%% file: main.tex
\documentclass[openany]{amsbook}
\usepackage{preamble}
\begin{document}


\title{Group Chunks in Model Theory and Algebraic Geometry}
\author{Ronan O'Gorman}

\input{Chapters/Abstract/Abstract}

\maketitle

\tableofcontents
\pagestyle{headings}

\chapter*{Acknowledgements}
I am fortunate to have had the support of not one but two inspiring mathematicians in Professors Thomas Scanlon and Martin Olsson. I am very grateful for their advice and encouragement. Thanks also to all the graduate students in the UC Berkeley Department of Mathematics who have helped make the past six years so enriching and enjoyable. Thanks to my parents for their unwavering support, and thanks to my partner Tianhe for being a continual source of joy in my life for the past four years.

\include{Chapters/Introduction/main_introduction}

\include{Chapters/Category-Theoretic_Prerequisites/Main_Prerequisites}

\include{Chapters/Partial_Morphisms_and_Partial_Magmas/Main_Partial_Morphisms_and_Partial_Magmas}

\include{Chapters/Group_Chunks_On_Presheaves/Main_Group_Chunks_on_Presheaves}

\include{Chapters/First-order_structures/Main_First_order_structures}

\include{Chapters/Scheme-Theoretic_Preliminaries/Main_Schemes}

\include{Chapters/Group_Chunks_on_Schemes}

\include{Chapters/S-Rational_Morphisms/Main_S-Rational_Morphisms}

\include{Chapters/Rational_Families/Main_Rational_Families}

\include{Chapters/Group_Chunks_from_S-Rational_Families/Main_Group_Chunks_from_S-Rational_Families}

\printbibliography

\end{document}

%% file: Chapters/Abstract/Abstract.tex


\begin{abstract}
  We formulate a group chunk theorem in the context of sheaves on sites which generalizes many similar results in model theory and algebraic geometry. Secondly, we develop an algebro-geometric analogue of Hrushovski's method of producing a group chunk from germs of definable functions on stationary types. The use of the model-theoretic tools of canonical bases and elimination of imaginaries is replaced with the use of Hilbert schemes to study ``canonical'' families of rational morphisms, allowing us to extend the previously known results over more general base schemes.

  The proofs of these results involve some technical work which may be of independent interest. First, we study partial morphisms in arbitrary categories, and show that a presheaf of partial magmas on a small category admits a universal morphism to a group. On the model theory side, we show that type-definable sets of $M^{eq}$ can be interpreted as sheaf quotients of type-definable sets. On the algebraic geometry side, we develop a theory of rational morphisms and families of rational morphisms of schemes over an arbitrary base.
\end{abstract}

%% file: Chapters/Introduction/main_introduction.tex
\chapter{Introduction}
\input{Chapters/Introduction/Motivation}

\input{Chapters/Introduction/Background.tex}

\input{Chapters/Introduction/Summary_of_main_results}

\input{Chapters/Introduction/Future_questions}
\input{Chapters/Introduction/Notation.tex}

%% file: Chapters/Introduction/Motivation.tex
\section{Motivation}
Model theory has had striking applications to algebraic geometry. Central to many of these applications are the tools of a subfield of model theory known as geometric stability theory. These tools have found application in, for example, Hrushovski's proofs of the Manin-Mumford and Mordell-Lang conjectures, (\cite{HrushovskiEhud1996TMordell_lang}, \cite{HrushovskiEhud2001Manin_mumford}), combinatorial algebraic geometry (\cite{EvansHrushovski91}, \cite{BAYSMartin2021Pgaf}), differential algebraic geometry (\cite{FreitagJames2022Wats}), difference algebraic geometry (\cite{ChatzidakisZoé2008Dfad}), and in reconstruction results - see for example Zilber's theorem below.

Unfortunately, the use of model-theoretic tools can sometimes obscure the underlying geometry. In some cases, although the results may be fundamentally algebro-geometric in nature, the model-theoretic arguments have no known algebro-geometric alternative. Consider, for example, the following theorem of Zilber (\cite{ZilberBoris2014ACai}):
\begin{thm}[Zilber]\label{Zilber's jacobian theorem}
    Let $k_1$, $k_2$ be algebraically closed fields, and $C_1$ and $C_2$ be smooth projective curves of genus $g \geq 2$ over $k_1,k_2$ respectively. For $i = 1,2$, fix points $c_i \in C_i(k_i)$ and consider the corresponding embeddings $C_i \hookrightarrow J_i = \Pic^0(C_i)$.

    Suppose we have a group isomorphism $\Theta$ of the rational points of the Jacobians respecting the images of the embeddings of the curves:
    \[\begin{tikzcd}[ampersand replacement=\&, row sep = small]
            J_1(k_1) \arrow[r,"\sim"', "\Theta"]\& J_2(k_2)\\
            C_1(k_1) \arrow[u, hook] \& C_2(k_2)\arrow[u,hook].
        \end{tikzcd}\]
    Then there exists a field isomorphism $\phi\colon k_1 \to k_2$, and a bijective isogeny $f\colon \phi(J_1) \to J_2$ over $k_2$ such that $\Theta = f \circ \phi$.
\end{thm}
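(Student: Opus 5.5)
We first recover the field $k_1$ from the pure group $J_1(k_1)$ together with the distinguished subset $C_1(k_1)$, uniformly enough that the construction is transported by $\Theta$. Concretely, we work in the two-sorted structure $\mathcal{M}_i = (J_i(k_i), +, C_i(k_i))$. Since $\Theta$ is an isomorphism of such structures, anything definable (or interpretable) in $\mathcal{M}_1$ is carried by $\Theta$ to the corresponding object in $\mathcal{M}_2$. So it suffices to interpret $k_i$, together with the algebro-geometric group structure on $J_i$, in $\mathcal{M}_i$ in a uniform way.

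\textbf{Step 1: the induced structure is rich.} Let $W^r_i$ be the image of $C_i^r$ under $(x_1,\dots,x_r) \mapsto \sum x_j - r c_i$. This set is definable in $\mathcal{M}_i$, and for $r = g$ it is all of $J_i$. Using translates of the $W^r_i$ and their intersections, we show that $\mathcal{M}_i$ defines a family of curves on $J_i$ (or on $C_i\times C_i$) that is at least two-dimensional and non-locally-modular. Typical examples are the translates of $C_i$ and the graphs of maps $x \mapsto t - x$ restricted to $C_i$. The hypothesis $g \geq 2$ is essential here: it guarantees that $C_i$ is not a coset of a subgroup, so $W^2_i$ is a genuine surface and intersections $C_i \cap (C_i + t)$ are finite but nonconstant in $t$. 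The tool is the Zariski-geometry/``group configuration'' technology. Since $\mathcal{M}_i$ is a reduct of $\mathrm{ACF}$ living on a strongly minimal set, Rabinovich's theorem (the reduct form of the Zilber trichotomy) says that a non-locally-modular reduct of $(k_i,+,\cdot)$ interprets a field definably isomorphic to $k_i$ itself.

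\textbf{Step 2: transport the field and compare.} Having interpreted the field $K_i \cong k_i$ by the same formulas in both structures, $\Theta$ induces an isomorphism $\phi\colon k_1 \to k_2$. We then need to show that $\Theta$ becomes a morphism of varieties after twisting by $\phi$. Since $J_i$ with its $k_i$-variety structure is interpretable in $\mathcal{M}_i$ via the field, the set-theoretic map $\Theta\circ\phi^{-1}\colon \phi(J_1)(k_2) \to J_2(k_2)$ is definable in $\mathrm{ACF}$ over $k_2$. It is also a group homomorphism. A definable homomorphism of algebraic groups is a constructible homomorphism. Any such map factors as a composite of Frobenius twists and a morphism, which yields a bijective isogeny $f$. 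Compatibility with $\phi$ gives $\Theta = f\circ\phi$.

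\textbf{Main obstacle.} The hardest part is Step 1, and specifically verifying non-local-modularity. That is, we must show the family of curves $\{C_i\cap(C_i+t)\}$, or the curves $\{(x,y)\in C_i^2 : x+y\in C_i+t\}$, is genuinely two-dimensional with no ``group-like'' degeneracy. My first attempt would be to reduce this to the fact that $C_i$ generates $J_i$ and has trivial stabilizer in $J_i$. Failing that, I would invoke a group chunk argument: from a generic germ of definable functions one builds a field, which is precisely the theme this paper goes on to develop.
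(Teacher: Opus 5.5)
The paper does not prove this statement. It quotes it from Zilber \cite{ZilberBoris2014ACai} as motivation, remarking only that the proof rests on a field-interpretation theorem originally due to Rabinovich \cite{Rabinovich}. Your architecture is the right one: treat $\mathcal{M}_i=(J_i(k_i),+,C_i(k_i))$ as a reduct of ACF, interpret $k_i$ in it, and transport along $\Theta$. But two steps fail as written.

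\textbf{Step 1.} You apply Rabinovich's theorem outside its scope.
\begin{itemize}
\item $\mathcal{M}_i$ is not strongly minimal: $J_i$ has rank $g$. The strongly minimal set is $C_i$, a curve of genus $\geq 2$, whereas Rabinovich's theorem concerns reducts whose universe is the field itself.
\item What you need is the restricted trichotomy for non-locally-modular strongly minimal reducts living on an arbitrary curve. You would apply it to the structure induced on $C_i$, in which $\mathcal{M}_i$ is interpretable via $C_i^g\to J_i$.
\item That is exactly the hard technical input the paper has in mind: Rabinovich's result as strengthened by Hasson--Sustretov \cite{HassonAssaf2024IsoC} and Castle \cite{CastleBenjamin2023Zrti}. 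It is not a formal consequence of the affine-line case, because pushing the structure forward along a finite map $C_i\to\mathbf{P}^1$ leaves the reduct unless that map is already definable there.
\end{itemize}
By contrast, non-local modularity, which you flag as the main obstacle, is easy, although your witnesses do not work. For generic $t$ the set $C_i\cap(t-C_i)$ is finite, so $x\mapsto t-x$ gives no curves in $C_i^2$. And a two-dimensional family of curves on the rank-$g$ set $J_i$ proves nothing once $g\geq 3$. Instead, use the whole family $\{C_i+t\}_{t\in J_i}$. Since $g\geq 2$, the stabilizer of $C_i$ is finite. So over a model $M$, for $t$ generic and $a$ generic on $C_i+t$, the canonical base of $\mathrm{tp}(a/Mt)$ is interalgebraic with $t$ over $M$. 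Yet $\dim(t/Ma)=(g+1)-g=1$, which contradicts one-basedness.

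\textbf{Step 2.} Interpreting a field does not give you that $\Theta\circ\phi^{-1}$ is constructible.
\begin{itemize}
\item A copy of the Jacobian over the interpreted field $K_i$ is certainly interpretable in $\mathcal{M}_i$. But its identification with the universe $J_i$ is a priori only ACF-definable, and $\Theta$ transports only $\mathcal{M}_i$-definable data.
\item The inference is false for general reducts. Take $K$ algebraically closed of characteristic $\neq 2$, and the reduct on $K$ whose relations are the preimages of constructible sets under coordinatewise squaring. It interprets $K$ on $K/\{\pm1\}$. Yet swapping $x$ and $-x$ on an arbitrary symmetric subset of $K$ is an automorphism that induces the identity on the interpreted field and is not constructible.
\end{itemize}
So you must use the group structure, roughly as follows.
\begin{itemize}
\item Produce an $\mathcal{M}_i$-definable link between $(J_i,+)$ and an algebraic group $H_i$ over $K_i$. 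For example, a definable subgroup of $J_i\times H_i(K_i)$ with finite kernels on both sides, obtained from non-orthogonality of $C_i$ and $K_i$ plus a Weil--Hrushovski group chunk or configuration argument inside $\mathcal{M}_i$.
\item Transport this link by $\Theta$ and compare it with the corresponding object for $J_2$.
\item Absorb the finite kernels using that $\Theta$ commutes with multiplication by integers.
\end{itemize}
Finally, a constructible bijective homomorphism becomes a morphism only after precomposing with a suitable power of Frobenius. You must therefore modify $\phi$ by the corresponding power of Frobenius before $f$ is a genuine isogeny.
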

The result was generalized in the recent work \cite{castle2026curveabstractgeneralizedjacobian}.

Zilber proved this result in answer to a conjecture of Bogomolov, Korotiaev, and Tschinkel \cite{BogomolovFedor2010ATTf}. Significant improvements to the proof (or rather the key technical result on which it relies, originally due to \cite{Rabinovich}) were made in \cite{HassonAssaf2024IsoC} and \cite{CastleBenjamin2023Zrti}. However, as far as we are aware, the proof remains poorly understood by algebraic geometers, despite some amount of interest. In the recent work \cite{klos}, the authors pose the question of developing an algebro-geometric proof of Zilber's theorem; this was the original motivation for our work.

Moreover, the tools of geometric stability theory are restricted to the world of varieties over fields, and often in practice the field is algebraically closed. This makes it difficult to see to what extent the arguments might generalize beyond the case of algebraically closed fields. For example, Zilber's \cref{Zilber's jacobian theorem} is known to fail in general if the fields are not algebraically closed (\cite[Proposition~5.3.2]{klos}), but it would be interesting to see if Zilber's methods could be used to prove the theorem, or similar reconstruction results, under weaker hypotheses.

The aim of this work is to begin to develop the tools of geometric stability theory from an algebro-geometric perspective. In particular, we focus on the so-called ``group chunk theorem'' and its model-theoretic extensions (see the next section for the statements). In the particular case of the group chunk theorem, there are already well-understood algebro-geometric analogues (see below) which have their own applications in algebraic geometry; they are used, for example, to study generalized Jacobians (\cite{rosenlicht}), N\'eron models of abelian varieties (\cite{blr90}), and split reductive groups over the integers (\cite{Demazure}).  We seek to connect the algebro-geometric and model-theoretic statements, and to this end we formulate a general group chunk type statement in the context of presheaves on a site which generalizes both.

On the other hand, there are several model-theoretic extensions of the group chunk theorem that do not have satisfactory algebro-geometric analogues - most notably Hrushovski's group configuration theorem, which is central to all the applications listed in the opening paragraph. In light of these applications, it seems reasonable to expect that scheme-theoretic analogues of these results would find many useful applications in algebraic geometry, in addition to clarifying (from a geometric perspective) the proofs which already exist. To this end, we give an algebro-geometric analogue of Hrushovski's construction of a group chunk from germs of definable bijections on stationary types, which is an important step in the proof of the group configuration.

%% file: Chapters/Introduction/Background.tex
\section{Background}
In this section we give some background on the history and various iterations of group chunk theorems in different fields.

A ``group chunk theorem" gives a universal procedure for constructing a group out of something which looks like a sufficiently large ``chunk" of a group. Informally, a group chunk is an object $X$, in whatever category one is working in, together with a ``partially defined morphism," referred to as a partial multiplication operation, from $X \times X$ to $X$ which is cancellative, associative, and defined on a sufficiently large domain. The goal is to find a universal map from $X$ into a group object which respects the partial multiplication; in other words, we want to construct a left adjoint to the forgetful functor from groups to group chunks. The general recipe for constructing the group goes back to Weil (see \cite{WeilAndre1955OAGo}). One considers the subgroup of the group of partial automorphisms of $X$ generated by the left action of $X$ on itself. One shows that the group is generated in two steps, and in particular it can be realized as a quotient of $X^2$. One then argues that the quotient is representable.

\begin{rem}\label{pre group remark}
    In applications, one often starts with a slightly larger object, sometimes known as a \emph{pre-group}. One must then find a group chunk embedded in the pre-group, as is done in, for example \cite[Proposition~3.2]{ArtinGroupChunk}.
\end{rem}


The group chunk construction has a long history. It was conceived by Weil in the 1940s to construct the Jacobian of a curve (\cite{Weil_varietes_abeliennes}). Weil's original construction predates modern scheme theory. At the time, Weil was unable to prove that the Jacobian he produced using the group chunk construction was projective, and in fact this was one of the original motivations for his definition of an ``abstract" variety as a collection of affine varieties glued together (see \cite{Weil_foundations}).

Today, there are several group chunk type theorems in different fields, all of which more or less follow Weil's recipe. In model theory, Weil's result was generalized to the context of stable theories by Hrushovski (\cite{Hrushovski-phd}), and it has since become a central tool of geometric stability theory. We give here a more recent generalization of Hrushovski's statement to the context of definable types:
\begin{thm}[{\cite[Proposition~3.15, Proposition~3.16]{Hrushovski2019}}]\label{Hrushovski group chunk intro statement}
    Let $p$ be a global partial type definable over a small set of parameters $C$. Let $(\cdot)$ be a $C$-definable map from $p^{\otimes 2}$ to $p$. Assume $(\cdot)$ is:
    \begin{enumerate}
        \item \textbf{Generically Cancellative}: For $a,b\models p^{\otimes 2}|_C$ we have $b\in \dcl(a,a\cdot b, C)$ and $a \in \dcl(b,a\cdot b, C)$.
        \item \textbf{Generically Associative}: For $a,b,c \models p^{\otimes 3}|_C$, we have $(a\cdot b)\cdot c = a\cdot (b\cdot c)$.
        \item \textbf{Generically Transitive}: For $a \models p|_C$, we have $(\lambda_a)_* p = p$, where $\lambda_a$ denotes the left multiplication by $a$.
    \end{enumerate}
    Let $X = p|_C(\bU)$. Then there exists a universal $C$-definable embedding from $X$ into a $C$-type-interpretable group $G$ which respects the multiplication operation.
\end{thm}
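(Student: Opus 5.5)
We follow Weil's recipe, carried out with germs of definable maps on $p$ in the style of Hrushovski. For $a \models p|_C$, generic cancellativity together with generic transitivity show that left multiplication $\lambda_a$ is a definable map that is generically a bijection from $p$ to $p$. By generic cancellativity, its inverse is definable over $(a,C)$.

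We work with \emph{germs} of such maps on the global type $p$. Two $(a,C)$-definable maps $f,g$ have the same germ if $f(x)=g(x)$ for $x \models p|_{C,a}$. Because $p$ is definable over $C$, this relation is definable in the parameters of $f$ and $g$. Hence the germ of $\lambda_a$ is an imaginary element $[a]$ of $M^{eq}$, and germs of compositions $\lambda_a\circ\lambda_b^{-1}$ are imaginaries $[a,b]$. Composition of germs is well defined because $(\lambda_a)_*p=p$: the image of a generic point over all parameters is again generic.

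We then define the group $G$ as the set of germs of the form $[a]\circ[b]^{-1}$ for $(a,b)\models p^{\otimes 2}|_C$.

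\begin{enumerate}
\item \textbf{Closure under composition.} This is the key step. Given $[a][b]^{-1}[c][d]^{-1}$, pick $e\models p$ generic over everything. Generic associativity gives $[b]^{-1}[c]=[u][v]^{-1}$ with $v=\ldots$ chosen via $e$: write $[c]=[b][b^{-1}c]$ by solving $b\cdot y = c\cdot e$. Here cancellativity makes $y$ definable, and transitivity makes $y$ realize $p$ generically. Then $[a][b]^{-1}[c][d]^{-1} = [a\cdot y][d\cdot e]^{-1}$, and independence/stationarity bookkeeping shows that the new pair is again generic.
\item \textbf{Group structure.} Inverses are clear, so $G$ is a group. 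It is a type-definable subset of a sort of $M^{eq}$ consisting of the images of $p^{\otimes 2}|_C$ under a $C$-definable map. Thus $G$ is $C$-type-interpretable.
\item \textbf{The embedding.} The map $a\mapsto[a]$ is injective, since $\lambda_a$ determines $a$ by cancellativity applied to a generic point. Generic associativity gives $[a\cdot b]=[a][b]$ for independent generics.
\end{enumerate}

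For universality, let $h\colon X\to H$ be any $C$-definable map to a type-interpretable group that respects the multiplication generically. Define $\tilde h([a][b]^{-1})=h(a)h(b)^{-1}$. To see this is well defined, suppose $[a][b]^{-1}=[a'][b']^{-1}$. Choose $e$ generic over everything, so that $a\cdot e' = \ldots$. Rewriting both sides as $[a\cdot z]\,[e]^{-1}$ forms with common $e$ reduces the claim to the multiplicativity of $h$ on generic pairs. The resulting map $\tilde h$ is a definable homomorphism, and it is unique because $G=X\cdot X^{-1}$.

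The main obstacle is the generic bookkeeping in step 1 and in the well-definedness of $\tilde h$. At each point one must verify that the auxiliary elements form realizations of $p^{\otimes n}|_C$ in the correct order. This requires the definability of $p$ and the transitivity hypothesis $(\lambda_a)_*p=p$ applied over enlarged parameter sets. I would first prove a lemma: if $(a,b)\models p^{\otimes2}|_C$ then $(a\cdot b,b)$ and $(a, a\cdot b)$ also realize $p^{\otimes 2}|_C$. All later arguments would then reduce to repeated applications of this lemma.
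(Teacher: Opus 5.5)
\textbf{The gap: your $G$ is not a group.} You define $G$ as the set of germs $[a][b]^{-1}$ with $(a,b)\models p^{\otimes 2}|_C$, and in step~1 you assert that the output pair $(a\cdot y, d\cdot e)$ is again such a pair. This is false.

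By your own injectivity argument, $[u][v]^{-1}=1$ forces $u=v$. That is impossible when $v\models p|_{Cu}$ and $p$ is, say, a complete non-realized type, so the identity is not in your $G$. Now take $p$ the generic type of a connected stable group, where $\otimes$ is symmetric. Then $[a][b]^{-1}$ and $[b][a]^{-1}$ both lie in your $G$, but their product is $1$, so closure fails. Your algebra in step~1 is correct; the problem is that nothing forces $a\cdot y$ and $d\cdot e$ to be independent, and they can coincide.

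In the actual generality of the theorem, $p$ is only a $C$-definable partial type, so neither symmetry of $\otimes$ nor stationarity is available, and things are worse. Take a divisible ordered abelian group, $p$ the type at $+\infty$, and $\cdot=+$. A pair realizes $p^{\otimes 2}|_{\emptyset}$ iff $a>0$ and $b>qa$ for all rational $q$. Your set is then exactly the germs of translations by negative elements. It has no identity and no inverses ($[b][a]^{-1}$ is translation by $b-a>0$), and it does not contain $[a]$, so your embedding does not even land in $G$. Your proposed lemma also fails here: $(a+b,b)$ does not realize $p^{\otimes 2}|_\emptyset$, since $b<a+b$. Only the half about $(a,a\cdot b)$ is true, and that is \cref{consequences of transitivity}.

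\textbf{The repair.} Use all pairs: $G=\{[a][b]^{-1}: a,b\in X\}$, i.e.\ $X^2/\ssim$, as the paper does (\cref{X2 ssim definition} and the last step of the proof of \cref{statement of model theoretic group chunk theorem}). Your step~1 computation with $e\models p|_{Cabcd}$ then proves closure without any genericity of the output pair. It only needs $e$, $c\cdot e$ and $y$ to realize $p$ over the relevant parameters, which is \cref{consequences of transitivity}. This is exactly the paper's two-step generation argument with $z=d\cdot e$. Identity, inverses and $[a]=[a\cdot e][e]^{-1}$ then become immediate.

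What you still owe is definability. The auxiliary $e$ is not definable from $(a,b,c,d)$, nor from $a$. So the multiplication of $G$, the embedding $X\to G$ and your $\tilde h$ are a priori only functions whose graphs are $C$-type-interpretable (using definability of $p$). You need the compactness fact that such a function between $C$-type-interpretable sets is relatively $C$-interpretable. In the paper this is handled in two moves. The auxiliary generic is chosen only after passing to a cover $\{(t,s) : s\models p|_{C\ldots}\}\to T$ in the site of type-definable sets. The sheaf quotient $(X^2/\ssim)^+$ is then identified with a type-interpretable set (\cref{equivalence of interpretable sets and presheaf quotients}).
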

Group chunks arise in many other model-theoretic settings. Group chunks in o-minimal structures are studied in \cite{PillayAnand1988Ogaf}. See also \cite{PillayHrushovski-local-fields}, \cite{PeterzilYaacov2022Odgi}, \cite{pillay2025groupsdefinablegeometricfields}, \cite{chernikov2025externallydefinablefsggroups}.

In algebraic geometry, Weil's result was generalized in a different direction by Artin to the context of schemes (\cite{ArtinGroupChunk}). Again following Weil's recipe, Artin produces a group algebraic space, although  algebraic spaces had not been invented at the time, so this was only recognized later. Artin's result was improved in \cite{blr90} and \cite{ER15}. The most up-to-date statement of which we are aware is the following:
\begin{thm}\label{artin group chunk introduction}(See \cite{ER15}).
    Let $S$ be a scheme, and $X$ a scheme faithfully flat and locally of finite presentation over $S$, whose fibers over $S$ are separated and have no embedded components. Let $W$ be a locally closed subscheme of $X \times_S X \times_S X$ such that, if we denote $X_1 = X_2 = X_3 = X$, then for all permutations $(i,j,k)$ of  $\{1,2,3\}^3$ with $i<j$, we have that
    \begin{enumerate}
        \item The projection $\rho_{ij}\colon W \to X_i\times_S X_j$ is an open immersion whose image $U_{ij}$ is both $X_i$-dense and $X_j$-dense.
        \item The projection $\rho_{ij}$  exhibits $W$ as the graph of a morphism $m_{ij}$ from $U_{ij} \subset X_i\times_S X_j$ to $X_k$.
        \item We have
              \begin{align*}
                  m_{12}(m_{12}(x_1,x_2),x_3) = m_{12}(x_1,m_{12}(x_2,x_3))
              \end{align*}
              whenever both sides of the above equation are defined.
    \end{enumerate}
    Then $X$ embeds into a group algebraic space, which is a scheme in good circumstances (see \cref{Artin Group chunk theorem} for more details)
\end{thm}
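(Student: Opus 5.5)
Following Weil's recipe, the plan is to build the group $G$ as a sheaf quotient of an open subscheme of $X\times_S X$, and then show that this sheaf is an algebraic space. First, I extend the rational multiplication $m=m_{12}$ to a presheaf of partial magmas on the big fppf site of $S$. For each $T\to S$, the set $X(T)$ carries a partial multiplication defined on those pairs $(x,y)$ with $(x,y)\in U_{12}(T)$. Condition (2), applied to $U_{13}$ and $U_{23}$, gives partial left and right division. The density hypotheses (1) say that for a fixed point $x$ the maps $y\mapsto xy$ and $y\mapsto x^{-1}y$ are defined on an $X$-dense open subset. They therefore define $S$-rational automorphisms of $X$, more precisely birational maps relative to $X$, defined on fibrewise dense opens. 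Since the fibres are separated and have no embedded components, these rational maps are determined by their restriction to any fibrewise dense open. So composition is well defined, and $T$-points of $X$ give elements of a group of rational automorphisms of $X_T$. I then use the universal morphism from a presheaf of partial magmas to a group, proved in the body of the paper, and let $G$ be its fppf sheafification. The associativity condition (3) ensures that $X\to G$ respects $m$.

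Next I show that $G$ is generated in two steps. For $T$-points $x,y,z,w$ of $X$, the composite $\lambda_x\lambda_y^{-1}\lambda_z\lambda_w^{-1}$ is fppf-locally of the form $\lambda_a\lambda_b^{-1}$. To prove this, I pick fppf-locally (after étale or fppf base change, using flatness and local finite presentation together with density of the $U_{ij}$) a point $u$ for which all the relevant products and quotients are defined. I then regroup the composite using associativity. Consequently $G$ is the quotient of an open $V\subset X\times_S X$ by the equivalence relation
\[ R=\{((a,b),(c,d)) : \lambda_a\lambda_b^{-1}=\lambda_c\lambda_d^{-1}\}. \]
This relation should be identified with an open subscheme of $V\times_S V$ via a map of the form $(a,b,c)\mapsto (a,b,c,(c\cdot b^{-1})\cdot a \ldots)$. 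With that identification, the two projections $R\rightrightarrows V$ are open-immersion-composed-with-projection, and hence flat and locally of finite presentation. By Artin's theorem that fppf quotients by flat, locally finitely presented relations are algebraic spaces, $G$ is an algebraic space. The group law descends from the group structure on rational automorphisms.

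Finally, $X\to G$ is a monomorphism by cancellativity. It is an open immersion because its pullback along $V\to G$ is the open locus in $V$ where $\lambda_a\lambda_b^{-1}$ agrees with some $\lambda_x$. That locus is open since it is the domain of definition of a rational map.

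The main obstacle is the second step. I must produce enough sections fppf-locally to move all partially defined products into their domains. I also need the equivalence relation $R$ to be representable by a scheme rather than merely a sheaf. The first difficulty is where flatness and the fibrewise density hypotheses do the real work. I would try to show that $U_{12}\cap(\text{translates})$ is fibrewise dense, and so has sections fppf-locally over any $T$ once it is surjective onto $T$. For the second, I would show that equality of rational maps defined on fibrewise dense opens, with no embedded components, is detected on a schematically dense open, which makes $R$ representable.
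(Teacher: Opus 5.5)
The paper does not prove this statement itself. It checks that $(h_X,m_{12})$ is a locally nontrivial group chunk on the fppf site, taking the second associativity identity from Artin's \S3.2.1. It then identifies the universal sheaf of groups with $(X^2/\ssim)^+$ via its abstract theorem, and cites Edixhoven--Romagny (\cref{Artin Group chunk theorem}) for algebraicity. Your first two steps match that sheaf-theoretic part. Your plan for the cited part is the standard Artin/Edixhoven--Romagny route: realise $G$ as the fppf quotient of $X\times_S X$ by a relation with flat, locally finitely presented projections, then apply Artin's algebraicity theorem.

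\textbf{The gap.} The step carrying the real content is asserted, not proved. That step is: the subfunctor $\Omega\subset X\times_S X$ of pairs with $\lambda_a\lambda_b^{-1}=\lambda_x$ for some $x$ is an open subscheme on which $x$ is a morphism. Equivalently, $X\to G$ is representable by open immersions. Your identification of $R$ with an open of $X^3$ (the preimage of $X$ under $(a,b,d)\mapsto\lambda_a\lambda_b^{-1}\lambda_d$) is an instance of this statement. So it has to be proved before Artin's theorem, not at the end.

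``It is the domain of definition of a rational map'' covers two claims. Let $\mu\colon(a,b)\mapsto ab^{-1}$, a priori given only by $(a,b)\mapsto m_{23}(b,a)$ on the transpose of $U_{23}$, and let $D$ be its maximal domain.
\begin{enumerate}
\item On $D$, the identity $\lambda_{\mu(a,b)}\lambda_b=\lambda_a$ holds at every $T$-point. This one is fine: both sides are partial maps $D\times_S X\dashrightarrow X$ with $D$-dense common domain, and they agree on an $S$-dense open, so \cref{S-dominant morphisms epimorphism property} applies.
\item Conversely, $\mu$ extends over every point where $\lambda_a\lambda_b^{-1}$ happens to be a translation. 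This is Weil's extension lemma and is not formal.
\end{enumerate}
Your proposed mechanism (equality of rational maps is detected on a schematically dense open) only shows that $R$ is well defined. It does not show that the locus where two families of rational maps agree is a subscheme.

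\textbf{The missing idea} is translation by an auxiliary point. Write $bu=m_{12}(b,u)$. Let $\Omega'\subset X^3$ be the open set of $(a,b,u)$ with $(a,u),(b,u)\in U_{12}$ and $(bu,au)\in U_{23}$. Put $F=m_{23}(bu,au)$, so that $F\cdot(bu)=au$.
\begin{enumerate}
\item Then $\lambda_F\lambda_b\lambda_u=\lambda_F\lambda_{bu}=\lambda_{au}=\lambda_a\lambda_u$, hence $\lambda_F=\lambda_a\lambda_b^{-1}$.
\item So $F$ is independent of $u$, because $X\to G$ is a monomorphism. It therefore descends along the fppf map $\Omega'\to\rho_{12}(\Omega')$, whose image is open since $X^3\to X^2$ is flat and locally of finite presentation.
\item Conversely, if $\lambda_a\lambda_b^{-1}=\lambda_x$ over $T$, then $x(bu)=au$ for all $u$ in a $T$-dense open of $X_T$. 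So suitable $u$ exist fppf-locally on $T$.
\end{enumerate}
Hence $X\times_G(X\times_S X)=\rho_{12}(\Omega')$. Descending along the two-step presentation $X\times_S X\to G$ gives the open-immersion property for arbitrary $T\to G$. Then $R$ is open in $X^3$ with flat, locally finitely presented projections, and Artin's theorem applies.

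\textbf{Two minor points.} First, the Freyd-type universal group is not needed for this statement, and it is not a priori the explicit group you work with. Second, regrouping the $\lambda^{-1}$ factors uses $\lambda_{m_{13}(y,v)}=\lambda_y^{-1}\lambda_v$. You should derive this from (3) by composing $\lambda_y\lambda_{m_{13}(y,v)}=\lambda_v$ with $\lambda_y^{-1}$.
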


In \cite{vandenDriesL.P.D.1990Wgct}, Van den Dries studies group chunks in a topological setting with a structure sheaf, although his version does not generalize Artin's result. In \cite{Zaitsev1995}, Weil's construction is generalized to reducible varieties without using scheme theory and applied to the study of real Lie groups. See also \cite{GoldbringVandenDries} for a similar-looking result where topological groups are constructed from local groups.

The model-theoretic group chunk theorems have several extensions which do not have satisfactory preexisting geometric analogues. In particular, we consider the problem of reconstructing groups from ``generically defined families of bijections,'' which is not addressed in Artin's work, and which is an important step in the proof of the group configuration. The result is proved (though not stated explicitly) in \cite{Hrushovski-phd}. We paraphrase the statement from \cite[Lemma~5.4]{Bays:GeometricStabilityTheory2016}:
\begin{thm}[Hrushovski]\label{model theoretic translation chunk}
    Fix a stable theory $T$, and suppose $\acl^{eq}(\emptyset) = \dcl^{eq}(\emptyset)$ (so that all types over the empty set are stationary). Let $p,q, r$ be types over $\emptyset$, and suppose $f_r\colon p \to q$ is a generically transitive canonical family of invertible germs of definable functions from $p$ to $q$. Let $b_1,b_2$ be independent realizations of $r$, and let $c$ be the canonical base of the germ $f_{b_1}\inv \circ f_{b_2}$ from $p$ to $p$. Let $s$ be the type of $c$, and $g_s$ be the induced canonical family of germs from $p$ to $p$. Suppose $c \forkindep b_i$ for each $i$. Then composition of germs induces a generically presented group (in the sense of \cref{Hrushovski group chunk intro statement}) on $s$, and hence a definable group.
\end{thm}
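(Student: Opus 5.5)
To prove \cref{model theoretic translation chunk}, I would verify the three hypotheses of \cref{Hrushovski group chunk intro statement} for the partial operation on $s$ induced by composition of germs. Then \cref{Hrushovski group chunk intro statement} supplies the group. Concretely, for independent $c_1,c_2\models s$, define $c_1\cdot c_2$ to be the canonical base of the germ $g_{c_1}\circ g_{c_2}$ from $p$ to $p$. First I would check that this lands in $s$, i.e.\ that $\tp(c_1\cdot c_2)=s$. For this I choose realizations carefully. Take $b_1,b_2,b_3\models r$ independent, and set $c_1=\Cb(f_{b_1}\inv\circ f_{b_2})$ and $c_2=\Cb(f_{b_2}\inv\circ f_{b_3})$. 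The hypothesis $c\forkindep b_i$, together with stationarity, should let me arrange that $c_1\forkindep c_2$ with $c_1,c_2\models s$; any independent pair realizes $s^{\otimes 2}$ by stationarity. Then $g_{c_1}\circ g_{c_2}=f_{b_1}\inv\circ f_{b_3}$, whose canonical base realizes $s$ because $b_1,b_3$ are independent realizations of $r$. Since $c_1\cdot c_2\in\dcl(c_1,c_2)$ (the germ of a composite of germs is definable from the codes), the operation is definable.

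Next I would check the three conditions. \emph{Associativity} is immediate, since composition of germs is associative and germs at generic points of $p$ compose as germs. For \emph{cancellativity}, I would use invertibility of the germs: $g_{c_2}=g_{c_1}\inv\circ g_{c_1\cdot c_2}$, so $c_2\in\dcl(c_1,c_1\cdot c_2)$, and symmetrically on the other side. I must check that the relevant independence holds so that these inverse germs are taken at generic points; this again uses the triple-$b$ representation. For \emph{generic transitivity}, I would show that for $c_1\models s$ and $c_2\models s$ with $c_2\forkindep c_1$, the product $c_1\cdot c_2$ realizes $s$ and is independent from $c_1$. Using $c_1\forkindep b_1$ and the representation above, $b_3$ is independent from $(b_1,b_2)$, so $c_1\cdot c_2=\Cb(f_{b_1}\inv\circ f_{b_3})$ should be independent from $c_1$. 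This amounts to the statement $(\lambda_{c_1})_*s=s$ in the stable setting, where $s$ is stationary.

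The main obstacle is the independence bookkeeping. I need to show that an arbitrary independent pair $(c_1,c_2)$ can be realized in the form above, with a shared middle $b_2$ and $b_1,b_2,b_3$ independent. The plan is as follows. Start from $b_1,b_2$ with $c_1=\Cb(f_{b_1}\inv f_{b_2})$, using $c_1\forkindep b_2$. Choose $b_3$ realizing the nonforking extension to $b_1b_2$ of the type over $b_2$ that makes $\Cb(f_{b_2}\inv f_{b_3})$ equal to a realization of $s$ independent from $c_1$ over $b_2$. Then use forking calculus (symmetry and transitivity) together with generic transitivity of the family $f_r$ to conclude that $b_1,b_2,b_3$ are independent. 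Uniqueness of nonforking extensions then transfers the conclusions to every independent pair.
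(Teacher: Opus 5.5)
The paper does not prove this statement itself (it is quoted from Hrushovski's thesis via Bays' notes). Your argument is correct, and it is exactly the model-theoretic content of the paper's scheme-theoretic analogue in \cref{section: constructing the partial magma}. Write $c_{ij}$ for the canonical base of the germ $f_{b_i}\inv\circ f_{b_j}$. The correspondences are as follows.
\begin{enumerate}
    \item Your shared-middle configuration $(c_{12},c_{23})$ is, up to relabeling, the map $f_1$ of \cref{existence of the fi lemma}. Its faithful flatness is deduced from that of $q_1$, which is the geometric stand-in for $c\forkindep b_i$.
    \item Your identity $g_{c_{12}}\circ g_{c_{23}}=f_{b_1}\inv\circ f_{b_3}$ is \cref{pullback equality result}. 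This yields \cref{definition of the m_ij lemma}, i.e.\ that the product lands in $s$.
    \item Your generic transitivity step $c_{12}\forkindep c_{13}$ is the faithful flatness of $(\rho_1,m_{12})\circ f_1$ in \cref{mij inverses statement}.
    \item Your associativity step is \cref{associativity verification part}.
\end{enumerate}
What the stable setting buys you is twofold. Stationarity (an automorphism carrying one good configuration onto any independent pair) replaces descent along the faithfully flat $f_i$ and $(\zeta_\psi)^3$ via \cref{faithfully flat morphisms are S-rational epimorphisms}. And taking $\dcl$ of inverse germs gives both cancellation conditions at once, where the paper has to exhibit the birational inverses $(\rho_1,m_{13})$ and $(m_{23},\rho_1)$.

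Your last paragraph should be replaced. The ``main obstacle'' is not one, and generic transitivity of $f_r$ is not used anywhere. The forward computation is:
\begin{enumerate}
    \item Since $(b_2,b_3)$ has the same type as $(b_1,b_2)$, the hypothesis gives $c_{23}\forkindep b_2$.
    \item Since $b_1\forkindep b_2b_3$ and $c_{23}\in\dcl(b_2,b_3)$, $c_{23}$ is independent from $b_1$ over $b_2$.
    \item By transitivity, $c_{23}\forkindep b_1b_2$, hence $c_{23}\forkindep c_{12}$.
\end{enumerate}
Stationarity of $s$ then makes every independent pair of realizations of $s$ conjugate to $(c_{12},c_{23})$. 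The conjugating automorphism carries $b_1,b_2,b_3$ along, so no backwards construction is needed.

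Symmetrically, $c_{12}\forkindep b_1$ together with $b_2$ independent from $b_3$ over $b_1$ gives $c_{12}\forkindep b_1b_3$, and so $c_{12}\forkindep c_{13}$. Passing from this to $(\lambda_{c_1})_*s=s$ means showing that $c_1\cdot c_2$ is independent from an arbitrary small $B\ni c_1$ whenever $c_2\forkindep B$. That is one further transitivity step, which you should write out.

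Finally, \cref{Hrushovski group chunk intro statement} as stated produces only a type-interpretable group. The closing word ``definable'' in the statement therefore needs an additional argument or hypothesis, which your sketch leaves implicit. It is automatic, for example, in $\omega$-stable theories, where type-definable groups are definable.
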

In the model-theoretic setting this is a straightforward extension of the group chunk construction in the stable context, as observed in \cite{Hrushovski-phd}. However in the scheme-theoretic world the construction is more difficult since many model-theoretic tools such as elimination of imaginaries are unavailable, and taking scheme-theoretic quotients is more complicated. The closest algebro-geometric analogue of which we are aware is the following theorem of Weil, which works for varieties over a field (not necessarily algebraically closed):
\begin{thm}(Paraphrased from \cite[Proposition~2]{WeilAndre1955OAGo})\label{Weil translation chunk}
    Let $V,W$ be two integral varieties defined over a field $k$. Let $g$ be a rational morphism from $V \times W$ into $W$, and suppose there exist generic points $x,y$ of $V$, independent over $k$, and a generic point $z$ of $V$ over $k$ such that
    \begin{enumerate}
        \item The rational morphism $u \mapsto g(x,u)$ is birational, and
        \item  $g(z,u) = g(x,g(y,u))$ for $u$ generic on $W$ over $k(x,y,z)$.\label{z existence assumption in weil's theorem}
    \end{enumerate}
    Then there exists a variety $\overline{V}$, and rational morphisms $\phi\colon V \to \overline{V}$, $\overline{g}\colon \overline{V}\times W \to W$ and $f\colon \overline{V}\times \overline{V} \dashrightarrow \overline{V}$, all defined over $k$, such that
    \begin{itemize}
        \item $g(x,u) = \overline{g}(\phi(x),u)$ for generic $u$,
        \item $f(\phi(x), \phi(y)) = \phi(z)$,
        \item $f$ and $\overline{g}$ define a pre-group structure (as in \cref{pre group remark}) on $\overline{V}$, and a pre-transformation space structure on $W$ with respect to $\overline{V}$.
    \end{itemize}
    In particular, $f$ induces a group chunk structure on a subvariety of $\overline{V}$.
\end{thm}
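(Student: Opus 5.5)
The idea is to follow Weil's recipe from \cite{WeilAndre1955OAGo}, phrased in the language of function fields and generic points. Throughout, $k(x)$, $k(y)$, $k(z)$ denote the residue fields of the chosen generic points. The first step is to produce $\overline{V}$ as the variety classifying the birational maps $g(x,-)$. Consider the rational morphism $u\mapsto g(x,u)$ of $W_{k(x)}$, and let $k'\subseteq k(x)$ be the smallest subfield containing $k$ over which this rational map is defined: its \emph{field of definition}, the analogue of the canonical base. Concretely, take the closure $\Gamma_x$ of its graph in $W\times W$ over $k(x)$. Then $k'$ is the field of definition of the subvariety $\Gamma_x$, which is generated by the coefficients of a normalized set of defining equations (equivalently, by its Chow coordinates).

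Since $k'$ is finitely generated over $k$, we choose a $k$-variety $\overline{V}$ with function field $k'$. The inclusion $k'\subseteq k(x)$ then gives a dominant rational map $\phi\colon V\dashrightarrow\overline{V}$, and $\overline g$ is defined as the rational map $(\phi(x),u)\mapsto g(x,u)$, which by construction is defined over $k(\phi(x))=k'$. The first bullet then holds by construction. The key property to record is injectivity on germs: if $x_1,x_2$ are generic with $g(x_1,-)=g(x_2,-)$ generically, then $\phi(x_1)=\phi(x_2)$, because the field of definition depends only on the rational map.

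Next I construct $f$. By hypothesis~\ref{z existence assumption in weil's theorem}, $g(x,-)\circ g(y,-)=g(z,-)$. By the injectivity above, $\phi(z)$ is determined by the composite map. The composite is defined over $k(\phi(x),\phi(y))$, hence its field of definition $k(\phi(z))$ is contained in $k(\phi(x),\phi(y))$. Thus $\phi(z)$ is a rational function of $(\phi(x),\phi(y))$, i.e.\ there is a rational map $f\colon\overline V\times\overline V\dashrightarrow\overline V$ over $k$ with $f(\phi(x),\phi(y))=\phi(z)$.

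It remains to check the pre-group axioms. For generic cancellativity: $g(y,-)=g(x,-)^{-1}\circ g(z,-)$, so $k(\phi(y))\subseteq k(\phi(x),\phi(z))$ because $g(x,-)$ is birational. Symmetrically, $k(\phi(x))\subseteq k(\phi(y),\phi(z))$. Generic associativity follows from associativity of composition of birational maps together with injectivity on germs. For the pre-transformation space property, we have $\overline g(f(a,b),u)=\overline g(a,\overline g(b,u))$ for independent generics $a,b$, which is hypothesis~\ref{z existence assumption in weil's theorem} rewritten. The final claim then follows by passing from the pre-group to a group chunk as in \cref{pre group remark}. This means restricting to the open dense locus where the relevant maps are defined, so that \cref{artin group chunk introduction} (or Weil's original theorem) applies.

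The main obstacle is independence and genericity bookkeeping: showing that $\phi(x),\phi(y)$ are independent generics of $\overline V$ and that $\phi(z)$ is generic. Only then does $f$ make sense as a rational map on $\overline V\times\overline V$ and the axioms hold at generic points. I would handle this by a transcendence-degree count. Since $x,y$ are independent and $k(\phi(x))\subseteq k(x)$, $k(\phi(y))\subseteq k(y)$, the subfields $k(\phi(x))$ and $k(\phi(y))$ are linearly disjoint over $k$. Genericity of $\phi(z)$ follows because $z$ itself is generic and $\phi$ is dominant. One further subtlety is that $k(\phi(x))$ should be a regular extension of $k$ when $k$ is not algebraically closed. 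I would avoid needing this by defining $\overline V$ directly via $k'$, which is a subfield of $k(x)$ and hence regular whenever $k(x)$ is.
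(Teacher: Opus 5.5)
Your outline is correct, but one key step is under-justified (see the second paragraph). It is essentially Weil's own minimal-field-of-definition argument. The paper cites this statement without proof; its own analogue (\cref{main result: most general case}, specialized in \cref{the hypotheses for the main situation are not so bad over fields}) reaches the same invariant by a different route.

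\textbf{Comparison with the paper's route.} The paper starts from a family $AX\dashrightarrow Y$ and passes to $\psi=\phi^\dagger*\phi$. Instead of taking an arbitrary $k$-model $\overline V$ of the minimal field of definition $k'\subseteq k(x)$, it uses the generically flat closed graph to get a classifying map to $\mathrm{Hilb}$ (\cref{xi existence result}) and takes its schematic image $Z$. Over a field, the residue field of $\mathrm{Hilb}$ at the point classifying $\Gamma_x$ is exactly your $k'$, so $Z$ is birational to your $\overline V$. What differs is how the properties are proved:
\begin{itemize}
\item injectivity on germs is automatic from the Hilbert functor;
\item your spreading-out of $u\mapsto g(x,u)$ to $\overline g$ becomes fpqc descent of the graph (\cref{Existence of canonical families});
\item your inclusion $k(\phi(z))\subseteq k(\phi(x),\phi(y))$ becomes the equality $Z(\widetilde{\psi}*\widetilde{\psi})=Z(\psi)$, obtained from faithfully flat reparametrizations;
\item cancellativity and associativity are checked by precomposing with faithfully flat maps, not by field inclusions and transcendence-degree counts.
\end{itemize}
Your version is shorter, needs no flatness hypotheses, and gives everything over $k$. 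The paper's version works over Noetherian bases, at the price of tameness, generic flatness and faithful flatness hypotheses that are automatic over a field; faithful flatness of $q_1,q_2$ plays the role of Weil's hypothesis on $z$.

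\textbf{Points to repair.} First, ``the field of definition depends only on the rational map'' only gives $k(\phi(x_1))=k(\phi(x_2))$. Since $\overline V$ is an arbitrary model of $k'$, two generators of the same field need not coincide. The missing step is Galois-theoretic:
\begin{itemize}
\item extend the $k$-isomorphism $k(x_1)\to k(x_2)$, $x_1\mapsto x_2$, to an automorphism $\sigma$ of the universal domain;
\item because $g$ is defined over $k$, $\sigma(\Gamma_{x_1})=\Gamma_{x_2}=\Gamma_{x_1}$;
\item an automorphism stabilizing a subvariety fixes its minimal field of definition pointwise, since it fixes the reduced Gr\"obner basis of its ideal;
\item hence $\phi(x_2)=\sigma(\phi(x_1))=\phi(x_1)$, because $\phi$ is defined over $k$.
\end{itemize}
The same transport argument shows that $k(\phi(z))$ is the field of definition of $\Gamma_z$ and that every $g(y,-)$ is birational. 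Your associativity step uses injectivity in the form ``$\overline g(a_1,-)=\overline g(a_2,-)$ implies $a_1=a_2$ on $\overline V$'', so state it that way.

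Second, drop ``equivalently, by its Chow coordinates''. In characteristic $p$ the minimal field of definition is in general only a purely inseparable extension of the Chow field. If you used the Chow field, $\overline g$ would not be defined over $k(\phi(x))$.

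Finally, Weil's normality axiom also requires $f(a,b)$ to be generic over $k(a)$ and over $k(b)$. This follows from your two cancellation inclusions by a transcendence-degree count, but you should say so.
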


%% file: Chapters/Introduction/Summary_of_main_results.tex
\section{Overview of Main Results}\label{section: overview of main results}
The main goal of the first part of this work is to show that Weil's group chunk construction can be carried out in a very general setting which generalizes \cref{artin group chunk introduction}, \cref{Hrushovski group chunk intro statement} and many similar statements. In later chapters, we develop a scheme-theoretic analogue of \cref{model theoretic translation chunk} which works over more general base schemes. Along the way, we prove numerous smaller results which we believe may be of independent interest.

In this section we discuss our results in some detail and describe how they fit in with existing literature.

\subsection{Partial Morphisms and Partial Magmas}
We begin in \cref{chapter: Partial Morphisms and Partial Magmas} by studying objects with partially defined operations abstractly. A \emph{partial morphism} from $X$ to $Y$ in a category $\scrC$ is a morphism from a subobject of $X$ to $Y$.  We say that two partial morphisms from $X$ to $Y$ are \emph{compatible} if they agree on the intersection of their domains.

By the Yoneda lemma, a partial morphism in any category $\scrC$ can be viewed as a partial morphism in the category $\Presh(\scrC)$ of presheaves on $\scrC$. For any $\scrC$, there is a bifunctor $\iHom(-,-)$ on $\Presh(\scrC)$, called the \emph{internal hom}, which is determined up to natural isomorphism by the adjunction
\begin{align*}
    \Hom(X \times Z ,Y) \simeq \Hom(X,\iHom(Z,Y)).
\end{align*}
In \cref{section: partial morphisms of presheaves}, we define an ``internal partial hom'' bifunctor $\parhom(-,-)$ on $\Presh(\scrC)$. We give two interpretations of $\parhom(-,-)$, and show that it satisfies an analogous adjunction:
\begin{prop}[see \cref{partial hom adjunction}]
    Let $\scrC$ be a small category, and $X,Y,Z \in \Presh(\scrC)$. We have isomorphisms
    \begin{align*}
        \lambda\colon \rparhom(X\times Z, Y) \simeq \Hom(X,\parhom(Z,Y))
    \end{align*}
    which are natural in $X$,$Y$ and $Z$.
\end{prop}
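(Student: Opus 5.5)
The plan is to define $\parhom(Z,Y)$ explicitly and then reduce the adjunction to the Yoneda lemma together with a sectionwise argument. For $c\in\scrC$, let $h_c$ denote the representable presheaf. Set
\[
\parhom(Z,Y)(c) := \rparhom(h_c\times Z, Y),
\]
the set of partial morphisms $h_c\times Z\supseteq D\to Y$ with $D$ a subpresheaf. Functoriality in $c$ comes from pulling back: for $f\colon c'\to c$, the map $h_{c'}\times Z\to h_c\times Z$ pulls $D$ back to a subpresheaf and restricts the morphism to it. Since pullbacks of subobjects are computed sectionwise, this is strictly functorial. Equivalently, a section over $c$ is a family that assigns to each $f\colon d\to c$ a partial function $Z(d)\rightharpoonup Y(d)$, and these families are compatible under restriction. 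This is the second interpretation, and I would check the two descriptions agree before proceeding.

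Next, define $\lambda$. Given a partial morphism $(D\subseteq X\times Z,\ \phi\colon D\to Y)$ and a section $x\in X(c)$, regard $x$ as a map $h_c\to X$. Pull $D$ back along $x\times \mathrm{id}\colon h_c\times Z\to X\times Z$ to get $D_x\subseteq h_c\times Z$, and let $\phi_x$ be the composite $D_x\to D\to Y$. Then $\lambda(D,\phi)_c(x)=(D_x,\phi_x)$. Naturality in $c$ follows because pulling back first along $x$ and then along $f$ is the same as pulling back along $x\circ f$. The inverse $\mu$ runs the other way. Given $\psi\colon X\to\parhom(Z,Y)$, define $D\subseteq X\times Z$ sectionwise by $(x,z)\in D(c)$ iff $(\mathrm{id}_c,z)\in \psi_c(x)$'s domain, and set $\phi(x,z)=\psi_c(x)(\mathrm{id}_c,z)$. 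This $D$ is a subpresheaf and $\phi$ is natural, because $\psi$ commutes with restriction and restriction of partial morphisms is pullback.

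The checks that $\mu\lambda=\mathrm{id}$ and $\lambda\mu=\mathrm{id}$ are essentially Yoneda. A subpresheaf $D_x$ of $h_c\times Z$ is determined by the points $(f,z)$ it contains. The point $(f,z)$ lies in $D_x$ iff $(\mathrm{id},z)$ lies in the restriction $D_{x\circ f}$, and the values of $\phi_x$ are likewise determined by their values at identity arrows. Naturality in $X$ and $Z$ is pullback along $X'\times Z'\to X\times Z$, which commutes with everything above. Naturality in $Y$ is postcomposition, which is trivial.

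The main obstacle is bookkeeping rather than any conceptual step. The partial hom sets must really be sets, which is why $\scrC$ is small and why we work with subobjects up to equality of subpresheaves rather than up to isomorphism of monos. Also, $\rparhom$ must be contravariantly functorial via pullback, and that pullback must be strictly, not merely up to isomorphism, compatible with composition. I would handle this by always representing subobjects of presheaves as genuine subpresheaves, so that pullback is literal preimage sectionwise and all the identities hold on the nose.
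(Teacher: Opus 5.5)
Your proposal is correct and follows the paper's proof essentially step for step: $\lambda$ is defined by pulling back the domain along $x\times\id_Z\colon h_c\times Z\to X\times Z$, and your inverse $\mu$ is the paper's $\beta(\theta)=\ev_{Y,Z}\circ(\theta\times\id_Z)$ written out sectionwise, where $\ev_{Y,Z}\colon\parhom(Z,Y)\times Z\dashrightarrow Y$ is the partial evaluation $(\phi,z)\mapsto\phi(\id_c,z)$. Your Yoneda-style verifications of the two composites and your naturality checks match the paper's; the second description of $\parhom$ you propose to compare against is not needed for this statement, since the paper proves that comparison separately.
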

This shows in particular that a partial binary operation on an object $X$ induces a morphism from $X$ into $\parhom(X,X)$; this map sends an element $x \in X(T)$ to the left translation by $x$. We will make use of this in the next chapter for the group chunk construction.

Next, we move on to studying partial magmas; i.e., objects with partial binary operations. We show that in essentially all situations, and for largely formal reasons, a presheaf of partial magmas $(X,\mu)$ admits a universal morphism to a presheaf of groups (see \cref{Universal maps to groups - general case}).
As a corollary, we obtain a universal map from $X$ into a sheaf of groups.

\subsection{Group Chunks on Sites}
Having shown that a universal morphism to a presheaf of groups always exists, the question then becomes under what circumstances does this presheaf of groups have a manageable form. In \cref{chapter: group chunks on presheaves} we formulate conditions under which Weil's recipe can be implemented, and show that it gives an explicit description both of the associated presheaf of groups and the associated sheaf of groups.

Roughly, we define a group chunk on a site $\scrC$ to be a presheaf $X$ on $\scrC$ together with a partial binary operation $m_{12}\colon X \times X \dashrightarrow X$ which is cancellative, associative, and which has sufficiently large domain (see \cref{Group Chunk definition} for the precise definition). Surprisingly, formulating the condition that the domain be sufficiently large does not require any analogue of the notion of density (as in the algebro-geometric case) or independence (as in the model-theoretic case); instead it is enough to specify that one may locally find points $z$ such that finite collections of iterated products with $z$ of depth at most 2 are defined (\cref{large domain definition}).

Given a group chunk, we first show that iterated products of arbitrary depth can be defined (\cref{Long products lemma}), and then show that the relation of compatibility, which we denote using the symbol $\ssim$, gives an equivalence relation on the subsemigroup of partial endomorphisms of $X$ generated by the left translation maps described above (\cref{compatibility is an equivalence relation in the cases we care about}), which we call $\tXinfty$. The universal presheaf of groups can then be realized as the quotient of $\tXinfty$ by the compatibility relation (\cref{section: second universal morphisms to groups}). Moreover, we show that the universal presheaf of groups is locally given by pairs of elements of $X$, and therefore when we sheafify we get a universal sheaf of groups which can be viewed as a quotient of $X^2$ (\cref{section: third universal maps to groups}).

The precise statement summarizing these results is:
\begin{thm}\label{main abstract group chunk result}
    Let $(X,\mu)$ be a presheaf of partial magmas on a category $\scrC$.
    \begin{enumerate}
        \item If $\scrC$ is small, then $X$ admits a universal map to a presheaf of groups (\cref{Universal maps to groups - general case}).
        \item If $\scrC$ is a site, and $(X,\mu)$ is a locally nontrivial group chunk, then the universal presheaf of groups $G(X)$ is separated and can be identified with the quotient $\tXinfty/\ssim$ (\cref{left adjoint for group chunks}).
        \item If $(X,\mu)$ is a locally nontrivial group chunk, then the universal sheaf of groups $G(X)^+$ can be identified with the sheaf quotient $(X^2/\ssim)^+$ (\cref{group chunks to sheaves left adjoint}).
    \end{enumerate}
\end{thm}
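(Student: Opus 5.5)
This theorem summarizes results proved later, so I would assemble it from three separate constructions, one per item. Throughout I use the internal partial hom adjunction (\cref{partial hom adjunction}), which sends $\mu$ to a morphism $\lambda\colon X\to\parhom(X,X)$, $x\mapsto$ left translation by $x$.

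For item (1) I would argue purely formally. When $\scrC$ is small, presheaves of groups on $\scrC$ are algebras for a finitary essentially algebraic theory over $\Presh(\scrC)$. A morphism of presheaves of partial magmas into a presheaf of groups is a morphism of presheaves $X\to G$ satisfying $f(\mu(x,y))=f(x)f(y)$ on the domain of $\mu$. I would first construct the universal target objectwise. For each $c\in\scrC$, take the free group on $X(c)$ modulo the relations $[\mu(x,y)]=[x][y]$ for $(x,y)$ in $\operatorname{dom}\mu(c)$. These groups are functorial in $c$ because restriction maps preserve domains. The universal property is then checked sectionwise, and smallness of $\scrC$ keeps everything set-sized. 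Alternatively, one can invoke the adjoint functor theorem, using a solution set bounded by the cardinality of $\coprod_c X(c)$.

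For item (2) I would follow Weil's recipe. Let $\tXinfty$ be the subsemigroup of partial endomorphisms in $\parhom(X,X)$ generated by the translations $\lambda_x$. \Cref{Long products lemma} shows that long products make sense. \Cref{compatibility is an equivalence relation in the cases we care about} shows that compatibility $\ssim$ is an equivalence relation. Both use the large-domain condition, which locally supplies a $z$ making the relevant depth-$\le 2$ products defined. The key steps, in order, are these.
\begin{enumerate}
\item Show that $\ssim$ is a congruence for composition. The quotient $\tXinfty/\ssim$ is then a presheaf of monoids, with the identity class realized locally as $\lambda_x$ composed with a partial inverse.
\item Show that every class is locally invertible. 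Cancellativity makes each $\lambda_x$ injective, so its partial inverse composed with it is compatible with the identity.
\item Separatedness. If two classes agree on a covering, then the representing partial maps agree on the glued intersection of domains, and so they are compatible.
\item Universality. Given $f\colon X\to H$, send $\lambda_{x_1}\cdots\lambda_{x_n}\mapsto f(x_1)\cdots f(x_n)$. Well-definedness on $\ssim$-classes is checked by evaluating at a local point $z$ in the common domain: there $f(x_1)\cdots f(x_n)f(z)=f(x_1\cdots x_n z)$, and cancelling $f(z)$ gives the same value for compatible words.
\end{enumerate}

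For item (3) I would show that each section of $\tXinfty/\ssim$ is, locally on a cover, of the form $\lambda_a^{-1}\lambda_b$. Rewriting a product $\lambda_{x}\lambda_{y}^{-1}$ into this form requires a local $z$ and associativity to absorb terms. This gives a local epimorphism $X^2\to G(X)$ whose kernel relation is $\ssim$. Since sheafification preserves colimits and finite limits, $G(X)^+\simeq (X^2/\ssim)^+$. Because sheafification is left adjoint to the inclusion of sheaves, this sheaf is universal among sheaves of groups.

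I expect the main obstacle to be the well-definedness and congruence arguments in item (2). Showing that compatibility is transitive and respected by composition requires simultaneously defining many iterated products on a common local domain. I would handle this by repeatedly passing to covers on which \cref{large domain definition} supplies a suitable $z$, and then using associativity together with cancellation to compare values at $z$.
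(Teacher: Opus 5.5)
Your overall route is the paper's: Weil's recipe on $\tXinfty$, compatibility, evaluation at local points, two-step generation, then sheafification. Two of your deviations are fine. The objectwise construction in (1) is a valid, more explicit substitute for Freyd's theorem. Your separatedness argument in (2) works directly from separatedness of $X$, without \cref{Check locally at one point}.

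The genuine gap is in (3): your normal form is on the wrong side. The large-domain axiom only controls left translations. What it gives is the following: for $\phi\in\tXinfty(T)$ and a local point $z\in\dom\phi$ (\cref{Long products lemma}), you have $\lambda_{\phi(z)}\ssim\phi\circ\lambda_z$ (\cref{tXinfty composition compatibility}), hence $\phi\ssim\lambda_{\phi(z)}\circ\lambda_z^\dagger$. That is the form $x_1x_2^{-1}$ used to define $\ssim$ on $X^2$ in \cref{X2 ssim definition}.

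Your form $\lambda_a^\dagger\circ\lambda_b$, i.e.\ $a^{-1}b$, would need a local $a$ with $a\phi$ again in $X$. That is a right-translation statement the axioms do not provide, and it can fail. On the one-point site (where ``locally'' means globally), let $X\subset F(s,t)$ consist of the reduced words $us^k$ with $u$ not ending in $s^{\pm1}$ and $k>|u|$. Set $\mu(x,y)=xy$ whenever $xy\in X$.
\begin{itemize}
\item Being inside a group, $X$ is cancellative and strongly associative.
\item It has large domain: each $\dom(\alpha_i\circ\beta_i)$ has the form $X\cap g^{-1}X\cap h^{-1}X$, and $s^k$ with $k\gg0$ lies in any finite intersection of such sets.
\item Its group $\tXinfty/\ssim$ is $F(s,t)$, since $s,ts^2\in X$.
\item Every element of $X$ ends in $s$, so $at\notin X$ for all $a\in X$, i.e.\ $t\notin X^{-1}X$.
\end{itemize}
So rewriting $\lambda_x\lambda_y^{-1}$ as $\lambda_a^{-1}\lambda_b$ is impossible already for $x=ts^2$, $y=s^2$. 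Hence your map $X^2\to G(X)$ is not a local epimorphism.

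The universality step of (2) also needs repair. Cancelling $f(z)$ at a local point only shows that $f(x_1)\cdots f(x_n)$ and $f(y_1)\cdots f(y_m)$ agree on the members of a cover. You must therefore assume $H$ separated, and this hypothesis cannot be dropped. Take the site with a single non-identity arrow $u\colon T'\to T$, where $\{u\}$ covers $T$. Let $X(T)=\{1,2\}\subset X(T')=\Z$, with $\mu$ given by addition over $T'$ and undefined over $T$. This is a locally nontrivial group chunk in which $\lambda_1\circ\lambda_1\ssim\lambda_2$. But the objectwise universal group from your (1) is free on $\{1,2\}$ at $T$, so $[1]^2\neq[2]$ there. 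Hence $X\to F(X)$ does not factor through $\tXinfty/\ssim$.

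So the universal object in (2) is the left adjoint on \emph{separated} presheaves of groups (\cref{left adjoint for group chunks}), and in general it differs from the one in (1). Statement (3) then follows by composing with sheafification, since sheaves are separated.

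Two smaller repairs:
\begin{itemize}
\item $\tXinfty$ must be generated by the $\lambda_x^\dagger$ (translations via $\mu_{13}$) as well as the $\lambda_x$. Otherwise the quotient need not be a group: for the positive reals under addition you would just get the positive reals back.
\item When $X(T)=\emptyset$ you have $\tXinfty(T)=\emptyset$. Trivial groups must be inserted there, as in \cref{Gmu definition}, before the quotient is a presheaf of groups.
\end{itemize}
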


One should note that, at this level of generality, we cannot hope to say anything meaningful regarding representability of the sheaf of groups $G(X)^+$. Comparing the algebro-geometric and model-theoretic results one sees that in this respect things work out rather differently. In the model-theoretic setting one has elimination of imaginaries (at least after passing to $\bM^{eq}$), which guarantees that quotients by definable equivalence relations are representable; this is discussed in detail in \cref{chapter: model theory}. In the scheme-theoretic setting the question is more subtle; see \cref{Artin Group chunk theorem}. The relationship between \cref{main abstract group chunk result} and Artin's \cref{artin group chunk introduction} is discussed in \cref{Group Chunks on Schemes}.

\subsection{Interpretable Sets}
In \cref{chapter: model theory} we relate our abstract group chunk construction to Hrushovski's theorem on generically presented groups on definable types (\cref{Hrushovski group chunk intro statement}). In order to do this, we define a Grothendieck topology on the category of type-definable sets of a theory with definable maps (see \cref{Grothendieck topology on type-definable sets definition} for the precise definition) where covers are definable surjections, and show that the hypotheses of \cref{Hrushovski group chunk intro statement} induce a group chunk on this site.

In the course of the group chunk construction, we need to take a sheaf quotient of a representable presheaf by an equivalence relation representable by a definable set. However in the model-theoretic construction, one takes the same quotient by passing to a type-interpretable set (i.e., a type-definable set in $M^{eq}$). In \cref{section: Sheaf Quotients and type-interpretable sets} we show that these two quotients are essentially the same. More precisely, we show:
\begin{prop}[see \cref{equivalence of interpretable sets and presheaf quotients}]
    \begin{enumerate}
        \item     There is an equivalence of categories $j$ from the category of $0$-type-interpretable sets $\typInt(\calT)$ to the category of sheaves on $\typDef(\calT)$ which are sheaf quotients of $0$-type-definable sets by $0$-definable equivalence relations, such that if a type-interpretable set $D$ is a model-theoretic quotient of a $0$-type-definable set $X$ by a $0$-definable equivalence relation $E$ on $X$ then $j(D)$ is naturally isomorphic to the sheaf quotient $(X/E)^+$.
        \item  There is an equivalence of categories $j$ from the category of $0$-interpretable sets $\Int(\calT)$ to the category of sheaves on $\Def(\calT)$ which are sheaf quotients of $0$-definable sets by $0$-definable equivalence relations, such that if a $0$-interpretable set $D$ is a model-theoretic quotient of a $0$-definable set $X$ by a $0$-definable equivalence relation $E$ on $X$ then $j(D)$ is naturally isomorphic to the sheaf quotient $(X/E)^+$.
    \end{enumerate}
\end{prop}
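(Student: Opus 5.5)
We construct the functor $j$ directly and check it is fully faithful with the stated essential image. The two parts are handled identically, with $\typDef$ in place of $\Def$; we describe the type-definable case.

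Every $0$-type-interpretable set $D$ is a type-definable subset of some sort $S/E$ of $\bM^{eq}$. Here $S$ is a $0$-definable set and $E$ a $0$-definable equivalence relation. Let $X\subseteq S$ be the $0$-type-definable preimage of $D$, and let $\pi\colon X\to D$ be the quotient map.

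Define $j(D)$ as the presheaf on $\typDef(\calT)$ given by
\[
j(D)(Y)=\{\text{$0$-definable maps } Y\to D\},
\]
where definable maps into $D$ are taken in $\bM^{eq}$. A morphism of type-interpretable sets induces a morphism of presheaves by composition, so $j$ is a functor.

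\textbf{Step 1: $j(D)$ is a sheaf.} Covers are definable surjections $Y'\to Y$. Let $f\colon Y'\to D$ be constant on fibers of $Y'\to Y$, that is, on $Y'\times_Y Y'$. Then the graph of the induced map $Y\to D$ is the image of a type-definable set under a definable map, hence type-definable. Its fibers are singletons, so by compactness it is given by a relatively definable function on $Y$. This is the step where compactness and saturation of $\bU$ are needed.

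\textbf{Step 2: $j(D)\simeq (X/E)^+$.} The map $\pi$ induces $h_X\to j(D)$, which is constant on $E$ and so factors through $X/E\to j(D)$. By Step 1 it then factors through $(X/E)^+\to j(D)$. We check this is an isomorphism of sheaves.

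\emph{Injectivity.} Suppose two maps $Y\to X$ have the same image in $D$. Then they land in $E$ pointwise, so the sections are equal already in $X/E$.

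\emph{Local surjectivity.} Given $f\colon Y\to D$, form the fiber product
\[
Y'=\{(y,x): \pi(x)=f(y)\}\subseteq Y\times X.
\]
This is $0$-type-definable, and $Y'\to Y$ is a definable surjection, hence a cover. The projection $Y'\to X$ lifts $f|_{Y'}$, so $f$ is locally in the image.

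\textbf{Step 3: full faithfulness.} Morphisms $j(D)\to j(D')$ correspond, via Step 2 and the universal property of sheafification, to maps $h_X\to j(D')$ constant on $E$. By Yoneda these are $0$-definable maps $X\to D'$ that are $E$-invariant, and such maps are exactly definable maps $D\to D'$ in $\bM^{eq}$. Essential surjectivity onto sheaf quotients $(X/E)^+$ is immediate: take $D=X/E$ in $\bM^{eq}$. The isomorphism in Step 2 is natural in the presentation $(X,E)$ by construction.

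The main obstacle is Step 1 together with the definability bookkeeping in Step 3. One must check that a map on $Y$ glued from a definable map on a cover is definable over $0$, not merely type-definable or invariant. The plan is to argue that its graph is type-definable over $0$ and then apply the standard compactness lemma: a type-definable function between type-definable sets is the restriction of a definable function. This lemma was presumably established with the definition of $\typDef(\calT)$.

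In the definable case (part 2), the same argument works verbatim: images and fiber products of definable sets are definable, and no compactness is needed.
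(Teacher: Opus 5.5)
Your proof is correct, and its skeleton matches the paper's. You use the same functor (Yoneda on $\typInt(\calT)$ restricted to $\typDef(\calT)$) and the same descent check that it lands in sheaves. You do that check directly on $\typDef(\calT)$ with an explicit compactness argument; the paper instead checks representables on $\typInt(\calT)$, asserts that the descended map is definable, and pulls back along the cover-preserving inclusion. You also use the same fiber-product argument showing that $h_X/h_E\to j(D)$ is an injective, locally surjective map of presheaves.

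The genuine difference is how you obtain full faithfulness. The paper proves it first and independently of the quotient identification. Faithfulness comes from surjectivity of a presentation $\pi\colon X\to D_1$. Fullness comes from descending $f(\pi)$ to some $\phi\colon D_1\to D_2$ and then verifying $f(t)=\phi\circ t$ after restricting along the surjection $X\times_{D_1}T\to T$. You instead read it off from Step 2 via the chain $\Hom(j(D),j(D'))\cong\Hom(h_X/h_E,j(D'))\cong\{s\in j(D')(X): s\circ p_1=s\circ p_2\}\cong\Hom(D,D')$, with $p_1,p_2\colon E\to X$ the projections. Your route is shorter and shows that full faithfulness is just the universal property of the sheaf quotient plus the categorical-quotient property of $\pi$. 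The paper's route never uses sheafification; it only uses that maps are determined after precomposing with a surjection.

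Two small points should be made explicit. First, composing your chain with $j$ sends $\phi$ to $\phi\circ\pi$ and then back to $\phi$, so your bijection really is the map induced by $j$. Second, essential surjectivity is not quite ``immediate''. Since $E$ is only an equivalence relation on the type-definable set $X$, you first need the compactness step (the paper's lemma at the end of the section on type-interpretable sets). That step extends $E$ to a definable equivalence relation on a definable superset of $X$, and only then does a quotient exist in $\bM^{eq}$.
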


After writing this section, we were informed by Johnson that the Grothendieck topology we define is a special case of the regular coverage of a regular category; see \cite{nlab:regular_coverage} or \cite{sketches_of_an_elephant}. We also discovered that, at least for the case of interpretable sets, \cref{equivalence of interpretable sets and presheaf quotients} follows quickly from preexisting work of Makkai (see \cite{mrg_categorical_logic}, \cite{Categorical_Meq}) which identifies the $\bM^{eq}$ construction with the pretopos completion of the category of definable sets, together with work of Shulman \cite[Theorem~9.2]{Shulman}, where a generalization of the notion of a pretopos completion is described and characterized as a subcategory of a category of sheaves. However, as far as we are aware, the result in the case of type-interpretable sets is new. Even in the case of interpretable sets the result has never been written down explicitly and is not well known in the logic community. In this work we provide a direct proof that requires less category theory than the aforementioned references.

\subsection{\texorpdfstring{$S$}{S}-Rational Morphisms}
Starting in \cref{chapter: S-rational morphisms}, we begin to develop the machinery that we will need to state our analogue of \cref{model theoretic translation chunk} over a general base scheme $S$. In order to formulate our results in the most general context, it is necessary to develop the theory of $S$-rational morphisms beyond what is normally presented (as in, for example, \cite{blr90} or \cite[IV~20]{EGA}).



Firstly, the composition $f\circ g$ of $S$-rational morphisms $f$ and $g$ is usually only considered either when the schemes are integral, or when $f$ is a morphism. In \cref{section: diffuse morphisms}, we introduce the class of \emph{diffuse} morphisms, and in \cref{composition of diffuse S-rational morphisms is well defined} we show that the composition of diffuse $S$-rational morphisms is well-defined.

Secondly, in order to run descent arguments with rational morphisms, we need a notion of faithful flatness that makes sense in the context of $S$-rational morphisms. We declare an $S$-rational morphism $\phi\colon X \dashrightarrow Y$ to be faithfully flat if there exist $S$-dense opens $U\subset X$, $V \subset Y$, and a representative $\phi_U$ of $\phi$ such that $\phi_{[U \to V]}$ is faithfully flat. In \cref{chapter: S-rational morphisms}, we use this to show that the class of faithfully flat $S$-rational morphisms reasonably well-behaved, and in particular is closed under taking products and compositions. Faithfully flat $S$-rational morphisms of locally Noetherian schemes are automatically diffuse.



As a key example, a rational morphism of integral varieties over fields is dominant if and only if it is faithfully flat. One drawback of this notion is that, outside of the case where $S$ is the spectrum of a field, we are not aware of good criteria for guaranteeing that a given $S$-rational morphism will be faithfully flat.

\subsection{Canonical Families of Rational Maps}
In \cref{chapter: S-rational families}, we define the algebro-geometric analogue of the ``family of germs of definable functions'' from \cref{model theoretic translation chunk}; this is an $S$-rational map $\phi\colon A\times X \dashrightarrow Y$. We call this an $S$-rational family from $X$ to $Y$ parametrized by $A$. Given another $S$-rational family $B\times Y \dashrightarrow Z$, one can in good circumstances compose the two families to get a third family $B\times A \times X \dashrightarrow Z$ over $B\times A$. Similarly, given a faithfully flat $S$-rational morphism $f\colon C \dashrightarrow A$, one can take the pullback $f^*(\phi)\colon C\times X \dashrightarrow Y$; this should be thought of as a ``reparametrization'' of $\phi$.

In the model theoretic setting, one proceeds by taking the ``canonical parameter space'' of a family of germs, and showing that the operation of composition of families induces definable maps on the canonical parameter spaces. The main results from \cref{chapter: S-rational families} show that this can be done geometrically. In the case of a $k$-rational families $\phi\colon A \times_k X \dashrightarrow Y$ where $A,X,Y$ are varieties over $k$ with $A$ geometrically integral, the construction runs as follows: first, the graph of $\phi$ induces a rational morphism $\xi$ from $A$ to the Hilbert scheme of $X \times_k Y$ over $k$ (\cref{xi existence result}). The schematic image of $\xi$, which we denote $Z(\phi)$, is the ``canonical parameter space'' for $\phi$, and it follows by a descent argument (\cref{Existence of canonical families}) that there is an associated canonical $k$-rational family $\widetilde{\phi}\colon Z(\phi)\times_k X \dashrightarrow Y$ which pulls back to $\phi$ along $\xi$. One then shows using the properties of the Hilbert scheme that this family is universal among families that pull back to $\phi$.

In the more general setting, we define a class of $S$-rational families which we call \emph{tame} (\cref{tame definition}), and which includes the case described above. We show that tame rational families can be canonically parametrized:
\begin{thm}[see \cref{canonical families for families on S-birationally projective schemes}]
    Let $\phi\colon AX \dashrightarrow Y$ be a tame $S$-rational family with $A$ Noetherian and $X,Y$ $S$-birationally projective. Then there exists a ``canonical'' rational family $\tphi\colon Z(\phi)X \dashrightarrow Y$, together with a faithfully flat $S$-rational morphism $\zeta_\phi\colon A \to Z(\phi)$, which satisfies the following universal property: for any $S$-rational family $\psi\colon BX \dashrightarrow Y$ with $B$ Noetherian, and faithfully flat $S$-rational morphism $f\colon A \dashrightarrow B$ such that $\phi = f^*(\psi)$, there exists a unique faithfully flat $S$-rational morphism $\eta\colon B \dashrightarrow Z(\phi)$ with $\psi = \eta^*(\tphi)$. The family $\tphi$ is unique up to birational equivalence.
\end{thm}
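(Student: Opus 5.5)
Following the sketch given for the field case, I would build $Z(\phi)$ from the Hilbert scheme and then get the universal property from descent. Tameness together with $X,Y$ being $S$-birationally projective lets us replace $X$ and $Y$ by $S$-dense opens of projective $S$-schemes $\overline{X},\overline{Y}$. Take the closure in $A\times_S\overline{X}\times_S\overline{Y}$ of the graph $\Gamma_\phi$. I expect tameness to guarantee, perhaps after shrinking $A$ to an $S$-dense open, that this closure is flat over $A$. If so, it defines a morphism $\xi\colon A\supset U\to \mathrm{Hilb}_{\overline{X}\times_S\overline{Y}/S}$, which is the analogue of \cref{xi existence result}. Since $A$ is Noetherian, I would let $Z(\phi)$ be the schematic image of $\xi$, with reduced structure if tameness requires it, and set $\zeta_\phi=\xi$ viewed as an $S$-rational morphism $A\dashrightarrow Z(\phi)$.

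\textbf{Faithful flatness of $\zeta_\phi$.} I would first show that $\zeta_\phi$ is faithfully flat in the sense of \cref{chapter: S-rational morphisms}. This should follow from generic flatness over the Noetherian scheme $Z(\phi)$ applied to the dominant morphism $\xi$. Here one must also check that the open where flatness holds is $S$-dense and not just dense, and I expect tameness to be designed to give exactly this.

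\textbf{The canonical family $\tphi$.} Over $Z(\phi)$, the restriction of the universal family gives a closed subscheme $\mathcal{W}\subset Z(\phi)\times_S\overline{X}\times_S\overline{Y}$. Its pullback along $\xi$ is the closure of $\Gamma_\phi$. The locus of $\mathcal{W}$ over which the projection to $Z(\phi)\times_S\overline{X}$ is an isomorphism is open. Its pullback along the faithfully flat $\xi$ contains the $S$-dense domain of the graph of $\phi$. Descending along $\xi$ then shows that this locus is $S$-dense, so $\mathcal{W}$ is generically the graph of an $S$-rational family $\tphi$ with $\xi^*(\tphi)=\phi$. This is the analogue of \cref{Existence of canonical families}.

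\textbf{Universal property.} Suppose $\phi=f^*(\psi)$ with $f\colon A\dashrightarrow B$ faithfully flat. Apply the construction to $\psi$ to get $\xi_\psi\colon B\dashrightarrow\mathrm{Hilb}$. Hilbert points are determined by the closure of the graph, and closures of graphs commute with flat pullback, so $\xi_\psi\circ f=\xi$ as $S$-rational morphisms. The composition is well defined because faithfully flat $S$-rational maps between locally Noetherian schemes are diffuse, by \cref{composition of diffuse S-rational morphisms is well defined}. Faithful flatness of $f$ then forces the schematic image of $\xi_\psi$ to be $Z(\phi)$, and we take $\eta=\xi_\psi$. For uniqueness, if $\eta^*(\tphi)=\psi$ then $\eta$ must classify the closure of the graph of $\psi$, so $\eta=\xi_\psi$ by the universal property of Hilbert schemes. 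Uniqueness of $\tphi$ up to birational equivalence is then the usual universal-object argument.

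\textbf{Main obstacle.} The hard step is the flatness and closure compatibility: showing that the closure of $\Gamma_\phi$ is flat over an $S$-dense open of $A$, and that forming this closure commutes with the flat base changes $f$ and $\xi$ on $S$-dense opens. I would first try to establish it fiberwise over $S$ using the fibral flatness criterion, with tameness supplying the needed hypotheses.
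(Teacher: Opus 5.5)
Your overall route is the paper's. You take $Z(\phi)$ as the schematic image of the Hilbert-scheme classifying map of the closed graph, obtain $\tphi$ by fpqc descent of the graph property along $\zeta_\phi\times\id_X$, and get the universal property from the compatibility of closed graphs with flat pullback (\cref{technical graph pullback result.}). One correction concerns tameness. In \cref{tame definition}, $S$-generic flatness of the closed graph and faithful flatness of $\zeta_{\phi}$ are exactly the two defining conditions, so there is nothing to prove there. Your generic-flatness argument could not do this job over a general base anyway: it needs a reduced target and yields only a dense open, which need not be $S$-dense. Likewise, you should not put a reduced structure on $Z(\phi)$, since for non-reduced $A$ the map $\xi$ need not factor through it.

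The genuine gap is in the universal property. There you name the candidate $\eta=\xi_\psi$ but never verify what the statement demands of it.
\begin{enumerate}
\item \emph{Existence of $\xi_\psi$.} This map exists only once $\Lambda_\psi$ is $S$-generically flat over $B$, and that is not given, because $\psi$ is not assumed tame. You must descend flatness along a faithfully flat representative $f_{U_1}|_{[U_1\to U_2]}$, using $f_{U_1}^*(\Lambda_\psi\cap U_2XY)=\Lambda_\phi\cap U_1XY$. This is \cref{Graphs being S-generically flat is preserved under ff pullback}.
\item \emph{Faithful flatness of $\eta$.} The statement requires it. It follows from $\eta\circ f=\zeta_\phi$, with $f$ faithfully flat and diffuse, by \cref{composition faithfully flat criterion for S-rational morphisms}.
\item \emph{The identity $\psi=\eta^*(\tphi)$.} You never show it. One route is
\[
f^*(\eta^*\tphi)=(\eta\circ f)^*\tphi=\zeta_\phi^*\tphi=\phi=f^*\psi,
\]
followed by cancelling $f\times\id_X$. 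That map is faithfully flat and diffuse, hence an epimorphism by \cref{faithfully flat morphisms are S-rational epimorphisms}. The paper instead gets $\widetilde{\psi}=\tphi$ from \cref{Canonical S-rational families and pullback.}.
\end{enumerate}

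Uniqueness of $\eta$ also has a gap. Your claim that ``$\eta$ must classify the closure of the graph of $\psi$'' requires $\Lambda_{\tphi}\cap Z'XY=\Omega\cap Z'XY$ for some $S$-dense open $Z'\subset Z(\phi)$, where $\Omega$ is the universal subscheme (your $\mathcal{W}$). Your construction only shows that $\Omega$ is a graph over an $S$-dense open of $Z(\phi)X$. A priori $\Omega$ could be strictly larger than the schematic closure of that graph. The paper proves the equality in part 4 of \cref{Existence of canonical families}. It pulls the closed immersion $\Lambda_{\tphi}\cap Z'XY\hookrightarrow\Omega\cap Z'XY$ back along the fpqc map $\zeta_\phi$, where both sides become $\Lambda_\phi$.

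Finally, uniqueness up to birational equivalence is not quite ``the usual argument''. Two diffuse $S$-rational maps that compose to the identities must still be shown to be $S$-birational, which is \cref{criteria for S-birationality}.
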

Moreover, in \cref{Canonical families and composition}, we show that composition of rational families induces rational morphisms on the canonical parameter spaces, and that the canonical parameter spaces are well-behaved under taking inverses.

\subsection{Constructing Group Chunks from Rational Families}\label{summary of constructing group chunks from rational families}
Finally, we give our geometric analogue of \cref{model theoretic translation chunk}. In the simpler and probably more important case of varieties over fields, the result reads as follows:

\begin{thm}[see \cref{the hypotheses for the main situation are not so bad over fields}]
    Let $A$ be a geometrically integral variety over a field $k$, and $X,Y$ be varieties over $k$.

    Let $\phi\colon AX \dashrightarrow Y$ be an invertible $k$-rational family over $A$ (i.e., assume the induced rational morphism $\overline{\phi}\colon A\times_k X \dashrightarrow A\times_k Y$ is birational) with inverse $\phi^\dagger$. Let $\psi = \phi^\dagger * \phi$. Suppose the rational morphisms
    \begin{align*}
        q_1\colon A^2 \dashrightarrow AZ(\psi); &  & (a_1,a_2) \in A^2(T) \mapsto (a_1, \zeta_\psi(a_1,a_2)) \\
        q_2\colon A^2 \dashrightarrow AZ(\psi); &  & (a_1,a_2) \in A^2(T) \mapsto (a_2, \zeta_\psi(a_1,a_2))
    \end{align*}
    are both dominant.

    Then composition of $\psi$ with itself induces an $S$-rational morphism $m_{12}\colon Z(\psi)^2 \dashrightarrow Z(\psi)$, and there exist $S$-dense opens $U\subset \dom(m_{12})$, $D \subset Z(\psi)$ and $W\subset D^3 \cap \Gamma_{m_{12U}}$ (where $\Gamma_{m_{12U}}$ denotes the graph of the representative of $m_{12}$ with domain $U$) such that the pair $(D,W)$ is an Artin group chunk (\cref{Artin Group Chunk definition}). Moreover, in this case the associated group algebraic space is a variety over $k$.
\end{thm}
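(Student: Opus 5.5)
This will be a reduction to the general result over an arbitrary base $S$ (the ``main situation'' of the last chapter), followed by a check that each of its hypotheses holds automatically, or reduces to the dominance of $q_1,q_2$, when $S=\Spec k$. The general theorem produces $m_{12}$ and the opens $U$, $D$, $W$ under four kinds of hypotheses: tameness of $\phi$ and $\psi$, so that the canonical families exist; Noetherianity and $S$-birational projectivity of the relevant schemes; faithful flatness of the composition and reparametrization maps; and faithful flatness of $q_1,q_2$. So the proof amounts to verifying these over a field and then identifying the resulting group algebraic space.

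First, tameness and the existence of canonical families. Over a field we are in the situation already described for $k$-rational families of varieties with $A$ geometrically integral. The graph of $\phi$ gives a rational map $\xi\colon A\dashrightarrow \mathrm{Hilb}$ by \cref{xi existence result}. Its schematic image $Z(\phi)$ is integral because $A$ is, and \cref{Existence of canonical families} supplies $\tphi$. Varieties are birationally quasi-projective, so after shrinking $X,Y$ to dense affine opens we obtain $S$-birational projectivity; this does not change rational families.

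Next, $\psi=\phi^\dagger*\phi$ is a family parametrized by $A^2$, which is geometrically integral since $A$ is. So $\psi$ is tame as well, and $Z(\psi)$ is an integral variety with $\zeta_\psi\colon A^2\dashrightarrow Z(\psi)$ dominant.

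For faithful flatness I use the key example: a rational morphism of integral varieties is faithfully flat iff it is dominant. This follows from generic flatness applied to a dominant map of integral varieties, together with shrinking the target to an open contained in the image. It converts the hypothesis that $q_1,q_2$ are dominant into the faithful flatness the general theorem requires. Faithful flatness of the maps to $A$, and of the maps between canonical parameter spaces built from them via \cref{Canonical families and composition}, is then automatic, because dominance is preserved under products of geometrically integral varieties and under composition. Local Noetherianity gives diffuseness, so all compositions are defined by \cref{composition of diffuse S-rational morphisms is well defined}.

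With the hypotheses verified, the general theorem yields $m_{12}$ and $(D,W)$ satisfying \cref{Artin Group Chunk definition}. For the final claim, the group algebraic space $G$ given by \cref{Artin Group chunk theorem} contains $D$ as a dense open and is covered by translates of $D$, so it is of finite type over $k$. A group algebraic space of finite type over a field is a scheme, by the standard result that group algebraic spaces locally of finite type over a field are schemes. It is separated because group spaces over a field are, so it is a variety, once one checks that the fibres are reduced as needed for the term ``variety''.

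I expect the main obstacle to be matching the general hypothesis on $q_i$ exactly. The general theorem likely asks for faithful flatness of $q_i$ onto $A\times Z(\psi)$ with specified dense opens. If so, I need $A\times_k Z(\psi)$ to be integral, which holds since $A$ is geometrically integral and $Z(\psi)$ is integral, so that dominance implies faithful flatness. I would prove this dominance-to-faithful-flatness lemma first and then quote it throughout.
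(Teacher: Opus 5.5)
Your strategy is the paper's: reduce to \cref{main result: most general case} and check its hypotheses over $\Spec k$ using geometric integrality, generic flatness, and ``dominant $\Rightarrow$ faithfully flat'' for integral varieties. The gap is that your guessed list of hypotheses for the general theorem leaves one out. Condition (ii) requires $\Lambda_{\widetilde{\psi}*\widetilde{\psi}}$, $\Lambda_{(\widetilde{\psi})^\dagger*\widetilde{\psi}}$ and $\Lambda_{\widetilde{\psi}*(\widetilde{\psi})^\dagger}$ to be $k$-generically flat over $Z(\psi)^2$, and you never verify it. Without it $m_{12}=\zeta_{\widetilde{\psi}*\widetilde{\psi}}$, $m_{13}$ and $m_{23}$ are not even defined. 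It is also hypothesis (5) of \cref{Canonical families and composition}, which you cite without checking it.

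The paper gets condition (ii) from \cref{Generic Flatness theorem}, and this needs $Z(\psi)^2$ to be reduced. Your assertion that $Z(\psi)$ is \emph{integral} does not give that: over an imperfect field the square of an integral variety can be non-reduced (take $\Spec k(t^{1/p})$ over $k=\mathbb{F}_p(t)$). What you need is that $Z(\psi)$ is \emph{geometrically} integral. \cref{tameness criterion over fields} provides this via \cref{schematic image of a geometrically integral variety}: $A^2$ is geometrically integral, and schematic images of quasicompact maps commute with the flat base change to $\bar k$. You should likewise say that $\Lambda_\psi$ is generically flat over the reduced $A^2$, since that is what makes $\xi_\psi$ exist.

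Some items on your list are not hypotheses at all. Tameness of $\phi$ is not required. Faithful flatness of the $m_{ij}$ is proved inside the general theorem from that of $q_1,q_2$, through $m_{12}\circ f_1=\zeta_\psi\circ\rho_{12}$. So your argument that it is ``automatic'' is harmless but unnecessary. Your treatment of $q_1,q_2$, via integrality of $A\times_k Z(\psi)$, is correct.

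For the final claim your route differs only slightly. You use Artin's theorem that group algebraic spaces locally of finite type over a field are schemes. The paper instead applies \cref{representability criterion for Artin group chunks} with $\dim S=0$, and cites Edixhoven--Romagny and Weil for the statement that $G$ is a variety.

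Your route is fine, with two repairs. Justify quasi-compactness by surjectivity of $D\times_k D\to G$, not by translates of $D$: when $k$ is not algebraically closed, the $G(k)$-translates need not cover $G$. The reducedness you leave open then follows because $D\times_k D\to G$ is faithfully flat and $D\times_k D$ is reduced. That again uses geometric integrality of $Z(\psi)$.
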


The assumption that the $q_i$ are both dominant corresponds in the model-theoretic setting to assuming $c \forkindep b_i$ for each $i$, or in the setting of \cref{Weil translation chunk} to assuming $g(z,u) = g(x,g(y,u))$ for generic $u$.

A few words about how the above construction relates to the results of Weil and Hrushovski. Starting with the same setup $\phi\colon A \times_k X \dashrightarrow Y$, one can, at least so long as $X$ and $Y$ are assumed irreducible, produce a group by appealing to \cref{model theoretic translation chunk} and using model-theoretic methods (see for example \cite{NesinPillay-Model-theory-of-groups}). However the relationship between the family $\phi$ and the group produced this way is less clear. In particular, the rational morphisms from $A$ to the group will be definable maps rather than explicit rational morphisms of schemes, and these maps will be somewhat arbitrary because of the choices involved in elimination of imaginaries and in obtaining isomorphisms between definable groups and algebraic ones in positive characteristic. Moreover, in positive characteristic, the group may only be defined over the perfect closure of $k$.

The construction outlined above can also be viewed as an extension of Weil's result where instead of starting with a family of automorphisms of an irreducible variety $W$ (as in \cref{Weil translation chunk}), one starts with a family of birational morphisms $W_1 \to W_2$. Weil uses a different method of obtaining the canonical parameter space, using fields of definition of varieties, which only works over fields. Our construction uses more modern technology, and it has the advantage that it makes it clear how the construction can run over more general base schemes, with some additional hypotheses.

The general statement is:
\begin{thm}[see \cref{main result: most general case}]
    Let $S$ be a Noetherian scheme, and let $A,X,Y$ be schemes over $S$, with $A$ fppf and with geometrically integral fibers over $S$ and such that $Y$ is locally Noetherian and $X$ is $S$-birationally projective.

    Let $\phi\colon AX \dashrightarrow Y$ be an invertible $S$-rational family over $A$, with inverse $\phi^\dagger$. Let $\psi = \phi^\dagger * \phi$. Suppose
    \begin{enumerate}[label = (\roman*).]
        \item The family $\psi$ is tame (\cref{tame definition})
        \item The closed graphs $\Lambda_{\widetilde{\psi} * \widetilde{\psi}}$, $\Lambda_{(\widetilde{\psi})^\dagger * \widetilde{\psi}}$ and $\Lambda_{\widetilde{\psi} * (\widetilde{\psi})^\dagger}$ are $S$-generically flat over $Z(\psi)^2$, for some choice of $S$-birational projective model of $X$ witnessing tameness of $\psi$.
        \item The $S$-rational morphisms
              \begin{align*}
                  q_1\colon A^2 \dashrightarrow AZ(\psi); &  & (a_1,a_2) \in A^2(T) \mapsto (a_1, \zeta_\psi(a_1,a_2)) \\
                  q_2\colon A^2 \dashrightarrow AZ(\psi); &  & (a_1,a_2) \in A^2(T) \mapsto (a_2, \zeta_\psi(a_1,a_2))
              \end{align*}
              are both faithfully flat.
    \end{enumerate}

    Then composition of $\psi$ with itself induces an $S$-rational morphism $m_{12}\colon Z(\psi)^2 \dashrightarrow Z(\psi)$, and there exist $S$-dense opens $U\subset \dom(m_{12})$, $D \subset Z(\psi)$ and $W\subset D^3 \cap \Gamma_{m_{12U}}$ (where $\Gamma_{m_{12U}}$ denotes the graph of the representative of $m_{12}$ with domain $U$), such that the pair $(D,W)$ is an Artin group chunk (\cref{Artin Group Chunk definition}).
\end{thm}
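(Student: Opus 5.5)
We follow the model-theoretic proof of \cref{model theoretic translation chunk}, replacing canonical bases by the canonical parameter spaces of \cref{canonical families for families on S-birationally projective schemes}, and independence by faithful flatness of the maps $q_i$.

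\textbf{Step 1: the multiplication.} Tameness of $\psi$ (hypothesis (i)) gives the canonical family $\widetilde{\psi}\colon Z(\psi)X \dashrightarrow X$ and the faithfully flat map $\zeta_\psi\colon A^2 \dashrightarrow Z(\psi)$ with $\psi = \zeta_\psi^*(\widetilde{\psi})$. Form the composite family $\widetilde{\psi} * \widetilde{\psi}$ over $Z(\psi)^2$. By hypothesis (ii), its closed graph is $S$-generically flat over $Z(\psi)^2$, so by \cref{xi existence result} it defines an $S$-rational morphism to the Hilbert scheme.

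We then need to show that this morphism factors through $Z(\psi)$; this is the key step. Pull back along $A^3 \dashrightarrow Z(\psi)^2$, $(a_1,a_2,a_3)\mapsto(\zeta_\psi(a_1,a_2),\zeta_\psi(a_2,a_3))$. The pulled-back family is
\[
(\phi_{a_1}^\dagger\phi_{a_2})(\phi_{a_2}^\dagger\phi_{a_3}) = \phi_{a_1}^\dagger\phi_{a_3},
\]
which is the pullback of $\widetilde{\psi}$ along $(a_1,a_3)$.

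For this we need the map $A^3 \dashrightarrow Z(\psi)^2$ to be faithfully flat. It factors as $A^3 \to A\,A^2 \to A\,Z(\psi) \to Z(\psi)^2$, using $q_2$ and then $q_1$ in the form $(a_2,z)\mapsto$ the corresponding element. We would write it as a composite of products of the $q_i$ with identities and isomorphisms. Then the products-and-compositions results for faithfully flat $S$-rational morphisms in \cref{chapter: S-rational morphisms} apply.

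Faithfully flat descent (as in \cref{Existence of canonical families}), together with the universal property of $Z(\psi)$, then yields $m_{12}\colon Z(\psi)^2 \dashrightarrow Z(\psi)$ with $\widetilde{\psi} * \widetilde{\psi} = m_{12}^*\widetilde{\psi}$. The same argument with $\Lambda_{(\widetilde{\psi})^\dagger * \widetilde{\psi}}$ and $\Lambda_{\widetilde{\psi}*(\widetilde{\psi})^\dagger}$ gives the "division" maps $m_{13}$ and $m_{23}$ solving $z_1 z_2 = z_3$ for $z_2$ and for $z_1$. This uses the compatibility of canonical parameter spaces with inverses from \cref{Canonical families and composition}.

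\textbf{Step 2: the group chunk axioms.} Associativity follows from associativity of composition of families, by uniqueness in the universal property. Cancellativity comes from $m_{13}$ and $m_{23}$ being inverse to $m_{12}$ in the appropriate variables. These identities hold on graphs over $S$-dense opens, since a family is determined by its birational class.

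We would then take $W$ to be the intersection of the graphs of $m_{12}$, $m_{13}$ and $m_{23}$ inside $D^3$, after shrinking. The projections $\rho_{ij}$ are then open immersions with image $U_{ij}$.

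\textbf{Step 3: density (the main obstacle).} We must choose $D$ and shrink $W$ so that each $U_{ij}$ is both $X_i$-dense and $X_j$-dense over $S$, as required by \cref{Artin Group Chunk definition}. The plan is:
\begin{itemize}
\item Obtain fibrewise density of $\dom(m_{12})$ over each factor from faithful flatness of $q_1$ and $q_2$. Images of $S$-dense opens under faithfully flat $S$-rational maps are $S$-dense, and the fibres of $A$ are geometrically integral, so the fibres of $Z(\psi)$ are irreducible.
\item Intersect the finitely many resulting dense opens to get $D$.
\item Shrink $W$ to the locus where all three representatives agree, using diffuseness to control the shrinking.
\end{itemize}
The hard part is keeping density fibre by fibre over $S$ rather than merely generically. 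We would try to reduce to the generic points of the fibres using fppf-ness of $A$ and Noetherianity of $S$.
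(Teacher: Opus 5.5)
Your Step 1 and the associativity half of Step 2 follow the paper's route. You pull $\widetilde{\psi}*\widetilde{\psi}$ back along a faithfully flat map $A^3\dashrightarrow Z(\psi)^2$, identify the result with a pullback of $\psi$, deduce $Z(\widetilde{\psi}*\widetilde{\psi})=Z(\psi)$, and check identities after precomposing with the faithfully flat $\zeta_\psi\times\zeta_\psi\times\zeta_\psi$. One correction: your factorization through $A\,Z(\psi)$ loses $a_2$. The intermediate space must be $A^2Z(\psi)$, giving $(\zeta_\psi\times\id_{Z(\psi)})\circ(\id_A\times q_1)$, which uses $\zeta_\psi$ and only one of the $q_i$.

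\textbf{First gap: cancellativity.} Saying $m_{13}$ and $m_{23}$ are ``inverse to $m_{12}$ in the appropriate variables'' has no content until you know that $(\rho_1,m_{12})$, $(\rho_2,m_{12})$, $(\rho_1,m_{13})$ and $(m_{23},\rho_1)$ are $S$-diffuse. Without this, composites such as $m_{13}\circ(\rho_1,m_{12})$ are not defined. Even granting the identities, you cannot apply \cref{criteria for S-birationality} to conclude that $(\rho_1,m_{12})_U$ and $(\rho_2,m_{12})_U$ are isomorphisms onto $S$-dense opens. That is exactly what makes $\rho_{13}$ and $\rho_{23}$ open immersions with $S$-dense image. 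The paper supplies this with a second use of hypothesis (iii). With $f_1$ as in \cref{existence of the fi lemma}, the map $(\rho_1,m_{12})\circ f_1$ is $(a_1,a_2,a_3)\mapsto(\zeta_\psi(a_1,a_3),\zeta_\psi(a_1,a_2))$. This factors through $q_1$ and is therefore faithfully flat; likewise $(\rho_2,m_{12})\circ f_1$ factors through $q_2$. By \cref{composition faithfully flat criterion for S-rational morphisms} the translation maps are then faithfully flat, hence diffuse. The inverse identities can then be checked after precomposing with $f_1$, and \cref{criteria for S-birationality} applies (\cref{mij inverses statement}). In Step 1 one $q_i$ suffices; here you need both.

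\textbf{Second gap: Step 3.} You leave this open, and it also omits two ingredients. First, Artin's definition (\cref{Artin Group Chunk definition}) requires $D$ to be faithfully flat and locally of finite presentation over $S$, and nothing makes $Z(\psi)$ itself flat over $S$. The paper shrinks to the image $B$ of an $S$-dense open of $A^2$ under a faithfully flat representative of $\zeta_\psi$. This $B$ is flat over $S$ by \cref{if f is faithfully flat and g circ f is flat then g is flat} and surjects onto $S$. Second, irreducible fibres are not enough; you need geometrically integral fibres. The paper gets these from $S$-dominance of $\zeta_\psi$: $Z(\psi)_s$ is the schematic image of the fibre over $s$ of an $S$-dense open of $A^2$, hence geometrically integral by \cref{schematic image of a geometrically integral variety}.

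With these in place, passing from $S$-density of the $U_{ij}$ to $X_i$- and $X_j$-density is exactly Artin's result \cref{geometrically integral fibers gives X dense open}, which the paper simply cites. Your fibre-by-fibre worry disappears at this point. The map $X_iX_j\to X_i$ is flat of finite presentation with integral fibres, so by the fibrewise criterion (EGA IV, 11.10.10) $X_i$-density of $U_{ij}$ is equivalent to surjectivity of $U_{ij}\to X_i$. Shrink $X$ to the intersection of the open, $S$-dense images of all the projections $U_{ij}\to X_i, X_j$. Each new fibre is then a finite intersection of nonempty opens of the irreducible scheme $X_{\kappa(x)}$, hence nonempty.
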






%% file: Chapters/Introduction/Future_questions.tex
\section{Future Directions}
There are several.

Regarding applications of the abstract group chunk result:
\begin{enumerate}
    \item In \cite{ArtinGroupChunk}, Artin imposes the technical assumption that fibers over the base be separated and have no embedded components. We suspect that, starting from our formulation of a group chunk on schemes in \cref{Group Chunks on Schemes}, one can apply Artin's methods to obtain a reasonable representability result without this assumption.
    \item There are many instances of group chunk type theorems we have not studied, particularly in model theory. To name one example, in \cite{PillayAnand1988Ogaf}, Pillay uses group chunks in an o-minimal context to study definable groups. There is also an o-minimal group configuration. It would be interesting to see to what extent our abstract group chunk result can be applied in these cases, and if it can shed light on new cases.
\end{enumerate}

Regarding the identification of type-interpretable sets with sheaf quotients (\cref{equivalence of interpretable sets and presheaf quotients}):
\begin{enumerate}
    \item One should be able to use sheaf quotients to describe other kinds of model-theoretic quotients; for example quotients of type-definable sets by type-definable equivalence relations (hyperimaginaries) or quotients of pro-definable and $\infty$-definable sets.
\end{enumerate}

Regarding technical improvements to \cref{main result: most general case}:
\begin{enumerate}
    \item In \cref{Weil translation chunk}, Weil also produces a homogeneous space on which the group acts. One also ultimately gets a homogeneous space from \cref{model theoretic translation chunk}. It should not be too difficult to extend our main construction \cref{main result: most general case} to obtain a homogeneous space also.
    \item It should be possible to relax the assumption that the fibers of $Z(\phi)$ be geometrically integral in \cref{main result: most general case} by adapting Artin's argument for \cref{geometrically integral fibers gives X dense open} to allow, for example, finite unions of varieties with geometrically integral fibers.
    \item It would also be potentially interesting to formulate a purely categorical, presheaf-theoretic version of \cref{main result: most general case} which also generalizes the model-theoretic construction, as we have done for the group chunk. However the complexity of the scheme-theoretic argument suggests that this may be more challenging.
\end{enumerate}

Working towards the group configuration and applications:
\begin{enumerate}
    \item We aim to extend \cref{main result: most general case} to recover a geometric account of the full group configuration theorem, at least in the case of varieties over fields, where things work out more neatly. We believe it should be possible to replace much of the forking calculus in the model-theoretic proof with geometric arguments which will hopefully be more conceptual.
    \item With a geometric group configuration in hand, we would hope to return to the proof of Zilber's theorem \cref{Zilber's jacobian theorem}. We conjecture that the arguments of Hasson and Sustretov (\cite{HassonAssaf2024IsoC}) should simplify in this particular case, and one should be able to apply them to obtain a purely geometric proof. It should then be possible to analyze to what extent the argument will generalize beyond algebraically closed fields.
\end{enumerate}

%% file: Chapters/Introduction/Notation.tex
\section{Notation}\label{prerequisites notation section}

In this section we list our notational conventions for future reference. Some of our conventions are not entirely standard (we highlight in particular \cref{products and Yoneda combine}) so the reader is advised not to skip this section.

\begin{notation}[Tuples of elements in sets]
    If $X$ is a set, we often write $a_1,\dots,a_n \in X$ to mean that  $a_1,\dots,a_n$ is a collection of elements of $X$ where this will not lead to ambiguity.
\end{notation}

\begin{notation}[Size issues]
    \begin{enumerate}
        \item     We allow our categories $\scrC$ to be large (i.e., not small) simply because many of our definitions make sense at this level of generality. However in practice all our results will only apply if $\scrC$ is small.

        \item     All sites are assumed to be small.  This will ensure that sheafification always exists.
    \end{enumerate}
\end{notation}

\begin{notation}[Restrictions of Morphisms]\label{restrictions of morphisms} Let $\scrC$ be a category.
    \begin{enumerate}
        \item Let $\phi\colon X \to Y$ and $\psi \colon U \to X$ be morphisms in $\scrC$. Where $\psi$ is clear from context we will sometimes denote the composition $\phi\circ \psi$ by $\phi|_U$, or ``the restriction of $\phi$ to $U$''.

        \item Let $\phi,\psi$ be as above and suppose we have a monomorphism $\iota\colon V\hookrightarrow Y$ such that $\phi|_U$ factors through $\iota$. We denote by $\phi|_{[U \to V]}$ the induced morphism from $U$ to $V$.

        \item We sometimes omit the subscripts $[U \to V]$ where the meaning should be clear from context; for example, given $\phi,\psi,\iota$ as above we permit ourselves commutative diagrams labelled as follows:
              \[
                  \begin{tikzcd}
                      U \arrow[r, "\phi"] & V
                  \end{tikzcd}
              \]
    \end{enumerate}
\end{notation}

\begin{notation}[Presheaves]\label{presheaf notation} Let $\scrC$ be a category.
    \begin{enumerate}
        \item      We denote by $\Presh(\scrC)$ the category of presheaves on $\scrC$.

        \item     If $\scrC$ is a site, we denote by $\Sep(\scrC)$ and $\Sh(\scrC)$ the categories of separated presheaves and sheaves on $\scrC$ respectively.

        \item     Given a presheaf $F\in \Presh(\scrC)$, a morphism $T' \to T$ in $\scrC$ and a section $a \in F(T)$, and  we will often denote the restriction of $a$ to $F(T')$ again by $a$. Where we wish to make this explicit, we will write $a|_{T'}$.

        \item   If $\phi\colon X \to Y$ is a morphism of presheaves on $\scrC$, we will write $\phi_T$ for the corresponding morphisms $X(T) \to Y(T)$ for each $T$.

        \item     If $\iota\colon G \hookrightarrow F$ is a monomorphism, then given $t \in F(T)$, we abusively write $t \in G(T)$ to mean $t = \iota(t')$ for some (necessarily unique, since $\iota$ is monic) $t' \in G(T)$.
    \end{enumerate}
\end{notation}

\begin{notation}[Yoneda Embedding]\label{Yoneda Notation} Let $\scrC$ be a category.
    \begin{enumerate}
        \item   Given an object $X \in \scrC$, we denote by $h_X$ the image of $X$ under the Yoneda embedding; i.e. $h_X$ is the presheaf $T \mapsto \Hom(T,X)$ and acting on morphisms $T' \to T$ by precomposition.

        \item   We frequently identify an object $X \in \scrC$ with its image $h_X$ under the Yoneda embedding. In practice this means that we write $t \in X(T)$ to mean that $t\colon T \to X$ is a morphism. We refer to $t$ as a \emph{$T$-point of $X$}.

        \item     We will also identify morphisms in $\scrC$ with their images under the Yoneda embedding. In particular, for $t \in X(T)$ as above, we write $\phi(t)$ to denote the morphism $\phi \circ t$, which is a $T$-point of $Y$. This is less standard, but it will be particularly useful in \cref{chapter: S-rational families}.
    \end{enumerate}
\end{notation}

\begin{notation}[Products]\label{products and Yoneda combine}

    Let $\scrC$ be a category with products.
    \begin{enumerate}
        \item  We will sometimes (especially in later chapters) use concatenation to denote products. In particular, if $X$ and $Y$ are objects in a category $\scrC$ (this will usually be the category of schemes over a base $S$) we denote by $XY$ (or sometimes $X\cdot Y$) the product of $X$ and $Y$ in $\scrC$.

        \item Given a product $X_1\cdots X_n$, we often write $\rho_{X_{i_1},\dots,X_{i_m}}$, or $\rho_{i_1,\dots,i_n}$ where the $X_i$ are clear from context, to denote the projection $X_1\cdots X_n\to X_{i_1}\cdots X_{i_m}$.

        \item If $t_1 \in X_1(T)$ and $t_2 \in X_2(T)$ are morphisms in $\scrC$, we denote by ${(t_1,t_2)}$ the induced morphism from $T$ to $X_1\times X_2$.

        \item If $\phi_1\colon X_1 \to Y_1$ and $\phi_2\colon X_2 \to Y_2$ are morphisms, we denote by $\phi_1\times \phi_2$ the induced morphism from $X_1\times X_2$ to $Y_1 \times Y_2$.

        \item  The above plays rather nicely with \cref{Yoneda Notation}. For example, in the above situation, we have $(\phi_1\times \phi_2)\circ (t_1,t_2) = (\phi_1(t_1),\phi_2(t_2))$. If $x\colon XY \to X$ and $y\colon XY \to Y$ denote the co-ordinate projections, then $(x,y) = \id_{XY}$ and we have
              \begin{align*}
                  \phi_1\times \phi_2 = (\phi_1\times \phi_2)(x,y) = (\phi_1(x),\phi_2(y)).
              \end{align*}
              We will use this frequently in later sections.

    \end{enumerate}
\end{notation}

\begin{notation}[Slice Categories]\label{slice categories notation}
    Let $\scrC$ be a category and $S$ an object of $\scrC$. We denote by $\scrC_S$ the \emph{slice category of $\scrC$ over $S$} whose objects are morphisms $\pi\colon T \to S$ in $\scrC$, and where a morphism $(\pi\colon T \to S) \to (\pi'\colon T' \to S)$ is a morphism $\phi\colon T \to T'$ in $\scrC$ which fits into a commutative triangle:
    \[
        \begin{tikzcd}
            T \arrow[rr,"\phi"] \arrow[rd, "\pi"'] &   & T' \arrow[ld, "\pi'"] \\
            & S &
        \end{tikzcd}
    \]
    We will usually identify objects $\pi\colon T \to S$ in $\scrC_S$ with the corresponding object $T \in \scrC$ in situations where the morphism $\pi$ (sometimes called the \emph{structure morphism}) is clear from context. In these situations, morphisms in $\scrC_S$ will be called \emph{morphisms over $S$}.
\end{notation}

%% file: Chapters/Category-Theoretic_Prerequisites/Main_Prerequisites.tex
\chapter{Category-Theoretic Prerequisites}
We fix notational conventions and collect some standard results from category theory that we will need for the remainder of this work.

This chapter is probably best skipped on first reading and referred back to as needed.

\input{Chapters/Category-Theoretic_Prerequisites/Generalities.tex}

\input{Chapters/Category-Theoretic_Prerequisites/Subobjects.tex}

\input{Chapters/Category-Theoretic_Prerequisites/Products.tex}
\input{Chapters/Category-Theoretic_Prerequisites/Equivalence_Relations.tex}

\input{Chapters/Category-Theoretic_Prerequisites/Adjoint_functors.tex}
\input{Chapters/Category-Theoretic_Prerequisites/Slice_Categories.tex}

\input{Chapters/Category-Theoretic_Prerequisites/Graphs.tex}

\input{Chapters/Category-Theoretic_Prerequisites/Presheaves.tex}

\input{Chapters/Category-Theoretic_Prerequisites/Presheaves_on_Slice_Categories.tex}
\input{Chapters/Category-Theoretic_Prerequisites/Sites_and_Sheaves.tex}

%% file: Chapters/Category-Theoretic_Prerequisites/Generalities.tex
\section{Generalities}
We recall some basic definitions  from category theory.

\begin{dfn} \phantom{.} \label{definitions from category theory}
    \begin{enumerate}
        \item A category is said to be \emph{locally small} if the collection of morphisms between any two objects is a set.

        \item A category is \emph{small} if it is locally small and the collection of objects is a set.

        \item A category $\scrC$ is said to be \emph{complete} if it admits small limits (i.e., limits over set-sized diagrams), and \emph{cocomplete} if it admits small colimits.

        \item A functor is \emph{continuous} if it preserves all small limits, and cocontinuous if it preserves small colimits. Note that this is unrelated to the notion of continuous used in \cite[\href{https://stacks.math.columbia.edu/tag/00WU}{Tag 00WU}]{stacks-project}.

        \item A category is \emph{Cartesian closed} if it has finite products (including a terminal object) and if for any $T \in \scrC$ the functor $-\times T$ has a right adjoint; this right-adjoint is called the exponential functor and often denoted $(-)^T$.

        \item A functor is \emph{exact} if it preserves colimits and finite limits.
    \end{enumerate}
\end{dfn}

\begin{lem}[{\cite[Proposition~4.2]{nlab:yoneda_embedding}}]\label{Yoneda is exact}
    The Yoneda embedding is continuous. In particular, given any $X\in \scrC$, we have $h_X \times h_X = h_{X\times X}$, so \cref{Yoneda Notation} introduces no ambiguity.
\end{lem}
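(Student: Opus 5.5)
The plan is to reduce everything to the pointwise description of limits in presheaf categories and then apply the Yoneda lemma. Let $D\colon I \to \scrC$ be a small diagram whose limit $L = \lim_I D$ exists in $\scrC$, with cone maps $\pi_i\colon L \to D(i)$. I will show that the induced cone $h_{\pi_i}\colon h_L \to h_{D(i)}$ exhibits $h_L$ as the limit of $h\circ D$ in $\Presh(\scrC)$.

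The first step is to recall that $\Presh(\scrC)$ is complete and that limits there are computed objectwise. Concretely, for each $T \in \scrC$ we have $(\lim_I h_{D(i)})(T) = \lim_I \Hom(T, D(i))$, computed in $\mathrm{Set}$. The restriction maps are induced by precomposition, and the projection maps are given coordinatewise. So it suffices to check, for every $T$, that the canonical map
\begin{align*}
    \Hom(T, L) \to \lim_I \Hom(T, D(i)), \qquad t \mapsto (\pi_i \circ t)_{i}
\end{align*}
is a bijection. I also need it to be natural in $T$; naturality is immediate, because both sides act on morphisms $T'\to T$ by precomposition, and precomposition commutes with postcomposing by $\pi_i$.

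The second step is the bijectivity itself, and it is exactly the universal property of $L$. An element of $\lim_I \Hom(T,D(i))$ is a compatible family $(t_i\colon T \to D(i))_i$, meaning $D(\alpha)\circ t_i = t_j$ for every $\alpha\colon i\to j$; in other words, it is a cone over $D$ with vertex $T$. The universal property says such a cone factors through a unique $t\colon T \to L$ with $\pi_i\circ t = t_i$. This gives surjectivity and injectivity simultaneously. In the language of \cref{Yoneda Notation}, this says that a $T$-point of $L$ is the same thing as a compatible family of $T$-points of the $D(i)$.

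Finally, I specialize to binary products. Take $I$ to be discrete with two objects, both sent to $X$. Then $h_{X\times X}(T) = \Hom(T,X\times X) \cong \Hom(T,X)\times\Hom(T,X) = (h_X\times h_X)(T)$ naturally in $T$, via $t \mapsto (\rho_1 t, \rho_2 t)$, with inverse $(t_1,t_2)\mapsto (t_1,t_2)$ in the notation of \cref{products and Yoneda combine}. Because of this, writing a $T$-point of $X\times X$ as a pair of $T$-points of $X$ is unambiguous, which justifies the identification made in \cref{Yoneda Notation}.

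I do not expect a genuine obstacle here. The only point needing care is the claim that presheaf limits are objectwise, which holds for any small $I$ since $\mathrm{Set}$ is complete; set-theoretic size issues do not arise because $\scrC$ is assumed locally small, so that each $h_X$ is a genuine presheaf of sets.
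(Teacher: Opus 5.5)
Your proof is correct and is the standard argument. The paper gives no proof of its own; it only cites the nLab proposition, whose proof is the one you give: limits in $\Presh(\scrC)$ are computed objectwise, and each $\Hom(T,-)$ sends a limit cone in $\scrC$ to a limit cone of sets by the universal property of the limit, with $h_X\times h_X = h_{X\times X}$ as the special case of binary products.
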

\begin{lem}\label{a functor which preserves finite limits preserves monomorphisms}
    A functor which preserves finite limits preserves monomorphisms.
\end{lem}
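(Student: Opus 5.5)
The plan is to characterize monomorphisms by a finite limit, so that preservation of finite limits immediately gives preservation of monomorphisms. The standard characterization is the following: a morphism $f\colon X \to Y$ in a category $\scrC$ is a monomorphism if and only if the square
\[
    \begin{tikzcd}
        X \arrow[r, "\id_X"] \arrow[d, "\id_X"'] & X \arrow[d, "f"] \\
        X \arrow[r, "f"'] & Y
    \end{tikzcd}
\]
is a pullback square, i.e.\ the kernel pair of $f$ is $(X,\id_X,\id_X)$.

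First I will verify this characterization directly from the definitions. Suppose $f$ is monic and $a,b\colon T \to X$ satisfy $fa = fb$. Then $a=b$, so $a$ itself is the unique morphism $T\to X$ whose compositions with both identities are $a$ and $b$. This shows the square is a pullback. Conversely, if the square is a pullback and $fa=fb$, the universal property gives some $u\colon T \to X$ with $\id_X u = a$ and $\id_X u = b$. Hence $a = u = b$, and $f$ is monic.

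Next, let $F\colon \scrC \to \scrD$ preserve finite limits, and let $f$ be a monomorphism. The square above is a pullback, which is a finite limit, so $F$ sends it to a pullback square in $\scrD$. Since $F(\id_X) = \id_{F(X)}$, the image square is exactly the corresponding square for $F(f)$. By the characterization applied in $\scrD$, $F(f)$ is a monomorphism.

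There is essentially no obstacle in this argument. The only point needing care is the interpretation of ``preserves finite limits'': it should mean that $F$ sends any limit cone over a finite diagram to a limit cone, so that the specific cone with both legs equal to $\id_X$ is carried to a limit cone. If instead the hypothesis only says that the limit of $F\circ D$ is isomorphic to $F(\lim D)$, I will note that the canonical comparison map is what is assumed to be an isomorphism, and this again yields that the image of the chosen cone is a pullback.
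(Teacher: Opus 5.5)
Your proposal is correct and takes the same approach as the paper, which also proves the lemma by noting that monicity is a finite limit condition: $f\colon X \to Y$ is monic if and only if its kernel pair is trivial. Your version is slightly more precise, since you require the specific square with both legs $\mathrm{id}_X$ to be a pullback, whereas the paper's phrasing $X \simeq X \times_Y X$ leaves implicit that the isomorphism must be the diagonal.
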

\begin{proof}
    Immediate, since the property of being a monomorphism can be expressed using a limit diagram - indeed $X \to Y$ is a monomorphism if and only if $X \simeq X \times_Y X$.
\end{proof}

%% file: Chapters/Category-Theoretic_Prerequisites/Subobjects.tex
\section{Subobjects}\label{section: subobjects}
\begin{dfn}
    \begin{enumerate}
        \item   Let $\scrC$ be a category and $X\in \scrC$. We define the category $\Sub'(X)$ to be the full subcategory of the slice category $\scrC_X$ consisting of monomorphisms. We define the category $\Sub(X)$ to be the category whose objects are isomorphism classes in $\Sub'(X)$ and whose morphisms are those induced by morphisms in $\Sub'(X)$. A subobject of $X$ will be an object of $\Sub(X)$.

        \item We will usually refer to an object in $\Sub(X)$ by one of its representatives in $\Sub'(X)$.

        \item If $Y,Y'$ are two subobjects of $X$, we denote by $Y \cap Y'$ and $Y\cup Y'$ the product and coproduct, respectively, of $Y$ and $Y'$ in the category $\Sub(X)$, where they exist. We refer to $Y \cap Y'$ as the \emph{intersection} of $Y$ and $Y'$, and $Y\cup Y'$ as the \emph{union} of $Y$ and $Y'$.
    \end{enumerate}
\end{dfn}

\begin{rem}[$\Sub(X)$ vs $\Sub'(X)$]
    We will need to work with $\Sub(X)$ instead of $\Sub'(X)$ to get associativity of composition of partial morphisms.

    In general, $\Sub(X)$ may be small even if $\Sub'(X)$ is not. This leads to the notion of a \emph{well-powered} category. Well-poweredness will not be important for us since we will usually assume our category is small.

    In any case, since the categories $\Sub'(X)$ and $\Sub(X)$ are equivalent, one should not worry overmuch about the distinction.
\end{rem}

\begin{lem}\label{Intersections and unions of subobjects exist in topoi}
    Let $\scrC$ be a category, $X \in \scrC$, and $Y,Y' \in \Sub(X)$.
    \begin{enumerate}
        \item  If $\scrC$ has fiber products, then the intersection $Y\cap Y'$ exists and is given by the product $Y\times_X Y'$.
        \item  If $\scrC$ is a topos (for example, $\scrC = \textit{Set}$ or $\scrC = \Presh(\scrD)$) then the union $Y \cup Y'$ exists and is given by the pushout $Y \sqcup_{(Y \cap Y')} Y'$. In particular the latter is a subobject of $X$.
    \end{enumerate}
\end{lem}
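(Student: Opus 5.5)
For part (1), the plan is to use the pullback description of intersections in $\Sub(X)$. Suppose $\scrC$ has fiber products and $\iota\colon Y\hookrightarrow X$, $\iota'\colon Y'\hookrightarrow X$ are monomorphisms. I first show that $P = Y\times_X Y'$ is a subobject of $X$. The map $P \to X$ is the composite $P \to Y \to X$. The projection $P \to Y$ is a base change of the monomorphism $\iota'$, and base changes of monomorphisms are monomorphisms: given $a,b\colon T \to P$ with equal compositions to $Y$, their compositions to $Y'$ also agree, because $\iota'$ is monic and both composites to $X$ agree. Hence $a=b$ by the universal property. Since composites of monomorphisms are monic, $P \to X$ is a monomorphism.

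Next I check that $P$ is the product of $Y$ and $Y'$ in $\Sub(X)$. Because a monomorphism into $X$ admits at most one map over $X$ from any object, $\Sub(X)$ is a preorder, and the product is just a greatest lower bound. The projections show $P \le Y$ and $P\le Y'$. If $Z \hookrightarrow X$ factors through both $Y$ and $Y'$, the two factorizations agree over $X$, so the universal property of the fiber product gives $Z \to P$ over $X$. Therefore $P = Y\cap Y'$.

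For part (2), I work in a topos, where finite limits and colimits exist, and form the pushout $Q = Y\sqcup_{Y\cap Y'}Y'$ together with the induced map $u\colon Q\to X$. The main step, and the only non-formal one, is showing that $u$ is a monomorphism. For $\textit{Set}$ this is a direct elementwise check. The pushout of two injections along their common intersection is identified with the set-theoretic union, because two elements are identified in $Q$ exactly when they come from a common element of $Y\cap Y'$. For $\Presh(\scrD)$, both limits and colimits are computed objectwise, so the claim reduces to $\textit{Set}$ at each object. For a general topos I would cite the standard fact that $\Sub(X)$ is a lattice, with the join given by the image of $Y\sqcup Y'\to X$. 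Since $Y\sqcup Y' \to X$ factors through the epimorphism $Y\sqcup Y'\to Q$, that image is also the image of $Q$. By effectiveness of the relevant pushout in a topos (pushouts of monomorphisms are pullback squares), $Q\to X$ is monic and hence equals that image.

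Once $u$ is known to be monic, $Q$ is an upper bound for $Y$ and $Y'$ in $\Sub(X)$. Any other upper bound $Z$ receives compatible maps from $Y$ and $Y'$ that agree on $Y\cap Y'$, so it receives a map from $Q$, and this map lies over $X$ because $X$-maps are unique. So $Q = Y\cup Y'$. I expect the monicity of $u$ in a general topos to be the main obstacle; in practice I would restrict attention to $\textit{Set}$ and presheaf categories, where it is elementwise, and defer to standard topos theory for the general case.
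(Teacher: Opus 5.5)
Your argument for (1), and for (2) in $\textit{Set}$ and $\Presh(\scrD)$ (the only cases the paper later uses), is correct. The paper gives no argument and only refers to the nLab page on subobjects, so your objectwise reduction is more self-contained than what the paper offers. One small fix in your last step: $Q\to Z\to X$ equals $u$ because the pushout legs $Y\to Q$, $Y'\to Q$ are jointly epimorphic (the uniqueness clause of the pushout). It is not because maps over $X$ are unique.

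The gap is the general topos case. Pushouts of monomorphisms being pullback squares only tells you that $Y\to Q$ and $Y'\to Q$ are monic with $Y\times_Q Y'\cong Y\cap Y'$. That is a statement about maps into $Q$, and it does not show that $u\colon Q\to X$ is monic. What you actually need is universality of coproducts, pullback-stability of epimorphisms, and balancedness.

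Write $I=Y\cap Y'$. Pulling back the epimorphism $Y\sqcup Y'\to Q$ twice gives an epimorphism
\[ Y\sqcup I\sqcup I\sqcup Y' \cong (Y\sqcup Y')\times_X(Y\sqcup Y') \longrightarrow Q\times_X Q. \]
This uses $Y\times_X Y\cong Y$ and $Y'\times_X Y'\cong Y'$ (monos) and $Y\times_X Y'\cong I$. Each of the four components factors through the diagonal $\Delta_u\colon Q\to Q\times_X Q$; for the two copies of $I$, this is because the pushout square commutes. So $\Delta_u$ is a monomorphism that is also an epimorphism. Since a topos is balanced, $\Delta_u$ is an isomorphism, and hence $u$ is monic.

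Equivalently, epimorphisms in a topos are effective. So the image of $Y\sqcup Y'\to X$ is the coequalizer of its kernel pair $Y\sqcup I\sqcup I\sqcup Y'\rightrightarrows Y\sqcup Y'$, and that coequalizer is by construction the pushout $Q$. With either of these in place of your parenthetical, the proof is complete for an arbitrary topos.
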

See \cite{nlab:subobject} for the proof.

%% file: Chapters/Category-Theoretic_Prerequisites/Products.tex
\section{Products}

We will make frequent use of the following standard results:
\begin{lem}[Cartesian rectangles] \label{Cartesian rectangles}
    Let
    \[
        \begin{tikzcd}
            A \arrow[r] \arrow[d] & B \arrow[r] \arrow[d] & C \arrow[d] \\
            X \arrow[r]           & Y \arrow[r]           & Z
        \end{tikzcd}
    \]
    be a commutative rectangle in $C$, and suppose the right-hand square is Cartesian. Then the left-hand square is Cartesian if and only if the outer rectangle is Cartesian.
\end{lem}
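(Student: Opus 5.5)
We verify the universal property of the pullback directly, in both directions. Label the rectangle's maps: $a\colon A\to B$, $b\colon B\to C$ (top), $x\colon X\to Y$, $y\colon Y\to Z$ (bottom), and verticals $p\colon A\to X$, $q\colon B\to Y$, $r\colon C\to Z$. By hypothesis $(B,b,q)$ is a pullback of $y$ and $r$. Throughout, a test object is $T$ with maps $s\colon T\to X$ and $t$ into either $B$ or $C$. We then show that cones over one cospan correspond bijectively to cones over the other, compatibly with the maps from $A$.

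\textbf{Left square Cartesian $\Rightarrow$ outer Cartesian.} Let $s\colon T\to X$ and $t\colon T\to C$ satisfy $y x s = r t$.
\begin{enumerate}
    \item The pair $(xs, t)$ is a cone over $Y\to Z\leftarrow C$. Since the right square is Cartesian, there is a unique $u\colon T\to B$ with $qu = xs$ and $bu = t$.
    \item The pair $(s,u)$ is a cone over $X\to Y\leftarrow B$. Since the left square is Cartesian, there is a unique $v\colon T\to A$ with $pv = s$ and $av = u$. Then $bav = t$, so $v$ is a factorization through the outer rectangle.
    \item For uniqueness, let $v'$ be another map with $pv' = s$ and $bav' = t$. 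Then $av'$ satisfies $q a v' = x p v' = xs$ and $b(av') = t$. By uniqueness in the right square, $av' = u$, and then uniqueness in the left square gives $v' = v$.
\end{enumerate}

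\textbf{Outer Cartesian $\Rightarrow$ left square Cartesian.} Let $s\colon T\to X$ and $u\colon T\to B$ satisfy $xs = qu$.
\begin{enumerate}
    \item The pair $(s, bu)$ is a cone over the outer cospan, since $yxs = yqu = rbu$.
    \item The outer rectangle being Cartesian gives a unique $v$ with $pv = s$ and $bav = bu$.
    \item To get $av = u$, note that $av$ and $u$ agree after composing with $b$ (step 2), and after composing with $q$, since $qav = xpv = xs = qu$. Uniqueness in the right Cartesian square therefore forces $av = u$.
    \item For uniqueness, any $v'$ with $pv' = s$ and $av' = u$ also satisfies $bav' = bu$, so $v' = v$ by uniqueness in the outer rectangle.
\end{enumerate}

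There is no real obstacle here. The only point needing care is step 3 of the second direction: the equation $av = u$ is not handed to us by the outer universal property and must be recovered from uniqueness in the right square, using that a map into $B$ is determined by its composites with $b$ and $q$.
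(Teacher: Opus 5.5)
Your proof is correct and complete. It is the standard direct verification of the pasting lemma from the universal property, and you correctly isolate the one nontrivial step: recovering $av = u$ from the fact that maps into $B$ are jointly determined by their composites with $b$ and $q$. The paper gives no proof of this lemma, listing it among ``standard results'', so there is no different route to compare against.
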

\begin{lem}[Cartesian Cubes]\label{cartesian cubes}
    Let
    \[
        \begin{tikzcd}
            A_1 \arrow[rr] \arrow[dd] \arrow[rd] &                           & B_1 \arrow[dd] \arrow[rd] &                \\
            & A_0 \arrow[rr] \arrow[dd] &                           & B_0 \arrow[dd] \\
            C_1 \arrow[rr] \arrow[rd]            &                           & D_1 \arrow[rd]            &                \\
            & C_0 \arrow[rr]            &                           & D_0
        \end{tikzcd}
    \]
    be a commutative cube in $\scrC$. Suppose the front, left and right faces are Cartesian. Then the back face is Cartesian.
\end{lem}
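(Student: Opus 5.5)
The plan is to check the universal property of the back face directly, using the other faces to transport cones. Label the maps of the cube as follows: the back face is the square $A_1 \to B_1 \to D_1$, $A_1 \to C_1 \to D_1$; the front face is $A_0 \to B_0 \to D_0$, $A_0 \to C_0 \to D_0$; the left face has vertices $A_1,A_0,C_1,C_0$; and the right face has vertices $B_1,B_0,D_1,D_0$. Let $T$ be an object with morphisms $b\colon T \to B_1$ and $c\colon T \to C_1$ agreeing in $D_1$. We must produce a unique $a\colon T \to A_1$ with components $b$ and $c$.

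First push $b$ and $c$ forward into the front face. Their images $b_0\colon T \to B_0$ and $c_0\colon T \to C_0$ agree in $D_0$ because the cube commutes. Since the front face is Cartesian, they induce a unique $a_0\colon T \to A_0$.

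Next use the left face, which is Cartesian with vertices $A_1, A_0, C_1, C_0$. The pair $(a_0, c)$ agrees in $C_0$: the image of $a_0$ in $C_0$ is $c_0$, which is the image of $c$. So there is a unique $a\colon T \to A_1$ mapping to $a_0$ and to $c$.

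It remains to check that $a$ maps to $b$ in $B_1$. For this use the right face, which is Cartesian with vertices $B_1, B_0, D_1, D_0$, so maps into $B_1$ are determined by their images in $B_0$ and $D_1$. The image of $a$ in $B_0$ equals the image of $a_0$ in $B_0$, which is $b_0$. The image of $a$ in $D_1$, computed via $C_1$, is the image of $c$, which equals the image of $b$ in $D_1$. Hence the image of $a$ in $B_1$ and the map $b$ agree after composing to both $B_0$ and $D_1$, so they are equal.

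For uniqueness, suppose $a'$ is another map with components $b$ and $c$. Its image in $A_0$ has components $b_0$ and $c_0$, so it equals $a_0$ by uniqueness in the front face. Then $a'$ maps to $a_0$ and to $c$, so $a' = a$ by uniqueness in the left face.

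There is no real obstacle here. The only care needed is in the existence step: one must compare maps into $B_1$ through the right face's universal property, checking both the $B_0$ component and the $D_1$ component.
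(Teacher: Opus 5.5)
Your proof is correct and complete. Each existence and uniqueness claim comes from the universal property of the right face, and you use commutativity of the top face for the $B_0$-component of $a$ and of the back face for its $D_1$-component. One small point of wording: when you say $b_0$ and $c_0$ agree in $D_0$ ``because the cube commutes,'' this also uses your hypothesis that $b$ and $c$ agree in $D_1$, together with commutativity of the right and bottom faces.

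The paper lists this lemma as a standard result and gives no proof, so there is no argument of its own to compare yours with. The most economical alternative is to apply \cref{Cartesian rectangles}, stated just before it, twice:
\begin{itemize}
\item Pasting the left face onto the front face shows that the rectangle $A_1 \to A_0 \to B_0$ over $C_1 \to C_0 \to D_0$ has Cartesian outer square.
\item By commutativity of the top and bottom faces, that outer square is literally the outer square of the rectangle $A_1 \to B_1 \to B_0$ over $C_1 \to D_1 \to D_0$.
\item The right-hand square of this second rectangle is the right face, which is Cartesian, so the rectangle lemma forces its left-hand square, the back face, to be Cartesian.
\end{itemize}
That route is shorter and reuses the preceding lemma. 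Your generalized-element argument is essentially the same pasting argument unwound: it is self-contained and makes explicit exactly which faces' commutativity is needed. Both arguments work in an arbitrary category, without assuming fiber products exist.
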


Recall the following, which we shall frequently use implicitly:
\begin{lem}[Pullbacks of monomorphisms]\label{pullbakcs of monomorphisms lemma}
    Suppose
    \[
        \begin{tikzcd}
            A \arrow[r] \arrow[d] & B \arrow[d] \\
            C \arrow[r, "\phi"]           & D
        \end{tikzcd}
    \]
    is a Cartesian diagram, and $B \to D$ is a monomorphism. Then $A \to C$ is a monomorphism, and $A$ represents the subpresheaf
    \begin{align*}
        T \mapsto \{c \in C(T) : \phi(c) \in B(T) \}
    \end{align*}

    In particular, if $B \to D$ is an isomorphism, then so is $A \to C$.
\end{lem}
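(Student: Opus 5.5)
The claim to prove is \cref{pullbakcs of monomorphisms lemma}. I would reduce everything to a pointwise description via the Yoneda embedding. Since the Yoneda embedding is continuous (\cref{Yoneda is exact}), the Cartesian square stays Cartesian after applying $h$, and limits of presheaves are computed objectwise. So for every $T \in \scrC$ we get
\begin{align*}
    A(T) \simeq C(T)\times_{D(T)} B(T) = \{(c,b) \in C(T)\times B(T) : \phi(c) = \iota(b)\},
\end{align*}
where $\iota\colon B \to D$ denotes the given monomorphism. The plan is to read off all three assertions from this description.

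First, injectivity. A morphism is a monomorphism if and only if its image under Yoneda is, i.e.\ if and only if it is injective on $T$-points for every $T$. Since $\iota$ is monic, $\iota_T$ is injective, so for a given $c$ there is at most one $b$ with $\phi(c)=\iota(b)$. Hence the projection $(c,b)\mapsto c$ is injective for every $T$, and $A \to C$ is a monomorphism.

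Second, the image. The image of $A(T)$ in $C(T)$ is exactly the set of $c$ with $\phi(c)=\iota(b)$ for some $b$. In the abusive notation of \cref{presheaf notation}, this is $\{c : \phi(c) \in B(T)\}$. Naturality in $T$ is automatic because all maps involved are natural transformations, so $A$ represents the stated subpresheaf.

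Third, the isomorphism case. If $\iota$ is an isomorphism, then every $\phi(c)$ lies in $B(T)$, so $A(T)\to C(T)$ is bijective for all $T$. Thus $h_A \to h_C$ is an isomorphism, and by full faithfulness of Yoneda so is $A \to C$.

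No step is a real obstacle. The only point needing care is the passage between $\scrC$ and presheaves, which is justified by \cref{Yoneda is exact} together with the fact that the Yoneda embedding is fully faithful and detects monomorphisms.
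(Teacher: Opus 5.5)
Your proof is correct. The paper states this lemma as a standard fact without proof, and your pointwise argument is the usual verification: the appeal to continuity of the Yoneda embedding amounts to the universal property of the fiber product, $\Hom(T,A)\simeq \Hom(T,C)\times_{\Hom(T,D)}\Hom(T,B)$, and injectivity of $\Hom(T,B)\to\Hom(T,D)$ for all $T$ is just the definition of a monomorphism.
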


\begin{lem}\label{associativity of fiber product}
    Fiber product is associative (up to canonical isomorphism). In other words, given a diagram
    \[
        \begin{tikzcd}
            X \arrow[rd] &   & Y \arrow[ld] \arrow[rd] &   & Z \arrow[ld] \\
            & A &                         & B &
        \end{tikzcd}\]
    we have an isomorphism $(X\times_A Y) \times_B Z \simeq X\times_A (Y \times_B Z)$ which commutes with the projections to $X,Y$ and $Z$.
\end{lem}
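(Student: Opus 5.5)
We prove the associativity isomorphism by comparing universal properties: both $(X\times_A Y)\times_B Z$ and $X\times_A (Y\times_B Z)$ represent the functor sending $T$ to the set of triples $(x,y,z)\in X(T)\times Y(T)\times Z(T)$ such that $x$ and $y$ have the same image in $A(T)$, and $y$ and $z$ have the same image in $B(T)$. Since a representing object is unique up to a unique isomorphism compatible with the universal elements, this gives the claimed canonical isomorphism. Compatibility with the projections to $X$, $Y$ and $Z$ will be automatic, because those projections are the components of the universal triple.

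First, for $P=(X\times_A Y)\times_B Z$, the morphism $X\times_A Y\to B$ is the composite of the projection to $Y$ with $Y\to B$. By the universal property of the outer fibre product, $P(T)$ is in bijection with pairs $(w,z)$, where $w\in (X\times_A Y)(T)$ and $z\in Z(T)$ agree in $B(T)$. Applying the universal property of the inner fibre product, $w$ corresponds to a pair $(x,y)$ agreeing in $A(T)$. Together these identify $P(T)$ with the set of triples above. Each step is a bijection natural in $T$, since it is given by composition with fixed projection morphisms.

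The symmetric computation does the same for $Q=X\times_A (Y\times_B Z)$, where $Y\times_B Z\to A$ is the composite of the projection to $Y$ with $Y\to A$. By the Yoneda lemma, the composite natural bijection $h_P\simeq h_Q$ comes from a unique isomorphism $P\simeq Q$. Tracking the universal triple shows that this isomorphism commutes with the projections to $X$, $Y$ and $Z$.

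Alternatively, one can argue with \cref{Cartesian rectangles} applied to the diagram $P\to X\times_A Y\to X$ over $Y\times_B Z\to Y\to A$. This is less direct, so we prefer the Yoneda argument.

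There is no real obstacle here. The only care needed is in specifying exactly which composite maps to $B$ and to $A$ define the iterated fibre products, so that the two triple descriptions visibly coincide.
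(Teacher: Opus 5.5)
Your proof is correct. The paper states this lemma without proof, as a standard fact, and your universal-property/Yoneda argument is the usual justification. One remark on your alternative route: it needs \cref{Cartesian rectangles} twice, not once. First apply it to the rectangle $P\to Y\times_B Z\to Z$ over $X\times_A Y\to Y\to B$, which shows that the square $P\to X\times_A Y$ over $Y\times_B Z\to Y$ is Cartesian. Only then can you apply it to the rectangle you describe, whose left square is this one, to identify $P$ with $X\times_A(Y\times_B Z)$.
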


%% file: Chapters/Category-Theoretic_Prerequisites/Equivalence_Relations.tex
\section{Equivalence Relations}
The following definition comes from \cite{nlab:congruence}. The reader should consult {\cite[2.5]{Borceux_1994}} for more details on equivalence relations in categories.

\begin{dfn}[Equivalence relations]\label{equivalence relation definition}
    Let $\scrC$ be a category and $X \in \scrC$. An \emph{equivalence relation} $E$ on $X$ is a representable subpresheaf $h_E \hookrightarrow h_X \times h_X$ such that for every $T\in \scrC$, the induced embedding $E(T) \hookrightarrow X(T) \times X(T)$ exhibits $E(T)$ as an equivalence relation on $X(T)$.
\end{dfn}

\begin{lem}
    Let $\scrC$ be a category with finite products. An equivalence relation $E$ on $X$ is equivalently a subobject
    $    \begin{tikzcd}
            E \ar[r,hook,"{(p_1,p_2)}"]& X\times X
        \end{tikzcd}$
    satisfying:
    \begin{enumerate}
        \item \textbf{Reflexivity}: There exists a morphism $r \colon X \to E$ such that
              \[ p_1 \circ r = \mathrm{id}_X \quad \text{and} \quad p_2 \circ r = \mathrm{id}_X. \]

        \item \textbf{Symmetry}: There exists a morphism $s \colon E \to E$ such that
              \[ p_1 \circ s = p_2 \quad \text{and} \quad p_2 \circ s = p_1. \]

        \item \textbf{Transitivity}: Let $E \times_X E$ be the pullback of $p_1 \colon E \to X$ along $p_2 \colon E \to X$, with projections $\pi_1, \pi_2 \colon E \times_X E \to E$. There exists a morphism $t \colon E \times_X E \to E$ such that
              \[ p_1 \circ t = p_1 \circ \pi_1 \quad \text{and} \quad p_2 \circ t = p_2 \circ \pi_2. \]
    \end{enumerate}
\end{lem}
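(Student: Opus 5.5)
The plan is to move back and forth between the internal description of $E$ (a subobject $E \hookrightarrow X\times X$) and the pointwise one (for each $T$, the subset $E(T)\subset X(T)\times X(T)$ is an equivalence relation) using the Yoneda lemma. Throughout, $p_1,p_2$ denote the components of the monomorphism $E\hookrightarrow X\times X$. By \cref{Yoneda is exact} we have $h_{X\times X} = h_X\times h_X$, so a representable subpresheaf of $h_X\times h_X$ is the same thing as a subobject $E\hookrightarrow X\times X$ in $\scrC$. For each $T$, $E(T)$ is then identified with the set of pairs $(x,y)\in X(T)^2$ of the form $(p_1 e, p_2 e)$ with $e\in E(T)$, and such an $e$ is unique because the map into $X\times X$ is monic. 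It remains to check that the three conditions (existence of $r$, $s$, $t$) are equivalent to the three pointwise axioms holding for every $T$.

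Suppose first that $r$, $s$, $t$ exist. For a fixed $T$ the pointwise axioms follow by composition. Reflexivity: for $x\in X(T)$, the element $r\circ x\in E(T)$ maps to $(x,x)$. Symmetry: if $e\in E(T)$ maps to $(x,y)$, then $s\circ e$ maps to $(y,x)$. Transitivity: if $e_1$ maps to $(x,y)$ and $e_2$ maps to $(y,z)$, then $p_2e_1 = y = p_1e_2$. I will order the factors of the pullback so that this equality gives a $T$-point of $E\times_X E$, and apply $t$ to it. The defining equations of $t$ then show that the image of this point is $(x,z)$.

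Conversely, suppose each $E(T)$ is an equivalence relation. To build $r$, take the universal point $T=X$, $x=\id_X$. The pair $(\id_X,\id_X)$ lies in $E(X)$, so it equals $(p_1,p_2)\circ r$ for a unique $r\colon X\to E$. To build $s$, take $T=E$ and the universal element $\id_E$, which maps to $(p_1,p_2)$. Symmetry gives $(p_2,p_1)\in E(E)$, which corresponds to some $s\colon E\to E$ with $p_1s=p_2$ and $p_2s=p_1$. To build $t$, take $T=E\times_X E$ with the points $\pi_1,\pi_2\in E(T)$. Their images are $(p_1\pi_1,p_2\pi_1)$ and $(p_1\pi_2,p_2\pi_2)$, and these chain because of the pullback condition. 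Transitivity then yields an element of $E(T)$ with image $(p_1\pi_1,p_2\pi_2)$, and this element is the required $t$. In each construction the equations are obtained by post-composing with the monomorphism into $X\times X$.

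I expect no real obstacle here. The only care needed is bookkeeping. First, the pullback $E\times_X E$ must be taken in the orientation that makes the two pairs chain correctly; if the chosen orientation goes the wrong way, composing with $s$ corrects it. Second, the existence of this pullback is needed. It exists because $E\times_X E$ is the intersection of the subobjects $E\times X$ and $X\times E$ of $X\times X\times X$, which can be formed from finite products and the monomorphism (equivalently it is a fibre product, which is assumed implicitly in the statement). Finally, the correspondence is a genuine equivalence, since a subobject determines its subpresheaf and vice versa by Yoneda.
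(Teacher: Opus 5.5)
Your generalized-elements argument is correct, and it is the standard proof behind the paper's treatment, which gives no argument of its own and only cites Borceux, Proposition 2.5.4. Two of your side remarks are wrong, although neither affects the core proof.

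First, $E\times_X E$ cannot in general be built from finite products. The intersection of $E\times X$ and $X\times E$ inside $X\times X\times X$ is itself a pullback over $X\times X\times X$. So in a category with only finite products, the existence of $E\times_X E$ is a genuine extra hypothesis, which the statement implicitly makes. Your parenthetical fallback is the right justification; the first one is not.

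Second, the orientation of the pullback is not bookkeeping that $s$ can repair. Suppose the pullback imposed $p_1\circ\pi_1 = p_2\circ\pi_2$. Then $t = r\circ p_1\circ\pi_1$ would satisfy both required equations for every reflexive relation, so condition 3 would be vacuous. Any reflexive, symmetric, non-transitive relation on a set would then be a counterexample to the lemma. The statement is only correct with $p_2\circ\pi_1 = p_1\circ\pi_2$. That is the reading you actually use in both directions, so your proof stands.
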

\begin{proof}
    See for example \cite[Proposition~2.5.4]{Borceux_1994}.
\end{proof}

\begin{cor}[The Yoneda embedding preserves equivalence relations]\label{Yoneda preserves equivalence relations}
    Let $\scrC$ be a category, $X\in \scrC$, and $E$ an equivalence relation on $X$. Then $h_E$ is an equivalence relation on $h_X$.
\end{cor}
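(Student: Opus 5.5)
The plan is to verify directly that $h_E \hookrightarrow h_X \times h_X$ satisfies \cref{equivalence relation definition} when viewed in the category $\Presh(\scrC)$. The statement is to be read there, with $\Presh(\scrC)$ in place of $\scrC$, and with $h_E$ the representing object of the required subpresheaf of $h_{h_X} \times h_{h_X}$.

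First, I would check that $h_E \to h_X \times h_X$ is a monomorphism of presheaves. By \cref{equivalence relation definition}, $h_E$ is already given as a subpresheaf of $h_X \times h_X$. Since monomorphisms of presheaves are detected objectwise, this is immediate.

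Second, I would check that for every presheaf $F \in \Presh(\scrC)$ the map $\Hom(F, h_E) \to \Hom(F,h_X)\times \Hom(F,h_X)$ exhibits its source as an equivalence relation on $\Hom(F,h_X)$. A pair of natural transformations $(\alpha,\beta)\colon F \to h_X\times h_X$ factors through $h_E$ if and only if, for every $T$ and $s\in F(T)$, the pair $(\alpha_T(s),\beta_T(s))$ lies in $E(T)$. Given this description, the three properties follow objectwise:
\begin{itemize}
    \item Reflexivity holds because $E(T)$ is reflexive, so $(\alpha_T(s),\alpha_T(s)) \in E(T)$ for all $T$ and $s$.
    \item Symmetry holds because $E(T)$ is symmetric, so the swapped pair $(\beta,\alpha)$ factors through $h_E$ whenever $(\alpha,\beta)$ does.
    \item Transitivity holds because each $E(T)$ is transitive, so if $(\alpha,\beta)$ and $(\beta,\gamma)$ factor through $h_E$ then so does $(\alpha,\gamma)$.
\end{itemize}

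There is no real obstacle in this argument. The only point needing care is the size and convention issue: one must interpret "representable" in $\Presh(\scrC)$ via the Yoneda embedding of $\Presh(\scrC)$ itself, and use \cref{Yoneda is exact} to identify $h_X\times h_X$ with $h_{X\times X}$ when products exist.

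Alternatively, when $\scrC$ has finite products, I could use the preceding lemma's characterization. Applying the Yoneda embedding to the morphisms $r$, $s$, $t$ and using that Yoneda preserves products and fiber products (\cref{Yoneda is exact}) gives $h_r$, $h_s$, $h_t$ satisfying the same identities. Monicity of $h_E\to h_{X\times X}$ then follows from \cref{a functor which preserves finite limits preserves monomorphisms}.
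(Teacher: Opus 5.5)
Your main argument is correct, but it is organized differently from the paper's. You verify \cref{equivalence relation definition} directly in $\Presh(\scrC)$. You work on the Hom-sets $\Hom(F,h_E)\subseteq \Hom(F,h_X)\times\Hom(F,h_X)$ for arbitrary test presheaves $F$, and you use only that a morphism into a subpresheaf is detected objectwise.

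The paper instead applies the characterization lemma stated just before the corollary inside $\Presh(\scrC)$, which has all finite limits whatever $\scrC$ is. It defines reflexivity, symmetry and transitivity maps $r_T\colon X(T)\to E(T)$, $s_T$, $t_T$ objectwise, using that each $E(T)$ is an equivalence relation. It then notes that these commute with restriction, and so assemble into morphisms of presheaves $r,s,t$.

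Both proofs rest on the same observation that everything can be checked pointwise in $T$. Yours avoids the characterization lemma altogether, while the paper's exhibits the internal structure maps on $h_E$ explicitly.

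Your alternative sketch is weaker than the paper's lemma-based proof. Applying the Yoneda embedding to $r,s,t$ presupposes that these morphisms already exist in $\scrC$. That requires $X\times X$ and the pullback $E\times_X E$ to exist in $\scrC$, and finite products alone do not give the latter. The paper constructs $r,s,t$ directly in $\Presh(\scrC)$, where $h_E\times_{h_X}h_E$ always exists, so it needs no hypothesis on $\scrC$.
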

\begin{proof}
    Since $\Presh(\scrC)$ has finite products, we just need to verify the hypotheses of the above lemma. For every $T$ we have by assumption that $E(T)$ is an equivalence relation on $X(T)$; in particular we have a well-defined reflexivity map $r_T\colon X(T) \to E(T)$ sending $x \to (x,x)$, and similarly have symmetry and transitivity maps $s_T$ and $t_T$. From the definitions we see immediately that the functions $r_T, s_T, t_T$ commute with restriction maps; therefore they induce well-defined maps of presheaves $r,s,t$ which satisfy the required properties.
\end{proof}

%% file: Chapters/Category-Theoretic_Prerequisites/Adjoint_functors.tex
\section{Adjoint Functors}
All the results in this section (with the possible exception of \cref{adjoint functors commutative square} for which we provide a proof) are standard. We refer the reader to \cite{Saunders_categories} for background on adjoint functors.

Let $F\colon\scrD \to \scrC$ and $G\colon \scrC \to \scrD$ be functors. Recall that $F$ is left adjoint to $G$, or equivalently $G$ is right adjoint to $F$, if the bifunctors $\Hom_\scrC(F(-),-)$ and $\Hom_\scrD(-,G(-))$ are naturally isomorphic. Adjoint functors are determined uniquely up to natural isomorphism.


The following results are proved in \cite{Saunders_categories}. First we give a characterization of adjoint functors:
\begin{lem}\label{universal property characterization of adjoint functors}
    Let $G\colon \scrC \to \scrD$ be a functor. Then $G$ has a left adjoint if and only if for every $Y$ in $\scrD$ there exists an object $F(Y) \in \scrC$ and a morphism $\eta_Y\colon Y \to G(F(Y))$ in $\scrD$ such that any morphism $Y \to  G(X)$ with $X\in \scrC$ factors uniquely through $\eta_Y$. In this case the mapping $Y \mapsto F(Y)$ can be extended uniquely to give a functor $F\colon \scrD \to \scrC$ which is left adjoint to $G$, and moreover the $\eta_Y$ cohere to give a natural transformation of functors $\eta\colon \id_\scrD \to GF$, called the unit of adjunction.
\end{lem}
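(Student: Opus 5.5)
The plan is to prove both directions separately. The reverse direction is where the content lies: assuming the universal arrows, we must construct $F$ on morphisms, show it is a functor, and build the natural isomorphism $\Hom_\scrC(F(-),-)\simeq\Hom_\scrD(-,G(-))$ witnessing the adjunction.

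For the forward direction, suppose $F\dashv G$ with natural bijection $\theta_{Y,X}\colon \Hom_\scrC(F(Y),X)\to \Hom_\scrD(Y,G(X))$. Set $\eta_Y=\theta_{Y,F(Y)}(\id_{F(Y)})$. Naturality of $\theta$ in $X$ shows that for $g\colon F(Y)\to X$ we have $\theta(g)=G(g)\circ\eta_Y$. Since $\theta$ is a bijection, every $f\colon Y\to G(X)$ is then of the form $G(g)\circ\eta_Y$ for a unique $g$, which is exactly the stated factorization property.

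For the converse, assume that for each $Y$ we are given $F(Y)$ and $\eta_Y$ with the universal property.
\begin{enumerate}
\item Define $F$ on morphisms. For $h\colon Y\to Y'$, let $F(h)\colon F(Y)\to F(Y')$ be the unique morphism with $G(F(h))\circ\eta_Y=\eta_{Y'}\circ h$. It exists and is unique by the universal property of $\eta_Y$ applied to $\eta_{Y'}\circ h\colon Y\to G(F(Y'))$.
\item Check functoriality. Both $\id_{F(Y)}$ and $F(\id_Y)$ satisfy the defining equation for $F(\id_Y)$, and both $F(h'h)$ and $F(h')F(h)$ satisfy the defining equation for $F(h'h)$, using functoriality of $G$. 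Uniqueness in the universal property then forces $F(\id_Y)=\id_{F(Y)}$ and $F(h'h)=F(h')F(h)$.
\item Uniqueness of the extension. Any functor extending $Y\mapsto F(Y)$ for which the $\eta_Y$ form a natural transformation must satisfy the same defining equation on morphisms. Uniqueness of factorizations therefore shows that the extension is unique.
\item Naturality of $\eta$. The defining equation in step 1 is precisely the naturality square for $\eta\colon\id_\scrD\to GF$.
\item The adjunction. Define $\theta_{Y,X}(g)=G(g)\circ\eta_Y$. The universal property says exactly that $\theta_{Y,X}$ is a bijection. Naturality in $X$ is immediate from functoriality of $G$. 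Naturality in $Y$ follows from the naturality of $\eta$ established in step 4.
\end{enumerate}

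No step is genuinely hard. The only point requiring care is step 1: $F$ on morphisms must be defined via the universal property, and functoriality and naturality are then obtained purely from the uniqueness clause rather than by direct computation.
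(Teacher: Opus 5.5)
Your proposal is correct: it is the standard argument from \cite{Saunders_categories} (Chapter~IV.1), and the paper itself gives no proof of its own, only that citation. You are also right to read the uniqueness clause as uniqueness subject to the $\eta_Y$ forming a natural transformation. Without that proviso the uniqueness fails, since a left adjoint with a fixed object function is determined on morphisms only up to conjugation by a natural isomorphism.
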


\begin{lem}\label{unit and counit of adjunction}
    An adjunction $\Hom_\scrC(F(-),-) \simeq \Hom_\scrD(-,G(-))$ gives rise to natural transformations of functors
    \begin{align*}
        \eta\colon \id_\scrD \to GF \quad \epsilon \colon FG \to \id_C,
    \end{align*} called the unit and counit of adjunction respectively, such that for any $X,Y$ and $\phi \in \Hom_\scrC(F(X),Y)$, the corresponding morphism in $\Hom_\scrD(X,G(Y))$ is given by the composition
    \begin{align*}
        X \xrightarrow{\eta_X} GF(X) \xrightarrow{G(\phi)} G(Y),
    \end{align*}
    and given $\psi \in \Hom_{\scrD}(X,G(Y))$ the corresponding map is the composition
    \begin{align*}
        F(X) \xrightarrow{F(\psi)} FG(Y) \xrightarrow{\epsilon_Y} Y.
    \end{align*}

    The unit and counit satisfy the so-called triangle identities:
    \begin{align*}
        \id_F = \epsilon \cdot F \circ F\cdot\eta \\
        \id_G = G\cdot \epsilon \circ \eta \cdot G.
    \end{align*}
\end{lem}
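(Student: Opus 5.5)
Both formulas are the images of $\id_{F(X)}$ and $\id_{G(Y)}$ under the adjunction bijection, read off through naturality; the triangle identities are the same computation run on the unit and counit themselves. Write the adjunction as a family of bijections
\begin{align*}
\theta_{X,Y}\colon \Hom_\scrC(F(X),Y) \to \Hom_\scrD(X,G(Y)),
\end{align*}
natural in $X$ and $Y$. I will set $\eta_X := \theta_{X,F(X)}(\id_{F(X)})$ and $\epsilon_Y := \theta_{G(Y),Y}^{-1}(\id_{G(Y)})$.

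The first step is naturality in $Y$. For $\phi\colon F(X) \to Y$, consider the naturality square of $\theta_{X,-}$ along $\phi$, applied to $\id_{F(X)}$. It gives
\begin{align*}
\theta_{X,Y}(\phi) = \theta_{X,Y}(\phi \circ \id_{F(X)}) = G(\phi)\circ \theta_{X,F(X)}(\id_{F(X)}) = G(\phi)\circ \eta_X .
\end{align*}
This is the first formula.

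Dually, naturality in $X$ of $\theta^{-1}_{-,Y}$ along $\psi\colon X \to G(Y)$ gives
\begin{align*}
\theta^{-1}_{X,Y}(\psi) = \theta^{-1}_{X,Y}(\id_{G(Y)}\circ\psi) = \theta^{-1}_{G(Y),Y}(\id_{G(Y)})\circ F(\psi) = \epsilon_Y\circ F(\psi).
\end{align*}
This is the second formula.

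Next, check that $\eta$ and $\epsilon$ are natural transformations. For $f\colon X \to X'$ we need $GF(f)\circ\eta_X = \eta_{X'}\circ f$. By the first formula, the left side is $\theta(F(f))$. By naturality in $X$, the right side is $\theta(\id_{F(X')}\circ F(f)) = \theta(F(f))$. So the two agree. The argument for $\epsilon$ is the dual one.

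Finally, the triangle identities. Apply the second formula with $\psi = \eta_X$. It gives $\epsilon_{F(X)}\circ F(\eta_X) = \theta^{-1}(\eta_X) = \id_{F(X)}$, which is the first identity. Symmetrically, applying the first formula to $\phi = \epsilon_Y$ gives $G(\epsilon_Y)\circ\eta_{G(Y)} = \theta(\epsilon_Y) = \id_{G(Y)}$, which is the second.

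The only real obstacle is bookkeeping. One has to orient the naturality squares correctly, so that post-composition in $\scrC$ corresponds to applying $G$ and pre-composition in $\scrD$ corresponds to applying $F$. Everything else is formal. I would expect to simply cite \cite{Saunders_categories}, with this sketch as justification.
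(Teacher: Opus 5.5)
Your argument is correct and complete. Defining $\eta_X=\theta(\mathrm{id}_{F(X)})$ and $\epsilon_Y=\theta^{-1}(\mathrm{id}_{G(Y)})$, reading off both formulas from naturality in each variable, deducing naturality of $\eta$ and $\epsilon$, and getting the triangle identities by feeding $\eta_X$ and $\epsilon_Y$ back into those formulas is the standard proof; the paper gives no argument of its own and simply defers to \cite{Saunders_categories}, which is where this argument comes from, so your approach matches.
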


The adjoint of a composition is the composition of the adjoints:
\begin{lem}\label{The adjoint of a composition is the composition of the adjoints}
    Let $G\colon \scrC \to \scrD$, $G'\colon \scrD \to \scrE$, $F'\colon\scrE \to \scrD$, $F\colon\scrD \to \scrC$ be functors. Suppose $F,F'$ are left adjoint to $G,G'$ respectively with respective units and counits of adjunction $\eta, \eta', \epsilon,\epsilon'$ Then $F'\circ F$ is left adjoint to $G \circ G'$. The unit and counit of the adjunction of the composition are given by
    \begin{align*}
        \id_\scrE \xrightarrow{\eta'} G'F' \xrightarrow{G'\eta F'} G'GFF' \\
        FF'G'G \xrightarrow{F\epsilon'G} FG \xrightarrow{\epsilon} \id_\scrC
    \end{align*}
\end{lem}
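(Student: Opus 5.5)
The plan is to compose the two natural hom-set bijections directly, and then read off the unit and counit by evaluating the composite bijection on identity morphisms, using \cref{unit and counit of adjunction}. (As written the composite should be read with the types matching: since $F'\colon \scrE \to \scrD$ and $F\colon \scrD \to \scrC$, the left adjoint is $F\circ F'\colon \scrE \to \scrC$ and the right adjoint is $G'\circ G\colon \scrC \to \scrE$; the displayed unit and counit formulas are consistent with this reading.)

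\textbf{Step 1 (composite bijection).} For $E \in \scrE$ and $C \in \scrC$, take the adjunction isomorphism for $(F,G)$ with $X = F'(E)$, followed by the adjunction isomorphism for $(F',G')$ with $Y = G(C)$:
\begin{align*}
    \Hom_\scrC(FF'(E), C) \simeq \Hom_\scrD(F'(E), G(C)) \simeq \Hom_\scrE(E, G'G(C)).
\end{align*}
Each bijection is natural in both variables. Precomposing a natural isomorphism of bifunctors with the functor $F'$ in the first slot, or postcomposing with $G$ in the second slot, again gives a natural isomorphism. Hence the composite is natural in $E$ and $C$, which is exactly the statement that $FF'$ is left adjoint to $G'G$.

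\textbf{Step 2 (unit).} By \cref{unit and counit of adjunction}, the unit at $E$ is the image of $\id_{FF'(E)}$ under the composite bijection. The first bijection sends $\phi$ to $G(\phi)\circ\eta_{F'E}$, so it sends the identity to $\eta_{F'E}$. The second bijection sends $\psi$ to $G'(\psi)\circ \eta'_E$. The unit is therefore
\begin{align*}
    G'(\eta_{F'E})\circ\eta'_E,
\end{align*}
which is the stated composite $\id_\scrE \to G'F' \to G'GFF'$.

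\textbf{Step 3 (counit).} Dually, the counit at $C$ is the preimage of $\id_{G'G(C)}$, computed with the inverse formulas $\psi \mapsto \epsilon'\circ F'(\psi)$ and then $\phi \mapsto \epsilon\circ F(\phi)$. Pulling $\id_{G'GC}$ back through the second bijection gives $\epsilon'_{GC}$. Pulling that back through the first gives
\begin{align*}
    \epsilon_C \circ F(\epsilon'_{GC}),
\end{align*}
matching the stated formula.

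There is no real obstacle here. The only care needed is bookkeeping: checking naturality of the composite in Step 1, and tracking which object each component of $\eta$, $\eta'$, $\epsilon$, $\epsilon'$ is evaluated at in Steps 2 and 3.
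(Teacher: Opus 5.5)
Your argument is correct and is the standard one. The paper gives no proof of its own and defers to Mac Lane, whose proof is exactly this: compose the two natural hom-set bijections, then evaluate at identities to read off the unit $G'\eta F'\circ\eta'$ and the counit $\epsilon\circ F\epsilon' G$. You are also right that the statement lists the composites in the wrong order: the adjunction is $F\circ F'\dashv G'\circ G$, which is what the displayed unit and counit formulas describe.
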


\begin{lem}\label{adjoint functors commutative square}
    In the situation of \cref{The adjoint of a composition is the composition of the adjoints}, we have a commutative diagram of bifunctors:
    \[
        \begin{tikzcd}
            \Hom_\scrC(F(-), -)  \ar[r,"\sim"] \ar[d,"\circ F\cdot \eta'"]     & \Hom_\scrD(-,G(-)) \ar[d, "G'"]\\
            \Hom_\scrC(FF'G'(-), -)    \ar[r, "\sim"]          & \Hom_\scrE(G'(-), G'G(-))
        \end{tikzcd}
    \]
    where the horizontal arrows are given by the adjunctions.
\end{lem}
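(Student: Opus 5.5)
The plan is to check commutativity of the square pointwise, writing every arrow explicitly in terms of units and counits by \cref{unit and counit of adjunction} and \cref{The adjoint of a composition is the composition of the adjoints}. The square only makes sense if the left vertical map is precomposition with $F$ applied to the counit $\epsilon'$ of the adjunction $F' \dashv G'$. Indeed, for $X \in \scrD$ we have $\epsilon'_X\colon F'G'(X) \to X$, and hence $F(\epsilon'_X)\colon FF'G'(X) \to F(X)$. I read the label ``$\circ F\cdot\eta'$'' in this way, as precomposition with $F\epsilon'$, and prove the lemma for that map.

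Fix $X \in \scrD$, $Y \in \scrC$ and $\phi \in \Hom_\scrC(F(X),Y)$. Going along the top and then down the right, \cref{unit and counit of adjunction} sends $\phi$ to $G(\phi)\circ\eta_X$, and applying $G'$ gives
\begin{align*}
    G'G(\phi)\circ G'(\eta_X).
\end{align*}
Going down the left we get $\phi\circ F(\epsilon'_X)$. We then apply the adjunction $FF' \dashv G'G$ at the object $G'(X) \in \scrE$. By \cref{The adjoint of a composition is the composition of the adjoints} its unit is $G'\eta F' \circ \eta'$, so this path gives
\begin{align*}
    G'G(\phi)\circ G'GF(\epsilon'_X)\circ G'(\eta_{F'G'X})\circ \eta'_{G'X}.
\end{align*}

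To compare the two expressions, first apply naturality of $\eta$ to the morphism $\epsilon'_X$. This gives $GF(\epsilon'_X)\circ\eta_{F'G'X} = \eta_X\circ\epsilon'_X$. Applying $G'$, the second expression becomes
\begin{align*}
    G'G(\phi)\circ G'(\eta_X)\circ G'(\epsilon'_X)\circ\eta'_{G'X}.
\end{align*}
The triangle identity $G'\epsilon'\circ\eta' G' = \id_{G'}$ from \cref{unit and counit of adjunction} removes the last two factors. What remains is exactly the first expression, so the square commutes.

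Naturality in $X$ and $Y$ needs no separate argument. All four arrows are built from natural isomorphisms, whiskerings of natural transformations and functor applications, so they are natural transformations of bifunctors, and commutativity at every component is all that has to be shown. The only real obstacle is the bookkeeping: pinning down the correct left vertical map, identifying the composite unit correctly, and then applying naturality of $\eta$ followed by the triangle identity for $(\eta',\epsilon')$.
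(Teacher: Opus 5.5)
Your proof is correct and is essentially the paper's argument. It uses the same pointwise computation via the unit formulas, naturality of $\eta$ applied to $\epsilon'_X$, and the triangle identity $G'\epsilon'\circ\eta'G'=\id_{G'}$; your reading of the left vertical arrow as precomposition with $F\epsilon'$ is also what the paper's proof actually uses, despite the $F\cdot\eta'$ label in the statement.
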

\begin{proof}
    Indeed, given any $X \in \scrD$, $Y\in \scrC$, and $\phi \in \Hom_\scrC(F(X),Y)$, then applying \cref{unit and counit of adjunction}, going around clockwise we get the composition
    \begin{align*}
        G'(X) \xrightarrow{G'\cdot \eta_X} G'GF \xrightarrow{G'G(\phi)} G'G(Y)
    \end{align*}
    Going counterclockwise, applying the vertical arrow we get
    \begin{align*}
        FF'G'(X) \xrightarrow{F\cdot \epsilon'_X} F(X) \xrightarrow{\phi} Y.
    \end{align*}
    Then applying the horizontal arrow, by the characterization of the unit of adjunction from \cref{The adjoint of a composition is the composition of the adjoints}, we get the composition
    \begin{align}\label{many arrows adjoint computation}
        G'(X) \xrightarrow{\eta'_X\cdot G'} G'F'G'(X) \xrightarrow{G'\eta_XF'G'} G'GFF'G'(X) \xrightarrow{G'GF\epsilon'_X} G'GF(X) \xrightarrow{G'G(\phi)} G'G(Y)
    \end{align}
    Consider the middle two arrows, and note that we get the same result if we apply $\epsilon'_X$ first and then $\eta_X$; so the middle two arrows are the same as the composition
    \begin{align*}
        G'F'G'(X) \xrightarrow{G'\epsilon'_X} G'(X) \xrightarrow{G'\eta_X} G'GF(X).
    \end{align*}
    Then by the triangle identities we see that the first three arrows of \ref{many arrows adjoint computation} compose to give the map $G'(X) \xrightarrow{G'\cdot \eta_X} G'GF(X)$ and we conclude.
\end{proof}

The following lemma is not hard, but we will often use it:
\begin{lem}\label{adjoint functors and full subcategories}
    Suppose $F\colon\scrD \to \scrC$ and $G\colon \scrC \to \scrD$ are adjoint functors, and $\scrC' \subset \scrC$, $\scrD' \subset \scrD$ are full subcategories such that $F|_{C'}$ and $G|_{D'}$ factor (necessarily uniquely) through morphisms $F'\colon \scrC' \to \scrD'$ and $G'\colon \scrD' \to \scrC'$ respectively.

    Then $F'$ is left-adjoint to $G'$.
\end{lem}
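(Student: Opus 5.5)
The plan is to verify the defining natural bijection directly, using that $\scrC'$ and $\scrD'$ are full. (The statement's subscripts should be read as $F|_{\scrD'}$ and $G|_{\scrC'}$. The intended reading is that $F$ maps $\scrD'$ into $\scrC'$ and $G$ maps $\scrC'$ into $\scrD'$, so that $F'\colon \scrD' \to \scrC'$ and $G'\colon \scrC' \to \scrD'$ are the restrictions.)

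Fix $X \in \scrD'$ and $Y \in \scrC'$. Since $\scrC'$ is a full subcategory of $\scrC$ containing both $F'(X) = F(X)$ and $Y$, we have
\[
\Hom_{\scrC'}(F'(X),Y) = \Hom_\scrC(F(X),Y).
\]
Likewise, since $\scrD'$ is full in $\scrD$ and contains $X$ and $G'(Y) = G(Y)$, we have
\[
\Hom_{\scrD'}(X,G'(Y)) = \Hom_\scrD(X,G(Y)).
\]
The adjunction isomorphism $\Hom_\scrC(F(X),Y) \simeq \Hom_\scrD(X,G(Y))$ therefore restricts to a bijection
\[
\Hom_{\scrC'}(F'(X),Y) \simeq \Hom_{\scrD'}(X,G'(Y)).
\]

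It remains to check naturality in $X \in \scrD'$ and $Y \in \scrC'$. A morphism $X_1 \to X_2$ in $\scrD'$ is simply a morphism in $\scrD$, and $F'$ and $G'$ act on morphisms exactly as $F$ and $G$ do. So the naturality squares for the restricted bijection are instances of the naturality squares for the original adjunction, and they commute.

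Alternatively, one can argue through \cref{universal property characterization of adjoint functors}. The unit $\eta_X\colon X \to GF(X) = G'F'(X)$ is a morphism of $\scrD'$ by fullness. Every morphism $X \to G'(Y)$ factors uniquely through $\eta_X$ in $\scrD$, and the factoring morphism $F(X) \to Y$ lies in $\scrC'$ by fullness. I do not expect any real obstacle; the only point needing care is that fullness is used on both sides.
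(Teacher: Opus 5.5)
Your proposal is correct and is essentially the paper's proof: fullness identifies the hom-sets of $\scrC'$ and $\scrD'$ with those of $\scrC$ and $\scrD$, so the adjunction bijection and its naturality restrict directly. You are also right that the statement's subscripts should be read as $F|_{\scrD'}$ and $G|_{\scrC'}$, with $F'\colon \scrD' \to \scrC'$ and $G'\colon \scrC' \to \scrD'$.
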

\begin{proof}
    Since $\scrC'$ and $\scrD'$ are full subcategories, for any $C \in \scrC'$ and $D \in \scrD'$ we have
    \begin{align*}
        \Hom_{\scrD'}(F'(C),D) = \Hom_{\scrD}(F(C),D),\qquad \Hom_{\scrC'}(C,G'(D)) = \Hom_\scrC(C,G(D)).
    \end{align*}

    Therefore the natural isomorphism $\Hom_\scrD(F(-),-) \simeq \Hom_\scrC(-,G(-))$ induces a natural isomorphism $\Hom_{\scrD'}(F'(-),-) \simeq \Hom_{\scrC'}(-,G'(-))$.
\end{proof}


A very useful result is
\begin{lem}
    Right adjoints are continuous. Left adjoints are cocontinuous.
\end{lem}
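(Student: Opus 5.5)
The plan is to prove the second statement first, that left adjoints preserve small colimits, and then obtain the first by duality. Let $F\colon \scrD \to \scrC$ be left adjoint to $G\colon \scrC \to \scrD$. Let $D\colon I \to \scrD$ be a small diagram with colimit $(L, \iota_i\colon D_i \to L)$. I claim that $(F(L), F(\iota_i))$ is a colimit of $F\circ D$. It is a cocone because $F$ is a functor.

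For universality, fix $Y \in \scrC$. Cocones from $F\circ D$ to $Y$ are the elements of $\lim_{i} \Hom_\scrC(F(D_i), Y)$, taken in $\textit{Set}$. By naturality in the first variable, the adjunction isomorphism $\Hom_\scrC(F(-),Y) \simeq \Hom_\scrD(-,G(Y))$ is compatible with the transition maps induced by the morphisms $D(i\to j)$. It therefore gives a bijection
\begin{align*}
    \lim_i \Hom_\scrC(F(D_i),Y) \simeq \lim_i \Hom_\scrD(D_i, G(Y)).
\end{align*}
Since $L$ is a colimit, the right-hand side is identified with $\Hom_\scrD(L,G(Y))$, and the adjunction identifies this with $\Hom_\scrC(F(L),Y)$.

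The only real check is that this composite bijection is the map $\phi \mapsto (\phi\circ F(\iota_i))_i$. That follows from naturality of the adjunction in the first variable, applied to each $\iota_i$: the adjunct of $\phi \circ F(\iota_i)$ is the adjunct of $\phi$ composed with $\iota_i$. Hence every cocone on $F\circ D$ with vertex $Y$ factors uniquely through $F(L)$. This is naturality bookkeeping rather than a genuine obstacle.

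For the first statement, note that $G^{op}\colon \scrC^{op} \to \scrD^{op}$ is left adjoint to $F^{op}$, because $\Hom_{\scrC^{op}}(-,F(-))$ and $\Hom_{\scrD^{op}}(G(-),-)$ are the same bifunctors with the arguments swapped. Limits in $\scrC$ are exactly colimits in $\scrC^{op}$. Applying the cocontinuity result to $G^{op}$ therefore shows that $G$ preserves small limits. Alternatively, one can run the same argument directly, using naturality in the second variable and the fact that $\Hom_\scrD(X,-)$ sends limits to limits.
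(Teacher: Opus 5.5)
Your argument is correct: the naturality check that the composite bijection is $\phi \mapsto (\phi\circ F(\iota_i))_i$ is exactly what makes $(F(L),F(\iota_i))$ universal. The duality step is also valid, since $G^{op}$ is left adjoint to $F^{op}$. The paper gives no proof of its own here and simply cites the result as standard (Mac Lane); your hom-set argument combined with duality is the standard proof found there.
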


A partial converse to the above is given by Freyd's adjoint functor theorem.
\begin{dfn}[Solution Set Condition]\label{solution set condition}
    A functor $G\colon \scrD \to \scrC$ is said to satisfy the solution set condition if for every $C\in \scrC$ there is a \emph{set} of arrows $(f_i\colon C \to G(D_i))_{i\in I}$ such that every arrow $h\colon C \to G(D)$ factors through one of the $f_i$.
\end{dfn}
\begin{thm}[Freyd's Adjoint Functor Theorem, {\cite[Chapter V.6]{Saunders_categories}}]\label{Freyd's adjoint functor theorem}
    A continuous functor from a complete locally small category which satisfies the solution set condition admits a left adjoint.
\end{thm}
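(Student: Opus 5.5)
The plan is to follow the standard proof of Freyd's theorem via initial objects. By \cref{universal property characterization of adjoint functors}, it suffices to show the following for each $C \in \scrC$: the comma category $(C \downarrow G)$ has an initial object. Here the objects of $(C\downarrow G)$ are pairs $(D, h\colon C \to G(D))$, and the morphisms $(D,h)\to(D',h')$ are arrows $u\colon D \to D'$ with $G(u)\circ h = h'$. An initial object $(F(C), \eta_C)$ is exactly a universal arrow, since every $h\colon C\to G(D)$ then factors uniquely through $\eta_C$.

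The first step is to show that $(C\downarrow G)$ is complete and locally small. Local smallness is inherited from $\scrD$. For completeness, take a small diagram $(D_j,h_j)$. Form the limit $L = \lim D_j$ in $\scrD$. Since $G$ is continuous, $G(L) = \lim G(D_j)$, so the compatible family $h_j$ induces a unique $h\colon C \to G(L)$. One checks directly that $(L,h)$ is a limit in the comma category.

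Next I use the solution set $\{(D_i,f_i)\}_{i\in I}$, which is a \emph{set} that is weakly initial: every object of $(C\downarrow G)$ receives a morphism from some $(D_i, f_i)$. Let $P = \prod_i (D_i,f_i)$, a small product in the comma category. Then $P$ is weakly initial, since any object receives a map from some factor and hence from $P$ by composing with the projection.

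The main step is to pass from weakly initial to initial. Let $E \to P$ be the equalizer of \emph{all} endomorphisms of $P$. This is a small limit because $\Hom(P,P)$ is a set by local smallness. I claim $E$ is initial. Existence of maps out of $E$ follows from $E \to P$ composed with the maps out of $P$. For uniqueness, suppose $a,b\colon E \to X$ are two maps. Take the equalizer $K \to E$ of $a$ and $b$, and choose some map $s\colon P \to K$ by weak initiality. Let $e\colon E\to P$ be the inclusion. Then $e\circ(K\to E)\circ s$ is an endomorphism of $P$, so it is equalized by $e$ in the sense that $e\circ(K\to E)\circ s\circ e = \id_P\circ e = e$. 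Since $e$ is monic, $(K\to E)\circ s\circ e = \id_E$. Hence $K \to E$ is split epi, and it is also monic, so it is an isomorphism. This gives $a=b$.

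I expect this uniqueness argument to be the subtle point; the rest is bookkeeping with \cref{universal property characterization of adjoint functors}.
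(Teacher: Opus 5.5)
Your proposal is correct. It is the standard argument from Mac Lane's Chapter V.6, which the paper cites without reproving: you reduce via \cref{universal property characterization of adjoint functors} to finding an initial object of the comma category $(C\downarrow G)$, use continuity of $G$ to see that this comma category is complete and locally small, and then upgrade the weakly initial product of the solution set to an initial object via the joint equalizer of all its endomorphisms. Your uniqueness step (the equalizer $K\to E$ of $a,b$ acquires a right inverse because $e$ is monic and equalizes every endomorphism of $P$) is exactly Mac Lane's proof of his initial-object theorem (Theorem V.6.1).
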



%% file: Chapters/Category-Theoretic_Prerequisites/Slice_Categories.tex
\section{Slice Categories}
We refer the reader to \cite{nlab:over_category} or \cite{Mac_Lane_Moerdijk_1994} for background on slice categories. Recall \cref{slice categories notation}.

The following material is all standard.

\begin{rem}[A Slice of a Slice is a Slice]\label{a slice of a slice is a slice}
    Let $\scrC$ be a category and $\phi\colon T' \to T$ a morphism in $\scrC$. Unraveling the definitions, we see that objects in the ``double slice'' $(\scrC_T)_{\phi}$ correspond to morphisms $T'' \to T'$, and morphisms in $(\scrC_T)_{\phi}$ are the same as morphisms over $T'$; i.e., we have an equivalence of categories $(\scrC_T)_{\phi}\simeq \scrC_{T'}$.
\end{rem}

\begin{dfn}[Forgetful functors on slice categories]\label{forgetful functor on slice categories}
    Let $\scrC$ be a category and $\phi\colon S\to T$ a morphism in $\scrC$.
    \begin{enumerate}
        \item We denote by $j_T$ the forgetful functor from $\scrC_T$ to $\scrC$ sending an object $(\psi\colon X \to T )$ to $X$.
        \item We denote by $j_{S/T}$ the postcomposition functor sending each object $(\psi\colon X \to S)\in \scrC$ to the composition $\phi \circ \psi\colon X \to T$.
    \end{enumerate}
\end{dfn}
\begin{rem}
    Observe that $j_{S/T}$ corresponds via the equivalence of \cref{a slice of a slice is a slice} to the forgetful functor from $(\scrC_T)_{\phi}$ to $\scrC_T$. For this reason we sometimes call $j_{S/T}$ a forgetful functor.

    Note also that we have $j_T = j_{S} \circ j_{T/S}$.
\end{rem}

For the following result, see \cite[I.9,~Theorem~4]{Mac_Lane_Moerdijk_1994} and its proof:
\begin{prop}[Adjoint Functors on Slice Categories]\label{adjoint functors on slice categories}
    Let $\scrC$ be a category, and $\phi\colon T\to S$ a morphism in $\scrC$.
    \begin{enumerate}
        \item     If $\scrC$ admits products of pairs of elements, then the forgetful functor $j_S\colon \scrC_S \to \scrC$ has a right adjoint $j_S^*$, called the \emph{base change map} or \emph{pullback}, which sends an object $X\in \scrC$ to the projection $X \times T \mapsto T$, which we denote by $X_T$, and which sends morphisms $X \to Y$ to the induced morphisms $X\times T \to Y\times T$.

        \item \label{right adjoint functor existence on slice category}  If $\scrC$ admits fiber products, then the postcomposition $j_{T/S}\colon \scrC_T \to \scrC_S$ has a right adjoint $j_{T/S}^*$, also called the base change or pullback, which sends an object $X \to S$ to the projection $X \times_S T \mapsto T$, and which sends morphisms $X \to Y$ to the induced morphisms $X\times_S T \to Y\times_S T$.

        \item \label{right adjoint to right adjoint on slice category} If $\scrC$ is Cartesian closed then $j_S^*$ has a right adjoint $(j_S)_*$ such that for any $f\colon Y \to S$ in $\scrC_S$, we have that $(j_S)_*(f)$ is equal to fiber product
              \[
                  \begin{tikzcd}
                      & Y^S \ar[d,"f^S"]\\
                      1 \ar[r,"\iota"] & S^S,
                  \end{tikzcd}
              \]
              where $\iota\colon 1 \to S^S$ is the map from the terminal object induced by $\id_T$ and the adjunction.

        \item  If $\scrC_S$ is Cartesian closed, then $j_{T/S}^*$ has a right adjoint $(j_{T/S})_*$ defined similarly to $(j_S)_*$ above.
    \end{enumerate}
\end{prop}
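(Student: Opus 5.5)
We must prove Proposition (Adjoint Functors on Slice Categories), all four parts. In each part we exhibit the natural bijection of hom-sets directly from universal properties, and use \cref{universal property characterization of adjoint functors} to upgrade the pointwise construction to a functor. The parts reduce to one another via \cref{a slice of a slice is a slice}.

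For (1), given $(\psi\colon Y\to S)\in\scrC_S$ and $X\in\scrC$, we use the universal property of the product. A morphism $Y\to X\times S$ over $S$ is a pair $(g,\psi)$ with $g\colon Y\to X$ arbitrary and second component forced to be $\psi$. This gives a bijection $\Hom_{\scrC_S}(\psi, X_S)\simeq\Hom_\scrC(Y,X)$. Naturality in $Y$ is immediate because precomposition commutes with pairing. Naturality in $X$ holds because $X\times S\to X'\times S$ is $f\times\id_S$.

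For (2), we argue the same way with fiber products. A morphism $(Y\to T)\to (X\times_S T\to T)$ over $T$ is a pair of maps $Y\to X$ and $Y\to T$, the latter fixed, which agree over $S$. This is exactly a morphism $j_{T/S}(Y)\to X$ in $\scrC_S$. Alternatively, (2) is (1) applied in the category $\scrC_S$: products in $\scrC_S$ are fiber products over $S$, and $\scrC_T\simeq(\scrC_S)_{\phi}$ with $j_{T/S}$ the forgetful functor.

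For (3), we must show $\Hom_{\scrC_S}(X\times S\to S,\; f)\simeq\Hom_\scrC(X,P)$, where $P=1\times_{S^S}Y^S$. A morphism $X\times S\to Y$ over $S$ is a map $g\colon X\times S\to Y$ with $f g=\rho_S$. By the exponential adjunction, $g$ corresponds to $\tilde g\colon X\to Y^S$. The condition $fg=\rho_S$ transposes to $f^S\circ\tilde g=\widetilde{\rho_S}$. Since $\rho_S\colon X\times S\to S$ factors as $X\times S\to 1\times S\to S$, naturality of the exponential adjunction shows $\widetilde{\rho_S}$ is the composite $X\to 1\xrightarrow{\iota}S^S$, where $\iota$ is the transpose of $1\times S\simeq S\xrightarrow{\id}S$. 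Here the statement's ``$\id_T$'' should read $\id_S$. Hence morphisms over $S$ correspond to maps $X\to Y^S$ whose composite with $f^S$ factors through $\iota$, that is, to maps $X\to P$. Naturality in $X$ and $f$ follows from naturality of the exponential adjunction and of pullbacks.

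For (4), we apply (3) inside the Cartesian closed category $\scrC_S$, with terminal object $\id_S$. Using \cref{a slice of a slice is a slice}, $(\scrC_S)_\phi\simeq\scrC_T$, and under this identification the functor $j_{\phi}^*$ of (3) becomes $j_{T/S}^*$ by the remark in (2).

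The only real work is the transpose computation $\widetilde{\rho_S}=\iota\circ(X\to1)$ in (3), which amounts to a careful use of naturality of the exponential adjunction in the first variable. The remaining parts are direct unwindings of universal properties, together with the verification that all the bijections are natural.
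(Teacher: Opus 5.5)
Your proposal is correct. It is essentially the standard argument the paper defers to by citing Mac Lane--Moerdijk: you transpose across the exponential adjunction and identify the condition $fg=\rho_S$ with factoring through the pullback along the name $\iota$ of $\id_S$. Your reduction of parts 2 and 4 to parts 1 and 3 via \cref{a slice of a slice is a slice} is exactly the paper's own remark, and your corrections of $X\times T$ to $X\times S$ in part 1 and of $\id_T$ to $\id_S$ in part 3 are right.

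One point should be made explicit. Cartesian closedness does not by itself guarantee that the pullback $1\times_{S^S}Y^S$ exists. In fact your computation shows that $j_S^*$ has a right adjoint exactly when these pullbacks exist. So parts 3 and 4 tacitly need that hypothesis (e.g.\ finite limits), which holds in the presheaf categories where the paper applies the result.
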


\begin{notation}\label{base change notation}
    Let $\scrC$ be a category with products and $X,S \in \scrC$. We write $X_S$ to denote the base change $j_S^*(X)$. Similarly, for any $T \to S$ and $Y \in \scrC_S$, we write $Y_T$ to denote the base change $j_{T/S}^*Y$.
\end{notation}

\begin{rem}\label{slice categories extra right adjoint remarks}
    In statement \ref{right adjoint to right adjoint on slice category}, observe that it is enough for the category to allow exponentiation by $S$; i.e. that the functor $X \to X\times S$ has a right adjoint.
\end{rem}

\begin{rem}
    Note that statements 2 and 4 above follow from statements 1 and 3 by \cref{a slice of a slice is a slice}.
\end{rem}

\begin{rem}
    It follows from \cref{associativity of fiber product} that pullback is associative up to canonical isomorphism; i.e., for any $X \in \scrC$ and $T \to S$ as above we have $X_T \simeq (X_S)_T$ naturally in $X$.

    Similarly, for any $T' \to T$ and $Y \in \scrC/S$ we have $Y_{T'}\simeq (Y_T)_{T'}$ naturally in $Y$.
\end{rem}

\begin{cor}\label{pullback preserves limits in categories with fiber products}
    Let $\scrC$ be a category with finite limits. Then for any $T \to S$ in $\scrC$, the pullbacks $j_{S}^*$ and $j_{T/S}^*$ preserve limits.
\end{cor}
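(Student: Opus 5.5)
The plan is to deduce this directly from the existence of right adjoints established in \cref{adjoint functors on slice categories}, together with the general fact that right adjoints are continuous. For $j_S^*$: since $\scrC$ has finite limits, it has binary products. Then \cref{adjoint functors on slice categories}(1) gives the adjunction $j_S \dashv j_S^*$. Hence $j_S^*$ is a right adjoint and preserves all limits that exist in $\scrC$. Similarly, $\scrC$ has fiber products, so \cref{adjoint functors on slice categories}(2) gives $j_{T/S} \dashv j_{T/S}^*$, and $j_{T/S}^*$ preserves limits for the same reason.

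One point needs care: ``right adjoints are continuous'' is usually stated for small limits. I will use it in the form that a right adjoint preserves every limit that exists in its domain. The proof is the standard one. Let $L = \lim D_i$ be a limit in $\scrC$ for a diagram $D$. Then for any $Z$ in the slice category we have natural isomorphisms
\begin{align*}
\Hom(Z, G(L)) \simeq \Hom(F(Z), L) \simeq \lim_i \Hom(F(Z), D_i) \simeq \lim_i \Hom(Z, G(D_i)).
\end{align*}
So $G(L)$, with the images of the limit cone, is a limit of $G\circ D$. Here $G$ stands for $j_S^*$ or $j_{T/S}^*$ and $F$ for the corresponding forgetful functor. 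In particular, the finite limits of $\scrC$ are sent to limits in $\scrC_S$ (respectively $\scrC_T$).

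As an optional sanity check, one can also verify the claim by hand. Limits in a slice $\scrC_S$ of a connected diagram are computed in $\scrC$, while the terminal object is $\id_S$ and products are fiber products over $S$. Pulling back then commutes with these by \cref{associativity of fiber product} and \cref{Cartesian rectangles}. This is unnecessary given the adjunction.

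I do not expect a real obstacle. The only subtle step is making sure the adjunction hypotheses (binary products and fiber products) are implied by finite limits, which they are.
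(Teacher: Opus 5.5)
Your proposal is correct and takes the same approach as the paper. The paper's proof is the one-line observation that $j_S^*$ and $j_{T/S}^*$ are right adjoints (by \cref{adjoint functors on slice categories}) and hence preserve limits. Your check that a right adjoint preserves every existing limit, not only small ones, is a harmless refinement of that argument.
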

\begin{proof}
    Immediate since right adjoints preserve limits.
\end{proof}

\begin{notation}
    Let $\scrC$ be a category with fiber products, and $f\colon T \to S$ a morphism in $\scrC$. In this situation the pullback $j_{T/S}^*$ is usually denoted by $f^*$.
\end{notation}

\begin{lem}\label{pullback is the same as pullback by restriction to subobject of the target}
    Let $\scrC$ be a category with fiber products, $X \to S$ a morphism in $\scrC$, and $f\colon T\to S$ be a morphism in $\scrC$. Suppose that $X \to S$ and $f$ both factor through a sub-object $U$ of $S$. Then $f^*(X) = f|_{[T \to U]}^*(X)$.
\end{lem}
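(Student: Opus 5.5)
The plan is to reduce the claim to the universal property of fiber products, using associativity of the fiber product (\cref{associativity of fiber product}) and the pasting lemma for Cartesian squares (\cref{Cartesian rectangles}). Write $\iota\colon U \hookrightarrow S$ for the monomorphism, $\pi\colon X \to S$ for the structure map, and $\pi_U\colon X \to U$, $f_U\colon T \to U$ for the factorizations, so that $\pi = \iota\circ\pi_U$ and $f = \iota\circ f_U$. Here $f^*(X) = X\times_S T$ and $f|_{[T \to U]}^*(X) = X\times_U T$, where $X$ is regarded as an object over $U$ via $\pi_U$. Both are objects over $T$ via the second projection. The claim is that they are canonically isomorphic over $T$, compatibly with the projections to $X$.

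First, compare the two universal properties at the level of $\scrD$-points, or directly in $\scrC$. A cone into $X\times_S T$ is a pair $x\colon Z\to X$, $t\colon Z \to T$ with $\iota\pi_U x = \iota f_U t$. Since $\iota$ is a monomorphism, this holds if and only if $\pi_U x = f_U t$, which is exactly the condition for a cone into $X \times_U T$. So both objects represent the same subfunctor of $h_X\times h_T$. By Yoneda they are canonically isomorphic, and the isomorphism commutes with both projections.

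Alternatively, one can argue more structurally. First observe that the square with vertices $U$, $U$, $U$, $S$, whose arrows are $\id_U$, $\id_U$, $\iota$, $\iota$, is Cartesian; this is precisely the statement that $\iota$ is monic (cf.\ the proof of \cref{a functor which preserves finite limits preserves monomorphisms}). Then $X\times_S T \simeq (X\times_U U)\times_S(U\times_U T)$, and pasting Cartesian squares via \cref{Cartesian rectangles} together with $U\times_S U\simeq U$ identifies this with $X\times_U T$.

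There is no real obstacle here. The only point needing care is the mild abuse of notation: the equality in the statement should be read as a canonical isomorphism of objects of $\scrC_T$, consistent with the paper's identification of pullbacks up to canonical isomorphism. I would make this explicit, and also remark that the isomorphism is natural in $X$.
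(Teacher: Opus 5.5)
Your proposal is correct, and your main argument takes a slightly different route from the paper.

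The paper uses a single pasting. It puts the defining Cartesian square of $X\times_U T$ (over $f_U\colon T\to U$) next to a second square whose top edge is the identity $T\to T$, whose vertical edges are $f_U$ and $f$, and whose bottom edge is $\iota\colon U\hookrightarrow S$. This right-hand square is Cartesian, i.e.\ $U\times_S T\simeq T$. \Cref{Cartesian rectangles} then shows that the outer rectangle, which is the square defining $X\times_S T$, is Cartesian.

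You instead compare the two universal properties directly. You observe that $\iota\circ\pi_U\circ x=\iota\circ f_U\circ t$ holds if and only if $\pi_U\circ x=f_U\circ t$, so both objects represent the same subfunctor of $h_X\times h_T$.

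The two arguments rest on the same fact. The Cartesianness of the paper's right-hand square is exactly your monicity cancellation; for instance, by \cref{pullbakcs of monomorphisms lemma} the pullback of $U\hookrightarrow S$ along $f$ represents all of $h_T$. The paper leaves this step implicit, and your version makes it explicit. The pasting version, in exchange, stays purely diagrammatic and reuses a general lemma.

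Your second, ``structural'' argument is in the spirit of the paper's proof. However, routing it through $U\times_S U\simeq U$ forces two extra re-bracketings via \cref{associativity of fiber product} before you reach $X\times_U T$, and you do not spell these out. Pasting with $U\times_S T\simeq T$, as the paper does, gets there in one step.
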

\begin{proof}
    Indeed, we have a diagram
    \[
        \begin{tikzcd}
            {(f|_{[T \to U]})^*(X)} \arrow[d] \arrow[r] & T \arrow[d, "{f|_{[T \to U]}}"] \arrow[r] & T \arrow[d, "f"] \\
            X \arrow[r]                                 & U \arrow[r, hook]                               & S
        \end{tikzcd}
    \]
    where both squares are Cartesian; we conclude by the Cartesian rectangle lemma (\cref{Cartesian rectangles}).
\end{proof}

%% file: Chapters/Category-Theoretic_Prerequisites/Graphs.tex
\section{Graphs}
The material in this section is largely standard. It will not be used until \cref{chapter: S-rational families}.

Let $\scrC$ be a category with products and fiber products, and $\phi\colon X \to Y$ a morphism in $\scrC$

Recall that the graph $\Gamma_\phi$ of $\phi$ is defined to be the fiber product
\[
    \begin{tikzcd}
        X \times_Y Y \arrow[r] \arrow[d] & Y \arrow[d, "\id_Y"] \\
        X \arrow[r, "\phi"]              & Y
    \end{tikzcd}
\]
In particular, the graph is isomorphic to $X$, and using \cref{pullbakcs of monomorphisms lemma} we see that it represents the presheaf
\begin{align*}
    T \mapsto \{ (x,y) \in X(T) \times Y(T) : \phi(x) = y\}.
\end{align*}
From this we get:
\begin{lem}\label{graph of morphis criterion}
    Let $\Gamma \subset X \times Y$ be a subobject, and consider the projections $\rho_X\colon \Gamma \to X$ and $\rho_Y\colon \Gamma \to Y$. Suppose $\rho_X$ is an isomorphism. Then $\Gamma$ is the graph of the morphism $\rho_Y \circ \rho_X\inv$.
\end{lem}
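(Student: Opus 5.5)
Since $\rho_X\colon \Gamma \to X$ is an isomorphism, the morphism $\psi := \rho_Y \circ \rho_X\inv\colon X \to Y$ is well defined. The plan is to show that $\Gamma$ and $\Gamma_\psi$ are the same subobject of $X\times Y$. Both are subobjects, so by the Yoneda lemma it suffices to check that they represent the same subpresheaf of $h_X\times h_Y$, i.e.\ that for every $T$ they have the same $T$-points inside $X(T)\times Y(T)$. By the discussion preceding the lemma (via \cref{pullbakcs of monomorphisms lemma}), $\Gamma_\psi$ represents
\begin{align*}
    T \mapsto \{(x,y)\in X(T)\times Y(T) : \psi(x) = y\},
\end{align*}
so the task reduces to identifying the image of $\Gamma(T)$ in $X(T)\times Y(T)$ with this set.

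First we show $\Gamma \subset \Gamma_\psi$. Let $g\in\Gamma(T)$, whose image in $X\times Y$ is $(\rho_X(g),\rho_Y(g))$. Then $\psi(\rho_X(g)) = \rho_Y(\rho_X\inv(\rho_X(g))) = \rho_Y(g)$, so the image lies in $\Gamma_\psi(T)$.

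Next we show $\Gamma_\psi \subset \Gamma$. Let $(x,y)$ satisfy $\psi(x)=y$, and set $g = \rho_X\inv(x)\in\Gamma(T)$, which exists because $\rho_X$ is an isomorphism. Then $\rho_X(g) = x$ and $\rho_Y(g) = \psi(x) = y$, so $g$ maps to $(x,y)$. Hence the two subpresheaves coincide, and since both are representable subobjects, they are equal in $\Sub(X\times Y)$.

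No step here is a serious obstacle. The only point needing care is the Yoneda argument: an equality of subpresheaves of $h_{X\times Y}$, together with the fact that both inclusions are monomorphisms, gives an isomorphism over $X\times Y$ and hence equality as subobjects. This uses \cref{Yoneda is exact} to identify $h_{X\times Y}$ with $h_X\times h_Y$.
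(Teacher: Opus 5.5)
Your proposal is correct and is the argument the paper intends. The paper gives no separate proof: it derives the lemma from the description of $\Gamma_\psi$ as the subpresheaf $T \mapsto \{(x,y) : \psi(x) = y\}$ obtained via \cref{pullbakcs of monomorphisms lemma}, and your two inclusions plus the Yoneda identification of subobjects spell out exactly that deduction.
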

Also by \cref{pullbakcs of monomorphisms lemma} we have:
\begin{lem}\label{Diagonal cartesian square for graphs}
    For any morphism $\phi\colon X \to Y$, we have a commutative diagram
    \[
        \begin{tikzcd}
            X \arrow[rd, dashed] \arrow[rdd, "{(\id_X,\phi)}"', bend right] \arrow[rrd, "\phi", bend left] &                       &    \\
            & \Gamma_\phi \arrow[r] \arrow[d, hook]     & Y \arrow[d, "\Delta_Y", hook] \\                                          & X \times Y \arrow[r, "\phi \times \id_Y"] & Y \times Y
        \end{tikzcd}
    \]
    where $\Delta_Y$ denotes the diagonal morphism, the square is Cartesian, and the dashed arrow is an isomorphism.
\end{lem}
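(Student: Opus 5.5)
We need to show that there is a commutative diagram in which the square is Cartesian and the induced map $X \to \Gamma_\phi$ is an isomorphism. The plan is to work in presheaves via the Yoneda embedding, which is continuous (\cref{Yoneda is exact}), and then describe everything on $T$-points.

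First, I identify the fiber product of $\phi\times\id_Y\colon X\times Y \to Y\times Y$ with $\Delta_Y$. By \cref{pullbakcs of monomorphisms lemma}, $\Delta_Y$ is a monomorphism, so this fiber product is a subobject of $X\times Y$. It represents the subpresheaf
\[
T \mapsto \{(x,y)\in X(T)\times Y(T) : (\phi(x),y) \in \Delta_Y(T)\},
\]
and the condition $(\phi(x),y)\in\Delta_Y(T)$ is exactly $\phi(x)=y$. This is the same subpresheaf that the paper identifies, just before \cref{graph of morphis criterion}, as the one represented by $\Gamma_\phi$. Since representing objects are unique up to unique isomorphism compatible with the inclusions into $X\times Y$, the square with $\Gamma_\phi$ in the corner is Cartesian. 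The top arrow $\Gamma_\phi\to Y$ is the second projection.

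Next, I check that the outer triangles commute and produce the dashed arrow. We have $(\phi\times\id_Y)\circ(\id_X,\phi) = (\phi,\phi) = \Delta_Y\circ\phi$, so the pair $((\id_X,\phi),\phi)$ is a cone over the cospan. The universal property of the pullback then gives a unique dashed arrow $X\to\Gamma_\phi$ making both triangles commute.

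It remains to show the dashed arrow is an isomorphism. On $T$-points it sends $x \mapsto (x,\phi(x))$. This is a bijection onto $\{(x,y): \phi(x)=y\}$, with inverse $(x,y)\mapsto x$, the restriction of the first projection. Naturality in $T$ is clear, so by Yoneda the map is an isomorphism. Alternatively, \cref{graph of morphis criterion} applies directly, since $\rho_X$ has the dashed map as a two-sided inverse.

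No step is a real obstacle. The only care needed is in handling the identification of subobjects up to isomorphism, which follows from the uniqueness of representing objects.
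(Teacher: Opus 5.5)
Your proposal is correct and follows the paper's argument: the paper derives the lemma directly from the pullbacks-of-monomorphisms lemma. It identifies the fiber product of $\phi\times\id_Y$ and $\Delta_Y$ with the subpresheaf $T\mapsto\{(x,y):\phi(x)=y\}$ that $\Gamma_\phi$ represents, and gets $X\simeq\Gamma_\phi$ from the definition $\Gamma_\phi = X\times_Y Y$ as a pullback of $\id_Y$. One small correction: $\Delta_Y$ is monic because a projection $Y\times Y\to Y$ is a retraction of it, not because of that lemma; the lemma is what then makes the pullback a subobject of $X\times Y$.
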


From the above we get:
\begin{enumerate}\label{Inverse and morphism graph lemma}
    \item Morphisms are determined uniquely by their graphs
    \item  Let $\phi\colon X \to Y$, $\psi\colon Y \to X$  be morphisms in $\scrC$. Then $\phi$ and $\psi$ are mutually inverse isomorphisms in $\scrC$ if and only if we have a Cartesian diagram:
          \[
              \begin{tikzcd}
                  \Gamma_\phi \arrow[r] \arrow[d, hook] & \Gamma_\psi \arrow[d, hook] \\
                  X \times Y \arrow[r, "\rho_{YX}", "\sim"']   & Y\times X
              \end{tikzcd}
          \]
          where $\rho_{YX}$ denotes the coordinate permutation.
\end{enumerate}

Taking the graph of a morphism commutes with pullback:
\begin{prop}\label{Taking the graph of a morphism commutes with pullback:}
    Let $\scrC$ be a category with fiber products and let $f\colon T \to S$ be a morphism in $\scrC$. Let $\phi\colon X \to Y$ be a morphism in $\scrC_S$ with graph $\Gamma_\phi$.

    Then $f^*(\Gamma_\phi) = \Gamma_{f^*(\phi)}$.
\end{prop}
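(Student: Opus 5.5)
The plan is to argue purely formally from the pullback description of the graph, combined with \cref{Cartesian rectangles} and \cref{associativity of fiber product}. We work in $\scrC_S$. Fiber products there are computed in $\scrC$ as fiber products over the base, since $j_S$ creates connected limits. The graph of $\phi\colon X \to Y$ in $\scrC_S$ is the fiber product $X \times_Y Y$, taken along $\phi$ and $\id_Y$. Equivalently, it is the subobject of $X\times_S Y$ obtained by pulling back the diagonal $\Delta_{Y/S}\colon Y \to Y\times_S Y$ along $\phi\times_S \id_Y$, as in \cref{Diagonal cartesian square for graphs}.

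Write $f^*(\phi)\colon X_T \to Y_T$. By definition $\Gamma_{f^*(\phi)}$ is $X_T \times_{Y_T} Y_T$. The key observation is that $f^* = j_{T/S}^*$ is a right adjoint by \cref{adjoint functors on slice categories}, so it preserves limits. By \cref{pullback preserves limits in categories with fiber products} it therefore preserves the fiber product diagram
\[
    \begin{tikzcd}
        \Gamma_\phi \arrow[r] \arrow[d] & Y \arrow[d, "\id_Y"] \\
        X \arrow[r, "\phi"] & Y
    \end{tikzcd}
\]
in $\scrC_S$. Applying $f^*$ yields a Cartesian square in $\scrC_T$ with corners $f^*\Gamma_\phi$, $Y_T$, $X_T$, $Y_T$, whose bottom arrow is $f^*(\phi)$ and whose right arrow is $f^*(\id_Y) = \id_{Y_T}$. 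Since fiber products are unique up to unique isomorphism compatible with the projections, this identifies $f^*(\Gamma_\phi)$ with $\Gamma_{f^*(\phi)}$.

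The remaining work is a bookkeeping step. We must check that this identification is compatible with the embeddings into $X_T\times_T Y_T \simeq (X\times_S Y)_T$, so that the equality holds as subobjects and not merely as abstract objects. This follows because $f^*$ preserves products in $\scrC_S$, that is, fiber products over $S$. Hence $f^*(X\times_S Y)\simeq X_T\times_T Y_T$ via the canonical map given by \cref{associativity of fiber product}, and $f^*$ sends the monomorphism $\Gamma_\phi\hookrightarrow X\times_S Y$ to a monomorphism (\cref{a functor which preserves finite limits preserves monomorphisms}) whose components are the pulled-back projections.

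The main obstacle is mild. It consists only of confirming that the canonical isomorphisms $f^*(\id_Y)=\id_{Y_T}$ and $f^*(X\times_S Y)\simeq X_T\times_T Y_T$ commute with the relevant projections. If preferred, this can instead be verified directly on $T'$-points via \cref{pullbakcs of monomorphisms lemma}. Both sides represent
\[
\{(x,y) \in X(T')\times Y(T') : \phi(x) = y\}
\]
for $T'$ over $T$, with $X(T')$ and $Y(T')$ taken over $S$ via $T'\to T\to S$.
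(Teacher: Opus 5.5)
Your proposal is correct and takes the same approach as the paper, which likewise notes that the pullback $f^*$ preserves fiber products (\cref{pullback preserves limits in categories with fiber products}), so the Cartesian square defining $\Gamma_\phi$ pulls back to the one defining $\Gamma_{f^*(\phi)}$. Your extra check that this identification respects the embeddings into $X_T\times_T Y_T$ is a harmless refinement that the paper leaves implicit.
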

\begin{proof}
    Immediate since pullbacks preserve fiber products (\cref{pullback preserves limits in categories with fiber products}), and the Cartesian square
    \[
        \begin{tikzcd}
            \Gamma_\phi \arrow[d] \arrow[r] & Y\arrow[d]\\
            X \arrow[r] & Y
        \end{tikzcd}
    \]
    pulls back to the Cartesian square defining $\Gamma_{f^*(\phi)}$.
\end{proof}

%% file: Chapters/Category-Theoretic_Prerequisites/Presheaves.tex
\section{Presheaf Categories}\label{Presheaves prerequisites section}
In this section we collect various results on presheaf categories. In particular, we discuss the internal hom functor and recall the precomposition functor and its adjoints.

\begin{lem}
    Let $\scrC$ be a small category. Then $\Presh(\scrC)$ is locally small.
\end{lem}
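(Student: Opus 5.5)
The plan is to show directly that for any two presheaves $F,G\in\Presh(\scrC)$ the collection $\Hom(F,G)$ of natural transformations is a set. A morphism $\eta\colon F\to G$ is by definition a family $(\eta_T)_{T\in\scrC}$ of functions $\eta_T\colon F(T)\to G(T)$, subject to the naturality condition that $\eta_{T'}(a|_{T'}) = \eta_T(a)|_{T'}$ for every morphism $T'\to T$ and every $a\in F(T)$. So I will exhibit $\Hom(F,G)$ as a subclass of a product that is visibly a set.

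First, since $\scrC$ is small, its collection of objects $\mathrm{Ob}(\scrC)$ is a set. Second, for each object $T$ the values $F(T)$ and $G(T)$ are sets (presheaves take values in $\textit{Set}$), so the collection of functions $\Hom_{\textit{Set}}(F(T),G(T))$ is a set, namely a subset of the power set of $F(T)\times G(T)$. Third, the assignment $\eta\mapsto(\eta_T)_T$ embeds $\Hom(F,G)$ into the product
\begin{align*}
    \prod_{T\in \mathrm{Ob}(\scrC)} \Hom_{\textit{Set}}(F(T),G(T)).
\end{align*}
This product is indexed by a set and has set-sized factors, so by the axioms of replacement and union it is a set. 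Finally, $\Hom(F,G)$ is cut out of this set by the naturality condition, so it is a set by separation.

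There is no real obstacle here. The only point that deserves care is that smallness of $\scrC$ is used twice: once so that the indexing class of objects is a set, and once (through local smallness of $\scrC$) so that the naturality condition quantifies over a set of morphisms. The second use is not actually needed for the conclusion, since separation applies to any definable condition. I will remark that without smallness the argument fails, because the product could be indexed by a proper class. This is exactly why the paper's size conventions insist that $\scrC$ be small.
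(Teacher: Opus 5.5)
Your proof is correct and is essentially the paper's argument. The paper just cites Mac Lane, Chapter~II.4, together with the local smallness of $\textit{Set}$. The argument there is exactly yours: embed $\Hom(F,G)$ in the set $\prod_{T}\Hom_{\textit{Set}}(F(T),G(T))$ and cut it out by the naturality condition.
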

\begin{proof}
    Apply the remarks of \cite[Chapter~II.4]{Saunders_categories}, observing that the category of sets is locally small.
\end{proof}

\begin{rem}[Iterated Presheaves]\label{Iterated presheaves remark}
    Let $\scrC$ be a category. Then by the Yoneda embedding, there is a restriction map $r\colon \Presh(\Presh(\scrC)) \to \Presh(\scrC)$ such that for any presheaf $F$ on $\Presh(\scrC)$ and $X \in \Presh(\scrC)$ we have $r(F)(X) = F(h_X)$.

    Observe that, by the Yoneda lemma, for any $X\in \Presh(\scrC)$ we have $r(h_X) \simeq X$.
\end{rem}

\begin{dfn}[Internal Hom]\label{internal Hom definition}
    Let $\scrC$ be a small category, and $X,Y \in \Presh(\scrC)$. We denote by $\iHom(X,Y)$, or the \emph{internal hom of $X$ and $Y$}, the functor defined by
    \begin{align*}
        \iHom(X,Y)(T) = \Hom_{\Presh(\scrC)}(h_T \times X, Y),
    \end{align*}
    where for morphisms $T' \to T$ the corresponding restriction map is given by precomposition with the induced map $X \times h_{T'} \to X \times h_T$.
\end{dfn}

\begin{rem}\label{internal Hom in locally small category}
    Note that for objects $X,Y \in \scrC$ we have by the Yoneda lemma (and since the Yoneda embedding preserves products) that
    \begin{align*}
        \iHom(h_X,h_Y)(T) = \Hom_\scrC(T \times X, Y),
    \end{align*}
    and restriction along $T' \to T$ is given by precomposition with $X \times T' \to X\times T$.

    In particular, even if the category is only locally small, then one can still define the internal hom of representable presheaves.
\end{rem}

\begin{rem}\label{internal Hom of iterated presheaves}
    For any presheaf $X \in \Presh(\scrC)$, one can consider the presheaf $\iHom(h_X,h_X) \in \Presh(\Presh(\scrC))$. By the Yoneda lemma, we have that $r(\iHom(h_X,h_X)) = \iHom(X,X)$, where $r$ is the restriction defined in \cref{Iterated presheaves remark}.
\end{rem}

\begin{rem}
    In fact, the assignment $X,Y \mapsto \iHom(X,Y)$ defines a functor from the product category $\Presh(\scrC)^{op} \times \Presh(\scrC) $ to $\Presh(\scrC)$, called the \emph{internal hom functor}, where given morphisms $\phi\colon X' \to X$ and $\psi\colon Y \to Y'$ the morphism
    \begin{align*}
        \iHom(\phi,\psi)\colon \iHom(X,Y)\to \iHom(X',Y')
    \end{align*}
    is induced by precomposition with $\phi_T\times \id_T$ and postcomposition with $\psi_T$ for every $T$. See \cite{nlab:closed_monoidal_structure_on_presheaves}.
\end{rem}

\begin{rem}[Presheaf categories are Cartesian closed]\label{The category of presheaves is Cartesian closed}
    If $\scrC$ is small, then the category of presheaves on $\scrC$ is Cartesian closed. Indeed, $\Presh(\scrC)$ is complete (i.e., admits small limits), and the limits are computed pointwise (see \cite[Chapter~5]{Saunders_categories}). Moreover, since $\scrC$ is small, for any $Y \in \Presh(\scrC)$, $\Presh(\scrC)$ admits an exponential functor defined using the internal hom: $(-)^Y = \iHom(Y,-)$. See \cite[Section~I.6]{Mac_Lane_Moerdijk_1994}.
\end{rem}

Morphisms between categories induce morphisms between the corresponding presheaf categories:
\begin{dfn}[The Precomposition Functor, {\cite[\href{https://stacks.math.columbia.edu/tag/00VC}{Tag 00VC}]{stacks-project}}]\label{functors induce precomposition functors on presheaf categories}
    Let $\scrC$, $\scrD$ be categories. Given any functor $f\colon \scrC \to \scrD$, we denote by $f\inv\colon \Presh(\scrD) \to \Presh(\scrC)$ (or $f^p$ in the parlance of \cite[\href{https://stacks.math.columbia.edu/tag/00VC}{Tag 00VC}]{stacks-project}) the ``precomposition functor' or ``pullback'' sending any presheaf $X$ in $\Presh(\scrD)$ to the presheaf $X \circ f$, and sending any morphism of presheaves $\phi\colon X \to Y$ to the induced morphism $f\inv(\phi)\colon X \circ f \to Y \circ f$; i.e., for any $T \in \scrD$, we have $f\inv(\phi)_T = \phi_{f(T)}$ as a map from $X(f(T))$ to $Y(f(T))$.
\end{dfn}



\begin{prop}\label{adjoints to precomposition functor}
    \begin{enumerate}
        \item    Assuming all the relevant limits exist, the functor $f\inv$ has a left adjoint $f_!$ sending any presheaf $X \in \Presh(\scrC)$ to the presheaf
              \begin{align*}
                  T \mapsto \varinjlim_{\phi\colon T \to F(T')}F(T').
              \end{align*}
              See \cite[Expos\'e~i, Proposition~5.1]{SGA4} or \cite[\href{https://stacks.math.columbia.edu/tag/00XZ}{Tag 00XZ}]{stacks-project} for details.

        \item Similarly, assuming all the relevant limits exist, the functor $f\inv$ has a right adjoint $f_*$ sending any $X \in \Presh(\scrC)$ to the presheaf
              \begin{align*}
                  T \mapsto \varprojlim_{\phi\colon F(T') \to T}F(T').
              \end{align*}
              See \cite[\href{https://stacks.math.columbia.edu/tag/00XF}{Tag 00XF}]{stacks-project}.
    \end{enumerate}
\end{prop}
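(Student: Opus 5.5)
The plan is to build both adjoints from the Yoneda lemma and the density of representables, and only afterwards identify them with the displayed pointwise formulas. Read correctly, those formulas say the following. For $X \in \Presh(\scrC)$ and $T \in \scrD$, $f_!X(T)$ is the colimit of $X(T')$ over the category of pairs $(T', \phi\colon T \to f(T'))$, with the variance making $T' \mapsto X(T')$ a diagram. Dually, $f_*X(T)$ is the limit of $X(T')$ over pairs $(T', \phi\colon f(T') \to T)$. The hypothesis ``assuming the relevant limits exist'' will be used exactly once each: to guarantee that these colimits and limits of sets exist, which holds for instance if $\scrC$ is small.

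\textbf{Right adjoint.} First I observe what the Yoneda lemma forces. If $f_*$ exists, then
\[
f_*X(T) \simeq \Hom(h_T, f_*X) \simeq \Hom(f\inv h_T, X).
\]
So I define $f_*X(T) := \Hom_{\Presh(\scrC)}(f\inv h_T, X)$; it is functorial in $T$ because $T \mapsto f\inv h_T$ is a functor.

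Next I identify this with the stated limit. The presheaf $f\inv h_T$ sends $T'$ to $\Hom_\scrD(f(T'), T)$, and its category of elements is the comma category of pairs $(T', \phi\colon f(T') \to T)$. Writing $f\inv h_T$ as the canonical colimit of representables over its category of elements gives
\[
\Hom(f\inv h_T, X) = \varprojlim_{\phi\colon f(T') \to T} X(T').
\]
To prove the adjunction, I write an arbitrary $Y \in \Presh(\scrD)$ as a colimit of representables $h_T$. Since $f\inv$ preserves colimits (they are computed pointwise), I get
\[
\Hom(Y, f_*X) = \varprojlim \Hom(h_T, f_*X) = \varprojlim \Hom(f\inv h_T, X) = \Hom(f\inv Y, X).
\]
Finally I check that this bijection is natural in $X$ and $Y$.

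\textbf{Left adjoint.} I set $f_!(h_{T'}) := h_{f(T')}$, as Yoneda forces via
\[
\Hom(f_!h_{T'}, Y) = Y(f(T')) = \Hom(h_{T'}, f\inv Y).
\]
I then extend to all of $\Presh(\scrC)$ by colimits: $f_!X := \varinjlim h_{f(T')}$, the colimit taken over the category of elements of $X$. The adjunction follows from
\[
\Hom(f_!X, Y) = \varprojlim Y(f(T')) = \varprojlim \Hom(h_{T'}, f\inv Y) = \Hom(X, f\inv Y).
\]
Evaluating $f_!X$ at $T$ gives a colimit, over pairs $(T', x \in X(T'))$, of $\Hom(T, f(T'))$. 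Rearranging this double colimit (a Fubini/coend manipulation) turns it into the colimit over pairs $(T', \phi\colon T \to f(T'))$ of $X(T')$, which is the stated formula.

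\textbf{Main obstacle.} I expect the only delicate step to be this last identification for $f_!$: keeping the variances straight and verifying that the reindexing of the double colimit is legitimate, via cofinality or coend calculus. Everything else is Yoneda plus the fact that limits and colimits of presheaves are computed pointwise. If the index categories are not small, the existence hypothesis in the statement is exactly what covers the gap.
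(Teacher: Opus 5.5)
Your proposal is correct, and your reading of the garbled displayed formulas is the intended one. The paper gives no argument of its own; it defers to SGA4 and the Stacks Project, where the route is direct. There $f_!$ and $f_*$ are \emph{defined} by the pointwise formulas, and the adjunction is checked by hand. For $f_!$, a map $f_!X\to Y$ is restricted along the insertions $X(T')\to f_!X(f(T'))$ at the indices $(T',\id_{f(T')})$. Conversely, $\beta\colon X\to f^{-1}Y$ induces $f_!X(T)\to Y(T)$ from the compatible maps $Y(\phi)\circ\beta_{T'}\colon X(T')\to Y(T)$. One then checks that these two constructions are mutually inverse.

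You instead let the Yoneda lemma dictate the answer, via $f_*X(T)=\Hom(f^{-1}h_T,X)$ and $f_!h_{T'}=h_{f(T')}$. You get the adjunction as a chain of natural bijections from density of representables and the fact that $f^{-1}$ preserves colimits, and only afterwards identify your objects with the formulas. Your route explains why the formulas are forced and makes the adjunction nearly formal. The direct route hands you explicit unit and counit maps, which is the form in which the paper later uses such adjunctions (compare the description of $\alpha_T$ in \cref{pullback of presheaves on slice categories}).

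The step you flag as the main obstacle is easier than you fear. Consider $\varinjlim\Hom(T,f(T'))$ over the category of elements of $X$, and $\varinjlim X(T')$ over pairs $(T',\phi\colon T\to f(T'))$. Both are literally the quotient of the set of triples $(T',x,\phi)$, with $x\in X(T')$ and $\phi\colon T\to f(T')$, by the equivalence relation generated by $(T',X(\alpha)(x''),\phi)\sim(T'',x'',f(\alpha)\circ\phi)$ for $\alpha\colon T'\to T''$. So no cofinality or coend argument is needed.

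Finally, for $f_!$ you only construct bijections natural in $Y$. Functoriality of $f_!$ and naturality in $X$ then come for free from \cref{universal property characterization of adjoint functors}.
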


%% file: Chapters/Category-Theoretic_Prerequisites/Presheaves_on_Slice_Categories.tex

\section{Presheaves on Slice Categories}
We recall that presheaves on slice categories are equivalent to slices of presheaf categories, we discuss pullbacks of presheaves on slice categories, we give a second interpretation of the internal hom on presheaves, and we show for any $X\in \Presh(\scrC)$ that $\iHom(X,X)$ is a presheaf of semigroups.

First let us discuss pullbacks of presheaves on slice categories. There are two notions of pullback for slice categories; we show that they are equivalent.
\begin{dfn}\label{pullback of presheaves on slice categories}
    Let $\scrC$ be a category and $T \in \scrC$. Recall the forgetful functor $j_T\colon \scrC_T \to \scrC$ from \cref{forgetful functor on slice categories}.
    \begin{enumerate}
        \item    By \cref{functors induce precomposition functors on presheaf categories}, we have a precomposition map, which (following \cite[\href{https://stacks.math.columbia.edu/tag/00XZ}{Tag 00XZ}]{stacks-project}) we will denote by $j_T\inv$, from $\Presh(\scrC)$ to $\Presh(\scrC_T)$, sending $X \in \Presh(\scrC)$ to $X\circ j_T$. For any $X \in \Presh(\scrC)$ we denote by $X_T$ the pullback $j_T\inv(X)$.

        \item   By \cref{adjoints to precomposition functor}, in good situations, $j_T\inv$ has left and right adjoints $(j_T)_!$ and $(j_T)_*$ respectively. Unraveling the definitions, we see that if $\scrC$ is locally small    then the left adjoint $(j_T)_!$ exists and is defined as follows: for any $X \in \Presh(\scrC_T)$ and $S \in \scrC$, we have
              \begin{align*}
                  (j_T)_!(X)(S) = \bigsqcup_{S \to T}X(S\to T)
              \end{align*}
              (see \cite[\href{https://stacks.math.columbia.edu/tag/03CD}{Tag 03CD}]{stacks-project} or \cite[Expos\'e~i, Proposition~5.11]{SGA4}). We denote by $\alpha_T$ the adjunction
              \begin{align*}
                  \alpha_T\colon \Hom_{\Presh(\scrC)}((j_T)_!(-),) \simeq \Hom_{\Presh(\scrC_T)}(-,j_T\inv(-))
              \end{align*}
              The adjunction is given as follows: for any map $\phi\colon (j_T)_!(X) \to Y$, the corresponding morphism $\alpha(\phi)\colon X_T \to Y_T$ sends any $x \in X_{T}(S \to T)$ (which is contained in $(j_T)_!(X)(S)$) to $\phi(x)$ (note $Y(S) = Y_T(S \to T)$ by definition).

        \item   Moreover, if $\scrC$ admits products of pairs of elements, then $(j_T)_*$ exists and is defined as follows: for any $X \in \Presh(\scrC_T)$ and $S \in \scrC$, we have
              \begin{align*}
                  (j_T)_*(X)(S) = X(S \times T \to T)
              \end{align*}
              (see \cite[\href{https://stacks.math.columbia.edu/tag/03CE}{Tag 03CE}]{stacks-project}).

        \item    By \cref{a slice of a slice is a slice}, for any $S \to T$ in $\scrC$ we get an analogous adjoint triple $(j_{T/S})_!$, $j_{T/S}\inv$, $(j_{T/S})_*$.
    \end{enumerate}
\end{dfn}

\begin{cor}\label{presheaf pullback preserves limits}
    For any $T \to S$ in $\scrC$, the pullbacks $j_{S}\inv$ and $j_{T/S}\inv$ preserve limits.
\end{cor}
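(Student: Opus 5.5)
The plan is to deduce this from the adjunctions already in hand, since right adjoints preserve limits. By \cref{pullback of presheaves on slice categories}, $j_S\inv$ has a left adjoint $(j_S)_!$ whenever $\scrC$ is locally small. Using \cref{a slice of a slice is a slice}, the same holds for $j_{T/S}\inv$, with left adjoint $(j_{T/S})_!$. Being right adjoints, both $j_S\inv$ and $j_{T/S}\inv$ are then continuous, which is exactly the claim.

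Because the size hypotheses on $\scrC$ are left loose in the statement, I would also give a direct argument that needs no adjoint. This mirrors how \cref{pullback preserves limits in categories with fiber products} was handled for representables.

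\begin{enumerate}
    \item Limits in $\Presh(\scrC)$ and in $\Presh(\scrC_S)$ are computed objectwise (see \cref{The category of presheaves is Cartesian closed}). So given a diagram $D\colon I \to \Presh(\scrC)$ with limit $L$, we have $L(U) = \varprojlim_i D_i(U)$ for every $U$.
    \item Evaluating $j_S\inv(L)$ at an object $(U \to S)$ gives $L(U) = \varprojlim_i D_i(U) = \varprojlim_i (j_S\inv D_i)(U\to S)$. The restriction maps agree, since both are induced by the restriction maps of the $D_i$ along the underlying morphisms of $\scrC$.
    \item The cone $j_S\inv(L) \to j_S\inv(D_i)$ is therefore objectwise a limit cone, hence a limit cone in $\Presh(\scrC_S)$.
\end{enumerate}

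This gives continuity of $j_S\inv$, i.e. $j_S\inv = -\circ j_S$ commutes with objectwise limits. For $j_{T/S}\inv$ the same computation works verbatim, since it is again precomposition with a functor, namely $j_{T/S}$. Alternatively, one can invoke \cref{a slice of a slice is a slice} to identify it with a forgetful pullback.

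The only point requiring care is to check that these objectwise identifications are natural in $U$. They are, because the restriction maps on both sides are literally the same maps of sets. I do not expect a genuine obstacle.
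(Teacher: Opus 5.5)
Your first paragraph is exactly the paper's proof: $j_S^{-1}$ and $j_{T/S}^{-1}$ have left adjoints $(j_S)_!$ and $(j_{T/S})_!$, and right adjoints preserve limits. Your supplementary objectwise computation is also correct and slightly more general, since precomposition with any functor preserves pointwise limits, so it does not need the base category to be locally small.
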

\begin{proof}
    Immediate, since right adjoints preserve limits.
\end{proof}

\begin{rem}\label{interpretation of j_!}
    Observe that if $\scrC$ is locally small then for any $T \in \scrC$ and $Y \in \Presh(\scrC)$ we have $(j_T)_!(Y_T) \simeq Y \times h_T$. Under this isomorphism, for any $\phi\colon Y\times h_T \to Y$, and $y \in Y_T(S \to T)$, we have $\alpha_T(\phi)(y) = \phi(y,S \to T)$. Moreover the counit of adjunction $(j_T)_!(Y_T) \to Y$ is given by the projection $Y \times h_T \to Y$.

    Similarly, for $T' \to T$ in $\scrC$ and $X \in \Presh(\scrC_T)$ we have $(j_{T'/T})_!(X_{T'}) = X \times h_{T' \to T}$, and the counit of adjunction is the projection to $\rho_X\colon X \times h_{T' \to T} \to X$. Moreover, we have $(j_{T'})_! = (j_{T})_!\circ (j_{T'/T})_!$, and therefore applying $(j_T)_!$ to $\rho_X$ we get a canonical map $(j_{T'})_!(X_{T'}) \to (j_T)_!(X)$, which is given by the maps
    \begin{align*}
        \bigsqcup_{S \to T'}X(S \to T' \to T) \to \bigsqcup_{S \to T}X(S \to T)
    \end{align*}
    for each $S \in \scrC$.
\end{rem}

\begin{lem}\label{precomposition on the left commutes with pullback on the right}
    For any locally small $\scrC$ and $T \in \scrC$ the adjunction of $(j_T)_!$ and $j_T\inv$ is natural in $T$ in the following sense: given  $Y \in \Presh(\scrC)$, $X\in \Presh(\scrC_T)$ and $T' \to T \in \scrC$ we have a commutative diagram:
    \[
        \begin{tikzcd}
            {\Hom_{\Presh_\scrC}((j_{T})_!(X), Y)} \arrow[r, "\alpha_T"] \arrow[d]                             & {\Hom_{\Presh(\scrC_T)}(X,Y_T)} \arrow[d,"j_{T'/T}\inv"] \\
            {\Hom_{\Presh_\scrC}((j_{T'})_!(X_{T'}), Y)}         \arrow[r, "\alpha_{T'}"]                           & {\Hom_{\Presh(\scrC_{T'})}(X_{T'}, Y_{T'} )}
        \end{tikzcd}
    \]
    where the left vertical arrow is induced by precomposition with the map $(j_{T'})_!(X_{T'}) \to (j_T)_!(X)$.
\end{lem}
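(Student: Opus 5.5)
The plan is to unwind both composites in the square explicitly, at the level of sections over each object $S\to T'$ of $\scrC_{T'}$, and compare them directly. Fix $\phi\colon (j_T)_!(X) \to Y$. By the description of $\alpha_T$ in \cref{pullback of presheaves on slice categories}, $\alpha_T(\phi)\colon X \to Y_T$ sends a section $x \in X(S\to T)$ to $\phi_S(x) \in Y(S) = Y_T(S \to T)$. Here we view $x$ inside $(j_T)_!(X)(S) = \bigsqcup_{S\to T} X(S\to T)$ as lying in the summand indexed by $S \to T$.

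First I compute the clockwise path. The functor $j_{T'/T}\inv$ is precomposition with $j_{T'/T}$. So for an object $(S \to T')$ of $\scrC_{T'}$ and $x \in X_{T'}(S\to T') = X(S \to T' \to T)$, the morphism $j_{T'/T}\inv(\alpha_T(\phi))$ evaluated at $(S \to T')$ is just $\alpha_T(\phi)$ evaluated at $(S \to T'\to T)$. It therefore sends $x$ to $\phi_S(x)$, where $x$ lies in the summand of $(j_T)_!(X)(S)$ indexed by the composite $S \to T' \to T$.

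Next I compute the counterclockwise path. Precomposing $\phi$ with the canonical map $(j_{T'})_!(X_{T'}) \to (j_T)_!(X)$ of \cref{interpretation of j_!} gives a map whose component at $S$ is
\begin{align*}
    \bigsqcup_{S \to T'} X(S\to T'\to T) \to \bigsqcup_{S\to T} X(S \to T) \xrightarrow{\phi_S} Y(S).
\end{align*}
Here the first arrow sends the summand indexed by $S \to T'$ identically onto the summand indexed by the composite $S \to T' \to T$. Applying $\alpha_{T'}$, again by its explicit description, sends $x \in X_{T'}(S\to T')$ to the image of $x$ under this composite, which is $\phi_S(x)$ in the same summand as before. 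The two morphisms $X_{T'} \to Y_{T'}$ therefore agree objectwise, and so they are equal.

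The only step needing care is confirming that the canonical map of \cref{interpretation of j_!} really acts as the summand-reindexing map described above. For this I would check that $(j_{T'})_! = (j_T)_!\circ(j_{T'/T})_!$, and that applying $(j_T)_!$ to the counit projection $X\times h_{T'\to T} \to X$ yields, at each $S$, exactly the map induced by composing the index $S\to T'$ with $T' \to T$. This follows from the coproduct formula for $(j_T)_!$, and once it is in place the rest is bookkeeping. Naturality in $X$ and $Y$ is not required by the statement, so nothing further is needed.
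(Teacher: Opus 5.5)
Your proof is correct and is the same as the paper's alternative argument, which also checks that both composites send $t \in X_{T'}(T'' \to T')$ to $\phi$ applied to $t$, viewed in the summand of $(j_T)_!(X)(T'')$ indexed by $T'' \to T' \to T$. The paper's primary argument instead gets the square formally from \cref{adjoint functors commutative square}, using the factorization $(j_{T'})_! = (j_T)_! \circ (j_{T'/T})_!$.
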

\begin{proof}
    This follows immediately from \cref{adjoint functors commutative square}. Alternatively, for any $\phi \in \Hom((j_{T})_!(X), Y)$ then following the definitions we see that going both ways around the diagram gives the morphism sending elements $t \in X_{T'}(T'' \to T')$ to $\phi(t,T'' \to T)$.
\end{proof}

\begin{lem}\label{internal hom equivalence}
    Let $X$, $Y$ be presheaves on a small category $\scrC$. Then the internal hom $\iHom(X,Y)$ from \cref{internal Hom definition} is naturally (in both $X$ and $Y$) isomorphic to a presheaf defined by
    \[
        T \mapsto \Hom_{\Presh(\scrC_T)}(X_T,Y_T)
    \]
    where for any morphism $T' \to T$ the corresponding restriction
    \begin{align*}
        \Hom_{\Presh(\scrC)_{h_T}}(X_T,Y_T) \to \Hom_{\Presh(\scrC)_{h_T}}(X_{T'},Y_{T'})
    \end{align*}
    is given by the pullback $j_{T'/T}\inv$.
\end{lem}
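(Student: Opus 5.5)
The plan is to build the isomorphism from the adjunction $\alpha_T$ of \cref{pullback of presheaves on slice categories} together with the identification $(j_T)_!(X_T)\simeq X\times h_T$ of \cref{interpretation of j_!}. For each $T\in\scrC$ consider the chain of bijections
\begin{align*}
    \iHom(X,Y)(T) = \Hom_{\Presh(\scrC)}(h_T\times X, Y) \simeq \Hom_{\Presh(\scrC)}((j_T)_!(X_T),Y) \xrightarrow{\alpha_T} \Hom_{\Presh(\scrC_T)}(X_T, Y_T).
\end{align*}
The first step uses the symmetry $h_T\times X\simeq X\times h_T$ and \cref{interpretation of j_!}. The second is the adjunction $(j_T)_!\dashv j_T\inv$, which exists since $\scrC$ is small, hence locally small. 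Concretely, a map $\phi\colon h_T\times X\to Y$ is sent to the morphism $X_T\to Y_T$ taking $x\in X(S)=X_T(S\to T)$ to $\phi(S\to T, x)$. I would record this explicit formula, since it makes all subsequent checks elementwise.

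Next I check that these bijections are compatible with restriction along $u\colon T'\to T$. In the internal hom, restriction is precomposition with $h_{T'}\times X\to h_T\times X$. Under the identification above, this corresponds to precomposition with the canonical map $(j_{T'})_!(X_{T'})\to (j_T)_!(X_T)$ described in \cref{interpretation of j_!}, given by the maps $\bigsqcup_{S\to T'}X(S)\to\bigsqcup_{S\to T}X(S)$ obtained by composing with $u$. I expect the main (though mild) obstacle to be verifying that this is exactly $u\times\id_X$ under the isomorphisms, keeping track of the identification $X_{T'}\simeq (X_T)_{T'}$. I would do this pointwise: both sides send $(S\to T', x)$ to $(S\to T'\to T, x)$. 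Once this is established, \cref{precomposition on the left commutes with pullback on the right} gives exactly the required commutative square with $j_{T'/T}\inv$ on the right. Alternatively, one can verify directly that both composites send $x\in X_{T'}(S\to T')$ to $\phi(S\to T'\to T, x)$.

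Finally, naturality in $X$ and $Y$ follows from the explicit formula. Postcomposition with $\psi\colon Y\to Y'$ clearly commutes with $\phi\mapsto \phi(S\to T,-)$. Precomposition with $\chi\colon X'\to X$ corresponds to precomposition with $\chi_T=j_T\inv(\chi)$. Alternatively, both are instances of the naturality of the adjunction $\alpha_T$ and of the natural isomorphism $(j_T)_!j_T\inv(-)\simeq -\times h_T$.
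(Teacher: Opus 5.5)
Your proposal is correct and follows the paper's own argument: you get the pointwise bijection from $\alpha_T$ via $(j_T)_!(X_T)\simeq X\times h_T$, naturality in $X$ and $Y$ from the naturality of the adjunction, and compatibility with restriction along $T'\to T$ from \cref{precomposition on the left commutes with pullback on the right}. Your explicit pointwise check that the canonical map $(j_{T'})_!(X_{T'})\to (j_T)_!(X_T)$ corresponds to $u\times\id_X$, with $(X_T)_{T'}$ identified with $X_{T'}$, fills in a step that the paper's brief proof leaves implicit.
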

\begin{proof}
    Indeed, by \cref{pullback of presheaves on slice categories} and \cref{precomposition on the left commutes with pullback on the right} for any $T\in \scrC$ we have an isomorphism given by the adjunction
    \begin{align*}
        \alpha_T \colon \Hom_\scrC(X\times h_T, Y) = \Hom_\scrC((j_T)_!(X_T),Y) \xrightarrow{\sim} \Hom_{\Presh(\scrC_T)}(X_T,Y_T),
    \end{align*}
    which is natural in $X$ and $Y$. It is also natural in $T$ by \cref{precomposition on the left commutes with pullback on the right}. Therefore we get a well-defined isomorphism of presheaves.
\end{proof}

\begin{cor}\label{internal hom is a presheaf of semigroups}
    For any $X\in \Presh(\scrC)$, composition of morphisms endows $\iHom(X,X)$ with the structure of a presheaf of semigroups.
\end{cor}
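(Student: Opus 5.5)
The plan is to work with the description of $\iHom(X,X)$ from \cref{internal hom equivalence}: for each $T$ we have $\iHom(X,X)(T) \simeq \Hom_{\Presh(\scrC_T)}(X_T,X_T)$, and the restriction along $T'\to T$ is the pullback functor $j_{T'/T}\inv$. With this description, composition gives each $\iHom(X,X)(T)$ the structure of a monoid, since it is the endomorphism monoid of $X_T$ in $\Presh(\scrC_T)$. In particular, associativity holds objectwise because composition in any category is associative.

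It then remains to check that the restriction maps are semigroup homomorphisms. For $T'\to T$ and $\phi,\psi\in \Hom(X_T,X_T)$ we need
\[
j_{T'/T}\inv(\phi\circ\psi)=j_{T'/T}\inv(\phi)\circ j_{T'/T}\inv(\psi).
\]
This holds simply because $j_{T'/T}\inv$ is a functor (precomposition, \cref{functors induce precomposition functors on presheaf categories}), so it preserves composition. Concretely, on components $j_{T'/T}\inv(\phi)_{S\to T'}=\phi_{S\to T'\to T}$, and composition of natural transformations is computed componentwise, so the identity holds on each component. Functors also preserve identities, so we even get a presheaf of monoids.

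The resulting structure is natural and does not depend on the choice of model, because it can be transported along the natural isomorphism of \cref{internal hom equivalence}. If one prefers to work with the original definition $\Hom(h_T\times X,X)$, the induced product is $(\phi\ast\psi)(t,x)=\phi(t,\psi(t,x))$, and one checks directly that it is compatible with precomposition by $h_{T'}\times X\to h_T\times X$.

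I do not expect a real obstacle here. The only point needing care is making sure the identification of \cref{internal hom equivalence} is an isomorphism of presheaves, i.e. natural in $T$, so that the semigroup structure is well defined on $\iHom(X,X)$ itself. That naturality is exactly what \cref{precomposition on the left commutes with pullback on the right} provides.
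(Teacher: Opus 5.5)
Your proposal is correct and is essentially the paper's argument. The paper also identifies $\iHom(X,X)(T)\simeq \Hom_{\Presh(\scrC_T)}(X_T,X_T)$ via \cref{internal hom equivalence} and concludes immediately because the pullback $j_{T'/T}^{-1}$ commutes with composition; your extra remarks (the monoid structure and the formula $(\phi\ast\psi)(t,x)=\phi(t,\psi(t,x))$ on $\Hom(h_T\times X,X)$) are correct but not needed.
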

\begin{proof}
    Immediate, since the pullback commutes with composition.
\end{proof}

There is an alternative notion of pullback for presheaves on slice categories:
\begin{rem}\label{pullback of slices of presheaf categories}
    \begin{enumerate}
        \item By \cref{adjoint functors on slice categories}, we have a pullback functor $j_{h_T}^*\colon \Presh(\scrC) \to \Presh(\scrC)_{h_T}$.

        \item The functor $j_{h_T}^*$ admits a left adjoint, which is equal to the forgetful functor $j_{h_T}$.

        \item If $\scrC$ is locally small, then by \cref{internal Hom in locally small category} we can define the exponential $(-)^{h_T}$ and then by \cref{slice categories extra right adjoint remarks} we also have a right adjoint.

        \item Similarly, by \cref{a slice of a slice is a slice}, for any $S \to T$ in $\scrC$ we have that $j_{T/S}$ is left adjoint to $j_{T/S}^*$, and if $\scrC$ is locally small then $j_{T/S}^*$ has a right adjoint.
    \end{enumerate}
\end{rem}

The above remarks give us two potential notions of pullback for presheaves of slice categories, namely $j_S\inv$ and $j_S^*$. The next result shows that these notions are compatible via an equivalence of categories.
The following result may be found at \cite[Expos\'e~i, Proposition~5.11]{SGA4}, or (for the more general case of sheaves on a site) at \cite[\href{https://stacks.math.columbia.edu/tag/00XZ}{Tag 00XZ}]{stacks-project}:
\begin{lem}[Presheaves on Slice Categories]\label{Presheaves on Slice categories}
    Let $\scrC$ be a locally small category, and $T\in \scrC$. There is an equivalence of categories $e$ between the category of presheaves on the slice category $\Presh(\scrC_T)$ and the slice category $\Presh(\scrC)_{h_T}$ of $\Presh(\scrC)$ over $h_T$. The functor $e$ sends a presheaf $F$ to the presheaf
    \begin{align*}
        F'\colon T' \mapsto \bigsqcup_{f\colon T' \to T}F(f),
    \end{align*}
    where the restriction maps are those induced by $F$, together with the canonical morphism $F' \to h_T$ defined as follows: for every morphism $f\colon T' \to T$ and $x\in F(f) \subset F'(T')$, $x$ is sent to $f$.

    The functor $e$ has a weak inverse $e\inv$ sending any $\phi\colon F' \to h_T$ in $\Presh(\scrC)_{h_T}$ to the presheaf $w(\phi)$ on $\scrC_T$ which sends an object $f\colon T' \to T$ to the fiber product of the diagram
    \[
        \begin{tikzcd}
            & F'(T') \arrow[d, "\phi"] \\
            pt \arrow[r, "f"]        & h_T(T').
        \end{tikzcd}
    \]

    Moreover, the equivalence $e$ respects pullbacks: we have $e\circ j_T\inv = j_{h_T}^*$. The equivalence $e$ also respects the left and right adjoints outlined above. In particular, we have a commutative diagram
    \[
        \begin{tikzcd}
            \Presh(\scrC_T) \arrow[rrrr, "e", bend left] \arrow[rrdd, "(j_T)_!"', bend right] \arrow[rrdd, "(j_T)_*", bend left] &  &     &  & \Presh(\scrC)_{h_T} \arrow[lldd, "j_{h_T}"', bend right] \arrow[lldd, "(j_{h_T})_r", bend left] \\
            &  &                                                                 &  &                                                                                                  \\
            &  & \Presh(\scrC) \arrow[lluu, "j_T\inv"] \arrow[rruu, "j_{h_T}^*"] &  &
        \end{tikzcd}
    \]
    where $(j_{h_T})_r := (j_T)_*\circ e\inv $ is a right adjoint to $j_{h_T}^*$.

    Similarly, for any $T \to S$ in $\scrC$ we have $e \circ j_{T/S}\inv = j_{h_T/h_S}^* \circ e$, and we have an analogous diagram.
\end{lem}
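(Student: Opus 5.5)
The plan is to verify directly that $e$ and $e\inv$ are well-defined functors, that they are mutually quasi-inverse, and then to check the compatibilities with pullbacks and adjoints. Essentially everything reduces to unwinding definitions and applying the Yoneda lemma.

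\textbf{Step 1: $e$ is a functor.} Given $F \in \Presh(\scrC_T)$, a morphism $g\colon T'' \to T'$ in $\scrC$ sends the summand $F(f)$ to $F(f\circ g)$ by the restriction map of $F$ along $g$, regarded as a morphism $(f\circ g) \to f$ in $\scrC_T$. This makes $F'$ a presheaf, and the map $F' \to h_T$ sending the $f$-summand to $f$ is natural. A morphism $F \to G$ acts summandwise and commutes with these maps over $h_T$.

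\textbf{Step 2: $e\inv$ is a functor.} Given $\phi\colon F' \to h_T$, the assignment $w(\phi)(f) = \phi_{T'}\inv(f)$ is functorial in $\scrC_T$. This holds because $\phi$ is natural: for a morphism $g$ from $(f\circ g)$ to $f$, restriction along $g$ carries the fiber over $f$ into the fiber over $f\circ g$.

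\textbf{Step 3: the two are quasi-inverse.} For the natural isomorphisms $e\inv e \simeq \id$ and $e e\inv \simeq \id$, one observes that the fiber of $\bigsqcup_f F(f)$ over $f$ is exactly $F(f)$. Conversely, any set mapping to $h_T(T')$ is the disjoint union of its fibers. Naturality of these identifications is immediate.

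\textbf{Step 4: compatibility with pullback.} To prove $e\circ j_T\inv = j_{h_T}^*$, note that for $X \in \Presh(\scrC)$ we have
\begin{align*}
    e(j_T\inv X)(T') = \bigsqcup_{f\colon T' \to T} X(T') = X(T') \times h_T(T'),
\end{align*}
which is $(X\times h_T)(T')$ computed pointwise, together with its projection to $h_T$. This is precisely $j_{h_T}^*X$, since limits in $\Presh(\scrC)$ are computed pointwise.

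\textbf{Step 5: compatibility with the left adjoints.} For the left adjoint, the formula for $(j_T)_!$ in \cref{pullback of presheaves on slice categories} is literally $j_{h_T}\circ e$. For the right adjoint there is nothing to check beyond adjointness: since $e$ is an equivalence intertwining $j_T\inv$ and $j_{h_T}^*$, transporting the adjunction $j_T\inv \dashv (j_T)_*$ along $e$ shows that $(j_T)_*\circ e\inv$ is right adjoint to $j_{h_T}^*$. The left adjoint statement can be deduced the same way, so one also gets $j_{h_T} \simeq (j_T)_!\circ e\inv$ abstractly, as a consistency check.

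\textbf{Step 6: the relative version.} The statement for $T \to S$ follows by applying the above to the slice category $\scrC_S$, using \cref{a slice of a slice is a slice}. It also uses the identification of $\Presh(\scrC_S)_{h_{T\to S}}$ with $\Presh(\scrC)_{h_T}$ obtained from the absolute case of $e$.

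\textbf{Main obstacle.} The only real subtlety is Step 6: keeping track of the identification between the double slice and the single slice, so that $e$ for $\scrC_S$ composed with $e$ for $\scrC$ matches $e$ for $\scrC_T$. I would check this directly on summands. Both sides send $F$ to $T' \mapsto \bigsqcup_{T'\to T} F(T'\to T)$, because a morphism $T' \to T$ over $S$ is the same thing as a morphism $T' \to T$.
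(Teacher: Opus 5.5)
Your proposal is correct. The paper gives no proof of its own here and simply cites SGA4 and the Stacks Project; your direct unwinding (realizing $e$ as $(j_T)_!$ equipped with its map to $(j_T)_!(\ast)=h_T$, then transporting the adjunctions along the equivalence) is essentially the standard argument found there.

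Two small remarks. First, Step 5 tacitly uses that $(j_T)_*$ exists (for instance when $\scrC$ has products with $T$, as in \cref{pullback of presheaves on slice categories}), which the statement itself presupposes. Second, Step 6 needs no double-slice bookkeeping: writing $\pi\colon T\to S$, one has directly $e(j_{T/S}\inv F)(T')=\bigsqcup_{f\colon T'\to T}F(\pi\circ f)=\bigl(e(F)\times_{h_S}h_T\bigr)(T')$, compatibly with restriction maps.
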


\begin{cor}\label{j lower shriek preserves limits}
    For any locally small $\scrC$ and $T \in \scrC$, the functor $(j_T)_!$ preserves fiber products.
\end{cor}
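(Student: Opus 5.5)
The plan is to reduce the statement to the equivalence of \cref{Presheaves on Slice categories}, where $(j_T)_!$ is identified with the forgetful functor $j_{h_T}\colon \Presh(\scrC)_{h_T} \to \Presh(\scrC)$. Precisely, the commutative diagram in that lemma gives $(j_T)_! \simeq j_{h_T} \circ e$. Since $e$ is an equivalence of categories, it preserves all limits, and in particular fiber products. It therefore suffices to show that the forgetful functor $j_{h_T}$ from the slice $\Presh(\scrC)_{h_T}$ to $\Presh(\scrC)$ preserves fiber products.

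This last step is the standard fact that forgetful functors from slice categories create connected limits. Suppose we are given a cospan $A \to C \leftarrow B$ in $\Presh(\scrC)_{h_T}$, with structure maps $a, b, c$ to $h_T$. Form the fiber product $A\times_C B$ in $\Presh(\scrC)$; it exists because limits in $\Presh(\scrC)$ are computed pointwise (\cref{The category of presheaves is Cartesian closed}). Equip it with the map to $h_T$ given by $a\circ \mathrm{pr}_A = b \circ \mathrm{pr}_B$; the two agree because both composites equal $c$ applied to the common image in $C$.

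We then check the universal property in the slice. A cone over the cospan in $\Presh(\scrC)_{h_T}$ is a cone in $\Presh(\scrC)$ whose vertex carries a map to $h_T$ compatible with the legs. The induced map to $A\times_C B$ automatically commutes with the maps to $h_T$, since the structure map on $A\times_C B$ factors through $\mathrm{pr}_A$. Hence this object is the fiber product in the slice, and $j_{h_T}$ sends it to the fiber product in $\Presh(\scrC)$.

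Alternatively, and as a sanity check, one can compute directly from the formula $(j_T)_!(X)(S) = \bigsqcup_{S\to T} X(S\to T)$. Fiber products in $\Presh(\scrC_T)$ are computed objectwise, and disjoint unions of sets commute with fiber products over a disjoint union indexed by the same set: elements of matching index pair up. This gives
\begin{align*}
    \bigsqcup_{f} \bigl(X(f)\times_{Z(f)} Y(f)\bigr) \simeq \Bigl(\bigsqcup_f X(f)\Bigr)\times_{\bigsqcup_f Z(f)}\Bigl(\bigsqcup_f Y(f)\Bigr),
\end{align*}
naturally in $S$.

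The main point requiring care is not a hard obstacle but a bookkeeping issue: verifying that the identification $(j_T)_! \simeq j_{h_T}\circ e$ is natural. That naturality is asserted in \cref{Presheaves on Slice categories}, so I will simply invoke it.
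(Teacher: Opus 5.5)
Your proposal is correct and follows the paper's proof. The paper likewise deduces the claim from the identification $(j_T)_! \simeq j_{h_T}\circ e$ in the lemma on presheaves on slice categories, together with the fact that the forgetful functor $j_{h_T}$ preserves fiber products, and it also mentions the direct computation via the disjoint-union formula for $(j_T)_!$ as an alternative.
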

\begin{proof}
    Immediate from above since the same is true of $j_{h_T}$. Alternatively one can proceed by direct computation using \cref{interpretation of j_!}. See also \cite[\href{https://stacks.math.columbia.edu/tag/04BB}{Tag 04BB}]{stacks-project}.
\end{proof}

\begin{lem}\label{relative j lower shriek cartesian diagram}
    Let $X\in \Presh(\scrC_T)$ and $T' \to T$ in $\scrC$. Given a subpresheaf $U \subset X$, we have a Cartesian diagram
    \[
        \begin{tikzcd}
            (j_{T'/T})_!(U_{T'}) \arrow[r] \arrow[d, hook] & U \arrow[d, hook] \\
            (j_{T'/T})_!(X_{T'}) \arrow[r]                   & X.
        \end{tikzcd}
    \]
\end{lem}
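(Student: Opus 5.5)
The plan is to verify the Cartesian property objectwise, using the explicit formula for $(j_{T'/T})_!$. By \cref{a slice of a slice is a slice}, $\scrC_{T'}$ is identified with the slice $(\scrC_T)_{T'\to T}$, so \cref{pullback of presheaves on slice categories} describes $(j_{T'/T})_!$ concretely: for an object $S \to T$ of $\scrC_T$ we have
\begin{align*}
    (j_{T'/T})_!(X_{T'})(S\to T) = \bigsqcup_{S \to T'} X(S \to T' \to T),
\end{align*}
where the disjoint union runs over the factorizations of $S \to T$ through $T'$. The analogous formula holds for $U$.

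First I will identify the horizontal maps. By \cref{interpretation of j_!} (applied in the slice $\scrC_T$), the bottom map is the counit of adjunction. On the summand indexed by a factorization $g\colon S \to T'$, it is the identity map from $X(S\to T'\to T)$ to $X(S \to T)$; these two groups coincide because the composite $S \to T' \to T$ is the given object $S \to T$. The same description holds for the top arrow with $U$ in place of $X$. The left vertical arrow is $(j_{T'/T})_!$ applied to the inclusion $U_{T'} \hookrightarrow X_{T'}$, and on each summand it is the inclusion $U(S\to T) \subset X(S\to T)$. Commutativity of the square follows immediately from these descriptions.

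Next, since limits in presheaf categories are computed pointwise, it suffices to check that for each object $S\to T$ the square of sets is Cartesian. An element of the set-theoretic fiber product consists of two pieces of data: a pair $(g, x)$, with $g\colon S\to T'$ a factorization and $x \in X(S\to T)$, and an element $u\in U(S \to T)$ with $u = x$. Such data is the same as a pair $(g,u)$ with $u\in U(S\to T)$, which is exactly an element of $\bigsqcup_g U(S\to T'\to T)$. This bijection is compatible with the projections, so the square is Cartesian. Naturality in $S$ is automatic, because all maps come from restriction maps of $X$ and $U$.

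I expect the main obstacle to be the bookkeeping in the first step: matching the abstract counit from \cref{interpretation of j_!}, which is stated for $(j_T)_!$ on $\scrC$, with the relative version $(j_{T'/T})_!$ via the slice-of-a-slice identification. Alternatively, one can avoid the formulas altogether. Using \cref{Presheaves on Slice categories}, the square corresponds to
\begin{align*}
    U\times h_{T'}\to U, \qquad X\times h_{T'}\to X
\end{align*}
in $\Presh(\scrC)_{h_T}$, where the products are taken over $h_T$ and the maps are the projections. A square of this form is Cartesian by a standard argument: $U\times_{h_T} h_{T'} \simeq U\times_X (X\times_{h_T} h_{T'})$, which is an instance of \cref{associativity of fiber product} together with \cref{Cartesian rectangles}.
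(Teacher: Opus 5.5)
Your proof is correct and follows the same route as the paper. The paper's proof is the one-line remark that the claim is immediate from the explicit summand-wise description of $(j_{T'/T})_!$ and its counit in \cref{interpretation of j_!}, together with the pointwise description of pullbacks of monomorphisms in \cref{pullbakcs of monomorphisms lemma}; your objectwise fiber-product computation spells out exactly this, and your alternative via \cref{Presheaves on Slice categories} and pasting of Cartesian squares is also valid but not needed.
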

\begin{proof}
    Immediate from \cref{pullbakcs of monomorphisms lemma} and the explicit description given in \cref{interpretation of j_!}.
\end{proof}

%% file: Chapters/Category-Theoretic_Prerequisites/Sites_and_Sheaves.tex
\section{Sites and Sheaves}\label{Sites and Sheaves}
We refer the reader to \cite{Mac_Lane_Moerdijk_1994} for details on the following definitions and results.

Let us first recall the definitions of a site and a sheaf on a site for future reference.
\begin{dfn}[Sieves and Sites]
    Let $\scrC$ be a small category.
    \begin{enumerate}
        \item    A \emph{sieve} $S$ on an object $T$ of $\scrC$ is a subobject of the presheaf $h_T$.

        \item    A \emph{Grothendieck topology} $J$ on $\scrC$ is an assignment to each object $T \in \scrC$ a collection $J(T)$ of sieves on $T$, called \emph{covering sieves}, such that the following three axioms are satisfied:
              \begin{enumerate}
                  \item \emph{Maximality (Identity):} For any object $T$, the maximal sieve $h_T$ belongs to $J(T)$.
                  \item \emph{Stability (Base Change):} If $S \in J(T)$ and $h: T' \to T$ is any morphism in $\scrC$, then the pullback sieve $h^*S = \{ g \mid h \circ g \in S \}$ belongs to $J(T')$.
                  \item \emph{Transitivity (Local Character):} Let $S \in J(T)$, and let $R$ be any sieve on $T$. If for every morphism $h: T' \to T$ in $S$, the pullback sieve $h^*R \in J(T')$, then $R \in J(T)$.
              \end{enumerate}
              The pair $(\scrC, J)$ is called a \emph{site}. We will usually denote the pair $(\scrC, J)$ simply by $\scrC$.
    \end{enumerate}
\end{dfn}

\begin{notation}
    We will often identify a sieve $S$ with the collection of morphisms $\sqcup_{T\in \scrC}S(T)$.
\end{notation}

\begin{dfn}[Covering families]
    Let $\scrC$ be a site, $T\in \scrC$, and $(f_i)_{i\in I}$ be a collection of morphisms in $\scrC$ with target $\scrC$.
    \begin{enumerate}
        \item We define the sieve generated by the $f_i$ to be the smallest sieve on $T$ containing the $f_i$.
        \item We say that $(f_i)_{i\in I}$ is a \emph{covering family}, or \emph{covers} $T$, if the sieve generated by the $f_i$ is a covering sieve.
    \end{enumerate}
\end{dfn}

\begin{dfn}
    Let $\scrC$ be a category with all fiber products. A \emph{Grothendieck pretopology} on $\scrC$ is an assignment to each object $U \in \scrC$ a collection of families of morphisms $\{U_i \to U \}_{i \in I}$, called \emph{covering families} or \emph{covers}, such that the following properties are satisfied:
    \begin{enumerate}
        \item (\emph{Isomorphisms}) If $V \xrightarrow{\sim} U$, then $\{V \to U\}$ is a cover.
        \item (\emph{Stability under pullback}) If $\{U_i \to U\}_{i\in I}$ is a cover, then for any $Y \mapsto U$, the fiber product $\{U_i \times_U Y \to Y\}_{i\in I}$ is a cover.
        \item (\emph{Local character}) If $\{U_i \to U\}_{i\in I}$ is a cover, and for every $i \in I$ we have a cover $\{V_{ij} \to U_i\}_{j\in J_i}$ then the composition $\{V_{ij} \to U\}_{i\in I,j\in J_i}$ is a cover.
    \end{enumerate}
\end{dfn}
\begin{rem}
    Given a Grothendieck pretopology on a category $\scrC$ with fiber products, there is a unique associated Grothendieck topology on $\scrC$ where the covering sieves are defined to be those sieves which contain covering families. See for example \cite{Mac_Lane_Moerdijk_1994}.
\end{rem}

\begin{dfn}
    Let $\scrC$ be a site and $T\in \scrC$. Let $S$ be a cover of $T$, and $X\in \Presh(\scrC)$ be a presheaf of $\scrC$.

    We denote by $\Match(S,X)$ the set of matching families for the cover - i.e. if $S$ consists of the cover $(T_i \to T)_{i \in I}$, then $\Match(S,X)$ is the set of families $(t_i \in X(T_i))_{i \in I}$ such that, if we denote by $p_1, p_2$ the natural restriction maps
    \[
        \begin{tikzcd}
            \prod_i X(T_i) \arrow[r, "p_2"'] \arrow[r, "p_1", shift left] & {\prod_{i,j}X(T_i \times_T T_j)}
        \end{tikzcd}
    \]
    then $p_1((t_i)_{i \in I}) = p_2((t_i)_{i \in I})$.
\end{dfn}

\begin{dfn}[Separated Presheaves and Sheaves]
    Let $\scrC$ be a site, and $F\in \Presh(\scrC)$ be a presheaf on $\scrC$.

    \begin{enumerate}
        \item     We say $F$ is \emph{separated} if, for every object $T \in \scrC$ and every cover $S$ of $T$, any matching family of elements for $S$ has at most one amalgamation in $F(T)$.

        \item     We say $F$ is a \emph{sheaf} if, for every object $T \in \scrC$ and every cover $S$ of $T$, any matching family of elements for $S$ has exactly one amalgamation in $F(T)$.
    \end{enumerate}
\end{dfn}

\begin{dfn}
    Let $\scrC$ and $\scrD$ be sites, and $f\colon \scrC \to \scrD$ be a functor. We say $f$ is \emph{cover-preserving} if for any covering family $\{U_i \to U\}_{i\in I}$ of $\scrC$ we have that $\{ f(U_i) \to f(U) \}_{i\in I}$ is a covering family of $\scrD$.
\end{dfn}
\begin{lem}
    If $f$ is cover preserving, then for any sheaf $X\in \Sh(\scrD)$ we have that the pullback $f\inv(X)$ is a sheaf on $\scrC$.
\end{lem}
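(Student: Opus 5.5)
We have to show that if $f\colon \scrC\to\scrD$ is cover-preserving and $X\in\Sh(\scrD)$, then $f\inv(X)=X\circ f$ satisfies the sheaf condition for every covering family of $\scrC$. The argument is a direct unwinding of the definitions. We do it once for covering families and then pass to covering sieves, which are determined by generating families.

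Fix $U\in\scrC$ and a covering family $\{g_i\colon U_i\to U\}_{i\in I}$ of $\scrC$. Let $(t_i)\in\Match(\{g_i\},f\inv X)$, so that $t_i\in X(f(U_i))$.

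The first step is to check that $(t_i)$ is again a matching family on $\scrD$ for the family $\{f(g_i)\colon f(U_i)\to f(U)\}$. This family covers $f(U)$ by hypothesis. The point needing care is that $f$ need not preserve fiber products. So I will use the sieve-theoretic form of the matching condition:
\begin{itemize}
\item[] for all $h\colon V\to U_i$ and $h'\colon V\to U_j$ in $\scrC$ with $g_ih=g_jh'$, we have $t_i|_h=t_j|_{h'}$.
\end{itemize}
On $\scrD$ one needs the analogous statement for arbitrary $k\colon W\to f(U_i)$ and $k'\colon W\to f(U_j)$ with $f(g_i)k=f(g_j)k'$. Such pairs do not in general come from $\scrC$. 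This is the main obstacle.

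My plan for getting past it is to read the definition of cover-preserving in the form standard in \cite{Mac_Lane_Moerdijk_1994} and the Stacks project. When $\scrC$ and $\scrD$ come from pretopologies on categories with fiber products, this means $f$ sends covers to covers and commutes with the fiber products $U_i\times_U U_j$. Granting that, $f(U_i\times_U U_j)=f(U_i)\times_{f(U)}f(U_j)$, and the equality $p_1((t_i))=p_2((t_i))$ in $\prod X(f(U_i\times_U U_j))$ is exactly the matching condition in $\scrD$. If one only has the weaker hypothesis, I would instead note that $f\inv$ preserves limits (\cref{presheaf pullback preserves limits}-style reasoning) and argue via the sieve generated by the family. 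That route is where the real content lies.

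Given matching in $\scrD$, the sheaf property of $X$ gives a unique $t\in X(f(U))=f\inv(X)(U)$ with $t|_{f(g_i)}=t_i$ for all $i$. Since $f\inv(X)(g_i)=X(f(g_i))$, this says $t$ amalgamates $(t_i)$ in $f\inv X$. Uniqueness transfers as well: any amalgamation in $f\inv X$ is literally an amalgamation in $X$ for the image cover. Hence $f\inv(X)$ is a sheaf, with existence and uniqueness of amalgamations both inherited directly from $X$.
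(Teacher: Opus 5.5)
\textbf{The missing step cannot be supplied: the lemma is false as stated.} The gap is the one you flagged, and neither of your ways around it works under the stated hypothesis.

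Commuting with the fiber products $U_i\times_U U_j$ is not part of being cover-preserving. The paper's definition only asks that $f$ send covering families to covering families. The fiber-product clause is a separate hypothesis; it is the second condition in the Stacks project's definition of a continuous functor. Your fallback does not help either. That $f\inv$ preserves limits of presheaves says nothing about $f$ preserving fiber products in $\scrC$, and the latter is what transporting the matching condition requires.

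Here is a counterexample.
\begin{itemize}
\item Let $\scrC$ have objects $U,V$ and a single non-identity arrow $g\colon V\to U$. Its covering sieves are the maximal ones together with $\{g\}$.
\item Let $\scrD$ be a small skeleton of finite sets, with jointly surjective families as covers.
\item Let $X=\Hom(-,Y)$ for an object $Y$ with at least two elements. This is a sheaf.
\item Let $f(U)=1$ and $f(V)=2$, with $f(g)$ the unique map $2\to 1$.
\end{itemize}
Then $f$ is cover-preserving, but $f\inv(X)(g)\colon Y\to Y\times Y$ is the diagonal. In $\scrC$ we have $V\times_U V=V$ with both projections the identity. So every element of $f\inv(X)(V)$ is a matching family for $\{g\}$, and $f\inv(X)$ is not a sheaf. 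Note that $f(V\times_U V)$ has two elements while $f(V)\times_{f(U)}f(V)$ has four.

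\textbf{What survives, and how to repair the statement.} Under cover-preservation alone, your uniqueness argument still works: $f\inv(X)$ is separated whenever $X$ is. For existence you need an extra hypothesis. One sufficient choice: for every covering family $\{U_i\to U\}$ of $\scrC$, the comparison maps $f(U_i\times_U U_j)\to f(U_i)\times_{f(U)}f(U_j)$ are isomorphisms. With that hypothesis your argument is complete.

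It is also the paper's own one-line argument. Its claim that matching families for $f\inv(X)$ induce matching families for $X$ is precisely the step that needs this hypothesis. The lemma is used only in \cref{sheaves on typInt are sheaves everywhere}, for full embeddings such as $\Def(\calT)\hookrightarrow\typDef(\calT)$. These do preserve fiber products, so that application is unaffected.
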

\begin{proof}
    Indeed, following the definitions, matching families for $f\inv(X)$ induce matching families for $X$ and the unique amalgamation for $X$ gives an amalgamation for $f\inv(X)$.
\end{proof}

\begin{dfn}
    Recall that a morphism of presheaves $\phi\colon X \to Y$ is \emph{locally surjective} if for every $T\in \scrC$ and $t \in Y(T)$ there exists a cover $S$ of $T$ such that for every $T' \in S$ there exists $s \in X(T')$ such that $\phi(s) = t$.
\end{dfn}

\begin{dfn}[The $(-)^+$ operation]
    Let $\scrC$ be a site. Recall that given a presheaf $X\in \Presh(\scrC)$, the presheaf $X^+$ is defined by
    \begin{align*}
        X^+(T) = \varinjlim_{S \in J(T)}\Match(S, X)
    \end{align*}
    where $J(T)$ denotes the collection of covers of $T$ and $\Match(S,X)$ denotes the set of matching families for the cover.

\end{dfn}

\begin{lem}\label{properties of the plus operation}
    Let $\scrC$ be a site and $X\in \Presh(\scrC)$.
    \begin{enumerate}
        \item The presheaf $X^+$ is separated.
        \item If $X$ is separated, then $X^+$ is a sheaf.
        \item If $X$ is a sheaf, then $X^+ \simeq X$.
        \item The functor $(-)^+$ is exact (\cref{definitions from category theory}).
    \end{enumerate}
\end{lem}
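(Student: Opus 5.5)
The four assertions of \cref{properties of the plus operation} are the standard properties of the plus construction (cf.\ \cite[Section~III.5]{Mac_Lane_Moerdijk_1994}), and I would prove them in order, directly from the formula $X^+(T) = \varinjlim_{S\in J(T)}\Match(S,X)$. The colimit is filtered, since $J(T)$ is closed under finite intersections by stability and transitivity. Hence two elements of $X^+(T)$ are equal if and only if their representing matching families agree after restriction to some common refinement cover. Restriction along $T'\to T$ acts by pulling back sieves.

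For (1), let $x,y\in X^+(T)$ agree after restriction to every member $f\colon T_i\to T$ of a covering sieve $S$. For each $f$, the restrictions $f^*x$ and $f^*y$ then agree on some cover $R_f$ of $T_i$. I would form the composite sieve $R=\{f\circ g : f\in S,\ g\in R_f\}$, which is covering by the transitivity axiom. On $R$ the representing families of $x$ and $y$ agree elementwise, so $x=y$ in the colimit.

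For (2), assume $X$ is separated and take a matching family $(x_f)_{f\in S}$ in $X^+$. Each $x_f$ is represented by a matching family of $X$ on a cover $R_f$ of the domain of $f$. I would glue these into a single family indexed by the composite sieve $R$. The key check is that this glued family is well-defined and matching: on overlaps, two representatives agree in $X^+$, so they agree on some further cover. Separatedness of $X$ then upgrades this local agreement to actual equality of sections of $X$. This gives an element of $X^+(T)$ restricting to each $x_f$; uniqueness follows from (1). This is the main obstacle, mostly bookkeeping, and I would follow Mac Lane--Moerdijk's Lemma III.5.2.

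For (3), if $X$ is a sheaf then each $\Match(S,X)\to X(T)$ given by amalgamation is a bijection compatible with refinement. Therefore the colimit is $X(T)$, naturally in $T$.

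For (4), finite limits commute with the filtered colimit defining $X^+$, and $\Match(S,-)$ is itself a limit, so $(-)^+$ preserves finite limits. For colimits, I would use the identity $a=(-)^{++}$ with $a$ left adjoint to the inclusion $\Sh\subset\Presh$, which follows from (1)–(3) together with the universal property of $X\to X^+$. Exactness in the intended sense (finite limits plus colimits, computed in sheaves) then follows for the composite. Alternatively I would simply cite the reference for this item.
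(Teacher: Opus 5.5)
Your arguments for (1), (2), (3) and for the finite-limit half of (4) are correct. They are the standard Mac Lane--Moerdijk arguments, which is all the paper relies on here: it gives no proof and refers to \cite{Mac_Lane_Moerdijk_1994}.

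The gap is the colimit half of (4). With the paper's definition in \cref{definitions from category theory}, item (4) asserts that $(-)^+\colon\Presh(\scrC)\to\Presh(\scrC)$ preserves colimits. Your adjunction argument instead proves that the \emph{composite} $a=(-)^{++}\colon\Presh(\scrC)\to\Sh(\scrC)$ preserves colimits computed in $\Sh(\scrC)$, which is a statement about a different functor with a different target.

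No argument can close this, because the claim is false for $(-)^+$. Let $\scrC$ be the site of open subsets of a topological space. The empty open set $\emptyset$ is covered by the empty sieve, which is contained in every sieve on $\emptyset$ and hence is cofinal in the directed system $J(\emptyset)$. So for \emph{every} presheaf $P$, the set $P^+(\emptyset)$ is a single point (the empty matching family). Consequently:
\begin{itemize}
\item The initial, everywhere-empty presheaf $0$ has $0^+(\emptyset)\neq\emptyset$, so the empty colimit is not preserved.
\item For the terminal presheaf $1$, which is a sheaf so that $1^+\simeq 1$, the set $(1\sqcup 1)^+(\emptyset)$ is a point, while $(1^+\sqcup 1^+)(\emptyset)$ has two elements. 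So binary coproducts are not preserved either.
\end{itemize}

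Instead of silently reinterpreting (4), you should say that it must be weakened. The correct statement is that $(-)^+$ is only left exact, which your filtered-colimit argument proves. Consequently $a=(-)^{++}$ is exact in the paper's sense as a functor $\Presh(\scrC)\to\Sh(\scrC)$: it preserves finite limits by applying that argument twice, and it preserves colimits (computed in $\Sh(\scrC)$) because it is a left adjoint. For the left-adjoint claim you also need the universal property of $X\to X^+$ for maps into sheaves. That is a separate lemma, not contained in (1)--(3), and should be proved or cited.

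This weakened form is all the paper actually uses. The only downstream use of exactness is in \cref{locally surjective morphism of presheaves leades to isomorphism of sheaves}, which needs only that sheafification preserves monomorphisms.
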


\begin{dfn}[Sheafification]
    Recall that the forgetful functor from sheaves on a site $\scrC$ to presheaves on $\scrC$ has a left adjoint, called the sheafification functor, defined by $(-)^{++}$.
\end{dfn}

\begin{rem}
    It follows from the characterization in terms of the $(-)^+$ operation that sheafification is exact.
\end{rem}

\begin{prop}\label{locally surjective morphism of presheaves leades to isomorphism of sheaves}
    Let $\phi\colon X \hookrightarrow Y$ be a locally surjective embedding of presheaves. Then the induced morphism $X^{++} \to Y^{++}$ is an isomorphism.
\end{prop}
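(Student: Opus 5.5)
The plan is to factor the induced map through sheafification in two steps, using the description of $(-)^{++}$ as two applications of $(-)^+$ (\cref{properties of the plus operation}). Write $\phi\colon X \hookrightarrow Y$ for the locally surjective monomorphism. Since sheafification is exact, it preserves monomorphisms (\cref{a functor which preserves finite limits preserves monomorphisms}), so $\phi^{++}\colon X^{++}\to Y^{++}$ is a monomorphism of sheaves. It therefore suffices to show that $\phi^{++}$ is surjective on sections, i.e.\ an epimorphism in the strong sense that every $s\in Y^{++}(T)$ lies in the image.

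First I show that $\phi^+\colon X^+\to Y^+$ is again a locally surjective monomorphism. It is monic because $(-)^+$ is exact. For local surjectivity, an element of $Y^+(T)$ is represented by a matching family $(y_i\in Y(T_i))$ for some covering sieve $S$ on $T$. For each $T_i\to T$ in $S$, local surjectivity of $\phi$ gives a covering sieve $R_i$ of $T_i$ on which $y_i$ lifts to sections of $X$. By transitivity, the composite family $\{T_{ij}\to T_i\to T\}$ generates a covering sieve. On each $T_{ij}$ the element of $Y^+(T_{ij})$ obtained by restriction is the image of a section of $X$, and hence of the image of that section in $X^+(T_{ij})$, using naturality of $X\to X^+$ and $Y\to Y^+$. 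This proves that $\phi^+$ is locally surjective. Iterating, $\phi^{++}$ is a locally surjective monomorphism.

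It remains to show that a locally surjective monomorphism of sheaves $F\hookrightarrow G$ is an isomorphism; this step uses that $F$ is a sheaf and that the map is monic. Given $g\in G(T)$, choose a cover $(T_i\to T)$ and $x_i\in F(T_i)$ with $\phi(x_i)=g|_{T_i}$. On $T_i\times_T T_j$, or more generally on any $T''$ factoring through two members of the sieve, both restrictions of the $x$'s map to $g|_{T''}$. Injectivity of $\phi$ then makes them equal, so the $x_i$ form a matching family. Since $F$ is a sheaf, they glue to some $x\in F(T)$. Then $\phi(x)$ and $g$ agree locally, and since $G$ is separated, $\phi(x)=g$.

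The main obstacle I expect is the bookkeeping in the middle step. The difficulty is verifying that the lifted sections really represent the restricted class in the colimit defining $Y^+$, together with working with sieves rather than families when fiber products are not available. In the last step the matching condition should be phrased as agreement on all $T''\to T_i$, $T''\to T_j$ that agree over $T$.
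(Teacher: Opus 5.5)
Your proof is correct, and it follows the same outline as the paper: injectivity of $X^{++}\to Y^{++}$ comes from exactness of sheafification, and surjectivity comes from local surjectivity of $\phi$.

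The difference is in the second half. The paper handles it in one line by citing \cite[III.7,~Corollary 6]{Mac_Lane_Moerdijk_1994} to pass from local surjectivity of $\phi$ to surjectivity of the sheafified map. Since an epimorphism of sheaves need not be surjective on sections, it is implicitly also using that a monic epimorphism of sheaves is an isomorphism. You prove what is needed directly, in two steps:
\begin{enumerate}
\item $(-)^+$ preserves locally surjective monomorphisms. Your sieve bookkeeping goes through: for $h=f\circ g$ in the sieve, the restricted class is the image under $Y\to Y^+$ of $y_h=y_f|_{T_{ij}}=\phi(x)$.
\item A locally surjective monomorphism of sheaves is bijective on sections, by gluing.
\end{enumerate}

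The paper's route is shorter but relies on two external facts, one of them left unstated. Yours is self-contained, and it shows explicitly that $X^{++}\to Y^{++}$ is bijective on every $T$. That is exactly the elementary content behind the ``monic plus epi implies iso'' step in this situation.
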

\begin{proof}
    First observe that the induced map $X^{++} \to Y^{++}$ is an embedding since sheafification is exact, and by \cref{a functor which preserves finite limits preserves monomorphisms}. It remains to show that $X^{++} \to Y^{++}$ is surjective; but this follows immediately by \cite[III.7,~Corollary 6]{Mac_Lane_Moerdijk_1994}.
\end{proof}

See \cite[\href{https://stacks.math.columbia.edu/tag/00YR}{Tag 00YR}]{stacks-project} for the proof of the following result:
\begin{lem}\label{sheafification and groups}
    Let $X$ be a presheaf of groups on a site $\scrC$. Then $X^{++}$ is naturally a sheaf of groups on $\scrC$. Moreover, sheafification gives a left adjoint to the forgetful functor from sheaves of groups to presheaves of groups.
\end{lem}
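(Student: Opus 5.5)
The plan is to transport the group structure along sheafification using only the fact that $(-)^{++}$ preserves finite limits, and then to get the adjunction from the ordinary sheafification adjunction by a uniqueness argument. Write $\eta_X\colon X \to X^{++}$ for the unit of sheafification.

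\textbf{Step 1: group structure on $X^{++}$.} A presheaf of groups is the same as a group object in $\Presh(\scrC)$. That is, $X$ comes with morphisms $m\colon X\times X \to X$, $e\colon 1 \to X$ and $i\colon X \to X$, subject to the associativity, unit and inverse axioms. Each axiom is the commutativity of a diagram built from finite products and these maps. Since sheafification is exact (the remark after \cref{properties of the plus operation}), it preserves finite limits. Hence we have canonical isomorphisms $(X\times X)^{++}\simeq X^{++}\times X^{++}$, $(X\times X \times X)^{++}\simeq (X^{++})^3$ and $1^{++}\simeq 1$. These are compatible with the units, because the projections are sent to the projections. Define $m^{++}$, $e^{++}$, $i^{++}$ by applying $(-)^{++}$ to $m,e,i$ and composing with these isomorphisms. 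Applying the functor $(-)^{++}$ to each axiom diagram gives a commuting diagram of the same shape. So $X^{++}$ is a group object in $\Sh(\scrC)$, i.e. a sheaf of groups, and by naturality of $\eta$ the unit $\eta_X$ is a homomorphism of presheaves of groups.

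\textbf{Step 2: the adjunction.} Let $G$ be a sheaf of groups and $f\colon X \to G$ a homomorphism of presheaves of groups. By the sheafification adjunction there is a unique morphism of sheaves $\bar f\colon X^{++}\to G$ with $\bar f\circ\eta_X = f$. It remains to check that $\bar f$ is a homomorphism. Consider the two maps
\[ \bar f\circ m^{++} \quad\text{and}\quad m_G\circ(\bar f\times\bar f) \]
from $X^{++}\times X^{++}\simeq (X\times X)^{++}$ to $G$. Both targets are sheaves, so by the uniqueness in the universal property of $\eta_{X\times X}$ it suffices that they agree after precomposition with $\eta_{X\times X}=\eta_X\times\eta_X$. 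Precomposing, the first becomes $\bar f\circ\eta_X\circ m = f\circ m$, and the second becomes $m_G\circ(f\times f)$. These are equal because $f$ is a homomorphism. Multiplicativity already forces compatibility with units and inverses, but the same argument applies to them directly if desired.

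\textbf{Step 3: conclusion.} Every homomorphism $X\to G$ thus factors uniquely through the homomorphism $\eta_X$ by a homomorphism of sheaves of groups. By \cref{universal property characterization of adjoint functors}, sheafification is left adjoint to the forgetful functor from sheaves of groups to presheaves of groups.

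The main point needing care is in Step 1: checking that the identifications $(X\times X)^{++}\simeq X^{++}\times X^{++}$ commute with the units, so that the uniqueness argument in Step 2 is legitimate. This follows because the comparison map is induced by applying $(-)^{++}$ to the projections, together with naturality of $\eta$.
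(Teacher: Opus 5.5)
Your proof is correct, but it takes a different route from the source the paper relies on. The paper gives no argument of its own and cites the Stacks project, Tag 00YR.

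\emph{The cited approach.} There the plus construction is carried out directly in the category of groups. The matching families for a covering form an equalizer of products of groups, hence a group. The colimit over covering sieves is filtered, and the forgetful functor from groups to sets preserves limits and filtered colimits. So $X^+$ is a presheaf of groups whose underlying presheaf of sets is the usual $X^+$, and the sheaf property and universal property are verified by rerunning the set-theoretic arguments in this setting.

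\emph{Your approach.} You regard a presheaf of groups as a group object in $\Presh(\scrC)$ and transport the structure using only that $(-)^{++}$ preserves finite products. You then show $\bar f$ is a homomorphism from the uniqueness half of the ordinary sheafification adjunction, applied to $\eta_X\times\eta_X$. The compatibility of the comparison isomorphism $(X\times X)^{++}\simeq X^{++}\times X^{++}$ with the units, which you rightly single out and check, is exactly what makes that step legitimate.

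\emph{What each buys.} Your argument is more formal and economical. It reuses the exactness of sheafification already recorded in the paper, never opens up the plus construction, and works verbatim for any structure defined by finite-product diagrams (monoids, abelian groups, rings). The cited approach is more hands-on but gives extra information. In particular, a single application of $(-)^+$ already yields a presheaf of groups whose group law is computed on matching families; this is the form the paper actually uses for separated presheaves of groups such as $G_\mu$. Your method gives this too, since $(-)^+$ is itself exact, but you would have to say so.

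One small point you leave implicit: the functor produced by the universal-property characterization of adjoints agrees with $g\mapsto g^{++}$ on morphisms. This follows from naturality of $\eta$ and uniqueness.
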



%% file: Chapters/Partial_Morphisms_and_Partial_Magmas/Main_Partial_Morphisms_and_Partial_Magmas.tex
\chapter{Partial Morphisms and Partial Magmas}\label{chapter: Partial Morphisms and Partial Magmas}
In this chapter we introduce partial morphisms, the internal partial hom functor, and partial magmas. The main results are an adjunction for the internal partial hom, and the existence of a universal embedding from presheaves of partial magmas to presheaves of groups.

In \cref{section:partial morphisms} we introduce partial morphisms and compatibility, and discuss composition of partial morphisms. In \cref{section: partial morphisms of presheaves}, we define the pullback of partial morphisms on presheaves of slice categories. We also give two definitions of the internal partial hom functor, and show that it satisfies an adjunction. In \cref{section: partial magmas} we introduce partial magmas and define the notions of cancellativity and (strong) associativity. In \cref{section: presheaves of partial magmas} we define the ``left translation'', and ``left translation by inverse'' maps associated to a cancellative presheaf of partial magmas, and also define the subsemigroup of partial morphisms generated by the left translations (\cref{definition of tX}), which will be the key object of consideration in the next chapter. In \cref{section: first universal maps to groups}, we show that every presheaf of partial magmas over a small category admits a universal morphism to a presheaf of groups.

\input{Chapters/Partial_Morphisms_and_Partial_Magmas/Partial_Morphisms.tex}

\input{Chapters/Partial_Morphisms_and_Partial_Magmas/Partial_Morphisms_of_Presheaves.tex}

\input{Chapters/Partial_Morphisms_and_Partial_Magmas/Partial_Magmas.tex}
\input{Chapters/Partial_Morphisms_and_Partial_Magmas/Presheaves_of_Partial_Magmas.tex}

\input{Chapters/Partial_Morphisms_and_Partial_Magmas/Universal_Morphisms_to_Groups.tex}

%% file: Chapters/Partial_Morphisms_and_Partial_Magmas/Partial_Morphisms.tex
\section{Partial Morphisms} \label{section:partial morphisms}


We introduce partial morphisms, and define composition and compatibility of partial morphisms.

\begin{dfn}[Partial morphisms and compatibility]\label{partial morphism in a category definition}
    Let $\mathscr{C}$ be a category, and $X, Y$ be objects in $\scrC$.
    \begin{enumerate}
        \item A \emph{partial morphism $\phi\colon X \dashrightarrow Y$} is a pair $(\phi, \dom\phi)$, where $\dom\phi$ is a subobject of $X$, called the \emph{domain} of the partial morphism, and $\phi\colon \dom\phi \to Y$ is a morphism in $\scrC$.  We will usually refer to the pair $(\phi,\dom\phi)$ simply by $\phi$ where $\dom\phi$ is clear from context.

        \item For any $X,Y \in \scrC$, we denote by $\rparhom(X,Y)$ the collection of partial morphisms from $X$ to $Y$, i.e.,
              \begin{align*}
                  \rparhom(X,Y) := \bigsqcup_{\substack{\text{Subobjects} \\ U\hookrightarrow X}}\Hom(U,Y).
              \end{align*}
              Observe that if $\scrC$ is small then $\rparhom(X,Y)$ is a set.

        \item If $\phi$ and $\psi$ are partial morphisms from $X$ to $Y$, we say $\phi$ and $\psi$ are \emph{compatible} and write $\phi \ssim \psi$ if the restrictions of the corresponding morphisms to $(\dom\phi \cap \dom\psi)$ agree.

        \item We say that a partial morphism $\phi\colon X \dashrightarrow Y$ is \emph{injective} or \emph{monic} if the morphism $\phi\colon \dom\phi \to Y$ is monic.

        \item Suppose $\scrC$ has fiber products. If $\phi\colon X \dashrightarrow Y$ and $\psi\colon Y \dashrightarrow Z$ are partial morphisms, we define the composition $\psi\circ\phi\colon X\dashrightarrow Z$ to be the partial morphism $(\psi\circ\phi, V)$, where the domain $V$ is given by the Cartesian square:
              \[\begin{tikzcd}
                      V \arrow[r] \arrow[d,hook] & \dom\psi \arrow[d,hook] \\
                      \dom\phi \arrow[r,"\phi"] & Y
                  \end{tikzcd}\]
              In other words, $\psi\circ\phi$ is the composition of morphisms $V \xrightarrow{\phi|_V} \dom \psi \xrightarrow{\psi} Z$.

              We will make use of \cref{Yoneda Notation} and \cref{restrictions of morphisms} when referring to composition of partial morphisms, so the composition $\psi\circ\phi$ may also be denoted by $\psi(\phi)$ or $\psi|_X$ and denoted ``the restriction of $\psi$ to $X$''.

        \item Suppose $\scrC$ has fiber products. We define the \emph{graph} of a partial morphism $\phi\colon X \dashrightarrow Y$ to be the graph of the morphism $\phi\colon \dom \phi \to Y$.
    \end{enumerate}
\end{dfn}

\begin{rem}
    \begin{enumerate}
        \item  Observe that if $\scrC$ is a topos (for example, $\scrC = \textit{Set}$ or $\scrC = \Presh(\scrD)$; see \cite{Mac_Lane_Moerdijk_1994} for the definition of a topos), and $\phi$ and $\psi$ are compatible, then by \cref{Intersections and unions of subobjects exist in topoi} they ``glue'' to give a partial morphism with domain $\dom \phi \cup \dom \psi$.

        \item  Note that compatibility ($\ssim$) is not an equivalence relation in general; however, we will prove that it becomes an equivalence relation when restricted to a particular collection of partial morphisms we care about.
    \end{enumerate}
\end{rem}

\begin{ex}
    \begin{enumerate}
        \item Any morphism in $\scrC$ is a partial morphism in $\scrC$.
        \item Let $\scrC$ be a category with an initial object which is a subobject of any other object (e.g. the category of groups). Then for any $X,Y \in \scrC$, there is a partial morphism from $X$ to $Y$ given by the morphism from the initial object to $Y$.
    \end{enumerate}
\end{ex}

Composition of partial morphisms is associative:
\begin{lem} \label{Composition of partial morphisms is associative.}
    Let $\scrC$ be a category with fiber products. Then for any partial morphisms $\phi_1\colon W \dashrightarrow X$, $\phi_2\colon X \dashrightarrow Y$, $\phi_3\colon Y \dashrightarrow Z$ in $\scrC$, we have $(\phi_1\circ \phi_2) \circ \phi_3 = \phi_1 \circ (\phi_2\circ \phi_3)$.
\end{lem}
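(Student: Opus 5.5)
The plan is to compare domains first and then the morphisms on them. Note the statement as written composes in the order $(\phi_1\circ\phi_2)\circ\phi_3$; the composites only type-check as $\phi_3\circ\phi_2\circ\phi_1$ (or with the labels read in the appropriate order). So I will prove that for composable partial morphisms $\phi_1\colon W\dashrightarrow X$, $\phi_2\colon X\dashrightarrow Y$, $\phi_3\colon Y\dashrightarrow Z$ we have $\phi_3\circ(\phi_2\circ\phi_1) = (\phi_3\circ\phi_2)\circ\phi_1$ as elements of $\rparhom(W,Z)$. The same argument applies verbatim to whichever bracketing the statement intends. Since partial morphisms have domains in $\Sub(W)$, equality means: the two domains are isomorphic as subobjects of $W$, and under that isomorphism the two morphisms to $Z$ agree.

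The first step is to write out both domains as iterated pullbacks. For $\phi_3\circ(\phi_2\circ\phi_1)$, let $V_{12}=\dom\phi_1\times_X\dom\phi_2$, the domain of $\phi_2\circ\phi_1$. The full domain is then $V_{12}\times_Y\dom\phi_3$, taken along the map $V_{12}\to\dom\phi_2\xrightarrow{\phi_2}Y$. For $(\phi_3\circ\phi_2)\circ\phi_1$, let $V_{23}=\dom\phi_2\times_Y\dom\phi_3$. The full domain is $\dom\phi_1\times_X V_{23}$, taken along $V_{23}\hookrightarrow\dom\phi_2\hookrightarrow X$.

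By \cref{associativity of fiber product}, applied to the diagram
\[\dom\phi_1\xrightarrow{\phi_1}X\hookleftarrow\dom\phi_2\xrightarrow{\phi_2}Y\hookleftarrow\dom\phi_3,\]
there is a canonical isomorphism
\[(\dom\phi_1\times_X\dom\phi_2)\times_Y\dom\phi_3\simeq\dom\phi_1\times_X(\dom\phi_2\times_Y\dom\phi_3).\]
This isomorphism commutes with the projections to $\dom\phi_1$, $\dom\phi_2$ and $\dom\phi_3$. In each case the monomorphism into $W$ is the composite with the projection to $\dom\phi_1\hookrightarrow W$; it is monic because pullbacks of monomorphisms are monomorphisms (\cref{pullbakcs of monomorphisms lemma}) and composites of monos are monic. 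Since the isomorphism commutes with the projection to $\dom\phi_1$, the two domains define the same element of $\Sub(W)$.

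It remains to compare the morphisms to $Z$. In both bracketings, the morphism is the projection of the iterated pullback to $\dom\phi_3$ followed by $\phi_3$. For example, in $\phi_3\circ(\phi_2\circ\phi_1)$ the map is $\phi_3$ composed with the projection $V_{12}\times_Y\dom\phi_3\to\dom\phi_3$, by definition of composition. Because the associativity isomorphism commutes with the projections to $\dom\phi_3$, the two morphisms agree.

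The only real point of care is bookkeeping. One must check that the structure maps used in the iterated pullbacks are literally the ones appearing in \cref{associativity of fiber product}. For instance, $V_{12}\to Y$ is $\phi_2$ composed with the projection to $\dom\phi_2$, which matches the map used when viewing $V_{12}$ over $\dom\phi_2$. One must also confirm that working in $\Sub$ rather than $\Sub'$ absorbs the choice of pullbacks. Nothing substantive beyond this is expected.
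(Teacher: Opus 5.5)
Your proof is correct and is essentially the paper's argument: both identify the two domains via the canonical associativity isomorphism of iterated fiber products (\cref{associativity of fiber product}), and observe that the morphisms to $Z$ agree because that isomorphism respects the projection to $\dom\phi_3$. You also correctly read the composite as $\phi_3\circ\phi_2\circ\phi_1$ and compare the domains as subobjects of $W$, which repairs the ordering typo in the statement; the paper's proof likewise should say ``subobjects of $W$'' rather than ``of $X$''.
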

\begin{proof}
    Since composition of morphisms is associative, it is enough to show that $\dom(\phi_1\circ \phi_2) \circ \phi_3 = \dom \phi_1 \circ (\phi_2\circ \phi_3)$ as subobjects of $X$. But by definition we have
    \begin{align*}
        \dom(\phi_1\circ \phi_2) \circ \phi_3 = (\dom \phi_1 \times_X \dom \phi_2)\times_Y \dom \phi_3, \\
        \dom\phi_1\circ (\phi_2 \circ \phi_3) = \dom \phi_1 \times_X (\dom \phi_2\times_Y \dom \phi_3),
    \end{align*}
    so we conclude by associativity of the fiber product (\cref{associativity of fiber product}).
\end{proof}

The fact that composition is associative, and that for any $\phi\colon X \dashrightarrow Y$ we have $\phi\circ\id_X = \id_Y \circ \phi$, implies that the collection of objects of $\scrC$ with partial morphisms gives a well-defined category:
\begin{dfn}[$\Par(\scrC)$]
    Let $\scrC$ be a category with fiber products. We denote by $\Par(\scrC)$ the category whose objects are those of $\scrC$ and where morphisms are partial morphisms.
\end{dfn}

Let us observe also that compatibility respects composition of partial morphisms:
\begin{lem}\label{compatibility respects composition}
    Given partial morphisms $\phi,\phi'\colon X \dashrightarrow Y$ and $\psi,\psi'\colon Y \dashrightarrow Z$, suppose $\phi \ssim \phi'$ and $\psi \ssim \psi'$. Then $\phi\circ\psi \ssim \phi'\circ\psi'$.
\end{lem}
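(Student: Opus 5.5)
The statement as written composes in the order of \cref{Yoneda Notation}: since $\phi,\phi'\colon X \dashrightarrow Y$ and $\psi,\psi'\colon Y \dashrightarrow Z$, the composite in question is $\psi\circ\phi$ (resp.\ $\psi'\circ\phi'$) as in \cref{partial morphism in a category definition}. I will prove $\psi\circ\phi \ssim \psi'\circ\phi'$, assuming as there that $\scrC$ has fiber products. The plan is to argue pointwise, via the Yoneda embedding, on the intersection of the two domains. This is legitimate because all subobjects involved are monomorphisms, and the Yoneda embedding preserves fiber products and monomorphisms (\cref{Yoneda is exact}, \cref{a functor which preserves finite limits preserves monomorphisms}). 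So a $T$-point of a subobject is simply a $T$-point of the ambient object satisfying a condition (\cref{pullbakcs of monomorphisms lemma}).

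\emph{Step 1: describe the domains.} Set $V = \dom(\psi\circ\phi) = \dom\phi\times_Y\dom\psi$ and $V' = \dom(\psi'\circ\phi')$. By \cref{pullbakcs of monomorphisms lemma}, $V$ represents the subpresheaf of $X$ given by
\[
T \mapsto \{x \in \dom\phi(T) : \phi(x) \in \dom\psi(T)\},
\]
and similarly for $V'$. By \cref{Intersections and unions of subobjects exist in topoi}, $V\cap V' = V\times_X V'$, whose $T$-points are the $x \in X(T)$ lying in both $V(T)$ and $V'(T)$.

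\emph{Step 2: agreement of the inner maps.} Fix such an $x$. Then $x \in \dom\phi(T)\cap\dom\phi'(T)$, which is a $T$-point of $\dom\phi\cap\dom\phi'$. Since $\phi \ssim \phi'$, we get $\phi(x) = \phi'(x) =: y \in Y(T)$.

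\emph{Step 3: agreement of the outer maps.} By Step 1, $y = \phi(x) \in \dom\psi(T)$ and $y = \phi'(x) \in \dom\psi'(T)$. Hence $y$ is a $T$-point of $\dom\psi\cap\dom\psi' = \dom\psi\times_Y\dom\psi'$. Since $\psi \ssim \psi'$, this gives $\psi(y) = \psi'(y)$, that is, $(\psi\circ\phi)(x) = (\psi'\circ\phi')(x)$.

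\emph{Step 4: conclude.} The two morphisms $V\cap V' \to Z$ agree on all $T$-points. Applying this to the universal point $T = V\cap V'$, $x = \mathrm{id}$, shows they are equal as morphisms in $\scrC$, which is exactly $\psi\circ\phi \ssim \psi'\circ\phi'$.

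The only point requiring care is the one used in Step 3: that a point landing in both $\dom\psi$ and $\dom\psi'$ genuinely factors through their intersection. This follows from the universal property of $\dom\psi\times_Y\dom\psi'$, together with the fact that the factorizations through the monomorphisms are unique.
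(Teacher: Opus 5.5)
Your proof is correct and is essentially the paper's argument: take a $T$-point $x$ of the intersection of the two composite domains, use compatibility of the inner maps to get a common value $y$, note that $y$ lies in both outer domains, and apply compatibility of the outer maps. The paper writes the composite as $\phi\circ\psi$ while its proof treats $\psi$ as the inner map, so reading it as $\psi\circ\phi$ (matching the stated types) is the right fix for a labeling slip, and your remarks on the Yoneda embedding and the universal point make explicit what the paper leaves implicit.
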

\begin{proof}
    Take any $T \in \scrC$ and $x \in (\dom \phi\circ\psi \cap \dom \phi'\circ\psi')(T)$. Then in particular $x \in (\dom \psi \cap \dom\psi')(T)$ and $\psi(x) = \psi'(x)$. Let $y = \psi(x)$. Then $y \in (\dom \phi \cap \dom \phi')(T)$, so
    \begin{align*}
        \phi\circ\psi(x) = \phi(y) = \phi'(y) = \phi'\circ \psi'(x).
    \end{align*}
\end{proof}

\begin{rem}\label{Yoneda induces an embedding of partial morphisms}
    Observe that, since the Yoneda embedding preserves monomorphisms (\cref{Yoneda is exact}, \cref{a functor which preserves finite limits preserves monomorphisms}), for any partial morphism $\phi\colon X \dashrightarrow Y$ with domain $U$ the Yoneda embedding induces a partial morphism $h_X \dashrightarrow h_Y$ with domain $h_U$, where for any $t \in U(T)$ we have $\phi(t) = \phi\circ t$. In particular, the Yoneda embedding induces an embedding from $\Par(\scrC)$ to $\Par(\Presh(\scrC))$; note that this will not be fully faithful in general. Note also that the Yoneda embedding preserves the compatibility relation between partial morphisms.
\end{rem}

%% file: Chapters/Partial_Morphisms_and_Partial_Magmas/Partial_Morphisms_of_Presheaves.tex
\section{Partial Morphisms of Presheaves}\label{section: partial morphisms of presheaves}


We introduce the pullback of a partial morphism of presheaves on a slice category, and define the ``inner partial hom'' presheaf.


\begin{notation}
    If $\phi\colon X \dashrightarrow Y$ is a partial morphism of presheaves on $\scrC$, we will write $\phi_T$ for the corresponding partial maps $X(T) \dashrightarrow Y(T)$ for each $T$.
\end{notation}

First, for any morphism $T' \to T$ and $X\in \Presh(\scrC)$, recall the definitions of $j_T\inv$, $j_{T'/T}\inv$ and $X_T$ from \cref{pullback of presheaves on slice categories}. We can extend the pullback functors to functors on the categories $\Par(\Presh(\scrC))$, $\Par(\Presh(\scrC_T))$ of presheaves with partial morphisms:
\begin{dfn}[Pullback of a partial morphism of presheaves on a slice category]\label{pullback of a partial morphism of presheaves on a slice category}
    Let $\scrC$ be a category, $T \in \scrC$, $X,Y \in \Presh(\scrC)$, and $\phi\colon X \dashrightarrow Y$ a partial morphism in $\Presh(\scrC)$. Note that $(\dom \phi)_T$ is a subobject of $X_T$. We define the pullback $j_{T}\inv(\phi)$ to be the pullback of the morphism $\phi|_{\dom \phi}$; i.e., $j_{T}\inv(\phi) := j_{T}\inv(\phi|_{\dom \phi})$.

    Similarly, if $T' \to T$ is a morphism in $\scrC$, for any $\phi\colon X \dashrightarrow Y$ on $\Presh(\scrC_T)$, we define the pullback $j_{T'/T}\inv(\phi)$ by $j_{T'/T}\inv(\phi) := j_{T'/T}\inv(\phi|_{\dom \phi})$.
\end{dfn}
\begin{rem}\label{remarks on pullback of partial morphisms on slice categories}
    \begin{enumerate}
        \item     Note that $j_{T'/T}\inv$ is induced by the definition of $j_T\inv$ via \cref{a slice of a slice is a slice}.

        \item     For any partial morphism $\phi\colon X \dashrightarrow Y$, it follows from the definitions that we have $j_{T'}\inv = j_{T'/T}\inv \circ j_{T}\inv$. Similarly for any $T \to S$ we have $j_{T'/S}\inv = j_{T'/T}\inv \circ j_{T/S}\inv$.
    \end{enumerate}
\end{rem}
Composition of partial morphisms commutes with pullback:
\begin{lem}\label{composition of partial morphisms commutes with pullback}
    Let $T\in \scrC$. Then for any partial morphisms $\phi\colon X \dashrightarrow Y$, $\psi\colon Y \dashrightarrow Z$ in $\Presh(\scrC)$, we have $j_T\inv(\psi\circ \phi) = j_T\inv(\psi)\circ j_T\inv(\phi)$.

    In particular, the pullback $j_T\inv$ induces a well-defined functor from $\Par(\scrC_T)$ to $\Par(\scrC_{T'})$.
\end{lem}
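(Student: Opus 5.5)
The claim splits into an equality of domains and an equality of morphisms on that domain. I would check the domain first and then the morphism. The main tool is that $j_T\inv$ preserves limits (\cref{presheaf pullback preserves limits}), so it preserves fiber products and monomorphisms (\cref{a functor which preserves finite limits preserves monomorphisms}).

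\textbf{Domain.} Write $U = \dom\phi$ and $V = \dom\psi$. By \cref{partial morphism in a category definition}, the domain $W$ of $\psi\circ\phi$ is the fiber product $U\times_Y V$, formed along $\phi\colon U \to Y$ and the inclusion $V \hookrightarrow Y$. Applying $j_T\inv$ to this Cartesian square gives a square that is again Cartesian. That square exhibits $W_T$ as $U_T \times_{Y_T} V_T$, with the maps $j_T\inv(\phi|_U)$ and $V_T \hookrightarrow Y_T$. The map $V_T \hookrightarrow Y_T$ is still a monomorphism because $j_T\inv$ preserves monomorphisms. By definition, $U_T \times_{Y_T} V_T$ is exactly the domain of $j_T\inv(\psi)\circ j_T\inv(\phi)$. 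So the two domains agree as subobjects of $X_T$.

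One could also see this concretely: both domains are the subpresheaf of $X_T$ whose value on $(S \to T)$ is $\{x \in U(S) : \phi(x) \in V(S)\}$, using \cref{pullbakcs of monomorphisms lemma}. Since subobjects are only defined up to isomorphism, either description is enough.

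\textbf{Morphism.} The composite $\psi\circ\phi$ is the morphism $\psi|_V\circ \phi|_{[W\to V]}$. Since $j_T\inv$ is a functor, it sends this to $j_T\inv(\psi|_V)\circ j_T\inv(\phi|_{[W\to V]})$. The latter factor is the induced map $W_T \to V_T$, which is exactly the morphism used to define $j_T\inv(\psi)\circ j_T\inv(\phi)$. Concretely, at $(S\to T)$ both sides are the map $\psi_S\circ\phi_S$ on $W(S)$.

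\textbf{The ``in particular'' statement.} Identities are preserved because $j_T\inv(\id_X) = \id_{X_T}$. For the relative version, $j_{T'/T}\inv$ corresponds to the previous case under the identification in \cref{a slice of a slice is a slice}, as noted in \cref{remarks on pullback of partial morphisms on slice categories}, so the same argument applies verbatim. Compatibility with the identification of domains as subobjects holds because $j_T\inv$ sends isomorphic monomorphisms to isomorphic ones.

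I expect no real obstacle. The only point needing care is that domains are subobjects, that is, isomorphism classes, so the equality of domains is an equality in $\Sub(X_T)$ rather than a literal equality of presheaves.
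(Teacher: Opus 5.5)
Your proof is correct and follows the paper's route. The paper reduces to the equality of domains, obtains it from the fact that $j_T^{-1}$ preserves fiber products (\cref{presheaf pullback preserves limits}), and takes the equality of the morphisms from functoriality of pullback; your remarks on preservation of monomorphisms, subobjects as isomorphism classes, and the relative version $j_{T'/T}^{-1}$ just spell out details the paper leaves implicit.
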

\begin{proof}
    Since composition commutes with pullback of morphisms, it is enough to verify that $\dom j_T\inv(\psi\circ\phi) = \dom j_T\inv(\psi)\circ j_T\inv(\phi)$. By definition, and since pullback preserves limits (\cref{presheaf pullback preserves limits}), we have
    \begin{align*}
        \dom j_T\inv(\psi\circ\phi) & = j_T\inv(\dom \psi\circ \phi) = j_T\inv( \dom \psi \times_Y \dom \phi)                               \\
                                    & = j_T\inv(\dom \psi) \times_{j_T\inv(Y)} j_T\inv(\dom \phi) = \dom j_T\inv(\psi) \circ j_T\inv(\phi).
    \end{align*}
    We conclude that $j_T\inv$ induces a well-defined functor.
\end{proof}

Next we introduce the ``inner partial hom'' between two presheaves. Recall \cref{internal Hom definition} and \cref{internal hom equivalence}:
\begin{dfn}[Inner partial hom]\label{Parhomdefinition}
    Let $\scrC$ be a small category, and $X,Y \in \Presh(\scrC)$. We denote by $\parhom(X,Y)$, or the \emph{inner partial hom of $X$ and \(Y\)} as follows:
    \begin{align*}
        \parhom(X,Y)(T) = \rparhom_{\Presh(\scrC)}(h_T \times X,Y),
    \end{align*}
    where restriction maps for $T' \to T$ are given by precomposition of partial morphisms with the induced map $X\times h_{T'} \to X\times h_T$. Equivalently, if $\phi$ has domain $U$, the restriction of $\phi$ is given by precomposition (of morphisms) of the top arrow in the induced Cartesian square:
    \[
        \begin{tikzcd}
            U' \arrow[r] \arrow[d]   & U \arrow[d] \\
            X\times h_{T'} \arrow[r] & X\times h_T
        \end{tikzcd}
    \]
\end{dfn}
\begin{rem}
    Observe that this gives a well-defined presheaf by associativity of composition of partial morphisms (\cref{Composition of partial morphisms is associative.}). Observe also that the construction is natural in $X$ and $Y$, and therefore we get a bifunctor $\parhom(-,-)$.
\end{rem}

The next result shows that the adjunction of \cref{The category of presheaves is Cartesian closed} extends to partial morphisms:
\begin{prop}\label{partial hom adjunction}
    Let $\scrC$ be a small category, and $X,Y,Z \in \Presh(\scrC)$. We have isomorphisms
    \begin{align*}
        \lambda\colon  \rparhom(X\times Z, Y) \simeq \Hom(X,\parhom(Z,Y))
    \end{align*}
    which are natural in $X$,$Y$ and $Z$. The isomorphism sends a partial morphism $\mu\colon X\times Z \dashrightarrow Y$ to the morphism $\lambda(\mu)$, which sends an element $x \in X(T)$ to the induced composition of partial morphisms
    \begin{align*}
        \lambda_x(\mu)\colon h_T \times Z \xrightarrow{(x\times \id_Z)} X\times Z \dashrightarrow Y,
    \end{align*}
    where we identify $x$ with the corresponding morphism given by the Yoneda embedding:
    \begin{align*}
        x\colon h_T \to X; \qquad f \in h_T(T') \mapsto X(f)(x).
    \end{align*}

    Explicitly, the domain of $\lambda_x(\mu)$ is the preimage of $\dom\mu$ under $(x,\id_Z)$, i.e., for any $T'$,
    \begin{align*}
        (\dom \lambda_x(\mu))(T') = \{ (f,z) \in (h_T\times Z)(T') : (X(f)(x),z) \in (\dom \mu) (T) \}
    \end{align*}
    and for $(f,z) \in (\dom\lambda_x(\mu))(T')$ we have $\lambda_x(\mu)(f,z) = \mu(X(f)(x),z)$.
\end{prop}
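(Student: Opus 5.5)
The plan is to check directly that $\lambda$ is well defined, write down an explicit inverse, and then verify naturality. The model is the proof of \cref{internal hom equivalence} and the usual Cartesian closed adjunction. The only new feature is keeping track of domains, which are subpresheaves and behave well under pullback by \cref{pullbakcs of monomorphisms lemma}.

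\textbf{Step 1: $\lambda(\mu)$ is a morphism of presheaves.} Fix $\mu\colon X\times Z\dashrightarrow Y$ with domain $U$. For $x\in X(T)$, the partial morphism $\lambda_x(\mu)$ is the composite of the morphism $x\times\id_Z$ with $\mu$. By \cref{pullbakcs of monomorphisms lemma} its domain is the subpresheaf described in the statement, so $\lambda_x(\mu)\in\parhom(Z,Y)(T)$. Now take $g\colon T'\to T$. The restriction of $\lambda_x(\mu)$ along $g$ is its precomposition with $g\times\id_Z$. By associativity of partial composition (\cref{Composition of partial morphisms is associative.}) this equals $\mu\circ((x\circ g)\times\id_Z)=\lambda_{X(g)(x)}(\mu)$. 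Hence $\lambda(\mu)$ commutes with restriction maps.

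\textbf{Step 2: the inverse.} Given $\Phi\colon X\to\parhom(Z,Y)$, define $\kappa(\Phi)$ as follows.
\begin{itemize}
\item Its domain $V$ has $V(T)=\{(x,z)\in X(T)\times Z(T) : (\id_T,z)\in\dom\Phi(x)(T)\}$.
\item On $V(T)$ it is $(x,z)\mapsto\Phi(x)(\id_T,z)$.
\end{itemize}
To see that $V$ is a subpresheaf and $\kappa(\Phi)$ is natural, take $g\colon T'\to T$ and use naturality of $\Phi$. This gives $\Phi(X(g)x)=\Phi(x)\circ(g\times\id_Z)$, whose domain is the preimage of $\dom\Phi(x)$. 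Since $\dom\Phi(x)$ is a subpresheaf of $h_T\times Z$, the element $(g,Z(g)z)=(\id_T,z)|_{T'}$ lies in it, and $(\id_{T'},Z(g)z)$ lies in $\dom\Phi(X(g)x)$. The values agree because $\Phi(x)$ is a morphism of presheaves.

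\textbf{Step 3: mutual inverses.}
\begin{itemize}
\item $\kappa\lambda(\mu)=\mu$ is immediate from the explicit domain formula, taking $f=\id_T$.
\item For $\lambda\kappa(\Phi)=\Phi$, take $x\in X(T)$ and $(f,z)\in(h_T\times Z)(T')$ with $f\colon T'\to T$. Then $(f,z)\in\dom\Phi(x)(T')$ iff $(\id_{T'},z)\in\dom\Phi(X(f)x)(T')$, by the naturality identity from Step 2 applied to $f$. The latter condition says exactly that $(X(f)x,z)\in V(T')$. The values match by the same identity.
\end{itemize}
This Yoneda-style step is the one needing care. The main point is that the domain of $\Phi(x)$ is entirely determined by the points $(\id,z)$ of its restrictions. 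That holds precisely because restriction in $\parhom$ is defined by pulling back domains.

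\textbf{Step 4: naturality.} Naturality in $X$ is precomposition with $\alpha\times\id_Z$, using $\lambda_{\alpha(x)}(\mu)=\lambda_x(\mu\circ(\alpha\times\id))$ by associativity. Naturality in $Y$ is postcomposition, which is trivial. Naturality in $Z$ is precomposition with $\id\times\beta$, which again follows from associativity of partial composition and the functoriality of $\parhom$ in its first variable.
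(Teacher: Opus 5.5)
Your proof is correct and follows the paper's argument. The paper also checks compatibility with restriction via associativity of partial composition, and its inverse $\beta(\theta)=\mathrm{ev}_{Y,Z}\circ(\theta\times\id_Z)$, built from the partial evaluation map $(\phi,z)\mapsto\phi(\id_T,z)$, is exactly your $\kappa$ written as a composite instead of pointwise (your subpresheaf check on $V$ is the paper's check that $\mathrm{ev}_{Y,Z}$ is a well-defined partial morphism).
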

\begin{proof}
    Let us first verify that each $\lambda(\mu)$ is a morphism of presheaves; indeed, for any $T' \to T$ and $x \in X(T)$, the restriction $x|_{T'}$ corresponds to the composition $h_{T'} \to h_T \xrightarrow{x} X$, so $\lambda_{(x|_{T'})}(\mu)$ is the composition
    \begin{align*}
        h_{T'} \times Z \to h_T \times Z \xrightarrow{(x\times \id_Z)} X\times Z \dashrightarrow Y,
    \end{align*}
    and this coincides with the restriction $(\lambda_x(\mu))|_{T'}$ by definition of $\parhom$.

    Naturality in $Y$ is immediate. For naturality in $X$ and $Z$, given any $X'\to X$ and $Z' \to Z$ observe that precomposing on the left and applying $\lambda$ gives the map which sends $x \in X'(T)$ to the composition
    \begin{align*}
        h_{T} \times Z' \xrightarrow{(x\times \id_Z)} X'\times Z' \to X \times Z \dashrightarrow Y,
    \end{align*}
    and this coincides with applying $\lambda$ and precomposing on the right.

    Finally we construct an inverse to $\lambda$. For this, we define the ``partial evaluation map'' of presheaves
    \begin{align*}
        \ev_{Y,Z}\colon \parhom(Z,Y) \times Z \dashrightarrow Y\quad  (\phi,z) \mapsto \phi(\id_T,z).
    \end{align*}
    Explicitly, the domain of $\ev_{Y,Z}$ is given by
    \begin{align*}
        (\dom \ev_{Y,Z}) (T) = \{(\phi,z) \in (\parhom(Z,Y) \times Z)(T) : (\id_T,z) \in (\dom \phi)(T) \}.
    \end{align*}
    Observe that this is a well-defined partial morphism of presheaves; indeed for any $T' \to T$ and $(\phi,z) \in (\parhom(Z,Y) \times Z)(T)$, we have
    \begin{align*}
        \ev_{Y,Z}((\phi,z)|_{T'}) & = \phi|_{T'}(z|_{T'},\id_{T'})                                          \\
                                  & = \phi(z|_{T'}, T' \to T) \tag{definition of restriction in $\parhom$}  \\
                                  & = (\phi(z,\id_T))|_{T'}. \tag{since $\phi$ is a morphism of presheaves}
    \end{align*}
    Then we define the inverse $\beta$ to $\lambda$ as follows: to any $\theta \in \Hom(X,\parhom(Z,Y))$ we associate the composition of partial morphisms
    \[
        \begin{tikzcd}
            \beta(\theta)\colon X\times Z \arrow[r,"\theta \times \id_Z"]& \parhom(Z,Y) \times Z \arrow[r,dashed,"\ev_{Y,Z}"] & Y.
        \end{tikzcd}
    \]
    Let us verify that $\beta$ does indeed give an inverse. For any $x \in X(T)$ and $\theta \in \Hom(X,\parhom(Z,Y))$ we have that $\lambda_x(\beta(\theta))$ is the precomposition of the above with $h_T\times Z \xrightarrow{x\times \id_Z} X\times Z$. This sends pairs $(T' \to T,z)$ in the domain to $\theta(x|_{T'})(z,\id_{T'})$, and since $\theta$ is a morphism of presheaves the latter is equal to $\theta(x)(z,T' \to T)$. By inspection we see also that $\dom \lambda_x(\beta(\theta)) = \dom \theta(x)$. We conclude $\lambda_x(\beta(\theta)) = \theta(x)$, and hence $\lambda\circ\beta(\theta) = \theta$.

    For any $\mu \colon X\times Z \dashrightarrow Y$ one shows by a similar computation that $\beta(\lambda(\mu)) = \mu$; this concludes the proof.
\end{proof}
\begin{rem}\label{adjoint partial hom size issues}
    The only reason we need $\scrC$ to be small in the above result is that so that both sides of the equation are sets; even if $\scrC$ is not small, we still get a natural correspondence.
\end{rem}

\begin{dfn}[Equivalent Formulation of Inner Partial Hom]
    Let $\scrC$ be a small category, and $X,Y \in \Presh(\scrC)$. We define the presheaf $\parhom'(X,Y)$ by defining
    \begin{align*}
        \parhom'(X,Y)(T) = \rparhom_{\Presh(\scrC_T)}(X_T, Y_T),
    \end{align*}
    for each $T\in \scrC$, where restriction maps are given by pullback of partial morphisms of presheaves.
\end{dfn}
\begin{rem}
    Observe that the above mapping gives a well-defined presheaf since taking restrictions commutes with composition by \cref{remarks on pullback of partial morphisms on slice categories}. Observe also that the construction is natural in $X$ and $Y$, and therefore we get a bifunctor $\parhom'(-,-)$
\end{rem}

\begin{prop}[{cf. \cref{internal hom equivalence}}]\label{parhomequivalence}
    We have a natural isomorphism of bifunctors
    \begin{align*}
        \alpha\colon \parhom(-,-) \simeq \parhom'(-,-).
    \end{align*}
    Moreover, for any $\mu\colon X\times Z \dashrightarrow Y$, $T \in \scrC$ and $x \in X(T)$, the image under $\alpha$ of $\lambda_x(\mu)$ from \cref{partial hom adjunction} is the partial morphism from $Z_T$ to $Y_T$ sending elements $z \in Z_T(T' \to T)$ to $\mu(x|_{T'}, z)$. Explicitly, the domain is given by
    \begin{align*}
        (\dom \lambda_x(\mu))(T' \to T) = \{ z \in Z_T(T' \to T) : (x|_{T'},z) \in (\dom \mu)(T')\}.
    \end{align*}

\end{prop}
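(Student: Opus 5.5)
The plan is to imitate the proof of \cref{internal hom equivalence}, replacing the adjunction $\alpha_T$ between $(j_T)_!$ and $j_T\inv$ by an extension of it to partial morphisms. First, for fixed $T$, I will build a bijection
\begin{align*}
    \alpha_T\colon \rparhom_{\Presh(\scrC)}(h_T\times X, Y) \to \rparhom_{\Presh(\scrC_T)}(X_T,Y_T).
\end{align*}
By \cref{interpretation of j_!} we have $(j_T)_!(X_T)\simeq X\times h_T$. The equivalence $e$ of \cref{Presheaves on Slice categories} matches subobjects of $X_T$ in $\Presh(\scrC_T)$ with subobjects of $X\times h_T \to h_T$ in $\Presh(\scrC)_{h_T}$. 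Since every subobject of $X\times h_T$ carries the induced map to $h_T$, the latter are exactly the subobjects of $X\times h_T$ in $\Presh(\scrC)$. Concretely, a subpresheaf $U\subset h_T\times X$ goes to the subpresheaf $U_T$ of $X_T$ given by $U_T(T'\xrightarrow{f} T)=\{x : (f,x)\in U(T')\}$. Every subobject $V\subset X_T$ has this form with $U=(j_T)_!(V)$, and the two constructions are mutually inverse. On the morphism part I will apply the ordinary adjunction $\alpha_T$ to $U=(j_T)_!(U_T)\to Y$, which gives $U_T\to Y_T$. Together these yield the bijection. Unwinding the definitions, $\phi$ with domain $U$ goes to the partial morphism $x\in U_T(f)\mapsto \phi(f,x)$. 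Applied to $\phi=\lambda_x(\mu)$, this is exactly the explicit formula in the statement, including the description of the domain.

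Next I will check naturality in $T$, i.e.\ that $\alpha_T$ intertwines the restriction maps. On the $\parhom$ side, restriction along $g\colon T'\to T$ pulls back the domain along $h_{T'}\times X\to h_T\times X$ and precomposes. On the $\parhom'$ side it applies $j_{T'/T}\inv$. For the domains, I will use \cref{relative j lower shriek cartesian diagram}, together with the Cartesian description in \cref{pullbakcs of monomorphisms lemma}. These show that pulling $U$ back to $h_{T'}\times X$ corresponds to $(U_T)_{T'}$: both equal $\{(f',x) : (g\circ f',x)\in U\}$. For the morphism parts, \cref{precomposition on the left commutes with pullback on the right} gives the required compatibility. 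Alternatively, I can compare the explicit formulas directly: both sides send $x\in X(T''\xrightarrow{f'}T')$ to $\phi(g f',x)$.

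Finally I will check naturality in $X$ and $Y$. Naturality in $Y$ (postcomposition) is immediate from the formula. For $X'\to X$, the domain on both sides is a preimage under the induced map. By \cref{presheaf pullback preserves limits}, $j_T\inv$ preserves fiber products, so the preimage commutes with $(-)_T$, and the morphism parts agree by the pointwise formula.

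I expect the only real obstacle to be the bookkeeping of domains as subobjects up to isomorphism. Specifically, I need $U\mapsto U_T$ to be a bijection on $\Sub$ rather than only on $\Sub'$, and I need it to be compatible with pullbacks. Everything else reduces to the pointwise formula.
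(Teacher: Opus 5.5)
Your proposal is correct and follows essentially the paper's route. It identifies subobjects of $h_T\times X$ with subobjects of $X_T$ via $e$ and $(j_T)_!$, and applies the ordinary adjunction $\alpha_T$ on each domain. It then gets naturality in $T$ from \cref{relative j lower shriek cartesian diagram} and \cref{precomposition on the left commutes with pullback on the right}, and reads off the formula for $\lambda_x(\mu)$ pointwise.

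One small correction concerns naturality in $X$ along a map $g\colon X'\to X$. The fact you need is that $(j_T)_!$ preserves fiber products (\cref{j lower shriek preserves limits}), giving $(j_T)_!(g_T\inv(V)) = (\id_{h_T}\times g)\inv((j_T)_!(V))$ for $V\subset X_T$. It is not that $j_T\inv$ preserves them, although your pointwise check already covers this.
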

\begin{proof}
    We need to show
    \begin{align} \label{goal equation for equivalence of partial homs definitions}
        \rparhom_{\Presh(\scrC)}(X\times h_T, Y) \simeq \rparhom_{\Presh(\scrC_T)}(X_T,Y_T)
    \end{align}
    naturally in $X$, $Y$ and $T$.

    Recall \cref{pullback of presheaves on slice categories} and \cref{Presheaves on Slice categories}, and note that $X\times h_T = (j_{T})_!(X_T)$.

    First, for any presheaf $X$, observe that $(j_T)_!$ induces an equivalence of categories between $\Sub(X_T)$ and $\Sub(X \times h_T)$; indeed the functor $j_{h_T}$ induces an equivalence of categories of subobjects between $\Sub(X_{h_T})$ and $\Sub(X\times h_T)$, and we get the claim by precomposing with the equivalence $e$. In particular, every subobject of $X\times h_T$ is of the form $(j_T)_!(U)$ for some $U \hookrightarrow X_T$.

    Next, for any subobject $(j_T)_!(U) \hookrightarrow X \times h_T$, by \cref{pullback of presheaves on slice categories} and \cref{precomposition on the left commutes with pullback on the right} we have an isomorphism given by the adjunction
    \begin{align}\label{given adjunction for equivalence of partial homs}
        \alpha \colon \Hom_\scrC((j_T)_!(U),Y) \xrightarrow{\sim} \Hom_{\Presh(\scrC_T)}(U,Y_T),
    \end{align}
    which is natural $T$, $U$ and $Y$, such that for any $\phi \in \Hom_\scrC((j_T)_!(U),Y)$ and $u \in U(T')$ we have $\alpha(\phi)(u) = \phi(u)$. This immediately gives that \cref{goal equation for equivalence of partial homs definitions} is natural in $Y$.

    For any $T' \to T$, by \cref{relative j lower shriek cartesian diagram} and \cref{j lower shriek preserves limits} we have a Cartesian square
    \[
        \begin{tikzcd}
            (j_{T'})_!(U_{T'}) \arrow[r] \arrow[d]   & (j_T)_!(U)\arrow[d] \\
            X\times h_{T'} \arrow[r] & X\times h_T
        \end{tikzcd}
    \]
    so by definition restriction of partial morphisms along $T \to T'$ on the left side of \cref{goal equation for equivalence of partial homs definitions} agrees with precomposition (of morphisms) by the canonical map $(j_{T'})_!(U_{T'}) \to (j_T)_!(U)$; since \cref{given adjunction for equivalence of partial homs} is natural in $T$ we conclude \cref{goal equation for equivalence of partial homs definitions} is natural in $T$.

    Finally, for any $X' \to X$ and subobject $(j_T)_!(U) \hookrightarrow X\times h_T$, if $U'$ is the pullback of $U \hookrightarrow X_T$ to $X_{T'}$, then by \cref{j lower shriek preserves limits} we have a Cartesian diagram
    \[
        \begin{tikzcd}
            (j_T)_!(U') \ar[r] \ar[d,hook] & (j_T)_!(U) \ar[d, hook]\\
            X'\times h_T \ar[r]& X\times h_T
        \end{tikzcd}
    \]
    so in particular the precomposition of partial morphisms on the left side of \cref{goal equation for equivalence of partial homs definitions} arises from the precomposition by $(j_T)_!(U') \to (j_T)_!(U)$; since the adjunction \cref{given adjunction for equivalence of partial homs} is natural in $U$ we conclude that \cref{goal equation for equivalence of partial homs definitions} is natural in $X$.

    Finally, for any $\mu\colon X\times Z \dashrightarrow Y$, and $x \in X(T)$, the fact that the image of $\lambda_x(\mu)$ from \cref{partial hom adjunction} under the equivalence is as claimed follows from the remarks under \cref{given adjunction for equivalence of partial homs} and the definition of $\lambda_x(\mu)$.
\end{proof}
\begin{notation}
    Going forward, we will also use $\lambda$ to denote the induced adjunction
    \begin{align*}
        \lambda\colon  \rparhom(X\times Z, Y) \simeq \Hom(X,\parhom'(Z,Y)).
    \end{align*}
\end{notation}

Just as in the internal hom case, $\parhom(X,X)$ is a semigroup:
\begin{cor}\label{parhom is a presheaf of semigroups}
    For any $X \in \scrC$, composition of partial morphisms endows the presheaf $\parhom(X,X)$ with the structure of a presheaf of semigroups.
\end{cor}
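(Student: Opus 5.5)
The plan is to use the description $\parhom'(X,X)$ from \cref{parhomequivalence}, where $\parhom'(X,X)(T) = \rparhom_{\Presh(\scrC_T)}(X_T,X_T)$, and transport the semigroup structure along the natural isomorphism $\alpha$. Since $\alpha$ is an isomorphism of presheaves, it suffices to show that $\parhom'(X,X)$ is a presheaf of semigroups. This mirrors the proof of \cref{internal hom is a presheaf of semigroups}.

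First, for each $T$, the set $\rparhom_{\Presh(\scrC_T)}(X_T,X_T)$ is exactly the endomorphism monoid of the object $X_T$ in the category $\Par(\Presh(\scrC_T))$. The category $\Presh(\scrC_T)$ has fiber products, computed pointwise, so composition of partial morphisms is defined. By \cref{Composition of partial morphisms is associative.}, this composition is associative, hence each $\parhom'(X,X)(T)$ is a semigroup (indeed a monoid with unit $\id_{X_T}$).

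Second, for any $T' \to T$ in $\scrC$ I must check that the restriction map, which is the pullback $j_{T'/T}\inv$ on partial morphisms, is a semigroup homomorphism. This is precisely the content of \cref{composition of partial morphisms commutes with pullback}, applied in its relative form $j_{T'/T}\inv(\psi\circ\phi) = j_{T'/T}\inv(\psi)\circ j_{T'/T}\inv(\phi)$. That relative form follows from the absolute one via \cref{a slice of a slice is a slice} and \cref{remarks on pullback of partial morphisms on slice categories}, using that $j_{T'/T}\inv$ preserves fiber products (\cref{presheaf pullback preserves limits}), so that domains of composites pull back to domains of composites. Functoriality of restriction was already noted when $\parhom'$ was shown to be a presheaf.

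Finally, I define the multiplication on $\parhom(X,X)$ by conjugating with $\alpha$. Alternatively, I can check directly on $\parhom(X,X)(T) = \rparhom(h_T\times X, X)$: the product of $\phi,\psi$ is the composite $h_T\times X \xrightarrow{(\rho_{h_T},\psi)} h_T\times X \xrightarrow{\phi} X$ of partial morphisms, and restriction along $h_{T'}\to h_T$ commutes with this by a Cartesian-square argument. The only mildly delicate point, and the one I expect to be the main obstacle, is verifying that the relative pullback respects domains of composites. This is handled by the limit-preservation of $j_{T'/T}\inv$ exactly as in the proof of \cref{composition of partial morphisms commutes with pullback}.
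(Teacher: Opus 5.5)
Your proposal is correct and follows essentially the same route as the paper. The paper's proof is the one-line observation that associativity of composition of partial morphisms (\cref{Composition of partial morphisms is associative.}) and compatibility of pullback with composition (\cref{composition of partial morphisms commutes with pullback}) give the result; this implicitly works in the $\parhom'(X,X)$ model and transports the structure along $\alpha$, which is exactly what you spell out.
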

\begin{proof}
    Immediate, since composition of partial morphisms is associative (\cref{Composition of partial morphisms is associative.}) and pullback respects composition (\cref{composition of partial morphisms commutes with pullback}).
\end{proof}



%% file: Chapters/Partial_Morphisms_and_Partial_Magmas/Partial_Magmas.tex
\section{Partial Magmas}\label{section: partial magmas}
For this section we fix a category $\scrC$ with products (necessary to define partial binary operations) and fiber products (necessary for composition of partial morphisms, and to define graphs).

\begin{dfn}\label{partial binary operations in a category definition}
    \begin{enumerate}
        \item   A \emph{partial binary operation} on an object $X\in \scrC$ is a partial morphism $\mu\colon X \times X \dashrightarrow X$.

        \item  A \emph{partial magma} on $\scrC$ is a pair $(X,\mu)$, where $X\in \scrC$ and $\mu$ is a partial binary operation on $X$.

        \item   A \emph{morphism of partial magmas} $\phi\colon (X,\mu) \to (X',\mu')$ is a morphism $\phi\colon X\to X'$ in $\scrC$ which preserves the partial binary operation; i.e., the restriction of $\phi\times \phi$ to $\dom \mu$ factors through $\dom \mu'$, and we have $\mu'((\phi\times \phi)|_U) = \phi(\mu)$.
    \end{enumerate}
    We will usually refer to the pair $(X,\mu)$ simply by $X$ where $\mu$ is clear from context.
\end{dfn}

\begin{ex}
    \begin{enumerate}
        \item  A group object in $\scrC$ is certainly a partial magma.

        \item Any object $X\in \Presh(\scrC)$ can be endowed with the structure of a magma by defining the partial operation $X\times X \dashrightarrow X$ to be the projection to the first factor.

        \item Let $\scrC$ be a category with an initial object which is a subobject of any other object (e.g. the category of groups). Then, for any $X\in\scrC$, the monomorphism from the initial object $e$ to $X\times X$, together with the map $e\to X$, define a partial morphism $X\times X\dashrightarrow X$, which gives the structure of a partial magma on $X$.
    \end{enumerate}
\end{ex}

The above examples show that the notion of a presheaf of partial magmas is very general, and that one should not hope to be able to say much about them without imposing extra conditions.

\begin{dfn}[Associativity]\label{associativity definition}
    Let $(X,\mu)$ be a partial magma on $\scrC$. We say $\mu$ is \emph{associative} if we have
    \begin{align*}
        \mu \circ (id_X \times \mu) \ssim \mu \circ (\mu \times \id_X),
    \end{align*}
    where $\ssim$ is defined as in \cref{partial morphism in a category definition}.

    Using the notation of \cref{products and Yoneda combine}, if we denote by $x_i\colon X^3 \to X$ the projection to the $i^{\text{th}}$ factor for $i = 1,2,3$, then $(x_1,x_2,x_3) = \id_{X^3}$ and we have
    \begin{align*}
        \mu(x_1,\mu(x_2,x_3)) \ssim \mu(\mu(x_1,x_2),x_3),
    \end{align*}
    We say $(X,\mu)$ is associative if $\mu$ is.
\end{dfn}

\begin{dfn}[Cancellative Partial Magmas]\label{cancellativity definition}
    Let \((X,\mu)\) be a partial magma in $\scrC$. Let \(X_i = X\) for \(i = 1,2,3\), and \(\Gamma_\mu \subset X^3\) be the graph of \(\mu\).

    We say that $\mu$ is \emph{cancellative} if for each permutation $(i,j,k)$ of $\{1,2,3\}$ with \(i<j\), the projection \(s_{ij}\colon \Gamma_\mu \to X_i \times X_j\) factors through an isomorphism with a subobject $U_{ij}$ of the product $X_i\times X_j$; i.e., $\rho_{ij}$ realizes \(\Gamma_\mu\) as the graph of a partial morphism \(\mu_{ij}\colon X_i\times X_j \dashrightarrow X_k\) with domain $U_{ij}$, where $\mu = \mu_{12}$.

    We say that $(X,\mu)$ is cancellative if $\mu$ is.
\end{dfn}

\begin{dfn}[Strong Associativity]\label{strong associativity definition}
    Let $(X,\mu)$ be a cancellative partial magma in $\scrC$. We say $\mu$ is \emph{strongly associative} if it is associative and if in addition we have that if $(x_1,x_2,x_3) = \id_{X^3}$ then
    \begin{align*}
        \mu_{13}(x_1,\mu(x_2,x_3)) \ssim \mu(\mu_{13}(x_1,x_2),x_3).
    \end{align*}

    We say $(X,\mu)$ is associative if $\mu$ is.
\end{dfn}

\begin{lem}\label{associativity and cancellativity preserved by passing to subobjects.}
    Let $(X,\mu)$ be a partial magma in $\scrC$, and $Y \hookrightarrow X$ be a subobject. If $\mu$ is cancellative or (strongly) associative then so is the partial binary operation on $Y$ given by restricting the graph of $\mu$ to $Y^3$.
\end{lem}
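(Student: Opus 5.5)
The plan is to make the restricted operation explicit and then check that each defining condition pulls back along the monomorphism $\iota\colon Y \hookrightarrow X$. Define $\Gamma_Y := \Gamma_\mu \times_{X^3} Y^3$. Since $\iota^3\colon Y^3 \to X^3$ is monic, \cref{pullbakcs of monomorphisms lemma} shows that $\Gamma_Y \to Y^3$ is monic. On $T$-points, $\Gamma_Y(T)$ consists of the triples $(y_1,y_2,y_3) \in Y(T)^3$ with $(\iota y_1,\iota y_2) \in \dom\mu(T)$ and $\mu(\iota y_1,\iota y_2) = \iota y_3$. By the Yoneda lemma it is harmless to check everything on $T$-points, and I will do so throughout.

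\textbf{Step 1: $\Gamma_Y$ is a graph.} Let $U^Y_{12} := U_{12} \times_{X^2} Y^2$ and $V := \mu^{-1}(Y) \cap U^Y_{12}$, the pullback of $Y$ along $\mu|_{U^Y_{12}}$. The projection $\Gamma_Y \to Y^2$ factors through $V$, and I claim it gives an isomorphism $\Gamma_Y \simeq V$. On points this holds because the third coordinate is uniquely determined: it equals $\mu(\iota y_1,\iota y_2)$, and since $\iota$ is monic it has at most one preimage in $Y$. By \cref{graph of morphis criterion}, $\Gamma_Y$ is then the graph of a partial operation $\mu_Y\colon Y^2 \dashrightarrow Y$ with domain $V$. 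This is the operation meant in the statement, and by construction $\iota \circ \mu_Y = \mu \circ (\iota\times\iota)|_V$.

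\textbf{Step 2: cancellativity.} The argument is the same for every pair $(i,j)$. The projection $s_{ij}\colon \Gamma_\mu \to X_i\times X_j$ is an isomorphism onto $U_{ij}$, so the $k$-th coordinate of a point of $\Gamma_\mu$ is determined by its $i$-th and $j$-th coordinates via $\mu_{ij}$. Pulling back to $Y^3$ then shows that the projection $\Gamma_Y \to Y_i\times Y_j$ is an isomorphism onto $U_{ij}\times_{X^2} Y^2 \cap \mu_{ij}^{-1}(Y)$, again using that $\iota$ is monic. The same reasoning gives $\iota\circ(\mu_Y)_{ij} = \mu_{ij}\circ(\iota\times\iota)$ on the relevant domain.

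\textbf{Step 3: (strong) associativity.} Let $(y_1,y_2,y_3)$ lie in the intersection of the domains of $\mu_Y(y_1,\mu_Y(y_2,y_3))$ and $\mu_Y(\mu_Y(y_1,y_2),y_3)$. Applying $\iota$ to both sides and using Step 1, the images land in the corresponding domains for $\mu$ on $(\iota y_1,\iota y_2,\iota y_3)$. Associativity of $\mu$ makes the two values agree after applying $\iota$, and since $\iota$ is monic they agree in $Y$. The identity involving $\mu_{13}$ follows verbatim using Step 2. More abstractly, all of this is the statement that postcomposition with a monomorphism reflects compatibility and commutes with the composition of \cref{compatibility respects composition}.

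The only real subtlety is Step 1: correctly identifying the domain $V$, and checking that $\Gamma_Y$ really projects isomorphically onto a subobject. This rests entirely on $\iota$ being monic and on pullbacks of monomorphisms being monic. Once that is in place, Steps 2 and 3 are routine pointwise checks.
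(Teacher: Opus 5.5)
Your proof is correct and follows the same route as the paper. Both arguments rest on the fact that pullback along the monomorphism $Y^3 \hookrightarrow X^3$ preserves monomorphisms and isomorphisms, which transfers cancellativity, and on the fact that postcomposition with a monomorphism reflects equality, which transfers the (strong) associativity identities; your Step 1, checking that the restricted graph really is the graph of a partial operation with domain $V$, makes explicit a point the paper's two-line proof leaves implicit.
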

\begin{proof}
    It follows immediately from the definition that the associativity conditions are preserved when passing to subobjects; for cancellativity note that pulling back to $Y$ preserves monomorphisms and isomorphisms, so if the relevant projections factor as isomorphisms followed by monomorphisms this remains true after pullback.
\end{proof}

%% file: Chapters/Partial_Morphisms_and_Partial_Magmas/Presheaves_of_Partial_Magmas.tex
\section{Presheaves of Partial Magmas}\label{section: presheaves of partial magmas}
We introduce presheaves of partial magmas, which are presheaves with a partial binary operation. We discuss the associated left/right translation maps in this setting, and we verify that the associativity and cancellation properties can be verified pointwise.


First we observe that the Yoneda embedding preserves partial magmas:
\begin{lem}\label{Yoneda preserves partial magmas}
    Let $\scrC$ be a category with products and fiber products and let $(X,\mu)$ be a partial magma on $\scrC$. Then $(h_X,h_\mu)$ is a partial magma on $\Presh(\scrC)$. Moreover, if $\mu$ is cancellative, associative, or strongly associative, respectively, then so is $h_\mu$.
\end{lem}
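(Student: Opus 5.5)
The plan is to push everything through the Yoneda embedding $h\colon \scrC \to \Presh(\scrC)$, using only that it is continuous (\cref{Yoneda is exact}) and hence preserves monomorphisms (\cref{a functor which preserves finite limits preserves monomorphisms}), products and fiber products.

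\textbf{Step 1: $h_\mu$ is a partial binary operation.} By \cref{Yoneda induces an embedding of partial morphisms}, the partial morphism $\mu\colon X\times X \dashrightarrow X$ with domain $U$ induces a partial morphism $h_\mu\colon h_{X\times X} \dashrightarrow h_X$ with domain $h_U$. Since $h_{X\times X} = h_X\times h_X$ (\cref{Yoneda is exact}), this is a partial binary operation on $h_X$, so $(h_X,h_\mu)$ is a partial magma.

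\textbf{Step 2: graphs and cancellativity.} The graph $\Gamma_\mu$ is a fiber product in $\scrC$. Since $h$ preserves fiber products, we get $h_{\Gamma_\mu} = \Gamma_{h_\mu}$ as subobjects of $h_X^3 = h_{X^3}$. Applying $h$ to the factorization $s_{ij}\colon \Gamma_\mu \xrightarrow{\sim} U_{ij} \hookrightarrow X_i\times X_j$ gives an isomorphism followed by a monomorphism $h_{U_{ij}} \hookrightarrow h_{X_i}\times h_{X_j}$. So $h_\mu$ is cancellative, and its partial morphisms satisfy $(h_\mu)_{ij} = h_{\mu_{ij}}$.

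\textbf{Step 3: compatibility and composition.} This is the step that needs care. We must check that $h$ respects composition of partial morphisms, that is, $h_{\psi\circ\phi} = h_\psi\circ h_\phi$. This holds because the domain of a composite is a fiber product, which $h$ preserves. We must also check that $h$ respects the product $\id\times\mu$ of partial morphisms, whose domain is $X\times \dom\mu$ and is preserved since $h$ preserves products. Finally, compatibility is preserved: the intersection of domains is a fiber product, which $h$ preserves, and equal morphisms map to equal morphisms. This last point is already noted in \cref{Yoneda induces an embedding of partial morphisms}.

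\textbf{Step 4: associativity.} Given Step 3, the compatibility
\[
\mu\circ(\id_X\times\mu) \ssim \mu\circ(\mu\times\id_X)
\]
transports directly to the corresponding compatibility for $h_\mu$. For strong associativity we also need the compatibility involving $\mu_{13}$. By Step 2, $(h_\mu)_{13} = h_{\mu_{13}}$, so this compatibility transports in the same way.

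I expect no real obstacle beyond the bookkeeping that the composites are taken in the same order on both sides.
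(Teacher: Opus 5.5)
Your proposal is correct and follows the same route as the paper, whose proof is the one-line observation that the Yoneda embedding preserves products, subobjects, partial morphisms and compatibility. Your Steps 1--4 spell out that bookkeeping, including the checks that composition of partial morphisms is preserved (via fiber products) and that $(h_\mu)_{ij} = h_{\mu_{ij}}$, which is what strong associativity needs.
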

\begin{proof}
    Follows from the fact that the Yoneda embedding preserves products, subobjects, partial morphisms and compatibility.
\end{proof}

\begin{rem}\label{mu is well be haved on presheaves of partial magmas}
    Let $(X,\mu)$ be a presheaf of partial magmas on $\scrC$. Let $T \in \scrC$, and $(a,b) \in X^2(T)$. By the Yoneda lemma, the pair $(a,b)$ corresponds to a map $(a',b')\colon h_T \to X^2$. Then the composition $\mu(a',b')$ is a partial morphism from $h_T$ to $X$. Observe that we have $(a,b) \in \dom \mu(T)$ if and only if $\dom \mu(a',b') = h_T$, and if this holds then the corresponding element of $X(T)$ is $\mu(a,b)$.
\end{rem}

\begin{notation}\label{lambda notation}
    Let $(X,\mu)$ be a presheaf of partial magmas on a small category $\scrC$. Recall the definition of $X_T$ from \cref{pullback of presheaves on slice categories}.

    \begin{enumerate}
        \item   Following \cref{presheaf notation}, for any $x \in X(T)$ and $y \in X_T(T' \to T)$ such that $(x|_{T'},y) \in \dom \mu(T')$, or $(y,x|_{T'}) \in \dom \mu(T')$, we will usually write $\mu(x,y)$ for $\mu(x|_{T'},y)$ or $\mu(y,x)$ for $\mu(y,x|_{T'})$, as appropriate.

        \item  By \cref{partial hom adjunction} and \cref{parhomequivalence}, the binary operation $\mu$ induces a morphism $\lambda(\mu)\colon X \to \parhom(X,X)$ such that for any $x \in X(T)$ and $y \in X_T(T' \to T)$ with $(x|_{T'},y) \in \dom \mu(T')$ we have $\lambda_x(\mu)(y) = \mu(x,y)$. Where $\mu$ is clear from context, we will usually write $\lambda_x$ for $\lambda_x(\mu)$, and $x(y)$ for $\lambda_x(y)$.

        \item Similarly, if $\mu$ is cancellative, the induced binary operation $\mu_{13}$ induces a morphism from $X$ to $\parhom(X,X)$, which we will denote by $\lambda^\dagger(\mu)$, such that for any $x \in X(T)$ and $y \in X_T(T' \to T)$ such that $(x|_{T'},y) \in \dom \mu_{13}(T')$ we have $\lambda_x^\dagger(\mu)(y) = \mu(x,y)$. Again, if $\mu$ is clear from context, we write $\lambda^\dagger_x$ for $\lambda_x(\mu)$, and $x^\dagger(y)$ for $\lambda^\dagger_x(y)$.
    \end{enumerate}
\end{notation}
\begin{rem}
    One could of course define \emph{left cancellativity} and define $\lambda^\dagger(\mu)$ under this weaker assumption; we will not worry about this since we will also need right cancellativity for our purposes, for example in \cref{cancellation on covers}.
\end{rem}

\begin{rem}
    Of course, one can still define the partial morphisms $\lambda_x, \lambda^\dagger_x$ if $\scrC$ is not small, see \cref{adjoint partial hom size issues}. We will not worry about this.
\end{rem}

\begin{rem}\label{dagger behaves linke an inverse}
    Observe that, if $(X,\mu)$ is cancellative, then for $z \in \dom \lambda_x(T')$ we have $x(z) \in \dom \lambda_x^\dagger$ and $x^\dagger(x(z)) = z$.
\end{rem}

In what follows we frequently use the symbol \((-)^\dagger\) where the reader should informally imagine an inverse \((-)\inv\). The next result justifies this intuition:
\begin{lem}\label{dagger maps to inverse}
    Let \(\phi\colon (X,\mu) \to (G,\cdot)\) be a morphism of presheaves of partial magmas on \(\scrC\), where \((G,\cdot)\) is a presheaf of groups, and let \(x,z \in X(T)\). If \(z \in \dom \lambda_x\) then \(\phi(x(z)) = \phi(x)\cdot \phi(z)\). Similarly, if \(z \in \dom \lambda_x^\dagger\) then \(\phi(x^\dagger(z)) = \phi(x)\inv \cdot \phi(z)\).
\end{lem}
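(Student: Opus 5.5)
We unwind the notation and reduce both claims to the defining property of a morphism of partial magmas, applied to $\mu = \mu_{12}$ and to its cancellative companion $\mu_{13}$. Throughout, $z \in X(T)$ is viewed as an element of $X_T(\id_T)$. By \cref{lambda notation}, the condition $z \in \dom \lambda_x$ means $(x,z) \in (\dom \mu)(T)$, and then $x(z) = \mu(x,z)$. Likewise, $z \in \dom\lambda^\dagger_x$ means $(x,z) \in (\dom \mu_{13})(T)$, and then $x^\dagger(z) = \mu_{13}(x,z)$.

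For the first claim, we use \cref{partial binary operations in a category definition}: since $\phi$ is a morphism of partial magmas, $\phi\times\phi$ restricted to $\dom\mu$ factors through the domain of the group multiplication (which is everything), and $\phi(\mu(a,b)) = \phi(a)\cdot\phi(b)$ as morphisms out of $\dom\mu$. Evaluating on the $T$-point $(x,z)$ of $\dom\mu$, as in \cref{mu is well be haved on presheaves of partial magmas}, gives $\phi(x(z)) = \phi(\mu(x,z)) = \phi(x)\cdot\phi(z)$.

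For the second claim, set $w = \mu_{13}(x,z) \in X(T)$. Cancellativity (\cref{cancellativity definition}) says that $\mu_{13}$ and $\mu_{12}$ have the same graph $\Gamma_\mu \subset X_1X_2X_3$, with $\mu_{13}$ read off via the projection $\rho_{13}$. So $(x,z)\in\dom\mu_{13}(T)$ with value $w$ means that the $T$-point $(x,w,z)$ lies in $\Gamma_\mu(T)$. Equivalently, $(x,w) \in \dom\mu(T)$ and $\mu(x,w) = z$. This is also the content of \cref{dagger behaves linke an inverse}, read in reverse. Applying the first claim gives $\phi(z) = \phi(x)\cdot\phi(w)$, and multiplying on the left by $\phi(x)\inv$ in the group $G(T)$ yields $\phi(x^\dagger(z)) = \phi(w) = \phi(x)\inv\cdot\phi(z)$.

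The only step requiring care is the translation from $\dom\mu_{13}$ to $\Gamma_\mu$. Concretely, we must check that a $T$-point of $U_{13}$ lifts, through the isomorphism $\Gamma_\mu \simeq U_{13}$, to a $T$-point of $\Gamma_\mu$ whose first two coordinates lie in $U_{12}$ and whose third coordinate is the $\mu_{12}$-value. This follows directly from the definitions, since $\Gamma_\mu$ is by definition the graph of $\mu_{12}$; the remainder of the argument is formal.
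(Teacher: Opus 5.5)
Your proof is correct and follows essentially the same route as the paper. The first claim is immediate because $\phi$ respects the partial multiplication. For the second, you set $w = x^\dagger(z)$ and use the shared graph $\Gamma_\mu$ to get $x(w) = z$. You then apply the first claim and cancel $\phi(x)$ in the group $G(T)$.
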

\begin{proof}
    The first statement is immediate since \(\phi\) respects the partial multiplication by assumption.

    Suppose \(z \in \dom \lambda_x^\dagger\) and \(x^\dagger(z) = y\). Then by definition we have \(x(y) = z\) and therefore \(\phi(x)\cdot \phi(y) = \phi(z)\) by above. Therefore \(\phi(y) = \phi(x)\inv \cdot \phi(z)\) as required.
\end{proof}

\begin{lem}\label{associativity criterion}
    A partial magma $(X,\mu)$ of presheaves on $\scrC$ is associative if and only if (using the notation of \cref{lambda notation}) for any $a \in X(T)$ and $b \in (\dom \lambda_a)(T)$ we have
    \begin{align*}
        \lambda_{a(b)} \ssim \lambda_a\circ \lambda_b.
    \end{align*}

    Similarly, a cancellative presheaf of partial magmas $(X,\mu)$ is strongly associative if and only if it is associative and for any $a \in X(T)$ and $b \in (\dom \lambda^\dagger_a)(T)$ we have
    \begin{align*}
        \lambda_{a^\dagger(b)} \ssim \lambda^\dagger_a\circ \lambda_b.
    \end{align*}
\end{lem}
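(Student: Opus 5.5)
The plan is to reduce both compatibility statements to pointwise identities of sections. In each case I will show that the two sides take the same values at every section lying in both domains. This works because compatibility of partial morphisms of presheaves can be checked on $T'$-points of the intersection of domains. I would work throughout with the description $\parhom'$ from \cref{parhomequivalence}. There $\lambda_c$, for $c \in X(T)$, is the partial morphism $X_T \dashrightarrow X_T$ sending $z \in X_T(T' \to T)$ to $\mu(c|_{T'},z)$ whenever $(c|_{T'},z) \in \dom\mu(T')$, and similarly for $\lambda^\dagger_c$ using $\mu_{13}$.

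For the forward direction of the first claim, fix $a \in X(T)$, $b \in \dom\lambda_a(T)$, and a $T'$-point $z$ (over $T$) in $\dom\lambda_{a(b)} \cap \dom(\lambda_a\circ\lambda_b)$. Then the triple $(a|_{T'},b|_{T'},z)$ is a $T'$-point of $X^3$. It lies in the domain of $\mu(\mu(x_1,x_2),x_3)$, because $(a,b) \in \dom\mu$ and $(\mu(a,b),z) \in \dom\mu$. It also lies in the domain of $\mu(x_1,\mu(x_2,x_3))$, because $z \in \dom\lambda_b$ and $b(z) \in \dom\lambda_a$. Associativity (\cref{associativity definition}) then gives $\mu(\mu(a,b),z) = \mu(a,\mu(b,z))$, which says exactly that $\lambda_{a(b)}(z) = \lambda_a(\lambda_b(z))$.

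For the converse, take a $T$-point $(x_1,x_2,x_3)$ in the intersection of the two domains. Set $a = x_1$, $b = x_2$ and $z = x_3$, regarded as a point over $\id_T$. Then $(a,b) \in \dom\mu$, so $b \in \dom\lambda_a(T)$, and $z$ lies in both domains. The hypothesis therefore gives the required equality. Since the intersection of domains is a subpresheaf and equality of morphisms of presheaves is checked pointwise (Yoneda), this proves compatibility.

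The second claim follows the same pattern, with the first coordinate handled by $\mu_{13}$ in place of $\mu$. Given $b \in \dom\lambda^\dagger_a(T)$, put $c = a^\dagger(b) = \mu_{13}(a,b)$. For $z$ in both domains, I must check two membership statements: $(a,b,z)$ lies in the domain of $\mu_{13}(x_1,\mu(x_2,x_3))$, and $(a,b,z)$ lies in the domain of $\mu(\mu_{13}(x_1,x_2),x_3)$. Strong associativity then yields $\mu(c,z) = \mu_{13}(a,\mu(b,z))$, that is, $\lambda_{a^\dagger(b)}(z) = \lambda^\dagger_a(\lambda_b(z))$. The converse direction is the same specialization to $T$-points as before.

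The main obstacle is bookkeeping rather than mathematics. I need to confirm that the domains of the iterated composites in \cref{associativity definition} and \cref{strong associativity definition} are exactly the pullbacks whose $T'$-points are described above. This follows from the definition of composition of partial morphisms as a fiber product together with \cref{pullbakcs of monomorphisms lemma}. I also need to confirm that restriction along $T' \to T$ commutes with $\mu$ and $\mu_{13}$, which holds because they are morphisms of presheaves on their domains. Finally, the identification between the $\parhom$ and $\parhom'$ descriptions is supplied by \cref{parhomequivalence}.
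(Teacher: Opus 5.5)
Your argument is correct. The paper states this lemma without proof, and your unwinding of the definitions is exactly the routine verification it leaves implicit: you describe the domains of the iterated composites as fiber products via \cref{pullbakcs of monomorphisms lemma}, compute the $\lambda$'s via \cref{parhomequivalence}, restrict $(a,b)$ to $T'$ for the forward direction, and specialize to the point over $\id_T$ for the converse.

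The two membership checks you list but do not carry out in the $\dagger$ case are immediate. Namely, $b\in\dom\lambda^\dagger_a$ and $z\in\dom\lambda_{a^\dagger(b)}$ place $(a|_{T'},b|_{T'},z)$ in $\dom\bigl(\mu\circ(\mu_{13}\times\id_X)\bigr)$, while $z\in\dom(\lambda^\dagger_a\circ\lambda_b)$ places it in $\dom\bigl(\mu_{13}\circ(\id_X\times\mu)\bigr)$. You are also right to read $\lambda^\dagger_x(y)$ as $\mu_{13}(x,y)$, despite the typo in \cref{lambda notation}.
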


Now we give some definitions which we will later use in our main construction:
\begin{dfn}\label{definition of tX}
    Let \((X,\mu)\) be a cancellative presheaf of partial magmas.
    \begin{enumerate}
        \item Define the presheaf \(\Xoneinfty := \bigsqcup_{n \geq 1} (\Xone)^n\).

        \item Given $(x,e) \in (\Xone)(T)$, we write $\lambda_x^e$ to mean \(\lambda_{x}\) if \(e= 1\) and \(\lambda_{x}^\dagger\) if \(e = -1\).

        \item Define the morphism of presheaves
              \begin{align*}
                  \epsilon_\mu\colon & \Xoneinfty \to \parhom(X,X)                                                                    \\
                                     & ((x_1,e_1),\dots,(x_n,e_n)) \mapsto \lambda_{x_1}^{e_1} \circ \dots \circ \lambda_{x_n}^{e_n}.
              \end{align*}

        \item We denote by \(\tX\) the image of \(\Xone\) under \(\epsilon_\mu\). We denote by \(\tXinfty\) the image of \(\Xoneinfty\) under \(\epsilon_\mu\). Observe that \(\tX\) is naturally a subpresheaf of \(\tXinfty\).


    \end{enumerate}

\end{dfn}

\begin{rem}\label{comments on definition of tX}
    \begin{enumerate}
        \item Observe that the operation of concatenation of sequences endows \(\Xoneinfty\) with the structure of a presheaf of semigroups.

        \item Since \(\tXinfty\) is closed under composition, it follows from \cref{parhom is a presheaf of semigroups} that \(\tXinfty\) is a presheaf of semigroups. Moreover, the map \(\epsilon_\mu\colon \Xoneinfty \to \tXinfty\) is a morphism of presheaves of semigroups (certainly concatenation commutes with restriction).

        \item If \(\mu\) is associative (\cref{associativity definition}) then \(\lambda(\mu)\) is a morphism of presheaves of partial magmas.

        \item We will later show that, assuming the domain of \(\mu\) is large enough, the morphisms \(\epsilon_\mu\) (restricted to \(X\times \{\pm1\}\)) and \(\lambda(\mu)\) are embeddings.
    \end{enumerate}
\end{rem}

\begin{rem}\label{group operations and epsilon_mu}
    Let \(\phi\colon (X,\mu) \to (G,\cdot)\) be a morphism of presheaves of partial magmas on \(\scrC\), where \((G,\cdot)\) is a presheaf of groups. It follows immediately from \cref{dagger maps to inverse} that for any \((x,e)\in (\Xone)(T)\) and \(z \in \dom \lambda_{x}^{e}\) we have \(\phi(\epsilon_{\mu}(x,e)(z)) = \phi(x)^e \cdot \phi(z)\).

    In particular, for any sequence \(\sigma = ((x_1,e_1),\dots,(x_n,e_n)) \in \Xoneinfty\) and \(z \in \dom \epsilon_\mu(\sigma)\), we have
    \begin{align*}
        \phi(\epsilon_\mu(\sigma)(z)) = \phi(x_1)^{e_1}\dots\phi(x_n)^e_n \cdot \phi(z).
    \end{align*}
\end{rem}

\begin{dfn}\label{formaldagger}
    Let \((X,\mu)\) be a cancellative presheaf of partial magmas. Define the involution \((-)^\dagger\) on \(X \times \{\pm 1\}\) by
    \begin{align*}
        (x,e)^\dagger := (x, -e).
    \end{align*}
    We extend \((-)^\dagger\) to \((X\times \{\pm 1\})^{(\infty)}\) by acting coordinatewise and reversing the order; i.e.,
    \begin{align*}
        (\alpha_1,\dots,\alpha_n)^\dagger := (\alpha_n^\dagger,\dots,\alpha_1^\dagger).
    \end{align*}
\end{dfn}
\begin{rem}
    We will later show that, if \((X,\mu)\) is a group chunk, then \((-)^\dagger\) descends to a well-defined morphism on \(\tXinfty/\ssim\).
\end{rem}

\begin{prop}\label{formaldagger looks like inverse}
    Let \((X,\mu)\) be a cancellative presheaf of partial magmas, and \(\sigma \in \Xoneinfty(T)\). Let \(\phi = \epsilon_\mu(\sigma)\) and \(\phi' = \epsilon_\mu(\sigma^\dagger)\). Then for any \(y \in \dom \phi(T)\) we have \(y \in (\dom \phi' \circ \phi)(T)\), and \(\phi'\circ \phi(y) = y\).

    In particular, \(\phi' \circ \phi \ssim \id_{X_T}\).
\end{prop}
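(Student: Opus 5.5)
I will argue by induction on the length $n$ of $\sigma$, with the case $n = 1$ as the key input. First fix what elements of $\dom\phi$ are. By \cref{parhomequivalence}, $\phi = \epsilon_\mu(\sigma)$ is a partial endomorphism of $X_T$, so $y \in (\dom\phi)(T)$ means a section over some object $T' \to T$ of $\scrC_T$; to lighten notation I write $T$ for that object. All the compositions involved are compositions of partial morphisms in $\Presh(\scrC_T)$. Their domains are computed by the fiber product of \cref{partial morphism in a category definition}, which on sections is simply ``$y$ lies in the domain of the inner map, and its image lies in the domain of the outer map''.

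\emph{Base case $n = 1$.} Here $\sigma = ((x,e))$ and $\sigma^\dagger = ((x,-e))$. If $e = 1$, then $\phi = \lambda_x$ and $\phi' = \lambda_x^\dagger$, and \cref{dagger behaves linke an inverse} states exactly that $y \in \dom\lambda_x$ implies $x(y) \in \dom\lambda_x^\dagger$ and $x^\dagger(x(y)) = y$. If $e = -1$, I need the dual statement: $y \in \dom\lambda_x^\dagger$ implies $x^\dagger(y) \in \dom\lambda_x$ and $x(x^\dagger(y)) = y$. I will prove this directly from cancellativity (\cref{cancellativity definition}). The condition $(x,y) \in \dom\mu_{13}$ means there is a unique point $(x,w,y) \in \Gamma_\mu$ with $w = \mu_{13}(x,y) = x^\dagger(y)$. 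Since $\Gamma_\mu$ is the graph of $\mu = \mu_{12}$, we get $(x,w) \in \dom\mu$ and $\mu(x,w) = y$. By the Yoneda reformulation in \cref{mu is well be haved on presheaves of partial magmas}, this says $w \in \dom\lambda_x$ and $x(w) = y$. This is the same argument as the remark, with the roles of $\rho_{12}$ and $\rho_{13}$ swapped.

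\emph{Inductive step.} Write $\sigma = (\alpha)\cdot\tau$ as a concatenation, with $\alpha = (x,e)$. Then $\phi = \lambda^{e}_{x}\circ\epsilon_\mu(\tau)$ by \cref{comments on definition of tX}, since $\epsilon_\mu$ is a semigroup morphism. Also $\sigma^\dagger = \tau^\dagger\cdot(\alpha^\dagger)$, so $\phi' = \epsilon_\mu(\tau^\dagger)\circ\lambda^{-e}_{x}$. Take $y \in \dom\phi$. Then $y \in \dom\epsilon_\mu(\tau)$, and $u := \epsilon_\mu(\tau)(y) \in \dom\lambda^e_x$. By the base case, $u \in \dom(\lambda^{-e}_x\circ\lambda^e_x)$ and $\lambda_x^{-e}(\lambda_x^e(u)) = u$. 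Hence $\lambda_x^{-e}(\phi(y)) = u$. By the induction hypothesis applied to $\tau$, $u \in \dom\epsilon_\mu(\tau^\dagger)$ with image $y$. Chaining these memberships shows that $y$ lies in the domain of $\epsilon_\mu(\tau^\dagger)\circ\lambda^{-e}_x\circ\lambda^e_x\circ\epsilon_\mu(\tau)$. By associativity of composition (\cref{Composition of partial morphisms is associative.}) this composite is $\phi'\circ\phi$, and its value at $y$ is $y$.

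\emph{The compatibility claim.} We have just shown $\dom(\phi'\circ\phi) \supseteq \dom\phi$. Conversely, $\dom(\phi'\circ\phi) \subseteq \dom\phi$ holds by the definition of composition. So the two domains coincide, and on this common subobject $\phi'\circ\phi$ agrees with the identity, checked sectionwise. Hence the restrictions of $\phi'\circ\phi$ and $\id_{X_T}$ to the intersection of their domains, which is $\dom\phi$, agree, and $\phi'\circ\phi \ssim \id_{X_T}$.

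The only step with real content is the base case for $e = -1$. There one has to translate cancellativity, in the form of $\Gamma_\mu$ being simultaneously the graph of $\mu_{12}$ and of $\mu_{13}$, into the sectionwise statement about $\lambda$ and $\lambda^\dagger$. The rest is bookkeeping with domains of composites.
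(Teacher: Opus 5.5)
Your proof is correct and follows essentially the same route as the paper. Both argue by induction on the length of $\sigma$: the length-one case comes from cancellativity, and the inductive step peels off the first letter $\alpha_1=(x,e)$ and uses $\epsilon_\mu(\sigma^\dagger)=\epsilon_\mu(\tau^\dagger)\circ\lambda_x^{-e}$. The only difference is that you write out the $e=-1$ base case via the graph of $\mu_{13}$, and the domain bookkeeping behind the compatibility claim, both of which the paper dismisses as similar or leaves implicit.
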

\begin{proof}
    We proceed by induction on the length of \(\sigma\).

    First suppose \(\sigma\) has length 1. If \(\sigma = ((x,1))\) with \(x \in X(T)\) then, following the notation of \cref{lambda notation}, we have \(\phi = \lambda_x\). Let \(y \in \dom x(T)\), and suppose \(x(y) = z\). Denote by \(\Gamma_\mu\) the graph of \(\mu\). Then by definition \((x,y,z)\in \Gamma_\mu(T')\), and in particular (by \cref{dagger behaves linke an inverse}) we have \(z\in \dom x^\dagger\) and \(x^\dagger(z) = \mu_{13}(x,z) = y\). The case \(\sigma = ((x,-1))\) is similar.

    For the general case, denote \(\psi = \epsilon_\mu(\alpha_2,\dots,\alpha_n)\), \(\psi' =  \epsilon_\mu((\alpha_2,\dots,\alpha_n)^\dagger)\). Let \(y \in \dom \phi\), and \(z = \psi(y)\). By assumption we have \(z\in \dom \epsilon_\mu(\alpha_1)\) and \(\epsilon_\mu(\alpha_1)(z) = \phi(y)\). By the base case we have \(\epsilon_\mu(\alpha_1)(z)\in \dom \epsilon_\mu(\alpha_1^\dagger)\) and \(\epsilon_\mu(\alpha_1^\dagger) \circ \epsilon_\mu(\alpha_1)(z) = z\). Therefore \(\phi(y)\in \dom \epsilon_\mu(\alpha_1^\dagger)\) and \(\epsilon_\mu(\alpha_1^\dagger) \circ \phi(y) = z\).

    By induction, we have \(z \in \dom \psi'\) and \(\psi'(z) = y\); therefore \(\epsilon_\mu(\alpha_1^\dagger) \circ \phi(y) \in \dom \psi'\) and \(\psi'\circ \epsilon_\mu(\alpha_1^\dagger) \circ \phi(y) = \phi'\circ\phi(y) = y\). Since \(\phi'= \psi'\circ \epsilon_\mu(\alpha_1^\dagger)\) we conclude.
\end{proof}

%% file: Chapters/Partial_Morphisms_and_Partial_Magmas/Universal_Morphisms_to_Groups.tex
\section{Universal Morphisms to Groups 1} \label{section: first universal maps to groups}
In this section we use Freyd's adjoint functor theorem (\cref{Freyd's adjoint functor theorem}) to show that, on a small category $\scrC$, the forgetful functor from presheaves of groups on $\scrC$ to presheaves of partial magmas on $\scrC$ has a left adjoint. This is equivalent to saying that every presheaf of partial magmas admits a universal morphism into a presheaf of groups (see \cref{universal property characterization of adjoint functors}).


We proceed to quickly verify the conditions for Freyd's adjoint functor theorem. Recall the definitions from \cref{definitions from category theory}.
\begin{lem}
    The category of presheaves of groups on $\scrC$ is complete.
\end{lem}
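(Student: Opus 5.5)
The plan is to compute limits pointwise and reduce to the completeness of the category of groups. Let $D\colon I \to \Presh(\mathrm{Grp})(\scrC)$ be a small diagram of presheaves of groups on $\scrC$. For each $T \in \scrC$, the evaluations $D(i)(T)$ form a small diagram of groups. I will define $L(T) := \varprojlim_{i} D(i)(T)$, computed in the category of groups. Concretely, this is the subgroup of $\prod_i D(i)(T)$ consisting of compatible families, with coordinatewise multiplication.

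For a morphism $f\colon T' \to T$, the restriction maps $D(i)(f)$ are group homomorphisms. They commute with the transition maps $D(u)$ for $u\colon i \to j$, because each $D(u)$ is a morphism of presheaves. Hence they induce a homomorphism $L(T) \to L(T')$, and functoriality in $T$ follows from functoriality of limits. The projections $L \to D(i)$ are then morphisms of presheaves of groups.

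Next I will check the universal property. Suppose $G$ is a presheaf of groups with a cone $G \to D(i)$. For each $T$, the universal property of $L(T)$ in groups gives a unique homomorphism $G(T) \to L(T)$. By uniqueness, these maps are compatible with restriction, since both composites $G(T) \to L(T')$ factor the same cone. So we obtain a unique morphism of presheaves of groups $G \to L$.

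An alternative formulation is to observe that a presheaf of groups is exactly a group object in $\Presh(\scrC)$. Limits in $\Presh(\scrC)$ are computed pointwise (\cref{The category of presheaves is Cartesian closed}), and group objects in a complete category are closed under limits.

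There is no real obstacle here. The only point needing care is verifying that the induced restriction maps are homomorphisms, which holds because limits in groups are created by the forgetful functor to sets.
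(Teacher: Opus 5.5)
Your proposal is correct and follows the same route as the paper, whose proof simply notes that the category of groups is complete and that limits in functor categories exist and are computed pointwise. You have written out that pointwise construction and the verification of the universal property explicitly, which the paper leaves to its citation of Mac Lane.
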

\begin{proof}
    Immediate since the category of groups is complete, and limits in functor categories exist and can be taken pointwise (\cite[Chapter~5]{Saunders_categories}).
\end{proof}

\begin{lem}
    The forgetful functor from the category of presheaves of groups to the category presheaves of partial magmas is continuous.
\end{lem}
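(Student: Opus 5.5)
The plan is to check directly that limits of presheaves of groups, computed pointwise, are also limits in the category of presheaves of partial magmas, with the forgetful image of the limit cone as the cone. First fix the forgetful functor concretely. A presheaf of groups $(G,\cdot)$ is sent to the partial magma $(G,\cdot)$ whose partial operation has domain all of $G\times G$. A morphism of presheaves of groups is a morphism of presheaves respecting a totally defined multiplication, so it is a morphism of partial magmas in the sense of \cref{partial binary operations in a category definition}. This makes the functor well defined; it is also faithful.

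Now let $D\colon I\to \Presh(\mathrm{Grp})$ be a small diagram with limit $L$, computed pointwise: $L(T)=\varprojlim_i D_i(T)$, with the multiplication taken componentwise. Let $\pi_i\colon L\to D_i$ be the projections. Take a cone in presheaves of partial magmas, say $\phi_i\colon (X,\mu)\to D_i$, compatible with the transition maps. Since limits in $\Presh(\scrC)$ are also computed pointwise, the underlying presheaf of $L$ is the limit of the underlying presheaves. So there is a unique morphism of presheaves $\phi\colon X\to L$ with $\pi_i\circ\phi=\phi_i$ for all $i$. Uniqueness in the partial magma category follows at once from uniqueness at the level of presheaves.

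The remaining point is to check that $\phi$ is a morphism of partial magmas. The domain condition is automatic, because the operation on $L$ is total, so $(\phi\times\phi)|_{\dom\mu}$ factors through $L\times L=\dom(\cdot)$. For the equation, take $(a,b)\in\dom\mu(T)$. For each $i$ we have
\begin{align*}
\pi_i(\phi(\mu(a,b)))=\phi_i(\mu(a,b))=\phi_i(a)\cdot\phi_i(b)=\pi_i(\phi(a)\cdot\phi(b)),
\end{align*}
where the last step uses that $\pi_i$ is a group homomorphism. The projections $\pi_i$ are jointly monic, so $\phi(\mu(a,b))=\phi(a)\cdot\phi(b)$, which is the required identity.

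Thus the image of the limit cone is a limit cone, and the forgetful functor is continuous. There is no real obstacle here. The only care needed is to note explicitly that the multiplication on a presheaf of groups is regarded as a partial operation with full domain, which is what makes the domain condition in the definition of a morphism of partial magmas trivially satisfied.
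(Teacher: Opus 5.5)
Your proof is correct, but it takes a more hands-on route than the paper.

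The paper argues abstractly. The forgetful functors from groups and from magmas to sets create limits, so the forgetful functor from groups to partial magmas creates, and hence preserves, limits. The presheaf statement then follows because limits in functor categories are computed pointwise.

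You instead verify the universal property directly in the presheaf setting:
\begin{itemize}
\item The underlying presheaf of the pointwise limit $L$ is the limit of the underlying presheaves. This gives existence and uniqueness of the induced map $\phi$ at the level of presheaves.
\item Uniqueness among partial magma morphisms is then automatic.
\item It remains to show that $\phi$ is a morphism of partial magmas. The domain condition is vacuous because the operation on $L$ is total. Multiplicativity follows from joint monicity of the $\pi_i$, each of which is a homomorphism at every $T$.
\end{itemize}
Working with the pointwise limit cone is legitimate, since any other limit cone of the same diagram is isomorphic to it.

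What your approach buys is that it never has to describe limits in the category of partial magmas. Nor does it have to pass from creation of limits over sets to creation over partial magmas, a step the paper leaves implicit. That step needs some care: one must compare limits of total magmas with limits of partial magmas. Moreover, the forgetful functor from partial magmas to sets does not itself create limits in the strict sense, because the domain of a lifted partial operation is not uniquely determined. Your argument also proves exactly what is claimed, namely preservation, with no detour through creation. The paper's version is shorter and places the statement in standard categorical language.
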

\begin{proof}
    We have that the forgetful functor from groups to sets creates limits (as in \cite[Chapter~5]{Saunders_categories}) and similarly for the forgetful functor from magmas to sets.
    Therefore the forgetful functor from groups to partial magmas creates limits, and in particular preserves them. The result follows since limits on functor categories can be computed pointwise (as in \cite[Chapter~5]{Saunders_categories}).
\end{proof}


We will use the following notion in order to verify the solution set condition in the next proof:
\begin{dfn}
    Let $\kappa$ be a cardinal. We say a presheaf $X$ on $\scrC$ is \emph{bounded} by $\kappa$ if $\abs{X(T)}<\kappa$ for every $T\in \scrC$.
\end{dfn}

\begin{rem}
    If $\scrC$ is small, then every presheaf on $\scrC$ is bounded by some cardinal $\kappa$.
\end{rem}

\begin{prop}\label{Universal maps to groups - general case}
    Suppose $\scrC$ is a small category. Then the forgetful functor $\scrF$ from the category of presheaves of groups on $\scrC$ to the category of presheaves of partial magmas on $\scrC$ has a left adjoint.
\end{prop}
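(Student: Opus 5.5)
We apply Freyd's adjoint functor theorem (\cref{Freyd's adjoint functor theorem}) to $\scrF$. The two lemmas just proved give that the category of presheaves of groups on $\scrC$ is complete and that $\scrF$ is continuous. It remains to check that this category is locally small and that $\scrF$ satisfies the solution set condition (\cref{solution set condition}).

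\textbf{Local smallness.} A morphism of presheaves of groups is a natural transformation of the underlying presheaves of sets. Since $\scrC$ is small, $\Presh(\scrC)$ is locally small, so these morphisms form a set.

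\textbf{Solution set.} Fix a presheaf of partial magmas $(X,\mu)$, and let $\kappa$ be an infinite cardinal bounding $X$ and strictly greater than $\abs{\Mor(\scrC)}$. Let $\phi\colon X \to \scrF(G)$ be any morphism of partial magmas with $G$ a presheaf of groups.

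\begin{enumerate}
\item For each $T$, let $H(T) \subset G(T)$ be the smallest family of subgroups, indexed by $T$, that contains $\phi_T(X(T))$ and is closed under the restriction maps $G(T) \to G(T')$. Concretely, $H(T)$ is generated by the restrictions $G(f)(\phi_{T''}(x))$ over all $f\colon T \to T''$ and $x \in X(T'')$. Indeed, by naturality of $\phi$ these are just the elements $\phi_T(X(f)(x))$, so $H(T)$ is simply the subgroup generated by $\phi_T(X(T))$.
\item Check that $H$ is a subpresheaf of groups. Restriction maps are group homomorphisms and send $\phi_T(X(T))$ into $\phi_{T'}(X(T'))$, so they send $H(T)$ into $H(T')$.
\item Bound the size: each $H(T)$ has cardinality at most $\max(\kappa,\aleph_0)=\kappa$.
\item Observe that $\phi$ factors through $H \hookrightarrow G$. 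The factored map is still a morphism of partial magmas, because the multiplication on $H$ is the restriction of that on $G$ and the inclusion is injective.
\end{enumerate}

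Every presheaf of groups bounded by $\kappa^+$ is isomorphic to one whose underlying sets $H(T)$ are subsets of the fixed set $\kappa$. Such a presheaf is determined by choosing, for each $T$, a subset of $\kappa$ and a group law on it, and, for each morphism of $\scrC$, a restriction map. Since $\scrC$ is small, these choices range over a set. So, up to isomorphism, there is only a set of such presheaves of groups $H_i$.

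Take the solution set to be all pairs $(H_i,\ \psi\colon X \to \scrF(H_i))$ with $\psi$ a morphism of partial magmas; this is a set by local smallness. By the construction above, every $\phi$ factors as $X \to \scrF(H_i) \to \scrF(G)$ for one of these pairs. Freyd's theorem then yields the left adjoint.

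The main (though mild) obstacle is this size bookkeeping: making sure the isomorphism classes of bounded presheaves of groups really form a set. This is exactly where smallness of $\scrC$ is used.
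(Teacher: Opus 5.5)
Your proof is correct and follows the same route as the paper: apply Freyd's adjoint functor theorem, using smallness of $\scrC$ to bound the size of the part of $G$ that $\phi$ actually reaches. Your version is in fact slightly more careful, because you pass to the subpresheaf of groups generated by $\phi(X)$, bounded by $\max(\kappa,\aleph_0)$, before counting isomorphism classes; the paper only bounds the image $\phi(X)$ itself, which is in general not a presheaf of groups and so cannot directly serve as a member of a solution set.
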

\begin{proof}
    First observe that, since $\scrC$ is small, we have that the category of presheaves of groups on $\scrC$ is locally small. Since the functor $\scrF$ is continuous, and the source is complete, in order to apply Freyd's adjoint functor theorem we just need to verify the solution set condition (\cref{solution set condition}).

    Let $X$ be a presheaf of partial magmas on $\scrC$. Since $\scrC$ is small, there exists a cardinal $\kappa$ such that $X$ is bounded by $\kappa$. Then for any presheaf of groups $Y$, and any morphism of partial magmas $\phi\colon X \to \scrF(Y)$, it follows that the image $\phi(X)$ is bounded by $\kappa$. Since $\scrC$ is small it follows that the collection of such images up to isomorphism is small; therefore this set satisfies the solution set condition.
\end{proof}

\begin{cor}\label{from presheaf adjoint to sheaf adjoint}
    Let $\scrC$ be a site, and let $\scrG$ the left adjoint to the forgetful functor from presheaves of groups to presheaves of partial magmas on $\scrC$. Denote by $\scrG^{++}$ the composition of $\scrG$ with sheafification. Then $\scrG^{++}$ defines a functor from presheaves of partial magmas on $\scrC$ to sheaves of groups on $\scrC$ which is left adjoint to the forgetful functor.
\end{cor}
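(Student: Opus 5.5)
The plan is to obtain $\scrG^{++}$ as a composite of two left adjoints and invoke \cref{The adjoint of a composition is the composition of the adjoints}. Write $\scrF$ for the forgetful functor from presheaves of groups to presheaves of partial magmas, $\iota$ for the forgetful (inclusion) functor from sheaves of groups to presheaves of groups, and $U = \scrF\circ\iota$ for the forgetful functor from sheaves of groups to presheaves of partial magmas. Since a site is small by our conventions, \cref{Universal maps to groups - general case} gives that $\scrG$ is left adjoint to $\scrF$. By \cref{sheafification and groups}, sheafification $(-)^{++}$ is left adjoint to $\iota$, and it sends presheaves of groups to sheaves of groups.

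Applying \cref{The adjoint of a composition is the composition of the adjoints} with $G = \iota$, $G' = \scrF$, $F = (-)^{++}$ and $F' = \scrG$, we conclude that $(-)^{++}\circ\scrG = \scrG^{++}$ is left adjoint to $\scrF\circ\iota = U$. Explicitly, for a presheaf of partial magmas $X$ and a sheaf of groups $H$, the chain of natural bijections is
\begin{align*}
\Hom_{\Sh}(\scrG(X)^{++}, H) \simeq \Hom_{\Presh\text{-}\mathrm{Grp}}(\scrG(X), \iota H) \simeq \Hom_{\mathrm{pMag}}(X, \scrF\iota H),
\end{align*}
and each bijection is natural in both variables.

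The only point needing care is bookkeeping rather than mathematics. We must check that $(-)^{++}$ really defines a functor from presheaves of groups to sheaves of groups, which is exactly the content of \cref{sheafification and groups}. We must also check that the forgetful functor from sheaves of groups to presheaves of partial magmas factors as $\scrF\circ\iota$. This is immediate, since a sheaf of groups is in particular a presheaf of groups, regarded as a presheaf of partial magmas with totally defined multiplication. No further obstacle is expected.
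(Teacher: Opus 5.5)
Your proposal is correct and follows the paper's proof: it also obtains $\scrG$ from \cref{Universal maps to groups - general case}, uses \cref{sheafification and groups} for the sheafification adjunction, and concludes with \cref{The adjoint of a composition is the composition of the adjoints}. Your assignment of $F, F', G, G'$, which gives that $(-)^{++}\circ\scrG$ is left adjoint to $\scrF\circ\iota$, is the correct reading of that lemma.
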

\begin{proof}
    By \cref{sheafification and groups}, we have that the sheafification defines a left adjoint to the forgetful functor from sheaves of groups to presheaves of groups. The result follows immediately since the adjoint of a composition is the composition of the adjoints (\cref{The adjoint of a composition is the composition of the adjoints}).
\end{proof}

%% file: Chapters/Group_Chunks_On_Presheaves/Main_Group_Chunks_on_Presheaves.tex
\chapter{Group Chunks on Presheaves}\label{chapter: group chunks on presheaves}
In this chapter we prove an abstract ``Group Chunk'' theorem which we will later show generalizes previously known results in model theory and algebraic geometry. See \cref{section: overview of main results} for an overview of the group chunk theorem and many incarnations.

In \cref{section: cancellative magmas on sites} we define local nontriviality and show that sections of separated cancellative presheaves of partial magmas are determined locally by their associated left actions. We also record two lemmas which will be used to verify strong associativity of the model-theoretic group chunk in \cref{section: group chunks on definable types}. Group chunks are defined in \cref{section: group chunks}, and here we also prove their key properties, namely that they allow for products of arbitrary depth to be defined (\cref{Long products lemma}) and that partial morphisms generated by left translations are determined locally by their action on a single point (\cref{Check locally at one point}). From these results it quickly follows that compatibility gives an equivalence relation; we study the quotient in \cref{section: second universal morphisms to groups} and show that it satisfies the desired universal property. In \cref{section: third universal maps to groups} we show that the presheaf quotient is locally given by pairs of elements; and therefore the sheaf quotient $(\tX^\infty/\ssim)^+$, which we know from the previous section satisfies the universal property, can be realized as a quotient of $X^2$.

\input{Chapters/Group_Chunks_On_Presheaves/Cancellative_Magmas_on_Sites.tex}

\input{Chapters/Group_Chunks_On_Presheaves/Group_Chunk_definition.tex}

\input{Chapters/Group_Chunks_On_Presheaves/Universal_Morphisms_to_Groups_2.tex}

\input{Chapters/Group_Chunks_On_Presheaves/Universal_Morphisms_to_Groups_3.tex}

%% file: Chapters/Group_Chunks_On_Presheaves/Cancellative_Magmas_on_Sites.tex
\section{Cancellative Magmas on Sites} \label{section: cancellative magmas on sites}
In this section we discuss results specific to cancellative presheaves of magmas on sites. We begin with some definitions. Then we show that, in good situations, a section of a separated cancellative presheaf of partial magmas $(X,\mu)$ is determined by its local action on $X$. Lastly we prove two technical lemmas for verifying strong associativity which will be useful in later applications.

First we discuss local nontriviality.
\begin{dfn}[Local nontriviality]\label{local nontriviality definition}
    Let $\scrC$ be a site.
    \begin{enumerate}
        \item    Let $T \in \scrC$. A presheaf $X$ on $\scrC$ is \emph{locally nontrivial over $T$} if there exists a covering sieve $S$ of $T$ such that for every $T' \to T$ in $S$ we have $X(T')\ne \emptyset$.

        \item   We say $X$ is \emph{locally nontrivial} if it is locally nontrivial over $T$ for every $T\in \scrC$.
    \end{enumerate}
\end{dfn}

\begin{rem}\label{local nontriviality over T remark}
    Let $X_T$ be a presheaf on $\scrC_T$. Then $X_T$ is locally nontrivial if and only if it is locally nontrivial over $T$. Indeed, given a cover $S$ which exhibits local nontriviality over $T$, and any morphism $f\colon T' \to T$, the pullback $f^*S$ exhibits local nontriviality over $T'$.
\end{rem}

\begin{dfn}
    We say that a presheaf of partial magmas $(X,\mu)$ on a site $\scrC$ is \emph{separated} if $X$ is separated. We say it is \emph{locally nontrivial} if $X$ is locally nontrivial.
\end{dfn}

Sections of separated cancellative presheaves of partial magmas are determined locally by their associated left actions:
\begin{lem}\label{cancellation on covers}
    Let $(X,\mu)$ be a separated, cancellative presheaf of magmas. Let $x,y \in X(T)$, and $e \in \{\pm 1\}$. Suppose there exists a cover $S$ of $T$ such that for every $T' \in S$ there exists $z \in (\dom\alpha \cap \dom\beta)(T')$ with $\lambda_{x}^{e}(z) = \lambda_{y}^{e}(z)$. Then $x = y$.
\end{lem}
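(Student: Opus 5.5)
The idea is to use cancellativity to show that $x$ and $y$ agree locally on the cover, and then use separatedness of $X$ to conclude $x=y$. (Here $\alpha,\beta$ in the statement should be read as $\lambda_x^e,\lambda_y^e$.) The set $\{x|_{T'}\}_{T'\in S}$ and $\{y|_{T'}\}_{T'\in S}$ are two amalgamations problems with the same underlying matching family exactly when $x|_{T'}=y|_{T'}$ for every $T'\to T$ in $S$. So it suffices to prove, for each $T'\in S$, that $x|_{T'} = y|_{T'}$. Separatedness then gives that $x$ and $y$ are both amalgamations of the matching family $(x|_{T'})_{T'\in S}$, so $x=y$.

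Fix $T'\in S$ and the given $z\in X(T')$ with $z\in(\dom\lambda^e_x\cap\dom\lambda^e_y)(T')$ and $\lambda^e_x(z)=\lambda^e_y(z)=:w$. By \cref{lambda notation} and \cref{parhomequivalence}, the domain condition for $\lambda_x^e$ at the object $\id_{T'}$ of the slice, in the case $e=1$, says $(x|_{T'},z)\in\dom\mu(T')$ with $\mu(x|_{T'},z)=w$. Hence $(x|_{T'},z,w)\in\Gamma_\mu(T')$, and likewise $(y|_{T'},z,w)\in\Gamma_\mu(T')$. In the case $e=-1$ we have $(x|_{T'},z)\in\dom\mu_{13}(T')$ with $\mu_{13}(x|_{T'},z)=w$. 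This means $(x|_{T'},w,z)\in\Gamma_\mu(T')$, and similarly for $y$.

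Now I apply cancellativity (\cref{cancellativity definition}) using the projection onto the second and third coordinates. The map $s_{23}\colon\Gamma_\mu\to X_2\times X_3$ is a monomorphism, being an isomorphism onto $U_{23}$. Evaluated at $T'$, it is therefore injective on $\Gamma_\mu(T')$. In the case $e=1$, the two points $(x|_{T'},z,w)$ and $(y|_{T'},z,w)$ have the same image $(z,w)$. In the case $e=-1$, the points $(x|_{T'},w,z)$ and $(y|_{T'},w,z)$ have the same image $(w,z)$. Either way we get $x|_{T'}=y|_{T'}$. This is precisely where right cancellativity, i.e.\ the $\mu_{23}$ part of the definition, is needed, as the earlier remark anticipated.

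The main care point is the unwinding in the second paragraph. We must check that "$z\in\dom\lambda^e_x(T')$", meaning membership in the domain of the partial morphism $X_T\dashrightarrow X_T$ at the object $T'\to T$, really translates to the pointwise statement about $\dom\mu$ or $\dom\mu_{13}$ at $T'$. This is exactly the explicit domain formula in \cref{parhomequivalence}, together with \cref{mu is well be haved on presheaves of partial magmas}. We also use that subobjects and graphs of presheaves are computed pointwise, so monomorphisms are injective on sections. With those in hand, the argument is routine.
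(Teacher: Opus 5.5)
Your proof is correct and is the same as the paper's. Right cancellativity, i.e.\ injectivity of the monomorphism $s_{23}\colon\Gamma_\mu\to X_2\times X_3$ on $T'$-sections, gives $x|_{T'}=y|_{T'}$ for each $T'\in S$, and separatedness of $X$ then forces $x=y$; you also write out the $e=-1$ case via $\mu_{13}$, which the paper only calls similar, and the one blemish is cosmetic: $z$ lies over the object $T'\to T$ of $\scrC_T$, not over ``$\id_{T'}$''.
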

\begin{proof}
    We show the case $e = 1$; the other case is similar.

    Suppose that over some $T' \in S$ we have $x(z) = y(z) = w$. Then by (right) cancellation we have $x = y$ over $T'$. Then since $x = y$ over every $T' \in S$, and $X$ is separated, this implies $x = y$ over $T$.
\end{proof}

%

If we assume the domain of the multiplication is sufficiently large, we can make the above statement stronger:
\begin{lem}\label{agreement is an equivalence relation on tX for separated cancellative magmas with large domain}
    Let $(X,\mu)$ be a presheaf of partial magmas which is separated, cancellative, and such that for any $T$ and $\alpha_1,\alpha_2 \in \tX(T)$ we have that $\dom \alpha_1 \cap \dom \alpha_2$ is locally nontrivial.

    Then for any $x,y \in X(T)$, and $e \in \{\pm 1\}$, we have
    \begin{align*}
        \lambda_{x}^e \ssim  \lambda_y^e \iff x = y.
    \end{align*}
\end{lem}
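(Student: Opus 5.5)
The backward direction is immediate. If $x = y$, then $\lambda_x^e$ and $\lambda_y^e$ are literally the same partial morphism $X_T \dashrightarrow X_T$. A partial morphism is always compatible with itself, since it trivially agrees with itself on its own domain.

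For the forward direction, assume $\lambda_x^e \ssim \lambda_y^e$, and put $\alpha_1 = \lambda_x^e$ and $\alpha_2 = \lambda_y^e$. Both lie in $\tX(T)$, being the images of $(x,e)$ and $(y,e)$ under $\epsilon_\mu$. By hypothesis, $\dom\alpha_1 \cap \dom\alpha_2$ is locally nontrivial as a presheaf on $\scrC_T$. By \cref{local nontriviality over T remark}, this means there is a covering sieve $S$ of $T$ such that, for every $T' \to T$ in $S$, the set $(\dom\alpha_1\cap\dom\alpha_2)(T' \to T)$ is nonempty. So for each such $T'$ we may choose a section $z$ in this intersection.

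By definition of compatibility, $\alpha_1$ and $\alpha_2$ agree on $\dom\alpha_1 \cap \dom\alpha_2$. In particular $\lambda_x^e(z) = \lambda_y^e(z)$ for each chosen $z$. These are exactly the hypotheses of \cref{cancellation on covers}, with the cover $S$ and the sections $z$ over each $T' \in S$. That lemma uses separatedness and cancellativity to conclude $x = y$.

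The main point to check is bookkeeping, not a real obstacle. The intersection of domains is a subpresheaf of $X_T$ on $\scrC_T$, and by \cref{parhomequivalence} a section of it over $T' \to T$ is a point $z \in X(T')$ with $(x|_{T'}, z)$ in $\dom\mu$ (or in $\dom\mu_{13}$ when $e = -1$), and likewise for $y$. This is exactly the kind of $z$ that \cref{cancellation on covers} requires. Its proof handles the case $e = -1$ the same way, by cancelling in the graph via $\mu_{13}$.
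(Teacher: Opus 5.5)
Your proof is correct and takes essentially the same route as the paper. The reverse direction is trivial; for the forward direction you apply the local nontriviality hypothesis to $\alpha_1=\lambda_x^e$, $\alpha_2=\lambda_y^e\in\tX(T)$, use compatibility to see that the two translations agree at the resulting local points $z$, and conclude with \cref{cancellation on covers}, while your extra bookkeeping (reading off the cover at $\id_T$ in $\scrC_T$, and treating $e=-1$ via $\mu_{13}$) only makes explicit what the paper leaves implicit.
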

\begin{proof}
    Right to left is immediate. Suppose $\lambda_{x}^e \ssim  \lambda_{y}^e$. Then by assumption there exists a cover $S$ of $T$ such that for every $T' \in S$ there exists $z \in (\dom\alpha \cap \dom\beta)(T')$, and then the result follows from \cref{cancellation on covers}.
\end{proof}

\begin{cor}\label{iota is an embedding with moderate domain}
    Let $(X,\mu)$ be as in \cref{agreement is an equivalence relation on tX for separated cancellative magmas with large domain}. Then the morphism of presheaves $\lambda(\mu)\colon X \to \tXinfty$ of \cref{definition of tX} is an embedding.
\end{cor}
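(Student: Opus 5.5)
Since a monomorphism of presheaves is exactly a morphism that is injective on $T$-sections for every $T$, I would reduce the statement to checking injectivity of $\lambda(\mu)_T\colon X(T) \to \tXinfty(T)$ for each $T \in \scrC$. The first thing to settle is that $\lambda(\mu)$ really lands in $\tXinfty$. By \cref{definition of tX}, the composite $X \to \Xone \to \Xoneinfty \xrightarrow{\epsilon_\mu} \parhom(X,X)$, with $x \mapsto ((x,1))$, equals $\lambda(\mu)$, so $\lambda(\mu)$ factors through the image $\tX \subset \tXinfty$. Since $\tXinfty$ is a subpresheaf of $\parhom(X,X)$, it therefore suffices to prove that $\lambda(\mu)\colon X \to \parhom(X,X)$ is injective on sections.

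For injectivity I would take $x,y \in X(T)$ with $\lambda_x = \lambda_y$ as elements of $\parhom(X,X)(T)$, that is, as partial morphisms $X_T \dashrightarrow X_T$ via \cref{parhomequivalence}. Equality of partial morphisms immediately gives compatibility $\lambda_x \ssim \lambda_y$, since they have the same domain and agree on it. I would then apply \cref{agreement is an equivalence relation on tX for separated cancellative magmas with large domain} with $e = 1$. Its hypotheses are exactly those assumed on $(X,\mu)$ in the corollary, and $\lambda_x, \lambda_y \in \tX(T)$, so the lemma gives $x = y$.

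To check the lemma really applies, I would spell out the mechanism. By assumption, $\dom\lambda_x \cap \dom\lambda_y$ is locally nontrivial over $T$, which gives a covering sieve $S$ of $T$ with a point $z$ over each $T' \in S$. At each such $z$ we have $\mu(x,z) = \mu(y,z)$. Right cancellation, in the form that $\mu_{23}$ recovers the first coordinate from the graph, then gives $x|_{T'} = y|_{T'}$. Finally, separatedness glues these local equalities to $x = y$, which is the argument of \cref{cancellation on covers}.

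I expect the main obstacle to be bookkeeping rather than mathematics. One issue is making sure local nontriviality of the domain intersection is read over $T$, using \cref{local nontriviality over T remark}. The other is checking that ``embedding'' here means monomorphism of presheaves, which is sectionwise injectivity. No further input should be needed.
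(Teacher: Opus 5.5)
Your proposal is correct and matches the paper's argument. The paper gives no separate proof because the corollary follows directly from the lemma: equality $\lambda_x = \lambda_y$ gives $\lambda_x \ssim \lambda_y$, hence $x = y$, so $\lambda(\mu)$ is injective on sections. Your unpacking of that lemma (local nontriviality of $\dom\lambda_x \cap \dom\lambda_y$ over $T$, right cancellation through the monic projection $\Gamma_\mu \to X_2 \times X_3$, then separatedness) is exactly how the paper proves it, via its cover-cancellation lemma.
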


Finally we record two technical lemmas which will be useful in applications for proving strong associativity from ostensibly weaker properties:
\begin{lem}\label{associativity follows from weak property}
    Let $\mu$ be a cancellative partial binary operation on a separated presheaf $X$. Take any $\alpha \in \tX(T)$ and $b,c,y \in X(T)$ such that $b \in (\dom \alpha)(T)$, $\alpha(b) = c$ and $y \in \dom \alpha \circ \lambda_b \cap \dom c$.

    Suppose there exists a cover $S$ of $T$ such that for every $T' \in S$ there exists $z\in X(T')$ such that
    \begin{align*}
        z \in (\dom \alpha \circ \lambda_b \circ \lambda_y \cap \dom \alpha \circ \lambda_{b(y)} \cap \dom \lambda_{\alpha(b(y))} \cap \dom \lambda_{c}\circ \lambda_y \cap \dom \lambda_{c(y)})(T')
    \end{align*}
    and
    \begin{align*}
        \alpha(b(y))(z)  = \alpha(b(y)(z)), \quad b(y)(z) = b(y(z)), \quad \alpha(b(y(z))) = c(y(z)), \quad c(y(z)) = c(y)(z).
    \end{align*}

    Then $\alpha(b(y)) = c(y)$.
\end{lem}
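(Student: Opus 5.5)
The plan is to chain the four given equalities over each $T' \in S$ and then use right cancellation together with separatedness, exactly as in \cref{cancellation on covers}. Fix $T' \in S$ and the witness $z \in X(T')$ from the hypothesis. Working over $T'$ with all data restricted, the membership of $z$ in the displayed intersection of domains ensures that every expression written below is defined. We then compute
\begin{align*}
    \alpha(b(y))(z) = \alpha(b(y)(z)) = \alpha(b(y(z))) = c(y(z)) = c(y)(z).
\end{align*}
The first and second equalities are the first two hypotheses (the second applied inside $\alpha$). The third and fourth are the remaining two. Note that $b(y) = \mu(b,y)$ and $c(y) = \mu(c,y)$ are sections of $X(T)$, since $y \in \dom \lambda_b$ (because $y \in \dom \alpha\circ\lambda_b$) and $y \in \dom c$. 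Also $\alpha(b(y)) \in X(T)$ because $b(y) \in \dom \alpha$ by the same domain hypothesis.

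Set $u = \alpha(b(y))$ and $v = c(y)$, both in $X(T)$. The computation shows that over $T'$ we have $z \in (\dom\lambda_u \cap \dom\lambda_v)(T')$; these domain memberships are part of the hypothesis, namely $\dom\lambda_{\alpha(b(y))}$ and $\dom\lambda_{c(y)}$. It also shows that $\lambda_u(z) = \lambda_v(z)$. This is precisely the situation of \cref{cancellation on covers} with $e = 1$. That lemma uses the graph $\Gamma_\mu$ and the projection $s_{23}$ being an isomorphism onto $U_{23}$, so that $(u,z,w)$ and $(v,z,w)$ in $\Gamma_\mu(T')$ force $u|_{T'} = v|_{T'}$. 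Since this holds for every $T'$ in the cover $S$ and $X$ is separated, the lemma gives $u = v$, i.e.\ $\alpha(b(y)) = c(y)$.

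The only delicate point is bookkeeping rather than mathematics. We must confirm that each intermediate term in the chain is genuinely defined over $T'$, which is what the long intersection of domains in the hypothesis is designed to provide. We must also interpret $\alpha(b(y)(z))$ etc.\ via the restriction conventions of \cref{lambda notation}, since $\alpha \in \tX(T)$ acts on $X_T$. With that checked, the argument is a direct application of \cref{cancellation on covers}. The hypothesis $\alpha(b) = c$ is not even needed beyond motivating the statement.
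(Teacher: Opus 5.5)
Your proposal is correct and follows the paper's proof: over each $T'$ in the cover you chain the four hypothesised equalities to get $\alpha(b(y))(z) = c(y)(z)$, and then conclude $\alpha(b(y)) = c(y)$ by right cancellation and separatedness via \cref{cancellation on covers}. You are also right that the hypothesis $\alpha(b) = c$ is never used in the argument; the paper's proof does not use it either.
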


\begin{proof}
    Choose $z$ locally on $T$ as in the statement of the lemma. Then for each $T'$ and $z$, we have
    \begin{align*}
        \alpha(b(y))(z) & = \alpha(b(y)(z))                                  \\
                        & = \alpha(b(y(z)))  \tag{since $b(y)(z) = b(y(z))$} \\
                        & = c(y(z))                                          \\
                        & = c(y)(z)
    \end{align*}
    as required. Since $X$ is separated, we conclude that $\alpha(b(y)) = c(y)$ by \cref{cancellation on covers}.
\end{proof}



\begin{lem}\label{strong associativity follows form weak property}
    Let $\mu$ be a cancellative partial binary operation on a separated presheaf $X$, and $a,b,y \in X(T)$ such that
    \begin{align*}
        b\in \dom \lambda_a^\dagger, \quad y \in \dom \lambda_{a^\dagger (b)} \cap \dom \lambda_a^\dagger \circ \lambda_b, \quad a^\dagger(b)(y) \in \dom \lambda_a
    \end{align*}
    and
    \begin{align*}
        a(a^\dagger(b)(y)) = a(a^\dagger(b))(y),
    \end{align*}
    Then $a^\dagger (b(y)) = a^\dagger(b)(y)$.
\end{lem}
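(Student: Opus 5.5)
The plan is a direct element-wise computation using only cancellativity (\cref{cancellativity definition}) and \cref{dagger behaves linke an inverse}. Separatedness and covers should not be needed. Set $c := a^\dagger(b)$. Then $b \in (\dom \lambda_a^\dagger)(T)$ means $(a,b) \in U_{13}(T)$, and $(a,c,b) \in \Gamma_\mu(T)$. Since $\Gamma_\mu$ is simultaneously the graph of $\mu_{12}$ and of $\mu_{13}$, this gives $(a,c) \in (\dom\mu)(T)$ and $a(c) = b$.

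Next I would rewrite the hypothesis. The assumption $y \in \dom\lambda_{a^\dagger(b)}$ says $c(y) = \mu(c,y)$ is defined. The assumption $a^\dagger(b)(y) \in \dom\lambda_a$ says $a(c(y))$ is defined. The displayed equation $a(a^\dagger(b)(y)) = a(a^\dagger(b))(y)$ then reads $a(c(y)) = a(c)(y) = b(y)$. Its right-hand side is defined because $a(c) = b$ and $y \in \dom\lambda_b$; the latter follows from $y \in \dom \lambda_a^\dagger\circ\lambda_b$.

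The key step is to observe that $(a, c(y), b(y)) \in \Gamma_\mu(T)$, since $\mu(a, c(y)) = b(y)$. By cancellativity, the projection $s_{13}\colon \Gamma_\mu \to X_1\times X_3$ is an isomorphism onto $U_{13}$ and exhibits $\Gamma_\mu$ as the graph of $\mu_{13}$. Hence $(a, b(y)) \in U_{13}(T)$, so $b(y) \in \dom\lambda_a^\dagger$ (this also follows from the hypothesis $y \in \dom\lambda_a^\dagger\circ\lambda_b$). Moreover $a^\dagger(b(y)) = \mu_{13}(a, b(y)) = c(y) = a^\dagger(b)(y)$. Equivalently, one can apply \cref{dagger behaves linke an inverse} to $z = c(y) \in \dom\lambda_a$, which gives $a^\dagger(a(c(y))) = c(y)$, and then substitute $a(c(y)) = b(y)$.

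I do not expect a genuine obstacle. The only care needed is bookkeeping of domains: each expression must be defined at the stage it is used, and the identification $a(c) = b$ must be justified from $b \in \dom\lambda_a^\dagger$ through the graph description of $\mu_{13}$ rather than assumed.
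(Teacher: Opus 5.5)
Your proposal is correct and follows essentially the same two-step computation as the paper. The paper first gets $a(a^\dagger(b)(y)) = a(a^\dagger(b))(y) = b(y)$ and then applies $a^\dagger$, citing \cref{formaldagger looks like inverse} for both $a(a^\dagger(b)) = b$ and $a^\dagger(a(z)) = z$; you instead read these off directly from the graph $\Gamma_\mu$, which is the length-one case of that proposition. You are also right that separatedness plays no role.
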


\begin{proof}
    Observe
    \begin{align*}
        a(a^\dagger(b)(y)) & = a(a^\dagger(b))(y) \tag{By assumption}               \\
                           & = b(y) \tag{by \cref{formaldagger looks like inverse}}
    \end{align*}
    Then again by \cref{formaldagger looks like inverse} we have
    \begin{align*}
        a^\dagger (b(y)) = a^\dagger (a(a^\dagger(b)(y))) = a^\dagger(b)(y)
    \end{align*}
\end{proof}

%% file: Chapters/Group_Chunks_On_Presheaves/Group_Chunk_definition.tex
\section{Group Chunks}\label{section: group chunks}

In this section we define group chunks, which are separated, cancellative, strongly associative partial magmas where the domain of the multiplication is sufficiently large. The goal, which we achieve in the next section, is to show that the universal group associated to a group chunk $(X,\mu)$ has a particularly nice form; namely that it is generated by the left action of $X$ on itself.

Roughly, the domain will be sufficiently large if any finite collection of products of arbitrary depth can be defined simultaneously. The first result in this section shows that it is enough to verify this for products of depth at most 2. The most important result in this section is \cref{Check locally at one point}, which states that compatibility of compositions of left translations can be checked locally at one point. This together with the large domain assumption implies that compatibility is a particularly well-behaved equivalence relation on compositions of left translations. At the end of this section we derive pleasant properties of this equivalence relation as we prepare to take the quotient in the next section.

\begin{dfn}\label{large domain definition}
    Let $(X,\mu)$ be a cancellative presheaf of partial magmas. We say $\mu$ has \emph{large domain} if the following holds:

    For any $N \in \Z_{>0}$ and partial morphisms $\alpha_1, \ldots, \alpha_N, \beta_1, \ldots, \beta_N$ in  $\tX(T)$, the intersection
    \begin{align*}
        \bigcap_{i = 1}^N \dom \alpha_i\circ \beta_i
    \end{align*}
    is locally nontrivial.

    We say $(X,\mu)$ has large domain if $\mu$ does.
\end{dfn}

\begin{dfn}[Group Chunk]\label{Group Chunk definition}
    A \emph{group chunk} on a site $\mathscr{C}$ is a presheaf of partial magmas $(X,\mu)$ on $\scrC$ which is separated, cancellative (\cref{cancellativity definition}), strongly associative (\cref{strong associativity definition}) and with large domain (defined above).
    A \emph{morphism of group chunks} is a morphism of the underlying partial magmas.
\end{dfn}
A separated presheaf of groups is certainly a group chunk.


\begin{prop}\label{Long products lemma}
    Let $(X,\mu)$ be a group chunk. Then for any $n \in \Z_{>0}$ and partial morphisms $\phi_{1},\cdots,\phi_{n} \in \tXinfty(T)$, the intersection $ \bigcap_{i = 1}^n \dom\phi_{i}$
    is locally nontrivial.
\end{prop}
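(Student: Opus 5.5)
The plan is to prove a stronger, inductive statement from which \cref{Long products lemma} follows at once by \cref{large domain definition}. Call $\phi\in\tXinfty(T)$ \emph{locally of depth two} if there is a cover $S$ of $T$ such that for every $T'\to T$ in $S$ there are finitely many pairs $\alpha_j,\beta_j\in\tX(T')$ with $\bigcap_j \dom(\alpha_j\circ\beta_j)\subseteq \dom\phi|_{T'}$ as subpresheaves of $X_{T'}$.

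Granting that every $\phi\in\tXinfty(T)$ is locally of depth two, the proposition follows. Given $\phi_1,\dots,\phi_n$, refine the finitely many covers to a common covering sieve, using the stability and transitivity axioms. Over each member $T'$, the intersection $\bigcap_i\dom\phi_i$ then contains a finite intersection $\bigcap_{i,j}\dom(\alpha_{ij}\circ\beta_{ij})$. This is locally nontrivial over $T'$ by large domain, and by \cref{local nontriviality over T remark} local nontriviality glues back along the cover by transitivity.

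To show every word is locally of depth two, I induct on the length of the sequence $\sigma\in\Xoneinfty(T)$ with $\phi=\epsilon_\mu(\sigma)$. Lengths $1$ and $2$ are trivial; for length $1$, take $\beta=\lambda_w$ composed with a $\dagger$ and use \cref{formaldagger looks like inverse}. For the inductive step, write $\phi=\lambda^{e}_x\circ\lambda^{f}_y\circ\psi$, where $\psi$ is shorter. The idea is to collapse the first two letters into one by inserting an auxiliary point.
\begin{itemize}
\item By large domain, locally choose $w$ lying in $\dom(\lambda^e_x\circ\lambda^f_y)$ together with finitely many further depth-two domains needed below.
\item Set $d=\lambda^e_x(\lambda^f_y(w))$.
\item Strong associativity, in the form of \cref{associativity criterion} (including its $\dagger$-variant, after rewriting $\lambda_a^\dagger$ compositions via the $\mu_{13}$ version), gives $\lambda^e_x\circ\lambda^f_y\circ\lambda_w\ssim\lambda_d$.
\item Hence, by \cref{formaldagger looks like inverse}, $\lambda^e_x\circ\lambda^f_y$ agrees with $\lambda_d\circ\lambda_w^\dagger$ on $D:=\dom(\lambda_d\circ\lambda^\dagger_w)\cap\dom(\lambda^e_x\circ\lambda^f_y\circ\lambda_w\circ\lambda^\dagger_w)$.
\item Crucially, $D\subseteq\dom(\lambda^e_x\circ\lambda^f_y)$, because any $u\in D$ has $\lambda_w(\lambda_w^\dagger(u))=u$.
\end{itemize}
Thus $\dom\phi$ contains $\dom(\lambda_d\circ\lambda_w^\dagger\circ\psi)\cap\dom(\lambda^e_x\circ\lambda^f_y\circ\lambda_w\circ\lambda^\dagger_w\circ\psi)$. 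The second term is a longer word, so this does not close the induction by itself. I would fix this by choosing instead the induction measure ``number of letters to the left of the last letter $\lambda_{\text{point}}^{\pm}$ that was chosen locally''. Alternatively, I would apply the same collapsing trick once more, using $\lambda_w\circ\lambda_w^\dagger\ssim\id$ on its domain, and arrange that $w$ also lies in the domains needed to collapse $\lambda_w^\dagger\circ\lambda^{\pm}_{(\cdot)}$ via the strong associativity $\lambda_{a^\dagger(b)}\ssim\lambda^\dagger_a\circ\lambda_b$.

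The main obstacle is exactly this point. Associativity only gives compatibility $\ssim$, that is, agreement on the intersection of domains, whereas the lemma requires \emph{containment} of domains. So every rewriting step must be accompanied by an explicit subdomain $D$ that is contained in the original domain and is itself an intersection of domains of words of strictly smaller complexity. The bookkeeping is to choose the locally existing auxiliary points ($w$, and the second point used for the $\dagger$-collapse) simultaneously inside all the finitely many depth-two domains that the later steps require. Large domain permits this because only finitely many conditions are imposed at each stage. The resulting strictly decreasing measure on word length then finishes the induction.
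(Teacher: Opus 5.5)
Your proof has a genuine gap: the inductive step does not reduce anything. Put $\chi=\lambda^e_x\circ\lambda^f_y$. Since $\lambda^\dagger_w(v)$ is the unique $v'$ with $(w,v',v)$ in the graph of $\mu$, the composite $\lambda_w\circ\lambda^\dagger_w$ is the identity with domain exactly $\dom\lambda^\dagger_w$. Hence
\[
\dom(\chi\circ\lambda_w\circ\lambda^\dagger_w\circ\psi)=\dom\phi\cap\dom(\lambda^\dagger_w\circ\psi).
\]
So your lower bound reads $\dom\phi\supseteq\dom(\lambda_d\circ\lambda^\dagger_w\circ\psi)\cap\dom\phi\cap\dom(\lambda^\dagger_w\circ\psi)$, which is a tautology. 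The ``crucial'' containment $D\subseteq\dom\chi$ holds only because $\dom\chi$ was built into $D$.

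The two repairs you sketch are not carried out, and they run into exactly the obstacle you name. Every identity the axioms supply, such as $\lambda_{\alpha(b)}\ssim\alpha\circ\lambda_b$ or $\lambda_w\circ\lambda^\dagger_w\ssim\id$, gives equalities only at points already known to lie in both domains. To turn this into a domain statement for an arbitrary $u$, you would need to know in advance that $u$ lies in the domain of the longer word, which is what you are trying to prove. It is not even clear that your ``locally of depth two'' strengthening is true. It asks far more than the proposition does: the proposition only requires the local existence of \emph{points} in $\bigcap_i\dom\phi_i$.

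The missing idea is to construct such points directly, applying compatibility only at a point chosen in advance to lie in both domains. This is what the paper does.
\begin{itemize}
\item Write $\phi_i=\alpha_{i1}\circ\cdots\circ\alpha_{ir_i}$ with $\alpha_{ij}\in\tX(T)$ and $r_i\geq 2$; a single letter $\alpha$ can be replaced by $\alpha\circ\alpha$.
\item By induction on $\max_i r_i$, choose $c$ locally in $\bigcap_i\dom(\alpha_{i2}\circ\cdots\circ\alpha_{ir_i})$.
\item Define the \emph{sections} $b_{ir_i}=c$ and $b_{i(j-1)}=\alpha_{ij}(b_{ij})$.
\item Large domain gives $z$ locally in the finite intersection $\bigcap_{i,j}\dom(\alpha_{ij}\circ\lambda_{b_{ij}})$ of depth-two domains.
\item Strong associativity gives $\lambda_{b_{i(j-1)}}\ssim\alpha_{ij}\circ\lambda_{b_{ij}}$. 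Here $z$ lies in both domains, because $\dom(\alpha_{i(j-1)}\circ\lambda_{b_{i(j-1)}})\subseteq\dom\lambda_{b_{i(j-1)}}$.
\item Therefore $\alpha_{ij}(b_{ij}(z))=b_{i(j-1)}(z)$, and $b_{ij}(z)\in\dom\alpha_{ij}$ for every $j$.
\item A downward induction on $j$ then shows $\lambda_c(z)\in\dom\phi_i$ for every $i$, and gluing the covers gives local nontriviality.
\end{itemize}
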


\begin{proof}
    By definition, for each $i$ we have $\phi_i =  \alpha_{i1} \circ \cdots\circ\alpha_{ir_i}$ with $\alpha_{ij} \in \tX(T)$ for each $i,j$.

    We may assume without loss of generality that $r_i \geq 2$ for each $i$. Indeed, if not, for each $i$ with $r_i = 1$ we may replace $\phi_i = \alpha_{i1}$ with $\phi'_i = \alpha_{i1}\circ\alpha_{i1}$ and observe that $\dom\phi'_i \subset \dom\phi_i$.

    Now we proceed by induction on $m := \max_i r_i$. The case $m = 2$ is covered by \cref{large domain definition}. So suppose $m >2$. Then by induction there exists a cover $S$ of $T$ such that for each $T'$ in $S$ there exists $x$ with
    \begin{align*}
        x \in \bigcap_{i = 1}^n(\dom \alpha_{i2} \circ \dots \circ \alpha_{ir_i})(T').
    \end{align*}
    For each such $T'$ and $i$ define $b_{ir_i} = x$. For $1 < j\leq r_i$, recursively define
    \begin{equation*}
        b_{i(j - 1)} = \alpha_{ij} \circ \dots \circ\alpha_{ir_i}(x) \in X(T')
    \end{equation*}
    and observe
    \begin{align*}
        b_{i(j - 1)} = \alpha_{ij}(b_{ij}) \text{ for $1<j \leq r_i$}.
    \end{align*}
    Then by strong associativity we have
    \begin{align}\label{ai bi ssim}
        b_{i(j - 1)} \ssim \alpha_{ij}\circ b_{ij} \text{ for $1<j\leq r_i$.}
    \end{align}
    Moreover, since $\mu$ has large domain, for each $T'$ we may find a cover $S'$ of $T'$ such that for each $T''$ in $S'$ there exists $y$ with
    \begin{align}\label{y in intersection of domains}
        y \in \bigcap_{\substack{i,j \\ 1\leq j \leq r_i}} (\dom \alpha_{ij}\circ b_{ij})(T'').
    \end{align}
    \Cref{y in intersection of domains} implies
    \begin{align}\label{bijindomalphaihj}
        b_{ij}(y) \in \dom \alpha_{ij} \text{ for $1 \leq j\leq r_i$ }
    \end{align}
    and \ref{ai bi ssim} implies
    \begin{align}\label{alphaijbijequation}
        \alpha_{i(j)}\circ b_{i(j)}(y) =  b_{i(j - 1)}(y)\text{ for $1<j\leq r_i$.}
    \end{align}
    \begin{claim}
        \begin{enumerate}
            \item  For all $i$ and $j$ with $1 \leq j \leq r_i$,
                  \begin{align*}
                      \alpha_{i(j+1)} \circ \alpha_{i(j+2)} \circ \dots \circ \alpha_{ir_i} \circ x(y) \in \dom \alpha_{ij},
                  \end{align*}
            \item   For $1<j \leq r_i$ we have
                  \begin{align*}
                      \alpha_{ij} \circ \alpha_{i(j+1)} \circ \dots \circ \alpha_{ir_i} \circ x(y) = b_{i(j - 1)}(y).
                  \end{align*}
        \end{enumerate}
    \end{claim}
    \begin{proof}
        We prove both statements together by downward induction on $j$. We get the base case $j = r_i$ by setting $j = r_i$ in \cref{bijindomalphaihj,alphaijbijequation}.

        For $j<r_i$, assume that we have
        \begin{align*}
            \alpha_{i(j+1)} \circ \alpha_{i(j+2)} \circ \dots \circ \alpha_{ir_i} \circ x(y) = b_{i(j - 1)}(y).
        \end{align*}
        Then by \cref{bijindomalphaihj} we get
        \begin{align*}
            \alpha_{i(j+1)} \circ \alpha_{i(j+2)} \circ \dots \circ \alpha_{ir_i} \circ x(y) \in \dom \alpha_{ij},
        \end{align*}
        and, for $j>1$, by \cref{alphaijbijequation} we get
        \begin{align*}
            \alpha_{ij} \circ \alpha_{i(j+1)} \circ \dots \circ \alpha_{ir_i} \circ x(y) = b_{i(j - 1)}(y).
        \end{align*}
    \end{proof}

    From the claim we get that for each $i$ we have
    \begin{align*}
        \alpha_{i2} \circ \alpha_{i3} \circ \dots \circ \alpha_{ir_i} \circ x(y) \in \dom \alpha_{i1},
    \end{align*}
    so we conclude
    \begin{align*}
        x(y) \in \dom \phi_i (T'').
    \end{align*}
    Therefore, collating the covers $S'$, we get a cover of $T$ which witnesses local nontriviality over $T$. The result then follows from \cref{local nontriviality over T remark}.
\end{proof}

The following result shows that $\ssim$ is particularly nicely behaved on $\tXinfty$.
\begin{prop}\label{Check locally at one point}
    Let $(X,\mu)$ be a group chunk. Let $\phi_1,\phi_2 \in \tXinfty(T)$. Suppose there exists a cover $S$ of $T$ such that for every $T' \in S$ there exists $z \in (\dom\phi_1 \cap \dom\phi_2)(T')$ such that ${\phi_1(z) = \phi_2(z})$. Then $\phi_1\ssim \phi_2$.
\end{prop}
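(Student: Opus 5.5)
The plan is to reduce compatibility to a pointwise statement. We fix $T''\to T$ and $w \in (\dom\phi_1\cap\dom\phi_2)(T'')$, and show $\phi_1(w)=\phi_2(w)$. Since $X$ is separated, it suffices to prove this after passing to a cover of $T''$. So we pull back $S$ to $T''$ (covers are stable under base change) and work over some $T'''$ on which we have a point $z\in(\dom\phi_1\cap\dom\phi_2)(T''')$ with $\phi_1(z)=\phi_2(z)$. By \cref{cancellation on covers} it is then enough to find, locally on $T'''$, a point $y$ with $y\in\dom\lambda_{\phi_1(w)}\cap\dom\lambda_{\phi_2(w)}$ and $\phi_1(w)(y)=\phi_2(w)(y)$.

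\textbf{Key auxiliary claim (``associativity along a word'').} Let $\phi=\lambda_{x_1}^{e_1}\circ\cdots\circ\lambda_{x_n}^{e_n}\in\tXinfty$, let $a\in\dom\phi$, and let $b$ be a point which lies in $\dom\lambda_{\phi(a)}$, in $\dom\phi\circ\lambda_a$, and in the domains of the $\lambda$'s of all intermediate values $\lambda_{x_k}^{e_k}\circ\cdots\circ\lambda_{x_n}^{e_n}(a)$. Then
\[
\phi(a)(b)=\phi(a(b)).
\]
The proof is by induction on $n$, peeling off one letter at a time. For a single letter with $e=1$ we use the associativity criterion $\lambda_{x(a)}\ssim\lambda_x\circ\lambda_a$ of \cref{associativity criterion}. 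For $e=-1$ we use the strong associativity clause $\lambda_{x^\dagger(a)}\ssim\lambda^\dagger_x\circ\lambda_a$. In each case we evaluate at $b$, which is legitimate precisely because $b$ lies in both domains.

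\textbf{Main computation.} We choose $y$ locally, using \cref{Long products lemma}, in the intersection of the domains of the finitely many elements of $\tXinfty$ appearing below. These are
\[
\phi_i\circ\lambda_w,\quad \lambda_{\phi_i(w)},\quad \phi_i\circ\lambda_z\circ\lambda_z^\dagger\circ\lambda_w,\quad \lambda_{\phi_i(z)}\circ\lambda_z^\dagger\circ\lambda_w,\quad \lambda_z^\dagger\circ\lambda_w,
\]
for $i=1,2$. We also include all the intermediate maps needed for the auxiliary claim applied to $(w,y)$ and to $(z,u)$. Here $\lambda_w$, $\lambda_z$, $\lambda_z^\dagger$ and $\lambda_{\phi_i(w)}$, $\lambda_{\phi_i(z)}$ are elements of $\tX(T''')$, so all of these are legitimate members of $\tXinfty(T''')$.

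Set $u=z^\dagger(w(y))$. Then $z(u)=w(y)$ by \cref{formaldagger looks like inverse}, and we compute
\[
\phi_1(w)(y)=\phi_1(w(y))=\phi_1(z(u))=\phi_1(z)(u)=\phi_2(z)(u)=\phi_2(z(u))=\phi_2(w(y))=\phi_2(w)(y).
\]
The first and third equalities (and their counterparts for $\phi_2$) are instances of the auxiliary claim; the middle equality is the hypothesis $\phi_1(z)=\phi_2(z)$. Collating over covers and applying \cref{cancellation on covers} (right cancellation, then separatedness) gives $\phi_1(w)=\phi_2(w)$. Since $w$ was arbitrary, this proves $\phi_1\ssim\phi_2$.

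The main obstacle is domain bookkeeping. Compatibility $\ssim$ is not transitive in general, so every equality above must be justified by showing that the relevant point actually lies in both domains, rather than by chaining compatibilities. The remedy is to list, once and for all, the finitely many compositions whose domains must contain $y$ (and hence $u$ and its intermediate images), and to invoke \cref{Long products lemma} a single time to obtain such $y$ locally.
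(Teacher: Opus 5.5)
Your proposal is correct and is essentially the paper's proof: your ``associativity along a word'' claim is the same inner claim the paper proves by letter-by-letter induction using strong associativity. Your $w$, $y$, $u$ play the roles of the paper's $y$, $y^\dagger(x)$, $z^\dagger(x)$ with $x=w(y)$; the only difference is cosmetic, in that you choose the point to be cancelled first and set $x=w(y)$, whereas the paper chooses $x$ first in the relevant domains and recovers both preimages with $\dagger$ via \cref{formaldagger looks like inverse}.
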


\begin{proof}

    Let $T' \in \scrC_T$ and $y \in (\dom \phi_1 \cap \dom \phi_2)(T')$. We need to show ${\phi_1(y) = \phi_2(y)}$.

    Let $S'$ be the pullback of the cover $S$ to $T'$. Fix any $T'' \in S'$ and $z \in (\dom\phi_1 \cap \dom\phi_2)(T'')$ with $\phi_1(z) = \phi_2(z)$. For each $i$ let us denote $c_{i(r_i+1)} = z$, $d_{i(r_i+1)} = y$ and for $j\leq r_i$ define
    \begin{align*}
         & c_{ij} = \alpha_{ij}(c_{i(j+1)}) = \alpha_{ij} \circ \cdots\circ\alpha_{ir_i}(z)  \\
         & d_{ij} = \alpha_{ij}(d_{i(j+1)}) = \alpha_{ij} \circ \cdots\circ\alpha_{ir_i}(y).
    \end{align*}
    In particular $c_{i1} = \phi_i(z)$ and $d_{i1} = \phi_i(y)$. We have $c_{11} = c_{21}$ by assumption, and our goal is to show $d_{11} = d_{21}$.

    By strong associativity, for each $i,j$ we have
    \begin{align}\label{alpha c d compatibility}
        c_{ij} \ssim \alpha_{ij}\circ c_{i(j+1)}, &  & d_{ij} \ssim \alpha_{ij}\circ d_{i(j+1)}.
    \end{align}

    By \cref{Long products lemma}, there exists a cover $S''$ of $T''$ such that for every $T''' \in S''$ we may find $x \in X(T''')$ with
    \begin{align*}
        x \in \left(\dom \phi_1 \cap \dom \phi_2 \cap \left( \bigcap_{i,j}\dom c_{ij}\circ z^\dagger \right) \cap \left( \bigcap_{i,j}\dom d_{ij}\circ y^\dagger \right)\right)(T''').
    \end{align*}

    Since $x \in \dom z^\dagger$, we have by \cref{formaldagger looks like inverse} that $z\circ z^\dagger(x) = x$.
    \begin{claim}
        For each $i,j$ we have
        \begin{align*}
            c_{ij} \circ z^\dagger(x) = \alpha_{ij} \circ \cdots\circ\alpha_{ir_i} \circ z \circ z^\dagger (x), \\
            d_{ij} \circ y^\dagger(x) = \alpha_{ij} \circ \cdots\circ\alpha_{ir_i} \circ y \circ y^\dagger (x).
        \end{align*}
    \end{claim}
    \begin{proof}
        We proceed by descending induction on $j$. If $j = r_i + 1$ the result is immediate. Let $j \leq r_i$. Then by induction
        \begin{align*}
            \alpha_{ij} \circ \cdots\circ\alpha_{ir_i} \circ z \circ z^\dagger (x) = \alpha_{ij}\circ c_{i(j+1)} \circ z^\dagger(x)
        \end{align*}
        and in particular $z^\dagger(x) \in \dom \alpha_{ij} \circ c_{i(j+1)}$. Then by \cref{alpha c d compatibility} we have
        \begin{align*}
            c_{ij} \circ z^\dagger(x) = \alpha_{ij} \circ c_{i(j+1)} \circ z^\dagger(x)
        \end{align*}
        and the result follows. Similarly for the $d_{ij}$.
    \end{proof}
    It follows from the claim that\begin{align*}
        \phi_i(x) = \phi_i \circ z \circ z^\dagger(x) =  c_{i1} \circ z^\dagger(x)
    \end{align*}
    and similarly $\phi_i(x) = d_{i1}\circ y^\dagger(x)$. Therefore,
    \begin{align*}
        \phi_1(x) = c_{11} \circ z^\dagger(x) = c_{21} \circ z^\dagger(x) = \phi_2(x),
    \end{align*}
    and then
    \begin{align*}
        d_{11}\circ y^\dagger(x) = \phi_1(x) = \phi_2(x) = d_{21} \circ y^\dagger(x).
    \end{align*}
    Since $X$ is separated, we conclude $d_{11} = d_{21}$ by \cref{cancellation on covers}.
\end{proof}

The above result has several useful corollaries.

\begin{cor}\label{compatibility is an equivalence relation in the cases we care about}
    Let $(X,\mu)$ be a group chunk. Then the relation $\ssim$ is an equivalence relation on $\tXinfty$.
\end{cor}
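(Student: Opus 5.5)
Reflexivity and symmetry are immediate from the definition of $\ssim$: every partial morphism agrees with itself on its own domain, and the condition ``agree on $\dom\phi\cap\dom\psi$'' is symmetric in $\phi$ and $\psi$. The only real content is transitivity. The plan is to reduce transitivity to \cref{Check locally at one point}, which says that compatibility of two elements of $\tXinfty(T)$ can be checked locally at a single point. Note also that $\ssim$ has to be understood sectionwise: for each $T$ it is a relation on the set $\tXinfty(T)$, and the graph of this relation is a subpresheaf, because restriction along $T'\to T$ is given by pullback and preserves agreement on domains. It therefore suffices to prove transitivity on each $\tXinfty(T)$.

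So let $\phi_1,\phi_2,\phi_3\in\tXinfty(T)$ with $\phi_1\ssim\phi_2$ and $\phi_2\ssim\phi_3$. By \cref{Long products lemma}, applied with $n=3$, the intersection $\dom\phi_1\cap\dom\phi_2\cap\dom\phi_3$ is locally nontrivial. Hence there is a cover $S$ of $T$ such that for every $T'\in S$ we can choose a point
\[
z\in(\dom\phi_1\cap\dom\phi_2\cap\dom\phi_3)(T').
\]
At such a $z$ we have $\phi_1(z)=\phi_2(z)$, since $z$ lies in $\dom\phi_1\cap\dom\phi_2$ and $\phi_1\ssim\phi_2$. Likewise $\phi_2(z)=\phi_3(z)$. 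Therefore $\phi_1(z)=\phi_3(z)$, and of course $z\in(\dom\phi_1\cap\dom\phi_3)(T')$.

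This is exactly the hypothesis of \cref{Check locally at one point} for the pair $\phi_1,\phi_3$, so that result gives $\phi_1\ssim\phi_3$.

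There is no serious obstacle. The one point needing care is that we must use the triple intersection, which \cref{Long products lemma} provides. A witness in $\dom\phi_1\cap\dom\phi_3$ alone would not do, since it need not lie in $\dom\phi_2$, and then neither compatibility hypothesis could be applied there.
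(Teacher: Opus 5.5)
Your proposal is correct and matches the paper's proof: reflexivity and symmetry are immediate, and transitivity comes from using \cref{Long products lemma} to choose, locally on $T$, a point $z$ in the triple intersection $\dom\phi_1\cap\dom\phi_2\cap\dom\phi_3$. At such a point $\phi_1(z)=\phi_2(z)=\phi_3(z)$, so \cref{Check locally at one point} gives $\phi_1\ssim\phi_3$, and your remark on why the triple intersection, rather than just $\dom\phi_1\cap\dom\phi_3$, is needed is exactly right.
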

\begin{proof}
    Identity and symmetry are immediate. For transitivity, let $\phi_1,\phi_2,\phi_3 \in X(T)$ and suppose $\phi_1\ssim \phi_2$ and $\phi_2 \ssim \phi_3$. Then by \cref{Long products lemma} there exists a cover $S$ of $T$ such that for every $T' \in S$ there exists $z \in (\cap_i \dom \phi_i)(T')$. Then $\phi_1(z) = \phi_2(z) = \phi_3(z)$ and we conclude by \cref{Check locally at one point} that $\phi_1 \ssim \phi_3$ as required.
\end{proof}

\begin{cor}\label{tXinfty composition compatibility}
    Let $(X,\mu)$ be a group chunk, $\phi \in \tXinfty(T)$ and $z \in (\dom \phi)(T)$. Then
    \begin{align*}
        \lambda_{\phi(z)} \ssim \phi \circ \lambda_z.
    \end{align*}
\end{cor}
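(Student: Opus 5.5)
The plan is to deduce this from \cref{Check locally at one point}: it is enough to find, locally on $T$, a single point at which $\lambda_{\phi(z)}$ and $\phi\circ\lambda_z$ are both defined and agree. Both are elements of $\tXinfty(T)$. Indeed $\lambda_{\phi(z)} = \epsilon_\mu((\phi(z),1))$, and if $\phi = \epsilon_\mu(\sigma)$ then $\phi\circ\lambda_z = \epsilon_\mu(\sigma\cdot((z,1)))$, since $\tXinfty$ is closed under composition.

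Write $\phi = \alpha_1\circ\cdots\circ\alpha_r$ with each $\alpha_j \in \tX(T)$. Set $c_{r+1} = z$ and $c_j = \alpha_j(c_{j+1})$, so that $c_1 = \phi(z)$. The definition of these $c_j$ uses only $z\in\dom\phi$. Strong associativity, in the form of \cref{associativity criterion} (covering both the $\lambda$ and the $\lambda^\dagger$ case), gives
\begin{align*}
    \lambda_{c_j} \ssim \alpha_j\circ\lambda_{c_{j+1}} \qquad \text{for } 1\le j\le r.
\end{align*}

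By \cref{Long products lemma}, applied to the finitely many elements $\lambda_{c_j}$ and $\alpha_j\circ\cdots\circ\alpha_r\circ\lambda_z$ of $\tXinfty(T)$, there is a cover $S$ of $T$ such that each $T'\in S$ carries a point $y$ lying in all of their domains. For such a $y$, I will show by descending induction on $j$ that
\begin{align*}
    \alpha_j\circ\cdots\circ\alpha_r\circ\lambda_z(y) = c_j(y).
\end{align*}
The base case $j=r+1$ is trivial. For the inductive step, assume the identity holds at $j+1$. Then $c_{j+1}(y) = \alpha_{j+1}\circ\cdots\circ\alpha_r\circ z(y)$, and since $y$ lies in the domain of the composite, this point lies in $\dom\alpha_j$. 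Hence $y\in\dom(\alpha_j\circ\lambda_{c_{j+1}})\cap\dom\lambda_{c_j}$, and the compatibility above gives $\alpha_j(c_{j+1}(y)) = c_j(y)$. This is the same argument as the Claim in the proof of \cref{Check locally at one point}.

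Taking $j=1$ gives $\phi\circ\lambda_z(y) = \phi(z)(y)$, with $y$ in both domains. So the hypothesis of \cref{Check locally at one point} holds on the cover $S$, and we conclude $\lambda_{\phi(z)}\ssim\phi\circ\lambda_z$.

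The main point needing care is the inductive step. Compatibility only gives agreement on the intersection of domains, so one must actually know that $y$ lies in $\dom\lambda_{c_j}$ and in $\dom(\alpha_j\circ\lambda_{c_{j+1}})$. This is exactly why all the relevant domains are included in the application of \cref{Long products lemma}. It is also why strong associativity is needed, rather than plain associativity, for the factors $\alpha_j$ of the form $\lambda^\dagger_x$.
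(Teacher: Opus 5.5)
Your proof is correct, but it is organized differently from the paper's.

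The paper inducts on the length $r$ of $\phi=\alpha_1\circ\cdots\circ\alpha_r$ at the level of compatibility. The case $r=1$ is strong associativity. For $r>1$ it writes $\phi=\alpha_1\circ\psi$ and composes the inductive hypothesis $\psi\circ\lambda_z\ssim\lambda_{\psi(z)}$ with $\alpha_1$ using \cref{compatibility respects composition}. It then applies the $r=1$ case to get $\alpha_1\circ\lambda_{\psi(z)}\ssim\lambda_{\phi(z)}$, and concludes by transitivity of $\ssim$ on $\tXinfty$ (\cref{compatibility is an equivalence relation in the cases we care about}).

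You bypass both \cref{compatibility respects composition} and transitivity. One application of \cref{Long products lemma} gives a single local point $y$ in all the relevant domains at once. A pointwise descending induction shows $\phi\circ\lambda_z(y)=\lambda_{\phi(z)}(y)$, and one application of \cref{Check locally at one point} finishes.

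Since transitivity is itself proved from \cref{Long products lemma} and \cref{Check locally at one point}, the two arguments rest on the same machinery. The paper's version is shorter and more modular: it treats $\ssim$ as a congruence on $\tXinfty$ and hides the domain bookkeeping inside transitivity. Yours in effect folds the $r-1$ uses of transitivity into a single step. The cost is tracking domain membership explicitly, which you do correctly by putting every $\dom\lambda_{c_j}$ and every $\dom(\alpha_j\circ\cdots\circ\alpha_r\circ\lambda_z)$ into the intersection.
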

\begin{proof}
    Write $\phi = \alpha_1\circ \dots \alpha_r$ as before. We proceed by induction on $r$. The case $r = 1$ follows from strong associativity. So suppose $r>1$, and let $\psi = \alpha_2\circ\dots\circ\alpha_r$. By induction, we have $ \psi \circ \lambda_z \ssim \lambda_{\psi(z)}$. Since compatibility respects composition (\cref{compatibility respects composition}) this implies $\alpha_1\circ \psi \circ \lambda_z \ssim \alpha_1\circ \lambda_{\psi(z)}$. Then by the $r = 1$ case we have $\alpha_1\circ \lambda_{\psi(z)} \ssim \lambda_{\alpha_1(\psi(z))} = \lambda_{\phi(z)}$. We conclude by transitivity of $\ssim$ (\cref{compatibility is an equivalence relation in the cases we care about}).
\end{proof}

\begin{cor}\label{dagger descends}
    Let $(X,\mu)$ be a group chunk, and let $\sigma_1, \sigma_2 \in \Xoneinfty(T)$. Suppose $\epsilon_\mu(\sigma_1) \ssim \epsilon_\mu(\sigma_2)$. Then $\epsilon_\mu(\sigma_1^\dagger) \ssim \epsilon_\mu(\sigma_2^\dagger)$.
\end{cor}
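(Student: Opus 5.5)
The plan is to reduce the claim to \cref{Check locally at one point}, by producing, locally on $T$, a single point at which $\epsilon_\mu(\sigma_1^\dagger)$ and $\epsilon_\mu(\sigma_2^\dagger)$ are both defined and agree. Write $\phi_i = \epsilon_\mu(\sigma_i)$ and $\phi_i' = \epsilon_\mu(\sigma_i^\dagger)$ for $i=1,2$. All four lie in $\tXinfty(T)$. By \cref{Long products lemma} there is a cover $S$ of $T$ such that for every $T'\in S$ there exists
\begin{align*}
    w \in (\dom \phi_1' \circ \phi_1 \circ \phi_2' \cap \dom \phi_1 \cap \dom\phi_2)(T').
\end{align*}
Here $\phi_1'\circ\phi_1\circ\phi_2'$ is again in $\tXinfty(T)$, being a composition of left translations and their daggers.

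Fix such $T'$ and $w$, and set $u = \phi_2'(w)$. Since $u\in\dom\phi_1$, we get $v := \phi_1(u)\in \dom\phi_1'$. The goal is to find a point in $\dom\phi_1'\cap\dom\phi_2'$ where the two maps agree, and the natural candidate is a point of the form $\phi_2(y)=\phi_1(y)$. The cleanest choice is to take $y$ in $\dom\phi_1\cap\dom\phi_2$, which \cref{Long products lemma} provides locally. By the hypothesis $\phi_1\ssim\phi_2$ we then have $\phi_1(y)=\phi_2(y)=:p$. By \cref{formaldagger looks like inverse}, applied to $\sigma_1$ and to $\sigma_2$, the point $p$ lies in $\dom\phi_1'\cap\dom\phi_2'$ and
\begin{align*}
    \phi_1'(p) = y = \phi_2'(p).
\end{align*}
This already gives the required agreement at a single point.

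So the argument is as follows. Using \cref{Long products lemma}, choose locally on $T$ a point $y\in(\dom\phi_1\cap\dom\phi_2)(T')$ (this replaces the more complicated choice of $w$ above). Put $p=\phi_1(y)=\phi_2(y)$, where equality holds by compatibility. Then \cref{formaldagger looks like inverse} shows $p\in\dom\phi_1'\cap\dom\phi_2'$ with $\phi_1'(p)=\phi_2'(p)=y$. Since such points exist over a cover of $T$, \cref{Check locally at one point} yields $\phi_1'\ssim\phi_2'$.

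I expect no real obstacle here. The only point needing care is that \cref{formaldagger looks like inverse} gives membership in the domain, and not merely compatibility, and this is exactly what its statement provides.
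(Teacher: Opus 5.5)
Your final argument is correct and is essentially the paper's proof: pick $y\in(\dom\phi_1\cap\dom\phi_2)(T')$ locally via \cref{Long products lemma}, use compatibility and \cref{formaldagger looks like inverse} to get $p\in\dom\phi_1'\cap\dom\phi_2'$ with $\phi_1'(p)=\phi_2'(p)=y$, and conclude by \cref{Check locally at one point}. The preliminary construction with $w$ is unnecessary and can simply be deleted.
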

\begin{proof}
    By \cref{Long products lemma}, there exists a cover $S$ of $T$ so that for each $T' \in S$ there exists $z \in (\dom\epsilon_\mu(\sigma_1) \cap \dom \epsilon_\mu(\sigma_2))(T').$ For any such $z$, let $y = \epsilon_\mu(\sigma_1)(z) = \epsilon_\mu(\sigma_2)(z)$. It follows immediately from \cref{formaldagger looks like inverse} that $\epsilon_\mu(\sigma_1^\dagger)(y) = \epsilon_\mu(\sigma_2^\dagger)(y) = z$. Then the result follows from \cref{Check locally at one point}.
\end{proof}

%% file: Chapters/Group_Chunks_On_Presheaves/Universal_Morphisms_to_Groups_2.tex
\section{Universal Morphisms to Groups 2}\label{section: second universal morphisms to groups}
In this section we give an explicit construction for the universal presheaf of groups associated to a group chunk; this will be roughly the quotient $\tXinfty/\ssim$. The only hiccup occurs where $X(T) = \emptyset$, and there is nothing to generate a group with; instead of excluding these cases, (which would probably be the most sensible thing to do, and which would make this section significantly shorter), we manually throw in an identity element over these values - this is the purpose of \cref{Gmu definition}.

We show first that $\tXinfty/\ssim$ is (mostly) a presheaf of groups, and that it comes with an embedding of $X$. This follows quickly from the results of the previous section. Next we fill in identity elements over objects $T$ with $X(T) = \emptyset$. Finally we verify the universal property. The sheafification of the quotient is studied in the next section.

\begin{lem}\label{the quotient of tXinfty is separated}
    Let $(X,\mu)$ be a group chunk. Then the presheaf quotient $\tXinfty/\ssim$ is separated.
\end{lem}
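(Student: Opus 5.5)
We must show: if $\phi_1,\phi_2 \in \tXinfty(T)$ have the property that there is a cover $S$ of $T$ with $\phi_1|_{T'} \ssim \phi_2|_{T'}$ for every $T'\to T$ in $S$, then $\phi_1 \ssim \phi_2$. Separatedness of the presheaf quotient means exactly this: two sections over $T$ whose classes agree on a cover already agree over $T$. This is because sections of $\tXinfty/\ssim$ over $T$ are classes of elements of $\tXinfty(T)$, and the relation $\ssim$ is an equivalence relation by \cref{compatibility is an equivalence relation in the cases we care about}. The plan is to reduce this to \cref{Check locally at one point}, which says that compatibility can be checked locally at a single point.

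First I will check that restriction respects $\ssim$, so that the quotient is a well-defined presheaf. If $\phi_1\ssim\phi_2$ over $T$, then the pullbacks along $T'\to T$ are again compatible. This holds because the domain of the pullback is the pullback of the domain (\cref{pullback of a partial morphism of presheaves on a slice category}), and agreement on the intersection of domains is a pointwise statement on objects over $T'$, hence over $T$. This is routine.

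Now fix $\phi_1,\phi_2$ and a cover $S$ as above. For each $T'\in S$, I apply \cref{Long products lemma} to $\phi_1|_{T'},\phi_2|_{T'} \in \tXinfty(T')$ to get a cover $S_{T'}$ of $T'$ such that for every $T''\in S_{T'}$ there is $z\in(\dom\phi_1\cap\dom\phi_2)(T'')$. Since $\phi_1|_{T'}\ssim\phi_2|_{T'}$ and $z$ lies in both domains, we get $\phi_1(z)=\phi_2(z)$. One point needs care here: restriction in $\parhom'$ is pullback, and the domains of the restrictions at $T''$ are just the original domains evaluated at objects over $T''$. So $z$ also lies in $(\dom\phi_1\cap\dom\phi_2)(T'')$ regarded over $T$, and the two values of $\phi_i$ agree.

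By the transitivity axiom of a Grothendieck topology, the composites $T''\to T'\to T$ generate a covering sieve of $T$. On each member of this sieve we have found a point $z$ in both domains on which $\phi_1$ and $\phi_2$ agree. \Cref{Check locally at one point} then gives $\phi_1\ssim\phi_2$ over $T$, which is what we need.

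The main obstacle is purely bookkeeping. One must make sure that \cref{Long products lemma} is applied to the restricted morphisms, which are still in $\tXinfty(T')$ because $\epsilon_\mu$ is a morphism of presheaves. One must also make sure that "points over $T''$" means the same thing whether we view the partial morphisms over $T$ or over $T'$, using \cref{a slice of a slice is a slice}. Apart from that, the argument is a direct combination of the two propositions.
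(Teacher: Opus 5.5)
Your proposal is correct and follows essentially the same route as the paper: the Long products lemma (\cref{Long products lemma}) gives points in both domains locally, compatibility on the cover gives agreement at those points, and \cref{Check locally at one point} yields $\phi_1\ssim\phi_2$. The only difference is cosmetic. You apply the Long products lemma on each member of the cover and compose covers via transitivity, whereas the paper applies it once over $T$ and refines the given cover.
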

\begin{proof}
    Let $\phi,\psi \in \tXinfty(T)$, and suppose there exists a cover $S$ of $T$ such that over every $T' \in S$ we have $\phi \ssim \psi$. By \cref{Long products lemma}, we may refine our cover $S$ so that for each $T'\in S$ there exists $z \in (\dom\phi \cap \dom \psi)(T')$. For every such $z$ we have $\phi(z) = \psi(z)$, so by \cref{Check locally at one point} we conclude $\phi \ssim \psi$.

\end{proof}

\begin{lem}\label{tXinfty/ssim is a presheaf of semigroups}
    Composition of partial morphisms endows $\tXinfty/\ssim$ with the structure of a presheaf of semigroups.
\end{lem}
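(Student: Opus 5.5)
We need to show that composition descends to the quotient: for each $T$, if $\phi\ssim\phi'$ and $\psi\ssim\psi'$ in $\tXinfty(T)$, then $\phi\circ\psi\ssim\phi'\circ\psi'$. We also need the induced operation to be associative and compatible with restriction maps.

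The plan has three steps, in this order.

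\textbf{Step 1 (composition is well defined).} Compatibility respects composition of partial morphisms (\cref{compatibility respects composition}). Applied to $\phi\ssim\phi'$ and $\psi\ssim\psi'$, it gives $\phi\circ\psi\ssim\phi'\circ\psi'$ directly. By \cref{compatibility is an equivalence relation in the cases we care about}, $\ssim$ is an equivalence relation on $\tXinfty(T)$, so the quotient sets $\tXinfty(T)/\ssim$ make sense. By \cref{comments on definition of tX}, $\tXinfty$ is closed under composition. Together these give a well-defined binary operation $[\phi]\cdot[\psi]:=[\phi\circ\psi]$ on each $\tXinfty(T)/\ssim$.

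\textbf{Step 2 (associativity).} Associativity is inherited from \cref{Composition of partial morphisms is associative.}, because the composites themselves are equal, not merely compatible.

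\textbf{Step 3 (presheaf structure).} We must check that restriction along $T'\to T$ preserves $\ssim$ and commutes with composition.
\begin{enumerate}
\item Commuting with composition is \cref{composition of partial morphisms commutes with pullback}, used through the description $\parhom'$ of \cref{parhomequivalence}.
\item For preservation of $\ssim$, suppose $\phi\ssim\phi'$ over $T$. The domain of $j_{T'/T}\inv(\phi)\cap j_{T'/T}\inv(\phi')$ is the pullback of $\dom\phi\cap\dom\phi'$, since pullback preserves limits (\cref{presheaf pullback preserves limits}). Elements of that pullback over $T''\to T'$ are exactly elements of $\dom\phi\cap\dom\phi'$ over the composite $T''\to T$. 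On those elements $\phi$ and $\phi'$ agree, so the restrictions agree as well.
\end{enumerate}
Hence the quotient map $\tXinfty\to\tXinfty/\ssim$ is a morphism of presheaves of semigroups.

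\textbf{Main obstacle.} Nothing here is deep. The only point needing care is confirming that "compatible" is stable under restriction in the $\parhom'$ model, i.e.\ that the domain intersection is computed pointwise over the slice. I would settle this by the pointwise description of subobjects in \cref{pullbakcs of monomorphisms lemma}.
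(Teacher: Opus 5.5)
Your proposal is correct and takes essentially the paper's route: $\tXinfty$ is already a presheaf of semigroups (\cref{comments on definition of tX}), and compatibility respects composition (\cref{compatibility respects composition}). The one addition is your explicit check that restriction along $T'\to T$ preserves $\ssim$, which the paper leaves implicit and which is needed for the quotient presheaf to be well defined.
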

\begin{proof}
    Immediate since the same is true of $\tXinfty$ (\cref{comments on definition of tX}) and composition respects compatibility (\cref{compatibility respects composition}).
\end{proof}

\begin{dfn}\label{iotadefinition}
    Let $(X,\mu)$ be a group chunk. We define $\iota_\mu\colon X \to \tXinfty/\ssim$ to be the composition of $\lambda(\mu)$ (\cref{definition of tX}) with the projection to $\tXinfty/\ssim$, sending sections $x$ to the class of $\lambda_x$.
\end{dfn}

\begin{lem}\label{iota gives an embedding into tXinfty}
    Let $(X,\mu)$ be a group chunk. Then the morphism of presheaves $\iota_\mu\colon X \to \tXinfty/\ssim$ is an embedding of presheaves of partial magmas.
\end{lem}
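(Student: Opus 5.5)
We must show that $\iota_\mu$ is injective on sections and that it respects the partial multiplication. The target $\tXinfty/\ssim$ carries the semigroup structure from \cref{tXinfty/ssim is a presheaf of semigroups}, regarded as a partial magma with total multiplication. That $\iota_\mu$ is a morphism of presheaves is immediate: it is the composition of the morphism $\lambda(\mu)\colon X \to \tXinfty$ from \cref{definition of tX} with the quotient map, and $\ssim$ is compatible with restriction because pullback respects domains and composition (\cref{composition of partial morphisms commutes with pullback}).

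For injectivity, take $x,y \in X(T)$ with $\iota_\mu(x) = \iota_\mu(y)$, that is $\lambda_x \ssim \lambda_y$. A group chunk satisfies the hypotheses of \cref{agreement is an equivalence relation on tX for separated cancellative magmas with large domain}. It is separated and cancellative by definition. For the domain hypothesis, \cref{Long products lemma} applied with $n=2$ to $\lambda_x,\lambda_y \in \tXinfty(T)$ shows that $\dom\lambda_x\cap\dom\lambda_y$ is locally nontrivial, and the same argument covers arbitrary $\alpha_1,\alpha_2\in\tX(T)$. That lemma with $e=1$ then gives $x=y$; equivalently, we may cite \cref{iota is an embedding with moderate domain} and compose with the quotient. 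To argue directly, choose locally a point $z$ in both domains, observe $x(z)=y(z)$, and conclude by \cref{cancellation on covers}.

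For multiplicativity, let $(a,b)\in(\dom\mu)(T)$. We need $\iota_\mu(\mu(a,b)) = \iota_\mu(a)\iota_\mu(b)$, i.e.\ $\lambda_{a(b)} \ssim \lambda_a\circ\lambda_b$. This is exactly the associativity criterion \cref{associativity criterion}, since group chunks are strongly associative and in particular associative; alternatively it is the $r=1$ case of \cref{tXinfty composition compatibility} with $\phi=\lambda_a$ and $z=b$. The domain condition in the definition of a morphism of partial magmas is automatic here because multiplication in the target is total.

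The only step needing any care is checking that the large domain hypothesis of a group chunk really supplies the local nontriviality required in the injectivity argument. \Cref{Long products lemma} handles this directly, so I expect no serious obstacle.
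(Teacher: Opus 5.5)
Your proof is correct and follows the paper's argument. Injectivity comes from \cref{agreement is an equivalence relation on tX for separated cancellative magmas with large domain}; you rightly check its local nontriviality hypothesis via \cref{Long products lemma}, a step the paper leaves implicit. Multiplicativity comes from associativity, $\lambda_{a(b)} \ssim \lambda_a\circ\lambda_b$, together with the fact that the semigroup structure on $\tXinfty/\ssim$ is induced by composition.
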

\begin{proof}
    The fact that $\iota_\mu$ is injective follows immediately from \cref{agreement is an equivalence relation on tX for separated cancellative magmas with large domain}. Moreover $\iota_\mu$ respects the partial binary operation since $\lambda(\mu)$ does (by associativity; \cref{comments on definition of tX}) and since the semigroup structure on $\tXinfty/\ssim$ is induced by composition.
\end{proof}

\begin{dfn}\label{longdaggerdefinition}
    Let $(X,\mu)$ be a group chunk. We denote by $(-)^\dagger$ the involution on the quotient $\tXinfty/\ssim$ induced by the involution $(-)^\dagger$ on $\Xoneinfty$ and \cref{dagger descends}; i.e., for any partial morphism $\phi \in \tXinfty(T)$, fix any $\sigma$ with $\phi =  \epsilon(\sigma)$ we define $\phi^\dagger$ to be the class of $\epsilon(\sigma^\dagger)$ in $(\tXinfty/\ssim)(T)$.
\end{dfn}

\begin{rem}
    Note that the above is well-defined by \cref{dagger descends}.
\end{rem}


\begin{cor}\label{tXinfty/ssim is a group if nonempty}
    Let $(X,\mu)$ be a group chunk. Then for any $T$ such that $X(T)\ne \emptyset$, the semigroup $(\tXinfty/\ssim)(T)$ is in fact a group, with inverse given by $(-)^\dagger$, and the restriction morphisms are group homomorphisms.
\end{cor}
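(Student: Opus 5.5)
We must show that when $X(T)\neq\emptyset$ the semigroup $(\tXinfty/\ssim)(T)$ has an identity and inverses given by $(-)^\dagger$, and that restriction maps are homomorphisms. Restrictions preserve composition by \cref{tXinfty/ssim is a presheaf of semigroups}, and a semigroup homomorphism between groups automatically preserves identities and inverses. So the real content is the group structure on each $(\tXinfty/\ssim)(T)$ with $X(T)\neq\emptyset$.

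For the identity, I would fix $x\in X(T)$ and take the candidate $e=[\lambda_x^\dagger\circ\lambda_x]$. The first step is to show $e$ is independent of $x$ and equals $[\epsilon_\mu(\sigma^\dagger)\circ\epsilon_\mu(\sigma)]$ for every $\sigma\in\Xoneinfty(T)$. By \cref{formaldagger looks like inverse}, $\epsilon_\mu(\sigma^\dagger)\circ\epsilon_\mu(\sigma)(y)=y$ for every $y$ in the domain of $\epsilon_\mu(\sigma)$. So any two such compositions agree at every point of the intersection of their domains. By \cref{Long products lemma} this intersection is locally nontrivial, and \cref{Check locally at one point} then makes them compatible. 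Hence all of them represent one class $e$, and $\phi^\dagger\phi=e$ for every class $\phi$. Applying this to $\sigma^\dagger$, and using $(\sigma^\dagger)^\dagger=\sigma$, gives $\phi\phi^\dagger=e$.

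It remains to show that $e$ is a two-sided identity, i.e.\ $e\circ\phi\ssim\phi\ssim\phi\circ e$ for $\phi=\epsilon_\mu(\sigma)$. Take $e=[\epsilon_\mu(\sigma^\dagger)\circ\epsilon_\mu(\sigma)]$ in the first case and $e=[\lambda_x^\dagger\lambda_x]$ in the second. In each case, every point $y$ of the domain of the composite lies in the domain of $\phi$, and the identity-like factor fixes the relevant point. Concretely, $\lambda_x^\dagger\lambda_x(y)=y$, so $\phi(\lambda_x^\dagger\lambda_x(y))=\phi(y)$; similarly $e(\phi(y))=\phi(y)$, since $e$ acts as the identity wherever it is defined. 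Thus the two sides agree on the intersection of their domains, which is already the definition of $\ssim$, so no further appeal is needed here. As a safeguard, \cref{Check locally at one point} together with \cref{Long products lemma} also applies, since both sides lie in $\tXinfty(T)$. With an identity and inverses in hand, $(\tXinfty/\ssim)(T)$ is a group.

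The main subtlety is where nonemptiness of $X(T)$ is used. It is needed because the identity class must be represented by some element of $\tXinfty(T)$, which requires at least one section $x\in X(T)$. Independence of the chosen representative is exactly what \cref{Check locally at one point} provides. The remaining checks are routine bookkeeping with \cref{formaldagger looks like inverse}.
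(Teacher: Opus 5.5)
Your proof is correct and takes essentially the paper's route: both use \cref{formaldagger looks like inverse} to see that every $\epsilon_\mu(\sigma^\dagger)\circ\epsilon_\mu(\sigma)$ is compatible with $\id_{X_T}$, which gives the identity class and the inverses, and both get the homomorphism claim because a semigroup map between groups is a group map; you additionally spell out the two-sided identity check that the paper leaves implicit. (Your appeal to \cref{Long products lemma} and \cref{Check locally at one point} in your second paragraph is harmless but unnecessary, since agreement on the whole intersection of the domains is already the definition of $\ssim$.)
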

\begin{proof}
    Suppose $X(T) \ne \emptyset$. It follows from \cref{formaldagger looks like inverse} that for any $\phi \in \tXinfty(T)$ we have $\phi^\dagger \circ \phi \ssim \phi\circ\phi^\dagger \ssim \id_{X_T}$. In particular $(\tXinfty/\ssim)(T)$ contains the identity morphism and is closed under inverse. It is therefore a group. The final statement follows since any morphism of semigroups between two groups is a group homomorphism.
\end{proof}

\begin{dfn}\label{Gmu definition}
    Let $(X,\mu)$ be a group chunk. Define the presheaf $G_\mu$ as follows: for any $T \in \scrC$ define
    \begin{equation*}
        G_\mu(T) = \begin{cases}
            (\tXinfty/\ssim)(T) & X(T) \ne \emptyset \\
            1                   & X(T) = \emptyset
        \end{cases}
    \end{equation*}
    and for any morphism $f\colon T' \to T$ in $\scrC$ define
    \begin{equation}\label{Gmu definition equation}
        G_\mu(f) = \begin{cases}
            (\tXinfty/\ssim)(f)                  & X(T) \ne \emptyset \text{ and } X(T') \ne \emptyset \\
            1 \mapsto \id_{(\tXinfty/\ssim)(T')} & X(T) = \emptyset \text{ and } X(T') \ne \emptyset   \\
            1 \mapsto 1                          & X(T) = \emptyset \text{ and } X(T') = \emptyset
        \end{cases}
    \end{equation}
\end{dfn}
\begin{lem}\label{G_mu is a separated presheaf of groups}
    The mapping $G_\mu$ defined above is a presheaf, and moreover it is a separated presheaf of groups.
\end{lem}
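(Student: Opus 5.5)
The plan is to check the presheaf axioms first, then the group structure, then separatedness, in each case splitting according to which of $X(T)$, $X(T')$ are empty. The key preliminary observation is that if $f\colon T'\to T$ and $X(T)\neq\emptyset$, then $X(T')\neq\emptyset$, since restricting any section of $X(T)$ gives a section of $X(T')$. Hence the case $X(T)\ne\emptyset$, $X(T')=\emptyset$ never occurs, and the three cases in \cref{Gmu definition equation} exhaust all morphisms. Note also that if $X(T)=\emptyset$ then $\tXinfty(T)=\emptyset$, since $\Xoneinfty(T)=\emptyset$, so there really is nothing to define there.

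\textbf{Functoriality.} Identities are clear in each case. For a composite $T''\xrightarrow{g}T'\xrightarrow{f}T$ we check $G_\mu(f\circ g)=G_\mu(g)\circ G_\mu(f)$ according to which of $X(T),X(T'),X(T'')$ are nonempty. By the observation above, nonemptiness is upward-closed along the composite, so there are four patterns.
\begin{enumerate}
\item All three nonempty: this is functoriality of $\tXinfty/\ssim$.
\item All three empty: trivial.
\item $X(T)=\emptyset$ and $X(T'),X(T'')\ne\emptyset$: we need the restriction of $\id_{X_{T'}}$ along $g$ to be $\id_{X_{T''}}$. This holds because pullback $j_{T''/T'}\inv$ sends identities to identities, using the $\parhom'$ description via \cref{parhomequivalence}, so the identity classes are compatible.
\item $X(T)=X(T')=\emptyset$ and $X(T'')\ne\emptyset$: both sides send $1$ to $\id$.
\end{enumerate}

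\textbf{Group structure.} For $T$ with $X(T)\ne\emptyset$, \cref{tXinfty/ssim is a group if nonempty} gives a group; otherwise $G_\mu(T)$ is the trivial group. Restriction maps are homomorphisms: in the nonempty-to-nonempty case by \cref{tXinfty/ssim is a group if nonempty}; in the other cases the trivial group maps to the identity element, since $\id_{X_{T'}}$ is the identity of $(\tXinfty/\ssim)(T')$ by the proof of that corollary.

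\textbf{Separatedness.} Let $\phi,\psi\in G_\mu(T)$ agree on a covering sieve $S$ of $T$. If $X(T)=\emptyset$ there is nothing to prove, as $G_\mu(T)$ is a singleton. If $X(T)\ne\emptyset$, then every $T'\in S$ has $X(T')\ne\emptyset$, so $G_\mu$ agrees with $\tXinfty/\ssim$ on $T$ and on all of $S$, with the same restriction maps. Then \cref{the quotient of tXinfty is separated} gives $\phi=\psi$.

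The main subtlety is the mixed functoriality case: we must confirm that $\id_{X_{T'}}$ lies in $\tXinfty(T')$ up to $\ssim$ (it equals the class of $\phi^\dagger\circ\phi$) and that this class is stable under restriction. Both follow from \cref{formaldagger looks like inverse} and the fact that restriction is a semigroup homomorphism between groups.
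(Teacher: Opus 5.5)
Your proof is correct and matches the paper's argument step for step. It uses the same observation that nonemptiness of $X(-)$ propagates along morphisms, and the same case split for functoriality, with the mixed case settled because restriction maps of $\tXinfty/\ssim$ are group homomorphisms and so preserve identities. Separatedness is reduced, as in the paper, to \cref{the quotient of tXinfty is separated}, plus triviality when $X(T)=\emptyset$.

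Your alternative justification in the mixed case, via pullback of $\id_{X_{T'}}$, also works. You just need to read it as saying that the restriction of the class of $\phi^\dagger\circ\phi$ is the class of $(\phi|_{T''})^\dagger\circ\phi|_{T''}$, which is again compatible with $\id_{X_{T''}}$.
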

\begin{proof}
    First note that for any $T' \to T$, if $X(T) \ne \emptyset$ then $X(T') \ne \emptyset$, so \cref{Gmu definition equation} above covers all possible cases.

    To show $G_\mu$ is a presheaf we need to verify that it respects composition. So let $T'' \to T' \to T$ be morphisms in $\scrC$; we need to verify that the maps $G_\mu(T) \to G_\mu(T'')$ and $G_\mu(T) \to G_\mu(T') \to G_\mu(T'')$ are the same.

    If $X(T)$ and $X(T')$ are both empty, the result follows immediately from the definition.

    If $X(T)$ is empty and $X(T')$ is not, then $X(T'')$ is nonempty, and by definition both $G_\mu(T) \to G_\mu(T')$ and $G_\mu(T) \to G_\mu(T'')$ send $1$ to the respective identity elements. The result follows since the restriction maps of $\tXinfty/\ssim$ are group homomorphisms and therefore respect the identity elements.

    If $X(T)$ is nonempty, then so are $X(T')$ and $X(T'')$, and the result follows since $\tXinfty/\ssim$ is a presheaf.

    Next note that $G_\mu(T)$ is a group for every $T$ (by \cref{tXinfty/ssim is a group if nonempty}) and the restriction morphisms $G_{\mu}(T) \to G_{\mu}(T')$ are group homomorphisms. Indeed, this holds when $X(T)$ is nonempty by \cref{tXinfty/ssim is a group if nonempty}, and if $X(T)$ is empty then the statement is trivial. In particular $G_\mu$ is a presheaf of groups.

    The fact that $G_\mu$ is separated follows since $\tXinfty/\ssim$ is separated (\cref{the quotient of tXinfty is separated}) and if $X(T) = \emptyset$ then $G_\mu(T)$ is just one point so we get the separation property at $T$ automatically.
\end{proof}
\begin{notation}
    We will use $(\cdot)$ to denote the group operation on $G_\mu$.
\end{notation}

\begin{lem}\label{extending from tXinfty to G_mu}
    Let $(X,\mu)$ be a group chunk. Let $\phi\colon \tXinfty/\ssim  \to G$ be a morphism of presheaves of semigroups, where $G$ is a presheaf of groups. Then $\phi$ extends uniquely to a morphism of sheaves of groups $\phi'\colon G_\mu \to G$. The morphism $\phi'$ is defined by\begin{equation*}
        \phi'_T = \begin{cases}
            \phi_T              & X(T) \ne \emptyset \\
            1 \mapsto id_{G(T)} & X(T) = \emptyset
        \end{cases}
    \end{equation*}
    for $T \in \scrC$.
\end{lem}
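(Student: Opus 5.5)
The plan is to check, in order, that the formula for $\phi'$ gives a morphism of presheaves, that each component is a group homomorphism, and that any extension must be $\phi'$. Throughout, ``extends'' means $\phi'_T=\phi_T$ whenever $G_\mu(T)=(\tXinfty/\ssim)(T)$, i.e.\ whenever $X(T)\ne\emptyset$. The statement says ``sheaves of groups''; since $G_\mu$ and $G$ are only presheaves here, I read this as ``presheaves of groups''.

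\textbf{Componentwise homomorphisms and uniqueness.} First I would show that for $X(T)\neq\emptyset$ the map $\phi_T$ is a group homomorphism. By \cref{tXinfty/ssim is a group if nonempty}, $(\tXinfty/\ssim)(T)$ is a group, so $\phi_T$ is a semigroup morphism between two groups. Such a map sends the identity $e$ to an idempotent of $G(T)$, because $\phi_T(e)^2=\phi_T(e)$. In a group the only idempotent is the identity, so $\phi_T(e)=\id_{G(T)}$, and inverses are then preserved automatically. When $X(T)=\emptyset$, the map $1\mapsto \id_{G(T)}$ is trivially a homomorphism. Uniqueness is immediate: $\phi'_T$ is forced to equal $\phi_T$ when $X(T)\ne\emptyset$, and any group homomorphism out of the trivial group $G_\mu(T)=1$ must send $1$ to the identity.

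\textbf{Naturality.} Next I would check that $\phi'$ commutes with the restriction maps of \cref{Gmu definition equation} for every $f\colon T'\to T$, using the three cases of \cref{Gmu definition}. If $X(T)$ and $X(T')$ are both nonempty, the square commutes because $\phi$ is a morphism of presheaves. If both are empty, both paths send $1$ to $\id_{G(T')}$, using that $G(f)$ is a group homomorphism. In the mixed case $X(T)=\emptyset\ne X(T')$, going one way gives $G(f)(\id_{G(T)})=\id_{G(T')}$. Going the other way gives $\phi_{T'}(\id_{(\tXinfty/\ssim)(T')})$, which equals $\id_{G(T')}$ by the idempotent argument above. As noted in \cref{G_mu is a separated presheaf of groups}, the remaining case ($X(T)\neq\emptyset$, $X(T')=\emptyset$) cannot occur.

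\textbf{Expected obstacle.} There is no real obstacle. The only point that needs care is the mixed case, which is exactly where one needs that $\phi_{T'}$ preserves identities even though $\phi$ was only assumed to be a morphism of semigroups. The idempotent argument above handles this.
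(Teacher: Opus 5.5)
Your proposal is correct and follows essentially the same route as the paper: uniqueness from the definition of $G_\mu$, naturality checked case by case on whether $X(T)$ and $X(T')$ are empty, and componentwise compatibility with the group law. Your reading of ``sheaves of groups'' as ``presheaves of groups'' is right, and your idempotent argument makes explicit a step the paper leaves implicit: in the mixed case $X(T)=\emptyset\neq X(T')$, you still need $\phi_{T'}$ to send the identity of $(\tXinfty/\ssim)(T')$ to $\id_{G(T')}$.
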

\begin{proof}
    Uniqueness is immediate since $G_\mu(T) = (\tXinfty/\ssim)(T)$ if $X(T)\ne \emptyset$, and if $X(T) = \emptyset$ then any morphism of presheaves of groups must send $1$ to the identity.

    Therefore we just need to verify that the mapping $\phi'$ gives a well-defined morphism of presheaves of groups. First $\phi'$ is a well-defined morphism of presheaves since for any $T' \to T$ if $X(T) \ne \emptyset$ then the corresponding naturality square is the same as for $\phi$, so it commutes by assumption, and if $X(T) = \emptyset$ then following the definitions we see that going both directions around the naturality square sends $1$ to the $\id_{G_{\mu}(T')}$. Lastly, $\phi'$ respects the binary operation since $\phi$ does.
\end{proof}

\begin{rem}
    The embedding of presheaves of partial magmas $\iota_\mu\colon X \to \tXinfty/\ssim$ sending sections $x$ to $\lambda_x$ (see \cref{iota gives an embedding into tXinfty}) induces an embedding of presheaves of partial magmas from $X$ into $G_\mu$. We will also call this $\iota_\mu$.
\end{rem}

\begin{prop}\label{existence of a unit}
    Let $\phi\colon (X,\mu) \to (G,\cdot)$ be a morphism of group chunks, where $(G,\cdot)$ is a presheaf of groups. Then there exists a unique morphism of presheaves of groups $\phi'\colon G_\mu \to G$ which makes the following diagram commute:
    \[\begin{tikzcd}
            X && G \\
            & {G_\mu}
            \arrow["\phi"', from=1-1, to=1-3]
            \arrow["{\iota_\mu}", from=1-1, to=2-2]
            \arrow["\phi'", dashed, from=2-2, to=1-3]
        \end{tikzcd}\]
\end{prop}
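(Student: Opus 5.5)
The plan is to build $\phi'$ first on $\tXinfty/\ssim$ and then extend it to $G_\mu$ using \cref{extending from tXinfty to G_mu}. The natural candidate is forced: a class represented by $\epsilon_\mu(\sigma)$ with $\sigma = ((x_1,e_1),\dots,(x_n,e_n)) \in \Xoneinfty(T)$ should go to $\phi(x_1)^{e_1}\cdots\phi(x_n)^{e_n} \in G(T)$. So first I define $\Phi\colon \Xoneinfty \to G$ by this formula. It is a morphism of presheaves, because restriction in $G$ is a group homomorphism and $\phi$ is a morphism of presheaves. It is also a morphism of semigroups, where $\Xoneinfty$ carries concatenation (\cref{comments on definition of tX}).

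The main step is to show that $\Phi$ descends along the composite $\Xoneinfty \to \tXinfty \to \tXinfty/\ssim$. Concretely, suppose $\sigma_1,\sigma_2 \in \Xoneinfty(T)$ and $\epsilon_\mu(\sigma_1) \ssim \epsilon_\mu(\sigma_2)$; this covers both the case where the two partial morphisms are equal and the case where they are merely compatible. I need to show $\Phi(\sigma_1) = \Phi(\sigma_2)$.
\begin{enumerate}
\item By \cref{Long products lemma}, choose a cover $S$ of $T$ such that for each $T' \in S$ there is $z \in (\dom\epsilon_\mu(\sigma_1)\cap\dom\epsilon_\mu(\sigma_2))(T')$.
\item Compatibility gives $\epsilon_\mu(\sigma_1)(z) = \epsilon_\mu(\sigma_2)(z)$.
\item By \cref{group operations and epsilon_mu}, $\phi$ of this common value equals $\Phi(\sigma_i)|_{T'}\cdot\phi(z)$ for $i=1,2$.
\item Cancelling $\phi(z)$ in the group $G(T')$ gives $\Phi(\sigma_1)|_{T'} = \Phi(\sigma_2)|_{T'}$ for every $T' \in S$.
\end{enumerate}

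This is where I expect the main obstacle. To conclude $\Phi(\sigma_1)=\Phi(\sigma_2)$ over $T$ from agreement on a cover, I need $G$ to be separated. I plan to use that $G$, being a group chunk in the statement's hypothesis, is separated by \cref{Group Chunk definition}. If one only had a presheaf of groups, I would instead first pass to the separated quotient, or use that $G_\mu$ is separated (\cref{G_mu is a separated presheaf of groups}). I will make the separatedness of $G$ explicit at this step.

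With descent in hand, $\Phi$ induces $\bar\phi\colon \tXinfty/\ssim \to G$. This is a morphism of presheaves of semigroups, since concatenation maps to composition under $\epsilon_\mu$ (\cref{comments on definition of tX}) and $\Phi$ is multiplicative. \cref{extending from tXinfty to G_mu} then extends $\bar\phi$ uniquely to $\phi'\colon G_\mu \to G$. The triangle commutes because $\iota_\mu(x) = [\lambda_x] = [\epsilon_\mu((x,1))]$, which maps to $\phi(x)$.

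For uniqueness, let $\phi''$ be any morphism of presheaves of groups with $\phi''\circ\iota_\mu = \phi$. Whenever $X(T)\neq\emptyset$, every element of $G_\mu(T)$ is a product of classes $[\lambda_{x}]^{\pm1}$: indeed $[\lambda_x^\dagger] = [\lambda_x]^{-1}$ by \cref{tXinfty/ssim is a group if nonempty}. So $\phi''$ is determined by $\phi$ on these elements. When $X(T)=\emptyset$, $G_\mu(T)$ is trivial, and any group homomorphism must send it to the identity. Hence $\phi''=\phi'$.
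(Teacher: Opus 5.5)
Your proposal is correct and is essentially the paper's proof: the same product map on $\Xoneinfty$, the same descent argument via \cref{Long products lemma} and \cref{group operations and epsilon_mu}, extension by \cref{extending from tXinfty to G_mu}, and uniqueness from generation by the classes of the $\lambda_x$. Your explicit use of the separatedness of $G$, which comes from $G$ being a group chunk, correctly fills in what the paper leaves implicit in ``we conclude''; however, drop the fallback remark, since neither the separatedness of $G_\mu$ nor passing to a separated quotient of $G$ would give equality in $G(T)$ itself.
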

\begin{proof}
    We first show uniqueness. Indeed for any such $\phi'$ and $x \in X(T)$ we must have $\phi'(\lambda_x) = \phi(x)$. Since $G_\mu(T)$ is generated as a group by the $\lambda_x$, this determines $\phi'$ uniquely.

    Therefore it is enough to show that such a $\phi'$ exists. For this, consider first the induced morphism
    \begin{align*}
        \psi\colon \Xoneinfty \to G, \quad ((x_1,e_1),\dots,(x_n,e_n)) \mapsto \phi(x_1)^{e_1}\cdot \dots \cdot \phi(x_n)^{e_n}.
    \end{align*}
    Observe that $\psi$ is a morphism of presheaves of semigroups with respect to the concatenation operation on $\Xoneinfty$.
    \begin{claim}
        Let $\sigma_1,\sigma_2 \in \Xoneinfty(T)$, and recall $\epsilon_\mu$ from \cref{definition of tX}. Suppose $\epsilon_\mu (\sigma_1) \ssim \epsilon_\mu(\sigma_2)$. Then $\psi(\sigma_1) = \psi(\sigma_2)$.
    \end{claim}
    \begin{proof}
        Let $\alpha_1 = \epsilon_\mu (\sigma_1)$, $\alpha_2 = \epsilon_\mu(\sigma_2)$. By \cref{Long products lemma}, locally on $T$ we may find $z \in \dom \alpha_1 \cap \dom \alpha_2$. Then $\alpha_1(z) = \alpha_2(z)$ by assumption. But by \cref{group operations and epsilon_mu} we have $\phi(\alpha_i(z)) = \psi(\sigma_i)\phi(z)$ for each $i$. Therefore $\psi(\sigma_1)\phi(z) = \psi(\sigma_2)\phi(z)$ and we conclude.
    \end{proof}
    It follows from the claim that $\psi$ descends to a morphism $\phi' \colon \tXinfty/\ssim \to G$. Moreover since $\psi$, $\epsilon_\mu$ and the projection $\tXinfty \to \tXinfty/\ssim$ are morphisms of presheaves of semigroups, it follows that $\phi'$ is a morphism of presheaves of semigroups. We conclude by \cref{extending from tXinfty to G_mu}.
\end{proof}
\begin{dfn}
    We define the functor $\scrG$ from group chunks on $\scrC$ to separated presheaves of groups on $\scrC$ sending a group chunk $(X,\mu)$ to $G_\mu$ and sending any morphism of group chunks $(X,\mu) \to (X',\mu')$ to the unique morphism $G_{\mu}\to G_{\mu'}$ induced by \cref{existence of a unit} and the composition $\iota_{\mu'} \circ \phi$.
\end{dfn}

\begin{prop}\label{left adjoint for group chunks}
    The mapping $\scrG$ is functorial and gives a left adjoint to the forgetful functor from the category of separated presheaves of groups to group chunks on $\scrC$.
\end{prop}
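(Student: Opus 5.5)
The plan is to deduce the adjunction from \cref{universal property characterization of adjoint functors}, applied to the forgetful functor $\scrF$ from separated presheaves of groups to group chunks. First, $\scrF$ must land in group chunks. A separated presheaf of groups $(G,\cdot)$ with its total multiplication is a group chunk, as remarked after \cref{Group Chunk definition}. In particular, every group homomorphism is a morphism of partial magmas, so $\scrF$ is a well-defined functor. Next, $G_\mu$ is a separated presheaf of groups by \cref{G_mu is a separated presheaf of groups}, and $\iota_\mu\colon X \to \scrF(G_\mu)$ is a morphism of group chunks by \cref{iota gives an embedding into tXinfty} together with the remark extending it to $G_\mu$. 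So for each group chunk $(X,\mu)$ we have a candidate unit $\eta_{(X,\mu)} := \iota_\mu$.

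The universal property required by \cref{universal property characterization of adjoint functors} is exactly \cref{existence of a unit}. Given any separated presheaf of groups $G$ and any morphism of group chunks $\phi\colon X\to \scrF(G)$, there is a unique morphism of presheaves of groups $\phi'\colon G_\mu\to G$ with $\phi'\circ\iota_\mu = \phi$. Since separated presheaves of groups form a full subcategory of presheaves of groups, $\phi'$ is a morphism in the target category. \Cref{universal property characterization of adjoint functors} then supplies a unique extension of $(X,\mu)\mapsto G_\mu$ to a functor left adjoint to $\scrF$, with unit $\iota$.

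It remains to check that this induced functor coincides with $\scrG$ as defined, which also gives functoriality of $\scrG$. The functor produced by \cref{universal property characterization of adjoint functors} sends $f\colon (X,\mu)\to(X',\mu')$ to the unique $\scrG(f)\colon G_\mu\to G_{\mu'}$ satisfying $\scrG(f)\circ\iota_\mu = \iota_{\mu'}\circ f$, and this is precisely the definition of $\scrG$ on morphisms. For completeness, functoriality can also be checked directly from uniqueness in \cref{existence of a unit}. Both $\scrG(g)\circ\scrG(f)$ and $\scrG(g\circ f)$ satisfy the factorization condition for $\iota_{\mu''}\circ g\circ f$, so they are equal. Likewise $\id_{G_\mu}$ satisfies the condition for $\iota_\mu$, so $\scrG(\id)=\id$.

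The only step needing care is the check that $\iota_\mu$ respects the partial multiplication, since the rest is formal. This means $\lambda_{\mu(x,y)}\ssim\lambda_x\circ\lambda_y$, which follows from associativity via \cref{associativity criterion} and is already recorded in \cref{iota gives an embedding into tXinfty}. One should also note that over $T$ with $X(T)=\emptyset$ there is nothing to check.
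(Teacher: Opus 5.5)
Your proposal is correct and takes essentially the same route as the paper. Functoriality of $\scrG$ comes from the uniqueness clause of \cref{existence of a unit}, and the adjunction comes from \cref{universal property characterization of adjoint functors} with unit $\iota_\mu$. You also spell out points the paper leaves implicit: that separated presheaves of groups are group chunks, that identities are preserved, and that the functor induced by the universal property agrees with $\scrG$.
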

\begin{proof}
    Given morphisms of group chunks $\phi\colon (X,\mu) \to (X',\mu')$ and $\psi\colon (X',\mu) \to (X'',\mu'')$, then we have $\scrG(\psi) \circ \scrG(\phi) \circ \iota_{\mu} = \iota_{\mu''}\circ \psi \circ \phi$ so by above we have $\scrG(\psi) \circ \scrG(\phi) = \scrG(\psi\circ \phi)$. This shows functoriality.

    The fact that $\scrG$ gives a left adjoint to the forgetful functor follows immediately from the characterization of left adjoints by universal morphisms (\cref{universal property characterization of adjoint functors}).
\end{proof}

\begin{lem}\label{sub group chunks can produce the same group}
    Let $(X',\mu') \hookrightarrow (X,\mu)$ be an embedding of group chunks such that the induced map $\mu|_{\dom\mu'}\colon \dom\mu' \to X$ is surjective. Then the induced morphism of presheaves of groups $\scrG(X') \to \scrG(X)$ is an isomorphism.
\end{lem}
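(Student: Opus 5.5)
Write $\iota\colon X' \hookrightarrow X$ for the embedding and $f = \scrG(\iota)\colon G_{\mu'} \to G_\mu$ for the induced morphism. By construction, $f$ is the unique morphism of presheaves of groups with $f\circ \iota_{\mu'} = \iota_\mu \circ \iota$; concretely, it sends the class of $\lambda'_{x'}$ to the class of $\lambda_{\iota(x')}$. The plan is to prove injectivity and surjectivity of $f_T$ for each $T$. The degenerate objects are handled first. If $X(T) = \emptyset$, then $X'(T) = \emptyset$ and both groups are trivial. The surjectivity hypothesis gives the converse: if $X(T) \neq \emptyset$, then any $x \in X(T)$ is locally a product of elements of $X'$, so $X'$ is locally nontrivial over $T$. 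We still need to deal with the case $X'(T) = \emptyset$ but $X(T)\neq\emptyset$, where $G_{\mu'}(T) = 1$. I would treat this case separately using separatedness of $G_\mu$ (\cref{G_mu is a separated presheaf of groups}), or observe that injectivity is trivial there and surjectivity follows from the local argument below combined with the fact that $G_\mu$ is separated. I expect this bookkeeping to be mildly annoying but not deep.

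For surjectivity at $T$ with $X'(T)\neq\emptyset$: the group $G_\mu(T)$ is generated by the classes $\lambda_x$ with $x\in X(T)$. The hypothesis that $\mu|_{\dom\mu'}\colon \dom\mu' \to X$ is surjective as a morphism of presheaves gives $x = \mu(a,b)$ with $(a,b)\in \dom\mu'(T)$. Then $\iota_\mu$ is a morphism of partial magmas (\cref{iota gives an embedding into tXinfty}), so in $G_\mu$ we have $[\lambda_x] = [\lambda_a]\cdot[\lambda_b] = f([\lambda'_a][\lambda'_b])$. Hence every generator lies in the image. If ``surjective'' should only be read as locally surjective, I would run the same computation on a cover and glue using separatedness of $G_\mu$. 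That version only shows the image is locally everything, so in that case I would take the hypothesis as literal pointwise surjectivity, which is what the statement says.

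For injectivity, the key step, I would take $\sigma \in (X'\times\{\pm1\})^{(\infty)}(T)$ such that $\epsilon_\mu(\iota\sigma) \ssim \id$, and show $\epsilon_{\mu'}(\sigma)\ssim \id$ in $\tXinfty$ for $X'$. By \cref{Check locally at one point} applied to the group chunk $X'$, it suffices to find, locally on $T$, a point $z \in \dom \epsilon_{\mu'}(\sigma)$ with $\epsilon_{\mu'}(\sigma)(z) = z$. \Cref{Long products lemma} for $X'$ gives such a $z\in X'(T')$ in $\dom\epsilon_{\mu'}(\sigma)$ locally. The partial operation $\mu'$ is the restriction of $\mu$, and likewise $\mu'_{13}$ is the restriction of $\mu_{13}$, since both are read off the restricted graph (\cref{associativity and cancellativity preserved by passing to subobjects.}). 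Therefore $\iota(\epsilon_{\mu'}(\sigma)(z)) = \epsilon_\mu(\iota\sigma)(\iota z)$, and $\iota z \in \dom\epsilon_\mu(\iota\sigma)$. Compatibility with the identity then gives $\epsilon_\mu(\iota\sigma)(\iota z) = \iota z$, and injectivity of $\iota$ yields $\epsilon_{\mu'}(\sigma)(z) = z$. Since $f$ is a group homomorphism, trivial kernel gives injectivity. The main point to verify carefully is this compatibility of $\lambda'^{\pm}$ with $\lambda^{\pm}$ on domains; I expect it to be straightforward from the graph description.
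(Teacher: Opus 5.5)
Your argument is correct, and its surjectivity half matches the paper: every generator $[\lambda_c]$ of $G_\mu(T)$ equals $[\lambda_a][\lambda_b]$ with $(a,b)\in\dom\mu'(T)$, by strong associativity. The paper's proof calls the induced map $\widetilde{X'}^{(\infty)}/\ssim\to\tXinfty/\ssim$ an embedding without argument. Your injectivity step supplies what it leaves implicit: take a test point from \cref{Long products lemma} for $X'$, push it forward along $\iota$, and apply \cref{Check locally at one point} for $X'$.

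Three small clean-ups.

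\begin{enumerate}
\item The case $X'(T)=\emptyset\neq X(T)$ cannot occur. Pointwise surjectivity of $\dom\mu'\to X$ already produces $(a,b)\in\dom\mu'(T)\subset (X'\times X')(T)$ over any $x\in X(T)$, so $X'(T)\neq\emptyset$. Your proposed fallback would not have worked anyway. Separatedness of $G_\mu$ gives uniqueness of amalgamations, not existence. Under mere local surjectivity the presheaf map can genuinely fail to be surjective at such $T$, which is why \cref{locally surjective sub group chunks can produce the same sheaf of groups} only concludes after sheafification.

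\item The identity $\id_{X'_T}$ need not lie in $\widetilde{X'}^{(\infty)}(T)$, so \cref{Check locally at one point} cannot literally be applied against it. It is cleaner to argue directly: take $\sigma_1,\sigma_2$ with $\epsilon_\mu(\iota\sigma_1)\ssim\epsilon_\mu(\iota\sigma_2)$. Pick $z$ locally in $\dom\epsilon_{\mu'}(\sigma_1)\cap\dom\epsilon_{\mu'}(\sigma_2)$, compare values after applying $\iota$, and use injectivity of $\iota$. This also spares you from justifying that elements of the identity class of $G_\mu(T)$ are compatible with $\id_{X_T}$.

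\item The transfer $\iota\bigl((\lambda'_x)^{e}(z)\bigr)=\lambda^{e}_{\iota x}(\iota z)$ only needs $\iota$ to be a morphism of partial magmas. It does not need $\mu'$ to be the full restriction of $\mu$ to $X'$. For $e=-1$, read it off the graph: $\mu'(x,b)=z$ gives $\mu(\iota x,\iota b)=\iota z$, hence $\mu_{13}(\iota x,\iota z)=\iota b$.
\end{enumerate}
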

\begin{proof}
    For any $c \in X(T)$, we have $a,b \in X'(T)$ with $\mu'(a,b) = \mu(a,b) = x$. By strong associativity, this implies $\lambda_a \circ \lambda_b \ssim \lambda_c$, and therefore the induced embedding $\widetilde{X'}^{(\infty)}/\ssim \hookrightarrow \tXinfty/\ssim$ is an isomorphism. The result then follows from the construction of $\scrG$.
\end{proof}

%% file: Chapters/Group_Chunks_On_Presheaves/Universal_Morphisms_to_Groups_3.tex
\section{Universal Morphisms to Groups 3} \label{section: third universal maps to groups}
In this section we construct a left adjoint to the forgetful functor from sheaves of groups to group chunks. We show that the presheaf constructed in the previous section is locally generated in 2 steps, and this allows for a simpler description of the sheafification. The 2-step generation is crucial in applications for proving representability, as we will see in later chapters.

\begin{dfn}
    We define the functor $\scrG^+$ from group chunks on $\scrC$ to sheaves of groups on $\scrC$ to be the composition of $\scrG$ with sheafification.
\end{dfn}
\begin{rem}
    Since for any group chunk $(X,\mu)$ we have that $G_\mu$ is separated, we have that the sheafification is given by $G_\mu^+$. In particular there is no conflict here with \cref{from presheaf adjoint to sheaf adjoint}.
\end{rem}

\begin{prop}\label{group chunks to sheaves left adjoint}
    The functor $\scrG^+$ gives a left adjoint to the forgetful functor from sheaves of groups to group chunks.
\end{prop}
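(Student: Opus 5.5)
The plan is to express $\scrG^+$ as a composite of two left adjoints and apply \cref{The adjoint of a composition is the composition of the adjoints}. The forgetful functor from sheaves of groups to group chunks factors as
\[
\Sh\text{-}\mathrm{Grp}(\scrC) \xrightarrow{U_1} \Sep\text{-}\mathrm{Grp}(\scrC) \xrightarrow{U_2} \mathrm{GrpChunk}(\scrC).
\]
Here $U_1$ is the inclusion of sheaves of groups into separated presheaves of groups. $U_2$ is the forgetful functor of \cref{left adjoint for group chunks}, which is well defined because a separated presheaf of groups is a group chunk. By \cref{left adjoint for group chunks}, $U_2$ has left adjoint $\scrG$. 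It therefore suffices to show that $U_1$ has a left adjoint given by $G \mapsto G^+$. The composite left adjoint is then $(-)^+\circ \scrG = \scrG^+$.

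For the adjunction for $U_1$, I will restrict the sheafification adjunction of \cref{sheafification and groups}. That adjunction is between sheaves of groups and presheaves of groups, with left adjoint $(-)^{++}$. I then apply \cref{adjoint functors and full subcategories} with $\scrD'$ the full subcategory of separated presheaves of groups and $\scrC'$ the full subcategory of sheaves of groups, which is all of the target. The required hypotheses are these:
\begin{enumerate}
    \item the forgetful functor sends sheaves of groups into separated presheaves of groups, which holds since sheaves are separated;
    \item sheafification sends separated presheaves of groups into sheaves of groups, which is automatic.
\end{enumerate}
For separated $G$, \cref{properties of the plus operation} gives that $G^+$ is a sheaf. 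Also $G^{++}\simeq G^+$, since $(-)^+$ fixes sheaves. So the restricted left adjoint is naturally isomorphic to $(-)^+$.

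The main subtlety I expect is checking that $G^+$ carries its group structure compatibly and that the identification $G^{++}\simeq G^+$ is natural as a map of presheaves of groups, not merely of presheaves. I would handle this using exactness of $(-)^+$ (\cref{properties of the plus operation}). Since $(-)^+$ preserves finite products, the multiplication $G\times G\to G$ induces $G^+\times G^+\to G^+$ satisfying the group axioms. The canonical isomorphism $G^+\to (G^+)^+$ is natural and respects this structure. A minor point is that a presheaf of groups means a group object in presheaves, so passing to full subcategories is harmless.

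Finally, for concreteness I will note the resulting unit: the composite $X\xrightarrow{\iota_\mu} G_\mu \to G_\mu^+$. Its universal property follows directly from \cref{existence of a unit} combined with the universal property of $(-)^+$ on separated presheaves of groups.
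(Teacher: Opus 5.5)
Your proposal is correct and matches the paper's proof. The paper proves this in one line, ``as in \cref{from presheaf adjoint to sheaf adjoint}'': it composes the adjunction of \cref{left adjoint for group chunks} with sheafification, using that $G_\mu$ is separated so its sheafification is $G_\mu^+$. Your restriction of the sheafification adjunction to separated presheaves of groups via \cref{adjoint functors and full subcategories} spells out the step the paper leaves implicit.
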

\begin{proof}
    Immediate, as in \cref{from presheaf adjoint to sheaf adjoint}.
\end{proof}

Recall the definition of $\epsilon_\mu$ from \cref{definition of tX}.
\begin{dfn}\label{X2 ssim definition}
    Let $(X,\mu)$ be a group chunk. Denote by $\zeta$ the composition
    \begin{align*}
         & X^2 \hookrightarrow \Xoneinfty \xrightarrow{\epsilon_\mu} \tXinfty \to \tXinfty/\ssim; \\
         & (x_1,x_2)  \mapsto \lambda_{x_1} \circ \lambda_{x_2}^\dagger.
    \end{align*}

    We denote by $\ssim$ the equivalence relation on $X^2$ given by the kernel pair of $\zeta$; i.e., for $x_1,x_2,y_1,y_2 \in X(T)$ we have $(x_1,x_2)\ssim (y_1,y_2)$ if $\lambda_{x_1}\circ \lambda_{x_2}^\dagger \ssim \lambda_{y_1}\circ \lambda_{y_2}^\dagger$.
\end{dfn}

\begin{rem}\label{quotient of X2 is separated}
    Observe that $X^2/\ssim$ is a subpresheaf of $\tXinfty/\ssim$ (since by construction, for each $T$, the induced map $X^2/\ssim \to \tXinfty$ is injective) and it follows from \cref{the quotient of tXinfty is separated} that $X^{(2)}/\ssim$ is separated. In particular the sheafification is $(X^{(2)}/\ssim)^+$.


\end{rem}


\begin{lem}
    Let $(X,\mu)$ be a group chunk. Then the map $\zeta\colon X^2 \to \tXinfty/\ssim$ is locally surjective; i.e., for any $\phi \in (\tXinfty/\ssim)(T)$ there exists a cover $S$ of $T$ such that for every $T'\in S$ there exists a pair $(x_1,x_2) \in X(T')^2$ such that (using the notation of \cref{lambda notation}) $\lambda_{x_1} \circ \lambda_{x_2}^\dagger \ssim \phi$.
\end{lem}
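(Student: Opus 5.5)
Given $\phi \in (\tXinfty/\ssim)(T)$, I will pick a representative $\phi = \epsilon_\mu(\sigma) \in \tXinfty(T)$ and produce $x_1, x_2$ locally with $\lambda_{x_1}\circ\lambda_{x_2}^\dagger \ssim \phi$. The idea is the group-theoretic identity $g = (gz)z^{-1}$: choose a local point $z$ in the domain of $\phi$, set $x_2 = z$ and $x_1 = \phi(z)$, and check that $\lambda_{\phi(z)}\circ\lambda_z^\dagger$ is compatible with $\phi$.

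\textbf{Step 1 (choose the point).} By \cref{Long products lemma} applied to $\phi$ (with $n=1$), $\dom\phi$ is locally nontrivial. So there is a covering sieve $S$ of $T$ such that every $T'\in S$ carries some $z\in(\dom\phi)(T')$. Fix such $T'$ and $z$, and put $x_1 = \phi(z)$, $x_2 = z$, both in $X(T')$.

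\textbf{Step 2 (compatibility).} By \cref{tXinfty composition compatibility}, $\lambda_{\phi(z)}\ssim\phi\circ\lambda_z$ over $T'$. Since compatibility respects composition (\cref{compatibility respects composition}), this gives
\begin{align*}
\lambda_{\phi(z)}\circ\lambda_z^\dagger \ssim \phi\circ\lambda_z\circ\lambda_z^\dagger.
\end{align*}
By \cref{formaldagger looks like inverse}, applied to $\sigma=((z,-1))$ whose dagger is $((z,1))$, we have $\lambda_z\circ\lambda_z^\dagger\ssim\id_{X_{T'}}$, and hence $\phi\circ\lambda_z\circ\lambda_z^\dagger \ssim \phi$. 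All three maps lie in $\tXinfty(T')$, because $\phi\circ\lambda_z\circ\lambda_z^\dagger = \epsilon_\mu(\sigma\cdot((z,1),(z,-1)))$. Therefore transitivity (\cref{compatibility is an equivalence relation in the cases we care about}) gives $\lambda_{x_1}\circ\lambda_{x_2}^\dagger\ssim\phi|_{T'}$.

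\textbf{Main obstacle.} The step needing care is $\phi\circ\lambda_z\circ\lambda_z^\dagger\ssim\phi$, since \cref{compatibility respects composition} only gives $\phi\circ\lambda_z\circ\lambda_z^\dagger\ssim\phi\circ\id$. This is fine because $\phi\circ\id=\phi$, but one has to use that $\id_{X_{T'}}$ need not lie in $\tXinfty$ while the lemma on composition holds for arbitrary partial morphisms. If that route proves awkward, I will argue directly with \cref{Check locally at one point}. Using \cref{Long products lemma}, I choose locally a point $w\in\dom(\lambda_{\phi(z)}\circ\lambda_z^\dagger)\cap\dom(\phi\circ\lambda_z\circ\lambda_z^\dagger)\cap\dom\phi$. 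At $w$, the left translation $\lambda_z\circ\lambda_z^\dagger$ fixes $w$ (by \cref{formaldagger looks like inverse}), so all three maps agree there, and \cref{Check locally at one point} gives the compatibilities.
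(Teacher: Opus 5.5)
Your proof is correct. It uses the same key mechanism as the paper but organises the argument differently.

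The paper uses that $(\tXinfty/\ssim)(T)$ is a group generated by the classes of the $\lambda_x$ (\cref{tXinfty/ssim is a group if nonempty}). It then reduces the lemma to showing that, locally, the image of $\zeta$ contains the generators and is closed under products and inverses. Its product step is exactly your trick specialised to $\phi=\lambda_a\circ\lambda_b^\dagger\circ\lambda_c\circ\lambda_d^\dagger$: pick $z$ locally in the domain via \cref{Long products lemma}, apply \cref{tXinfty composition compatibility}, then compose with $\lambda_z^\dagger$.

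Because \cref{tXinfty composition compatibility} already holds for an arbitrary $\phi\in\tXinfty(T)$, you apply the trick once to the whole word $\epsilon_\mu(\sigma)$, using a single cover. This removes several things the paper needs:
\begin{enumerate}
\item the appeal to the group structure of the quotient;
\item the separate generator and inverse steps;
\item the implicit induction on word length, with repeated refinement of covers, that turns closure under binary products into the full statement.
\end{enumerate}

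You also explicitly check that the intermediate map $\phi\circ\lambda_z\circ\lambda_z^\dagger$ lies in $\tXinfty(T')$, so that transitivity of $\ssim$ applies even though $\id_{X_{T'}}$ need not lie there. This is more careful than the paper's corresponding step, which treats $(-)^\dagger$ as an inverse somewhat loosely.

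What the paper's formulation buys is mainly expository. It displays explicitly that the image of $X^2$ is locally closed under the group operations, which is the ``generated in two steps'' phenomenon of Weil's recipe. For the lemma as stated, your route is shorter. Your fallback via \cref{Check locally at one point} is also valid but not needed.
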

\begin{proof}
    For any $T$ with $(\tXinfty/\ssim)(T)\ne \emptyset$, we have by \cref{tXinfty/ssim is a group if nonempty} that $(\tXinfty/\ssim)(T)$ is a group generated by the elements $\lambda_x$ with inverse given by $(-)^\dagger$. So it suffices to show that, locally, the image of $\zeta$ contains the $\lambda_x$ and is closed under composition and inverse. More precisely, we need:
    \begin{enumerate}
        \item For any $a \in X(T)$, there exists a cover $S$ of $T$ such that for every $T' \in S$ there exists $(x_1,x_2) \in X^2$ with $\lambda_{x_1}\circ \lambda_{x_2}^\dagger \ssim \lambda_x$
        \item For any $a,b,c,d \in X(T)$, there exists a cover $S$ of $T$ such that for every $T' \in S$ there exists $(x_1,x_2) \in X^2$ with $\lambda_{x_1}\circ \lambda_{x_2}^\dagger \ssim \lambda_a \circ \lambda_b^\dagger \circ \lambda_c \circ \lambda_d^\dagger$
        \item For any $a,b\in X(T)$, there exists a cover $S$ of $T$ such that for every $T' \in S$ there exists $(x_1,x_2) \in X^2$ with $\lambda_{x_1}\circ \lambda_{x_2}^\dagger \ssim (\lambda_a \circ \lambda_b^\dagger)\inv$.
    \end{enumerate}

    First let us show that the image is locally closed under composition. So take any $a,b,c,d \in X(T)$. By \cref{Long products lemma}, we may find a cover $S$ of $T$ so that for every $T' \in S$ there exists $z \in \dom\lambda_a\circ \lambda_{b}^\dagger \circ \lambda_{c}\circ \lambda_d^\dagger$. Then by \cref{tXinfty composition compatibility} we have
    \begin{align*}
        \lambda_{a(b^\dagger(c(d^\dagger(z))))} \ssim\lambda_a\circ \lambda_b^\dagger \circ \lambda_c\circ \lambda_d^\dagger \circ \lambda_z.
    \end{align*}
    Since compatibility respects composition (\cref{compatibility respects composition}), and $(-)^\dagger$ gives the inverse in $\tXinfty$, it follows that
    \begin{align*}
        \lambda_{a(b^\dagger(c(d^\dagger(z))))}\circ \lambda_{z}^\dagger & \ssim \lambda_a\circ \lambda_b^\dagger \circ \lambda_c  \circ \lambda_d \circ \lambda_z\circ   \lambda_z^\dagger \\
                                                                         & \ssim \lambda_a\circ \lambda_b^\dagger \circ \lambda_c \circ \lambda_d^\dagger
    \end{align*}
    so setting $x_1 = a(b^\dagger(c(d^\dagger(z))))$ and $x_2 = z$ we conclude.

    The fact that the image is closed under inverse is clear since by \cref{dagger maps to inverse} for any $a,b \in X(T)$ we have $(\lambda_a\circ \lambda_b^\dagger)\inv = \lambda_b \circ \lambda_a^\dagger$.


    Finally we show that the image locally contains the generators; indeed for any $a\in X(T)$, arguing as above we may take $(x_1,x_2) = (a(z),z)$ for $z$ chosen locally on $T$.
\end{proof}

\begin{dfn}\label{G_mu prime definintion}
    Let $(X,\mu)$ be a group chunk. Define the subpresheaf $G'_\mu$ of $G_\mu$ by
    \begin{equation}
        G'_\mu(T) = \begin{cases}
            (X^2/\ssim)(T) & X(T) \ne \emptyset, \\
            1              & X(T) = \emptyset,
        \end{cases}
    \end{equation}
    where $\ssim$ is the equivalence relation defined in \cref{X2 ssim definition}.
\end{dfn}

\begin{rem}\label{local nontrivial X sheafification remark}
    \begin{enumerate}
        \item Observe that $G'_\mu$ is separated since $G_\mu$ is (\cref{G_mu is a separated presheaf of groups}). In particular the sheafification is $(G'_\mu)^+$.

        \item Observe that if $X(T)$ is locally nontrivial then $(G'_\mu)^+ = (X^2/\ssim)^+$.
    \end{enumerate}
\end{rem}

\begin{cor}\label{g_mu prime vs g_mu sheafification}
    Let $(X,\mu)$ be a group chunk. Then the embedding $G'_\mu \hookrightarrow G_\mu$ is locally surjective, and therefore induces an isomorphism of sheaves $(G'_\mu)^+ \simeq G_\mu^+$.
\end{cor}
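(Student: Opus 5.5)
The plan is to reduce to the immediately preceding lemma, which says that $\zeta\colon X^2 \to \tXinfty/\ssim$ is locally surjective, and then invoke \cref{locally surjective morphism of presheaves leades to isomorphism of sheaves}. That proposition is stated for $(-)^{++}$, but both $G'_\mu$ and $G_\mu$ are separated (\cref{local nontrivial X sheafification remark} and \cref{G_mu is a separated presheaf of groups}). Hence their sheafifications are $(G'_\mu)^+$ and $G_\mu^+$, and the $(-)^{++}$ statement specialises to the one we want.

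First I will check that $G'_\mu \hookrightarrow G_\mu$ really is a subpresheaf. For $T$ with $X(T)\neq\emptyset$, the inclusion $(X^2/\ssim)(T)\hookrightarrow(\tXinfty/\ssim)(T)$ is injective by \cref{quotient of X2 is separated}. For $T$ with $X(T)=\emptyset$, both sides are the point $1$. Compatibility with restriction along $T'\to T$ is clear when $X(T)\ne\emptyset$. When $X(T)=\emptyset$ and $X(T')\ne\emptyset$, the element $1$ restricts to $\id_{X_{T'}}$, and I need this to lie in $(X^2/\ssim)(T')$. This holds because $X(T')\neq\emptyset$: pick $x\in X(T')$, and then $\lambda_x\circ\lambda_x^\dagger\ssim\id$ by \cref{formaldagger looks like inverse}, as used in \cref{tXinfty/ssim is a group if nonempty}.

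Next I will prove local surjectivity. Take $g\in G_\mu(T)$.
\begin{itemize}
\item If $X(T)=\emptyset$, then $g=1$, and $1\in G'_\mu(T)$ by definition, so the trivial cover suffices.
\item If $X(T)\neq\emptyset$, then $g\in(\tXinfty/\ssim)(T)$. The previous lemma gives a cover $S$ of $T$ such that for each $T'\in S$ there is $(x_1,x_2)\in X(T')^2$ with $\zeta(x_1,x_2)=g|_{T'}$. Since $X(T')\supset$ the restriction of $X(T)\neq\emptyset$, we have $G'_\mu(T')=(X^2/\ssim)(T')$, and so $g|_{T'}\in G'_\mu(T')$.
\end{itemize}

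Finally, \cref{locally surjective morphism of presheaves leades to isomorphism of sheaves} yields the isomorphism of sheaves. The only mildly delicate point is the bookkeeping at objects with $X(T)=\emptyset$, namely whether $G'_\mu$ is closed under restriction there. The argument above handles it, so I do not expect a genuine obstacle.
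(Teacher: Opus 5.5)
Your proposal is correct and follows the paper's proof. Local surjectivity comes directly from the preceding lemma on $\zeta$, and the isomorphism then follows from the fact that a locally surjective embedding of presheaves induces an isomorphism of sheafifications. Your extra bookkeeping fills in details the paper leaves implicit: that $G'_\mu$ is closed under restriction at objects with $X(T)=\emptyset$, via $\zeta(x,x)$ representing the identity, and that separatedness lets you replace $(-)^{++}$ by $(-)^{+}$.
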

\begin{proof}
    Local surjectivity follows from the previous result, and the second statement then follows from \cref{locally surjective morphism of presheaves leades to isomorphism of sheaves}.
\end{proof}




\begin{prop}
    \label{simplified group chunks to sheaves left adjoint}
    The functor $(\scrG')^+$ from group chunks to sheaves of groups which sends a group $(X,\mu)$ to the sheaf of groups defined in \cref{G_mu prime definintion} gives a left adjoint to the forgetful functor from sheaves of groups to group chunks. If $(X,\mu)$ is locally nontrivial, then $(\scrG')^+ = (X^2/\ssim)^+$.
\end{prop}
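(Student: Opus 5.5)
The plan is to reduce the statement to \cref{group chunks to sheaves left adjoint} by means of the isomorphism of \cref{g_mu prime vs g_mu sheafification}. First I would check that $(\scrG')^+$ is a functor on group chunks. For a morphism of group chunks $\phi\colon (X,\mu)\to (X',\mu')$, the induced morphism $\scrG(\phi)\colon G_\mu \to G_{\mu'}$ sends $\lambda_{x_1}\circ\lambda_{x_2}^\dagger$ to $\lambda_{\phi(x_1)}\circ\lambda_{\phi(x_2)}^\dagger$. This follows from the construction in \cref{existence of a unit}, where $\psi$ sends a sequence to $\phi(x_1)\phi(x_2)\inv$, together with \cref{dagger maps to inverse}. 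Hence $\scrG(\phi)$ restricts to a morphism $G'_\mu \to G'_{\mu'}$. Over $T$ with $X(T)=\emptyset$ there is nothing to check, since $G'_\mu(T)=1$ maps to the identity. Sheafifying then makes $(\scrG')^+$ a functor, and the inclusions $G'_\mu\hookrightarrow G_\mu$ give a natural transformation $(\scrG')^+ \to \scrG^+$, because the sheafification of a natural transformation is natural.

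Next I would show that this natural transformation is a natural isomorphism. Its component at $(X,\mu)$ is exactly the isomorphism $(G'_\mu)^+\simeq G_\mu^+$ of \cref{g_mu prime vs g_mu sheafification}; by \cref{local nontrivial X sheafification remark} both sheafifications are given by the single $(-)^+$. A functor naturally isomorphic to a left adjoint is itself a left adjoint: the natural bijections $\Hom(\scrG^+(X),G)\simeq \Hom(X,G)$ coming from \cref{group chunks to sheaves left adjoint} transport along the isomorphism. This gives the first claim. The unit of the resulting adjunction is $X\to G'_\mu \to (G'_\mu)^+$, $x\mapsto$ the class of $(x(z),z)$ taken locally.

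For the last claim, assume $X$ is locally nontrivial. The presheaf $X^2/\ssim$ embeds in $G'_\mu$, and over each $T$ with $X(T)\neq\emptyset$ the two agree. If $X(T)=\emptyset$, then $(X^2/\ssim)(T)=\emptyset$ while $G'_\mu(T)=1$. By local nontriviality there is a cover of $T$ on whose members $X$ is nonempty; restricted to those members, the element $1$ goes to the identity class, which is the class of $(z,z)$. So the inclusion $X^2/\ssim\hookrightarrow G'_\mu$ is locally surjective. \Cref{locally surjective morphism of presheaves leades to isomorphism of sheaves} then gives $(X^2/\ssim)^{++}\simeq (G'_\mu)^{++}$. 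Since both presheaves are separated (\cref{quotient of X2 is separated}, \cref{local nontrivial X sheafification remark}), these sheafifications are their $(-)^+$.

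I expect the functoriality check in the first step to be the main, though mild, obstacle. One must make sure the image of $G'_\mu$ lands in $G'_{\mu'}$ and handle the padded identity values when $X(T)=\emptyset$ but $X'(T)\neq\emptyset$. In that case $1$ maps to $\id = \lambda_{z}\circ\lambda_{z}^\dagger$, which lies in $X'^2/\ssim$, so the case is fine. If it proves awkward, I would avoid it by defining $(\scrG')^+$ on morphisms through the isomorphism with $\scrG^+$ directly.
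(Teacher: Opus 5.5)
Your proposal is correct and follows the paper's route: you transport the adjunction of \cref{group chunks to sheaves left adjoint} along the isomorphism $(G'_\mu)^+\simeq G_\mu^+$ of \cref{g_mu prime vs g_mu sheafification}, and you use local nontriviality for the identification with $(X^2/\ssim)^+$, spelling out the functoriality check and the local-surjectivity argument that the paper leaves implicit in \cref{local nontrivial X sheafification remark}. One harmless slip is that the unit does not factor through $G'_\mu$ at the presheaf level, because the class of $\lambda_x$ lies in $G'_\mu(T)$ only after passing to a cover on which some $z \in \dom\lambda_x$ exists; it should be read as $X \to G_\mu \to G_\mu^+ \simeq (G'_\mu)^+$, and nothing in your argument depends on this.
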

\begin{proof}
    Immediate from \cref{group chunks to sheaves left adjoint}, \cref{g_mu prime vs g_mu sheafification}, \cref{adjoint functors and full subcategories} and \cref{local nontrivial X sheafification remark}.
\end{proof}

\begin{lem}\label{locally surjective sub group chunks can produce the same sheaf of groups}
    Let $(X',\mu') \hookrightarrow (X,\mu)$ be an embedding of group chunks such that the induced map $\mu|_{\dom \mu'}\colon \dom\mu' \to X$ is locally surjective. Then the induced morphism of presheaves of groups $\scrG(X')^+ \to \scrG(X)^+$ is an isomorphism.
\end{lem}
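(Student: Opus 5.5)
The strategy is to imitate the proof of \cref{sub group chunks can produce the same group}, but locally. Since sheafification is exact, it suffices to show two things about the induced morphism of separated presheaves of groups $\scrG(X') = G_{\mu'} \to G_\mu = \scrG(X)$: it is a monomorphism, and it is locally surjective. \Cref{locally surjective morphism of presheaves leades to isomorphism of sheaves} then gives the isomorphism $\scrG(X')^+ \to \scrG(X)^+$. First I will describe the map concretely. For $T$ with $X'(T)\neq\emptyset$, a class of $\epsilon_{\mu'}(\sigma)$ with $\sigma\in (X'\times\{\pm1\})^{(\infty)}(T)$ is sent to the class of $\epsilon_\mu(\sigma)$. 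This description comes from \cref{existence of a unit} applied to $\iota_\mu\circ(X'\hookrightarrow X)$, using \cref{group operations and epsilon_mu}. If $X'(T)=\emptyset$, the source is trivial and nothing needs checking there.

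\textbf{Injectivity.} Suppose $\epsilon_\mu(\sigma_1)\ssim\epsilon_\mu(\sigma_2)$ as partial endomorphisms of $X_T$, with $\sigma_i$ built from elements of $X'$. By \cref{Long products lemma} applied to the group chunk $X'$, we can find $z\in X'(T')$ locally on $T$ lying in $\dom\epsilon_{\mu'}(\sigma_1)\cap\dom\epsilon_{\mu'}(\sigma_2)$. Because $\mu'$ is the restriction of $\mu$ (its graph is $\Gamma_\mu\cap X'^3$), the translations $\lambda^{\pm}$ for $\mu'$ agree with those for $\mu$ on $X'$. Hence $\epsilon_{\mu'}(\sigma_i)(z)=\epsilon_\mu(\sigma_i)(z)$, and these values coincide. \Cref{Check locally at one point} for $X'$ then yields $\epsilon_{\mu'}(\sigma_1)\ssim\epsilon_{\mu'}(\sigma_2)$. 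One subtlety must be handled here: $\mu'$-domains can be strictly smaller than $\mu$-domains intersected with $X'$ only if $\dom\mu'\neq\dom\mu\cap X'^2$. I will take the embedding of group chunks to mean that $\Gamma_{\mu'}=\Gamma_\mu\cap X'^3$, as in \cref{associativity and cancellativity preserved by passing to subobjects.}. Even without that assumption, agreement at a point in both domains is all that is needed.

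\textbf{Local surjectivity.} By the preceding lemma, $\zeta$ is locally surjective, so $G_\mu$ is locally generated by elements $\lambda_c$ with $c\in X(T)$. It is therefore enough to show that each $\lambda_c$ is locally in the image. By hypothesis there is a cover $S$ of $T$ such that for each $T'\in S$ there are $a,b\in X'(T')$ with $(a,b)\in\dom\mu'$ and $\mu(a,b)=c|_{T'}$. By strong associativity, $\lambda_a\circ\lambda_b\ssim\lambda_{c}$ over $T'$, so the class of $\lambda_c|_{T'}$ is the image of the class of $\lambda_a\circ\lambda_b\in G_{\mu'}(T')$. It follows that the image subpresheaf contains every generator locally. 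Because the image is closed under products and inverses (the map is a group homomorphism), this gives local surjectivity onto $G_\mu$: any element is locally a finite word in generators, and we refine the covers for each letter.

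I expect the main obstacle to be the bookkeeping in the injectivity step. The domains of $\epsilon_{\mu'}(\sigma)$ and $\epsilon_\mu(\sigma)$ live on different presheaves ($X'_T$ versus $X_T$), so the point $z$ has to be chosen in $X'$ and shown to lie in both domains. The other delicate point is the case where $X'(T)=\emptyset$ but $X(T)\neq\emptyset$. There the map $1\mapsto\id$ is trivially injective, and surjectivity must come from covers on which $X'$ is nonempty. Those covers are provided by the local surjectivity hypothesis, since such a cover produces points $a\in X'(T')$.
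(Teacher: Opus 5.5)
Your proposal is correct and follows essentially the paper's route: locally write $c=\mu(a,b)$ with $(a,b)\in\dom\mu'$, use strong associativity to get $\lambda_c\ssim\lambda_a\circ\lambda_b$, deduce that the induced map is locally surjective, and conclude by \cref{locally surjective morphism of presheaves leades to isomorphism of sheaves}. Your injectivity argument, via \cref{Long products lemma} and \cref{Check locally at one point} and the fact that the $\mu'$-translations are restrictions of the $\mu$-translations, supplies a justification the paper omits when it simply calls the induced map an embedding.
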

\begin{proof}
    Arguing as in \cref{sub group chunks can produce the same group}, we see that the induced embedding $\widetilde{X'}^{(infty)}/\ssim \hookrightarrow \tXinfty/\ssim$ is locally surjective. The result then follows from \cref{locally surjective morphism of presheaves leades to isomorphism of sheaves}.
\end{proof}

%% file: Chapters/First-order_structures/Main_First_order_structures.tex
\chapter{Model Theory}\label{chapter: model theory}
In this section we show that model-theoretic quotients can be realized as sheaf-theoretic quotients, and use this to show how the group chunk result of Hrushovski and Rideau-Kikuchi (\cref{statement of model theoretic group chunk theorem}) can be viewed as a special case of the construction of the previous chapter.

We restrict ourselves to considering quotients of definable/type-definable sets by definable equivalence relations, since this is what we need for our intended application. However statements analogous to \cref{equivalence of interpretable sets and presheaf quotients} should hold for other kinds of model-theoretic quotients; for example quotients of type-definable sets by type-definable equivalence relations (hyperimaginaries) or quotients of pro-definable and $\infty$-definable sets.

In \cref{section: definable sets and model-theoretic quotients}, we begin with a review of definable sets, the category $\Def(\calT)$, and model-theoretic quotients - i.e., elimination of imaginaries. In \cref{section: interpretable sets} we give a careful review of Shelah's $(-)^{eq}$ construction in the context of a multi-sorted structure. The definitions are extended to type-definable and interpretable sets in \cref{section: type-definable and interpretable sets}. In \cref{section: Sheaf Quotients and type-interpretable sets}, we define the Grothendieck topology on the category of (type-)definable sets, and show that (type-)interpretable sets correspond to sheaf quotients. In \cref{section: group chunks on definable types}, we show that the model-theoretic group chunk of Hrushovski and Rideau-Kikuchi (\cite[Proposition~3.15, Proposition~3.16]{Hrushovski2019}) can be seen as an instance of the group chunk on presheaves we defined in the previous chapter.


For this chapter we fix a complete first-order theory $\mathcal{T}$ in a (possibly many-sorted) language $\calL$. We assume $\calL$ contains equality symbols $=_S$ for each sort-symbol $S$ of $\calL$.

\input{Chapters/First-order_structures/Definable_sets_and_Model-theoretic_quotients.tex}
\input{Chapters/First-order_structures/Interpretable_sets.tex}
\input{Chapters/First-order_structures/Type-Definable_and_Type-Interpretable_Sets.tex}
\input{Chapters/First-order_structures/Sheaf_quotients_and_type-interpretable_sets}

\input{Chapters/First-order_structures/Group_Chunks_on_Definable_types.tex}

%% file: Chapters/First-order_structures/Definable_sets_and_Model-theoretic_quotients.tex
\section{Definable Sets and Model-Theoretic Quotients}\label{section: definable sets and model-theoretic quotients}

We refer to any of the standard references \cite{Tent-Ziegler}, \cite{marker2013model} for definitions of the basic notions of languages, theories, models, definable sets etc. The interested reader can refer to \cite{Mac_Lane_Moerdijk_1994} for a more general construction of models valued in topoi.


Let us set down some conventions.
\begin{notation}
    Let $M$ be an $\calL$-structure.
    \begin{enumerate}
        \item For each sort symbol $S$ of $\calL$ we denote by $S(M)$ the corresponding sort of $M$.
        \item If $\phi$ is a relation, constant symbol, or formula from $\calL$, we denote by $\phi(M)$ the interpretation or the set realizations of $\phi$ in $M$; this will be a subset of a product of sorts of $M$. Note in particular that this applies if $\phi$ is a formula with no free variables; in this case $\phi(M)$ is either $1$ (i.e., $\{\emptyset\}$) for true or $0$ (i.e., $\emptyset$) for false.
        \item If $f$ is a function symbol of $\calL$, we denote by $\Gamma_f(M)$ the graph of the interpretation of $f$ in $M$.
        \item By a subset $A$ of $M$ we mean a subset of the union of the sorts of $M$.
    \end{enumerate}
\end{notation}

\begin{dfn}[Definable sets]
    Let $M$ be a model of $\calT$, and $A$ a subset of $M$.
    \begin{enumerate}
        \item   An \emph{$A$-definable set} of $M$ is a subset $X\subset M_1\times \dots \times M_k$, where $M_1,\dots,M_k$ are sorts of the structure, defined by an $\calL$-formula with parameters in $A$.
        \item   A \emph{definable set} of $M$ is an $A'$-definable set of $M$ for some subset $A'\subset M$.
        \item   An $A$-\emph{definable subset} of an $A$-definable set $X$ of $M$ is an $A$-definable set $Y$ of $M$ such that $Y$ is a subset of $X$.
        \item   An $A$-\emph{definable equivalence relation} on an $A$-definable set $X$ of $M$ is an $A$-definable subset $E$ of $X\times X$ such that $E$ is an equivalence relation on $X$.
        \item   An $A$-\emph{definable map} between $A$-definable sets $X$ and $Y$ of $M$ is an $A$-definable subset of $X\times Y$ which is a function from $X$ to $Y$.
    \end{enumerate}
\end{dfn}
\begin{rem}
    There are alternative interpretations of the notion of a definable set; for example one may wish to consider a definable set as a functor from structures to sets. In the end, however, both constructions yield equivalent categories, so for most purposes the differences are unimportant.
\end{rem}

\begin{notation}
    Following standard conventions, we write $0$-definable in place of $\emptyset$-definable.
\end{notation}

\begin{rem}
    From a category-theoretic perspective it is important to note that a definable set comes with a given embedding into some $M_1\times \dots \times M_k$; in particular they are subobjects in the category of sets (see \cref{section: subobjects}). In applications of model theory to algebraic geometry, for example, it is useful to bear in mind that a definable set in a model of ACF corresponds to a finite union of varieties with a given embedding into affine space. In the more general setup of \cite{Mac_Lane_Moerdijk_1994}, a definable object is a pair $(A, X)$, where $X$ is a list of sorts and $A$ is a subobject of the corresponding product.
\end{rem}


\begin{dfn}[The Category of $0$-Definable Sets]
    Denote by $\Def(\calT)$ the category of $0$-definable sets of $M$, for some model $M$ of $\calT$, and $0$-definable maps between them.
\end{dfn}
\begin{lem}\label{DefT is independent of the choice of model}
    The category $\Def(\calT)$ is independent of the choice of $M$ up to equivalence.
\end{lem}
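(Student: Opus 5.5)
The plan is to produce, for two models $M, N$ of $\calT$, an equivalence $\Def_M(\calT) \to \Def_N(\calT)$. The functor should be \emph{evaluation of the defining formula}: a $0$-definable set of $M$ has the form $\phi(M)$ for an $\calL$-formula $\phi$ without parameters, and we send it to $\phi(N)$. The only tool needed is completeness of $\calT$: every $\calL$-sentence true in $M$ is in $\calT$, hence true in $N$, and conversely.

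The steps are as follows.
\begin{enumerate}
    \item \textbf{Well-definedness on objects.} A definable set comes with its ambient product of sorts. If $\phi(M) = \psi(M)$ as subsets of the same product $M_1\times\dots\times M_k$, then $M \models \forall \bar x\,(\phi(\bar x)\leftrightarrow\psi(\bar x))$. This sentence lies in $\calT$, so $\phi(N) = \psi(N)$. Hence $F(\phi(M)) := \phi(N)$ depends only on the set, together with its ambient sorts.
    \item \textbf{Morphisms.} A $0$-definable map $f\colon \phi(M) \to \psi(M)$ is a $0$-definable subset $\chi(M) \subset \phi(M)\times\psi(M)$ which is the graph of a function. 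The statements "$\chi \subset \phi\times\psi$" and "for every $\bar x$ with $\phi(\bar x)$ there is exactly one $\bar y$ with $\chi(\bar x,\bar y)$" are first-order sentences. They therefore transfer to $N$, so $\chi(N)$ is the graph of a $0$-definable map $\phi(N)\to\psi(N)$. By step 1 this map does not depend on the choice of $\chi$.
    \item \textbf{Functoriality.} Identities correspond to the formula $\bar x = \bar y$, built from the equality symbols $=_S$, which holds in both models. Composition corresponds to the formula $\exists \bar y\,(\chi_1(\bar x,\bar y)\wedge\chi_2(\bar y,\bar z))$, which is interpreted uniformly in both models. So $F$ is a functor.
    \item \textbf{Equivalence.} The symmetric construction gives $G\colon \Def_N(\calT)\to\Def_M(\calT)$. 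By construction $G\circ F$ and $F\circ G$ are the identity on objects and on morphisms, so $F$ is in fact an isomorphism of categories, and in particular an equivalence.
\end{enumerate}

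The only point requiring care is step 1. We must check that the assignment does not depend on the choice of defining formula, and that the ambient product of sorts is retained, since definable sets are subobjects rather than bare sets. Without tracking the ambient sorts, the empty set could not be identified across different ambient products. Once the ambient sorts are recorded, every remaining check reduces to the transfer of a single sentence between models of the complete theory $\calT$.
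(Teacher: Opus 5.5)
Your proposal is correct and is essentially the paper's argument. Both use the same functor $\phi(M)\mapsto\phi(N)$, and both show it is well-defined and functorial because every relevant property is witnessed by a sentence of the complete theory $\calT$. The only difference is cosmetic: the paper checks that $F$ is full, faithful and surjective on objects, whereas you build the symmetric functor and note that it is a strict inverse, so $F$ is an isomorphism of categories. One small fix: for the identity, use the formula $\phi(\bar x)\wedge\bar x=\bar y$, so that the graph lies in $\phi(M)\times\phi(M)$.
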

\begin{proof}
    Indeed, given models $M$ and $N$ of $\calT$, there is a functor $F$ from $0$-definable sets of $M$ to $0$-definable sets of $N$ defined as follows: for any formula $\psi$ one sends $\psi(M)$ to $\psi(N)$. This map $F$ is well-defined; indeed if $\psi$ and $\phi$ are formulas and $\psi(M) = \phi(M)$ then this is witnessed by a sentence in $\calT$, so $\psi(N) = \phi(N)$.
    For similar reasons (since the relevant properties are witnessed by sentences of $\calT$) we have that
    \begin{enumerate}
        \item $F$ sends $0$-definable maps to $0$-definable maps.
        \item $F$ respects composition of $0$-definable maps and sends the identity to the identity - in particular $F$ is a well-defined functor.
        \item $F$ is full and faithful.
    \end{enumerate}
    Since $F$ is surjective by definition, this shows that $F$ is an equivalence of categories.
\end{proof}
\begin{rem}\label{models give embeddings of definable sets}
    Observe that, by above, for every model $M$ of $T$ we get an embedding of $\Def(\calT)$ into the category of sets. Different models give different embeddings. In fact one can define a model of $\calT$ to be an embedding of $\Def(\calT)$ into the category of sets which satisfies certain good properties; see \cite{mrg_categorical_logic}.
\end{rem}



\begin{dfn}[Model-Theoretic Quotients]\label{model-theoretic quotients definition}
    Let $M$ be a model of $\calT$, $X$ be a $0$-definable set of $M$ and $E \subset X^2$ be a $0$-definable equivalence relation on $X$. We define a \emph{model-theoretic quotient} of $X$ by $E$ in $M$ to be a $0$-definable surjection $\rho_E\colon X \twoheadrightarrow D$ such that the classes of $E$ are given by the fibers of $\rho_E$.
\end{dfn}

\begin{lem}\label{model-theoretic quotients are category-theoretic quotients}
    Model-theoretic quotients are quotients in the sense of category theory; i.e., if $\rho_E\colon X \to D$ is a model-theoretic quotient of $X$ by $E$, then we have a co-equalizer diagram in $\Def(\calT)$:
    \[\begin{tikzcd}
            E \arrow[r, shift left, "p_1"] \arrow[r,"p_2"'] & X \arrow[r, "\rho_E"] & D
        \end{tikzcd}\]
    where $p_1$, $p_2$ are the two projections of $E$ to $X$.
\end{lem}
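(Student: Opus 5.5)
We must show that whenever $Y$ is a $0$-definable set and $f\colon X\to Y$ is a $0$-definable map with $f\circ p_1 = f\circ p_2$, there is a unique $0$-definable map $g\colon D\to Y$ with $g\circ\rho_E = f$. The plan is to work inside a fixed model $M$, which is harmless since $\Def(\calT)$ does not depend on the choice of model up to equivalence (\cref{DefT is independent of the choice of model}), and to define $g$ by an explicit formula. Before that, I will check that the diagram commutes: $\rho_E\circ p_1 = \rho_E\circ p_2$ because the classes of $E$ are exactly the fibers of $\rho_E$.

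Uniqueness is immediate. Since $\rho_E$ is surjective on $M$-points, any two maps $g,g'$ with $g\circ\rho_E = g'\circ\rho_E$ agree as functions $D(M)\to Y(M)$. A $0$-definable map is by definition a definable subset of $D\times Y$, namely its graph, so two maps that agree as functions are equal in $\Def(\calT)$.

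For existence, let $\phi_f(x,y)$ define the graph of $f$ and $\phi_\rho(x,d)$ define the graph of $\rho_E$. I take $g$ to be the set defined by
\begin{align*}
  \gamma(d,y) := \exists x\,\bigl(\phi_\rho(x,d)\wedge \phi_f(x,y)\bigr).
\end{align*}
This set is $0$-definable and is contained in $D\times Y$.

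To see that $\gamma$ is the graph of a function $D\to Y$, first take any $d\in D(M)$. Surjectivity of $\rho_E$ gives some $x$ with $\rho_E(x)=d$, and then $(d,f(x))$ satisfies $\gamma$. For functionality, suppose $\rho_E(x)=\rho_E(x')=d$. Then $(x,x')\in E(M)$, since the fibers of $\rho_E$ are the $E$-classes. The hypothesis $f\circ p_1 = f\circ p_2$, evaluated at the point $(x,x')$ of $E$, then gives $f(x)=f(x')$. Hence $g$ is a $0$-definable map, and $g\circ\rho_E=f$ holds by construction.

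The only point needing care is that morphisms in $\Def(\calT)$ are determined by their behaviour on $M$-points. This is true because maps are identified with their graphs as definable subsets of $M$.
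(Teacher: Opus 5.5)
Your proof is correct and follows the same approach as the paper: you factor $f$ through $D$ set-theoretically, using that the fibers of $\rho_E$ are exactly the $E$-classes, and you get uniqueness from surjectivity of $\rho_E$. Your only addition is the explicit formula $\gamma$ showing that the induced map is $0$-definable, a step the paper asserts without detail.
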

\begin{proof}
    Indeed, given any $0$-definable map $g\colon X \to G$ such that $g$ is constant on equivalence classes of $E$, the $g$ factors uniquely through $D$ in the category of sets, and the resulting map is $0$-definable.
\end{proof}
\begin{rem}
    Observe that the converse to the above statement is false. Consider for example the theory of an infinite set $X$ in the 1-sorted language with an equivalence relation $E$ with two infinite classes. Then any model-theoretic quotient map $\rho_E$ as in the diagram above must have as its image two distinct definable elements. However the only definable 1-element set in this structure is $X^0$, so there is no model-theoretic quotient. On the other hand, since any definable map from $X$ which is constant on both equivalence classes is constant; it follows that the constant map $X \to X^0$ is a quotient in the category $\Def(\calT)$.
\end{rem}

\begin{dfn}[Uniform elimination of imaginaries]\label{uniform elimination of imaginaries definition}
    We say $\mathcal{T}$ has \emph{uniform elimination of imaginaries} (this terminology comes from {\cite[4.4.4]{Hodges}}) if for every model $M$ of $\calT$, any $0$-definable set $X$ of $M$, and any $0$-definable equivalence relation $E$ on $X$, $M$ admits a model-theoretic quotient of $X$ by $E$.
\end{dfn}

\begin{rem}\label{uniform elimination of imaginaries - some or any M}
    Observe that it is equivalent to require that the definition holds for \emph{some} model $M$ since given a model-theoretic quotient $\rho_E$ of $X$ by $E$ one can write down a sentence in $\calL$ witnessing the property that $\rho_E$ is a model-theoretic quotient.
\end{rem}
\begin{rem}\label{uniform elimination of imaginaries and category theory}
    By \cref{model-theoretic quotients are category-theoretic quotients}, if $\calT$ has uniform elimination of imaginaries then the category $\Def(\calT)$ admits quotients by equivalence relations.
\end{rem}

\begin{lem}\label{criterion for uniform elimination of imaginaries}
    Let $M$ be a model of $\calT$ with sorts $(S_i(M))_{i \in I}$. Suppose that $M$ admits model theoretic quotients by equivalence relations $E$ on products of sorts $S_1(M)\times\dots\times S_k(M)$. Then $M$ has uniform elimination of imaginaries.
\end{lem}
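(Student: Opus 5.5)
The plan is to reduce an arbitrary $0$-definable set $X$ with a $0$-definable equivalence relation $E$ to the case of an equivalence relation on a product of sorts. By \cref{uniform elimination of imaginaries - some or any M} it is enough to work in the fixed model $M$. So let $X \subset S_1(M)\times\dots\times S_k(M)$ be $0$-definable, let $P = S_1(M)\times\dots\times S_k(M)$, and let $E\subset X^2$ be a $0$-definable equivalence relation on $X$. The main step is to extend $E$ to a $0$-definable equivalence relation $E'$ on all of $P$. Define $E'$ by
\begin{align*}
    E'(x,y) \iff E(x,y) \vee \bigl(x \notin X \wedge y\notin X \bigr).
\end{align*}
This is clearly $0$-definable, since $X$ and $E$ are. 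Reflexivity, symmetry and transitivity are checked by cases according to membership in $X$. The point is that $P\setminus X$ becomes a single class, and no class of $E'$ meets both $X$ and its complement. (One could instead let $E'$ be equality off $X$; either choice works.)

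By hypothesis there is a model-theoretic quotient $\rho\colon P \twoheadrightarrow D'$ of $P$ by $E'$. Set $D = \rho(X)$. This is $0$-definable as the image of a $0$-definable set under a $0$-definable map, namely by the formula $\exists x\,(x\in X \wedge \rho(x) = d)$. Then $\rho|_X\colon X\to D$ is a $0$-definable surjection. Its graph is the intersection of the graph of $\rho$ with $X\times D$, so it is a $0$-definable map between $0$-definable sets.

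It remains to check that the fibers of $\rho|_X$ are exactly the $E$-classes. For $x,y\in X$ we have $\rho(x)=\rho(y)$ iff $E'(x,y)$, and by construction $E'$ restricted to $X$ is $E$. Thus $\rho|_X$ is a model-theoretic quotient of $X$ by $E$ in the sense of \cref{model-theoretic quotients definition}.

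A small edge case occurs when $k=0$, so that $P=M^0$ is a point. Here $E$ is trivial and the identity serves as the quotient, or alternatively the hypothesis already covers this case.

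I expect no real obstacle. The only point needing care is that the definition of definable sets allows $X$ to sit in a product of sorts with repetitions and in any order. The hypothesis is stated for products $S_1(M)\times\dots\times S_k(M)$, which I read as arbitrary finite products of sorts, so the reduction above applies directly.
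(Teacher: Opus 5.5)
Your proof is correct and follows the paper's own argument: extend $E$ to the whole product of sorts by making the complement of $X$ a single new class, take the quotient supplied by the hypothesis, and restrict it to $X$. You also spell out details the paper leaves implicit, such as replacing the codomain by the $0$-definable image $\rho(X)$ so that the restricted map is surjective.
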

\begin{proof}
    Indeed, one can extend any $0$-definable equivalence relation $E$ on a definable set $X\subset S_1(M)\times \dots S_k(M)$ to all of $S_1(M)\times\dots\times S_k(M)$ by declaring the complement of $X$ to be a new equivalence class, and then taking the restriction of the induced quotient map to $X$.
\end{proof}


\begin{lem}
    Uniform elimination of imaginaries is invariant under adding parameters to the language; i.e., if $\calT$ has uniform elimination of imaginaries, then for any subset $A$ of a model $M$ of $\calT$ we have that $\calT_A$ has uniform elimination of imaginaries.
\end{lem}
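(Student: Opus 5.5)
We need to show: if $\calT$ has uniform elimination of imaginaries, then $\calT_A$ does, where $\calT_A$ is the theory of $M$ in the language $\calL_A$ with constants for elements of $A$. The plan is to reduce to the parameter-free case by absorbing the parameters into extra variables.

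First, by \cref{uniform elimination of imaginaries - some or any M} it suffices to check the definition in the single model $M$ (expanded to $\calL_A$). So fix an $A$-definable set $X \subset M_1\times\dots\times M_k$ and an $A$-definable equivalence relation $E$ on $X$. Only finitely many parameters are used; write them as a tuple $a \in N_1\times \dots\times N_m$ (a product of sorts). Choose $\calL$-formulas $\chi(x,y)$ and $\epsilon(x,x',y)$ with $X = \chi(M,a)$ and $E = \epsilon(M,M,a)$.

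Next, replace $\chi$ by a $0$-definable set $\widehat{X} \subset N\times M_1\times\dots\times M_k$, with $N = N_1\times\dots\times N_m$, given by $(b,x)\in\widehat{X}$ iff $\epsilon(-,-,b)$ defines an equivalence relation on $\chi(M,b)$ and $\chi(x,b)$ holds. The condition that $\epsilon(-,-,b)$ is an equivalence relation on $\chi(M,b)$ is first-order in $b$, so $\widehat{X}$ is $0$-definable. On $\widehat{X}$ define the $0$-definable equivalence relation $\widehat{E}$ by $(b,x)\,\widehat{E}\,(b',x')$ iff $b = b'$ and $\epsilon(x,x',b)$. Reflexivity, symmetry and transitivity hold fibrewise by construction, and equality of the $b$-coordinates glues the fibres together.

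Since $\calT$ has uniform elimination of imaginaries, there is a $0$-definable surjection $\rho\colon \widehat{X}\to \widehat{D}$ whose fibres are the $\widehat{E}$-classes. Because $a$ satisfies the defining condition, the fibre $\widehat{X}_a = \{a\}\times X$ is contained in $\widehat{X}$. Set $D = \rho(\widehat{X}_a)$, which is $A$-definable, and let $\rho_E(x) = \rho(a,x)$, which is an $A$-definable map from $X$ onto $D$. Its fibres are exactly the $E$-classes: $\rho(a,x)=\rho(a,x')$ iff $(a,x)\,\widehat{E}\,(a,x')$ iff $\epsilon(x,x',a)$.

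The main point needing care is the construction of $\widehat{X}$. If we simply took all $(b,x)$ with $\chi(x,b)$, the relation $\widehat{E}$ might fail to be an equivalence relation at other parameter values $b$, so the restriction to "good" $b$ has to be built into the definition. The statement that $\epsilon$ is an equivalence relation is expressible uniformly in $b$, which makes this restriction definable. The remaining points are routine. The construction stays within a product of sorts, so \cref{criterion for uniform elimination of imaginaries} is not needed, although it could be used to reduce to $X$ being a full product of sorts. And since models of $\calT_A$ are elementarily equivalent to $M$ expanded by $A$, checking one model suffices.
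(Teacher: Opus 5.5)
Your proof is correct and is essentially the paper's argument: absorb the parameter tuple into extra coordinates, build the first-order condition ``$\epsilon(-,-,b)$ is an equivalence relation'' into the $0$-definable data, apply parameter-free uniform elimination, and specialize at $b=a$. The only difference is that the paper first reduces to $X$ a product of sorts via \cref{criterion for uniform elimination of imaginaries} and then extends the relation to all of $X\times Y$, treating bad parameter values separately, whereas you restrict the domain to good parameters and impose $b=b'$ directly. Your version is slightly cleaner, and it also avoids a problem in the paper's formula: the disjunct $y_1\neq y_2$, taken literally, makes the paper's relation $E'$ fail transitivity.
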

\begin{proof}
    See \cite[1.1.8]{PillayGST} for a proof in the single sort case which easily generalizes. For the reader's convenience we give here a proof adapted from the lecture notes of a course of Scanlon (\cite{Speirs2013Lec15}):

    Let $E$ be an $A$-definable equivalence relation on an $A$-definable subset $X$ of $M$. By \cref{criterion for uniform elimination of imaginaries} we may assume $X$ is a product of sorts of $M$. Suppose $E$ is defined by an $\calL_A$-formula $\psi(x_1,x_2,a)$ (where $a$ represents a tuple from $A$ and each $x_i$ a tuple of variables from the product $X$). Suppose the tuple $a$ is contained in a product of sorts $Y$.  Consider the formula $\psi'(x_1,y_1,x_2,y_2)$, defined to be a disjunction of the expressions:
    \begin{enumerate}
        \item $\psi(x_1,x_2,y_1) \land y_1 = y_2 \land $ ``$\psi(X,X,y_1)$ is an equivalence relation on $X$''
        \item $y_1 \ne y_2$
        \item ``$\psi(X,X,y_1)$ is not an equivalence relation on $X$''
    \end{enumerate}
    Let $E' = (\psi')^M$. Then $E'$ is a $0$-definable equivalence relation on the product of sorts $X\times Y$. Let $\rho_{E'}$ be the corresponding $0$-definable quotient map; then the map $x \mapsto \rho_{E'}(x,a)$ gives a quotient for $X$ by $E$.
\end{proof}

\begin{rem}
    An alternative notion, which we will not use, is that of (non-uniform) elimination of imaginaries. A theory $\calT$ is said to have \emph{elimination of imaginaries} if every class of a $0$-definable equivalence relation has a canonical parameter - see for example \cite[8.4]{Tent-Ziegler}. A theory $\calT$ has uniform elimination of imaginaries if and only if it has elimination of imaginaries and two distinct definable elements (see for example \cite[Lemma~8.4.7]{Tent-Ziegler} for the proof in the single-sorted case which easily generalizes to the many sorted case).
\end{rem}



%% file: Chapters/First-order_structures/Interpretable_sets.tex
\section{Interpretable Sets}\label{section: interpretable sets}
The $(-)^{eq}$ expansion of a multi-sorted structure is defined in, for example, \cite{Categorical_Meq}, or \cite{haskell2008stable}. The single-sorted variant is discussed in \cite[8.4]{Tent-Ziegler}, \cite{poizat2011course} or \cite{Hodges}.

\begin{dfn}[{$\calLeqT$}]
    Let $M$ be a model of $\calT$. We define an expansion $\calL^{eq}(T)$ of the language $\calL$ as follows:
    \begin{enumerate}
        \item $\calLeqT$ contains $\calL$. The sort symbols of $\calL$ will be called the \emph{home sort symbols} of $\calLeqT$.
        \item For every $0$-definable equivalence relation $E$ on a product of sorts $S_1(M)\times\dots\times S_n(M)$ of $M$, $\calLeqT$ contains a sort symbol $S_E$ and a function symbol $\pi_E\colon S_1\times \dots \times S_n \to S_E$.
    \end{enumerate}
\end{dfn}

\begin{lem}
    The language $\calLeqT$ is independent of the choice of $M$.
\end{lem}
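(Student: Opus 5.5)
The statement only asserts that the sort symbols and function symbols of $\calLeqT$ are determined without reference to $M$. I will show that the indexing data used in the definition is intrinsic to the theory $\calT$. By construction, $\calLeqT$ consists of $\calL$ together with one sort symbol $S_E$ and one function symbol $\pi_E$ for each $0$-definable equivalence relation $E$ on a product of sorts $S_1(M)\times\dots\times S_n(M)$. So it suffices to show that, for any two models $M,N$ of $\calT$, there is a canonical bijection between the $0$-definable equivalence relations on products of sorts of $M$ and those of $N$, compatible with the products of sorts on which they live.

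First, I would identify a $0$-definable subset $E\subset S_1(M)\times\dots\times S_n(M)\times S_1(M)\times\dots\times S_n(M)$ with the class of $\calL$-formulas $\psi(x,y)$ without parameters such that $E=\psi(M)$. Two formulas $\psi,\psi'$ define the same set in $M$ exactly when $M\models\forall x\,y\,(\psi\leftrightarrow\psi')$. This is a sentence, and since $\calT$ is complete it holds in $M$ if and only if it lies in $\calT$, hence if and only if it holds in $N$.

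Second, being an equivalence relation on $S_1\times\dots\times S_n$ is expressed by the conjunction of the reflexivity, symmetry and transitivity sentences for $\psi$. Again by completeness of $\calT$, $\psi(M)$ is an equivalence relation exactly when $\psi(N)$ is.

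Combining the two steps, the map $\psi(M)\mapsto\psi(N)$ is a well-defined bijection from the $0$-definable equivalence relations on products of sorts of $M$ to those of $N$. This is exactly the restriction of the equivalence $F$ from \cref{DefT is independent of the choice of model}. Hence we may index the new sorts by the set of $\calT$-equivalence classes of formulas $\psi$ with $\forall x\,y\,(\psi\leftrightarrow\psi')$ in $\calT$ for which $\calT$ proves the equivalence-relation axioms, and this set depends only on $\calT$.

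The one point needing care is the meaning of "independent": the symbols are only canonically identified across models, not literally equal, and the identification must also respect the arities of the $\pi_E$. I would therefore state explicitly that we index symbols by the formula classes above, so that $\calLeqT$ is literally the same language for every $M$. There is no real obstacle beyond this bookkeeping.
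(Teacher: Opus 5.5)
Your proposal is correct and is essentially the paper's argument: the paper's one-line proof observes that $\psi(M)$ being an equivalence relation on a product of sorts is witnessed by a sentence of the complete theory $\calT$, and so transfers to any other model $N$. Your additional points are that equality of defined relations is also witnessed by a sentence (so $\psi(M)\mapsto\psi(N)$ is well defined), and that the new symbols can be indexed explicitly by classes of formulas. These are bookkeeping the paper leaves implicit, relying on \cref{DefT is independent of the choice of model}.
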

\begin{proof}
    Immediate, since for any formula $\phi$, if $\phi(M)$ is an equivalence relation on the product of sorts $S_1(M)\times \dots \times S_n(M)$ then this is witnessed by a sentence in $\calT$ and therefore $\phi(N)$ is an equivalence relation on $S_1(N)\times \dots \times S_n(N)$.
\end{proof}

\begin{rem}\label{induced equivalences on subcategories of sets}
    Given a collection of sets $Z = (Z_i)_{i\in I}$, let us denote by $\calC(Z)$ the smallest full subcategory of the category of sets which contains the collection $Z$, the sets $1$ and $0$ (i.e., $\{\emptyset\}$ and $\emptyset$), and is closed under taking finite Cartesian products and subsets.

    Observe that given another family $Y = (Y_i)_{i\in I}$ and a family of bijections $(\iota_i\colon Y_i \xrightarrow{\sim} Z_i)_{i\in I}$, we get an equivalence of categories $\iota\colon \calC(Y) \xrightarrow{\sim} \calC(Z)$ which preserves the sets $0$, $1$, and the intersections, complements, projections of subsets and graphs of functions. For each set $A\in \calC(Y)$ we get a bijection $A \xrightarrow{\sim} \iota(A)$, which we also denote by $\iota$.

    In particular, if $M$ is a model of $\calT$, and $\Sigma$ a collection of sort symbols, and $(N_s)_{S\in \Sigma}$ a collection of sets, then given a family of bijections  $(\iota_S\colon S(M) \to N_S)_{S\in \sigma}$, the corresponding equivalence $\iota\colon \calC((S(M)_{S\in \Sigma})) \to \calC((N_S)_{S\in \Sigma})$ induces an equivalence of categories of $\Def(\calT)$ with its image under $\iota$.
\end{rem}

\begin{dfn}[{$M^{eq}$.}]\label{Meq definition}
    Let $M$ be a model of $T$ with sorts $(M_i)_{i \in I}$. Let $\Sigma$ denote the set of sort symbols of $\calL$. We define an $\calLeqT$-structure $\Meq$ as follows:

    \begin{enumerate}
        \item   For each $0$-definable equivalence relation $E$ on a product of sorts $S_1(M)\times\dots\times S_n(M)$ of $M$, choose a pair $(M_E,\rho_E)$, where $M_E$ is a choice of representative of the quotient of $S_1(M)\times\dots\times S_n(M)$ by $E$ in the category of sets and $\rho_E\colon S_1(M)\times\dots\times S_n(M) \to M_E$ is the quotient map.

        \item For each $E$ as above, define $S_E(\Meq) = M_E$. Note that for each $S\in \Sigma$, we have a sort $S(\Meq) := M_{=_S}$. These will be called the \emph{home sorts} of $\Meq$. Observe that the quotient map $\rho_{=_S}$ is a bijection.


        \item For each $E$, interpret $\pi_E$ as the quotient map $M_{=_{S_1}}\times\dots\times M_{=_{S_n}}\to M_E$ induced by $\rho_E$ and the isomorphisms $\rho_{=_{S_i}}$ (so for example the maps $\pi_{=_{S_i}}$ are interpreted as the identity maps).



        \item  Denote by $\iota$ the induced equivalence $\calC((S(M)_{S\in \Sigma})) \xrightarrow{\sim} \calC((S(\Meq)_{S\in \Sigma}))$ induced by the bijections $\rho_{=_S}$ and \cref{induced equivalences on subcategories of sets}.



        \item For any relation or constant symbol $R$ in $\calL$, define $R(\Meq) := \iota(R(M))$, and for function symbols $f$ in $\calL$ define $\Gamma_f(\Meq) := \iota(\Gamma_f(M))$.

    \end{enumerate}
\end{dfn}
\begin{rem}
    Set-theoretically, there is a ``best'' representative of the quotient $(M_1\times\dots\times M_n)/E$ whose elements are the $E$-equivalence classes - one does not need the axiom of choice here. However in practice the choice of representative is unimportant, and we think it is clarifying to make these choices explicit.
\end{rem}

\begin{rem}
    By above, for every model $M$ of $\calT$ we get an embedding of $\Int(\calT)$ into the category of sets.
\end{rem}





\begin{lem}\label{L formulas preserved in Meq.}
    Let $\iota$ be as in \cref{Meq definition}. Then for any $\calL$-formula $\phi$, we have $\iota(\phi(M))= \phi(\Meq)$.
\end{lem}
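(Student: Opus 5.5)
The plan is a structural induction on the $\calL$-formula $\phi$, using that $\iota$ is the equivalence of \cref{induced equivalences on subcategories of sets} induced by the bijections $\rho_{=_S}\colon S(M)\to S(\Meq)$. This $\iota$ preserves $0$, $1$, finite products, intersections, complements (relative to products of sorts), projections of subsets, and graphs of functions. Since formulas are built from atomic formulas by Boolean connectives and existential quantification, and their realization sets are built by exactly these set operations, it suffices to check the atomic case and that each construction step commutes with $\iota$.

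Before the induction I would strengthen the statement to terms. For an $\calL$-term $t(x_1,\dots,x_n)$ of sort $S$, I would show by induction on $t$ that the graph of $t^{\Meq}$ on home sorts is $\iota$ applied to the graph of $t^M$. For a variable, the graph is a diagonal, which is the graph of an identity map and so is preserved. For a constant $c$, this holds by definition, since $c(\Meq)=\iota(c(M))$. For $f(t_1,\dots,t_k)$, the graph is obtained from the graphs of $f$ and of the $t_i$ by products, intersections and a projection. Since $\Gamma_f(\Meq)=\iota(\Gamma_f(M))$ by definition, the induction hypothesis together with these preservation properties gives the claim.

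For atomic formulas there are two cases. The formula $R(t_1,\dots,t_k)$ is the projection of (the graph of $(t_1,\dots,t_k)$ intersected with the preimage of $R$). Since $R(\Meq)=\iota(R(M))$ by definition, this case follows. The formula $t_1=_S t_2$ is handled the same way, using that the diagonal of $S(M)$ maps to the diagonal of $S(\Meq)$ because $\rho_{=_S}$ is a bijection. The inductive steps are then routine. For $\neg\phi$, use preservation of complements within the ambient product of sorts. For $\phi\wedge\psi$ in variables $\bar x$, first pad each formula with dummy variables, which amounts to taking a product with a full sort and is preserved, and then intersect. For $\exists y\,\phi$, use preservation of projections. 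Sentences land in $0$ or $1$, which $\iota$ fixes.

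The main obstacle is bookkeeping rather than any conceptual difficulty. We need to check that $\iota$ really commutes with these operations: complements taken in the correct ambient product, projections along chosen coordinates, and variable padding. This reduces to the fact that $\iota$ acts on each product of sorts as the product of the bijections $\rho_{=_S}$. A bijection of ambient sets commutes with every Boolean operation and with images under coordinate projections. I would state this one observation explicitly and then invoke it at each step.
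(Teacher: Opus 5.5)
Your proposal is correct and follows essentially the same route as the paper. Both arguments first show by induction on terms that $\iota$ preserves graphs of terms, then handle atomic formulas using that $\iota$ respects relations, and finally run an induction on formula complexity using that $\iota$ acts on each product of sorts as the product of the bijections $\rho_{=_S}$. You additionally spell out bookkeeping the paper leaves implicit: equality atoms, padding with dummy variables, and complements taken in the ambient product.
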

\begin{proof}
    Indeed, since $\iota$ respects constants and function symbols, it follows by induction on complexity of terms that for any $\calL$-term $t$ we have $\Gamma_t(\Meq) = \iota(\Gamma_t(M))$, where $\Gamma_t$ denotes the graph of the function corresponding to $t$. Since $\iota$ also respects relations it follows that the result holds for any atomic formula, and then the full statement follows by \cref{induced equivalences on subcategories of sets} and induction on formula complexity since $\iota$ gives isomorphisms of the sorts of $M$ with the home sorts of $\Meq$.
\end{proof}

\begin{lem}\label{0 interpretable in home sorts are 0 definable}

    Let $\phi$ be an $\calL^{eq}$-formula with free variables in the home sorts (or no free variables). Then there exists an $\calL$-formula $\psi$ with $\psi(\Meq) = \phi(\Meq)$.

\end{lem}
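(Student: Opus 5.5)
We prove this by induction on the complexity of the $\calL^{eq}$-formula $\phi$. The main tool is a way of eliminating quantifiers over imaginary sorts. Since every element of an imaginary sort $S_E(\Meq)$ is of the form $\pi_E(\bar x)$ for some tuple $\bar x$ in the home sorts ($\rho_E$ is surjective), we can trade a quantifier $\exists u \in S_E$ for $\exists \bar x \in S_1\times\dots\times S_n$, substituting $\pi_E(\bar x)$ for $u$. The only genuinely new atomic information in $\calL^{eq}$ is then the equations $\pi_E(\bar x) = \pi_E(\bar y)$. These are equivalent in $\Meq$ to $E(\bar x,\bar y)$, and by \cref{L formulas preserved in Meq.} the latter is realized by $\iota(E(M))$, i.e.\ by an $\calL$-formula.

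To make the induction work, we strengthen the statement. We claim that for every $\calL^{eq}$-formula $\phi(\bar v, u_1,\dots,u_m)$, with $\bar v$ in home sorts and $u_k$ in sorts $S_{E_k}$, there is an $\calL$-formula $\psi(\bar v,\bar x_1,\dots,\bar x_m)$ such that in $\Meq$
\begin{align*}
    \phi(\bar v, \pi_{E_1}(\bar x_1),\dots,\pi_{E_m}(\bar x_m)) \iff \psi(\bar v,\bar x_1,\dots,\bar x_m)
\end{align*}
for all home-sort tuples. Taking $m=0$ gives the lemma.

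The first step is to handle terms. The only function symbols involving imaginary sorts are the $\pi_E$, whose arguments are home-sort terms and whose values are imaginary. Hence every term of imaginary sort is either a variable $u_k$ or $\pi_E(t)$ with $t$ an $\calL$-term, and home-sort terms are $\calL$-terms. (Home-sort variables are identified through the bijections $\pi_{=_S}$, which are interpreted as the identity.)

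The second step is to handle atomic formulas. These are $\calL$-atomic formulas, which need no change, and equalities between imaginary terms of the same sort $S_E$. After substitution such an equality becomes $\pi_E(\bar s) = \pi_E(\bar t)$ with $\bar s,\bar t$ $\calL$-terms, and we replace it by $\theta_E(\bar s,\bar t)$, where $\theta_E$ is an $\calL$-formula defining $E$. Correctness follows from $\rho_E$ being the quotient by $E$ together with \cref{L formulas preserved in Meq.}.

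The third step is the inductive step. Boolean connectives are immediate. For $\exists v$ over a home sort, we keep the quantifier. For $\exists u$ over $S_E$, we replace it by $\exists \bar x$ over $S_1\times\dots\times S_n$ and apply the induction hypothesis with $u$ added as one more imaginary variable; this is correct by surjectivity of $\pi_E$.

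I expect the main obstacle to be bookkeeping rather than an idea: being careful that $\iota$ intertwines interpretations in $M$ and in the home sorts of $\Meq$, so that "$\psi(\Meq)$" is well-defined and matches.
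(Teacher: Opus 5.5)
Your proposal is correct and follows essentially the same route as the paper's proof. Both argue by induction on formula complexity: an atomic equation $\pi_E(\bar s)=\pi_E(\bar t)$ is replaced by the $\mathcal{L}$-formula defining $E$, and $\exists u\in S_E$ is traded for a home-sort quantifier $\exists \bar x$ after substituting $\pi_E(\bar x)$ for $u$, using surjectivity of $\pi_E$. Your strengthened hypothesis, which allows imaginary free variables substituted by $\pi_{E_k}(\bar x_k)$, makes explicit the bookkeeping the paper leaves implicit when it applies induction to $\exists y\,\phi(x,\pi_E(y))$; you also handle compound terms inside $\pi_E$ a little more carefully than the paper, which only treats the atomic case $\pi_E(x_1)=\pi_E(x_2)$.
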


\begin{proof}
    The corresponding statement for single-sorted languages is proved in \cite[Proposition~8.4.5]{Tent-Ziegler}. The proof for the multi-sorted case is essentially the same.




    We proceed by induction on formula complexity. Indeed, all atomic formulas with variables from the home sorts are either atomic $\calL$-formulas, in which case the result follows by definition, or of the form $\pi_E(x_1) = \pi_E(x_2)$ for some $0$-definable equivalence relation $E$; this gives the $0$-interpretable set $\iota(E)$. The induction steps for negation and conjunction are immediate. For existential quantification, the definable set given by $(\exists y \in M_E)\ \phi(x,y)$ in $\Meq$ is the image of that given by $\exists y\ \phi(x,\pi_E(y))$ in $M$, so we conclude by induction.


\end{proof}

\begin{cor}\label{embedding from def into int}
    The embedding $\iota$ from \cref{Meq definition} induces a fully faithful embedding from $\Def(\calT)$ to $\Int(\calT)$.
\end{cor}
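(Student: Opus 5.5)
The embedding $\iota$ of \cref{Meq definition}, restricted to $0$-definable sets of $M$, gives a functor $\Def(\calT) \to \Int(\calT)$. The plan is to show that it is well defined and fully faithful. I take $\Int(\calT)$ to be the category of $0$-definable sets of $\Meq$ with $0$-definable maps.

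\emph{Well-definedness.} Let $X = \phi(M)$ be a $0$-definable set of $M$, with $\phi$ an $\calL$-formula. By \cref{L formulas preserved in Meq.}, $\iota(X) = \phi(\Meq)$, and this is a $0$-definable set of $\Meq$ since $\phi$ is also an $\calL^{eq}$-formula. A $0$-definable map $f\colon X \to Y$ is by definition a $0$-definable subset $\Gamma_f \subset X \times Y$, so the same lemma shows that $\iota(\Gamma_f)$ is $0$-definable in $\Meq$. Because $\iota$ is a bijection on each product of sorts, $\iota(\Gamma_f)$ is the graph of the function $\iota \circ f \circ \iota\inv$ from $\iota(X)$ to $\iota(Y)$. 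The remark on induced equivalences of subcategories of sets (\cref{induced equivalences on subcategories of sets}) says $\iota$ is compatible with products and graphs. Hence composition and identities are preserved, and we have a functor.

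\emph{Faithfulness.} This is immediate, because $\iota$ is injective on each set and $\iota(\Gamma_f)$ determines $\Gamma_f$.

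\emph{Fullness.} This is the main step. Take $0$-definable sets $X, Y$ of $M$ and a $0$-definable map $g\colon \iota(X) \to \iota(Y)$ in $\Meq$, given by an $\calL^{eq}$-formula $\theta$. All free variables of $\theta$ lie in home sorts. Therefore \cref{0 interpretable in home sorts are 0 definable} yields an $\calL$-formula $\psi$ with $\psi(\Meq) = \theta(\Meq) = \Gamma_g$. By \cref{L formulas preserved in Meq.}, $\Gamma_g = \iota(\psi(M))$. Thus $\psi(M)$ is a $0$-definable subset of $X \times Y$, and since $\iota$ is a bijection it is the graph of a function $X \to Y$. Its image under the functor is $g$, which proves fullness.
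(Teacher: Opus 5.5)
Your proof is correct and takes the same route as the paper. The paper uses \cref{L formulas preserved in Meq.} to see that $\iota$ sends $0$-definable sets and maps to $0$-interpretable ones, and \cref{0 interpretable in home sorts are 0 definable} to see that every $0$-interpretable set or map in the home sorts arises this way, then concludes via \cref{induced equivalences on subcategories of sets}; you have spelled out the functoriality, faithfulness and fullness checks that the paper's two-line argument leaves implicit.
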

\begin{proof}
    Indeed, the by \cref{L formulas preserved in Meq.} and \cref{0 interpretable in home sorts are 0 definable} we have that the image of $\Def(\calT)$ under $\iota$ is precisely the collection $0$-interpretable subsets of the home sorts and $0$-interpretable maps between them. Then the result follows from \cref{induced equivalences on subcategories of sets}.
\end{proof}

\begin{dfn}
    Let $M$ be a model of $\calT$. We denote by $\calTeq$ the complete theory of the $\calLeqT$-structure $\Meq$.
\end{dfn}
\begin{lem}
    The theory $\calTeq$ is independent of the choice of $M$.
\end{lem}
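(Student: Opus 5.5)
The plan is to show that for any two models $M, N$ of $\calT$, the $\calLeqT$-structures $\Meq$ and $N^{eq}$ are elementarily equivalent; since $\calTeq$ is defined as the complete theory of $\Meq$, this is exactly what independence of the choice of $M$ means. There are two sources of dependence to remove. The first is the choice of representatives $(M_E, \rho_E)$ made in \cref{Meq definition}. The second is the model $M$ itself.

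For the first, I would observe that any two choices of representatives for the same $M$ yield isomorphic $\calLeqT$-structures. The bijections between the chosen quotient sets are compatible with the maps $\rho_E$, so they commute with the interpretations of the $\pi_E$. They also commute with the home-sort identifications, so they respect the $\calL$-symbols, which are transported via $\iota$ as in \cref{induced equivalences on subcategories of sets}. Isomorphic structures have the same complete theory.

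For the second, the key step is to reduce any $\calLeqT$-sentence $\sigma$ to an $\calL$-sentence. By \cref{0 interpretable in home sorts are 0 definable}, applied to a formula with no free variables, there is an $\calL$-sentence $\psi$ with $\psi(\Meq) = \sigma(\Meq)$. By \cref{L formulas preserved in Meq.}, $\psi(\Meq) = \iota(\psi(M)) = \psi(M)$, so $\Meq \models \sigma$ iff $M \models \psi$. The subtle point is that the $\psi$ produced must not depend on $M$. To check this, I would revisit the induction in the proof of \cref{0 interpretable in home sorts are 0 definable}. The translation is purely syntactic: an atomic formula $\pi_E(x_1) = \pi_E(x_2)$ becomes the $\calL$-formula defining $E$, and a quantifier $\exists y \in S_E$ becomes $\exists y'$ over the underlying product of sorts, with $\pi_E(y')$ substituted for $y$. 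So it yields a single $\calL$-sentence $\psi_\sigma$ with $\Meq \models \sigma \leftrightarrow$ ``$M \models \psi_\sigma$'' for every model $M$. The translation needs the auxiliary variables to be handled via terms $\pi_E(y')$, so I would strengthen the induction to apply to formulas whose free variables in imaginary sorts are replaced by $\pi_E$ of home-sort variables.

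Given this, since $\calT$ is complete, $M \models \psi_\sigma$ iff $N \models \psi_\sigma$. Hence $\Meq \models \sigma$ iff $N^{eq} \models \sigma$, and the theories coincide. The main obstacle is the uniformity claim in the previous paragraph. If the cited lemma's proof is not visibly uniform, I would instead argue directly by induction on $\calLeqT$-formulas $\phi(x)$, with $x$ possibly in imaginary sorts. The inductive claim would be that there is an $\calL$-formula $\phi^*(x')$, with $x'$ the underlying home-sort variables, such that for all models $M$ and all $a'$, we have $\Meq \models \phi(\pi(a'))$ iff $M \models \phi^*(a')$. The well-definedness of the sort symbols across models, via the preceding lemma that $\calLeqT$ is independent of $M$, ensures each $E$ is given by the same $\calL$-formula in every model.
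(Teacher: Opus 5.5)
Your proposal is correct and follows the paper's argument. The paper also uses \cref{L formulas preserved in Meq.} and \cref{0 interpretable in home sorts are 0 definable} to translate each $\calLeqT$-sentence $\sigma$ into an $\calL$-sentence $\sigma'$ that works for every model $M$, and then concludes by completeness of $\calT$. Your explicit check that this translation is uniform in $M$ (strengthening the induction so that imaginary-sort variables are handled via $\pi_E$) fills in a point the paper only asserts. Your separate step about the choice of representatives is harmless, but it is already covered by the uniform translation.
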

\begin{proof}
    Indeed, it follows from the previous two results that for any $\calLeqT$-sentence $\sigma$ there is an $\calL$-sentence $\sigma'$ such that for any model $M$ of $T$ we have $\sigma(\M^{eq}) = \iota(\sigma(M))$.
\end{proof}

\begin{dfn}
    Fix a model $M$ of $\calT$ and a subset $A \subset M$. We define an $A$-\emph{interpretable set} of $M$ to be an $A$-definable set of $\Meq$.
\end{dfn}
\begin{notation}\label{zero-interpretable}
    As above, we write $0$-interpretable in place of $\emptyset$-interpretable.
\end{notation}

\begin{dfn}
    We denote by $\Int(\calT)$ the category of $0$-interpretable sets of $M$, for some model $M$ of $\calT$, and $0$-interpretable maps between them; i.e., $\Int(\calT) := \Def(\calT^{eq})$.
\end{dfn}
\begin{lem}
    The category $\Int(\calT)$ is independent of the choice of $M$.
\end{lem}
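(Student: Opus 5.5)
The plan is to reduce the statement to \cref{DefT is independent of the choice of model}, applied to the theory $\calTeq$ in the language $\calLeqT$. By definition $\Int(\calT) = \Def(\calTeq)$, where $\Def(\calTeq)$ is computed in the $\calLeqT$-structure $\Meq$ for some model $M$ of $\calT$. So it suffices to show two things. First, the structures $\Meq$ and $N^{eq}$ arising from different models $M,N$ of $\calT$ are both models of the single complete theory $\calTeq$. Second, the language $\calLeqT$ does not depend on the model.

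The second point is the earlier lemma stating that $\calLeqT$ is independent of $M$. The first point is the lemma stating that $\calTeq$ is independent of $M$. That lemma rests on \cref{L formulas preserved in Meq.} and \cref{0 interpretable in home sorts are 0 definable}: every $\calLeqT$-sentence is equivalent to an $\calL$-sentence. Hence $\Meq \equiv N^{eq}$ whenever $M \equiv N$, which holds because $\calT$ is complete.

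Once both structures are models of $\calTeq$, we apply \cref{DefT is independent of the choice of model} to $\calTeq$. This gives the functor $\phi(\Meq) \mapsto \phi(N^{eq})$ on $\calLeqT$-formulas. It is well defined, fully faithful and surjective, because all the relevant equalities and functionality properties are expressed by $\calTeq$-sentences. So it is an equivalence $\Def(\calTeq)$ computed in $\Meq$ $\simeq$ $\Def(\calTeq)$ computed in $N^{eq}$.

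I expect the main subtlety to be bookkeeping rather than mathematics. The construction of $\Meq$ involves choices of set-theoretic quotient representatives $(M_E,\rho_E)$, so I need to check that these choices do not matter. Two different choices for the same $M$ give $\calLeqT$-structures that are isomorphic via the canonical bijections between representatives of the quotients. An isomorphism of structures induces an isomorphism of the categories of $0$-definable sets, by \cref{induced equivalences on subcategories of sets}. So any choice of representatives yields an equivalent category. Combining this with the previous paragraph gives independence of $M$ up to equivalence.
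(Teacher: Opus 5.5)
Your proposal is correct and matches the paper's proof, which is the same one-line reduction: models of $\calT$ give models of the single complete theory $\calTeq$ in the fixed language $\calLeqT$, and then \cref{DefT is independent of the choice of model} applied to $\calTeq$ gives the equivalence. Your extra step about the choice of quotient representatives is harmless but unnecessary: $\Meq$ built from any choice of representatives is a model of $\calTeq$, so the same lemma already covers it.
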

\begin{proof}
    Immediate, since different models of $\calT$ give rise to models of $\calT^{eq}$, so we conclude by \cref{DefT is independent of the choice of model}
\end{proof}

\begin{notation}\label{definable sets are interpretable notation}
    In light of \cref{embedding from def into int}, we identify $\Def(\calT)$ with a full subcategory of $\Int(\calT)$.
\end{notation}

\begin{lem}\label{Every interpretable set is a quotient}
    Let $M$ be a model of $T$. Then every $0$-interpretable set $D$ of $M$ is a model-theoretic quotient (in $\Meq$) of a $0$-definable set $X$ (in $M$) by a $0$-definable equivalence relation $E$. If $D$ is a product of sorts of $\Meq$, then $X$ is a product of home sorts.
\end{lem}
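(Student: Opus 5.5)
A $0$-interpretable set $D$ is by definition a $0$-definable subset of $\Meq$, say $D = \phi(\Meq)$ with $\phi$ an $\calLeqT$-formula. Its free variables range over sorts $S_{E_1},\dots,S_{E_k}$, where each $E_i$ is a $0$-definable equivalence relation on a product of home sorts $P_i = S_{i1}(M)\times\dots\times S_{in_i}(M)$. The plan is to pull $D$ back to home sorts and then exhibit the required quotient map.

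\textbf{Step 1 (the product of sorts case).} Take $X_0 = P_1\times\dots\times P_k$, a product of home sorts. Let $E_0$ be the product equivalence relation $\bigwedge_i E_i(x_i,x_i')$, which is $0$-definable in $M$. The map
\begin{align*}
\pi = \pi_{E_1}\times\dots\times\pi_{E_k}\colon \iota(X_0) \to S_{E_1}(\Meq)\times\dots\times S_{E_k}(\Meq)
\end{align*}
is $0$-definable in $\Meq$ and surjective, since each $\pi_{E_i}$ is interpreted as a quotient map. Its fibres are exactly the $\iota(E_0)$-classes. So $\pi$ is a model-theoretic quotient of $X_0$ by $E_0$, and this already proves the last sentence of the statement. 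The degenerate case $k=0$ (no free variables) is handled by taking $X_0$ to be the empty product $1$.

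\textbf{Step 2 (general $D$).} Put $X = \pi\inv(D)$. This set is defined in $\Meq$ by the $\calLeqT$-formula $\phi(\pi_{E_1}(x_1),\dots,\pi_{E_k}(x_k))$, whose free variables lie in the home sorts. By \cref{0 interpretable in home sorts are 0 definable} there is an $\calL$-formula $\psi$ with $\psi(\Meq) = X$, and by \cref{L formulas preserved in Meq.} we get $X = \iota(\psi(M))$, so $X$ is a $0$-definable set of $M$. Let $E = E_0 \cap (X\times X)$, which is also $0$-definable. Restricting $\pi$ to $X$ gives a $0$-definable surjection $X \to D$ whose fibres are the $E$-classes, because $X$ is a union of $E_0$-classes. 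This is the model-theoretic quotient we want.

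\textbf{Main obstacle.} The only delicate point is bookkeeping: identifying $X$, a subset of the home sorts of $\Meq$, with a $0$-definable set in $M$ through $\iota$. This is exactly what \cref{0 interpretable in home sorts are 0 definable} and \cref{L formulas preserved in Meq.} provide, so the difficulty is keeping the identifications straight rather than any new argument.
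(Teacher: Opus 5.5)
Your proposal is correct and follows the paper's proof. Like the paper, you pull $D$ back along $\pi_{E_1}\times\dots\times\pi_{E_k}$ to the home sorts, take $E$ to be the fibres of this map, and use \cref{0 interpretable in home sorts are 0 definable} to see that the preimage is $0$-definable in $M$. The only small difference is that you write $E$ directly as the $\calL$-definable set $E_0\cap(X\times X)$, whereas the paper also invokes that lemma for $E$.
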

\begin{proof}
    Suppose $D$ is contained in the sorts $M_{E_1} \times \cdots \times M_{E_k}$, where each $E_i$ is a definable equivalence relation on a product of sorts $S_{i1}(M) \times \dots \times S_{in_i}(M)$. We define $X$ to be the pre-image of $X$ in $S_{11}(M) \times \dots \times S_{kn_k}(M)$ under the map $\pi_{E_1}\times \dots \times \pi_{E_k}$; note that if $D$ is a product of sorts of $M$, then by construction $X$ is a product of home sorts. The $E$-equivalence classes are defined to be the fibers of this map. Both $X$ and $E$ are $0$-interpretable, so by \cref{0 interpretable in home sorts are 0 definable} they are $0$-definable (in the sense of \cref{definable sets are interpretable notation}) and the quotient map is given by $\pi_{E_1}\times \dots \times \pi_{E_k}$.
\end{proof}

\begin{lem}\label{Meq has elimination of imaginaries}
    Let $M$ be an $\calL$-structure. Then $M^{eq}$ has uniform elimination of imaginaries. In particular the category $\Int(\calT)$ admits quotients by equivalence relations.
\end{lem}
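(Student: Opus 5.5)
The plan is to reduce everything to products of sorts of $\Meq$ and then use \cref{criterion for uniform elimination of imaginaries}, applied to the theory $\calTeq$ and the structure $\Meq$. That criterion says it suffices to produce model-theoretic quotients for $0$-definable (in $\Meq$) equivalence relations on products of sorts of $\Meq$. By \cref{uniform elimination of imaginaries - some or any M}, working in the single model $\Meq$ is then enough for $\calTeq$. The final clause of the statement will follow from \cref{uniform elimination of imaginaries and category theory}, applied to $\calTeq$, since $\Int(\calT) = \Def(\calTeq)$.

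So fix sorts $M_{E_1},\dots,M_{E_k}$ of $\Meq$ and a $0$-definable equivalence relation $F$ on $D := M_{E_1}\times\dots\times M_{E_k}$. By \cref{Every interpretable set is a quotient}, $D$ is the model-theoretic quotient of a product of home sorts $X = S_{11}\times\dots\times S_{kn_k}$ under the map $\pi := \pi_{E_1}\times\dots\times\pi_{E_k}$. Pull $F$ back along $\pi$ to obtain $E := (\pi\times\pi)^{-1}(F)$ on $X$. This is a $0$-definable (in $\Meq$) equivalence relation on a product of home sorts. By \cref{0 interpretable in home sorts are 0 definable}, it is defined by an $\calL$-formula, so it comes from a $0$-definable equivalence relation $E_0$ on $S_{11}(M)\times\dots\times S_{kn_k}(M)$, with $\iota(E_0) = E$ (using \cref{L formulas preserved in Meq.}).

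By construction of $\calLeqT$, there is then a sort $S_{E_0}$ together with a function symbol $\pi_{E_0}\colon X\to S_{E_0}$, interpreted as the set-theoretic quotient by $E$. Since $\pi$ is surjective and $E$ is saturated with respect to the fibers of $\pi$, the map $\pi_{E_0}$ is constant on fibers of $\pi$. Hence it descends to a surjection $\rho\colon D\to M_{E_0}$ whose fibers are exactly the $F$-classes. Its graph is the image of the graph of $\pi_{E_0}$ under $\pi\times\id$, so it is given by the $0$-formula $\exists x\,(\pi(x)=d\wedge \pi_{E_0}(x)=e)$ and is therefore $0$-definable in $\Meq$. Thus $\rho$ is a model-theoretic quotient of $D$ by $F$.

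The main obstacle is the bookkeeping: checking that $E$ really is $0$-definable in $\calL$ so that the sort $S_{E_0}$ exists. That is exactly what \cref{0 interpretable in home sorts are 0 definable} provides. One also has to match $E_0$ in $M$ with $E$ in $\Meq$ via $\iota$, which \cref{L formulas preserved in Meq.} handles. The remaining checks (surjectivity, fibers, definability of $\rho$) are routine.
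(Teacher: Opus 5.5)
Your proposal is correct and follows essentially the same route as the paper. You reduce to products of sorts of $M^{eq}$, present the product as a model-theoretic quotient of a product of home sorts, pull the equivalence relation back, and descend the quotient map $\pi_{E_0}$ of the pulled-back relation.

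The differences are minor. You explicitly justify that the sort $S_{E_0}$ exists, using the fact that $0$-interpretable subsets of home sorts are $\mathcal{L}$-definable; the paper leaves this implicit. You also check definability of the descended map by writing down its graph, whereas the paper cites the fact that model-theoretic quotients are categorical quotients.
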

\begin{proof}
    See \cite[Lemma~16.13]{poizat2011course} for the result in the single-sort case, again this proof easily generalizes.

    Indeed, let $E$ be a $0$-interpretable equivalence relation on a $0$-interpretable set $X$ of $M$. We need to find a model-theoretic quotient of $X$ by $E$ in $\Meq$. By \cref{criterion for uniform elimination of imaginaries} we may assume $X$ is a product of sorts of $\Meq$. By \cref{Every interpretable set is a quotient}, we may realize $X$ as a model-theoretic quotient of a product of home sorts $X'$ by a $0$-definable equivalence relation $E'$. Let $\rho_{E'}$ be the quotient map and let $E''$ denote the preimage $(\rho_{E'}\times \rho_{E'})\inv(E)$ in $X' \times X'$. Then $E''$ is an equivalence relation on $X'$ which is refined by $E'$, so by \cref{model-theoretic quotients are category-theoretic quotients} the map $\pi_{E''}\colon X' \to M_{E''}$ factors definably (in $\Meq$) through $X$. Let $\rho_E\colon X \to M_{E''}$ be the induced $0$-interpretable map. Then for any $x_1,x_2\in X$ we have $\rho_{E}(x_1) = \rho_{E}(x_2)$ if and only if $x_1'\sim_{E''} x_2'$, where $x_1'$ and $x_2'$ are pre-images of $x_1$, $x_2$ respectively under $\rho_{E'}$. By definition of $E''$, this holds if and only if $x_1\sim_E x_2$, as required.

    The final statement follows by \cref{uniform elimination of imaginaries and category theory}.
\end{proof}
\begin{cor}
    The theory $\calT$ has uniform elimination of imaginaries if and only if the embedding $\iota\colon \Def(\calT) \hookrightarrow \Int(\calT)$ is an equivalence of categories.
\end{cor}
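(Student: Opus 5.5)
We prove the two implications separately. Throughout we use the identification of \cref{definable sets are interpretable notation}, under which $\iota$ is the fully faithful embedding of \cref{embedding from def into int}.

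For the forward direction, assume $\calT$ has uniform elimination of imaginaries. Since $\iota$ is fully faithful, it remains to show that $\iota$ is essentially surjective. Let $D$ be a $0$-interpretable set. By \cref{Every interpretable set is a quotient}, $D$ is a model-theoretic quotient in $\Meq$ of a $0$-definable set $X$ by a $0$-definable equivalence relation $E$, with quotient map $\pi\colon X\to D$. By uniform elimination of imaginaries there is a model-theoretic quotient $\rho_E\colon X\to D'$ in $M$, with $D'$ a $0$-definable set. By \cref{L formulas preserved in Meq.}, $\iota(\rho_E)$ is still a $0$-definable surjection in $\Meq$ whose fibres are the $E$-classes. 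So $\iota(D')$ and $D$ are both model-theoretic quotients of $X$ by $E$ in $\Int(\calT)$, and by \cref{model-theoretic quotients are category-theoretic quotients} both are coequalizers of $E\rightrightarrows X$ there. Hence $D\simeq\iota(D')$. Concretely, the induced bijection $D\to\iota(D')$ is $0$-definable, with graph equal to the image of the graph of $\pi$ times $\iota(\rho_E)$.

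For the converse, assume $\iota$ is an equivalence, and let $E$ be a $0$-definable equivalence relation on a $0$-definable set $X$. By \cref{Meq has elimination of imaginaries}, $\Meq$ has a model-theoretic quotient $\rho\colon \iota(X)\to D$ with $D$ $0$-interpretable. By essential surjectivity there is a $0$-definable $D'$ and a $0$-interpretable bijection $f\colon D\to\iota(D')$. Then $f\circ\rho\colon\iota(X)\to\iota(D')$ is a morphism between objects in the image of $\iota$, so by fullness it equals $\iota(g)$ for a $0$-definable $g\colon X\to D'$. Since $\iota$ is induced by bijections of underlying sets, $g$ is surjective and its fibres are exactly the $E$-classes. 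Thus $g$ is a model-theoretic quotient in $M$. This holds for every $X$ and $E$ in one model, which suffices by \cref{uniform elimination of imaginaries - some or any M}.

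The main obstacle is bookkeeping: transferring the properties ``surjective'' and ``fibres are the $E$-classes'' across $\iota$. This is handled by \cref{induced equivalences on subcategories of sets}, since $\iota$ acts by bijections on underlying sets compatibly with graphs and projections.
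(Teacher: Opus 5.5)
Your proof is correct and takes the same route as the paper. The forward direction combines \cref{Every interpretable set is a quotient} with uniform elimination of imaginaries to make the fully faithful $\iota$ essentially surjective, and the converse pulls the quotients supplied by \cref{Meq has elimination of imaginaries} back along the equivalence; you just write out the bookkeeping that the paper's one-line proof leaves implicit.
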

\begin{proof}
    Indeed, left to right follows from \cref{Every interpretable set is a quotient}, and right to left follows from \cref{Meq has elimination of imaginaries}.
\end{proof}

\begin{cor}
    The embedding $\Int(\calT)\hookrightarrow\Int(\calT^{eq})$ is an equivalence of categories.
\end{cor}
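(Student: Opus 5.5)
The plan is to reduce to the preceding corollary, which says that a theory has uniform elimination of imaginaries exactly when the embedding of definable sets into interpretable sets is an equivalence. Apply it to the theory $\calT^{eq}$ in place of $\calT$. Then $\Def(\calT^{eq}) = \Int(\calT)$, $\Int(\calT^{eq}) = \Def((\calT^{eq})^{eq})$, and the embedding in the statement is precisely the $\iota$ of \cref{embedding from def into int} for $\calT^{eq}$. So it suffices to show that $\calT^{eq}$ has uniform elimination of imaginaries.

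That is \cref{Meq has elimination of imaginaries}: for any model $M$ of $\calT$, the structure $\Meq$ has uniform elimination of imaginaries. By \cref{uniform elimination of imaginaries - some or any M} it is enough to check the property in one model of $\calT^{eq}$, and $\Meq$ is such a model, since $\calT^{eq}$ is by definition its complete theory. Hence $\calT^{eq}$ has uniform elimination of imaginaries, and the corollary gives the equivalence.

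The main obstacle is bookkeeping rather than content. The previous corollary was stated for an arbitrary complete theory $\calT$ in a possibly many-sorted language with equality symbols on each sort, so I would check that $\calT^{eq}$ meets these standing assumptions. The language $\calLeqT$ is many-sorted, $\calT^{eq}$ is complete by construction, and equality on the new sorts $S_E$ is part of any first-order language with sorts, so nothing new is needed. If one prefers to avoid applying that corollary to $\calT^{eq}$ abstractly, the direct argument is just its left-to-right direction. Take a $0$-interpretable set of $M^{eq\,eq}$. By \cref{Every interpretable set is a quotient} applied to $\calT^{eq}$, it is a quotient of a $0$-definable set of $\Meq$ by a $0$-definable equivalence relation. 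By elimination of imaginaries in $\Meq$, that quotient is already realized in $\Meq$, so the embedding is essentially surjective. Full faithfulness comes from \cref{embedding from def into int}.
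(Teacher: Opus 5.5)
Your proposal is correct and follows the paper's argument: the paper also obtains the corollary immediately by combining \cref{Meq has elimination of imaginaries} with the preceding corollary applied to $\calT^{eq}$. Only that corollary's left-to-right direction is needed, and that direction is exactly what you unpack in your direct argument.
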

\begin{proof}
    Immediate by the previous two results.
\end{proof}

%% file: Chapters/First-order_structures/Type-Definable_and_Type-Interpretable_Sets.tex






\section{Type-Definable and Type-Interpretable Sets}\label{section: type-definable and interpretable sets}
In this short section we bootstrap the definitions from the previous section to type-definable and type-interpretable sets.

\begin{dfn}[Type-Definable sets]\label{type-definable sets definition}
    Let $M$ be a model of $\calT$, and $A$ a subset of $M$.
    \begin{enumerate}
        \item   An $A$-\emph{type-definable set} of $M$ is a subset $X\subset M_1\times \dots \times M_k$, where $M_1,\dots,M_k$ are sorts of the structure, defined by a \emph{partial} $\calL$-type over $A$.
        \item   A \emph{type-definable set} of $M$ is an $A'$-type-definable set of $M$ for some subset $A'\subset M$.
        \item   An $A$-\emph{type-definable subset} of an $A$-type-definable set $X$ of $M$ is an $A$-type-definable set $Y$ of $M$ such that $Y$ is a subset of $X$.
        \item   An $A$-\emph{definable equivalence relation} on an $A'$-type-definable set $X$ of $M$ is an equivalence relation $E$ on $X$ which is the intersection of an $A$-definable set with $X\times X$.
        \item   An $A$-\emph{definable map} between $A'$-type-definable sets $X$ and $Y$ of $M$ is a map from $X$ to $Y$ which is the intersection of an $A$-definable set with $X\times Y$.
        \item  An $A$-\emph{type-interpretable set} of $M$ is an $A$-type definable set of $\Meq$.
        \item Similarly we have $A$-interpretable maps and equivalence relations on $A$-type-interpretable sets of $\Meq$.
    \end{enumerate}
\end{dfn}
\begin{notation}
    As above, we use $0$-type-definable/interpretable in place of $\emptyset$-type-definable/interpretable
\end{notation}

\begin{dfn}\label{type-definable categories definition}
    \begin{enumerate}
        \item We denote by $\typDef(\calT)$ the category of $0$-type-definable sets of $M$, for any $\abs{T}^+$-saturated model $M$ of $T$, with $0$-definable maps between them.
        \item We denote by $\typInt(\calT)$ the category of $0$-type-interpretable sets of $M$, for any $\abs{T}^+$-saturated model $M$ of $T$, with $0$-interpretable maps between them.
    \end{enumerate}
\end{dfn}

\begin{lem}
    The categories $\typDef(\calT)$, $\typInt(\calT)$ are independent of the choice of $M$.
\end{lem}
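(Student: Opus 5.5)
The plan is to mimic the proof of \cref{DefT is independent of the choice of model}, now using saturation to control partial types. Fix two $\abs{T}^+$-saturated models $M$ and $N$ of $\calT$. Since $\calT$ is complete, we may (after passing to a common elementary extension, or by comparing both with a monster model) reduce to the case $M \preceq N$. We then define a functor $F$ from $0$-type-definable sets of $M$ to those of $N$. A partial type $\pi$ over $\emptyset$ is sent by $F$ to $\pi(N)$. A $0$-definable map, given as the trace of a formula $\theta$ on $X \times Y$, is sent to the trace of the same $\theta$ on $\pi(N)\times \rho(N)$.

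The central step is to show that two partial types $\pi, \pi'$ over $\emptyset$ satisfy $\pi(M)=\pi'(M)$ if and only if $\pi(N)=\pi'(N)$. More generally, each relevant property transfers between $M$ and $N$; the relevant properties are
\begin{enumerate}
\item containment,
\item the statement that the trace of $\theta$ is the graph of a function $X \to Y$,
\item equality of two such traces.
\end{enumerate}
The key tool is compactness combined with saturation. Since $\abs{\pi}\le\abs{T}$, a $\abs{T}^+$-saturated model realizes every consistent partial type over $\emptyset$ in $\le \abs{T}$ variables. Hence $\pi(M)\subseteq \pi'(M)$ holds if and only if, for each $\phi\in\pi'$, the type $\pi\cup\{\neg\phi\}$ is inconsistent with $\calT$. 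By compactness, this holds if and only if $\calT \vdash \bigwedge\pi_0 \to \phi$ for some finite $\pi_0\subseteq\pi$, and this last condition is model-independent. The same argument applies to functionality. The statement ``for all $x\models\pi$ there exists a unique $y\models\rho$ with $\theta(x,y)$'' is handled in two parts. Uniqueness and the condition $y\models\rho$ are expressed by inconsistency of types in the variables $x,y,y'$. Existence is reduced by compactness to the claim that $\pi(x)\cup\{\neg\exists y(\theta(x,y)\wedge\bigwedge\rho_0(y))\}$ is inconsistent for some finite $\rho_0\subseteq\rho$. Here saturation is used in the forward direction: if each such type is consistent, then the type $\pi(x)\cup\{\exists y(\theta\wedge\bigwedge\rho_0)\}_{\rho_0}$ together with uniqueness still yields a point in $M$. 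I expect this step to be the main obstacle. It requires arguing that the unique witness $y$ satisfies all of $\rho$. That follows from a compactness argument over finite $\rho_0$, using that the witness is the same one for each $\rho_0$ by uniqueness.

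Once transfer is established, $F$ is well defined on objects and morphisms and respects composition and identities; these are properties of traces of formulas, handled the same way. $F$ is faithful because a map is determined by its graph, and equality of graphs transfers. $F$ is full because a $0$-definable map $\pi(N)\to\rho(N)$ is the trace of some $\calL$-formula, and that formula's trace on $M$ gives a preimage, by transfer again. $F$ is essentially surjective since every object of the target is $\pi(N)$ for some $\pi$. The $\typInt(\calT)$ case follows by applying the same argument to $\calT^{eq}$. This uses the fact that $M^{eq}$ is $\abs{T}^+$-saturated when $M$ is: every $\calL^{eq}$-type over $\emptyset$ pulls back, via the maps $\pi_E$ and \cref{0 interpretable in home sorts are 0 definable}, to an $\calL$-type of the same size.
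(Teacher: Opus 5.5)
Your overall route matches the paper's: send $\pi(M)$ to $\pi(N)$, and transfer well-definedness, functoriality, fullness and faithfulness using saturation. The paper phrases the transfer through complete types extending $\pi$, each of which is realized in the saturated model. You phrase it through $\calT$-provability after compactness, which is the same mechanism. Your reduction to $M\preceq N$ is never actually used.

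The trouble is the existence half of functionality, the step you singled out, which is misstated in two ways. First, the quantifier is wrong. ``Every $x\models\pi$ has some $y\models\rho$ with $\theta(x,y)$'' corresponds to inconsistency of $\pi(x)\cup\{\neg\exists y(\theta(x,y)\wedge\bigwedge\rho_0(y))\}$ for \emph{every} finite $\rho_0\subseteq\rho$, not for some. Already $\rho_0=\emptyset$ satisfies your version whenever $\pi(x)\vdash\exists y\,\theta(x,y)$, and this produces no witness in $\rho(N)$. So with ``some'' the transfer from $M$ to $N$ breaks. Second, uniqueness as you state it does not make the witnesses for different $\rho_0$ coincide. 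It only constrains realizations of all of $\rho$, and a witness to $\theta(x,y)\wedge\bigwedge\rho_0(y)$ need not realize $\rho$.

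Once the quantifier is corrected, either of two standard repairs works.
\begin{enumerate}
\item Use saturation of $N$ over the finite tuple $x$. For $x\in\pi(N)$, the ``every $\rho_0$'' condition makes $\rho(y)\cup\{\theta(x,y)\}$ finitely satisfiable in $N$, hence realized.
\item First apply compactness to the uniqueness statement, whose transfer you handle correctly. This gives finite $\pi_1\subseteq\pi$ and $\rho_1\subseteq\rho$ with
\[
\calT\vdash \bigwedge\pi_1(x)\wedge\bigwedge\rho_1(y)\wedge\bigwedge\rho_1(y')\wedge\theta(x,y)\wedge\theta(x,y')\to y=y'.
\]
Then for $x\models\pi$, the witnesses for all finite $\rho_0\supseteq\rho_1$ really are a single element, and that element realizes $\rho$.
\end{enumerate}
The second repair needs no saturation over parameters. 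It therefore fits your $\calT^{eq}$ argument, which only establishes realization of types over $\emptyset$ in $M^{eq}$.

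A smaller point: the justification ``since $|\pi|\le|T|$'' misplaces the role of saturation. Saturation bounds the parameter set, not the number of formulas. The types you realize are over $\emptyset$ in finitely many variables, so they are realized regardless of $|\pi|$.
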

\begin{proof}
    We show the result for $\typDef(\calT)$, the proof for $\typInt(\calT)$ is nearly identical.

    Let $N$ be another $\abs{\calT}^+$-saturated model. As in \cref{DefT is independent of the choice of model}, there is a functor $F$ sending $0$-type-definable sets of $M$ to $0$-type-definable sets of $N$ as follows: for any partial type $p$, one sends $p(M)$ to $p(N)$. This map $F$ is well-defined; indeed if $p$ and $q$ are partial types and $p(M) = q(M)$ then every complete extension of $p$ contains $q$ (otherwise, since $M$ is chosen $\abs{T}^+$-saturated, we could find an element in $p(M)$ but not in $q(M)$) and vice versa. It follows that $p(N) = q(N)$.

    For similar reasons (since the relevant properties are witnessed by sentences in complete extensions of the types) we have that
    \begin{enumerate}
        \item $F$ sends $0$-definable maps to $0$-definable maps.
        \item $F$ respects composition of $0$-definable maps and sends the identity to the identity - in particular $F$ is a well-defined functor.
        \item $F$ is full and faithful.
    \end{enumerate}
    Since $F$ is surjective by definition, this shows that $F$ is an equivalence of categories.
\end{proof}

\begin{rem}
    For every $\abs{\calT}^+$-saturated model $M$ of $\calT$ we get an embedding of $\typDef(\calT)$ and $\typInt(\calT)$ into the category of sets.
\end{rem}

\begin{rem}\label{type definable sets are definable}
    Observe from the definitions that we have fully faithful embeddings of $\Def(\calT)$ into $\typDef(\calT)$ and $Int(\calT)$ into $\typInt(\calT)$.
\end{rem}

\begin{rem}\label{type-definable sets are type-interpretable}
    As in \cref{embedding from def into int}, it follows from \cref{L formulas preserved in Meq.} and \cref{0 interpretable in home sorts are 0 definable} that the embedding $\iota$ of \cref{Meq definition} induces an isomorphism of categories (which we also call $\iota$) between $\typDef(\calT)$ and the full subcategory of $\typInt(\calT)$ consisting of $0$-type-interpretable subsets of products of the home sorts. In particular, we can (and do) identify $\typDef(\calT)$ with a full subcategory of $\typInt(\calT)$.
\end{rem}

\begin{dfn}\label{definition: model theoretic quotients of type-definable sets}
    We define a model-theoretic quotient of a $0$-type-definable set by a $0$-definable equivalence relation as in \cref{model-theoretic quotients definition}.
\end{dfn}
\begin{rem}\label{remarks: model-theoretic quotients of type-definable sets}
    \begin{enumerate}
        \item A model-theoretic quotient of a $0$-type-definable set by a $0$-definable equivalence relation is a categorical quotient in $\typDef(\calT)$.
              The proof is the same as for \cref{model-theoretic quotients are category-theoretic quotients}
        \item Every $0$-type-interpretable set $D \in \typInt(\calT)$ is a model-theoretic quotient of a $0$-type-definable set $X$ by a $0$-definable equivalence relation. The proof is the same as for \cref{0 interpretable in home sorts are 0 definable}.
    \end{enumerate}
\end{rem}




\begin{lem}
    Let $X$ be a $0$-type-interpretable set of $M$ and $E$ a $0$-interpretable equivalence relation on $X$. Then there exists a model-theoretic quotient of $X$ by $E$ in $\typInt(\calT)$.
\end{lem}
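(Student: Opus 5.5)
The plan is to reduce to the definable case already handled by \cref{Meq has elimination of imaginaries}. Since $E$ is $0$-interpretable, there is a $0$-interpretable set $E_0$ with $E = E_0 \cap (X\times X)$. The set $X$ is cut out by a partial type $p$ in $\Meq$, say $X \subset D_0 := M_{E_1}\times\dots\times M_{E_k}$ (a product of sorts of $\Meq$). The strategy is to find a $0$-interpretable set $Y \supseteq X$ on which some $0$-interpretable relation restricting to $E$ is an honest equivalence relation. We then take the model-theoretic quotient $\rho\colon Y \to Q$ given by \cref{Meq has elimination of imaginaries} and restrict it to $X$. The image $\rho(X)$ is $0$-type-interpretable, since it is the projection of a type-definable set and $M$ is saturated. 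Its fibers over $X$ are exactly the $E$-classes, so $\rho|_X\colon X \to \rho(X)$ is the required quotient.

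The key step is finding $Y$. Let $\theta(x)$ be the formula stating ``$E_0$ restricted to the set $\{y : \theta'(y)\}$ is reflexive, symmetric and transitive''; this cannot literally refer to $\theta'$ recursively, so I would proceed by compactness instead. On $X$, the relation $E_0$ is reflexive, symmetric and transitive. Transitivity fails only on triples outside $X^3$, so by compactness and saturation there is a single formula $\chi$ in $p$ such that $E_0$ is reflexive, symmetric and transitive on $Y := \chi(\Meq)$. Concretely, the partial type $p(x_1)\cup p(x_2)\cup p(x_3)\cup\{\neg(\text{equivalence axioms for } x_1,x_2,x_3 \text{ in } E_0)\}$ is inconsistent. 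Closing under finite conjunctions then yields a single $\chi \in p$ that works for all three variables simultaneously. With this $Y$, the relation $E_0\cap Y^2$ is a $0$-interpretable equivalence relation on the $0$-interpretable set $Y$, and \cref{Meq has elimination of imaginaries} applies.

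The remaining step is to verify the claims about $\rho|_X$. Its fibers are the $E_0\cap Y^2$-classes intersected with $X$, which are exactly the $E$-classes. The map $\rho|_X$ is $0$-interpretable in the sense of \cref{type-definable sets definition}, because it is the intersection of the graph of $\rho$ with $X\times \rho(X)$. Its image $\rho(X)$ is type-interpretable: it is defined by the partial type $\{\exists x\,(\rho(x)=q \wedge \phi(x)) : \phi \in p\}$ (closing $p$ under conjunctions), and saturation ensures that this defines precisely $\rho(X)$.

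I expect the main obstacle to be the compactness step that produces $\chi$. One must check that a single formula can witness all the equivalence axioms uniformly. This requires $p$ to be closed under finite conjunctions (we may assume it is) and the model to be $\abs{T}^+$-saturated, so that inconsistency of the realizations in $M$ yields genuine inconsistency of the types.
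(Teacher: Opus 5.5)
Your proposal is correct and follows the paper's proof. In both, compactness yields a $0$-interpretable superset $Y \supseteq X$ on which the formula defining $E$ is an honest equivalence relation, elimination of imaginaries in $M^{eq}$ gives a quotient map on $Y$, and that map is restricted to $X$; your saturation argument that the image $\rho(X)$ is $0$-type-interpretable supplies a detail the paper leaves implicit, and it is needed for $\rho|_X$ to be a surjection onto an object of $\typInt(\calT)$.
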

\begin{proof}
    Choose a formula $\phi$ such that $E = (\phi(M)\cap X\times X)$. Then by compactness we may choose a definable superset $Y \supset X$ such that $E_1 := \phi(M)\cap Y^2$ is a definable equivalence relation on $Y$. Then since $M^{eq}$ eliminates imaginaries, there exists an $M^{eq}$-definable set $Y'$ and an $M^{eq}$-definable map $f_{E_1}\colon Y \to Y'$ whose fibers are exactly the $E_1$-equivalence classes. In particular the fibers of the restriction of $f_{E_1}$ to $X$ are exactly the $E$-equivalence classes, so we may define the quotient to be the restriction of $f_{E_1}$ to $X$.
\end{proof}

%% file: Chapters/First-order_structures/Sheaf_quotients_and_type-interpretable_sets.tex
\section{Sheaf Quotients and Type-Interpretable Sets}\label{section: Sheaf Quotients and type-interpretable sets}

Recall the notion of a congruence object or internal equivalence relation $E$ on an object $X$ in a category $\scrC$ (see \cref{equivalence relation definition}), and that the Yoneda embedding takes congruences to congruences (\cref{Yoneda preserves equivalence relations}). Recall also \cref{presheaf notation} and \cref{Yoneda Notation}.
\begin{dfn}
    Let $X\in \typDef(\calT)$ be a definable/type-definable set and $E$ a definable equivalence relation on $X$. By the presheaf quotient of $X$ by $E$ we mean the coequalizer of the diagram $
        \begin{tikzcd}
            h_E \arrow[r, shift left] \arrow[r] & h_X
        \end{tikzcd}$ in the presheaf category $\Presh(\typDef(\calT))$. Note this is defined up to natural isomorphism.
\end{dfn}

\begin{rem}\label{pointwise quotients for presheaves}
    Recall that quotients in the category of presheaves are taken pointwise, so for any diagram $
        \begin{tikzcd}
            h_E \arrow[r, shift left] \arrow[r] & h_X,
        \end{tikzcd}$
    as above we have for any $T \in \typDef(\calT)$ that $(h_X/h_E)(T)$ is naturally in bijection with the quotient set $h_X(T)/h_E(T)$.

    In particular, the quotient map $h_X \to h_X/h_E$ is a surjection of presheaves, and is therefore pointwise surjective.
\end{rem}

\begin{notation}\label{equivlaence classes notation}
    Given a presheaf $F$ on a category $\scrC$, an equivalence relation $E$ on $F$, and a fixed choice of presheaf quotient $F/E$, for any $T\in \scrC$ and $t \in F(T)$ we denote by $[t]_{E(T)}$ the image of $t$ in $(F/E)(T)$ under the quotient map.
\end{notation}

One might guess that there is an equivalence of categories between presheaf quotients and model-theoretic quotients; in fact this fails because maps of presheaf quotients correspond to $0$-definable maps, and in general there exist morphisms of interpretable sets which do not lift to $0$-definable maps. The next result and the examples which follow make this precise.
\begin{lem}\label{morphisms of presheaf quotients correspond to definable maps}
    Let $E_1,E_2$ be $0$-definable equivalence relations on $0$-type-definable sets $X_1,X_2$ respectively, and let $F_1,\ F_2$ denote the respective presheaf quotients. Then for any morphism of the presheaf quotients $\sigma\colon F_1 \to F_2$ there is a $0$-definable map $\phi\colon X_1 \to X_2$ which descends to $\sigma$.
\end{lem}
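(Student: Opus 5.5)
The plan is to evaluate $\sigma$ on the ``generic point'' of $X_1$ given by the identity morphism, in the spirit of the Yoneda lemma. Let $F_1 = h_{X_1}/h_{E_1}$ and $F_2 = h_{X_2}/h_{E_2}$ be the chosen presheaf quotients, with quotient maps $q_i\colon h_{X_i} \to F_i$. Consider the section $[\id_{X_1}]_{E_1(X_1)} = q_1(\id_{X_1}) \in F_1(X_1)$. Applying $\sigma$ gives an element $\sigma_{X_1}([\id_{X_1}]) \in F_2(X_1)$.

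By \cref{pointwise quotients for presheaves}, the quotient map $h_{X_2} \to F_2$ is pointwise surjective, so $F_2(X_1) = h_{X_2}(X_1)/h_{E_2}(X_1)$. We may therefore choose a $0$-definable map $\phi \in h_{X_2}(X_1) = \Hom_{\typDef(\calT)}(X_1,X_2)$ with $[\phi]_{E_2(X_1)} = \sigma_{X_1}([\id_{X_1}])$. This $\phi$ is the candidate lift.

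It remains to check that $\phi$ descends to $\sigma$, meaning $q_2 \circ h_\phi = \sigma \circ q_1$ as morphisms $h_{X_1} \to F_2$. By the Yoneda lemma, two morphisms out of the representable $h_{X_1}$ agree exactly when they agree on $\id_{X_1}$. On $\id_{X_1}$ we get
\begin{align*}
    q_2(h_\phi(\id_{X_1})) = [\phi]_{E_2(X_1)} = \sigma_{X_1}([\id_{X_1}]) = \sigma(q_1(\id_{X_1})),
\end{align*}
so the two composites coincide. Concretely, for any $T$ and $t \in h_{X_1}(T)$, naturality of $\sigma$ and of the quotient maps gives $\sigma([t]) = \sigma([\id_{X_1}])|_T = [\phi]|_T = [\phi \circ t]$. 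Finally, since $q_1$ is an epimorphism of presheaves, the induced morphism $F_1 \to F_2$ determined by $\phi$ is exactly $\sigma$.

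There is no real obstacle here. The one point needing care is that the presheaf quotient is computed pointwise, so every section of $F_2(X_1)$ lifts to an honest element of $h_{X_2}(X_1)$; this is supplied by \cref{pointwise quotients for presheaves}. A second point is that $\phi$ has to be a morphism of $\typDef(\calT)$, which by definition is a $0$-definable map. We also need $\phi$ to preserve equivalence classes, i.e. $x \mathrel{E_1} x'$ implies $\phi(x) \mathrel{E_2} \phi(x')$. This follows by applying the equation above on $T = E_1$ to the two projections $E_1 \rightrightarrows X_1$, which are identified by $q_1$.
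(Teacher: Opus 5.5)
Your proposal is correct and matches the paper's proof: both take the element of $F_2(X_1)$ corresponding to $\sigma\circ q_1$ (equivalently, $\sigma$ applied to the class of $\id_{X_1}$), lift it through the pointwise surjection $h_{X_2}(X_1)\to F_2(X_1)$ to get $\phi$, and conclude by Yoneda. Your extra check that $\phi$ respects the equivalence relations, using the projections $p_1,p_2\colon E_1\to X_1$, just spells out what the paper leaves implicit in the word ``descends''.
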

\begin{proof}
    First, precomposition with the quotient map $h_{X_1} \to F_1$ induces a map $\sigma'\colon h_{X_1} \to F_2$. By the Yoneda lemma, this corresponds to an element in the set $F_2(X_1)$. Then since the quotient map $h_{X_2} \to F_2$ is a surjection of presheaves, $\sigma'$ lifts to a map from $h_{X_1}$ to $h_{X_2}$, which corresponds to a $0$-definable map from $X_1$ to $X_2$ by the Yoneda lemma.
\end{proof}

\begin{ex}
    Consider the theory of a (one-sorted) structure $M$ with an equivalence relation $E$ dividing $M$ into two infinite classes. Observe that there is an automorphism of the corresponding interpretable set $M_E$ which sends a representative of one class to the representative of the other. However we will show that the presheaf quotient of $M$ by $E$ has no nontrivial endomorphisms.

    First observe that there are no $0$-definable maps from $M$ to $M$ other than the identity. Indeed, given any $0$-definable map $f$ form $M$ to $M$, suppose for some $x \in M$ that $f(x) \ne x$. Then given any other $y$ in the same equivalence class as $f(x)$, we may find an automorphism of $M$ which fixes $x$ and sends $f(x)$ to $y$, giving a contradiction. Now the statement follows from the previous lemma.
\end{ex}

One might hope that the above example could be fixed by adding parameters (and then defining a map which is constant on each equivalence class), but the next example shows that in general there are $0$-interpretable endomorphisms of interpretable sets which do not correspond to definable endomorphisms, even if we consider endomorphisms definable with parameters.
\begin{ex}
    Consider a structure $M$ with two equivalence relations $E_1$, $E_2$, where $E_2$ has infinitely many infinite classes and $E_1$ has two infinite classes, each of which divide each $E_2$ class into two infinite partitions. Let $E_3$ be the refinement of $E_1$ and $E_2$. Then there is a $0$-interpretable endomorphism $\sigma$ of $M_{E_3}$ sending any $E_3$-class to the unique disjoint $E_3$-class with which it is identified under $E_2$. However, arguing as above, we see that there are no parameter-definable maps from $M$ to $M$ which descend to $\sigma$.
\end{ex}

\begin{dfn}\label{Grothendieck topology on type-definable sets definition}
    We define a Grothendieck topology on $\Def(\calT)$ and $\typDef(\calT)$ by declaring that a cover of a $0$(-type)-definable set $X$ is a single $0$-definable surjection $X' \to X$.
\end{dfn}

\begin{rem}
    The Grothendieck topology defined above is a special case of the regular coverage of a regular category; see \cite{nlab:regular_coverage} or \cite{sketches_of_an_elephant}. One could alternatively consider the coherent topology, where one takes covers to be finite collections of jointly surjective definable maps. This is often used to study the category of definable sets of a theory; see for example \cite[Chapter~10]{Mac_Lane_Moerdijk_1994}. We believe that the results of this section should still go through using this topology.
\end{rem}

One can verify that
\begin{lem}
    In both $\Def(\calT)$ and $\typDef(\calT)$, the above definition satisfies the axioms of a Grothendieck topology. Moreover, representable presheaves are sheaves.
\end{lem}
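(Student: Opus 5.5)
We have to check the three Grothendieck topology axioms for the topology of \cref{Grothendieck topology on type-definable sets definition}, and then show that representable presheaves are sheaves. A sieve on $X$ is declared covering when it contains a $0$-definable surjection $X' \to X$. In both $\Def(\calT)$ and $\typDef(\calT)$ we will use two facts: fiber products exist and are computed set-theoretically inside the saturated model, and images of $0$-definable maps are again $0$-(type-)definable. For $\typDef(\calT)$ the image claim uses compactness and $\abs{\calT}^+$-saturation: the image of a type-definable set under a definable map is type-defined by the partial type $\{\exists x\,(\theta(x)\wedge \Gamma(x,y)) : \theta \text{ a finite conjunction from } p\}$.

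\textbf{Maximality.} $\id_X$ is a surjection, so the maximal sieve covers.

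\textbf{Stability.} Let $f\colon X'\to X$ be a surjection and $h\colon Y\to X$ any map. The fiber product $X'\times_X Y = \{(x',y): f(x')=h(y)\}$ is $0$-(type-)definable, and its projection to $Y$ is surjective because $f$ is surjective. This projection lies in $h^*S$, so $h^*S$ covers.

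\textbf{Transitivity.} Let $S$ contain a surjection $f\colon X'\to X$, and let $R$ be a sieve such that $f^*R$ covers $X'$. Then $f^*R$ contains a surjection $g\colon X''\to X'$, so $f\circ g\in R$. Since $f\circ g$ is a surjection, $R$ covers.

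\textbf{Representables are sheaves.} Fix $Y$ and a cover of $X$ generated by a surjection $f\colon X'\to X$. A matching family for the generated sieve is determined by its value on $f$, since every other member of the sieve factors through $f$. So it suffices to show that $h_Y$ satisfies the sheaf condition for the single map $f$. Concretely, let $g\colon X'\to Y$ be a $0$-definable map that is constant on fibers of $f$; more precisely, $g\circ p_1 = g\circ p_2$ on $X'\times_X X'$. Matching on $X'\times_X X'$ suffices because any two maps $T\to X'$ with equal composite to $X$ factor through this fiber product. We need a unique $0$-definable $\bar g\colon X\to Y$ with $\bar g\circ f = g$.

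Uniqueness holds because $f$ is surjective. For existence, set-theoretically $\bar g$ exists with graph $\{(x,y) : \exists x'\,(f(x')=x \wedge g(x')=y)\}$, which is the image of $X'$ under $(f,g)$. In the definable case this graph is $0$-definable and we are done.

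\textbf{Main obstacle: the type-definable case.} There $\bar g$ must be the restriction to $X\times Y$ of a $0$-\emph{definable} set, whereas the image above is only type-definable. Write $\phi_f,\phi_g$ for formulas whose restrictions to $X'\times X$ and $X'\times Y$ give the graphs. On $X'$, the partial type $p_{X'}$ implies
\[
\forall x_1',x_2'\,\bigl(\phi_f(x_1',x)\wedge\phi_f(x_2',x)\wedge\phi_g(x_1',y_1)\wedge\phi_g(x_2',y_2)\rightarrow y_1=y_2\bigr)
\]
for the relevant elements. We then apply compactness and saturation to find a formula $\theta$ from (a finite conjunction of) $p_{X'}$ such that, on the definable neighbourhood $\theta$, the formula $\phi_f$ still defines a function, $\phi_g$ still defines a function, and $g$ is still constant on $f$-fibers. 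With such a $\theta$, the formula $\exists x'\,(\theta(x')\wedge\phi_f(x',x)\wedge\phi_g(x',y))$ is $0$-definable, and its restriction to $X\times Y$ is exactly the graph of $\bar g$: the restriction is contained in the graph by the constancy on fibers (taken on $\theta$), and contains it by surjectivity of $f$. Getting this compactness step right, in particular making sure the $x'$ witnesses from $\theta$ land in the correct fiber, is where the real care is needed.
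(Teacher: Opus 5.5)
Your outline is right, and for the three topology axioms it is exactly the paper's argument: identities and composites of surjections are surjections, and base changes of surjections are surjections. For the sheaf property the paper says less than you. It defers to \cref{representable presheaves are sheaves in typDef}, which passes to $\typInt(\calT)$ and simply asserts that the descended set-theoretic map is $0$-interpretable. You are right that, in the type-definable case, the content lies in that assertion.

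But your compactness step, as phrased, would fail. You cannot in general find $\theta\in p_{X'}$ on which the given $\phi_f$ (or $\phi_g$) defines a function. Nor can you make constancy on fibres hold for all $x$ in the ambient sort. The reason is that these formulas are only constrained on $X'\times X$ and $X'\times Y$. For example, replace $\phi_f$ by $\phi_f\vee\neg\psi(x)$ for some $\psi\in p_X$. This is an equally valid defining formula, but now $\phi_f(x',-)$ picks up all of $\neg\psi(M)$ for every $x'$.

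The repair is to bound the target variables as well. Write $\Phi$ for $\phi_f(x_1',x)\wedge\phi_f(x_2',x)\wedge\phi_g(x_1',y_1)\wedge\phi_g(x_2',y_2)$. The partial type
\[
p_{X'}(x_1')\cup p_{X'}(x_2')\cup p_X(x)\cup p_Y(y_1)\cup p_Y(y_2)\cup\{\Phi,\ y_1\neq y_2\}
\]
is not realized in $M$, since that is precisely constancy of $g$ on $f$-fibres. By $\abs{\calT}^+$-saturation it is therefore not finitely satisfiable. This gives finite conjunctions $\theta\in p_{X'}$, $\psi_X\in p_X$, $\psi_Y\in p_Y$ such that
\[
M\models\theta(x_1')\wedge\theta(x_2')\wedge\psi_X(x)\wedge\psi_Y(y_1)\wedge\psi_Y(y_2)\wedge\Phi\rightarrow y_1=y_2 .
\]
Now take your $\chi(x,y)=\exists x'\,(\theta(x')\wedge\phi_f(x',x)\wedge\phi_g(x',y))$. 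Suppose $(x,y)\in X\times Y$ satisfies $\chi$ with witness $x_1'$, and choose $x_2'\in X'$ with $f(x_2')=x$. Since $x\in\psi_X(M)$ and $y,\bar g(x)\in\psi_Y(M)$, the implication gives $y=\bar g(x)$.

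So the issue you flag, about the witness landing in the correct fibre, disappears. The witness from $\theta$ never has to lie in $X'$, because it is only compared with a genuine preimage. The other inclusion is as you say, and with this repair the proof is complete.
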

\begin{proof}
    Indeed, identity maps are $0$-definable surjections, and a composition of $0$-definable surjections is a $0$-definable surjection. This gives the identity and local character axioms. For stability under base change, note that for any $0$-definable surjection $Y \to X$ in either category, and any $0$-definable map $T \to X$, the base change map $Y\times_X T \to T$ is definable and surjective (indeed, given any $t \in T$, let $y$ be an element of the preimage in $Y$ of the image of $t$ in $X$. Then $(y,t) \in Y\times_X T$.
\end{proof}
\begin{rem}
    Applying the above definition to the theory $\calT^{eq}$ we get Grothendieck topologies on the categories $\typInt(\calT) = \typDef(\calTeq)$ and  $\Int(\calT)=\Def(\calTeq)$.
\end{rem}

\begin{rem}\label{sheaves on typInt are sheaves everywhere}
    It follows immediately from the definition that the embedding $\Def(\calT) \hookrightarrow \typDef(\calT)$ is cover-preserving. It follows from \cref{type-definable sets are type-interpretable} and \cref{0 interpretable in home sorts are 0 definable} that the embeddings $\Def(\calT) \hookrightarrow \Int(\calT)$ and $\typDef(\calT) \hookrightarrow \typInt(\calT)$ are also cover-preserving.

    In particular, sheaves on $\typInt(\calT)$ induce sheaves on $\Int(\calT)$, $\typDef(\calT)$, and $\Def(\calT)$.
\end{rem}

\begin{lem}\label{representable presheaves are sheaves in typDef}
    Representable presheaves are sheaves in $\typInt(\calT)$, $\Int(\calT)$, $\typDef(\calT)$, and $\Def(\calT)$
\end{lem}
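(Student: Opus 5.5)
The plan is to verify the sheaf condition for a representable presheaf $h_Y$ directly, using the fact that every cover in these sites is (generated by) a single $0$-definable surjection. I will do the argument once for $\typInt(\calT) = \typDef(\calTeq)$, since the other three cases are the same argument with $\calT$ replaced by $\calTeq$ and type-definable replaced by definable. (Alternatively, one can use \cref{sheaves on typInt are sheaves everywhere}, but representables on a subcategory need not be restrictions of representables, so the direct argument is cleaner.) Since a covering sieve contains a covering family, and a sheaf condition for the sieve generated by a single map $p\colon X' \to X$ reduces to the usual equalizer condition for that map, it suffices to show the following. For every $0$-definable surjection $p\colon X' \to X$, the diagram
\[
    \begin{tikzcd}
        h_Y(X) \arrow[r,"p^*"] & h_Y(X') \arrow[r, shift left] \arrow[r, shift right] & h_Y(X'\times_X X')
    \end{tikzcd}
\]
is an equalizer.

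Injectivity of $p^*$ (separatedness) is immediate. If $f,g\colon X\to Y$ are $0$-definable and $f\circ p = g\circ p$, then $f=g$ as set maps in the fixed saturated model, because $p$ is surjective on points. Since morphisms are determined by their graphs, $f = g$.

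For existence of amalgamations, let $f'\colon X' \to Y$ be $0$-definable with $f'\circ \rho_1 = f'\circ\rho_2$ on $X'\times_X X'$. That is, $f'(x_1') = f'(x_2')$ whenever $p(x_1') = p(x_2')$. Then $f'$ factors set-theoretically as $f = \overline{f}$ with $f\circ p = f'$, and the graph of $f$ is the image of $X'$ under $(p,f')$. The remaining point, which I expect to be the main obstacle, is that $f$ is a $0$-definable map in the sense of \cref{type-definable sets definition}. This means $\Gamma_f$ must be the intersection of a $0$-definable set with $X\times Y$, not merely type-definable.

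To handle this, I will write $p$ and $f'$ as restrictions of formulas $\pi(x',x)$ and $\theta(x',y)$. The graph is $\Gamma_f = \{(x,y)\in X\times Y : \exists x'\in X'\,(\pi(x',x)\wedge\theta(x',y))\}$. This is type-definable by saturation, but the quantifier ranges over the partial type defining $X'$. The compactness step goes as follows. The condition "if $\pi(x'_1,x)\wedge\pi(x'_2,x)$ holds for $x'_1,x'_2\in X'$, then $\theta(x'_1,y)\leftrightarrow\theta(x'_2,y)$" holds on the type-definable set $X'$. By compactness and $\abs{\calT}^+$-saturation, it already holds on some $0$-definable $D'\supseteq X'$ on which $\pi$ and $\theta$ are still functional. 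I will also use that surjectivity onto $X$ makes $\exists x' \in D'\,\pi(x',x)$ true on $X$.

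With this in hand, the formula $\exists x'\in D'\,(\pi(x',x)\wedge\theta(x',y))$ is a genuine $0$-definable formula whose intersection with $X\times Y$ is $\Gamma_f$. Here the consistency on $D'$ ensures that no extra values are picked up from points of $D'\setminus X'$ lying over $x$. To get this I will shrink $D'$, again by compactness, so that every point of $D'$ over a point of $X$ agrees with points of $X'$ in its $\theta$-value.

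Uniqueness of the amalgamation is the separatedness already shown, which completes the sheaf condition.
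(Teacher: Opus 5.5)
Your argument is correct in substance and has the same skeleton as the paper's: descend $f'$ set-theoretically along the surjection $p$, and get uniqueness from surjectivity on points. It is more complete than the paper's proof. The paper reduces all four sites to $\typInt(\calT)$ via \cref{sheaves on typInt are sheaves everywhere} and then simply asserts that the descended function is $0$-interpretable. You work on each site directly and supply a compactness argument for that definability claim, which is the only nontrivial point.

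Your stated reason for avoiding the reduction does not apply here. Each of $\Def(\calT)$, $\Int(\calT)$, $\typDef(\calT)$ embeds fully faithfully in $\typInt(\calT)$ (\cref{type definable sets are definable}, \cref{type-definable sets are type-interpretable}). So for $Y$ in a smaller category, $h_Y$ is exactly the restriction of $h_{\iota(Y)}$, and the paper's reduction is legitimate. What it buys is that the compactness argument only has to be made once.

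One step of your compactness paragraph needs repair, though it turns out to be superfluous. You only control $\pi$ and $\theta$ on $X'\times X$ and $X'\times Y$. So the condition ``$\pi(x_1',x)\wedge\pi(x_2',x)\rightarrow(\theta(x_1',y)\leftrightarrow\theta(x_2',y))$'' with $x,y$ unrestricted can already fail on $X'$. Likewise, $\theta$ need not be functional on any $D'\supseteq X'$ unless you first cut $y$ down.

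All your final formula needs is the last shrink, stated relative to $X$ and $Y$: a $0$-definable $D'\supseteq X'$ with
\[
    \pi(x'',x)\wedge\pi(x_1',x)\wedge\theta(x'',y)\rightarrow\theta(x_1',y)\qquad\text{for all } x''\in D',\ x_1'\in X',\ x\in X,\ y\in Y.
\]
Such a $D'$ exists for the following reasons:
\begin{itemize}
    \item The set of $x''$ violating this condition is the projection of a type-definable set, hence type-definable by saturation.
    \item This set is disjoint from $X'$.
    \item Compactness therefore gives a formula separating the two.
\end{itemize}
With this $D'$, the formula $\exists x'\in D'\,(\pi(x',x)\wedge\theta(x',y))$ cuts out $\Gamma_f$ in $X\times Y$, using surjectivity of $p$ to produce $x_1'$.

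A slightly cleaner way to say the same thing: $\Gamma_f$ is type-definable, and so is its complement in $X\times Y$, namely $\{(x,y)\in X\times Y : \exists x'\in X'\,(\pi(x',x)\wedge\neg\theta(x',y))\}$. By compactness, a single formula then cuts out $\Gamma_f$ in $X\times Y$.
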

\begin{proof}

    By \cref{sheaves on typInt are sheaves everywhere}, it is enough to verify this on $\typInt(\calT)$. Fix a $\abs{T}^+$-saturated model $M$ of $T$ and a corresponding embedding of $\typInt(\calT)$ into the category of sets.

    Given a $0$-type-interpretable set $X$ and a $0$-interpretable surjection $T' \to T$, any $0$-interpretable map $t' \in X(T')$ which equalizes the projections $\begin{tikzcd}
            T'\times_T T' \arrow[r, shift left] \arrow[r] & T
        \end{tikzcd}$
    descends uniquely to a function (in the category of sets) from $T$ to $X$ which sends any $t\in T$ to the image in $X$ of any element of its preimage in $T'$. This function is $0$-interpretable.
\end{proof}

\begin{lem}\label{sheafification identification lemma}
    For any type-definable set $X$, and definable equivalence relation $E$ on $X$, the presheaf quotient $(h_X/h_E)$ is separated. In particular, the sheafification is $(h_X/h_E)^+$.
\end{lem}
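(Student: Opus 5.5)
We must show that $h_X/h_E$ is separated. Since the site is $\typDef(\calT)$ with covers given by single $0$-definable surjections, separatedness reduces to the following claim. Let $T \in \typDef(\calT)$, let $p\colon T' \to T$ be a $0$-definable surjection, and let $s_1, s_2 \in (h_X/h_E)(T)$ with $s_1|_{T'} = s_2|_{T'}$. Then $s_1 = s_2$. (A general covering sieve contains a single surjection, so checking this one cover suffices.)

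By \cref{pointwise quotients for presheaves}, the quotient is computed pointwise: $(h_X/h_E)(T) = h_X(T)/h_E(T)$. So we may write $s_i = [f_i]_{E(T)}$ for $0$-definable maps $f_i\colon T \to X$. Two such maps represent the same class exactly when $(f_1, f_2)\colon T \to X\times X$ factors through $E$, that is, when $(f_1(t), f_2(t)) \in E$ for every $t \in T$ in a fixed $\abs{\calT}^+$-saturated model $M$. Here we use that $E$ is a subobject of $X \times X$ and that the Yoneda embedding identifies $h_E(T)$ with the maps landing in $E$.

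The hypothesis $s_1|_{T'} = s_2|_{T'}$ says that $f_1 \circ p$ and $f_2 \circ p$ are $E$-related. This means $(f_1(p(t')), f_2(p(t'))) \in E$ for every $t' \in T'$. Since $p$ is surjective on $M$-points, every $t \in T$ has the form $p(t')$, so $(f_1(t), f_2(t)) \in E$ for all $t \in T$. Hence $(f_1, f_2)$ lands in $E$ set-theoretically. The induced map $T \to E$ is $0$-definable, because it is the restriction of the $0$-definable map $(f_1, f_2)$ and $E$ is the trace of a $0$-definable set. So $[f_1] = [f_2]$ in $(h_X/h_E)(T)$.

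The only step needing care is this last one: checking that "lands in $E$ pointwise" really gives a morphism $T \to E$ in $\typDef(\calT)$, and that equality in the presheaf quotient is exactly this relation. Both follow because morphisms in $\typDef(\calT)$ are determined by their graphs in $M$. Finally, once separatedness is established, \cref{properties of the plus operation} gives that $(h_X/h_E)^+$ is a sheaf. So the sheafification is $(h_X/h_E)^{++} = (h_X/h_E)^+$.
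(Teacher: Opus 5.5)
Your proof is correct and follows essentially the same route as the paper. Both lift the two classes to maps $T \to X$, use the pointwise description of the presheaf quotient and surjectivity of the cover $T' \to T$ to show the lifts are $E$-related at every point of $T$, and then invoke \cref{properties of the plus operation} for the sheafification.
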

\begin{proof}
    Denote $F = h_X/h_E$. Let $T' \to T$ be a definable surjection. Take any $s_1,s_2 \in F(T)$. Suppose $s_1$ and $s_2$ become equal after restriction to $T'$; we need to show $s_1 = s_2$. Let $t_1, t_2 \in X(T)$ be lifts of $s_1$ and $s_2$ respectively, and let $t_1'$, $t_2'$ be the restrictions to $T'$. We need to show $t_1 \sim_{E(T)} t_2$, or equivalently $t_1(b) \sim_E t_2(b)$ for any $b \in T$. By assumption we have $t_1'\sim_{E(T')}t_2'$, or equivalently for every element $a \in T'$ we have $t_1'(a) \sim_E t_2'(a)$.  So take any $b \in T$, and lift it to an element $a \in T'$. Then following the definitions we have $t_1(b) = t_1'(a) \sim_E t_2'(a) = t_2(b)$, as required.

    The statement about the sheafification then follows from \cref{properties of the plus operation}.
\end{proof}





\begin{dfn}
    We define the functor
    \begin{align*}
        \th\colon \typInt(\calT) \to \Sh(\typDef(\calT)); \quad D \to \th_D
    \end{align*}
    to be the composition of the Yoneda embedding on $\typInt(\calT)$ with the pullback to sheaves on $\typDef(\calT)$.

    In particular, for $D \in \typInt(\calT)$ and $T\in \typDef(\calT)$ we have that $\th_D(T)$ is the set of $0$-interpretable maps from $T$ to $D$; i.e., following \cref{Yoneda Notation}, $\th_D(T) = D(T)$. For any $0$-interpretable map $f\colon D_1 \to D_2$, the restriction $\th_{f}\colon \th_{D_1} \to \th_{D_2}$ is given by postcomposition with $f$, i.e., for any $\phi \in D_1(T)$, $\th_{f}(\phi) = f(\phi)$.

\end{dfn}

\begin{rem}\label{interpretation of th on definable sets}
    Observe that for $X \in \typDef(\calT)$ we have by \cref{type-definable sets are type-interpretable} that $\th_{\iota(X)} = h_X$.
\end{rem}

\begin{prop}
    The functor $\th$ is fully faithful.
\end{prop}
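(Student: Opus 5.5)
We must show that for $D_1,D_2\in\typInt(\calT)$, the map $f\mapsto \th_f$ from $0$-interpretable maps $D_1\to D_2$ to morphisms of sheaves $\th_{D_1}\to\th_{D_2}$ on $\typDef(\calT)$ is a bijection. The plan is to present each $D_i$ as a quotient. We then show that morphisms out of $\th_{D_1}$ are determined by, and built from, their values on a single representable sheaf $h_{X_1}$.

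\textbf{Setup.} By \cref{remarks: model-theoretic quotients of type-definable sets}, write $D_1$ as a model-theoretic quotient $\rho\colon X_1\twoheadrightarrow D_1$, where $X_1\in\typDef(\calT)$ is $0$-type-definable and the quotient is by a $0$-definable equivalence relation $E_1$. Via \cref{type-definable sets are type-interpretable} and \cref{interpretation of th on definable sets}, the element $\rho\in D_1(X_1)=\th_{D_1}(X_1)$ is the image of $\id_{X_1}$ under $\th_\rho\colon h_{X_1}\to\th_{D_1}$.

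\textbf{Key claim: $\th_\rho$ is an epimorphism of sheaves.} Take $T\in\typDef(\calT)$ and $t\in\th_{D_1}(T)$, i.e.\ a $0$-interpretable map $t\colon T\to D_1$. Form the fibre product $T' = T\times_{D_1}X_1$ in $\typInt(\calT)$. It is a $0$-type-interpretable subset of $T\times X_1$, which is a product of home sorts, so $T'$ is $0$-type-definable by \cref{type-definable sets are type-interpretable}. The projection $T'\to T$ is surjective because $\rho$ is, so it is a cover in $\typDef(\calT)$. The projection $T'\to X_1$ lifts $t|_{T'}$. Thus $\th_\rho$ is locally surjective.

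\textbf{Faithfulness.} Suppose $\th_f=\th_g$. Evaluating at $X_1$ on $\rho$ gives $f\circ\rho=g\circ\rho$. Since $\rho$ is surjective as a map of sets, $f=g$.

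\textbf{Fullness.} Let $\sigma\colon\th_{D_1}\to\th_{D_2}$ be a morphism of sheaves. Set $\phi:=\sigma_{X_1}(\rho)\in D_2(X_1)$, a $0$-interpretable map $X_1\to D_2$. We check that $\phi$ is constant on $E_1$-classes. The two projections $p_1,p_2\colon E_1\to X_1$ satisfy $\rho p_1=\rho p_2$ in $\th_{D_1}(E_1)$. Naturality of $\sigma$ then gives $\phi p_1=\phi p_2$. By \cref{model-theoretic quotients are category-theoretic quotients}, whose proof works verbatim in $\typInt(\calT)$, $\phi$ factors as $f\circ\rho$ with $f\colon D_1\to D_2$ $0$-interpretable. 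Now $\th_f$ and $\sigma$ agree on $\rho$, hence agree after precomposition with $\th_\rho\colon h_{X_1}\to\th_{D_1}$ by Yoneda. Since $\th_\rho$ is an epimorphism of sheaves and $\th_{D_2}$ is a sheaf (by \cref{representable presheaves are sheaves in typDef} and \cref{sheaves on typInt are sheaves everywhere}), we conclude $\sigma=\th_f$.

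To avoid invoking epimorphism theory, the last step can be done directly. For $t\in\th_{D_1}(T)$, choose the cover $T'\to T$ and the lift $x\colon T'\to X_1$ from the key claim. Then
\[
\sigma(t)|_{T'}=\sigma(\rho x)=\phi x=f\rho x=f(t)|_{T'}.
\]
Since $\th_{D_2}$ is separated, $\sigma(t)=f(t)$.

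\textbf{Main obstacle.} The main point needing care is the key claim: that the fibre product $T'$ lies in $\typDef(\calT)$ rather than merely in $\typInt(\calT)$. This uses that the definable maps into the home sorts are given by $\calL$-formulas, via \cref{0 interpretable in home sorts are 0 definable}.
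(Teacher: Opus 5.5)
Your proposal is correct and follows essentially the same route as the paper. You present $D_1$ as a model-theoretic quotient $X_1 \to D_1$, get faithfulness from surjectivity of the quotient map, obtain the candidate map by descending $\sigma(\rho)$ through the categorical quotient, and check agreement on an arbitrary $t\colon T\to D_1$ by restricting to the type-definable fibre product $T\times_{D_1}X_1$; the only cosmetic difference is that you conclude via separatedness of $\th_{D_2}$ (or $\th_\rho$ being an epimorphism of sheaves), whereas the paper argues directly that two set maps agreeing after precomposition with the surjection $T\times_{D_1}X_1\to T$ are equal.
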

\begin{proof}
    Let us first show that $\th$ is faithful. Suppose $\phi_1,$ $\phi_2$ are two distinct morphisms between $0$-type-interpretable sets $D_1$, $D_2$. By \cref{remarks: model-theoretic quotients of type-definable sets}, we may find a $0$-interpretable morphism $\pi\in D_1(X)$ exhibiting $D_1$ as a model-theoretic quotient of a $0$-type-definable set $X$ by a $0$-definable equivalence relation $E$. By definition, for each $i$ we have $\th_{\phi_i}(\pi) = \phi_i\circ \pi$. Since $\phi_1 \ne \phi_2$ and $\pi$ is surjective, it follows that $\phi_1 \circ \pi \ne \phi_2 \circ \pi$, so $\th_{\phi_1} \ne \th_{\phi_2}$ as required.

    Now let us show that $\th$ is full. Let $D_1$, $D_2$, $X$, and $E$ be as above and suppose we have a morphism of presheaves $f\colon \th_{D_1} \to \th_{D_2}$. We need to show that there exists a $0$-interpretable morphism $\phi\colon D_1 \to D_2$ with $\th_{\phi} = f$.

    First we show that $f$ commutes with precomposition:
    \begin{claim}
        For any $0$-definable map $\psi\colon S \to T$ and $t \in D_1(T)$ we have $f(t \circ \psi) = f(t) \circ \psi$ as elements of $D_1(S)$.
    \end{claim}
    \begin{proof}
        By definition, for each $i$ and $t' \in D_i(T)$ we have $\th_{D_i}(\psi)(t') = D_i(\psi)(t') = t'\circ \psi$. Then, since $f$ is a morphism of presheaves, we have
        \begin{align*}
            f(t) \circ \psi = \th_{D_2}(\psi)(f(t)) = f(\th_{D_1}(\psi)(t)) = f(t \circ \psi).
        \end{align*}
    \end{proof}

    We use the above lemma to produce a candidate for $\phi$. Consider the induced diagram shown below, where $p_1,p_2$ denote the projections from $E$ to $X$. Observe that $\pi\circ p_1 = \pi\circ p_2$ by definition, so by the claim we have
    \begin{align*}
        f(\pi)\circ p_1 = f(\pi\circ p_1 ) = f(\pi \circ p_2) = f(\pi)\circ p_2.
    \end{align*}
    Since model-theoretic quotients are category-theoretic quotients (\cref{remarks: model-theoretic quotients of type-definable sets}), it follows that there is a unique $0$-interpretable map $\phi\colon D_1 \to D_2$ such that $f(\pi) = \phi \circ \pi$ as shown.
    \[
        \begin{tikzcd}
            E \arrow[r, "p_1", shift left] \arrow[r, "p_2"'] & X \arrow[r, "\pi"] \arrow[dr,"f(\pi)"'] & D_1 \ar[d,dashed, "\phi"]\\
            &&D_2.
        \end{tikzcd}
    \]

    Finally we show that $f = \th_\phi$, i.e., that for any $0$-type-definable set $T$ and $0$-interpretable map $t\colon T \to D_1$ we have $f(t) = \phi\circ t$.

    Consider the fiber product $ X \times_{\pi,D_1,t} T$. This is a type-interpretable subset of the home sorts, and therefore type-definable by \cref{0 interpretable in home sorts are 0 definable}. Extending the diagram above, we have the diagram below where the square is Cartesian and the lower triangle commutes:
    \[ 
        \begin{tikzcd}
            & {X\times_{\pi,D_1,t}T} \arrow[r, "q_T"] \arrow[d, "q_X"'] & T \arrow[d, "t"] \arrow[dd, "f(t)", bend left] \\
            E \arrow[r, "p_2"'] \arrow[r, "p_1", shift left] & X \arrow[r, "\pi"] \arrow[rd, "f(\pi)"']                   & D_1 \arrow[d, "\phi"']                          \\
            &                                                           & D_2
        \end{tikzcd}\]
    Again, our goal is to show $f(t) = \phi\circ t$. Since $\pi$ is surjective we have that $q_T$ is surjective, so it is enough to show
    \begin{align*}
        f(t) \circ q_T = \phi \circ t \circ q_T.
    \end{align*}
    But by the claim we have
    \begin{align*}
        f(t) \circ q_T = f(t \circ q_T) = f(\pi \circ q_X) = f(\pi) \circ q_X = \phi \circ \pi \circ q_X = \phi \circ t \circ q_T
    \end{align*}
    as required.
\end{proof}

Next we show that the embedding $\th$ essentially surjects onto the subcategory of sheaves which are sheaf quotients of type-definable sets with definable equivalence relations.

\begin{prop}
    Let $X \in \typDef(\calT)$ and $E$ be a $0$-definable equivalence relation on $X$. Let $D \in \typInt(\calT)$ and suppose $\pi_E\colon X \to D$ exhibits $D$ as a model-theoretic quotient of $X$ by $E$.
    Then $\th_D \simeq (h_X/h_E)^+$.
\end{prop}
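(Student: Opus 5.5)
The plan is to build a natural map of presheaves $h_X/h_E \to \th_D$, show it is injective and locally surjective, and then pass to sheafifications.

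\emph{Constructing the map.} By \cref{interpretation of th on definable sets}, $h_X = \th_{\iota(X)}$. So the quotient map $\pi_E$ induces a morphism $\th_{\pi_E}\colon h_X \to \th_D$, given by $t \mapsto \pi_E\circ t$ for $t \in X(T)$. The two composites $h_E \rightrightarrows h_X \to \th_D$ agree, since $\pi_E\circ p_1 = \pi_E\circ p_2$. Hence $\th_{\pi_E}$ factors through the presheaf quotient, giving a map $\bar\pi\colon h_X/h_E \to \th_D$.

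\emph{Injectivity.} By \cref{pointwise quotients for presheaves} it suffices to check this pointwise. For $t_1,t_2 \in X(T)$ with $\pi_E\circ t_1 = \pi_E \circ t_2$, we have $(t_1(b),t_2(b)) \in E$ for every $b \in T$. Therefore $(t_1,t_2)$ factors through $E$ as a $0$-definable map $T \to E$: it is the restriction of a definable set to $T\times E$. So $t_1 \sim_{E(T)} t_2$, and $\bar\pi$ is a monomorphism.

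\emph{Local surjectivity.} This is the key step. Take $t \in \th_D(T)$, i.e.\ a $0$-interpretable map $t\colon T \to D$ with $T$ type-definable. Form the fiber product $T' := X\times_{\pi_E,D,t} T$, exactly as in the fullness part of the preceding proposition. By \cref{0 interpretable in home sorts are 0 definable} (in its type-definable form, \cref{type-definable sets are type-interpretable}), $T'$ is $0$-type-definable, and the projections $q_X$, $q_T$ are $0$-definable. Since $\pi_E$ is surjective, $q_T$ is a $0$-definable surjection, so it is a cover in the sense of \cref{Grothendieck topology on type-definable sets definition}. Moreover $t\circ q_T = \pi_E\circ q_X = \bar\pi([q_X])$. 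Thus every section of $\th_D$ lifts locally. The point to check carefully is that $q_X$ and $q_T$ really are $0$-definable maps, that is, restrictions of $0$-definable sets, rather than merely interpretable ones. This follows by applying \cref{0 interpretable in home sorts are 0 definable} to the graphs, which live in home sorts.

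\emph{Conclusion.} By \cref{representable presheaves are sheaves in typDef} and the fact that $\th$ is a pullback of a representable sheaf on $\typInt(\calT)$ along a cover-preserving embedding (\cref{sheaves on typInt are sheaves everywhere}), $\th_D$ is a sheaf. Sheafification is exact, so it preserves the monomorphism $\bar\pi$. By \cref{locally surjective morphism of presheaves leades to isomorphism of sheaves}, the locally surjective embedding $\bar\pi$ induces an isomorphism $(h_X/h_E)^{++}\simeq \th_D^{++} = \th_D$. Finally, $h_X/h_E$ is separated by \cref{sheafification identification lemma}, so $(h_X/h_E)^{++} = (h_X/h_E)^+$, which gives $\th_D \simeq (h_X/h_E)^+$.
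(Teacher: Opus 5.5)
Your proposal is correct and follows the paper's proof essentially step for step. You descend $\th_{\pi_E}$ to an embedding $h_X/h_E \hookrightarrow \th_D$, prove local surjectivity using the cover $X\times_{\pi_E,D,t}T \to T$, and conclude with \cref{locally surjective morphism of presheaves leades to isomorphism of sheaves}; your additional checks (that $\th_D$ is already a sheaf, and that separatedness of $h_X/h_E$ gives $(h_X/h_E)^{++} = (h_X/h_E)^+$) make explicit what the paper leaves implicit.
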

\begin{proof}
    By \cref{locally surjective morphism of presheaves leades to isomorphism of sheaves}, it is enough to show that we have a locally surjective embedding of presheaves $h_X/h_E \hookrightarrow \th_D$.

    Recall that $\th_{\iota(X)} = h_X$ (\cref{interpretation of th on definable sets}) and consider the morphism $\th_{\pi_E}\colon h_X \to \th_D$. By definition, for any $t_1,t_2 \in X(T)$ we have $\pi_E \circ t_1 = \pi_E \circ t_2$ if and only if we have $(t_1,t_2) \in E(T)$. In particular, the morphism $\th_{\pi_E}$ descends to an embedding $\eta\colon h_X/h_E \hookrightarrow \th_D$. We claim that $\eta$ is locally surjective. Indeed, given any $t \in D(T)$ we can form a Cartesian diagram
    \[
        \begin{tikzcd}
            {X\times_{\pi_E,D,t}T} \arrow[r, "q_T", two heads] \arrow[d, "q_X"] & T \arrow[d, "t"] \\
            X \arrow[r, "\pi_E", two heads]                                        & D
        \end{tikzcd}
    \]
    and by construction (recalling \cref{equivlaence classes notation}) we have $\eta([q_X]_{E(X\times_D T)}) = \pi_E\circ q_X$. The latter is equal to the restriction of $t$ to $X\times_D T$.

\end{proof}

\begin{cor}\label{equivalence of interpretable sets and presheaf quotients}
    \begin{enumerate}
        \item     There is an equivalence of categories $j$ from the category of $0$-type-interpretable sets $\typInt(\calT)$ to the category of sheaves on $\typDef(\calT)$ which are sheaf quotients of $0$-type-definable sets with $0$-definable equivalence relations, such that if a $0$-type-interpretable set $D$ is a model-theoretic quotient of a $0$-type-definable set $X$ by a $0$-definable equivalence relation $E$ on $X$ then $j(D)$ is naturally isomorphic to the sheaf quotient $(h_X/h_E)^+$.
        \item  There is an equivalence of categories $j$ from the category of $0$-interpretable sets $\Int(\calT)$ to the category of sheaves on $\Def(\calT)$ which are sheaf quotients of $0$-definable sets with $0$-definable equivalence relations, such that if a $0$-interpretable set $D$ is a model-theoretic quotient of a $0$-definable set $X$ by a $0$-definable equivalence relation $E$ on $X$ then $j(D)$ is naturally isomorphic to the sheaf quotient $(h_X/h_E)^+$.
    \end{enumerate}
\end{cor}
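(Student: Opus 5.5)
The plan is to take $j := \th$, viewed as a functor into the full subcategory $\scrQ \subset \Sh(\typDef(\calT))$ of sheaves isomorphic to some sheaf quotient $(h_X/h_E)^+$. Here $X \in \typDef(\calT)$ and $E$ is a $0$-definable equivalence relation on $X$. Three facts are needed: $\th$ lands in $\scrQ$, it is fully faithful, and it is essentially surjective onto $\scrQ$. The naturality statement will come along with the construction of these isomorphisms.

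\emph{Landing in $\scrQ$ and the compatibility statement.} Take $D \in \typInt(\calT)$. By \cref{remarks: model-theoretic quotients of type-definable sets}, $D$ is a model-theoretic quotient $\pi_E\colon X \to D$ of some $0$-type-definable $X$ by a $0$-definable $E$. The preceding proposition then gives $\th_D \simeq (h_X/h_E)^+$, so $\th_D \in \scrQ$. That isomorphism is induced by the canonical map $\eta\colon h_X/h_E \hookrightarrow \th_D$ through the universal property of sheafification, using that $\th_D$ is a sheaf. Since $\eta$ is built from $\th_{\pi_E}$, it is natural with respect to morphisms of presentations, which gives the ``naturally isomorphic'' clause. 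Full faithfulness of $\th$ is the earlier proposition, so it holds on $\scrQ$ as well.

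\emph{Essential surjectivity.} Let $F \simeq (h_X/h_E)^+$ be a sheaf in $\scrQ$. By the lemma in \cref{section: type-definable and interpretable sets}, a model-theoretic quotient $\pi\colon X \to D$ of $X$ by $E$ exists in $\typInt(\calT)$. The needed fact is that the $0$-definable equivalence relation $E$ on the type-definable set $X$ is also $0$-interpretable on $\iota(X)$, and this holds because of \cref{type-definable sets are type-interpretable}. The proposition then gives $\th_D \simeq (h_X/h_E)^+ \simeq F$. Altogether $j$ is fully faithful and essentially surjective onto $\scrQ$, hence an equivalence.

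\emph{Part (2).} The same argument goes through for $\Int(\calT)$ and $\Def(\calT)$, replacing ``type-definable'' by ``definable'' throughout. The inputs are:
\begin{enumerate}
    \item the analogue of $\th$, obtained by composing the Yoneda embedding on $\Int(\calT)$ with pullback along the cover-preserving inclusion $\Def(\calT)\hookrightarrow \Int(\calT)$ (\cref{sheaves on typInt are sheaves everywhere});
    \item \cref{Every interpretable set is a quotient} in place of the type-definable presentation;
    \item \cref{Meq has elimination of imaginaries} for existence of quotients.
\end{enumerate}
The separatedness and representability inputs (\cref{representable presheaves are sheaves in typDef}, \cref{sheafification identification lemma}) were already stated for both settings. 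The full faithfulness and local surjectivity proofs used only surjectivity of the quotient maps, Cartesian squares and \cref{0 interpretable in home sorts are 0 definable}, so they transfer verbatim.

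\emph{Main obstacle.} The steps themselves are formal. The real work is checking that the earlier proposition applies in the definable setting: the fibre product $X\times_D T$ must be definable rather than merely type-definable, which follows from \cref{0 interpretable in home sorts are 0 definable}. One must also confirm that the naturality of $\eta$ respects the chosen presentations. I would address both by tracing $\eta$ through the universal property of $(-)^+$ and checking them there.
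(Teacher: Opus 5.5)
Your proposal is correct and follows the paper's proof. Like the paper, you set $j=\th$, use that $\th$ is fully faithful and that $\th_D\simeq(h_X/h_E)^+$, and use that every $0$-(type-)interpretable set is a model-theoretic quotient so that $\th$ lands in the subcategory of sheaf quotients. You also spell out two points the paper's one-line proof leaves implicit: essential surjectivity, via the existence of model-theoretic quotients in $\typInt(\calT)$ (respectively elimination of imaginaries in $\Meq$), and the need to rerun both propositions for $\Int(\calT)$ over $\Def(\calT)$, where the fibre product $X\times_D T$ is definable by the lemma on $0$-interpretable subsets of the home sorts.
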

\begin{proof}
    Both statements follow from the previous two results on setting $j = \th$, since every $0$-(type-)interpretable set is a model-theoretic quotient by \cref{Every interpretable set is a quotient} and \cref{remarks: model-theoretic quotients of type-definable sets}.
\end{proof}

%% file: Chapters/First-order_structures/Group_Chunks_on_Definable_types.tex
\section{Group Chunks on Definable Types}\label{section: group chunks on definable types}
In this section we show how the Group Chunk Theorem of Hrushovski and Rideau-Kikuchi \cite{Hrushovski2019} can be derived from our work in previous sections.

As in \cite{Hrushovski2019}, we fix a large cardinal $\kappa$ and a $\kappa$-saturated strongly $\kappa$-homogeneous model $\bU$ of $\calT$. By a \emph{global partial type} we mean a partial type over $\bU$ (or a \emph{filter}, in the terminology of \cite{Hrushovski2019}). A \emph{small} set will be a subset of $\bU$ of size less than $\kappa$.

Recall the notions of a definable partial type (\cite[2.1]{Hrushovski2019}), the push-forward $f_*(p)$ of a definable partial type $p$ by a definable function $f$ (\cite[Definition~2.1]{Hrushovski2019}), and the Morley product of definable types (\cite[Definition~2.2]{Hrushovski2019}).

Note the following easy lemma:
\begin{lem}\label{restriction can be taken over definably closed}
    Let $p$ be a global partial type definable over a small set of parameters $C$, $A$ a small set, and $d\in \bU$. Suppose $d \models p|_{CA}$. Then $d \models p|_{\dcl(CA)}$.
\end{lem}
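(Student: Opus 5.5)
The plan is to unwind what $d \models p|_{\dcl(CA)}$ means and reduce it to the hypothesis $d\models p|_{CA}$. Recall that for a global partial type $p$ definable over $C$ and a small set $B \supseteq C$, the restriction $p|_B$ consists of the formulas $\phi(x,b)$ with $b$ from $B$ such that $\phi(x,b)\in p$. Equivalently, using the $C$-definable scheme $d_p$: $\phi(x,b)\in p$ if and only if $\models d_p\phi(b)$, where $d_p\phi(y)$ is a formula (or, for partial types, a condition) over $C$. So we must show: for every $\calL$-formula $\phi(x,y)$ and every tuple $e$ from $\dcl(CA)$ with $\phi(x,e)\in p$, we have $\models\phi(d,e)$.

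First, fix such $\phi$ and $e$. Since $e\in\dcl(CA)$, there is a $0$-definable function symbol-free description: a formula $\theta(y,z)$ over $\emptyset$ and a tuple $a$ from $CA$ such that $\theta(y,a)$ defines the single element $e$, i.e. $e = f(a)$ for an $\emptyset$-definable (partial) function $f$. Second, consider the formula $\psi(x,a) := \exists y\,(\theta(y,a)\wedge\phi(x,y))$, which is over $CA$ and is equivalent, in $\bU$, to $\phi(x,e)$. Since $p$ is a global type, it is closed under consequence modulo $\bU$-parameters (as a filter of definable sets), so $\psi(x,a)\in p$ because it defines the same subset of $\bU$ as $\phi(x,e)$. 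Third, since $a$ is from $CA$, $\psi(x,a)\in p|_{CA}$, so by hypothesis $\models\psi(d,a)$, whence $\models\phi(d,e)$ as $e$ is the unique solution of $\theta(y,a)$.

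The only point requiring care is the second step: that membership in $p$ depends only on the definable set, not on the chosen formula, and that $p|_{CA}$ is understood to contain all formulas over $CA$ lying in $p$ (rather than only instances of a fixed family $\phi(x,y)$ with parameters substituted). If the paper's convention (following \cite{Hrushovski2019}) defines $p|_B$ via the definition scheme, I would instead argue that $d_p\psi(a)$ holds iff $d_p\phi(f(a))$ holds, which again follows because $\psi(x,a)$ and $\phi(x,f(a))$ define the same set and the scheme is determined by $p$ as a filter of definable sets. Either way the argument is routine; I expect no genuine obstacle.
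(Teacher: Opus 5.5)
Your proof is correct and follows essentially the same route as the paper. You write the parameter $e\in\dcl(CA)$ as the value of an $\emptyset$-definable function at a tuple from $CA$, replace $\phi(x,e)$ by an equivalent formula over $CA$ lying in $p$, and then apply $d\models p|_{CA}$. Your explicit appeal to $p$ being closed under equivalence in $\bU$ (as a filter) is exactly the point that the paper's parenthetical ``otherwise $p$ is inconsistent'' stands in for.
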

\begin{proof}
    Indeed, take any tuple $b \in \dcl(C,A)$, and suppose $b = \psi(c,a)$ for some definable map $\psi$ with $c\in C$ and $a \in A$. Then for any formula $\phi(x,b) \in p|_{Cb}$, we must have $\phi(x,\psi(c,a)) \in p|_{Cac}$ (otherwise $p$ is inconsistent) so $d \models p|_{Cac}$ implies $d \models p|_{Cb}$ as required.
\end{proof}

\begin{thm}[{\cite[Proposition~3.15, Proposition~3.16]{Hrushovski2019}}]\label{statement of model theoretic group chunk theorem}
    Let $p$ be a global partial type definable over a small set of parameters $C$. Let $(\cdot)$ be a $C$-definable map from $p^{\otimes 2}$ to $p$. Assume $(\cdot)$ is:
    \begin{enumerate}
        \item \textbf{Generically Cancellative}: For $a,b\models p^{\otimes 2}|_C$ we have $b\in \dcl(a,a\cdot b, C)$ and $a \in \dcl(b,a\cdot b, C)$.
        \item \textbf{Generically Associative}: For $a,b,c \models p^{\otimes 3}|_C$, we have $(a\cdot b)\cdot c = a\cdot (b\cdot c)$.
        \item \textbf{Generically Transitive}: For $a \models p|_C$, we have $(\lambda_a)_* p = p$, where $\lambda_a$ denotes the left multiplication by $a$.
    \end{enumerate}
    Let $X = p|_C(\bU)$. Then there exists a universal $C$-definable embedding from $X$ into a $C$-type-interpretable group $G$ which respects the multiplication operation.
\end{thm}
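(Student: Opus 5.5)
We will derive the statement from the abstract machinery of \cref{chapter: group chunks on presheaves}, applied on the site $\typDef(\calT_C)$ of $C$-type-definable sets with the topology of \cref{Grothendieck topology on type-definable sets definition}. After naming $C$ by constants, we may assume $C=\emptyset$. Set $X=p|_C(\bU)$, viewed as an object of $\typDef(\calT)$, and let $h_X$ be the corresponding representable presheaf, which is a sheaf by \cref{representable presheaves are sheaves in typDef}. The multiplication is a $C$-definable map $p^{\otimes 2}|_C\to X$. Its domain $p^{\otimes 2}|_C(\bU)$ is a type-definable subset of $X\times X$, because definability of $p$ makes the Morley product type-definable over $C$. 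This gives a partial binary operation $\mu\colon h_X\times h_X\dashrightarrow h_X$, and $(h_X,\mu)$ is a presheaf of partial magmas.

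The plan is to verify that $(h_X,\mu)$ is a group chunk in the sense of \cref{Group Chunk definition}. Separatedness is automatic. For cancellativity, generic cancellativity together with compactness gives $C$-definable maps $\mu_{13},\mu_{23}$ on the type-definable sets $\{(a,a\cdot b)\}$ and $\{(b,a\cdot b)\}$. We then use generic transitivity to identify these domains. Concretely, $(a,a\cdot b)\models p^{\otimes2}|_C$ whenever $(a,b)$ does, since $(\lambda_a)_*p=p$, and symmetrically for $\mu_{23}$ using a right-translation consequence; if needed, we shrink everything to the generic locus. This shows that each projection of $\Gamma_\mu$ is an isomorphism onto a type-definable subobject.

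Associativity is exactly generic associativity, checked pointwise on $T$-points. Indeed, a $T$-point of the domain of both sides is a definable map into $p^{\otimes3}|_C$-realizations, after using \cref{restriction can be taken over definably closed} to pass from $C\cup\{t\}$-genericity to $\dcl$-genericity. Strong associativity is obtained from \cref{associativity follows from weak property} and \cref{strong associativity follows form weak property}, choosing the auxiliary $z$ generic over everything on a definable cover.

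The main obstacle is large domain (\cref{large domain definition}). Given finitely many $\alpha_i,\beta_i\in\tX(T)$, that is, left translations or inverse translations by $T$-points, we must find a cover $T'\twoheadrightarrow T$ and a $T'$-point $z$ lying in all $\dom\alpha_i\circ\beta_i$. We will take $T'=T\times X'$ with $X'$ the type-definable set of realizations of $p$ over the parameters involved. More precisely, $T'$ is the set of pairs $(t,z)$ with $z\models p|_{C t}$; this is type-definable because $p$ is definable, and it surjects onto $T$ by saturation. The tautological point $z$ is generic over each $a_i(t)$. By generic transitivity, $\beta_i(z)$, and its inverse analogue, is again generic over $Ct$. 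Hence $\alpha_i$ applies, which gives all the required domain conditions simultaneously.

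With the group chunk established, \cref{simplified group chunks to sheaves left adjoint} gives the universal sheaf of groups $(h_X^2/\ssim)^+$. The relation $\ssim$ on $X^2$ is $C$-definable, since it is cut out by a single generic witness via \cref{Check locally at one point}. By \cref{equivalence of interpretable sets and presheaf quotients} this sheaf is therefore $\th_G$ for a $C$-type-interpretable set $G=X^2/\ssim$. The group operations on the sheaf transfer to $G$ by full faithfulness of $\th$, so $G$ is a type-interpretable group. The universal property restricts from sheaves of groups to type-interpretable groups, again by full faithfulness.
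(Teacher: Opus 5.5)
Your route is essentially the paper's. You use the same site $\typDef(\calT_C)$, the same cover $\{(t,s) : s\models p|_{\ldots}\}$ with its tautological point for large domain, the same witness argument through \cref{associativity follows from weak property} and \cref{strong associativity follows form weak property}, and the same identification of $(h_X^2/\ssim)^+$ with a model-theoretic quotient.

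One step fails as written, however. Plain associativity is \emph{not} ``exactly generic associativity checked pointwise''. Pointwise, a point $(x_1,x_2,x_3)$ lies in $\dom \mu(x_1,\mu(x_2,x_3)) \cap \dom \mu(\mu(x_1,x_2),x_3)$ as soon as $x_2\models p|_{Cx_1}$, $x_3 \models p|_{Cx_2}$, $x_3\models p|_{C,\,x_1\cdot x_2}$ and $x_2\cdot x_3\models p|_{Cx_1}$. Genericity of $x_3$ over $Cx_2$ and over $C(x_1\cdot x_2)$ separately does not give genericity over $Cx_1x_2$. \Cref{restriction can be taken over definably closed} cannot close this, since it only passes from a single parameter set to its definable closure. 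For instance, if $p$ is the generic type of a connected algebraic group and $(a,b)$ is a pair of independent generics, then $(a,b,a)$ lies in both domains but does not realize $p^{\otimes 3}|_C$.

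This is why the paper does not treat associativity separately. It runs the witness argument of \cref{associativity follows from weak property} for $\alpha=\lambda_a^{e}$ with both $e=1$ (which is associativity, by \cref{associativity criterion}) and $e=-1$. The witness $z$ is taken generic over $a(t),b(t),y(t)$ on the cover $\{(t,s) : s\models p|_{a(t)b(t)y(t)}\}$. The needed identities at these generic points then come from \cref{model-theoretic generic strong associativity}, together with \cref{consequences of transitivity} and \cref{restriction can be taken over definably closed}. You should do the same: fold the $e=1$ case into the witness argument rather than claiming it directly.

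Two smaller points. First, for cancellativity no right-translation hypothesis is available, but none is needed. In $\Presh(\typDef(\calT_C))$ a monomorphism is an isomorphism onto its image subpresheaf, so it suffices that the projections of $\Gamma_\mu$ are monic, which is generic cancellativity applied pointwise. Also, there is no ``shrinking to the generic locus'', since $X$ is fixed. Second, you should record that $h_X$ is locally nontrivial (\cref{model theory local nontriviality result}). \Cref{simplified group chunks to sheaves left adjoint} identifies the universal sheaf of groups with $(h_X^2/\ssim)^+$ only under that hypothesis.

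On the positive side, your justification that $\ssim$ is $C$-definable, via a single generic witness, \cref{Check locally at one point} and the definability of $p$, fills in a step the paper merely asserts.
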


Before we begin the proof, we give an easy model-theoretic lemma which shows the purpose of the generic transitivity assumption:
\begin{lem}\label{consequences of transitivity}
    Let $B$ be a small set, and suppose $(a,c) \models p^{\otimes 2}|_{BC}$. Then
    \begin{enumerate}
        \item Suppose $a\cdot c = d$. Then $d \models p|_{BCa}$.

        \item There exists $b$ such that $b \models p|_{BCa}$ and $a\cdot b = c$.
    \end{enumerate}
\end{lem}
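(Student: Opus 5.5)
For part (1), the plan is to compute the type of $d$ over $BCa$ through the pushforward description in generic transitivity. Since $(a,c) \models p^{\otimes 2}|_{BC}$, the definition of the Morley product gives $a \models p|_{BC}$ and $c \models p|_{BCa}$. We want to show $d = \lambda_a(c) \models p|_{BCa}$. Take any formula $\phi(x,e) \in p|_{BCa}$, with $e$ a tuple from $BCa$.

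By generic transitivity, $(\lambda_a)_*p = p$. Here $a \models p|_C$, and $\lambda_a$ is $Ca$-definable. So $\phi(x,e) \in p$ gives $\phi(\lambda_a(x),e) \in p$, and the parameters of this formula lie in $BCa$. Because $c \models p|_{BCa}$, we get $\models \phi(a\cdot c, e)$, that is, $\models \phi(d,e)$. Hence $d \models p|_{BCa}$.

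One subtlety should be checked: that $(\lambda_a)_*p$ is defined by exactly this rule. This follows from the definition of pushforward in \cite{Hrushovski2019}, which says $\psi \in f_*p$ iff $\psi(f(x)) \in p$. If needed, we can pass to $\dcl$ using \cref{restriction can be taken over definably closed}.

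For part (2), we first realize $b \models p|_{BCac}$ by saturation of $\bU$. We then need to move $b$ so that $a\cdot b = c$. Put $d' = a\cdot b$. Since $(a,b) \models p^{\otimes 2}|_{BC}$, part (1), applied to this pair, gives $d' \models p|_{BCa}$. Both $c$ and $d'$ therefore realize $p|_{BCa}$.

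The main obstacle is that $p|_{BCa}$ may be a partial type, not a complete type, so $c$ and $d'$ need not have the same type over $BCa$. We cannot immediately pick an automorphism fixing $BCa$ that sends $d'$ to $c$.

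The fix I would try is to use definability of $p$. If $p$ is a complete global definable type, then $p|_{BCa}$ is complete, so $\tp(d'/BCa) = \tp(c/BCa)$. Strong homogeneity then gives $\sigma \in \mathrm{Aut}(\bU/BCa)$ with $\sigma(d') = c$. Set $b = \sigma(b')$, where $b'$ is the element chosen above. Then $a \cdot b = \sigma(a\cdot b') = c$, and $b \models \sigma(p|_{BCa}) = p|_{BCa}$, because $p$ is $C$-definable and hence invariant under $\sigma$.

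If $p$ is only a partial type, I would argue by compactness instead. We need the set $\{a\cdot x = c\} \cup p|_{BCa}(x)$ to be consistent. Suppose it is not. Then there is $\phi(x,e) \in p|_{BCa}$ with $\models \forall x\,(\phi(x,e) \to a\cdot x \neq c)$. Generic cancellativity gives $x = g(a, a\cdot x)$ for a $C$-definable $g$, at least on generic pairs. Using this, we turn the failure into a formula $\neg\phi(g(a,y),e)$ satisfied by $y = c$. We then show it contradicts $c \models p|_{BCa}$ together with $(\lambda_a)_*p = p$ applied to $\phi$. Making this last step rigorous for partial types is where I expect the real work.
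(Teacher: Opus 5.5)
Your part (1) is the paper's argument. Your treatment of (2) when $p|_{BCa}$ is complete is also essentially the paper's. The paper takes your $b'$ to be $c$ itself, sets $d=a\cdot c$, chooses $f\in\mathrm{Aut}(\bU/BCa)$ with $f(d)=c$, and puts $b=f(c)$. Note that the paper's step ``$c$ and $d$ have the same type over $CBa$'' silently uses exactly the completeness you flag.

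The gap is the partial-type case, which you leave unresolved even though the statement is made for a global \emph{partial} type $p$. Your route through cancellativity does not work as written. To extract $\neg\phi(g(a,c),e)$ from $\models\forall x\,(\phi(x,e)\to a\cdot x\neq c)$ you would already need $a\cdot g(a,c)=c$, i.e.\ that $c$ lies in the image of $\lambda_a$, which is the very point at issue. The missing idea is to push $\phi$ forward rather than pull $c$ back.
\begin{itemize}
\item Let $\Psi(y)$ be the formula $\exists x\,(\phi(x,e)\wedge a\cdot x=y)$, with the domain condition of $(\cdot)$ built in. That condition lies in $p|_{Ca}$ because $a\models p|_C$.
\item The parameters of $\Psi$ are in $BCa$. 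Also $\phi(x,e)$, together with the domain condition, implies $\Psi(a\cdot x)$.
\item Since $p$ is a filter, $\Psi(\lambda_a(x))\in p$, i.e.\ $\Psi\in(\lambda_a)_*p=p$.
\item Hence $\Psi\in p|_{BCa}$, so $\models\Psi(c)$, contradicting the choice of $\phi$.
\end{itemize}
So $p|_{BCa}(x)\cup\{a\cdot x=c\}$ is consistent, and $\kappa$-saturation of $\bU$ realizes it by some $b$. This argument uses neither generic cancellativity nor strong homogeneity, and it covers the complete case as well. It is what is actually needed to justify (2) for partial $p$.
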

\begin{proof}
    For the first statement, we have $c \models p|_{CBa}$ by assumption, so $d \models ((\lambda_a)_*p)|_{CBa}$. But $p|_{CBa} = ((\lambda_a)_*p)|_{CBa}$ by generic transitivity.

    For the second statement, suppose $a\cdot c = d$. Then since $c \models p|_{BCa}$ we have $d \models ((\lambda_a)_*p)|_{BCa}$. Since $p = (\lambda_a)_*(p)$  we have that $c$ and $d$ have the same type over $CBa$. Therefore there exists an automorphism $f$ of $\bU$ over $CBa$ which sends $d$ to $c$, and we can take $b = f(c)$.
\end{proof}



Next we define the induced presheaf of partial magmas and state its basic properties:
\begin{dfn}
    Let $p,(\cdot), X$ be as in the statement of \cref{statement of model theoretic group chunk theorem}.

    \begin{enumerate}
        \item  Denote by $\calT_C$ the theory obtained by labeling the elements of $C$ in $\bU$. Note that $C$-type-interpretable subsets of $\bU$, where $\bU$ is considered as a model of $\calT$, correspond to $0$-type-interpretable subsets of $\bU$ considered as a model of $\calT_C$.

        \item We endow $\typDef(\calT_C)$ with the structure of a site as in \cref{Grothendieck topology on type-definable sets definition}. Note that by \cref{representable presheaves are sheaves in typDef} the presheaf $h_X$ is a sheaf on $\typDef(\calT_C)$.

        \item  For each $n$, let $X^{\otimes n}$ denote $p^{\otimes n}|_C(\bU)$.

        \item   Observe that $(\cdot)$ induces a partial morphism of sheaves $\mu\colon h_X\times h_X \to h_X$ with domain equal to $h_{X^{\otimes 2}}$ (by \cref{Yoneda induces an embedding of partial morphisms}) such that for any $(x_1,x_2) \in X^{\otimes 2}(T)$ and $t\in T$ we have $\mu(x_1,x_2)(t) = x_1(t)\cdot x_2(t)$.
    \end{enumerate}
\end{dfn}
We will show that the sheaf of partial magmas $(h_X,\mu)$ defined above is a group chunk in the sense of \cref{section: group chunks}, and that the universal sheaf of groups given by \cref{group chunks to sheaves left adjoint} is representable as a $C$-type-interpretable set.

\begin{lem}\label{model theory local nontriviality result}
    The sheaf $h_X$ is locally nontrivial.
\end{lem}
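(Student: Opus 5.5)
We need to show: for every $T \in \typDef(\calT_C)$ there is a cover $T' \to T$ (a single $0$-definable surjection, in the sense of \cref{Grothendieck topology on type-definable sets definition}) with $h_X(T') \neq \emptyset$, i.e.\ with a $C$-definable map $T' \to X$. Here $X = p|_C(\bU)$ is a $C$-type-definable set, so it is an object of the site.

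The plan is to take $T' = T \times X$ with the projection $\rho_T\colon T\times X \to T$. The second projection $\rho_X\colon T\times X \to X$ is a $0$-definable map in $\calT_C$, so $\rho_X \in h_X(T\times X)$, and this set is therefore nonempty. It remains only to check that $\rho_T$ is a cover, i.e.\ a definable surjection. It is certainly $0$-definable, and it is surjective exactly when $X \neq \emptyset$. Since $p$ is a global partial type, it is consistent. So $p|_C$ is a consistent partial type over the small set $C$, and by $\kappa$-saturation of $\bU$ it is realized, giving $X \ne \emptyset$.

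The only delicate step is this nonemptiness, together with the edge case $T = \emptyset$. When $T$ is empty, every map from the empty set is surjective, and the unique map $\emptyset \to X$ lies in $h_X(\emptyset)$. So even there the empty cover works, or the cover $T\times X \to T$ works uniformly.

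The main (mild) obstacle is bookkeeping rather than mathematics. We must confirm that $T\times X$ is again $0$-type-definable in $\calT_C$: it is the product of type-definable sets, defined by the union of the two partial types in disjoint variables. We must also confirm that projections count as $0$-definable maps in the sense of \cref{type-definable sets definition}, which holds since a projection is the restriction of a $0$-definable set to $(T\times X)\times T$. With these checks, local nontriviality over every $T$ follows directly from \cref{local nontriviality definition}.
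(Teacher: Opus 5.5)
Your proof is correct and is the paper's argument: the cover $T\times X \to T$, with the second projection as a section of $h_X(T\times X)$. You also spell out what the paper's one-line proof leaves implicit, namely that $X = p|_C(\bU) \neq \emptyset$ by saturation, which is what makes $T\times X \to T$ surjective and hence a cover. Every map in the sieve it generates factors through $T\times X$, so restricting the projection gives the nonempty sections required by \cref{local nontriviality definition}.
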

\begin{proof}
    Indeed, for any type-definable set $T$ we have for example the definable projection $T\times X \to X$.
\end{proof}

\begin{lem}\label{model theory cancellation axiom}
    The presheaf of partial magmas $(h_X, \mu)$ defined above is cancellative.
\end{lem}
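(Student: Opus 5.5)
We have to show that for each permutation $(i,j,k)$ of $\{1,2,3\}$ with $i<j$, the projection $\rho_{ij}\colon \Gamma_\mu \to h_X^{(i)}\times h_X^{(j)}$ is an isomorphism onto a subobject. The plan is to identify this subobject concretely as a representable presheaf $h_{U_{ij}}$, with $U_{ij}$ a $C$-type-definable subset of $X\times X$. We then produce $C$-definable maps $\mu_{ij}\colon U_{ij}\to X$ whose graphs match $\Gamma_\mu$.

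First we describe $\Gamma_\mu$. By \cref{Yoneda induces an embedding of partial morphisms}, $\Gamma_\mu = h_{\Gamma}$, where $\Gamma=\{(a,b,a\cdot b): (a,b)\models p^{\otimes 2}|_C\}\subset X^3$ is $C$-type-definable. The case $(i,j)=(1,2)$ is immediate, since $\rho_{12}\colon\Gamma\to X^{\otimes 2}$ is a definable bijection with definable inverse $(a,b)\mapsto(a,b,a\cdot b)$. Hence $U_{12}=X^{\otimes 2}$ and $\mu_{12}=\mu$.

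For $(1,3)$ and $(2,3)$ we use generic cancellativity. For $(a,b)\models p^{\otimes 2}|_C$ we have $b\in\dcl(a,a\cdot b,C)$ and $a\in\dcl(b,a\cdot b,C)$. Set $U_{13}=\rho_{13}(\Gamma)$ and $U_{23}=\rho_{23}(\Gamma)$. These are projections of a type-definable set, so by saturation of $\bU$ they are $C$-type-definable. By generic cancellativity the map $\rho_{13}|_\Gamma$ is injective.

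The main obstacle is showing that the inverse is given by a single $C$-definable map, not merely pointwise by $\dcl$. The approach is compactness. For each $(a,b)$ there is a $C$-formula $\theta(x,z,y)$ defining a function with $\theta(a,a\cdot b,y)\leftrightarrow y=b$. If no finitely many of these formulas sufficed uniformly, the partial type saying "$(x,y,z)\in\Gamma$ and for each candidate $\theta$ either it fails to be a function at $(x,z)$ or it gives the wrong value" would be consistent. By saturation it would then be realized, which is a contradiction. So finitely many formulas $\theta_1,\dots,\theta_n$ cover $\Gamma$. We patch them into a single $C$-definable function $g$, using on each piece the first $\theta_l$ that is functional with value in the correct place. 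Functionality is definable, and correctness on $\Gamma$ is ensured by the type. This gives $\mu_{13}(x,z)=g(x,z)$ on $U_{13}$, and the symmetric argument gives $\mu_{23}$.

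Then $\Gamma$ is the graph of $g$ restricted to $U_{13}$, after permuting coordinates. Taking Yoneda, $\rho_{13}\colon h_\Gamma\to h_{U_{13}}$ is an isomorphism with inverse $(x,z)\mapsto(x,g(x,z),z)$. The same holds for $\rho_{23}$, and $h_{U_{ij}}\hookrightarrow h_X^2$ is monic because $U_{ij}\subset X^2$. The requirement that these really are isomorphisms of presheaves is checked on $T$-points: a $T$-point of $h_\Gamma$ is a $C$-definable map $T\to\Gamma$, and composing with the definable inverse maps shows bijectivity on $T$-points. This completes cancellativity in the sense of \cref{cancellativity definition}.
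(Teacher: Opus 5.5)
Your argument is correct in substance, but it proves more than the lemma needs and differs from the paper's route. In $\Presh(\typDef(\calT_C))$ every monomorphism is already an isomorphism onto its image subpresheaf. So cancellativity only requires $\rho_{13},\rho_{23}\colon h_\Gamma\to h_X\times h_X$ to be injective on $T$-points, and that is all the paper proves. If $(x_1,x_2,x_3)$ and $(x_1,x_2',x_3)$ are $T$-points of $\Gamma$, the pointwise uniqueness you also invoke (injectivity of $\rho_{13}|_\Gamma$) gives $x_2(t)=x_2'(t)$ for every $t\in T$, hence $x_2=x_2'$; the case of $\rho_{23}$ is symmetric.

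The step you call the main obstacle is therefore unnecessary for this lemma. That step is producing a single $C$-definable inverse, so that the image is the representable $h_{U_{13}}$ and $\mu_{13}$ is a definable map. It is nonetheless a genuinely stronger statement, and the paper relies on it tacitly later. It is needed when the paper defines a $T$-point $y$ with $x(y)=z$ ``by declaring $y(t)$ to be the unique $b$'' with $x(t)\cdot b=z(t)$. It is also needed when the paper asserts that $\ssim$ on $h_X^2$ is representable by a definable equivalence relation.

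If you keep that step, tighten the patching. ``The first $\theta_l$ that is functional with value in the correct place'' is not a definable recipe as written. Membership in $\Gamma$ is only type-definable, and several $\theta_l$ can be functional at the same $(a,c)$ with different values. The clean fix uses injectivity directly and makes the $\dcl$-formulas $\theta_l$ superfluous.

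Let $\Sigma_\Gamma$ be the partial type over $C$ defining $\Gamma$. By injectivity, $\Sigma_\Gamma(x,y,z)\cup\Sigma_\Gamma(x,y',z)\cup\{y\neq y'\}$ is not realized in $\bU$. By saturation it is then inconsistent. Hence some finite conjunction $\gamma$ of formulas from $\Sigma_\Gamma$ satisfies $\gamma(x,y,z)\wedge\gamma(x,y',z)\rightarrow y=y'$. The $C$-definable set $\{(x,z,y):\gamma(x,y,z)\}$ then meets $U_{13}\times X$ exactly in the graph of $\mu_{13}$, which is the definable inverse you want.
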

\begin{proof}
    Indeed, for any $0$-type-definable $T$ and $x_1,x_2, x_3 \in X^3(T)$ with $x_1\cdot x_2 = x_3$ we have by generic cancellativity that for each $t \in T$ that there exists a unique $t'$ such that $x_1(t) \cdot t' = x_3(t)$, so the map $x_2$ is uniquely determined by $x_1$ and $x_3$; similarly $x_1$ is determined by $x_2, x_3$.
\end{proof}

Now let us consider the induced left translation maps. Recall \cref{lambda notation}, \cref{lambda notation} and \cref{definition of tX}.
\begin{rem}\label{model theory lambda domain}
    For any $x\in X(T)$ we have by definition of $\lambda_x$ that
    \begin{align*}
        \dom \lambda_x = h_{X^{\otimes 2}} \times_{p_1,h_X, x} h_T,
    \end{align*}
    and in particular for any $0$-definable map $T' \to T$ we have
    \begin{align*}
        \dom \lambda_x(T' \to T) = \{y\in X(T') : \text{ for all } t \in T' \text{ we have } y(t) \models p|_{Cx|_{T'}(t)} \}.
    \end{align*}
\end{rem}

\begin{lem}
    Let $x,z \in X(T)$. Then there exists $y \in \dom \lambda_x(T)$ with $x(y) = z$ if and only if for every $t \in T$ we have $z(t) \models p|_{Cx(t)}$.
\end{lem}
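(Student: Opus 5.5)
The plan is to prove both directions pointwise, working in the fixed saturated model $\bU$ of $\calT_C$. I will use the description of $\dom\lambda_x$ from \cref{model theory lambda domain} (specialised to $T' = T$ with the identity) together with \cref{consequences of transitivity}.

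For the forward direction, suppose $y \in \dom\lambda_x(T)$ with $x(y) = z$. By \cref{model theory lambda domain}, for every $t\in T$ we have $y(t) \models p|_{Cx(t)}$. Since $x(t)\models p|_C$, the pair $(x(t),y(t))$ realizes $p^{\otimes 2}|_C$. Also $z(t) = x(t)\cdot y(t)$. Applying part (1) of \cref{consequences of transitivity} with $B = \emptyset$, $a = x(t)$, $c = y(t)$ gives $z(t)\models p|_{Cx(t)}$.

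For the converse, assume $z(t)\models p|_{Cx(t)}$ for all $t\in T$. Then for each $t$ the pair $(x(t),z(t))$ realizes $p^{\otimes 2}|_C$. Part (2) of \cref{consequences of transitivity} (again with $B=\emptyset$) produces some $b\models p|_{Cx(t)}$ with $x(t)\cdot b = z(t)$. By generic cancellativity, $b\in\dcl(x(t),z(t),C)$, so $b$ is unique; in particular it is determined by $x(t)$ and $z(t)$.

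The main obstacle is uniformity: the witness $b$ must come from a single $0$-definable (over $C$) map $y\colon T\to X$. My plan for this step is as follows.
\begin{enumerate}
\item Consider the $C$-definable relation $R(u,v,w)$ given by ``$u\cdot w = v$'', where $(\cdot)$ is the $C$-definable map, viewed as a formula on some definable superset.
\item For each $t$, the element $b$ is the unique realization of $R(x(t),z(t),w)\wedge p|_{Cx(t)}(w)$. By generic cancellativity it is in fact definable over $x(t),z(t),C$ by some $C$-formula $\theta(u,v,w)$ defining a function on the relevant pairs.
\item Use compactness, with $T$ type-definable and $x,z$ definable maps, to find finitely many such formulas that cover all of $X^{\otimes 2}$.
\item Glue these finitely many formulas by a definable case distinction into a single $C$-definable partial function $g(u,v)$ with $u\cdot g(u,v) = v$ on $\{(u,v) : (u,v)\models p^{\otimes 2}|_C\}$.
\end{enumerate}
Then set $y = g(x,z)$. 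This is a definable map $T\to X$, since $g(x(t),z(t)) = b$ realizes $p|_C$. Moreover $y(t)\models p|_{Cx(t)}$ for every $t$, so $y\in\dom\lambda_x(T)$ by \cref{model theory lambda domain}, and $x(y) = z$ pointwise, hence as maps.

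I expect the compactness argument in step 3 to be where care is needed. The existence of $\mu_{13}$ with domain exactly the type-definable set $X^{\otimes 2}$ ought to be available already from the proof of \cref{model theory cancellation axiom}. I would first try to cite or extract it from there, rather than redo the compactness.
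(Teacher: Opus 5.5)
Both of your pointwise directions are exactly the paper's proof: part (1) of \cref{consequences of transitivity} gives the forward direction, and part (2) together with generic cancellativity gives the converse. The paper then simply asserts that $t\mapsto b$ is a definable map. You are right that this is the step that needs an argument, but your sketch of it has two problems.

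First, the gluing in step 4 is not justified. A case distinction can only look at $(u,v)$. A rule such as ``use the first $\theta_i$ for which $\theta_i(u,v,\cdot)$ has a unique solution'' can return some $w\neq b$, for instance a $w$ that satisfies the ambient formula for $u\cdot w=v$ but with $w\not\models p|_{Cu}$. In that case $y(t)\neq b$ and $y\notin\dom\lambda_x(T)$. You cannot pick the correct branch without first using compactness to make the solution unique on a definable neighbourhood. That compactness step is the real content, and once you have it the $\theta_i$ are superfluous.

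Second, your fallback is circular. The proof of \cref{model theory cancellation axiom} only shows that the projections of the graph are injective. The statement that $\lambda_x^\dagger$ (i.e.\ $\mu_{13}$) has domain $X^{\otimes 2}$ is \cref{model theory lambda dagger domain}, and the paper deduces that from the present lemma.

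The fix uses only uniqueness of $b$ and saturation:
\begin{itemize}
\item Let $\Sigma=p^{\otimes 2}|_C$, and let $\mu(u,w,v)$ be a $C$-formula whose trace on $X^{\otimes 2}\times X$ is the graph of $(\cdot)$. Put $\Pi(u,v,w)=\Sigma(u,w)\cup\{\mu(u,w,v)\}$.
\item Uniqueness says that $\Pi(u,v,w)\cup\Pi(u,v,w')\cup\{w\neq w'\}$ is not realized in $\bU$. It is a type over the small set $C$, so by saturation it is inconsistent.
\item Hence some finite conjunction $\gamma(u,v,w)$ of members of $\Pi$ satisfies $\gamma(u,v,w)\wedge\gamma(u,v,w')\rightarrow w=w'$.
\item Define $g(u,v)$ as the unique $w$ with $\gamma(u,v,w)$. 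This is a $C$-definable partial function with $g(a,a\cdot b)=b$ for every $(a,b)\models\Sigma$, since $\Pi(a,a\cdot b,b)$ holds. So $y=g(x,z)$ is the required map.
\end{itemize}

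Note also that uniqueness does not follow from $b\in\dcl(x(t),z(t),C)$ alone, as your ``so $b$ is unique'' suggests. You need that two solutions $b,b'\models p|_{Cx(t)}$ have the same type over $Cx(t)$; this is the completeness already used in the proof of \cref{consequences of transitivity}. Then an automorphism over $Cx(t)$ taking $b$ to $b'$ fixes $z(t)=x(t)\cdot b$, and hence, since $b\in\dcl(x(t),z(t),C)$, it fixes $b$, so $b'=b$.
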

\begin{proof}
    Indeed, if there exists such a $y$, then for every $t \in T$ we have $y(t) \models p|_{Cx(t)}$ and then $z(t) \models p|_{Cx(t)}$ by \cref{consequences of transitivity}. Conversely, one can give a definable map $y$ from $T$ to $X$ by declaring for each $t \in T$ that $y(t)$ is the unique $b \in X$ such that $x(t) \cdot b = z(t)$; the existence and uniqueness of such a $b$ is guaranteed by \cref{consequences of transitivity} and generic cancellativity.
\end{proof}

From the above lemma and by definition of $\lambda_x^\dagger$ we immediately get:
\begin{cor}\label{model theory lambda dagger domain}
    For any $x\in X(T)$ and $0$-definable map $T' \to T$ we have
    \begin{align*}
        \dom \lambda_x^\dagger(T' \to T) = \dom \lambda_x(T' \to T) = \{y\in X(T') : \text{ for all } t \in T' \text{ we have } y(t) \models p|_{Cx|_{T'}(t)} \}.
    \end{align*}
\end{cor}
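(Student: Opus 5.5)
The plan is to unwind the definition of $\lambda_x^\dagger$ from \cref{lambda notation} and reduce to the preceding lemma, applied over $T'$ instead of $T$. The second equality (the explicit description of $\dom\lambda_x(T'\to T)$) is just \cref{model theory lambda domain}, so all the work is in showing $\dom\lambda_x^\dagger(T'\to T)$ equals the same set.

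First, I would describe $\dom\lambda_x^\dagger$ concretely. By \cref{parhomequivalence}, for $z \in X_T(T'\to T) = X(T')$ we have $z \in \dom\lambda_x^\dagger(T'\to T)$ if and only if $(x|_{T'},z)\in U_{13}(T')$, where $U_{13}=\dom\mu_{13}$ is the subobject of $X_1\times X_3$ from \cref{cancellativity definition}. By cancellativity (\cref{model theory cancellation axiom}), the projection $s_{13}\colon\Gamma_\mu\to X_1\times X_3$ is an isomorphism onto $U_{13}$. Hence any $T'$-point of $U_{13}$ lifts uniquely to a $T'$-point of $\Gamma_\mu$. So $(x|_{T'},z)\in U_{13}(T')$ holds if and only if there is $y\in X(T')$ with $(x|_{T'},y,z)\in\Gamma_\mu(T')$. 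Equivalently, there is $y\in\dom\lambda_{x|_{T'}}(T')$ with $x|_{T'}(y)=z$.

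Second, I would apply the preceding lemma with $T'$ in place of $T$ and with the pair $x|_{T'}, z\in X(T')$. It says such a $y$ exists if and only if $z(t)\models p|_{C\,x|_{T'}(t)}$ for every $t\in T'$. This is exactly the membership condition in \cref{model theory lambda domain}.

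Combining the two steps gives the first equality, and \cref{model theory lambda domain} gives the second. The only point needing care is the first step: one must check that a $T'$-point of $U_{13}$ really comes from a $T'$-point of $\Gamma_\mu$, rather than only pointwise on elements of $T'$. This is precisely what the isomorphism $\Gamma_\mu\simeq U_{13}$ supplies, so I expect no genuine obstacle.
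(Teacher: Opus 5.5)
Your proposal is correct and follows the paper's route: the paper also obtains the corollary directly from the preceding lemma (applied over $T'$) together with the definition of $\lambda_x^\dagger$ via $\mu_{13}$, and the second equality is just \cref{model theory lambda domain}. Your step lifting $T'$-points of $U_{13}$ to $T'$-points of $\Gamma_\mu$ by cancellativity makes explicit what the paper leaves implicit.
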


\begin{lem}\label{independence implies products are defined}
    Let $(x_1,e_1),\dots,(x_{n},e_{n})\in (X \times \{\pm1\})(T)$, and $z \in X(T)$. Suppose that for every $t\in T$ we have $z(t) \models p|_{Cx_1(t),\dots,x_n(t)}$. Then we have
    \begin{align*}
        z \in (\dom \lambda_{x_1}^{e_1} \circ \dots \circ \lambda_{x_n}^{e_n})(T).
    \end{align*}
\end{lem}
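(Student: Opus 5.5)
We argue by induction on $n$, working pointwise in $t \in T$ and using \cref{model theory lambda domain} and \cref{model theory lambda dagger domain}. These describe the domains of $\lambda_x$ and $\lambda_x^\dagger$ concretely: a section $y \in X(T)$ lies in $\dom \lambda_x^{e}(T)$ exactly when $y(t) \models p|_{Cx(t)}$ for every $t \in T$. The domain of a composition of partial morphisms is the iterated fiber product of domains. So $z \in \dom(\lambda_{x_1}^{e_1}\circ\dots\circ\lambda_{x_n}^{e_n})(T)$ means two things. First, $z \in \dom\lambda_{x_n}^{e_n}(T)$. Second, $\lambda_{x_n}^{e_n}(z) \in \dom(\lambda_{x_1}^{e_1}\circ\dots\circ\lambda_{x_{n-1}}^{e_{n-1}})(T)$.

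It is convenient to prove a slightly stronger statement that carries through the induction. Let $B$ be any small set. Suppose that for every $t$ we have $z(t)\models p|_{C B x_1(t)\dots x_n(t)}$. Then $z$ lies in the domain of the composite, and moreover $w := \lambda_{x_1}^{e_1}\circ\dots\circ\lambda_{x_n}^{e_n}(z)$ satisfies $w(t)\models p|_{CBx_1(t)\dots x_n(t)}$ for every $t$. We will only need the case where $B$ absorbs the remaining parameters, so we may take $B=\emptyset$ and keep all of $x_1(t),\dots,x_n(t)$ in the base.

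The heart of the argument is the one-step claim. Let $a = x_n(t)$, $c = z(t)$, and $B' = \{x_1(t),\dots,x_{n-1}(t)\}$. By hypothesis $c \models p|_{CB'a}$, and in particular $c\models p|_{Ca}$, so $z\in\dom\lambda_{x_n}^{e_n}(T)$.

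If $e_n = 1$, we must show that $a\cdot c$ realizes $p|_{CB'a}$. For this we need $(a,c)\models p^{\otimes 2}|_{B'C}$, which requires $a\models p|_{CB'}$. The hypothesis does not give this; it only constrains $z(t)$. So we avoid needing information about $a$. By generic transitivity, $(\lambda_a)_*p = p$ for $a\models p|_C$. Since $x_n(t)\in X = p|_C(\bU)$, we do have $a\models p|_C$. Because $p$ is $C$-definable and $\lambda_a$ is $Ca$-definable, the pushforward restricts correctly. From $c\models p|_{CB'a}$ we then get $a\cdot c\models ((\lambda_a)_*p)|_{CB'a} = p|_{CB'a}$. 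This is exactly the argument of \cref{consequences of transitivity}(1), with its unnecessary hypothesis on $a$ dropped. Restriction over $\dcl$, via \cref{restriction can be taken over definably closed}, handles the fact that $a\cdot c$ is defined over $Cac$.

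If $e_n=-1$, let $b = a^\dagger(c)$, the unique $b$ with $a\cdot b = c$ (this uses cancellativity and generic transitivity). We must show $b\models p|_{CB'a}$. Suppose instead that $b\not\models p|_{CB'a}$. Then some formula $\neg\phi(x,d)$ with $\phi(x,d)\in p|_{CB'a}$ holds of $b$. Since $p$ is a definable partial type and $(\lambda_a)_*p=p$, the set $\lambda_a^{-1}$ applied to $p|_{CB'a}$ lies inside $p|_{CB'a}$. Combining these gives a contradiction. The cleaner route is to use the same invariance from the other side: for $b$ defined from $a,c$, we have $b\equiv_{CB'a}$ a realization of $p|_{CB'a}$. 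Concretely, take $b'\models p|_{CB'a}$; by the first case, $a\cdot b'\models p|_{CB'a}$, so $a\cdot b'\equiv_{CB'a} c$. An automorphism over $CB'a$ sending $a\cdot b'$ to $c$ sends $b'$ to $b$ by uniqueness, hence $b\models p|_{CB'a}$.

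Each step of the induction also needs the pointwise choice $w$ to be $0$-definable over $T$ (in $\calT_C$). This holds because the $\lambda$ maps are definable partial morphisms. Applying the claim successively with shrinking $B'$ finishes the induction.

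I expect the main obstacle to be the $e=-1$ step, together with checking that the pushforward identity $(\lambda_a)_*p=p$ can be used over the larger base $CB'a$ even though $a$ is not assumed generic over $B'$.
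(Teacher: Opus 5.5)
Your proof is correct and is essentially the paper's. You induct on $n$, get $z\in\dom\lambda_{x_n}^{e_n}$ from \cref{model theory lambda domain} and \cref{model theory lambda dagger domain}, and push genericity over $Cx_1(t)\dots x_{n-1}(t)$ through one step by the argument of \cref{consequences of transitivity}. Your observation that its hypothesis $a\models p|_{CB}$ is never used (only $a\models p|_C$ and $c\models p|_{CBa}$ are needed) is right, and it patches the slightly loose citation in the paper. So does your explicit use of cancellativity to identify $a^\dagger(c)$ with the element that part (2) produces.

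Delete the abortive first attempt at the $e_n=-1$ case, which is not a valid argument as written, and keep only the automorphism argument. Like the paper's proof of \cref{consequences of transitivity}, that argument relies on any two realizations of $p|_{CB'a}$ being conjugate over $CB'a$.
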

\begin{proof}
    Indeed, for every $t\in T$ we have by \cref{model theory lambda domain} and \cref{model theory lambda dagger domain} that $z \in \dom \lambda_{x_n}^{e_n}$, and for each $t \in T$ we have by \cref{consequences of transitivity} that
    $\lambda_{x_n}^{e_n}(z)(t) \models p|_{Cx_1(t),\dots,x_{n - 1}(t)}$, so we get the result by induction.
\end{proof}

\begin{lem}\label{independent elements can be multiplied before}
    Let $(x,y,z) \in X^{\otimes 3}(T)$. Then for $e \in \{\pm1\}$ if we denote $\alpha = \lambda_{x}^{e}$ we have
    \begin{align*}
        z \in (\dom \lambda_{\alpha(y)})(T).
    \end{align*}
\end{lem}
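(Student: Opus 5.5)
First I will check that $\alpha(y)$ makes sense, i.e.\ that $y \in (\dom \lambda_x^e)(T)$. Since $(x,y,z) \models p^{\otimes 3}|_C$ pointwise, for every $t\in T$ we have $y(t) \models p|_{Cx(t)}$, and \cref{model theory lambda domain} (for $e=1$) or \cref{model theory lambda dagger domain} (for $e=-1$) then gives $y \in \dom\lambda_x^e(T)$. Write $w = \alpha(y) \in X(T)$. By \cref{model theory lambda domain} applied to $w$, the claim $z \in \dom\lambda_w(T)$ reduces to the pointwise statement that $z(t) \models p|_{Cw(t)}$ for every $t \in T$.

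Next I will show that $w(t) \in \dcl(C, x(t), y(t))$. For $e = 1$ this is immediate, since $w(t) = x(t)\cdot y(t)$ and $(\cdot)$ is $C$-definable. For $e=-1$, $w(t)$ is the unique $b$ with $x(t)\cdot b = y(t)$. Uniqueness comes from generic cancellativity, and $b \in \dcl(C,x(t),y(t))$ because $b \in \dcl(C, x(t), x(t)\cdot b)$. The one subtlety is that cancellativity is stated for pairs realizing $p^{\otimes 2}|_C$. Here $(x(t),b) \models p^{\otimes 2}|_C$ holds since $y \in \dom\lambda_x^\dagger$ means $y$ is in the image of $\lambda_x$ on a generic pair; \cref{consequences of transitivity} supplies such a $b$ with $b \models p|_{Cx(t)}$.

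Finally, since $(x(t),y(t),z(t)) \models p^{\otimes 3}|_C$, the Morley product gives $z(t) \models p|_{C x(t) y(t)}$. By \cref{restriction can be taken over definably closed}, with $A = \{x(t),y(t)\}$, this upgrades to $z(t) \models p|_{\dcl(C x(t) y(t))}$. Since $w(t)$ lies in this set, $z(t) \models p|_{Cw(t)}$, which is exactly what was needed.

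The main obstacle I expect is bookkeeping in the $e=-1$ case: making sure the element $b$ really forms a generic pair with $x(t)$, so that cancellativity and the definable-closure conclusion apply. \Cref{consequences of transitivity} handles this, and the rest is routine.
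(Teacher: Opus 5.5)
Your proposal is correct and is the paper's argument: the paper's proof observes that $z(t)\models p|_{Cx(t)y(t)}$ upgrades to $z(t)\models p|_{C\alpha(y)(t)}$ via \cref{restriction can be taken over definably closed}, because $\alpha(y)(t)\in\dcl(C,x(t),y(t))$. You also spell out checks the paper leaves implicit: that $y\in\dom\lambda_x^e$, and, for $e=-1$, the $\dcl$ membership via generic cancellativity. There the genericity of the pair $(x(t),\alpha(y)(t))$ is already automatic from the definition of $\mu_{13}$, so the appeal to \cref{consequences of transitivity} is unnecessary.
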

\begin{proof}
    Indeed, it follows immediately from \cref{restriction can be taken over definably closed} that for $t\in T$ that we have $z(t) \models p|_{Cx(t)y(t)}$ implies $z(t) \models p|_{C\alpha(y)(t)}$.
\end{proof}

\begin{lem}\label{model-theoretic generic strong associativity}
    For any $a,b,y \in X^{\otimes 3}(T)$ and $e\in \{\pm1\}$, let $\alpha = \lambda_{a}^{e}$. Then $y\in \dom \alpha \circ \lambda_b \cap \dom \lambda_{\alpha(b)}$ and $\alpha(b(y)) = \alpha(b)(y)$.
\end{lem}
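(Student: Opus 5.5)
The plan is to work pointwise. For each $t\in T$ we have $(a(t),b(t),y(t))\models p^{\otimes 3}|_C$, and we will use \cref{model theory lambda domain}, \cref{model theory lambda dagger domain}, \cref{consequences of transitivity}, and generic associativity. The two sides are $0$-definable maps $T\to X$, so it suffices to prove the domain memberships and the equality of values elementwise at each $t$.

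\textbf{Domain memberships.} Since $y(t)\models p|_{Ca(t)b(t)}$, \cref{independence implies products are defined}, applied to the sequence $((a,e),(b,1))$, gives $y\in\dom\alpha\circ\lambda_b$. For $y\in\dom\lambda_{\alpha(b)}$ we use \cref{independent elements can be multiplied before}, which applies directly to the triple $(a,b,y)$.

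\textbf{Case $e=1$.} Here $\alpha(b)(y)=(a\cdot b)\cdot y$ and $\alpha(b(y))=a\cdot(b\cdot y)$ pointwise. These agree by generic associativity, since $(a(t),b(t),y(t))\models p^{\otimes 3}|_C$.

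\textbf{Case $e=-1$.} This is the main obstacle. Put $d=a^\dagger(b)$, so that pointwise $a\cdot d=b$. The first step is to check that $(a(t),d(t),y(t))\models p^{\otimes 3}|_C$.
\begin{itemize}
\item $d(t)\models p|_{Ca(t)}$ holds by the domain description of $\lambda_a$: $a^\dagger(b)$ lies in $\dom\lambda_a$ by \cref{dagger behaves linke an inverse}.
\item $y(t)\models p|_{Ca(t)d(t)}$ holds because $d(t)\in\dcl(C,a(t),b(t))$ by generic cancellativity, together with \cref{restriction can be taken over definably closed}.
\end{itemize}
Generic associativity then gives $a\cdot(d\cdot y)=(a\cdot d)\cdot y=b\cdot y$.

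From this, \cref{strong associativity follows form weak property} yields $a^\dagger(b(y))=a^\dagger(b)(y)$. To apply it, we still need to verify its hypotheses:
\begin{itemize}
\item $y\in\dom\lambda_d\cap\dom\lambda_a^\dagger\circ\lambda_b$, which follows from the domain computations above;
\item $d(y)\in\dom\lambda_a$, which follows from \cref{consequences of transitivity}, giving $d(t)\cdot y(t)\models p|_{Ca(t)}$ since $(d(t),y(t))\models p^{\otimes 2}|_{Ca(t)}$;
\item $a(a^\dagger(b)(y))=a(a^\dagger(b))(y)$, which is exactly the associativity identity just obtained.
\end{itemize}
Alternatively, we can argue directly: $a\cdot(d\cdot y)=b\cdot y$, so by cancellation $a^\dagger(b\cdot y)=d\cdot y$.
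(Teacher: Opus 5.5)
Your proof is correct and follows essentially the same route as the paper: the domain memberships come from \cref{independence implies products are defined} and \cref{independent elements can be multiplied before}, and the case $e=1$ comes directly from generic associativity; for $e=-1$ you show $(a,a^\dagger(b),y)\in X^{\otimes 3}(T)$ via \cref{restriction can be taken over definably closed}, apply the $e=1$ identity to that triple, and conclude with \cref{strong associativity follows form weak property}. You usefully make explicit the generic-cancellativity step $a^\dagger(b)(t)\in\dcl(C,a(t),b(t))$, which the paper leaves implicit; note also that $a^\dagger(b)\in\dom\lambda_a$ is the converse of what \cref{dagger behaves linke an inverse} states, though it follows just as directly from cancellativity, or from \cref{consequences of transitivity} as the paper uses it.
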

\begin{proof}
    The fact that $y$ lies in the relevant domains follows from \cref{independence implies products are defined} and \cref{independent elements can be multiplied before}.
    For the case $e = 1$, the generic associativity assumption immediately gives $a(b(y)) = a(b)(y)$.

    For the case $e = -1$ we will use \cref{strong associativity follows form weak property}. By \cref{consequences of transitivity} we have $(a,a^\dagger(b)) \in X^{\otimes 2}(T)$ and then by \cref{restriction can be taken over definably closed} it follows that $(a,a^\dagger(b),y) \in X^{\otimes 3}$; it follows by the $e = 1$ case that $a^\dagger(b)(y) \in \dom \lambda_a$ and $a(a^\dagger(b)(y)) = a(a^\dagger(b))(y)$, so we conclude by \cref{strong associativity follows form weak property}.
\end{proof}

With the above results in hand we can easily prove \cref{statement of model theoretic group chunk theorem}:
\begin{proof}[Proof of \cref{statement of model theoretic group chunk theorem}]

    We know that $h_X$ is separated (\cref{representable presheaves are sheaves in typDef}), locally nontrivial (\cref{model theory local nontriviality result}), and cancellative (\cref{model theory cancellation axiom}).

    Let us verify that the domain is large (\cref{large domain definition}). Indeed, given any $T\in \typDef(\calT)$ and pairs $(x_1,e_1),\dots,(x_{n},e_{n})\in (X \times \{\pm1\})(T)$, consider the type-definable set
    \begin{align*}
        S := \{(t,s)\in T\times X : s \models p|_{x_1(t)\dots x_n(t)}\},
    \end{align*}
    together with the projection $q_X\colon S \to X$. Observe that $S$ is a cover of $T$ (since for any $t_1,\dots t_n$, the type $ p|_{x_1(t)\dots x_n(t)}$ is consistent), and by \cref{independence implies products are defined} we have
    \begin{align*}
        q_X \in \bigcap_{i,j \in\{1,\dots,n\}} \dom \lambda_{x_i}^{e_i} \circ \lambda_{x_j}^{e_j}.
    \end{align*}

    Next we show that $(X,\mu)$ is strongly associative (\cref{strong associativity definition}). Let $(a,b) \in X^{\otimes 2}(T)$, $e \in \{\pm1\}$, $\alpha = \lambda_{x}^{e}$ and $y \in (\dom \alpha \circ \lambda_b \cap \dom \lambda_{\alpha(b)})(T)$, we need to show $\alpha(b(y)) = \alpha(b)(y)$. Consider the type-definable set
    \begin{align*}
        S := \{(t,s)\in T\times X : s \models p|_{a(t)b(t)c(t)}\},
    \end{align*}
    together with the projection $z\colon S \to X$. We verify that $z$ witnesses the required properties from \cref{associativity follows from weak property}. Indeed, writing $a,b,y$ for $a|_{S}, b|_S, c|_S$ and checking for each $(t,s) \in S$ we see that we have $(b,y,z) \in X^{\otimes 3}(S)$. It follows from \cref{restriction can be taken over definably closed} that also $(a,b(y),z), (a(b),y,z) \in X^{\otimes 3}(S)$ and $(a(b(y)),z), (a(b)(y),z) \in X^{\otimes 2}(T)$. By \cref{consequences of transitivity} we also have that $(a,b,y(z)) \in X^{\otimes 3}(S)$. Then it follows from \cref{independence implies products are defined} that $z$ is in the domain of all the relevant partial functions, and from \cref{model-theoretic generic strong associativity} that all the relevant identities hold, so we conclude by \cref{associativity follows from weak property} that $\alpha(b(y)) = \alpha(b)(y)$ as required.

    Finally, observe that the equivalence relation $\ssim$ on $(h_X)^2$ defined in \cref{X2 ssim definition} is representable as a definable equivalence relation $X$ (which we also denote by $\ssim$). Since $h_X$ is locally nontrivial, it follows (\cref{local nontrivial X sheafification remark}) that the associated sheaf of groups is representable as an interpretable set by a model-theoretic quotient $X^2/\ssim$. The result follows.
\end{proof}

%% file: Chapters/Scheme-Theoretic_Preliminaries/Main_Schemes.tex
\chapter{Scheme-Theoretic Preliminaries}\label{Scheme-Theoretic Preliminaries chapter}
In this chapter we collect some results from scheme theory which we will need later. This chapter is probably best skipped on first reading and referred back to as needed, though readers unfamiliar with the notion of $S$-density (or universal schematic density relative to $S$, in the parlance of \cite{EGA}) may find it beneficial to look through \cref{section: S-dominance and density} and subsequent sections.

Our basic reference is the Stacks Project which the reader should consult for missing definitions and derivations.

Let us recall the following result for later reference:
\begin{prop}[{\cite[\href{https://stacks.math.columbia.edu/tag/02FV}{Tag 02FV}]{stacks-project}}]\label{composition criteria for being locally of finite presentation}
    Let $f\colon X \to Y$ be a morphism of schemes over a scheme $S$. If $X$ is locally of finite presentation over $S$, and $Y$ is locally of finite type over $S$, then $f$ is locally of finite presentation.
\end{prop}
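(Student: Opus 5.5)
The plan is to reduce to the affine case and then to a statement about ring maps; this is the standard route. First I would use that ``locally of finite presentation'' is local on source and target. Pick $x \in X$, and affine opens $W = \operatorname{Spec} R \subset S$, $V = \operatorname{Spec} B \subset Y$ lying over $W$, and $U = \operatorname{Spec} C \subset X$ with $f(U) \subset V$ and $x \in U$. Since $X \to S$ is locally of finite presentation, $R \to C$ is of finite presentation. Since $Y \to S$ is locally of finite type, $R \to B$ is of finite type. It then suffices to show that $B \to C$ is of finite presentation, because such opens $U$ cover $X$.

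For the algebraic step, choose a surjection $R[y_1,\dots,y_m] \to B$. Since $R \to C$ is of finite type, so is $B \to C$: pick generators $c_1,\dots,c_n$ of $C$ over $R$, which then also generate $C$ over $B$. This gives a surjection
\begin{align*}
  R[y_1,\dots,y_m,x_1,\dots,x_n] \twoheadrightarrow C .
\end{align*}
This surjection factors through $B[x_1,\dots,x_n] \twoheadrightarrow C$.

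Because $C$ is finitely presented over $R$, the kernel of the displayed surjection is finitely generated. This holds for any choice of finite presentation, by the standard lemma that finite presentation is independent of the chosen generators (Stacks Tag 00R2 / 00F4). The kernel of $B[x] \to C$ is the image of that finitely generated kernel under the surjection $R[y,x] \to B[x]$. Hence it is finitely generated, so $B \to C$ is of finite presentation.

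The only substantive point, and the step I expect to need care, is the independence of the generating set in the definition of finite presentation: if $C = R[z_1,\dots,z_k]/I$ with $I$ finitely generated, then for any other finite generating set the kernel is again finitely generated. I would prove this by comparing both presentations through the joint presentation in $R[z,y,x]$, expressing each new generator as a polynomial in the old ones and conversely. Once that is in place, the rest is routine bookkeeping, and the global statement follows by gluing, since finite presentation of a morphism is local on source and target.
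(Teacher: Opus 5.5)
Your proposal is correct and follows the standard argument behind the paper's citation; the paper gives no proof of its own. Stacks Tag 02FV reduces to affine opens and then to the algebra fact that if $R\to C$ is of finite presentation and $R\to B$ is of finite type, then $B\to C$ is of finite presentation, and this is proved exactly as you do: $\ker(B[x]\to C)$ is the image of the finitely generated kernel of $R[y,x]\to C$. Two small points: your affine reduction uses that these finiteness properties of morphisms hold for \emph{every} pair of affine opens $U\to W$, not merely for some affine cover; and no gluing is needed at the end, since being locally of finite presentation is checked locally around each point.
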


\input{Chapters/Scheme-Theoretic_Preliminaries/Flat_Morphisms.tex}

\input{Chapters/Scheme-Theoretic_Preliminaries/Schematic_Images.tex}
\input{Chapters/Scheme-Theoretic_Preliminaries/Hilbert_Schemes.tex}
\input{Chapters/Scheme-Theoretic_Preliminaries/S-Dominance_and_S-Density.tex}

\input{Chapters/Scheme-Theoretic_Preliminaries/Schematic_Dominance_and_Schematic_Image.tex}

\input{Chapters/Scheme-Theoretic_Preliminaries/S-Dominance_and_Flatness.tex}
\input{Chapters/Scheme-Theoretic_Preliminaries/Dominance_vs_Schematic_Dominance_vs_S-Dominance.tex}

%% file: Chapters/Scheme-Theoretic_Preliminaries/Flat_Morphisms.tex
\section{Flat Morphisms}
The following two results are immediate from the definition of flatness and faithful flatness. Recall \cref{restrictions of morphisms}.
\begin{lem}\label{restrictions of flat morphisms to opens are flat}
    Let $\phi\colon X \to Y$ be a flat morphism. Then for any opens $V \subset Y$ and $U \subset \phi\inv(V)$, the restriction $\phi|_{[U \to V]}$ is flat.
\end{lem}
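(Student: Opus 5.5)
The plan is to reduce the statement to the local (affine) definition of flatness. Flatness of a morphism of schemes $\phi\colon X \to Y$ means that for every $x \in X$ the local ring $\calO_{X,x}$ is a flat $\calO_{Y,\phi(x)}$-module. The whole argument rests on the observation that passing to open subschemes does not change these local rings.

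First, we check that the restriction $\phi|_{[U \to V]}$ makes sense in the sense of \cref{restrictions of morphisms}. Since $U \subset \phi\inv(V)$, the composite $U \hookrightarrow X \xrightarrow{\phi} Y$ factors through the open immersion $V \hookrightarrow Y$. Open immersions are monomorphisms, so this factorization is unique and yields $\phi|_{[U \to V]}\colon U \to V$.

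Next, fix a point $u \in U$ and let $v = \phi(u) \in V$. Because $U$ is open in $X$, we have $\calO_{U,u} = \calO_{X,u}$. Likewise, because $V$ is open in $Y$, we have $\calO_{V,v} = \calO_{Y,v}$. Under these identifications, the induced local homomorphism $\calO_{V,v} \to \calO_{U,u}$ is exactly $\phi^\#_u\colon \calO_{Y,v} \to \calO_{X,u}$. This map is flat by hypothesis, so $\phi|_{[U \to V]}$ is flat at every point of $U$, and hence flat.

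An alternative route uses standard closure properties. We can write $\phi|_U = \phi \circ (U \hookrightarrow X)$, which is a composition of flat morphisms because open immersions are flat. Then $\phi|_U$ agrees with $(V \hookrightarrow Y)\circ \phi|_{[U\to V]}$, and flatness can be tested after composing with an open immersion on the target, since it is local on the target. Either route works; the only point that needs any care is the identification of local rings on open subschemes, which is immediate from the definition of the structure sheaf on an open subscheme. There is no real obstacle here.
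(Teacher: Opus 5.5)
Your proof is correct and is essentially the paper's argument; the paper gives no separate proof and calls the lemma immediate from the definition of flatness. The key point is the one you identify: the local rings of the open subschemes $U$ and $V$ coincide with those of $X$ and $Y$, so the stalk maps of $\phi|_{[U \to V]}$ are exactly those of $\phi$.
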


\begin{lem}\label{restrictions of faithfully flat morphisms to opens are faithfully flat}
    Let $\phi\colon X \to Y$ be a faithfully flat morphism. Then for any open $V \subset Y$ the restriction $\phi|_{[\phi\inv(V) \to V]}$ is faithfully flat.
\end{lem}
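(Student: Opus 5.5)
The plan is to reduce the statement to flatness and surjectivity separately, working locally and using the definitions directly.

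For flatness, I will use the stalk-wise characterization. Take $x \in U$ with $\phi(x) \in V$. Since $U \subset X$ and $V \subset Y$ are open, the local rings satisfy
\begin{align*}
    \mathcal{O}_{U,x} = \mathcal{O}_{X,x}, \qquad \mathcal{O}_{V,\phi(x)} = \mathcal{O}_{Y,\phi(x)}.
\end{align*}
The local homomorphism induced by $\phi|_{[U \to V]}$ on stalks is the same map $\mathcal{O}_{Y,\phi(x)} \to \mathcal{O}_{X,x}$ as for $\phi$. This map is flat by hypothesis, so every stalk map of the restriction is flat. This gives \cref{restrictions of flat morphisms to opens are flat}.

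For \cref{restrictions of faithfully flat morphisms to opens are faithfully flat}, set $U = \phi\inv(V)$. Flatness of the restriction then follows from the first lemma, so only surjectivity remains. Given $y \in V$, surjectivity of $\phi$ gives some $x \in X$ with $\phi(x) = y$. Then $x \in \phi\inv(V) = U$ automatically, so $\phi|_{[\phi\inv(V) \to V]}$ is surjective and hence faithfully flat.

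None of these steps is hard. The only point needing care is the bookkeeping that identifies the restricted morphism with $\phi$ on stalks. One should also note why the second lemma requires $U = \phi\inv(V)$ rather than an arbitrary open $U \subset \phi\inv(V)$: for a smaller $U$, surjectivity onto $V$ can fail.
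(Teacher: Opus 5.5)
Your proof is correct and matches the paper's approach. The paper records this lemma, together with its flat companion, as immediate from the definitions of flatness and faithful flatness. Your argument is exactly that unwinding: the stalk maps of the restriction coincide with those of $\phi$, and surjectivity onto $V$ holds because every preimage of a point of $V$ lies in the preimage of $V$.
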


\begin{lem}[{\cite[\href{https://stacks.math.columbia.edu/tag/01UA}{Tag 01UA}]{stacks-project}}]\label{a flat morphism of schemes which is locally of finite presentation is universally open}
    A flat morphism of schemes which is locally of finite presentation is universally open.
\end{lem}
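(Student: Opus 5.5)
The statement is the final lemma: a flat morphism $f\colon X\to Y$ that is locally of finite presentation is universally open. I would reduce it to two separate facts. First, flatness and local finite presentation are both stable under arbitrary base change. Second, such a morphism is already open. Together these give universal openness: for any $Y'\to Y$, the base change $X\times_Y Y'\to Y'$ is again flat and locally of finite presentation, hence open.

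Openness is local on both source and target, so I would reduce to $f\colon \operatorname{Spec} B\to \operatorname{Spec} A$ with $B$ flat and of finite presentation over $A$. Because the principal opens $D(g)\subset\operatorname{Spec} B$ are again flat and of finite presentation over $A$, it suffices to show that the image $f(\operatorname{Spec} B)$ is open.

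For that I would combine two ingredients.
\begin{enumerate}
\item Chevalley's theorem: since $f$ is of finite presentation, the image is constructible.
\item Going-down for flat ring maps: the image is stable under generalization.
\end{enumerate}
A constructible subset of an affine scheme that is stable under generalization is open, by the standard topological lemma on spectral spaces. This gives openness of the image.

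The main obstacle is Chevalley's theorem in the non-Noetherian setting. There I would use absolute Noetherian approximation: write $A$ as a filtered colimit of finitely generated $\mathbb{Z}$-algebras, descend $B$ to some finite stage, and apply the Noetherian version there. I would not re-prove this; I would cite it from the Stacks Project, as the paper does for this statement.
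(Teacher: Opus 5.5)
Your proposal is correct and is essentially the argument behind the paper's treatment: the paper gives no proof of its own beyond citing Stacks Tag 01UA, and the Stacks Project proof likewise gets universal openness from stability of flatness and local finite presentation under base change, proving openness on affines exactly as you do (Chevalley for constructibility of the image, going-down for flat ring maps for stability under generalization, and the spectral-space lemma that such a set is open). The only difference is cosmetic: the Stacks Project proves non-Noetherian Chevalley directly rather than by absolute Noetherian approximation, and since you cite it, either justification is fine.
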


\begin{lem}[{\cite[\href{https://stacks.math.columbia.edu/tag/02VW}{Tag 02VW}]{stacks-project}}]\label{A faithfully flat morphism is an epimorphism of schemes}
    A faithfully flat morphism is an epimorphism of schemes
\end{lem}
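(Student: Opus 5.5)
The goal is to show that a faithfully flat morphism $\phi\colon X \to Y$ is an epimorphism of schemes: if $g_1,g_2\colon Y \to Z$ satisfy $g_1\circ\phi = g_2\circ\phi$, then $g_1 = g_2$. We will prove this by checking that $g_1$ and $g_2$ agree on underlying topological spaces and agree on structure sheaves.

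\emph{Topological spaces.} Faithful flatness gives surjectivity of $\phi$ on points. So for every $y\in Y$ we can pick $x$ with $\phi(x)=y$, and then $g_1(y)=g_1(\phi(x))=g_2(\phi(x))=g_2(y)$. Hence $g_1$ and $g_2$ agree as continuous maps.

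\emph{Reduction to affines.} Fix $y\in Y$ and set $z=g_1(y)=g_2(y)$. Choose an affine open $W\ni z$ in $Z$ and an affine open $V=\operatorname{Spec}B \ni y$ contained in $g_1^{-1}(W)=g_2^{-1}(W)$. Both restrictions $g_i|_V$ factor through $W=\operatorname{Spec}C$, so they correspond to ring maps $h_1,h_2\colon C\to B$. Since morphisms of schemes are local on the source, it suffices to show $h_1 = h_2$ for each such $V$.

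\emph{Reduction to a ring statement.} By \cref{restrictions of faithfully flat morphisms to opens are faithfully flat}, $\phi^{-1}(V)\to V$ is faithfully flat. Cover $\phi^{-1}(V)$ by affines $U_j=\operatorname{Spec}A_j$. Then $A:=\prod_j A_j$ is flat over $B$, since each $A_j$ is flat and a product of flat modules is flat when it is a direct sum. This last point only holds for finitely many $j$, so we must avoid an infinite product. The fix is to work with $\bigoplus$ on the module level: we only need injectivity of $B\to\prod_j A_j$.

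The key claim is therefore that $B\to\prod_j A_j$ is injective. Suppose $b\mapsto 0$. Then $b$ vanishes in the stalks $\mathcal O_{X,x}$ for all $x$ over $V$. Each map $B_{\mathfrak p}\to\mathcal O_{X,x}$ is a flat local homomorphism, hence faithfully flat, hence injective. Surjectivity of $\phi$ ensures every prime $\mathfrak p$ of $B$ is hit, so $b$ vanishes in every $B_{\mathfrak p}$, and therefore $b=0$.

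\emph{Conclusion.} From the hypothesis $g_1\circ\phi=g_2\circ\phi$, the two composites $C\to B\to A_j$ agree for every $j$. By the injectivity just established, $h_1=h_2$.

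The main obstacle is this injectivity step, specifically the use of stalkwise faithful flatness of local homomorphisms in place of a global faithfully flat ring map, since $\phi^{-1}(V)$ need not be quasi-compact. Alternatively, one can simply cite \cite[\href{https://stacks.math.columbia.edu/tag/02VW}{Tag 02VW}]{stacks-project}.
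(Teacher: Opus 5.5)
Your argument is correct. The paper does not prove this statement; it only cites \cite[Tag~02VW]{stacks-project}.

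The standard argument for the statement is purely stalkwise. Once $g_1,g_2$ agree on points, take $y=\phi(x)$ and $z=g_i(y)$. The two maps $\mathcal{O}_{Z,z}\to\mathcal{O}_{Y,y}$ agree after composing with $\mathcal{O}_{Y,y}\to\mathcal{O}_{X,x}$, which is flat local, hence faithfully flat, hence injective. A morphism of locally ringed spaces is determined by its underlying map together with its stalk maps, so $g_1=g_2$.

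You instead reduce to an affine $V=\operatorname{Spec}B$. You use the same local fact only to prove that $B\to\prod_j A_j$, i.e.\ $\mathcal{O}_Y(V)\to\mathcal{O}_X(\phi^{-1}(V))$, is injective. That is exactly the schematic dominance shown in the proof of \cref{a faithfully flat morphism is S-dominant}.

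Your packaging buys a slightly more general statement: any surjective morphism with $\mathcal{O}_Y\to\phi_*\mathcal{O}_X$ injective is an epimorphism in the category of all schemes. This complements the EGA fact recalled in \cref{section: S-dominance and density}, where surjectivity is traded for separatedness of the target.

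One cleanup: delete the sentences asserting that $\prod_j A_j$ is flat and suggesting a passage to $\bigoplus_j A_j$. Flatness of the product is never used. Also, for infinitely many $j$ the diagonal $B\to\prod_j A_j$ does not land in $\bigoplus_j A_j$, so that detour only obscures an otherwise clean proof.
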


\begin{lem}[{\cite[\href{https://stacks.math.columbia.edu/tag/02JZ}{Tag 02JZ}]{stacks-project}}]\label{if f is faithfully flat and g circ f is flat then g is flat}
    Let $f\colon X \to Y$ and $g\colon Y \to Z$ be morphisms of schemes. Suppose $f$ is faithfully flat and $g\circ f$ is flat. Then $g$ is flat.
\end{lem}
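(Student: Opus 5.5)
The statement is the Stacks Project result \cref{if f is faithfully flat and g circ f is flat then g is flat}: if $f\colon X \to Y$ is faithfully flat and $g\circ f$ is flat, then $g$ is flat. My plan is to reduce it to commutative algebra at local rings. Flatness of $g$ means that for every $y \in Y$, with $z = g(y)$, the local ring $\calO_{Y,y}$ is flat over $\calO_{Z,z}$. Fix such $y$.

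The first step uses surjectivity. Since $f$ is faithfully flat, it is surjective, so we can choose $x \in X$ with $f(x) = y$. Then we get local homomorphisms
\begin{align*}
    \calO_{Z,z} \to \calO_{Y,y} \to \calO_{X,x}.
\end{align*}
Flatness of $f$ gives that $\calO_{Y,y} \to \calO_{X,x}$ is flat. Because this map is local, it is in fact faithfully flat.

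The second step is the algebraic transfer. Flatness of $g\circ f$ gives that $\calO_{X,x}$ is flat over $\calO_{Z,z}$. For any injection $M' \hookrightarrow M$ of $\calO_{Z,z}$-modules, look at the kernel $K$ of $M'\otimes \calO_{Y,y} \to M\otimes \calO_{Y,y}$. Tensoring over $\calO_{Y,y}$ with $\calO_{X,x}$ is exact, so $K\otimes_{\calO_{Y,y}}\calO_{X,x}$ is the kernel of
\begin{align*}
    M'\otimes_{\calO_{Z,z}} \calO_{X,x} \to M\otimes_{\calO_{Z,z}}\calO_{X,x}.
\end{align*}
This kernel vanishes because $\calO_{X,x}$ is flat over $\calO_{Z,z}$. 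Since $\calO_{Y,y}\to\calO_{X,x}$ is faithfully flat, $K = 0$. Hence $\calO_{Y,y}$ is flat over $\calO_{Z,z}$, and $g$ is flat.

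I expect the only real obstacle to be the fact that a flat local homomorphism of local rings is faithfully flat. I would prove it by the criterion that $N \otimes B = 0$ forces $N = 0$, which one checks on cyclic modules $A/I$ using $\mathfrak m_B \supseteq IB$ for proper $I$. Everything else is formal, since flatness is local and surjectivity supplies the preimage point $x$.
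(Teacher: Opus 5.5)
The paper gives no proof of its own here, only the citation to the Stacks Project, and your argument is the standard one: pass to local rings at a preimage point supplied by surjectivity, note that a flat local homomorphism $\mathcal{O}_{Y,y}\to\mathcal{O}_{X,x}$ is faithfully flat, and descend injectivity of $M'\otimes\mathcal{O}_{Y,y}\to M\otimes\mathcal{O}_{Y,y}$ from flatness of $\mathcal{O}_{Z,z}\to\mathcal{O}_{X,x}$. It is correct and complete; your sketch of why flat local homomorphisms are faithfully flat uses flatness to embed $B/IB$ into $N\otimes B$, and that suffices.
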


Recall that schemes are sheaves for various flat topologies:
\begin{lem}[{\cite[\href{https://stacks.math.columbia.edu/tag/023P}{Tag 023P}]{stacks-project}}]
    Schemes are sheaves for the fpqc (faithfully flat and quasicompact) and fppf (faithfully flat and finite presentation) topologies.
\end{lem}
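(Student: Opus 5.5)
The plan is to prove the fpqc statement and then deduce the fppf one. Every fppf covering is an fpqc covering. Indeed, flat morphisms locally of finite presentation are open (\cref{a flat morphism of schemes which is locally of finite presentation is universally open}). So for an affine open of the target, its quasicompactness lets finitely many affine opens in the covering family have images that cover it, which is exactly the fpqc covering condition. We fix a scheme $X$ and must show that $h_X$ satisfies the sheaf condition for every fpqc covering $\{T_i \to T\}_{i\in I}$.

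\textbf{Step 1: reduction to one affine faithfully flat map.} The Zariski sheaf property of $h_X$ is just gluing of morphisms. Using it, we may work locally on $T$ and assume $T$ is affine. The fpqc condition then lets us refine the covering by finitely many affine opens of the $T_i$ that jointly surject onto $T$. Refinement preserves the sheaf condition because $h_X$ is already a Zariski sheaf on each $T_i$. Replacing the finite family by its disjoint union, we reduce to a single faithfully flat morphism $\Spec R' \to \Spec R$. It then suffices to show that
\[
\Hom(\Spec R, X) \to \Hom(\Spec R', X) \rightrightarrows \Hom(\Spec R'\otimes_R R', X)
\]
is an equalizer.

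\textbf{Step 2: the affine target case.} When $X=\Spec A$, morphisms into $X$ correspond to ring maps out of $A$. So the claim is equivalent to exactness of
\[
0\to R \to R' \rightrightarrows R'\otimes_R R'.
\]
Because $R\to R'$ is faithfully flat, exactness can be checked after applying $-\otimes_R R'$. After this base change the map $R'\to R'\otimes_R R'$ has a section, given by multiplication. The base-changed sequence is then split exact by the usual contracting-homotopy argument.

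\textbf{Step 3: general $X$ (the main obstacle).} Let $f'\colon T'=\Spec R' \to X$ agree on $T'\times_T T'$; we need to descend it. First we produce a continuous map $f$ on underlying spaces. Surjectivity of $T'\times_T T'\to T'\times_{|T|}T'$ on points shows that $f'$ is constant on fibres of the surjection $|T'|\to|T|$.

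The key input, and the step I expect to be hardest, is that a faithfully flat quasicompact morphism is submersive: $|T|$ carries the quotient topology from $|T'|$. I would prove this by the standard argument that such a map is universally submersive. That argument uses going-down for flat maps, so closed sets pull back to sets stable under generization. It then applies quasicompactness to control images of constructible sets, following the fpqc lemma in the Stacks Project.

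Granting submersiveness, we take an affine open cover $\{X_j\}$ of $X$. Each $W_j := f\inv(X_j)$ is open in $T$ because its preimage $f'^{-1}(X_j)$ is open. Over each $W_j$ we apply Step 1 and Step 2 to the restricted covering, with target the affine scheme $X_j$. This gives morphisms $W_j \to X_j$. These agree on overlaps by uniqueness in Step 2, so they glue by the Zariski sheaf property to the required $f\colon T\to X$. Uniqueness of $f$ follows the same way: $T'\to T$ is surjective, and the affine case gives uniqueness locally.
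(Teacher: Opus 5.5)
Your proposal is correct and is essentially the argument behind the Stacks Project result the paper cites; the paper gives no proof of its own. That argument reduces to a single faithfully flat map of affine schemes, settles affine targets by exactness of $0\to R\to R'\to R'\otimes_R R'$, and settles general targets using that faithfully flat quasicompact morphisms are submersive. The one step you state too quickly is the refinement in Step 1: refinement does not preserve the sheaf condition in general, so to see that the amalgamation restricts to $s_i$ on every $T_i$, you should adjoin an arbitrary affine open $W\subset T_i$ to the finite refined family (still a single faithfully flat map of affines after taking the disjoint union) and use uniqueness, which goes through with what you prove.
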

Moreover,
\begin{lem}\label{fppf morphisms are epimorphisms on the fppf topology}
    Let $f\colon X \to Y$ be an fpqc/fppf morphism of schemes. Then $f$ is an epimorphism of fpqc/fppf sheaves, respectively.
\end{lem}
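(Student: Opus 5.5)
The plan is to reduce to the standard fact that a sheaf, viewed as a functor, turns a covering morphism into a monomorphism on sections; this is exactly what it means to be an epimorphism in the category of sheaves. I will work in the fppf case, since the fpqc case is word-for-word the same.

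Let $f\colon X \to Y$ be fppf. First, by the previous lemma every scheme is an fppf sheaf, so $h_X$ and $h_Y$ are objects of the category of fppf sheaves, and $h_f$ is a morphism there. Second, by the definition of the fppf topology, the single morphism $\{f\colon X \to Y\}$ is a covering family of $Y$. Third, I will show that $h_f$ is locally surjective. Take a $T$-point $t \in Y(T)$ and pull back along $t$: the base change $X_T := X \times_{Y,t} T \to T$ is again fppf, so it is a cover of $T$. Over this cover $t$ lifts to the $X_T$-point of $X$ given by the first projection, so the lifting condition in the definition of a locally surjective morphism is satisfied.

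It remains to see that a locally surjective morphism $\phi\colon F \to G$ of sheaves is an epimorphism in the sheaf category. Suppose $a,b\colon G \to H$ with $H$ a sheaf and $a\circ\phi = b\circ\phi$. For any $t \in G(T)$, choose a cover of $T$ over which $t$ lifts to sections $s_i$ of $F$. Then
\[
a(t)|_{T_i} = a(\phi(s_i)) = b(\phi(s_i)) = b(t)|_{T_i}
\]
for each member $T_i$ of the cover. Since $H$ is separated, $a(t) = b(t)$, and hence $a = b$.

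Applying this with $\phi = h_f$ gives the lemma. There is no real obstacle here; the only care needed is a size-theoretic one. The notion of epimorphism in the sheaf category depends on working with a fixed small site, such as a big fppf site of some chosen size, and this requires $X$, $Y$ and the base changes above to lie in it. I would state this implicitly by fixing such a site.
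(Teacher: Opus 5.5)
Your proof is correct and follows the paper's argument. The paper likewise lifts an arbitrary $T$-point $y$ of $Y$ over the cover $X\times_Y T \to T$ via the projection to $X$, i.e.\ it shows that $h_f$ is locally surjective; you additionally spell out the standard step, left implicit in the paper, that a locally surjective morphism into a separated presheaf (in particular a sheaf) is an epimorphism.
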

\begin{proof}
    Indeed, given any morphism $y\colon T \to Y$, we have an fpqc/fppf cover $X\times_Y T \to T$ and the pullback $X\times_Y T \to X$ lifts the restriction $X\times_Y T \to Y$.
\end{proof}

The following result will be important:
\begin{prop}\label{construction of morphism from flat descent}
    Let $\phi\colon X \to Z$ be a morphism of schemes with graph $\Gamma_\phi$. Let $\zeta\colon X \to Y$ be an fpqc morphism of schemes. Suppose there exists a closed subscheme $D\subset Y\times Z$ such that $\zeta^*(D) = \Gamma_\phi$. Then there is a unique morphism $\tilde{\phi}\colon Y \to Z$ with graph $D$ such that $\tilde{\phi}\circ \zeta = \phi$.
\end{prop}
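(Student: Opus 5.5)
We use fpqc descent for morphisms, via the sheaf property of schemes (\cite[Tag 023P]{stacks-project}). The idea is that $\phi$ already descends as a morphism; the hypothesis on $D$ is then used only to identify its graph.

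\textbf{Step 1 (descent of $\phi$).} Write $p_1,p_2\colon X\times_Y X \to X$ for the two projections. We must show $\phi\circ p_1 = \phi\circ p_2$, and it suffices to compare graphs. Pulling back the hypothesis $\zeta^*(D)=\Gamma_\phi$ along $p_i$ gives
\begin{align*}
    (\zeta p_1)^*(D) = p_1^*\Gamma_\phi = \Gamma_{\phi p_1}, \qquad (\zeta p_2)^*(D) = p_2^*\Gamma_\phi = \Gamma_{\phi p_2},
\end{align*}
as closed subschemes of $(X\times_Y X)\times Z$. Here we use associativity of pullback and \cref{Taking the graph of a morphism commutes with pullback:}, regarding $\phi$ as a morphism over $X$ of the form $X \to X\times Z$. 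Since $\zeta p_1=\zeta p_2$, the two graphs coincide. Morphisms are determined by their graphs, so $\phi p_1=\phi p_2$. The functor $h_Z$ is an fpqc sheaf and $\zeta$ is an fpqc cover, so there is a unique $\tilde\phi\colon Y\to Z$ with $\tilde\phi\circ\zeta=\phi$. Uniqueness also follows because $\zeta$ is an epimorphism (\cref{A faithfully flat morphism is an epimorphism of schemes}).

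\textbf{Step 2 (graph of $\tilde\phi$ is $D$).} Both $D$ and $\Gamma_{\tilde\phi}$ are closed subschemes of $Y\times Z$; the latter is closed provided $Z$ is separated, and if not we argue with the locally closed graph in the same way. By \cref{Taking the graph of a morphism commutes with pullback:} and Step 1,
\begin{align*}
    \zeta^*\Gamma_{\tilde\phi} = \Gamma_{\tilde\phi\zeta} = \Gamma_\phi = \zeta^*D.
\end{align*}
The morphism $\zeta\times\id_Z\colon X\times Z\to Y\times Z$ is fpqc, being a base change of $\zeta$. Closed subschemes descend uniquely along fpqc morphisms, because ideal sheaves (quasi-coherent subsheaves of $\calO$) satisfy faithfully flat descent. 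Hence $D=\Gamma_{\tilde\phi}$.

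\textbf{Main obstacle.} The delicate point is bookkeeping in Step 1: the identification $p_i^*\Gamma_\phi=\Gamma_{\phi p_i}$ must be made as subschemes of the product over the correct base, namely $(X\times_Y X)\times Z$ versus $X\times Z$. It may be cleaner to argue directly with $T$-points. A $T$-point $(t_1,t_2)$ of $X\times_Y X$ gives a $T$-point $(t_1,\phi t_1)$ of $\Gamma_\phi$. Its image lands in $D$ over the common image $y=\zeta t_1=\zeta t_2$. Then $(t_2,\phi t_1)$ lies in $\zeta^*D=\Gamma_\phi$, which forces $\phi t_1=\phi t_2$. This pointwise argument avoids any separatedness issues.
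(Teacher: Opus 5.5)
Your proof is correct, but it takes a different route from the paper's.

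\textbf{The paper's route.} The paper never descends $\phi$ itself. It notes that the base change of the projection $D\to Y$ along $\zeta$ is the projection $\zeta^*(D)=\Gamma_\phi\to X$, which is an isomorphism. Since being an isomorphism is fpqc local on the target (Stacks, Tag 02L4), the projection $D\to Y$ is an isomorphism. Hence $D$ is the graph of $\tilde\phi:=(D\to Z)\circ(D\to Y)^{-1}$. The identity $\tilde\phi\circ\zeta=\phi$ then follows from $\Gamma_{\tilde\phi\circ\zeta}=\zeta^*(\Gamma_{\tilde\phi})=\zeta^*(D)=\Gamma_\phi$. This produces the morphism and identifies its graph in one step, and it uses neither the closedness of $D$ nor any separatedness of $Z$.

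\textbf{Your route.} You first descend $\phi$ through the sheaf property of $h_Z$. Your cocycle check is sound, and the $T$-point version is especially clean. It only uses that $\Gamma_\phi$ is pulled back from a subscheme of $Y\times Z$. You then identify $\Gamma_{\tilde\phi}$ with $D$ separately, by descent of closed subschemes. This relies only on the most basic descent facts, at the cost of the extra step.

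\textbf{The one point to tighten.} In that extra step, make the non-separated case precise instead of saying you argue ``in the same way''. The map $\zeta\times\id_Z$ is surjective and $\zeta^*\Gamma_{\tilde\phi}=\zeta^*D$, so $\Gamma_{\tilde\phi}$ and $D$ have the same underlying set. That set is closed, so $\Gamma_{\tilde\phi}\to Y\times Z$ is an immersion with closed image, hence a closed immersion, and your ideal-sheaf argument then applies. Alternatively, prove $h_D=h_{\Gamma_{\tilde\phi}}$ as subfunctors of $h_{Y\times Z}$ by testing $T$-points after base change to the fpqc cover $T\times_Y X\to T$. This uses only that $h_D$, $h_Z$ and $h_{Y\times Z}$ are fpqc sheaves, so no closedness or separatedness is needed.
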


\begin{proof}
    Since morphisms are determined uniquely by their graphs, it is enough to show that $D$ is the graph of a morphism, or equivalently that the projection $p_1\colon D \to Y$ is an isomorphism (\cref{graph of morphis criterion}). Since schemes are fpqc sheaves this can be checked fpqc locally (see for example \cite[\href{https://stacks.math.columbia.edu/tag/02L4}{Tag 02L4}]{stacks-project}) Since by assumption the projection $q_1\colon \zeta^*(D) \to X$ is an isomorphism, and $\zeta$ is fpqc, we conclude.
\end{proof}

\begin{prop}[{Generic Flatness, \cite[\href{https://stacks.math.columbia.edu/tag/052B}{Tag 052B}]{stacks-project}}] \label{Generic Flatness theorem}
    Let $\pi\colon X \to Y$ be a finite type morphism of schemes with $Y$ reduced, and let $\F$ be a finite type quasicoherent sheaf on X. Then there is a dense open subscheme $U \subset Y$ such that $\F|_{\pi\inv(U)}$ is of finite presentation and flat over $U$.
\end{prop}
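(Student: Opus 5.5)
This is Grothendieck's generic flatness theorem, quoted from the Stacks Project. The plan is the standard one: reduce to an algebraic generic freeness statement over an affine base, prove that first over a domain, and then pass to a reduced base.

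\textbf{Step 1: reduce to affines.} The property ``$\F|_{\pi\inv(U)}$ is of finite presentation and flat over $U$'' is local on $Y$. A union of dense opens of the affine pieces of an affine cover of $Y$ is again dense in $Y$, so we may assume $Y=\operatorname{Spec}A$ with $A$ reduced. Since $\pi$ is of finite type, $X$ is quasi-compact and is covered by finitely many affines $\operatorname{Spec}B_k$. A finite intersection of dense opens of $Y$ is dense, and flatness and finite presentation over $U$ may be checked on this affine cover. Hence it suffices to treat the case $X=\operatorname{Spec}B$, with $B$ a finite type $A$-algebra and $\F=\widetilde{M}$ for a finite $B$-module $M$.

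\textbf{Step 2: prove generic freeness over a domain.} Write $B$ as a quotient of $A[x_1,\dots,x_n]$, so $M$ becomes a finite $A[x_1,\dots,x_n]$-module. The goal is an $f\neq 0$ in $A$ such that $M_f$ is free over $A_f$ and of finite presentation over $B_f$. Let $K=\operatorname{Frac}A$ and argue by induction on $\dim\operatorname{Supp}(M\otimes_A K)$.
\begin{itemize}
\item Take a finite filtration of $M$ with successive quotients of the form $B/\mathfrak q$, after inverting some $f$. Freeness is stable under extensions, and finite presentation can be arranged along the filtration, so it suffices to treat $M=B/\mathfrak q$ with $B$ a domain over $A$.
\item Apply Noether normalization over $K$ and clear denominators by inverting some $f$. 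This makes $B_f$ finite over $A_f[y_1,\dots,y_d]$ with the $y_i$ algebraically independent.
\item Choose a generic rank $r$ and an injection $A_f[y]^{\oplus r}\to B_f$ whose cokernel is torsion over $A_f[y]$. The cokernel has support of strictly smaller dimension after $\otimes K$, so the induction hypothesis applies to it. Since $A_f[y]$ is free over $A_f$, this gives freeness of $M_f$.
\item Finite presentation over $B_f$ follows from finite presentation of $B_f$ over the free polynomial ring. Concretely, a free $A_f$-module is flat, and a finitely generated module flat over the base with finite type total space is of finite presentation (e.g.\ via Raynaud–Gruson, or the version of the induction carrying presentations along).
\end{itemize}

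\textbf{Step 3: pass from a domain to a reduced base.} We need $U$ to contain every generic point $\eta$ of $Y$; in a reduced ring the localization $A_\eta$ is a field. This is the main obstacle: $A$ need not be Noetherian, so $Y$ may have infinitely many minimal primes, and we cannot simply shrink to an open that is a domain.

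The first attempt is a limit argument. Write $A=\varinjlim A_i$ over finitely generated $\mathbb Z$-subalgebras $A_i$, which are reduced and Noetherian. Descend $B$ to a finite type $A_i$-algebra $B_i$ and $M$ to a finite $B_i$-module $M_i$ with $M_i\otimes_{A_i}A\twoheadrightarrow M$. Over the Noetherian reduced ring $A_i$, which has finitely many components, Step 2 applied componentwise gives a dense open $U_i\subset\operatorname{Spec}A_i$ over which $M_i$ is free. Freeness is preserved by base change, so its preimage is a dense open of $\operatorname{Spec}A$, because $A_i\to A$ is injective and hence dominant.

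The remaining difficulty is that $M$ is only a quotient of $M_i\otimes A$. I would handle this by enlarging $i$: the kernel is finitely generated over the generic fibre since $B_\eta$ is Noetherian, so its generators descend to some $A_j$, and there the descended module equals $M$ near each generic point. If that bookkeeping fails, I would fall back on the Stacks argument, which runs the induction of Step 2 directly over a reduced $A$ using its minimal primes.
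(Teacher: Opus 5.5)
The paper gives no proof of this statement; it is quoted from the Stacks Project. Your Steps 1 and 2 are the standard argument, and they give a complete proof when $Y$ is locally Noetherian. That case covers the paper's applications (varieties and reduced schemes of finite type over a field). There $B$ and $M$ are automatically of finite presentation, and each affine piece has finitely many generic points. Each generic point has a neighbourhood $D(f)$ with $A_f$ a domain, and an open set containing all generic points is dense. Two side remarks on Step 2. It needs $A$ Noetherian anyway, for the prime filtration. And your phrasing of Raynaud--Gruson is false without a domain hypothesis (the example below is a counterexample), though over the domain $A_f$ it is harmless. The genuine gap is in Step 3, the case of a reduced base with infinitely many irreducible components.

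Your stated target there, that $U$ contain every generic point of $Y$, is not what density requires in that case, and it cannot be achieved. Take $A=\prod_{n\ge 0}k$ for a field $k$, $B=A$, and $M=A/I$ with $I=\bigoplus_{n\ge 0}k$.
\begin{itemize}
\item Every $A$-module is flat, since $A$ is absolutely flat, and every prime of $A$ is minimal.
\item Let $\mathfrak p$ come from a non-principal ultrafilter and let $f\notin\mathfrak p$. Then $A_f\cong\prod_{n\in T}k$ with $T$ infinite, and $I_f\cong\bigoplus_{n\in T}k$, which is not finitely generated.
\item So $M$ is not of finite presentation on any neighbourhood of $\mathfrak p$. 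The only admissible $U$ is the dense open set of principal points.
\end{itemize}

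The same example defeats your bookkeeping. Take $A_i$ to be the prime subring and $M_i=A_i$. Then $M_i\otimes_{A_i}A_{\mathfrak p}\cong M_{\mathfrak p}$ at every non-principal $\mathfrak p$. Yet $M_i\otimes_{A_i}A\to M$ is an isomorphism on no neighbourhood of such a point, because its kernel $I$ is not finitely generated. Agreement at a generic stalk spreads to an open set only through an extra mechanism. Over a domain that mechanism is torsion-freeness: after shrinking, the descended module is free, so the kernel is torsion-free and generically zero, hence zero. With zero divisors this is unavailable.

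Separately, injectivity of $A_i\to A$ does not make the preimage of a dense open dense. For $k[t]\hookrightarrow k[t]\times k$, $a\mapsto(a,a(0))$, the preimage of $D(t)$ misses the open point $\operatorname{Spec}k$. At best you can let $i$ depend on the nonempty open $D(g)$ being tested, using that minimal primes of $A_i$ lift to $A$.

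What is missing is an argument that produces, inside each nonempty $D(g)$, some nonempty open on which $M$ is flat and finitely presented, without prescribing which generic point it contains. Deferring to the Stacks argument at that point is just the citation the paper already makes, not a proof.
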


%% file: Chapters/Scheme-Theoretic_Preliminaries/Schematic_Images.tex
\section{Schematic Images}
If $X$ is a scheme, we denote by $\abs{X}$ the underlying topological space.

Let $\phi\colon X \to Y$ be a morphism of schemes. We denote the set-theoretic image of $\phi$ by $\Ima(\phi)$ or $\phi(X)$. We denote the scheme-theoretic or schematic image of a morphism $\phi$ by $\Schim_Y(\phi)$ or $\Schim(\phi)$, or sometimes $\Schim_Y(X)$ if $\phi$ is clear from context. Recall that $\Schim(\phi)$ is the smallest closed subscheme of $Y$ through which $\phi$ factors; this corresponds to the largest quasicoherent subsheaf of ideals of $\ker(\O_Y \to \phi_* \O_x)$.

Set-theoretically we have $\overline{\phi(X)} \subset \abs{\Schim(\phi)}$, but we do not have equality in general.

\begin{lem}\label{Graphs and schematic images}
    Let $\phi\colon X \to Y$ be a morphism of schemes. Then the graph $\Gamma_\phi$ of $\phi$ is a locally closed subscheme of the schematic image of the induced morphism $(id_X, \phi)\colon X \to X\times Y$. If $Y$ is separated, we have $\Gamma_\phi = \Schim(id_X, \phi)$.
\end{lem}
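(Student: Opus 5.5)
The plan is to identify the graph with the image of the section $(\id_X,\phi)\colon X \to X\times Y$ and then compare it with the schematic image. Recall that $\Gamma_\phi$ is defined as the fiber product $X\times_Y Y$ along $\phi$ and $\id_Y$. By \cref{Diagonal cartesian square for graphs}, it is the pullback of the diagonal $\Delta_Y\colon Y \to Y\times Y$ along $\phi\times\id_Y\colon X\times Y \to Y\times Y$. Moreover, the induced map $X \to \Gamma_\phi$ is an isomorphism through which $(\id_X,\phi)$ factors.

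First I will show that $\Gamma_\phi$ is a locally closed subscheme of $X\times Y$. The diagonal of any scheme is an immersion, meaning a closed immersion into an open subscheme (Stacks, Tag 01KJ). Immersions are stable under base change, so $\Gamma_\phi \hookrightarrow X\times Y$ is an immersion. Concretely, there is an open $W\subset X\times Y$ in which $\Gamma_\phi$ is closed: namely $W = (\phi\times\id_Y)\inv(V)$, where $V\subset Y\times Y$ is an open containing $\Delta_Y(Y)$ as a closed subscheme.

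Next I will place $\Gamma_\phi$ inside $\Schim(\id_X,\phi)$, working within $W$. Since $(\id_X,\phi)$ factors through the immersion $\Gamma_\phi \hookrightarrow X\times Y$, its image lies in $W$. Schematic image commutes with restriction to opens for quasi-compact morphisms, and $(\id_X,\phi)$ is an immersion, hence quasi-compact on the relevant pieces; if needed I would argue with the ideal sheaf $\ker(\O_{X\times Y}\to (\id_X,\phi)_*\O_X)$ restricted to $W$. This gives $\Schim(\id_X,\phi)\cap W = \Schim_W(\id_X,\phi)$. The latter is contained in the closed subscheme $\Gamma_\phi\subset W$, because the morphism factors through it. Conversely, $\Gamma_\phi \simeq X$, and $\O_{\Gamma_\phi}\to (\id_X,\phi)_*\O_X$ is an isomorphism, so the kernel ideal on $W$ is exactly the ideal of $\Gamma_\phi$. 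Hence $\Gamma_\phi = \Schim(\id_X,\phi)\cap W$, which is open in the schematic image. This proves that $\Gamma_\phi$ is a locally closed subscheme of the schematic image.

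If $Y$ is separated, then $\Delta_Y$ is a closed immersion, so we may take $V = Y\times Y$ and $W = X\times Y$. Then $\Gamma_\phi$ is a closed subscheme through which $(\id_X,\phi)$ factors, and its ideal equals the kernel ideal, which is quasicoherent, so $\Gamma_\phi=\Schim(\id_X,\phi)$. The main obstacle I expect is the compatibility of schematic image with restriction to the open $W$ in the non-separated case without extra quasi-compactness hypotheses. I would handle it by noting that $(\id_X,\phi)$ is an immersion, so the pushforward of $\O_X$ is quasicoherent on $W$.
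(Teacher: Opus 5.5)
Your overall route is the paper's: $\Gamma_\phi\hookrightarrow X\times Y$ is the base change of $\Delta_Y$ along $\phi\times\id_Y$, hence an immersion, closed in $W=(\phi\times\id_Y)\inv(V)$, and a closed immersion when $Y$ is separated. Your separated case is fine as written. The gap is the middle step, where you assert $\Schim(\id_X,\phi)\cap W=\Schim_W(\id_X,\phi)$ and conclude that $\Gamma_\phi$ is an \emph{open} subscheme of the schematic image.

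Immersions need not be quasi-compact, and $(\id_X,\phi)$ is not quasi-compact in general (e.g.\ $\phi=\id_Y$ with $Y$ not quasi-separated). Your fallback, that $(\id_X,\phi)_*\O_X$ is quasi-coherent on $W$, does not help. The scheme $\Schim(\id_X,\phi)$ is cut out by the largest quasi-coherent ideal $\mathcal{J}$ contained in $\mathcal{I}=\ker(\O_{X\times Y}\to(\id_X,\phi)_*\O_X)$ on all of $X\times Y$. This construction does not commute with restriction to opens. So $\mathcal{J}|_W$ can be strictly smaller than $\mathcal{I}|_W=\mathcal{I}_{\Gamma_\phi}$ when $\mathcal{I}$ fails to be quasi-coherent near $\overline{\Gamma_\phi}\setminus\Gamma_\phi$.

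The claimed equality can genuinely fail. Work over $\mathbf{C}$ and set
$P=\mathbf{C}[g,r,u,h_1,h_2,\dots]/(h_mh_{m'}\ (m\neq m'),\ g^mrh_m\ (m\geq1))$, $X=\Spec P$, and $X'=D(g)\cup\bigcup_m D(h_m)$.
Let $\sigma$ be the automorphism of $X'$ that is the identity on $D(g)$ and $u\mapsto u+r$ on each $D(h_m)$. Let $Y$ be two copies of $X$ glued along $X'$ via $\sigma$, and let $\phi$ be the inclusion of the first copy. The function $\sigma^*u-u$ vanishes on $D(g)$, while on $D(h_m)$ it equals $r$, which is killed by $g^m$ and by no lower power of $g$. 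From this one checks the following in the affine chart $X\times X$ given by the second copy, at the points $(x,x)$ with $x\in D(g)\cap V(r,h_1,h_2,\dots)$. The element $1\otimes u-u\otimes 1$ lies in the stalk of $\mathcal{I}_{\Gamma_\phi}$ there. It does not lie in the localization of $\ker(P\otimes_{\mathbf{C}}P\to\O(X'))$ there, and that localization contains the stalk of $\mathcal{J}$. Hence no open $W$ satisfies $\Schim(\id_X,\phi)\cap W=\Gamma_\phi$. Your equality does hold when $(\id_X,\phi)$ is quasi-compact, e.g.\ when $Y$ is quasi-separated, but that is not assumed.

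The fix is that you never needed equality. The scheme $\Schim(\id_X,\phi)\cap W$ is a closed subscheme of $W$ through which $(\id_X,\phi)$ factors, so it contains $\Schim_W(\id_X,\phi)=\Gamma_\phi$. Hence $\Gamma_\phi\hookrightarrow\Schim(\id_X,\phi)\cap W$ is a closed immersion into an open subscheme of $\Schim(\id_X,\phi)$. That is precisely the statement that $\Gamma_\phi$ is a locally closed subscheme of $\Schim(\id_X,\phi)$. This is the paper's argument: the immersion $\Gamma_\phi\hookrightarrow X\times Y$ factors through the closed subscheme $\Schim(\id_X,\phi)$ and is therefore an immersion into it. Replace your middle step with this, and drop the claim that $\Gamma_\phi$ is open in the schematic image; then your proof is complete.
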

\begin{proof}
    By \cref{Diagonal cartesian square for graphs} we have a commutative diagram
    \[
        \begin{tikzcd}
            X \arrow[rd, dashed] \arrow[rdd, "{(\id_X,\phi)}", bend right] \arrow[rrd, "\phi", bend left] &                         &   \\ & \Gamma_\phi \arrow[r] \arrow[d, hook]     & Y \arrow[d, "\Delta_Y", hook] \\
            & X \times Y \arrow[r, "\phi \times \id_Y"] & Y \times Y
        \end{tikzcd}
    \]
    where the square is Cartesian and the dashed arrow is an isomorphism. It follows that $\Schim(id_X,\phi) = \Schim(\Gamma_\phi \hookrightarrow X \times Y)$. Since $\Delta_Y$ is a locally closed immersion, it follows that
    $\Gamma_\phi \hookrightarrow X \times Y$ is also, and in particular that $\Gamma_\phi$ is a locally closed subscheme of $\Schim(\id_X,\phi)$.

    If $Y$ is separated (so $\Delta_Y$ is a closed immersion), it follows that  $\Gamma_\phi \hookrightarrow X \times Y$ is a closed immersion, and in particular $\Gamma_\phi= \Schim(\id_X,\phi)$.
\end{proof}

\begin{prop}\label{quasicoherent kernel lemma}
    Let $\phi\colon X \to Y$ be a morphism of schemes with $X$ reduced or $\phi$ quasicompact. Then the kernel $\calI := \ker(\O_Y \to \phi_* \O_X)$ is quasicoherent.
\end{prop}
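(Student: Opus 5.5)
The question is local on $Y$, so I will work over an affine open $V = \Spec A \subset Y$. Since $\O_Y \to \phi_*\O_X$ is compatible with restriction to opens, $\calI|_V = \ker(\O_V \to (\phi|_{\phi\inv(V)})_*\O_{\phi\inv(V)})$. It therefore suffices to show that for every $g \in A$ the natural map $\calI(V)_g \to \calI(D(g))$ is an isomorphism, which is the standard criterion for quasicoherence on an affine scheme. The two hypotheses will be handled separately.

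\emph{The quasicompact case.} If $\phi$ is quasicompact (and, as usual, we may also pass to the quasi-separated case, or appeal to the standard argument), then $\phi_*\O_X$ is quasicoherent by \cite[\href{https://stacks.math.columbia.edu/tag/01LC}{Tag 01LC}]{stacks-project}. The kernel of a morphism of quasicoherent sheaves is quasicoherent, so the claim follows.

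For a quasicompact but not quasi-separated morphism I would argue directly. Cover $\phi\inv(V)$ by finitely many affines $U_i = \Spec B_i$. Then a section $a \in A$ lies in $\calI(V)$ iff its image vanishes in each $B_i$, so
\begin{align*}
    \calI(V) = \ker\bigl(A \to \textstyle\prod_i B_i\bigr).
\end{align*}
Localization at $g$ is exact and commutes with finite products, and the restriction of $U_i$ over $D(g)$ is $\Spec (B_i)_g$. Hence $\calI(D(g)) = \ker(A_g \to \prod_i (B_i)_g) = \calI(V)_g$, as required. This finite-cover computation is the real content, and it avoids needing quasi-separatedness altogether.

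\emph{The reduced case.} Suppose $X$ is reduced. A section $a \in \O_Y(W)$ pulls back to zero on a reduced scheme iff it vanishes at every point of $\phi\inv(W)$, i.e., iff $\phi(\phi\inv(W))$ lies in $V(a)$. Therefore $\calI$ is the ideal sheaf of sections vanishing on the set $Z = \phi(X)$, which in turn equals the ideal of sections vanishing on the closure $\overline{Z}$ with its reduced structure. The latter is the ideal sheaf of the reduced induced closed subscheme structure on $\overline{Z}$, which is quasicoherent by \cite[\href{https://stacks.math.columbia.edu/tag/01J1}{Tag 01J1}]{stacks-project}. Concretely, on $\Spec A$ this ideal is $\bigcap_{\mathfrak p \in \overline Z}\mathfrak p$, and the vanishing condition localizes.

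\emph{Main obstacle.} The main subtlety is the reduced case. One must check carefully that "pullback is zero" is equivalent to "vanishes pointwise on the image". This equivalence uses reducedness of $X$ and holds over every open $W$ simultaneously, not just globally. Once that is established, the identification with the radical ideal of $\overline{Z}$ gives quasicoherence.
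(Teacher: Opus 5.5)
Your proof is correct. The paper gives no argument of its own, only a citation to the scheme-theoretic image chapter of Vakil's notes, and your two cases are the standard proofs found there: a finite affine cover gives $\ker(A \to \prod_i B_i)$ and exactness of localization finishes the quasicompact case, while in the reduced case pointwise vanishing identifies the kernel with the ideal of the reduced induced structure on $\overline{\phi(X)}$. The one blemish is the opening appeal to Tag 01LC: that lemma also needs $\phi$ quasi-separated, and there is no general way to ``pass to'' that case, but your finite-cover computation already handles arbitrary quasicompact $\phi$, so that first paragraph can simply be deleted.
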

See the chapter on scheme-theoretic images in \cite{FOAG}.

\begin{lem}\label{schematic image of morphism with reduced source}
    Let $\phi\colon X \to Y$ be a morphism of schemes with $X$ reduced; then $\Schim(\phi)$ is reduced.
\end{lem}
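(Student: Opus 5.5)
The statement asks for: if $X$ is reduced, then $\Schim(\phi)$ is reduced. The plan is to reduce to a local commutative algebra statement about the ideal cutting out the schematic image, and then use the fact that a subring of a reduced ring is reduced.

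\textbf{Identify the image ideal.} Since $X$ is reduced, \cref{quasicoherent kernel lemma} tells us that $\calI := \ker(\O_Y \to \phi_* \O_X)$ is quasicoherent. The schematic image is cut out by the largest quasicoherent ideal contained in this kernel, which is therefore $\calI$ itself. So $\Schim(\phi)$ is the closed subscheme $V(\calI)$, with structure sheaf $\O_Y/\calI$.

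\textbf{Reduce to affines.} Reducedness is local, so it suffices to show that for every affine open $V = \Spec A \subset Y$ the ring $A/\calI(V)$ is reduced. On $V$, the map $\O_Y(V) \to (\phi_*\O_X)(V) = \O_X(\phi\inv(V))$ has kernel $\calI(V)$, because taking sections over $V$ is left exact. Hence $A/\calI(V)$ injects into $\O_X(\phi\inv(V))$.

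\textbf{Conclude.} The ring $\O_X(\phi\inv(V))$ is the ring of sections of a reduced scheme over an open set, so it is reduced: a nilpotent section is nilpotent in every stalk, hence zero in every stalk, hence zero. A subring of a reduced ring is reduced, so $A/\calI(V)$ is reduced. Equivalently, $\O_Y/\calI$ embeds into $\phi_*\O_X$, and the latter has no nonzero nilpotent local sections.

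\textbf{Main obstacle.} The only delicate point is the identification of the image ideal with the full kernel. This is exactly where quasicoherence, supplied by \cref{quasicoherent kernel lemma} under the reducedness hypothesis, is needed. It guarantees that the sections of the ideal over an affine open are the kernel on sections, and not some smaller ideal. Once that is in place, the rest is immediate.
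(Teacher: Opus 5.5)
Your proof is correct, but it takes a different route from the one the paper relies on. The paper gives no argument of its own and simply cites Stacks Tag 056B. The standard proof there is purely formal. Write $Z = \Schim(\phi)$. Because $X$ is reduced, the factorization $X \to Z$ passes through $Z_{\mathrm{red}}$, by the universal property of the reduction. Since $Z_{\mathrm{red}} \hookrightarrow Y$ is again a closed immersion through which $\phi$ factors, minimality of the schematic image gives $Z \subset Z_{\mathrm{red}}$, hence $Z = Z_{\mathrm{red}}$. That argument works for an arbitrary morphism and never looks at the ideal sheaf, so it does not need \cref{quasicoherent kernel lemma}.

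Your argument instead uses that quasicoherence result to identify the ideal of $\Schim(\phi)$ with the full kernel $\calI$. You then get reducedness from the injection $A/\calI(V) \hookrightarrow \O_X(\phi\inv(V))$ on affine opens. You are right that this is exactly where quasicoherence is indispensable: for a strictly smaller quasicoherent ideal, the quotient would no longer embed into $\phi_*\O_X$.

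The formal route is shorter and needs no input about the kernel. Yours is more concrete and can be pushed one step further. A section of a reduced scheme vanishes iff it vanishes at every point, so $\calI(V)$ is the intersection of the primes of $A$ corresponding to points of $\phi(X) \cap V$. This shows that $\Schim(\phi)$ is precisely the reduced induced structure on $\overline{\phi(X)}$.
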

See \cite[\href{https://stacks.math.columbia.edu/tag/056B}{Tag 056B}]{stacks-project}.

\begin{prop}\label{Schematic image of well-behaved morphisms}
    Let $\phi\colon X \to Y$ be a morphism of schemes with $\ker(\O_Y \to \phi_* \O_X)$ quasicoherent (e.g., by \cref{quasicoherent kernel lemma}, if $\phi$ is quasicompact or $X$ reduced). Then
    \begin{enumerate}
        \item We have $\overline{\phi(X)} = \abs{\Schim(\phi)}$
        \item For any open subscheme $U \subset Y$, the schematic image of $\phi|_{\phi\inv(U)}\colon \phi\inv(U) \to U$ is equal to $U \cap \Schim(\phi)$.
    \end{enumerate}
\end{prop}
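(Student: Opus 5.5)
Both statements are local on $Y$, so the plan is to reduce to the affine case and then read everything off from the ideal sheaf $\calI := \ker(\O_Y \to \phi_*\O_X)$. By hypothesis $\calI$ is quasicoherent, so it is the largest quasicoherent subsheaf of ideals contained in this kernel, and hence $\Schim(\phi) = V(\calI)$. The main technical input is that formation of $\phi_*\O_X$ commutes with restriction to opens: for $U \subset Y$ open, $(\phi_*\O_X)|_U = (\phi|_{\phi\inv(U)})_*\O_{\phi\inv(U)}$. Consequently $\calI|_U = \ker(\O_U \to (\phi|_{\phi\inv(U)})_*\O_{\phi\inv(U)})$, and this restricted kernel is quasicoherent because $\calI$ is.

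For statement (2), the previous observation identifies the schematic image of $\phi|_{\phi\inv(U)}$ with $V(\calI|_U) = U \cap V(\calI) = U \cap \Schim(\phi)$. The one point to check is that the restricted morphism satisfies the quasicoherence hypothesis, so that its schematic image really is cut out by its kernel; this is exactly the statement that $\calI|_U$ is quasicoherent. Here I would use the hypothesis as stated, namely quasicoherence of the kernel, rather than re-deriving it from quasicompactness or reducedness (both of which are stable under restriction to opens anyway).

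For statement (1), one inclusion is general: $\phi$ factors through the closed subscheme $\Schim(\phi)$, so $\overline{\phi(X)} \subset \abs{\Schim(\phi)}$. For the converse, let $y \notin \overline{\phi(X)}$ and choose an affine open $U \ni y$ with $U \cap \phi(X) = \emptyset$. Then $\phi\inv(U) = \emptyset$, so $(\phi_*\O_X)|_U = 0$ and $\calI|_U = \O_U$. Hence $U \cap \Schim(\phi) = V(\O_U) = \emptyset$, which is also what (2) gives, and so $y \notin \abs{\Schim(\phi)}$.

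The only step requiring care is the identification $\Schim(\phi) = V(\calI)$ when $\calI$ is quasicoherent. This follows because a closed subscheme $V(\calJ)$ with $\calJ$ quasicoherent receives a factorization of $\phi$ if and only if $\calJ \subset \calI$. Since the argument only needs this standard fact, citing the reference on scheme-theoretic images (as the paper does) suffices.
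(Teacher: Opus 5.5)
Your proposal is correct and is essentially the argument of the Stacks Project proof (Tag 01R8) that the paper cites: you identify $\Schim(\phi)$ with $V(\calI)$ using quasicoherence of $\calI$, deduce (2) from the fact that $\phi_*\O_X$, and hence $\calI$, commutes with restriction to opens, and obtain (1) by noting that $\calI|_U = \O_U$ on any open $U$ disjoint from $\phi(X)$. Your opening reduction to the affine case is never actually used and can be dropped.
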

See the proof of \cite[\href{https://stacks.math.columbia.edu/tag/01R8}{Tag 01R8}]{stacks-project}.

Taking the schematic image of a quasicompact morphism commutes with flat base change:
\begin{lem}\label{schematic image of a quasicompact morphism commutes with flat base change on target}
    Let $f\colon X \to Y$ be a flat morphism of schemes, and $g\colon V \to Y$ be a quasicompact morphism of schemes. Then
    \begin{align*}
        \Schim(f^*(g)) = f^*(\Schim(g))
    \end{align*}
\end{lem}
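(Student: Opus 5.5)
The plan is to reduce to the affine case and then use that, for a quasicompact morphism, the ideal of the schematic image is the kernel of the map on global sections, together with the fact that flat base change commutes with taking kernels. The first step is reduction to affine targets. Since $f$ is flat, the base change $f^*(g)\colon V\times_Y X \to X$ is again quasicompact, so both schematic images are computed by quasicoherent kernel ideals (\cref{quasicoherent kernel lemma}). By \cref{Schematic image of well-behaved morphisms}(2), formation of the schematic image of a quasicompact morphism is compatible with restriction to opens of the target. Closed subschemes are determined locally, so it suffices to check the equality over an affine open cover of $X$ mapping into affine opens of $Y$. We may therefore assume $Y=\operatorname{Spec} A$ and $X=\operatorname{Spec} B$ with $A\to B$ flat.

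Next we identify the ideals. In this affine situation, $g$ quasicompact means that $V$ is a finite union of affines $V_i=\operatorname{Spec} C_i$. The ideal of $\Schim(g)$ is then
\begin{align*}
I=\ker\bigl(A\to \Gamma(V,\O_V)\bigr),
\end{align*}
and $\Gamma(V,\O_V)$ is the equalizer of $\prod_i C_i \rightrightarrows \prod_{i,j}\Gamma(V_i\cap V_j,\O)$. The base change $V_B=V\times_Y X$ is covered by the affines $\operatorname{Spec}(C_i\otimes_A B)$, so the ideal of $\Schim(f^*(g))$ is $J=\ker\bigl(B\to\Gamma(V_B,\O)\bigr)$. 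We want to show $J=IB$, since $IB$ is exactly the ideal of $f^*(\Schim(g))=\Schim(g)\times_Y X$.

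The main obstacle is commuting $\Gamma$ with $-\otimes_A B$ when $V$ is not separated. In that case the intersections $V_i\cap V_j$ need not be affine; they are only quasicompact. I would handle this first by showing, for quasicompact quasiseparated $V$, that $\Gamma(V_B,\O)=\Gamma(V,\O)\otimes_A B$ by flatness, since flat base change of cohomology (equivalently, exactness of $\otimes_A B$ on the Čech equalizer) preserves equalizers and finite products. If $V$ is not quasiseparated, I would avoid cohomology by writing the kernel directly. First, $I=\ker\bigl(A\to\prod_i C_i\bigr)$, because a section vanishes on $V$ if and only if it vanishes on each $V_i$. This kernel only involves the finite product of the $C_i$, so no intersections are needed. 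Since $B$ is flat over $A$, tensoring the exact sequence $0\to I\to A\to\prod_i C_i$ gives the exact sequence
\begin{align*}
0\to I\otimes_A B\to B\to \prod_i (C_i\otimes_A B),
\end{align*}
and flatness also gives $I\otimes_A B= IB$. Finally, $J$ is likewise $\ker\bigl(B\to\prod_i C_i\otimes_A B\bigr)$ by the same argument applied to the cover of $V_B$. Hence $J=IB$, and the two closed subschemes agree.
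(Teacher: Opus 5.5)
Your proof is correct. The paper gives no argument of its own and simply cites Stacks Tag 081I. Your route is essentially the standard one: reduce to $Y=\operatorname{Spec}A$ and $X=\operatorname{Spec}B$, write both ideals as kernels into the finite products $\prod_i C_i$ and $\prod_i (C_i\otimes_A B)$, and use exactness of $-\otimes_A B$.

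The quasi-separated detour through flat base change of cohomology is unnecessary, since your finite-product argument never uses quasi-separatedness. Likewise, quasicompactness of $f^*(g)$ holds for any base change, not only flat ones.
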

See \cite[\href{https://stacks.math.columbia.edu/tag/081I}{Tag 081I}]{stacks-project}.
\begin{cor}\label{schematic image of a quasicompact morphism commutes with flat base change on base}
    Let $f\colon S' \to S$ be a flat morphism of schemes, and $g\colon X \to Y$ a quasicompact morphism of schemes over $S$. Then
    \begin{align*}
        \Schim(f^*(g)) = f^*(\Schim(g)).
    \end{align*}
\end{cor}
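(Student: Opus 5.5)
The plan is to deduce this from \cref{schematic image of a quasicompact morphism commutes with flat base change on target}, by identifying $f^*(g)$ with a base change of $g$ along a flat morphism of targets. Write $Y' = Y\times_S S'$, $X' = X\times_S S'$, and let $f_Y\colon Y' \to Y$ be the projection. Then $f^*(g)\colon X' \to Y'$ is the base change of $g$.

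First, $f_Y$ is flat, because flatness is stable under base change and $f$ is flat. Second, $f^*(g)$ is canonically identified with the base change $f_Y^*(g)$ of $g$ along $f_Y$. Indeed, there is a canonical isomorphism $X\times_Y Y' \simeq X\times_Y (Y\times_S S') \simeq X\times_S S'$, given by \cref{associativity of fiber product} (or \cref{Cartesian rectangles} applied to the rectangle $X' \to Y' \to S'$ over $X \to Y \to S$). Under this identification, the projection $X\times_Y Y' \to Y'$ is exactly $f^*(g)$. Third, $f^*(g)$ is quasicompact, since quasicompactness is stable under base change; this is needed only to make sense of the target lemma, which requires $g$ quasicompact, and that holds by hypothesis.

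Applying \cref{schematic image of a quasicompact morphism commutes with flat base change on target} to the flat morphism $f_Y\colon Y' \to Y$ and the quasicompact morphism $g\colon X \to Y$ then gives
\[
\Schim\bigl(f^*(g)\bigr) = \Schim\bigl(f_Y^*(g)\bigr) = f_Y^*\bigl(\Schim(g)\bigr) = \Schim(g)\times_Y Y' = \Schim(g)\times_S S' = f^*\bigl(\Schim(g)\bigr),
\]
where both sides are viewed as closed subschemes of $Y'$. The final equality is again associativity of fiber products.

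The only potential subtlety is checking that these canonical identifications respect the embeddings into $Y'$, so that the equality holds as closed subschemes rather than merely up to abstract isomorphism. This follows because all the isomorphisms from \cref{associativity of fiber product} commute with the projections to $Y'$.
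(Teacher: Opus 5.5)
Your proof is correct and is essentially the paper's argument: the paper also observes that the base change $Y\times_S S' \to Y$ is flat and then combines \cref{Cartesian rectangles} with \cref{schematic image of a quasicompact morphism commutes with flat base change on target}. Your remark about quasicompactness of $f^*(g)$ is unnecessary, since only quasicompactness of $g$ is used.
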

\begin{proof}
    Note that the pullback $Y_{S'} \to Y_S$ is flat, and then apply \cref{Cartesian rectangles} and the previous result.
\end{proof}

%% file: Chapters/Scheme-Theoretic_Preliminaries/Hilbert_Schemes.tex
\section{Hilbert Schemes}

\begin{dfn}\label{Hilbert schemes definition}
    Let $S$ be a Noetherian scheme, and let $B$ be a projective $S$-scheme. We denote by $\Hilb_B$ the Hilbert scheme of $B$ over $S$ (see \cite{FGAHilbertSchemes}). This represents the Hilbert functor, which is the presheaf on the category of locally Noetherian schemes over $S$ sending any $T \to S$ to the set of closed subschemes of $B_T$ which are flat over $T$.
\end{dfn}

\begin{lem}\label{Hilbert schemes are disjoint unions of projective schemes}
    Fix a very ample line bundle on $B$. Then $\Hilb_B$ is a disjoint union of projective $S$-schemes $\Hilb^p_B$ where $\Hilb^p_B$ represents the presheaf on the category of locally Noetherian schemes over $S$ sending any $T \to S$ to the set of closed subschemes $Z$ of $B_T$ which are flat over $T$ such that, over every $t \in T$, the fiber $Z_t$ has Hilbert polynomial $p$.
\end{lem}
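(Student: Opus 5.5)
The plan is to reduce everything to Grothendieck's construction of the Hilbert scheme with fixed Hilbert polynomial. Fix a very ample line bundle $\O_B(1)$ on $B$ relative to $S$.

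For a locally Noetherian $T$ over $S$ and a closed subscheme $Z \subset B_T$ flat over $T$, the sheaf $\pi_*\O_Z(n)$ is computed fibrewise. Flatness, properness and cohomology and base change show that the function $t \mapsto \chi(Z_t,\O_{Z_t}(n))$ is locally constant on $T$. This is the standard fact that Hilbert polynomials are locally constant in flat projective families, which we would cite from \cite{FGAHilbertSchemes}. Consequently $T$ decomposes as a disjoint union of open and closed pieces $T^p$, one for each polynomial $p$, on which the fibres have Hilbert polynomial $p$. The Hilbert functor therefore splits as a coproduct of subfunctors $\mathrm{Hilb}^p$, and each subfunctor is open and closed. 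Since a disjoint union of schemes represents the coproduct of their functors on locally Noetherian schemes, it suffices to show that each $\mathrm{Hilb}^p$ is represented by a projective $S$-scheme $\Hilb^p_B$.

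For each fixed $p$ we follow Grothendieck's construction.

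First, Castelnuovo–Mumford regularity gives an integer $m$, depending only on $p$ and on the embedding $B \hookrightarrow \mathbb{P}^N_S$, with the following property. For every $Z$ in $\mathrm{Hilb}^p(T)$, the ideal sheaf $\calI_Z$ is $m$-regular on every fibre. The uniform bound is Mumford's bound for ideal sheaves with fixed Hilbert polynomial.

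Second, $m$-regularity implies several things. The sheaf $\pi_*\calI_Z(m)$ is locally free of constant rank $r = \binom{N+m}{N} - p(m)$. Its formation commutes with base change. It is a subbundle of $\pi_*\O_{\mathbb{P}^N_T}(m)$, and it determines $Z$. This gives an embedding of $\mathrm{Hilb}^p$ into the Grassmannian $\mathrm{Gr}$ of rank-$r$ subbundles of $\O_S(m)^{\oplus}$, which is projective over $S$.

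Third, the image is a locally closed subscheme, cut out by a flattening stratification: one requires that the quotient by the ideal generated in degree $m$ be flat with Hilbert polynomial $p$, and for this we would use Mumford's flattening stratification. We then check that the stratum is in fact closed, by the valuative criterion applied to flat limits over DVRs (properness), so it is projective.

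The main obstacle is the uniform regularity bound in the first step. In the relative Noetherian setting we would cite it rather than reprove it. We also have to make sure that the representability is only claimed on the category of locally Noetherian $S$-schemes, which is consistent with \cref{Hilbert schemes definition}.
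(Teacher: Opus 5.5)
Your proposal is correct and follows essentially the paper's approach. The paper gives no argument of its own and simply cites Grothendieck's construction in \cite{FGAHilbertSchemes}, which is exactly what you sketch: splitting by locally constant Hilbert polynomial, a uniform regularity bound, embedding into a Grassmannian, a flattening stratum, and closedness via the valuative criterion. Two routine points are left implicit: the condition $Z \subset B_T$ cuts $\Hilb^p_B$ out as a closed subscheme of the corresponding Hilbert scheme of $\mathbf{P}^N_S$, and your Grassmannian should parametrize rank-$r$ subbundles of $\pi_*\O_{\mathbf{P}^N_S}(m)\simeq \O_S^{\oplus\binom{N+m}{N}}$.
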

See \cite{FGAHilbertSchemes} for the proof.

In particular, it follows that $\Hilb_B$ is separated and locally of finite type over $S$.

Recall (\cref{products and Yoneda combine}) that we use concatenation to denote products.
\begin{prop}\label{tau_YX definition}
    Let $X,Y$ be projective schemes over a Noetherian scheme $S$, and consider the permutation $\rho_{YX}\colon XY \to YX$.

    Then the pullback $\rho_{YX}^*$ of closed subschemes of $YX$ induces a morphism of Hilbert Schemes $\tau_{YX}\colon \Hilb_{YX} \to \Hilb_{XY}$ such that, for each locally Noetherian $T \to S$, and $t \in \Hilb_{YX}(T)$ corresponding to a closed subscheme $Z \subset TYX$, the morphism $\tau_{YX}(t)$ corresponds to the closed subscheme $\rho_{YX}^*(Z)$. In particular, we have $\rho_{YX}^*(Z) = t^*(\tau_{YX}^*(\Omega_{YX}))$, where $\Omega_{YX} \subset \Hilb_{YX} \cdot YX$ is the universal closed subscheme.

\end{prop}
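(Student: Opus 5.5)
The plan is to obtain $\tau_{YX}$ from the Yoneda lemma. Since $\Hilb_{YX}$ and $\Hilb_{XY}$ represent the respective Hilbert functors on locally Noetherian $S$-schemes (\cref{Hilbert schemes definition}), it suffices to construct a natural transformation between these functors and then show the universal-family identity.

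First I check that $\rho_{YX}\colon XY \to YX$ is an isomorphism of $S$-schemes. Both $XY$ and $YX$ are projective over $S$, so their Hilbert schemes exist. For a locally Noetherian $T \to S$, base change gives an isomorphism $\rho_{YX,T}\colon TXY \to TYX$ over $T$. Pullback along an isomorphism sends closed subschemes to closed subschemes. Given $Z \subset TYX$ flat over $T$, the pullback $\rho_{YX}^*(Z) \simeq Z$ as $T$-schemes, so it is again flat over $T$.

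This defines a map of sets $\Hilb_{YX}(T) \to \Hilb_{XY}(T)$. The main point is naturality in $T$. For $f\colon T' \to T$, the Hilbert functor's restriction map is pullback along $f \times \id$. Naturality then amounts to the equality $(f\times\id_{XY})^*\rho_{YX,T}^*(Z) = \rho_{YX,T'}^*(f\times\id_{YX})^*(Z)$. This holds because the square formed by $\rho_{YX,T'}$, $\rho_{YX,T}$ and the two maps $f\times\id$ commutes and is Cartesian. Associativity of pullback (\cref{associativity of fiber product}, \cref{Cartesian rectangles}) then gives canonical equality of the two pullbacks as subobjects. By Yoneda (since $\Hilb_{YX}$ is itself locally Noetherian, being locally of finite type over Noetherian $S$) we get a morphism $\tau_{YX}$ with the stated effect on $T$-points.

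For the final claim, take $T = \Hilb_{YX}$ and the point $\id$, which corresponds to $\Omega_{YX}$. Then $\tau_{YX}$ corresponds to $\rho_{YX}^*(\Omega_{YX}) \subset \Hilb_{YX}XY$, that is, $\tau_{YX}^*(\Omega_{XY}) = \rho_{YX}^*(\Omega_{YX})$ by the definition of the point $\tau_{YX}\in\Hilb_{XY}(\Hilb_{YX})$. For a general $t$, we have $Z = t^*(\Omega_{YX})$ by universality, and naturality gives $\rho_{YX}^*(Z) = t^*\rho_{YX}^*(\Omega_{YX})$. (The statement's "$\tau_{YX}^*(\Omega_{YX})$" should read $\rho_{YX}^*(\Omega_{YX})$, equivalently $\tau_{YX}^*(\Omega_{XY})$; I would phrase it as $\rho_{YX}^*(Z) = (\tau_{YX}\circ t)^*(\Omega_{XY})$.)

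The main obstacle is purely bookkeeping: showing that the pullbacks agree as subobjects on the nose, not just up to isomorphism. This is handled by the uniqueness in the Cartesian rectangle lemma.
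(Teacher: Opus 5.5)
Your proposal is correct and follows the paper's argument: show that $\rho_{YX}^*$ defines a natural transformation of Hilbert functors, check naturality in $T$ by compatibility of pullbacks, and conclude by Yoneda. The only difference is that the paper cites the Cartesian cube lemma for naturality where you use the commuting square together with associativity of pullback. You are also right that the final identity must read $\rho_{YX}^*(Z) = t^*(\tau_{YX}^*(\Omega_{XY}))$, since $\tau_{YX}^*$ can only be applied to subschemes of $\Hilb_{XY}\cdot XY$.
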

\begin{proof}
    To produce a morphism $\tau_{YX}$ it is enough to show that the pullback $\rho_{YX}^*$ induces a morphism of the Hilbert functors; in particular we need to show that pullback of that for any morphism $f\colon T' \to T$ of locally Noetherian schemes over $S$, and any $Z \subset TYX$, we have $f^*(\rho_{YX}^*(Z)) = \rho_{YX}^*(f^*(Z))$. But this follows quickly from the Cartesian cube \cref{cartesian cubes}.
\end{proof}

%% file: Chapters/Scheme-Theoretic_Preliminaries/S-Dominance_and_S-Density.tex
\section{S-Dominance and Density}\label{section: S-dominance and density}
The notion of $S$-dominance will be important for us. The definition can be found in \cite{EGA}, but the terminology ``$S$-dense'' is borrowed from \cite{ER15}.


\begin{dfn}(cf. \cite*[IV, 11.10]{EGA})
    \label{Schematic and S-dominance and density definition}
    Let $X,Y$ be schemes over a base $S$, and $\phi\colon X \to Y$ a morphism over $S$.
    \begin{enumerate}
        \item We say $\phi$ is \emph{schematically dominant} if the corresponding morphism of structure sheaves $\O_Y \to \phi_*\O_X$ is injective.
        \item We say $\phi$ is \emph{$S$-dominant} if it is schematically dominant and remains so after arbitrary base change $S' \to S$ (in \cite[IV, 11.10.8]{EGA}, this property is called ``universal schematic dominance relative to $S$'')
        \item Let $U\hookrightarrow X$ be an open immersion. We say $U$ is \textsl{schematically} or \emph{$S$-dense} in $X$ if the immersion $U\hookrightarrow X$ is schematically or $S$-dominant, respectively.
    \end{enumerate}
    If $S = \Spec(A)$ for a ring $A$, we will say $A$-dominant/dense in place of $\Spec(A)$-dominant/dense.
\end{dfn}
\begin{rem}
    See \cref{section: dominance vs schematic dominance} for a comparison of the notions of dominance and schematic dominance.
\end{rem}


\begin{notation}\label{S-general notation}
    We say that a property $P$ of morphisms $\phi\colon X \to Y$ is stable under restriction to $S$-dense opens if for any $S$-dense $U \subset X$ we have that $P$ holds of $\phi$ if and only if $P$ holds of $\phi|_U$.
\end{notation}
In \cref{chapter: S-rational morphisms} we will define the notion of an $S$-rational morphism. Properties of morphisms which are stable under restriction to $S$-dense opens induce properties of $S$-rational morphisms.

Schematically dominant morphisms are epimorphisms in the category of separated schemes:
\begin{prop}
    Let $f\colon X \to Y$ be a schematically dominant morphism of schemes over $S$. Let $g_1,g_2\colon Y \to Z$ be two morphisms over $S$, and suppose $Z$ is separated over $S$. Suppose $g_1\circ f = g_2 \circ f$. Then $g_1 = g_2$.
\end{prop}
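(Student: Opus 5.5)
The plan is to reduce to the equalizer of $g_1$ and $g_2$ and use separatedness to make it a closed subscheme. Consider the morphism $(g_1,g_2)\colon Y \to Z\times_S Z$ and form the fiber product
\begin{align*}
    E := Y \times_{(g_1,g_2), Z\times_S Z, \Delta} Z,
\end{align*}
where $\Delta\colon Z \to Z\times_S Z$ is the diagonal. By \cref{pullbakcs of monomorphisms lemma}, $E \hookrightarrow Y$ is a monomorphism representing the subfunctor of $T$-points $y$ with $g_1(y) = g_2(y)$. Since $Z$ is separated over $S$, $\Delta$ is a closed immersion, so $E \hookrightarrow Y$ is a closed immersion. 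In other words, $E$ is a closed subscheme of $Y$, cut out by a quasicoherent ideal sheaf $\calI \subset \O_Y$.

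Next, the hypothesis $g_1\circ f = g_2\circ f$ says exactly that $(g_1,g_2)\circ f$ factors through $\Delta$. Hence $f$ factors through $E$: $f = \iota_E \circ f'$ for some $f'\colon X \to E$. On structure sheaves, the map $\O_Y \to f_*\O_X$ then factors as $\O_Y \to \iota_{E*}\O_E = \O_Y/\calI \to f_*\O_X$. In particular $\calI$ lies in the kernel of $\O_Y \to f_*\O_X$.

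Schematic dominance of $f$ means this kernel is zero, so $\calI = 0$ and $E = Y$. Therefore $\iota_E$ is an isomorphism, and $(g_1,g_2)$ factors through $\Delta$; that is, $g_1 = g_2$. Note that only schematic dominance, not the full $S$-dominance, is needed here.

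The only step requiring care is checking that a factorization through a closed subscheme forces its ideal to lie in the kernel of $\O_Y \to f_*\O_X$. This is a direct local computation: on an affine open, the ring map $A \to B$ factors through $A/I$, so $I$ maps to zero. No quasicoherence of the kernel is required, since we only use that the already-quasicoherent $\calI$ sits inside it.
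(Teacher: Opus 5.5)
Your argument is correct. You form the closed equalizer $E = Y\times_{(g_1,g_2),\,Z\times_S Z,\,\Delta} Z$, factor $f$ through it, and use injectivity of $\mathcal{O}_Y \to f_*\mathcal{O}_X$ to force its ideal to vanish; this is the standard proof. The paper gives no argument of its own and simply cites EGA IV, 11.10.1, which proceeds along these same lines, so your proposal just makes the cited result self-contained.
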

See \cite[VI.11.10.1]{EGA}.

\begin{prop}\label{S-dominant morphisms epimorphism property}
    Let $f\colon X \to Y$ be an $S$-dominant morphism of schemes over $S$. Let $g_1,g_2\colon Y \to Z$ be two morphisms over $S$, and suppose $Z$ has separated fibers over $S$. Suppose $g_1\circ f = g_2 \circ f$. Then $g_1 = g_2$.
\end{prop}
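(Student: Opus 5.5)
The plan is to reduce to the fibers over $S$ and use the classical statement for schematically dominant morphisms there. Since $Z$ need not be separated over $S$, the direct EGA argument, which pulls back the closed diagonal, is unavailable. We can only use that each fiber $Z_s$ is separated over $\kappa(s)$. The equalizer of $g_1,g_2$ is the fiber product $E := Y\times_{(g_1,g_2),Z\times_S Z,\Delta}Z$. Here $E\to Y$ is a monomorphism and an immersion, since $\Delta_{Z/S}$ is always an immersion. The goal is to show $E\to Y$ is an isomorphism. By hypothesis $f$ factors through $E$.

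The first step is to show $E\to Y$ is surjective, and in fact an isomorphism on each fiber. For $s\in S$, base change along $\Spec\kappa(s)\to S$. By the definition of $S$-dominance (\cref{Schematic and S-dominance and density definition}), $f_s\colon X_s\to Y_s$ is schematically dominant, and $Z_s$ is separated over $\kappa(s)$. Moreover $g_{1,s}\circ f_s=g_{2,s}\circ f_s$. The previous proposition (the separated case, \cite[VI.11.10.1]{EGA}) then gives $g_{1,s}=g_{2,s}$. Since equalizers commute with base change, $E_s\to Y_s$ is an isomorphism. In particular $E\to Y$ is a bijective immersion.

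The second step is to upgrade this to an isomorphism. A bijective immersion is a closed immersion onto a closed subset that is all of $|Y|$. Concretely, an immersion factors as a closed immersion into an open subscheme $U\subset Y$, and $U$ contains the image, which is all of $Y$, so $U=Y$. Thus $E\hookrightarrow Y$ is a closed immersion with some ideal sheaf $\mathcal I$. Because $f$ factors through $E$, the map $\O_Y\to f_*\O_X$ kills $\mathcal I$. Schematic dominance of $f$, which holds by taking $S'=S$, forces $\mathcal I=0$, so $E=Y$ and $g_1=g_2$.

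I expect the main subtlety to be justifying that $\Delta_{Z/S}$ is an immersion, so that $E\to Y$ is an immersion, and that the fiber statement really yields surjectivity onto all of $|Y|$. Both are standard: every diagonal of schemes is an immersion, and the points of $Y$ lie over points of $S$. Notably, the argument needs only base change to the points $\Spec\kappa(s)$ together with schematic dominance over $S$ itself, not the full strength of $S$-dominance.
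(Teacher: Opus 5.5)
Your argument is correct and complete. The equalizer $E=Y\times_{Z\times_S Z}Z$ is an immersion into $Y$ because every diagonal is one, the fibrewise separated case makes $E\to Y$ surjective and hence a closed immersion, and schematic dominance of $f$ over $S$ itself kills its ideal. The paper gives no argument of its own here, only the reference \cite[Proposition~1.4]{ArtinGroupChunk}; your equalizer-plus-fibres proof is the standard argument for this statement, and your remark that only schematic dominance of $f$ and of the fibres $f_s$ is needed is accurate.
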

See \cite[Proposition 1.4]{ArtinGroupChunk}

\section{Composition of \texorpdfstring{$S$}{S}-Dominant Morphisms}
\begin{lem}\label{A composition of schematically-dominant morphisms is schematically-dominant}
    A composition of schematically or $S$-dominant morphisms is schematically or $S$-dominant, respectively.
\end{lem}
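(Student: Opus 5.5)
The schematic case comes down to composing injections of sheaves of rings. The $S$-dominant case then follows because base change commutes with composition. Let $f\colon X \to Y$ and $g\colon Y \to Z$ be morphisms of $S$-schemes, both schematically dominant. We must show $g\circ f$ is schematically dominant, i.e.\ that $\O_Z \to (g\circ f)_*\O_X = g_*f_*\O_X$ is injective. This map factors as
\[
\O_Z \to g_*\O_Y \to g_*f_*\O_X .
\]
The first arrow is injective because $g$ is schematically dominant. The second arrow is $g_*$ applied to $\O_Y \to f_*\O_X$, which is injective because $f$ is schematically dominant. Since $g_*$ is a right adjoint (equivalently, it is computed by sections over preimages), it is left exact and so preserves injections of sheaves. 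Hence the composite is injective. We should check that the factorization really is the canonical map: this holds because $(g\circ f)^\# = g_*(f^\#)\circ g^\#$ by the definition of composition of morphisms of ringed spaces.

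For the $S$-dominant case, fix an arbitrary base change $S' \to S$. Base change is a functor, so $(g\circ f)_{S'} = g_{S'}\circ f_{S'}$. By hypothesis $f_{S'}$ and $g_{S'}$ are both schematically dominant. The schematic case then shows their composite is schematically dominant. Since $S'$ was arbitrary, $g\circ f$ is $S$-dominant.

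The argument is essentially formal, and I see no real obstacle. The only step needing care is the left exactness of $g_*$ together with the identification of the canonical map to $(g\circ f)_*\O_X$. Neither requires quasicompactness or quasicoherence hypotheses, because we work with sheaves of rings rather than with schematic images. Alternatively, one could cite \cite[IV, 11.10]{EGA} directly.
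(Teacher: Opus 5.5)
Your proposal is correct and follows the paper's proof. The paper simply cites EGA for the schematically dominant case, which you instead prove directly by factoring $\O_Z \to g_*\O_Y \to g_*f_*\O_X$ and using that $g_*$ preserves injections; it then deduces the $S$-dominant case exactly as you do, from the fact that base change commutes with composition.
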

\begin{proof}
    The statement for schematic dominance is contained in \cite[VI, 11.10.3]{EGA}, and the statement for $S$-dominance follows from it since composition commutes with base change.
\end{proof}

\begin{lem}\label{composition is schematically dominant implies f is schematically dominant}
    Let $g\colon X\to Y$ and $f\colon Y \to Z$ be morphisms of schemes over $S$. Suppose $f\circ g$ is schematically or $S$-dominant. Then $f$ is schematically or $S$-dominant.
\end{lem}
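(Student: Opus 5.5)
The plan is to argue at the level of structure sheaves, treating the schematic case first and deducing the $S$-dominant case from it.

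For schematic dominance, the goal is to show that $\O_Z \to f_*\O_Y$ is injective. The composite $\O_Z \to (f\circ g)_*\O_X$ factors as
\begin{align*}
    \O_Z \to f_*\O_Y \to f_*g_*\O_X = (f\circ g)_*\O_X,
\end{align*}
where the second arrow is $f_*$ applied to $\O_Y \to g_*\O_X$. By hypothesis this composite is injective. If a composite of two maps of sheaves is injective, the first map must be injective, so $\O_Z \to f_*\O_Y$ is injective and $f$ is schematically dominant. There is no obstacle in this step beyond confirming that the factorization is the one induced by the morphisms of ringed spaces. That is immediate from the functoriality of pushforward, since $(f\circ g)^\# = f_*(g^\#)\circ f^\#$.

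For the $S$-dominant case, fix any base change $S' \to S$. Base change commutes with composition, so $(f\circ g)_{S'} = f_{S'}\circ g_{S'}$. By the definition of $S$-dominance this composite is schematically dominant. Applying the schematic case to $g_{S'}$ and $f_{S'}$ shows that $f_{S'}$ is schematically dominant. Since $S'$ was arbitrary, $f$ is $S$-dominant.

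I expect no real obstacle here. The only point needing care is that the schematic statement applies to arbitrary morphisms of schemes, with no quasicompactness hypothesis. The definition in \cref{Schematic and S-dominance and density definition} uses injectivity of the structure-sheaf map directly, so this causes no difficulty.
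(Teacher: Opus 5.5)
Your proposal is correct and follows the paper's argument: both reduce to the schematic case via base change, and both conclude injectivity of $\O_Z \to f_*\O_Y$ from the injective composite $\O_Z \to f_*\O_Y \to f_*g_*\O_X$. Your explicit check of the factorization $(f\circ g)^\# = f_*(g^\#)\circ f^\#$ simply spells out what the paper leaves implicit.
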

\begin{proof}
    Again we just need to check the statement for schematic dominance, but if the composition $\O_Z \to f_*\O_Y \to f_*g_*\O_X$ is injective then certainly so is $\O_Z \to f_*\O_Y$.
\end{proof}

\begin{cor} \label{S-dominance is stable under restriciton to S-dense opens}
    Let $\phi\colon X \to Y$ be a morphism of schemes over $S$, and $U\subset X$ $S$-dense. Then $\phi$ is $S$-dominant if and only if $\phi|_U\colon U \to Y$ is $S$-dominant.
\end{cor}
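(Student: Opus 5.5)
The claim is that for an $S$-dense open $U\subset X$, the morphism $\phi$ is $S$-dominant if and only if $\phi|_U$ is. I would obtain both directions directly from the two preceding lemmas, writing $j\colon U\hookrightarrow X$ for the open immersion, so that $\phi|_U=\phi\circ j$.

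For the forward direction, suppose $\phi$ is $S$-dominant. By definition, $U$ being $S$-dense in $X$ means that $j$ is $S$-dominant. Then $\phi|_U=\phi\circ j$ is a composition of two $S$-dominant morphisms, hence $S$-dominant by \cref{A composition of schematically-dominant morphisms is schematically-dominant}.

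For the converse, suppose $\phi\circ j$ is $S$-dominant. Then \cref{composition is schematically dominant implies f is schematically dominant}, applied with $g=j$ and $f=\phi$, gives that $\phi$ is $S$-dominant. This direction does not even use the density of $U$.

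There is no real obstacle here. The only point worth checking is that both cited lemmas are stated for $S$-dominance and not merely for schematic dominance. They are: each reduces to the schematic case after an arbitrary base change $S'\to S$. This reduction works because base change commutes with composition, and because $j_{S'}$ is again an open immersion with the needed density property by the definition of $S$-density. So the corollary follows immediately.
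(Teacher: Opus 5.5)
Your proposal is correct and matches the paper's argument. The forward direction follows from the composition lemma for $S$-dominant morphisms, since $U\hookrightarrow X$ is $S$-dominant, and the converse follows from the lemma that $S$-dominance of $f\circ g$ forces $S$-dominance of $f$; you are also right that the converse does not use the $S$-density of $U$.
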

\begin{proof}
    Indeed, left to right is by \cref{A composition of schematically-dominant morphisms is schematically-dominant} and left to right is by \cref{composition is schematically dominant implies f is schematically dominant}.
\end{proof}

\begin{lem}\label{Preservation of schematic dominance on restrictions to opens on the target}
    Let $\phi\colon X \to Y$ be a schematically or $S$-dominant morphism of schemes. Then for any $V \subset Y$, the restriction $\phi|_{\phi\inv(V)}\colon \phi\inv(V) \to V$ is schematically or $S$-dominant.
\end{lem}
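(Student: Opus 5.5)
The plan is to reduce to the case of schematic dominance and then handle base change; the $S$-dominant case follows because base change commutes with restriction to opens. First, for schematic dominance, we use the fact that pushforward commutes with restriction to opens.

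Set $U = \phi\inv(V)$ and $\psi = \phi|_{[U \to V]}$. We must show that $\O_V \to \psi_*\O_U$ is injective. For any open $W \subset V$ we have $\phi\inv(W) \subset U$, so
\begin{align*}
    (\psi_*\O_U)(W) = \O_U(\psi\inv(W)) = \O_X(\phi\inv(W)) = (\phi_*\O_X)(W).
\end{align*}
Thus $\psi_*\O_U = (\phi_*\O_X)|_V$. Under this identification, the map $\O_V \to \psi_*\O_U$ is exactly the restriction to $V$ of $\O_Y \to \phi_*\O_X$. Injectivity of a morphism of sheaves is preserved under restriction to an open subset, so $\psi$ is schematically dominant.

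For the $S$-dominant case, take any base change $S' \to S$. Write $X'$, $Y'$, $\phi'$ for the base changes of $X$, $Y$, $\phi$, and let $V' = V\times_S S'$, which is open in $Y'$. The key point is that
\begin{align*}
    (\phi\inv(V))\times_S S' = \phi'^{-1}(V'),
\end{align*}
which holds by the Cartesian rectangle lemma (\cref{Cartesian rectangles}), since $\phi\inv(V) = X \times_Y V$. It follows that the base change of $\psi$ to $S'$ is $\phi'|_{[\phi'^{-1}(V') \to V']}$. Since $\phi'$ is schematically dominant by hypothesis, the first step shows this restriction is schematically dominant as well.

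There is no real obstacle here. The only care needed is in the identification of the pushforward with the restricted sheaf, and in the base change compatibility of the open preimage, both of which are routine.
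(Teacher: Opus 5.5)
Your proposal is correct and follows the same route as the paper: reduce to schematic dominance because restriction to opens commutes with base change, then note that injectivity of $\O_Y \to \phi_*\O_X$ is preserved on restricting to $V$. Your identification $\psi_*\O_U = (\phi_*\O_X)|_V$ and your check that $(\phi\inv(V))\times_S S' = \phi'^{-1}(V')$ simply fill in details that the paper's two-line proof leaves implicit.
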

\begin{proof}
    Since taking restrictions to opens commutes with base change, we just need to verify the statement for schematic dominance. If the morphism $\O_Y \to \phi_*\O_X$ is injective, then so is its restriction to $V$.
\end{proof}

\begin{cor}\label{schematic density and restriction to opens}
    Let $U\subset X$ be a schematically or $S$-dense open subscheme. Then for any open subscheme $V \subset X$, we have that $U\cap V$ is schematically or $S$-dense in $V$.
\end{cor}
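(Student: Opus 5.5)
The plan is to deduce this directly from \cref{Preservation of schematic dominance on restrictions to opens on the target}, applied to the open immersion $j\colon U \hookrightarrow X$. By definition (\cref{Schematic and S-dominance and density definition}), $U$ is schematically (resp.\ $S$-) dense in $X$ exactly when $j$ is schematically (resp.\ $S$-) dominant. So the first step is to apply the lemma to $j$ with target open $V \subset X$. This shows that the restriction $j|_{j\inv(V)}\colon j\inv(V) \to V$ is schematically (resp.\ $S$-) dominant.

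The second step is to identify this restriction. Since $j$ is an open immersion, $j\inv(V)$ is the open subscheme $U \cap V$, and the restricted morphism $U\cap V \to V$ is precisely the open immersion of $U\cap V$ into $V$. Applying \cref{Schematic and S-dominance and density definition} again, this says exactly that $U \cap V$ is schematically (resp.\ $S$-) dense in $V$.

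For the $S$-dense case there is one point to check: the lemma is stated uniformly for both notions, but base change must be compatible with restriction to opens. That is, for $S' \to S$ we need $(U\cap V)_{S'} = U_{S'} \cap V_{S'}$ inside $X_{S'}$. This holds because fiber products of open immersions are computed as preimages, and it is exactly what the lemma's proof already uses. I therefore do not expect a real obstacle; the only care needed is in identifying $j\inv(V)$ with $U\cap V$ as open subschemes of $V$.
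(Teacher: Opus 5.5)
Your proof is correct and is the intended argument: the paper states this as an immediate corollary of the preceding lemma on preservation of schematic or $S$-dominance under restriction to an open of the target, applied to the open immersion $U \hookrightarrow X$, using $j^{-1}(V) = U \cap V$. Your extra check that base change is compatible with restriction to opens is already built into that lemma, so nothing further is needed.
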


An intersection of schematically or $S$-dense opens is schematically or $S$-dense. In fact, more is true:
\begin{cor}\label{Intersection and schematically/S-dense opens}
    Let $U\subset Y$ be a schematically or $S$-dense open, and $V\subset Y$ be an open subscheme. The following are equivalent:
    \begin{enumerate}
        \item $V$ is schematically or $S$-dense in $Y$ (respectively),
        \item $U \cap V$ is schematically or $S$-dense in $Y$,
        \item $U \cap V$ is schematically or $S$-dense in $U$.
    \end{enumerate}
\end{cor}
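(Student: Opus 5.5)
We prove the equivalence by chaining the earlier results on open immersions, using that $U\cap V \hookrightarrow Y$ factors in two ways: as $U\cap V \hookrightarrow U \hookrightarrow Y$ and as $U \cap V \hookrightarrow V \hookrightarrow Y$. The notions of schematic and $S$-density are handled in parallel. For the $S$ version we may either repeat the argument after an arbitrary base change $S'\to S$ (opens, intersections and the factorisations above are compatible with base change), or simply use the $S$-dominant versions of the lemmas, which are stated alongside the schematic ones.

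First we show (2)$\Leftrightarrow$(3). If $U\cap V$ is dense in $U$, then $U \cap V \hookrightarrow U \hookrightarrow Y$ is a composition of two dominant immersions, since $U$ is dense in $Y$ by hypothesis. By \cref{A composition of schematically-dominant morphisms is schematically-dominant} this composition is dominant, so $U\cap V$ is dense in $Y$. Conversely, if $U\cap V$ is dense in $Y$, apply \cref{Preservation of schematic dominance on restrictions to opens on the target} to the dominant immersion $U\cap V \hookrightarrow Y$ with the open $U\subset Y$. The preimage of $U$ is $U\cap V$, so $U\cap V\hookrightarrow U$ is dominant. (Equivalently, use \cref{schematic density and restriction to opens}.)

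Next we show (2)$\Rightarrow$(1). If $U\cap V\hookrightarrow Y$ is dominant, it factors as $U\cap V \hookrightarrow V \hookrightarrow Y$. By \cref{composition is schematically dominant implies f is schematically dominant} the second map $V\hookrightarrow Y$ is dominant, so $V$ is dense in $Y$.

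Finally we show (1)$\Rightarrow$(3). If $V$ is dense in $Y$, apply \cref{schematic density and restriction to opens} to $V$ and the open $U$; this gives that $U \cap V$ is dense in $U$. This step uses only the density of $V$; the density of $U$ is needed only for (3)$\Rightarrow$(2). Together the implications close the cycle (1)$\Rightarrow$(3)$\Rightarrow$(2)$\Rightarrow$(1).

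None of the steps is hard. The only point needing care is the bookkeeping in \cref{Preservation of schematic dominance on restrictions to opens on the target}: one must check that the preimage of the open $U$ (respectively $V$) under the immersion is exactly $U\cap V$, so that the restricted morphism is the expected inclusion of opens. For the $S$ version one must also check that these identifications survive base change; they do, because open immersions and intersections of opens are stable under base change.
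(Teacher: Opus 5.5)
Your proof is correct and uses the same ingredients as the paper: composition of dominant morphisms, the lemma that $f\circ g$ dominant implies $f$ dominant, and restriction to opens in the target. The only difference is organization: the paper gets (1)$\Leftrightarrow$(2) from the corollary that $S$-dominance is stable under restriction to $S$-dense opens and then proves (1)$\Rightarrow$(3) and (3)$\Rightarrow$(1), whereas you close the cycle (1)$\Rightarrow$(3)$\Rightarrow$(2)$\Rightarrow$(1) directly.
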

\begin{proof}
    Indeed, we have $1 \iff 2$ by \cref{S-dominance is stable under restriciton to S-dense opens}, $1 \implies 3$ by \cref{schematic density and restriction to opens}, and $3 \implies 1$ by \cref{composition is schematically dominant implies f is schematically dominant}.
\end{proof}

\section{Products of \texorpdfstring{$S$}{S}-Dominant Morphisms}
One key advantage of $S$-dominance, as opposed to schematic dominance, is:
\begin{prop}\label{A product of S-dominant morphisms is S-dominant}
    A product of $S$-dominant morphisms is $S$-dominant. I.e., if $\phi_1\colon X_1 \to Y_1$ and $\phi_2\colon X_2 \to Y_2$ are $S$-dominant morphisms of schemes over $S$, then the morphism $\phi_1\times_S\phi_2\colon X_1\times_S X_2 \to Y_1 \times_S Y_2$ is $S$-dominant.
\end{prop}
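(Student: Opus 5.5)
The plan is to factor $\phi_1\times_S\phi_2$ as a composition of two base changes and then use \cref{A composition of schematically-dominant morphisms is schematically-dominant}. Concretely, we write
\begin{align*}
    \phi_1\times_S\phi_2 = (\id_{Y_1}\times_S \phi_2)\circ(\phi_1\times_S \id_{X_2}),
\end{align*}
where $\phi_1\times_S\id_{X_2}\colon X_1\times_S X_2 \to Y_1\times_S X_2$ and $\id_{Y_1}\times_S\phi_2\colon Y_1\times_S X_2 \to Y_1\times_S Y_2$. By \cref{A composition of schematically-dominant morphisms is schematically-dominant}, it suffices to show that each factor is $S$-dominant. By symmetry, it is enough to treat $\phi_1\times_S \id_{X_2}$.

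The key observation is that $\phi_1\times_S\id_{X_2}$ is the base change of $\phi_1$ along the structure morphism $X_2 \to S$. More precisely, it is the morphism of $X_2$-schemes $(\phi_1)_{X_2}\colon (X_1)_{X_2}\to (Y_1)_{X_2}$. $S$-dominance is stable under base change $S'\to S$ by definition, since any further base change $S''\to S'$ is a base change $S''\to S$. Hence $(\phi_1)_{X_2}$ is $X_2$-dominant: for every $S''\to X_2$, the base change of $(\phi_1)_{X_2}$ to $S''$ equals the base change of $\phi_1$ along the composite $S''\to X_2\to S$, and this is schematically dominant. It remains to pass from $X_2$-dominance back to $S$-dominance of $\phi_1\times_S\id_{X_2}$, viewed as a morphism over $S$. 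That is, for arbitrary $S'\to S$, we must show that $(\phi_1\times_S\id_{X_2})_{S'} = (\phi_1)_{S'}\times_{S'}\id_{(X_2)_{S'}}$ is schematically dominant.

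Replacing $S$ by $S'$, we are reduced to the following claim: if $\phi_1$ is $S$-dominant, then $\phi_1\times_S\id_{X_2}$ is schematically dominant. This morphism is the base change of $\phi_1$ along $X_2\to S$, so it is the base change of $\phi_1$ to the new base $X_2$. Taking the base change $S''=X_2$ in the definition of $S$-dominance gives exactly that $(X_1)\times_S X_2\to (Y_1)\times_S X_2$ is schematically dominant.

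I expect the only real obstacle to be bookkeeping: checking that the base change of $\phi_1$ over $S$ to $X_2$, regarded as a morphism of schemes, is literally $\phi_1\times_S\id_{X_2}$. This is an instance of the Cartesian rectangle lemma (\cref{Cartesian rectangles}), using $(Y_1)_{X_2}=Y_1\times_S X_2$. Schematic dominance is a property of the underlying morphism of schemes, independent of the base, so no further work is needed. Combining the two factors then gives the result.
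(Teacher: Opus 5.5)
Your proposal is correct and is essentially the paper's argument: factor $\phi_1\times_S\phi_2$ through an intermediate product (you use $Y_1\times_S X_2$, the paper uses $X_1\times_S Y_2$), note that each factor is a base change of an $S$-dominant morphism and hence $S$-dominant, and conclude because $S$-dominance is stable under composition. Your detour through $X_2$-dominance in the second paragraph is unnecessary, since the identity $(\phi_1\times_S\id_{X_2})_{S'} = \phi_1\times_S\id_{X_2\times_S S'}$ already reduces the claim directly to the definition of $S$-dominance, applied to the base change along $X_2\times_S S'\to S$.
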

\begin{proof}
    Immediate, since $S$-dominance is stable under composition and base change, and $\phi_1\times_S \phi_2$ is the same as the composition
    \begin{align*}
        X_1\times_S X_2 \to X_1 \times_S Y_2 \to  Y_1 \times_S Y_2.
    \end{align*}
\end{proof}
\begin{cor}\label{products of S-dense opens are S-dense.}
    Let $U \subset X$ and $V\subset Y$ be $S$-dense opens. Then $U \times_S V$ is $S$-dense in $X\times Y$.
\end{cor}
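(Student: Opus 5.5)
The plan is to deduce the corollary directly from \cref{A product of S-dominant morphisms is S-dominant}. By definition (\cref{Schematic and S-dominance and density definition}), $U \subset X$ being $S$-dense means that the open immersion $i\colon U \hookrightarrow X$ is $S$-dominant. Likewise, $V \subset Y$ being $S$-dense means that $j\colon V \hookrightarrow Y$ is $S$-dominant. Applying the proposition, the product morphism $i\times_S j\colon U\times_S V \to X\times_S Y$ is $S$-dominant.

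It remains to identify $i\times_S j$ with the inclusion of an open subscheme. The morphism $i\times_S j$ factors as
\begin{align*}
    U\times_S V \to X\times_S V \to X\times_S Y.
\end{align*}
The first map is the base change of $i$ along $X\times_S V \to X$, and the second is the base change of $j$ along $X\times_S Y \to Y$. Open immersions are stable under base change and composition, so $i\times_S j$ is an open immersion. Its image is $\rho_X\inv(U)\cap\rho_Y\inv(V)$, the open subscheme of $X\times_S Y$ denoted $U\times_S V$.

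Hence the open immersion $U\times_S V\hookrightarrow X\times_S Y$ is $S$-dominant, which is precisely the statement that $U\times_S V$ is $S$-dense in $X\times_S Y$.

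There is no real obstacle here: all the content lies in \cref{A product of S-dominant morphisms is S-dominant}, which in turn rests on the stability of $S$-dominance under base change (built into the definition) and under composition (\cref{A composition of schematically-dominant morphisms is schematically-dominant}). The only point needing care is the bookkeeping identification of $i\times_S j$ with the open immersion of the product.
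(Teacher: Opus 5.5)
Your proof is correct and is exactly the paper's route: the corollary is stated as an immediate consequence of \cref{A product of S-dominant morphisms is S-dominant}, applied to the two open immersions $U \hookrightarrow X$ and $V \hookrightarrow Y$. Your extra check that $U\times_S V \to X\times_S Y$ is itself an open immersion is the only bookkeeping the paper leaves implicit.
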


\begin{prop}\label{dominant open morphisms}
    Let $\phi\colon X \to Y$ be an open morphism of schemes.
    \begin{enumerate}
        \item If $\phi$ is schematically dominant, then for any schematically dense $U\subset X$, $\phi(U)\subset Y$ is a schematically dense open.
        \item If $\phi$ is $S$-dominant, then for any $S$-dense $U \subset X$, $\phi(U)\subset Y$ is an $S$-dense open.
    \end{enumerate}
\end{prop}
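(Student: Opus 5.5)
For part (1), I would reduce openness and schematic density separately. Since $\phi$ is an open morphism and $U\subset X$ is open, $\phi(U)$ is open in $Y$, so only density is at issue. I would equip $\phi(U)$ with its open subscheme structure and consider the restriction $\phi|_{[U \to \phi(U)]}$. The composite $U \hookrightarrow X \xrightarrow{\phi} Y$ is schematically dominant by \cref{A composition of schematically-dominant morphisms is schematically-dominant}. This composite factors as
\begin{align*}
    U \xrightarrow{\phi|_{[U\to\phi(U)]}} \phi(U) \hookrightarrow Y.
\end{align*}
By \cref{composition is schematically dominant implies f is schematically dominant}, the second factor, namely the open immersion $\phi(U)\hookrightarrow Y$, is then schematically dominant. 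That is exactly the assertion that $\phi(U)$ is schematically dense in $Y$.

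For part (2), I would run the same argument with $S$-dominance in place of schematic dominance. Both \cref{A composition of schematically-dominant morphisms is schematically-dominant} and \cref{composition is schematically dominant implies f is schematically dominant} are stated for $S$-dominance as well. The composite $U\hookrightarrow X\to Y$ is therefore $S$-dominant, and it factors through the open immersion $\phi(U)\hookrightarrow Y$. Hence that open immersion is $S$-dominant, i.e. $\phi(U)$ is $S$-dense.

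There is no real obstacle here, since neither part needs openness to be preserved under base change. The $S$-dominance of $\phi(U)\hookrightarrow Y$ is deduced directly from the corollary for $S$-dominant composites. We never base change the image, so plain openness of $\phi$ suffices and universal openness is not required. The only point I would double-check is that the image of an open set under an open morphism carries its open subscheme structure and that $\phi|_U$ factors through it set-theoretically, hence scheme-theoretically. This holds because a morphism factors through an open subscheme as soon as its image lies in it.
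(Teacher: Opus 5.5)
Your proof is correct and follows the paper's argument: restrict to $U$, factor $U \to Y$ through the open immersion $\phi(U)\hookrightarrow Y$, and cancel using \cref{composition is schematically dominant implies f is schematically dominant}. For part (2), applying the $S$-dominant form of that lemma directly is slightly cleaner than the paper's route, which base changes to $S'$ and invokes part (1) for $\phi_{S'}$, tacitly using that $\phi_{S'}$ is open; as you note, only openness of $\phi$ itself is needed.
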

\begin{proof}
    Since a composition of open schematically or $S$-dominant morphisms is open and schematically or $S$-dominant (\cref{composition is schematically dominant implies f is schematically dominant}), it is enough to show in each case that $\phi(X)$ is schematically or $S$-dense respectively.

    \begin{enumerate}
        \item Let $V = \phi(X)$, and $j\colon V\hookrightarrow Y$ be the corresponding open immersion. Then $\O_Y \to \phi_*\O_X$ factors as $\O_Y \to j_*\O_V \to \phi_*\O_X$; thus if $\O_Y \to \phi_*\O_X$ is injective then certainly so is $\O_Y \to j_*\O_V$.

        \item Let $V = \phi(X)$. We need to show $V_{S'}$ is schematically dense for any $S' \to S$. But we have $\phi(X_{S'}) \subset V_{S'}$, and $\phi(X_{S'})$ is schematically dense by above, so we conclude by \cref{composition is schematically dominant implies f is schematically dominant}.
    \end{enumerate}
\end{proof}


%% file: Chapters/Scheme-Theoretic_Preliminaries/Schematic_Dominance_and_Schematic_Image.tex
\section{Schematic Dominance and Schematic Image}

\begin{rem}\label{schematic image of schematically dominant morphism}
    Observe that if $\phi\colon X \to Y$ is schematically dominant, then by definition it follows that $\Schim(\phi) = Y$; in general the converse is false.
\end{rem}

Precomposition by schematically dominant morphisms preserves schematic image:
\begin{prop}\label{Precomposition by schematically dominant morphisms preserves schematic image}
    Let $\phi\colon X \to Y$ and $\psi\colon Y \to Z$ be morphisms of schemes, and suppose $\phi$ is schematically dominant. Then $\Schim(\psi \circ \phi) = \Schim(\psi)$.
\end{prop}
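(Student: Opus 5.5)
The plan is to prove both inclusions of closed subschemes of $Z$ directly from the characterization of the schematic image as the smallest closed subscheme through which a morphism factors. First, $\psi\circ\phi$ factors through $\Schim(\psi)$, because $\psi$ itself does. Minimality then gives $\Schim(\psi\circ\phi)\subset \Schim(\psi)$.

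For the reverse inclusion, let $W=\Schim(\psi\circ\phi)$, a closed subscheme of $Z$ cut out by a quasicoherent ideal $\calI_W$. It suffices to show that $\psi$ factors through $W$; minimality then gives $\Schim(\psi)\subset W$. Factoring through a closed subscheme is equivalent to $\calI_W$ mapping to zero under $\psi^\sharp\colon \O_Z \to \psi_*\O_Y$. So we take a local section $s$ of $\calI_W$ over an open $V\subset Z$ and set $t=\psi^\sharp(s)\in \O_Y(\psi\inv(V))$.

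Since $\psi\circ\phi$ factors through $W$, the composite $\O_Z\to\psi_*\O_Y\to\psi_*\phi_*\O_X$ kills $\calI_W$. Hence $\phi^\sharp(t)=0$ in $\O_X(\phi\inv\psi\inv(V))$. Schematic dominance of $\phi$ means $\O_Y\to\phi_*\O_X$ is injective, and this injectivity holds on every open of $Y$, in particular on $\psi\inv(V)$. Therefore $t=0$, so $\calI_W\subset\ker(\O_Z\to\psi_*\O_Y)$ and $\psi$ factors through $W$.

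The one point needing care is the relation between "factors through a closed subscheme" and "the ideal dies in $\psi_*\O_Y$". This equivalence holds for any morphism and any quasicoherent ideal, since the factorization can be checked affine-locally on $Z$. Notably, no quasicoherence of the kernel is required, because we only test the quasicoherent ideal $\calI_W$. With this, the argument is purely sheaf-theoretic and the two inclusions give $\Schim(\psi\circ\phi)=\Schim(\psi)$.
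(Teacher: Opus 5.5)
Your proof is correct and is essentially the paper's argument. In both, the key point is that $\mathcal{O}_Y \to \phi_*\mathcal{O}_X$ is injective on every open (equivalently, $\psi_*$ preserves this injection), so $\ker(\mathcal{O}_Z \to \psi_*\mathcal{O}_Y) = \ker(\mathcal{O}_Z \to \psi_*\phi_*\mathcal{O}_X)$; the paper concludes at once because the schematic image is cut out by the largest quasicoherent ideal in this kernel, whereas you unpack the same fact into two inclusions using the factorization criterion.
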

\begin{proof}
    If $\O_Y \to \phi_*(\O_X)$ is injective, then so is $\psi_*\O_Y \to \psi_*(\phi_*(\O_X))$, so the morphism $\O_Z \to \psi_*(\O_Y)$ and the composition $\O_Z \to \psi_*(\O_Y) \to \psi_*(\phi_*(\O_X))$ have the same kernel.
\end{proof}
\begin{lem}\label{Schematically dominant maps to schematic images}
    Let $\phi\colon X \to Y$ be a morphism such that $\ker(\O_Y \to \phi_* \O_x)$ is quasicoherent (e.g. if $\phi$ is quasicompact or $X$ reduced). Then $\phi$ maps schematically dominantly onto its schematic image.
\end{lem}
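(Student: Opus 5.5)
The plan is to factor $\phi$ through its schematic image and check injectivity of structure sheaves locally. Let $Z = \Schim(\phi)$ with closed immersion $i\colon Z \hookrightarrow Y$, and let $\phi = i\circ\phi'$ with $\phi'\colon X \to Z$. The ideal of $Z$ is $\calI := \ker(\O_Y \to \phi_*\O_X)$. By hypothesis $\calI$ is quasicoherent, so it is a quasicoherent ideal sheaf, and it is the largest quasicoherent ideal contained in the kernel. Hence $\O_Z = \O_Y/\calI$.

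The claim to establish is that $\O_Z \to \phi'_*\O_X$ is injective. Since $i$ is a closed immersion, $i_*$ is exact and faithful on sheaves of abelian groups. It therefore suffices to show that $i_*\O_Z \to i_*\phi'_*\O_X = \phi_*\O_X$ is injective.

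Next I identify this map. The morphism $\O_Y \to \phi_*\O_X$ factors as $\O_Y \to i_*\O_Z = \O_Y/\calI \to \phi_*\O_X$, because $\phi$ factors through $Z$. This uses functoriality of the comorphism for the composition $\phi = i\circ\phi'$. The induced map $\O_Y/\calI \to \phi_*\O_X$ is the map obtained by quotienting $\O_Y \to \phi_*\O_X$ by its full kernel $\calI$. A map from a quotient by the entire kernel is injective, so the claim follows.

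The only subtle point is where the quasicoherence hypothesis enters. In general the ideal of the schematic image is only the largest quasicoherent subsheaf of the kernel, and it could be strictly smaller than the kernel, which would break injectivity. Under the hypothesis, the kernel is itself quasicoherent, so it coincides with the ideal of $Z$. The parenthetical cases, $\phi$ quasicompact or $X$ reduced, then follow from \cref{quasicoherent kernel lemma}.
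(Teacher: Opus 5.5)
Your proposal is correct and follows essentially the same argument as the paper. Both factor $\phi$ through $\iota\colon X'=\Schim(\phi)\hookrightarrow Y$ and use quasicoherence of $\mathcal{I}=\ker(\mathcal{O}_Y\to\phi_*\mathcal{O}_X)$ to identify $\mathcal{O}_Y\to\iota_*\mathcal{O}_{X'}$ with the quotient by the full kernel, which gives injectivity of $\iota_*\mathcal{O}_{X'}\to\phi_*\mathcal{O}_X$. You then recover injectivity of $\mathcal{O}_{X'}\to\phi'_*\mathcal{O}_X$ using exactness and faithfulness of $\iota_*$, where the paper instead compares sections over $V$ and $V\cap X'$ directly; this difference is only cosmetic.
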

\begin{proof}
    Let $\calI = \ker(\O_Y \to \phi_* \O_X)$, $X'  = \Schim(\phi)$, and $X \xrightarrow{\phi'} X' \xrightarrow{\iota} Y$ be the corresponding factorization. Then $\O_Y \to \phi_*(\O_X)$ factors as $\O_Y \to \iota_*(\O_{X'}) \to \phi_*(\O_X)$. Since $\calI$ is quasicoherent, we have $\O_Y \to \iota_*(\O_{X'})$ is just the quotient by $\calI$; therefore $\iota_*(\O_{X'}) \to \iota_*(\phi'_*(\O_X))$ is injective. It follows that $\O_{X'} \to \phi'_*(\O_X)$ is injective, since for any open $U \subset X'$ we have that the map $\O_{X'}(U) \to \phi'_*(\O_X)(U)$ is the same as the map $\iota_*(\O_{X'})(V) \to \iota_*(\phi'_*(\O_X))(V)$ for some open $V \subset Y$ with $U = V\cap X'$.
\end{proof}
\begin{cor}\label{Schematic image criteria for schematic dominance}
    Let $\phi\colon X \to Y$ be a morphism such that $\ker(\O_Y \to \phi_* \O_x)$ is quasicoherent (e.g. if $\phi$ is quasicompact or $X$ reduced). Then $\phi$ is schematically dominant if and only if $\Schim(\phi) = Y$.
\end{cor}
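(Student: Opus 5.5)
One direction is already available. If $\phi$ is schematically dominant, then $\Schim(\phi) = Y$ by \cref{schematic image of schematically dominant morphism}, and no quasicoherence hypothesis is needed for this.

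For the converse, I will assume $\Schim(\phi) = Y$ and use the hypothesis that $\calI := \ker(\O_Y \to \phi_*\O_X)$ is quasicoherent. This hypothesis holds, for instance, when $\phi$ is quasicompact or $X$ is reduced, by \cref{quasicoherent kernel lemma}. Recall that $\Schim(\phi)$ is the closed subscheme of $Y$ cut out by the largest quasicoherent ideal sheaf contained in $\calI$. Since $\calI$ is itself quasicoherent, that largest ideal is $\calI$, so $\Schim(\phi)$ is the closed subscheme $V(\calI)$. The assumption $\Schim(\phi) = Y$ therefore gives $V(\calI) = Y$ as closed subschemes. Closed subschemes correspond bijectively to quasicoherent ideal sheaves, so $\calI = 0$. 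This says exactly that $\O_Y \to \phi_*\O_X$ is injective, i.e.\ that $\phi$ is schematically dominant.

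Alternatively, I could appeal to \cref{Schematically dominant maps to schematic images}. It says $\phi$ factors schematically dominantly through $\Schim(\phi) = Y$, and when the schematic image is all of $Y$ that factorization is $\phi$ itself, so $\phi$ is schematically dominant.

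The argument has essentially no hard step. The only point requiring care is identifying $\Schim(\phi)$ with $V(\calI)$, which is precisely where quasicoherence of $\calI$ is used.
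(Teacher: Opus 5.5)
Your proof is correct and matches the paper's. The forward direction is \cref{schematic image of schematically dominant morphism}, and your alternative route via \cref{Schematically dominant maps to schematic images} is exactly the paper's argument for the converse; your primary argument (quasicoherence of the kernel forces $\Schim(\phi)$ to be the closed subscheme cut out by $\calI$, so $\Schim(\phi)=Y$ gives $\calI=0$) is just the special case $\Schim(\phi)=Y$ of that lemma's proof, written out directly.
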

\begin{proof}
    Left to right is \cref{schematic image of schematically dominant morphism}, and right to left follows from the above result.
\end{proof}

%% file: Chapters/Scheme-Theoretic_Preliminaries/S-Dominance_and_Flatness.tex
\section{\texorpdfstring{$S$}{S}-Dominance and Flatness}

Note that a surjective morphism of schemes over $S$ will not be $S$-dominant in general. However:
\begin{prop}\label{a faithfully flat morphism is S-dominant}
    Let $\phi\colon X \to Y$ be a faithfully flat morphism of schemes over $S$. Then $\phi$ is $S$-dominant.
\end{prop}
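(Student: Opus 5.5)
Since $S$-dominance means schematic dominance after every base change $S' \to S$, and faithful flatness is stable under base change, it suffices to prove the following: a faithfully flat morphism $\phi\colon X \to Y$ is schematically dominant, i.e.\ $\O_Y \to \phi_*\O_X$ is injective. Indeed, for any $S' \to S$ the base change $\phi_{S'}\colon X_{S'} \to Y_{S'}$ is again faithfully flat, since flatness and surjectivity are both preserved under base change. Applying the schematic statement to $\phi_{S'}$ then gives $S$-dominance.

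For schematic dominance, injectivity of a morphism of sheaves can be checked on sections over an affine open cover of $Y$. So fix an affine open $V = \Spec(B) \subset Y$ and a section $b \in B$ which maps to zero in $\O_X(\phi\inv(V))$. By \cref{restrictions of faithfully flat morphisms to opens are faithfully flat}, the restriction $\phi\inv(V) \to V$ is faithfully flat, so we may assume $Y = \Spec B$ is affine. We must show that $b = 0$.

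Consider the ideal sheaf $\calI = b\O_Y \subset \O_Y$. The vanishing of $b$ in $\O_X(X)$ means that $b$ pulls back to zero on every affine open $\Spec A \subset X$ lying over $V$. By flatness, $\phi^*\calI = \calI\O_X$, i.e.\ for each point $x \in X$ with image $y$ the map $\calI_y \otimes_{\O_{Y,y}} \O_{X,x} \to \O_{X,x}$ is injective with image $b\O_{X,x} = 0$. Hence $\calI_y \otimes \O_{X,x} = 0$. Since $\O_{Y,y} \to \O_{X,x}$ is a flat local homomorphism, it is faithfully flat, so $\calI_y = 0$. Surjectivity of $\phi$ gives such an $x$ for every $y$, so $\calI = 0$ and therefore $b = 0$.

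The only subtle step is the local algebra: flat local homomorphisms are faithfully flat, and flatness identifies $\phi^*\calI$ with $\calI\O_X$. Both are standard facts. Alternatively, one can cite \cite[IV, 11.10.5]{EGA} directly.
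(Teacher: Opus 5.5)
Your proof is correct and follows essentially the paper's route: use stability of faithful flatness under base change to reduce to schematic dominance, then check injectivity of $\O_Y \to \phi_*\O_X$ locally, using surjectivity of $\phi$ and the fact that the flat local maps $\O_{Y,y}\to\O_{X,x}$ are faithfully flat. Your detour through $\calI = b\O_Y$ is an unpacked proof that such a local map is injective, and it correctly passes through $\O_{X,x}$ rather than the stalk of $\phi_*\O_X$. One small point: injectivity must be checked on all affine opens (a basis), not just on one affine cover; your argument does this, since you take $V$ to be an arbitrary affine open.
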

\begin{proof}
    Since faithful flatness is stable under base change, it is enough to show that $\phi$ is schematically dominant, or equivalently that the morphism $\O_Y \to \phi_*\O_X$ is injective. This can be checked on stalks. By assumption the maps on stalks are faithfully flat ring maps; such maps are injective.
\end{proof}

\begin{cor}\label{Precomposition by faithfully flat morphisms preserves schematic image}
    Let $\phi\colon X \to Y$ and $\psi\colon Y \to Z$ be morphisms of schemes, and suppose $\phi$ is faithfully flat. Then $\Schim(\psi \circ \phi) = \Schim(\psi)$.
\end{cor}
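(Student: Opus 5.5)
This follows by combining \cref{a faithfully flat morphism is S-dominant} with \cref{Precomposition by schematically dominant morphisms preserves schematic image}. By \cref{a faithfully flat morphism is S-dominant}, $\phi$ is $S$-dominant for any base $S$ over which the schemes live; taking $S = Y$, or simply reading off the definition, $\phi$ is in particular schematically dominant, since $S$-dominance includes schematic dominance over the original base. We may therefore apply \cref{Precomposition by schematically dominant morphisms preserves schematic image} directly to the pair $\phi$, $\psi$, which gives $\Schim(\psi\circ\phi) = \Schim(\psi)$.

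For completeness, the key fact can also be checked directly. Schematic dominance of $\phi$ means the map $\O_Y \to \phi_*\O_X$ is injective. To see this, test on an open $V\subset Y$: a section $s$ of $\O_Y(V)$ that dies in $\O_X(\phi\inv(V))$ has zero germ at every point $y \in V$. Indeed, since $\phi$ is surjective we can pick $x$ over $y$, and the local map $\O_{Y,y} \to \O_{X,x}$ is faithfully flat, hence injective. So $s = 0$.

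There is no real obstacle here. The only point needing care is that ``faithfully flat'' supplies surjectivity together with flatness, and that injectivity of a faithfully flat local ring map is standard; both are already packaged into the proof of \cref{a faithfully flat morphism is S-dominant}.
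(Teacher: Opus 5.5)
Your proposal is correct and follows the paper's own proof: faithful flatness gives ($S$-)dominance, hence schematic dominance, via \cref{a faithfully flat morphism is S-dominant}, and \cref{Precomposition by schematically dominant morphisms preserves schematic image} then finishes. Your direct stalk-level check of injectivity of $\O_Y \to \phi_*\O_X$ is also sound. It picks a point $x$ over $y$ and uses that the flat local map $\O_{Y,y} \to \O_{X,x}$ is injective, which is slightly more careful than the paper's phrasing of that step.
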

\begin{proof}
    Immediate by the above and \cref{Precomposition by schematically dominant morphisms preserves schematic image}.
\end{proof}

\begin{cor}\label{image of S-dense open under faithfully flat morphism locally of finite presentation}
    Let $\phi\colon X\to Y$ be a morphism of schemes over $S$ which is faithfully flat and locally of finite presentation. Then for any $S$-dense open $U\subset X$ we have that $f(U)$ is an $S$-dense open subscheme of $Y$, and $\phi|_{[U \to f(U)]}$ is faithfully flat.
\end{cor}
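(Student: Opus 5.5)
The statement has two parts: $f(U)$ is an $S$-dense open of $Y$, and the restriction $\phi|_{[U \to f(U)]}$ is faithfully flat. (Here $f$ denotes $\phi$.) I would prove openness and density together, and then handle faithful flatness separately.

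\textbf{Openness and density.} Since $\phi$ is flat and locally of finite presentation, it is universally open by \cref{a flat morphism of schemes which is locally of finite presentation is universally open}. In particular $\phi$ is an open morphism, so $\phi(U)$ is open in $Y$. Since $\phi$ is faithfully flat, it is $S$-dominant by \cref{a faithfully flat morphism is S-dominant}. Applying part (2) of \cref{dominant open morphisms} to the open, $S$-dominant morphism $\phi$ and the $S$-dense open $U$, we conclude that $\phi(U)$ is an $S$-dense open of $Y$.

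\textbf{Faithful flatness of the restriction.} Set $V = \phi(U)$. Since $U \subset \phi\inv(V)$, the restriction $\phi|_{[U \to V]}$ is defined, and it is flat by \cref{restrictions of flat morphisms to opens are flat}. It is surjective onto $V$ by the definition of $V$. A flat surjective morphism is faithfully flat, which finishes the argument.

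I do not expect a real obstacle. The one point to verify is that part (2) of \cref{dominant open morphisms} needs only that $\phi$ is open and $S$-dominant, and both were established above. If one wanted to avoid relying on that result, the argument is short: for any base change $S' \to S$, the set $\phi(U)_{S'}$ contains $\phi_{S'}(U_{S'})$, which is schematically dense. This is because $U_{S'} \hookrightarrow X_{S'}$ and $\phi_{S'}$ are both schematically dominant, and $\phi_{S'}$ is open since $\phi$ is universally open. One then concludes using \cref{composition is schematically dominant implies f is schematically dominant}.
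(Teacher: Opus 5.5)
Your proof is correct and matches the paper's argument: universal openness and the $S$-dominance of faithfully flat morphisms, fed into \cref{dominant open morphisms}, give $S$-density of $\phi(U)$. Surjectivity onto $\phi(U)$ together with \cref{restrictions of flat morphisms to opens are flat} then gives faithful flatness of the restriction.
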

\begin{proof}
    Since a flat morphism of schemes which is locally of finite presentation is universally open (\cref{a flat morphism of schemes which is locally of finite presentation is universally open}) and a faithfully flat morphism is $S$-dominant (\cref{a faithfully flat morphism is S-dominant}), it follows from \cref{dominant open morphisms} that $f(U)$ is an $S$-dense open. The morphism $\phi|_{[U \to f(U)]}$  is certainly surjective, and flatness follows from \cref{restrictions of flat morphisms to opens are flat}.
\end{proof}

The pullback of a quasicompact schematically dominant morphism by a flat morphism is schematically dominant:
\begin{lem}
    Let $f\colon X \to Y$ be a flat morphism of schemes, and let $g\colon Z \to Y$ be a quasicompact schematically dominant morphism. Then $f^*(g)$ is schematically dominant.
\end{lem}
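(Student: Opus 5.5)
The statement is local on $Y$, so I will reduce to the affine case and then use the fact that flat base change commutes with schematic images of quasicompact morphisms. Schematic dominance of a morphism $h\colon W \to X$ can be checked on an open cover of the target: $\O_X \to h_*\O_W$ is injective if and only if it is injective after restricting to each member of an open cover of $X$. Moreover, by \cref{Preservation of schematic dominance on restrictions to opens on the target}, restricting $g$ over an open $V \subset Y$ preserves schematic dominance. Quasicompactness of $g$ is also preserved by such restrictions. So I first cover $Y$ by affine opens $V$ and $X$ by affine opens $U \subset f\inv(V)$. The restriction of $f^*(g)$ over $U$ is the base change of $g|_{g\inv(V)}$ along $f|_{[U \to V]}$, which is again flat by \cref{restrictions of flat morphisms to opens are flat}. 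Hence I may assume $X$ and $Y$ are affine.

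The key step is then to apply \cref{schematic image of a quasicompact morphism commutes with flat base change on target}. Since $g$ is quasicompact, its kernel ideal is quasicoherent by \cref{quasicoherent kernel lemma}. So \cref{Schematic image criteria for schematic dominance} applies: schematic dominance of $g$ is equivalent to $\Schim(g) = Y$. Flat base change then gives
\begin{align*}
    \Schim(f^*(g)) = f^*(\Schim(g)) = f^*(Y) = X.
\end{align*}
The base change $f^*(g)\colon Z\times_Y X \to X$ is quasicompact, since quasicompactness is stable under base change. So its kernel ideal is again quasicoherent, and \cref{Schematic image criteria for schematic dominance} shows that $f^*(g)$ is schematically dominant.

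With the cited results, the only point needing care is the applicability of \cref{Schematic image criteria for schematic dominance} in both directions, i.e.\ quasicoherence of the kernels. This is exactly where quasicompactness of $g$, and its stability under base change, is used. In fact the affine reduction is not strictly needed, since the cited results are global; I would include it only as a safeguard. If one wanted to avoid citing the Stacks tag, the main obstacle would be proving flat base change of schematic images directly. For $g$ quasicompact and quasiseparated, I would do this by noting that $g_*\O_Z$ is quasicoherent and that $f^*g_*\O_Z \simeq (f^*g)_*\O_{Z_X}$ by flat base change. Then $f^*$ of the injection $\O_Y \hookrightarrow g_*\O_Z$ stays injective because $f$ is flat. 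The merely quasicompact case would need a Čech-type argument on a finite affine cover of $Z$.
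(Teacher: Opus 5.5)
Your proposal is correct and follows essentially the paper's argument: quasicompactness of $f^*(g)$ lets you apply \cref{Schematic image criteria for schematic dominance}, and flat base change of schematic images gives $\Schim(f^*(g)) = f^*(\Schim(g)) = f^*(Y) = X$. As you note yourself, the affine reduction is unnecessary, since the cited results are global.
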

\begin{proof}
    Note that $f^*(g)$ is quasicompact, since $g$ is. Therefore by \cref{Schematic image criteria for schematic dominance} it is enough to show that $\Schim(f^*(g)) = X$. By \cref{schematic image of schematically dominant morphism} we have $\Schim(g) = Y$. Then by \cref{schematic image of a quasicompact morphism commutes with flat base change on target}, we have $\Schim(f^*(g)) = f^*(\Schim(g)) =  f^*(Y) = X$.

    See \cite[IV.3,~11.10.5]{EGA} for an alternative proof.
\end{proof}
It follows that pullback of a quasicompact $S$-dominant morphism by a flat morphism is $S$-dominant:
\begin{cor}\label{the pullback of a quasicompact S-dominant morphism by a flat morphism is S-dominant}
    Let $f\colon X \to Y$ be a flat morphism of schemes over $S$, and let $g\colon Z \to Y$ be a quasicompact $S$-dominant morphism. Then $f^*(g)$ is $S$-dominant.
\end{cor}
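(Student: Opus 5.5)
The plan is to reduce everything to the preceding lemma by pulling back along the structure morphisms to $S'$, exactly as the $S$-dominance results earlier in this chapter were deduced from their schematic-dominance counterparts. First unwind the definitions. The pullback $f^*(g)\colon X\times_Y Z \to X$ is a morphism of schemes over $S$. To say it is $S$-dominant means that for every base change $S' \to S$ the morphism $(f^*(g))_{S'}$ is schematically dominant.

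Fix an arbitrary $S' \to S$. Since base change commutes with fiber products (\cref{associativity of fiber product}, \cref{Cartesian rectangles}), we have a canonical identification
\begin{align*}
(X\times_Y Z)_{S'} \simeq X_{S'}\times_{Y_{S'}} Z_{S'}.
\end{align*}
Under this identification $(f^*(g))_{S'}$ becomes $(f_{S'})^*(g_{S'})$. We now check the three hypotheses needed to apply the preceding lemma to $f_{S'}$ and $g_{S'}$.
\begin{enumerate}
\item $f_{S'}\colon X_{S'} \to Y_{S'}$ is flat, since flatness is stable under base change.
\item $g_{S'}$ is quasicompact, since quasicompactness is stable under base change.
\item $g_{S'}$ is schematically dominant, because $g$ is $S$-dominant; this is precisely the definition.
\end{enumerate}
The lemma then gives that $(f_{S'})^*(g_{S'})$ is schematically dominant. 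Since $S'$ was arbitrary, $f^*(g)$ is $S$-dominant.

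The only point needing care is the identification of $(f^*(g))_{S'}$ with $(f_{S'})^*(g_{S'})$ as morphisms over $X_{S'}$, not merely as abstract schemes. This follows from the Cartesian cube lemma (\cref{cartesian cubes}) applied to the cube formed by the original fiber square and its base change to $S'$, so I do not expect a genuine obstacle here.
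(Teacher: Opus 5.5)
Your proposal is correct and matches the paper's proof: you base change to an arbitrary $S'$, identify $(f^*(g))_{S'}$ with $f_{S'}^*(g_{S'})$, and apply the preceding lemma to the flat $f_{S'}$ and the quasicompact, schematically dominant $g_{S'}$. The only cosmetic difference is that the paper justifies this identification by noting that base change preserves limits (\cref{pullback preserves limits in categories with fiber products}), rather than by the Cartesian cube lemma.
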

\begin{proof}
    For any $S' \to S$, we need to show that $(f^*(g))_{S'}$ is schematically dominant. Since $g$ is $S$-dominant and quasicompact, we have that $g_{S'}$ is schematically dominant and quasicompact. By \cref{pullback preserves limits in categories with fiber products}, we have $(f^*(g))_{S'} = f_{S'}^*(g_{S'})$. Since $f_{S'}$ is flat, we conclude by the previous result.
\end{proof}

%% file: Chapters/Scheme-Theoretic_Preliminaries/Dominance_vs_Schematic_Dominance_vs_S-Dominance.tex
\section{Dominance vs Schematic Dominance vs \texorpdfstring{$S$}{S}-Dominance}\label{section: dominance vs schematic dominance}

Recall
\begin{dfn}
    A morphism of schemes $\phi\colon X \to Y$ is said to be \emph{dominant} if its set-theoretic image is dense in $\abs{Y}$.
\end{dfn}
\begin{lem}[{\cite[\href{https://stacks.math.columbia.edu/tag/01RL}{Tag 01RL}]{stacks-project}}]\label{dominant quasicompact morphisms}
    Let $f\colon X\to S$ be a quasicompact morphism of schemes. Then $f$ is dominant if and only if for every irreducible component $Z\subset S$ the generic point of $Z$ is in the image of $f$.
\end{lem}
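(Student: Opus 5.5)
The plan is to reduce to the topological statement that a subset of $\abs{S}$ is dense if and only if it meets every irreducible component, in the sense that its closure contains each generic point. Throughout, $S$ and $\abs{S}$ denote the target of the quasicompact morphism $f\colon X\to S$ in the statement, not the base scheme of earlier sections.

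For the forward direction, assume $f$ is dominant, so $\overline{f(X)} = \abs{S}$. Fix an irreducible component $Z$ with generic point $\eta$; I will show $\eta \in f(X)$. Quasicompactness enters here: for a quasicompact morphism, the image $f(X)$ is stable under generization in the closure sense. More precisely, any point in $\overline{f(X)}$ is a specialization of a point of $f(X)$.

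I would prove this generization property first. The statement is local on $S$, so we may take $S = \Spec A$ affine, and then $X$ is quasicompact, hence a finite union of affines $\Spec B_i$. Replacing $X$ by $\coprod \Spec B_i$ does not change the image, so we may assume $X = \Spec B$ is affine. Let $\mathfrak p \in \overline{f(X)}$. The closure of the image is $V(\ker(A\to B))$, so $\mathfrak p \supseteq \ker(A \to B)$. Localize at $\mathfrak p$: the map $A_{\mathfrak p} \to B_{\mathfrak p}$ has kernel $\ker(A\to B)_{\mathfrak p}$ by exactness of localization, so $B_{\mathfrak p}\neq 0$. Pick a prime $\mathfrak q$ of $B_{\mathfrak p}$; its contraction is a prime of $A$ contained in $\mathfrak p$ and lying in $f(X)$. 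Hence $\mathfrak p$ is a specialization of a point of $f(X)$.

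Now apply this to $\eta$. Since $\eta \in \overline{f(X)}$, it is a specialization of some $x' \in f(X)$. But $\eta$ is a generic point of an irreducible component, so it admits no proper generization, and therefore $x' = \eta$, giving $\eta \in f(X)$. This step, showing $B_{\mathfrak p}\neq 0$ through compatibility of kernels with localization, is the only real content of the argument; it is also where quasicompactness is needed, since without it $X$ cannot be replaced by a single affine.

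For the converse, assume $f(X)$ contains every generic point of the irreducible components of $S$. Since $\abs{S}$ is the union of its irreducible components, and each component is the closure of its generic point, we get $\abs{S} \subseteq \overline{f(X)}$. Thus $f$ is dominant, and this direction needs no hypothesis on $f$.
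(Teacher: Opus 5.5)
Your proof is correct, and it follows the standard argument behind the cited Stacks Project lemma; the paper itself gives no proof beyond that citation. The key step is that for quasicompact $f$ every point of $\overline{f(X)}$ is a specialization of a point of $f(X)$, which you verify directly by reducing to affine source and target and localizing, and a generic point of an irreducible component has no proper generization, while the converse is purely topological.
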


In general, dominance is neither necessary nor sufficient for schematic dominance. But the two notions agree in good situations:

\begin{prop}\label{schematic dominance vs dominance}
    Let $\phi\colon X \to Y$ be a morphism of schemes. Suppose $Y$ is reduced and $\ker(\O_Y \to \phi_* \O_X)$ is quasicoherent. Then $\phi$ is schematically dominant if and only if it is dominant.
\end{prop}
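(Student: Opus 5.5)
The plan is to reduce both directions to the relationship between the closure of the image and the schematic image, using \cref{Schematic image criteria for schematic dominance} and \cref{Schematic image of well-behaved morphisms}. Since $\ker(\O_Y \to \phi_*\O_X)$ is quasicoherent, \cref{Schematic image criteria for schematic dominance} says that $\phi$ is schematically dominant if and only if $\Schim(\phi) = Y$ as closed subschemes. Under the same hypothesis, \cref{Schematic image of well-behaved morphisms} gives the equality of topological spaces $\overline{\phi(X)} = \abs{\Schim(\phi)}$. The proof then consists of comparing ``$\Schim(\phi) = Y$ as schemes'' with ``$\abs{\Schim(\phi)} = \abs{Y}$ as sets''.

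For the forward direction, suppose $\phi$ is schematically dominant. Then $\Schim(\phi) = Y$, so $\overline{\phi(X)} = \abs{\Schim(\phi)} = \abs{Y}$. This says exactly that $\phi$ is dominant. Neither reducedness nor anything beyond the quasicoherence hypothesis is needed here.

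For the converse, suppose $\phi$ is dominant. Then $\abs{\Schim(\phi)} = \overline{\phi(X)} = \abs{Y}$, so $\Schim(\phi)$ is a closed subscheme of $Y$ whose underlying set is all of $\abs{Y}$. The key step, and the only place reducedness of $Y$ enters, is to show that such a closed subscheme equals $Y$.

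To see this, let $\calI$ be the quasicoherent ideal sheaf of $\Schim(\phi)$. Because $V(\calI) = \abs{Y}$, on each affine open $\Spec A \subset Y$ the ideal $I = \calI(\Spec A)$ satisfies $V(I) = \Spec A$. Hence $I$ is contained in every prime ideal of $A$, that is, $I \subset \sqrt{0}$. Since $Y$ is reduced, $\sqrt{0} = 0$, so $I = 0$. Therefore $\calI = 0$ and $\Schim(\phi) = Y$, and \cref{Schematic image criteria for schematic dominance} gives that $\phi$ is schematically dominant.

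I expect no real obstacle. The one point needing care is that the identification $\overline{\phi(X)} = \abs{\Schim(\phi)}$ genuinely depends on the quasicoherence hypothesis, which is why that hypothesis appears in the statement. This is already handled by the cited results.
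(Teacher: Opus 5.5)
Your proof is correct and matches the paper's argument: both directions go through $\overline{\phi(X)} = \abs{\Schim(\phi)}$ and \cref{Schematic image criteria for schematic dominance}, with reducedness used only to upgrade $\abs{\Schim(\phi)} = \abs{Y}$ to $\Schim(\phi) = Y$. You spell out that last step (the ideal lies in the nilradical, which vanishes) more explicitly than the paper does, and you correctly cite \cref{Schematic image of well-behaved morphisms} for the topological equality.
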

\begin{proof}
    By \cref{schematic image of schematically dominant morphism}, we have $\abs{\Schim{\phi}} = \overline{\phi(X)}$.
    Suppose $\phi$ is dominant. Then $\abs{\Schim{\phi}} = \overline{\phi(X)} = Y$. Since $Y$ is reduced this implies $\Schim(\phi) = Y$, and we conclude by \cref{Schematic image criteria for schematic dominance}.

    Conversely, if $\phi$ is schematically dominant, then $\Schim(\phi) = Y$, so $\overline{\phi(X)} = \abs{Y}$ and $\phi$ is dominant.
\end{proof}

\begin{cor}\label{schematically dense opens of a reduced scheme}
    Let $X$ be a reduced scheme, and $U \subset X$ an open subscheme. Then $U$ is dense if and only if it is schematically dense.
\end{cor}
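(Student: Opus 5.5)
The statement follows directly from \cref{schematic dominance vs dominance}, applied to the open immersion $j\colon U \hookrightarrow X$. So the plan is to check that proposition's two hypotheses for $j$ and then unwind the definitions.

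First I check the hypotheses. The target $X$ is reduced by assumption. For quasicoherence of $\ker(\O_X \to j_*\O_U)$, I use \cref{quasicoherent kernel lemma}, which only needs the source to be reduced. The source $U$ is an open subscheme of a reduced scheme, and reducedness is local (the local rings of $U$ are local rings of $X$), so $U$ is reduced.

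Then \cref{schematic dominance vs dominance} says $j$ is schematically dominant if and only if it is dominant. Unwinding each side:
\begin{itemize}
\item By \cref{Schematic and S-dominance and density definition}, $j$ schematically dominant means exactly that $U$ is schematically dense in $X$.
\item $j$ dominant means the set-theoretic image $\abs{U}$ is dense in $\abs{X}$, which is exactly the statement that $U$ is dense.
\end{itemize}
This gives the equivalence.

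I expect no real obstacle here. The only points needing care are that reducedness passes to the open subscheme $U$, so that the quasicoherence hypothesis holds, and that the notions "dominant" and "dense" match under the definitions used in the paper.
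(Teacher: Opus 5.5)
Your proposal is correct and matches the paper's proof, which simply cites \cref{schematic dominance vs dominance} together with \cref{quasicoherent kernel lemma} for the open immersion $U \hookrightarrow X$. Your explicit check that $U$ inherits reducedness from $X$ is exactly the detail needed for the quasicoherence hypothesis, and the paper leaves it implicit.
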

\begin{proof}
    Apply \cref{schematic dominance vs dominance} and \cref{quasicoherent kernel lemma}.
\end{proof}

\begin{cor}[Generic faithful flatness]\label{Generic faithful flatness proposition}
    Let $\pi\colon X \to Y$ be a dominant finite type morphism of schemes with $Y$ reduced. Then there exists a dense open $U \subset Y$ such that $\pi|_{[\pi\inv(U) \to U]}$ is faithfully flat and of finite presentation (fppf).
\end{cor}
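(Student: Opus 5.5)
The plan is to reduce to generic flatness (\cref{Generic Flatness theorem}) applied to the structure sheaf, and then to use dominance to upgrade flatness to faithful flatness over a smaller dense open.

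First, apply \cref{Generic Flatness theorem} with $\F = \O_X$, which is a finite type quasicoherent sheaf. This gives a dense open $U_0 \subset Y$ such that $\pi|_{\pi\inv(U_0)}\colon \pi\inv(U_0) \to U_0$ is flat and of finite presentation. This restriction is then flat and locally of finite presentation, so it is open by \cref{a flat morphism of schemes which is locally of finite presentation is universally open}. Consequently its image $V := \pi(\pi\inv(U_0)) = \pi(X)\cap U_0$ is an open subset of $U_0$, and hence of $Y$.

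Next, show that $V$ is dense in $Y$. Since $\pi$ is dominant, $\pi(X)$ is dense in $Y$. Because $U_0$ is open, $\pi(X) \cap U_0$ is dense in $U_0$. Because $U_0$ is dense in $Y$, it follows that $V$ is dense in $Y$.

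Now set $U := V$. The preimage $\pi\inv(U)$ is open in $\pi\inv(U_0)$, so the map $\pi\inv(U) \to U$ is flat by \cref{restrictions of flat morphisms to opens are flat}. It is surjective by the definition of $V$. It is of finite presentation because finite presentation is local on the target, and $\pi\inv(U)\to U$ is the base change of $\pi\inv(U_0)\to U_0$ along the open immersion $U \hookrightarrow U_0$.

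The main obstacle is bookkeeping: one must check that finite presentation, including the quasicompactness and quasiseparatedness parts, is preserved under restriction to an open of the target. Base change along an open immersion handles this. The other step needing care is the density argument, and in particular the fact that $U$ must be open; this uses openness of flat finitely presented maps rather than anything about $\pi$ itself, which is only assumed dominant. The reducedness of $Y$ enters only through the generic flatness theorem.
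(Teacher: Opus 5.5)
Your proposal is correct and follows the paper's proof: apply generic flatness to get $U_0$, use openness of flat locally finitely presented morphisms, and take $U$ to be the image $\pi(\pi^{-1}(U_0))$. The one difference is the density check. You argue topologically from dominance. The paper instead passes through schematic dominance, using reducedness of $Y$ via \cref{schematic dominance vs dominance} together with \cref{dominant open morphisms}. Your version is slightly more elementary and needs reducedness of $Y$ only for the generic flatness step.
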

\begin{proof}
    First note that $\pi$ is schematically dominant by \cref{schematic dominance vs dominance}. By \cref{Generic Flatness theorem}, there exists a dense open $V \subset Y$ such that the restriction $\pi|_{\pi\inv(V)}$ is flat and of finite presentation. It is therefore open (\cref{a flat morphism of schemes which is locally of finite presentation is universally open}) and schematically dominant (\cref{Preservation of schematic dominance on restrictions to opens on the target}) so by the above and \cref{dominant open morphisms} we get that $U = \pi(\pi\inv(V))$ is a dense open subscheme of $Y$, and $\pi|_{[\pi\inv(U) \to U]}$ is fppf.
\end{proof}

\begin{cor}\label{schematic image of a geometrically integral variety}
    Let $\phi\colon A \to B$ be a morphism of varieties over a field $k$, and suppose $A$ is geometrically integral. Then $\Schim(\phi)$ is geometrically integral.
\end{cor}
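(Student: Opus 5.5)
The plan is to reduce the statement to the fact that the schematic image of a morphism from a geometrically integral scheme is geometrically integral, checking the three properties (reduced, irreducible, and stable under base change to $\bar{k}$) in turn. Write $B' = \Schim(\phi)$, so that $\phi$ factors as $A \xrightarrow{\phi'} B' \hookrightarrow B$. Since $A$ is reduced, \cref{schematic image of morphism with reduced source} gives that $B'$ is reduced. Since $A$ is reduced, the kernel is quasicoherent (\cref{quasicoherent kernel lemma}), so \cref{Schematic image of well-behaved morphisms} gives $\abs{B'} = \overline{\phi(A)}$. This is the closure of the image of an irreducible space, hence irreducible. So $B'$ is integral.

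For geometric integrality, pass to the algebraic closure $\bar{k}$. The base change $\Spec\bar{k} \to \Spec k$ is flat. Moreover $\phi$ is a morphism of varieties, so it is quasicompact: $A$ is of finite type over $k$ and $B$ is separated, hence quasiseparated. Therefore \cref{schematic image of a quasicompact morphism commutes with flat base change on base} applies and gives
\begin{align*}
    \Schim(\phi_{\bar{k}}) = \Schim(\phi)_{\bar{k}} = B'_{\bar{k}}.
\end{align*}
Now $A_{\bar{k}}$ is integral by hypothesis. Applying the first paragraph to $\phi_{\bar{k}}\colon A_{\bar{k}} \to B_{\bar{k}}$ shows that $B'_{\bar{k}}$ is reduced and irreducible, hence integral. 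Hence $B'$ is geometrically integral.

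The only point needing care is that schematic image commutes with the base change $\Spec \bar{k}\to\Spec k$; the quasicompactness verification above handles this. Alternatively, one can argue directly: $\O_{B'} \hookrightarrow \phi'_*\O_A$ stays injective after tensoring with $\bar{k}$, by flatness over the field and the fact that pushforward commutes with flat base change for qcqs morphisms. Either route gives the result. Finally, $B'$ is a closed subscheme of the variety $B$, so it is again a variety.
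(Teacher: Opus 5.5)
Your proposal is correct and takes essentially the same route as the paper: it uses flat base change of schematic images (\cref{schematic image of a quasicompact morphism commutes with flat base change on base}) to reduce to plain integrality, gets reducedness from \cref{schematic image of morphism with reduced source}, and gets irreducibility from the density of $\phi(A)$ in $\Schim(\phi)$. Your version is a little tidier: you read irreducibility directly off $\abs{\Schim(\phi)} = \overline{\phi(A)}$ instead of going through dominance onto the schematic image, and you explicitly check the quasicompactness hypothesis that the paper's citation needs but does not verify.
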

\begin{proof}
    Since the schematic image of a quasicompact morphism commutes with flat base change (\cref{schematic image of a quasicompact morphism commutes with flat base change on base}), it is enough to show that $\Schim(\phi)$ is integral. It is reduced by \cref{schematic image of morphism with reduced source}. It is irreducible since $A$ is irreducible, and by above $\phi$ is dominant onto $\Schim(\phi)$ by \cref{Schematically dominant maps to schematic images} and \cref{schematic dominance vs dominance}. Indeed, suppose $\Schim(\phi)$ were a union of proper closed subschemes $Z_1$, $Z_2$; then since $\phi(A)$ is dense in $\Schim(\phi)$ we have that $\phi(A)$ is not contained in either $Z_1$ or $Z_2$, so taking the pre-images of $Z_i$ in $A$ we get a contradiction.
\end{proof}

$S$-dominance/density is often equivalent to schematic dominance/density in every fiber:

\begin{lem}(cf \cite[VI, 11.10.9]{EGA})    Let $\phi\colon X \to Y$ be a morphism of schemes over $S$. Suppose $X$ is flat over $S$, and $Y$ is locally Noetherian. Then $\phi$ is $S$-dominant if and only if for every $s \in S$ we have that $\phi_s\colon X_s \to Y_s$ is schematically dominant.
\end{lem}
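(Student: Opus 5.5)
The forward direction is formal. If $\phi$ is $S$-dominant, then for each $s\in S$ we base change along $\Spec \kappa(s)\to S$, and the definition (\cref{Schematic and S-dominance and density definition}) gives that $\phi_s$ is schematically dominant. All the work is in the converse. There we must show that for an arbitrary $S'\to S$ the base change $\phi_{S'}\colon X_{S'}\to Y_{S'}$ is schematically dominant, i.e.\ that $\O_{Y_{S'}}\to (\phi_{S'})_*\O_{X_{S'}}$ is injective. The hypotheses are visibly stable under base change except for the Noetherian hypothesis on $Y$: flatness of $X$ over $S$ is preserved, and each fiber of $\phi_{S'}$ over $s'\mapsto s$ is the field extension $(\phi_s)\otimes_{\kappa(s)}\kappa(s')$. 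So I would organise the argument as (a) a fiberwise criterion over a fixed base with $Y$ locally Noetherian, plus (b) a reduction of the general $S'$ to that case.

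For (a), suppose $S$ is arbitrary, $X$ is $S$-flat, $Y$ is locally Noetherian, and every $\phi_s$ is schematically dominant. Injectivity is local on $Y$, so we fix $y\in Y$ over $s\in S$ and a local section $f$ of $\O_Y$ near $y$ with $\phi^\#(f)=0$; the goal is $f_y=0$. Let $\mathfrak m=\mathfrak m_s\O_{Y,y}$. I would show by induction on $n$ that $f\in\mathfrak m^n\O_{Y,y}$ for all $n$. The case $n=1$ uses the injectivity of $\O_{Y_s}\to (\phi_s)_*\O_{X_s}$, after identifying the latter with $\phi_*\O_X/\mathfrak m_s$. For the inductive step, write $f$ in terms of generators of $\mathfrak m_s^n$. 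Flatness of $X$ over $S$ gives $\mathfrak m_s^n\O_X/\mathfrak m_s^{n+1}\O_X\simeq \O_{X_s}\otimes_{\kappa(s)}\mathfrak m_s^n/\mathfrak m_s^{n+1}$ (Tor-vanishing). Applying injectivity of $\phi_s$ coefficientwise on this graded piece then pushes $f$ into $\mathfrak m^{n+1}$. Finally, Krull's intersection theorem in the Noetherian local ring $\O_{Y,y}$ gives $f_y=0$.

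There is a technical point here: comparing $\phi_*$ with reduction modulo $\mathfrak m_s^n$ requires pushforward to commute with these base changes. I would first reduce to $X$ quasi-compact over an affine neighbourhood of $y$, using that schematic dominance can be tested on an open cover of $X$ mapping into affine opens of $Y$ (take a filtered union of quasi-compact opens). Alternatively, I would work directly with sections over affines, which avoids pushforward base change altogether.

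For (b), given $S'\to S$, the fibers of $\phi_{S'}$ remain schematically dominant: schematic dominance over a field is preserved by the flat extension $\kappa(s)\to\kappa(s')$, since injectivity survives tensoring over a field. The difficulty is that $Y_{S'}$ need not be locally Noetherian. I would try to reduce, by a limit argument, to $S'$ local and then to $S'$ of finite type over (a localisation of) $S$, so that $Y_{S'}$ is again locally Noetherian and (a) applies. A nonzero kernel section over $S'$ would descend to some finite stage. \textbf{This reduction is the step I expect to be the main obstacle}, since $Y$ is only assumed locally Noetherian rather than of finite presentation over $S$. If the limit argument does not go through, I would fall back on following \cite[IV, 11.10.9]{EGA} directly, which handles exactly this via its criterion for universal schematic density in terms of flatness and fibers.
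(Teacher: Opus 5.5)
\textbf{The forward direction and part (a) are correct, but step (b) has a genuine gap, exactly where you flagged the main obstacle.}

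Part (a) works in its section-wise form. Drop the identification of $(\phi_s)_*\mathcal{O}_{X_s}$ with $\phi_*\mathcal{O}_X/\mathfrak m_s$; it is false in general, since $\operatorname{Spec}\kappa(s)\to S$ is not flat. Instead, on each affine $W=\operatorname{Spec}C\subset\phi^{-1}(V)$ lying over $\operatorname{Spec}A\subset S$, with $s\leftrightarrow\mathfrak p$:
\begin{itemize}
\item flatness gives $\mathfrak p^nC_{\mathfrak p}/\mathfrak p^{n+1}C_{\mathfrak p}\cong(\mathfrak p^nA_{\mathfrak p}/\mathfrak p^{n+1}A_{\mathfrak p})\otimes_{\kappa(s)}\mathcal{O}(W_s)$;
\item choose the coefficients $e_j$ of $f$ so that their images are $\kappa(s)$-independent;
\item the resulting vanishing of $\phi_s^\#(\bar h_j)$ on the $W_s$ glues, and schematic dominance of $\phi_s$ finishes the step.
\end{itemize}

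In the limit argument, only $V'$ and $f$ descend to a finite stage $S'_i$. To contradict schematic dominance of $\phi_{S'_i}$, you need $\phi_{S'_i}^\#(f_i)=0$ on all of $\phi_{S'_i}^{-1}(V'_i)$. If $\phi$ is not quasi-compact, $\phi_{S'}^{-1}(V')$ needs infinitely many affine pieces. The vanishing on each piece descends at its own stage, and your argument gives no uniform $i$.

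Your suggested reduction to quasi-compact opens of $X$ does not rescue this. The fibre hypothesis does not pass to a quasi-compact open $X_\alpha\subset X$.

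The fibre claim has the same defect. The phrase ``injectivity survives tensoring over a field'' only shows that schematic dominance of $\phi_s$ survives $\kappa(s)\to\kappa(s')$ when $\phi_s$ is quasi-compact. In that case $(\phi_s)_*\mathcal{O}_{X_s}$ embeds in a finite sum of quasi-coherent pushforwards from affines, and flat base change applies. Otherwise the opens of $Y_{s'}$ are not pulled back from $Y_s$, and you need the separate result over fields (the next lemma in the paper, citing EGA IV 11.10.6).

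If you add the hypothesis that $\phi$ is quasi-compact (e.g.\ $X$ Noetherian), both defects disappear and your outline becomes a complete proof. No localisation is needed:
\begin{itemize}
\item reduce to $S$, $S'$, $Y$ affine;
\item write $\mathcal{O}(S')=\varinjlim A'_i$ with each $A'_i$ finitely generated over $\mathcal{O}(S)$, so each $Y_{S'_i}$ is Noetherian and (a) applies at every stage;
\item descend the vanishing of $\phi_{S'}^\#(f)$ using a finite affine cover of $\phi^{-1}(V)$.
\end{itemize}

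Without that hypothesis you have not proved the statement. Your fallback is not automatic either. EGA IV 11.10.9, which is all the paper offers, places its flatness and finite-presentation hypotheses on the target of the family (here $Y$), not on the source. So it does not apply verbatim to the hypotheses as stated.
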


\begin{lem}(cf \cite[VI, 11.10.10]{EGA})
    Let $X$ be a scheme flat and locally of finite presentation over $S$. An open subscheme $U \subset X$ is $S$-dense if and only if for every $s\in S$, $U_s \subset X_s$ is schematically dense.
\end{lem}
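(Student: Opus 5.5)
The forward direction is formal, and for the converse I plan to reduce to the Noetherian case by a limit argument and then apply the preceding lemma (cf.\ \cite[VI, 11.10.9]{EGA}) to the open immersion $U \hookrightarrow X$.

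\textbf{Forward direction.} Suppose $U$ is $S$-dense in $X$. For each $s \in S$, base change along $\Spec k(s) \to S$. By \cref{Schematic and S-dominance and density definition}, the immersion $U_s \hookrightarrow X_s$ is schematically dominant, so $U_s$ is schematically dense in $X_s$.

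\textbf{Converse: reduction to the affine Noetherian case.} Assume every $U_s$ is schematically dense in $X_s$. Schematic density can be checked on an open cover of $X$, and (via \cref{schematic density and restriction to opens}) also locally on $S$. So I may assume:
\begin{itemize}
\item $S = \Spec A$ and $X = \Spec B$, with $B$ a flat, finitely presented $A$-algebra;
\item $U$ is quasicompact, being a finite union of standard opens $D(g_1),\dots,D(g_m)$. Schematic density of an arbitrary open is the union of such statements, since it can be tested on quasicompact opens.
\end{itemize}
Now write $A = \varinjlim A_\lambda$ over finitely generated $\Z$-subalgebras. By standard limit results (EGA IV$_3$, \S8 and \S11.2), for $\lambda$ large there exist:
\begin{itemize}
\item a finitely presented $A_\lambda$-algebra $B_\lambda$ with $B_\lambda \otimes_{A_\lambda} A \simeq B$;
\item $B_\lambda$ flat over $A_\lambda$, which holds after enlarging $\lambda$ because flatness descends in the limit;
\item an open $U_\lambda \subset X_\lambda = \Spec B_\lambda$ pulling back to $U$, given by the same $g_i$.
\end{itemize}

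\textbf{Main obstacle: spreading out fiberwise density.} Let $E_\lambda \subset S_\lambda$ be the set of points $s_\lambda$ such that $(U_\lambda)_{s_\lambda}$ is schematically dense in $(X_\lambda)_{s_\lambda}$. The key claim is that $E_\lambda$ is constructible, since $X_\lambda$ is of finite presentation over $S_\lambda$ (EGA IV$_3$, 9.9 / 11.10, on constructibility of such fiberwise properties). Two further facts are needed:
\begin{itemize}
\item Fiberwise density is insensitive to field extension, because $\Spec k(s) \to \Spec k(s_\lambda)$ is faithfully flat and injectivity of $\O \to j_*\O$ for quasicompact $j$ is preserved under flat base change (\cref{schematic image of a quasicompact morphism commutes with flat base change on target}). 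Hence the preimage of $E_\lambda$ in $S$ is all of $S$ by hypothesis.
\item For a constructible set whose preimage in the limit is everything, some $\lambda' \geq \lambda$ already has $E_{\lambda'} = S_{\lambda'}$ (EGA IV$_3$, 8.3.4).
\end{itemize}
I expect this constructibility step to be the hardest part. If direct constructibility is awkward, my fallback is to describe density through associated points: for $X_\lambda$ flat over a Noetherian base, $\mathrm{Ass}(X_\lambda)$ is the union of $\mathrm{Ass}((X_\lambda)_s)$ over $s \in \mathrm{Ass}(S_\lambda)$. Then $U_\lambda$ is schematically dense precisely when it contains these points, which is a condition over finitely many $s$ and can be spread out.

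\textbf{Conclusion.} With $E_{\lambda'} = S_{\lambda'}$, the preceding lemma applies to $U_{\lambda'} \hookrightarrow X_{\lambda'}$: the source is flat over $S_{\lambda'}$ and the target is locally Noetherian. So $U_{\lambda'}$ is $S_{\lambda'}$-dense in $X_{\lambda'}$. Since $S_{\lambda'}$-density is by definition stable under arbitrary base change, pulling back along $S \to S_{\lambda'}$ shows that $U$ is $S$-dense in $X$.
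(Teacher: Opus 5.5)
The paper gives no proof of this lemma; it only cites EGA IV 11.10.10, so your argument has to stand on its own. For quasi-compact $U$ it does, granted the constructibility theorem you invoke. Flatness and $U$ descend to some $X_\lambda$. Your field-extension argument correctly uses that $U_\lambda\hookrightarrow X_\lambda$ is quasi-compact and that $\Spec k(s)\to\Spec k(s_\lambda)$ is faithfully flat. The limit statement for constructible sets then gives fiberwise density over all of $S_{\lambda'}$. The Noetherian criterion legitimately applies, since $X_{\lambda'}$ is flat and of finite type over the Noetherian $S_{\lambda'}$. Finally, $S_{\lambda'}$-density pulls back to $S$-density.

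\textbf{The gap is the reduction to quasi-compact $U$.} The lemma concerns an arbitrary open. After passing to $X=\Spec B$ with $B$ non-Noetherian, $U$ need not be quasi-compact, so it cannot be spread out to any $X_\lambda$. Your justification (``schematic density of an arbitrary open is the union of such statements'') does not supply what the rest of the argument needs.

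Write $U$ as the directed union of its quasi-compact open subsets $V$. The hypothesis only says that for each $s$ there is \emph{some} $V$, depending on $s$, with $V_s$ schematically dense in $X_s$. To run your argument you need a single quasi-compact $V\subset U$ with $V_s$ schematically dense in $X_s$ for \emph{all} $s\in S$. Given such a $V$, it is $S$-dense by the quasi-compact case, and then $U$ is $S$-dense by \cref{composition is schematically dominant implies f is schematically dominant}, applied after each base change.

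This uniformity is a genuine step, but it follows from your own constructibility input:
\begin{enumerate}
\item For quasi-compact $V\subset U$, let $E_V\subset S$ be the set of $s$ with $V_s$ schematically dense in $X_s$. Spreading out $V$ and using field-extension invariance, $E_V$ is the preimage of a constructible subset of some $S_\lambda$, hence constructible in $S$.
\item Since $X_s$ is Noetherian, $\mathrm{Ass}(X_s)$ is finite and lies in $U$, hence in some quasi-compact $V\subset U$. So the directed family $(E_V)$ covers $S$.
\item $S$ is compact in the constructible topology, and constructible sets are open there. Hence finitely many $E_V$ cover $S$, and by directedness a single $E_V$ equals $S$.
\end{enumerate}

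Two smaller remarks.

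First, your fallback via $\mathrm{Ass}(X_\lambda)$ cannot replace the constructibility step. It only concerns schematic density of $U_\lambda$ in $X_\lambda$. That property is not preserved by the non-flat base change $S\to S_\lambda$, and it is not the hypothesis of the Noetherian criterion. The criterion needs fiberwise density at \emph{every} point of $S_{\lambda'}$, while your hypothesis only controls points in the image of $S$. So constructibility of $E_\lambda$ carries the real content and needs a precise reference; it rests on the generic behaviour of associated points of fibers.

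Second, the localization on $S$ and on $X$ is fine. However, it uses that $S$-density is local on the target and on the base, not \cref{schematic density and restriction to opens}, which is the opposite implication.
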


\begin{lem}(cf \cite[VI, 11.10.6]{EGA})\label{S-dominance over fields}
    Suppose $S = \Spec(k)$ for some field $k$, and $\phi\colon X \to Y$ is a morphism over $S$. Then $\phi$ is schematically dominant if and only if it is $S$-dominant. In particular an open $U\subset X$ is schematically dense if and only if it is $S$-dense.
\end{lem}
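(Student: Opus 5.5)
The plan is to reduce the statement over a field to a statement about underlying rings and their base changes, using that field extensions are faithfully flat. One direction is immediate from \cref{Schematic and S-dominance and density definition}: $S$-dominance is defined as schematic dominance after every base change, and the identity base change is one such. So it suffices to show that if $\phi\colon X \to Y$ is schematically dominant over $S = \Spec(k)$, then $\phi_{S'}$ is schematically dominant for every $S' \to \Spec(k)$. The final statement about opens then follows by applying this to the open immersion $U \hookrightarrow X$.

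Schematic dominance is local on $Y$ (by \cref{Preservation of schematic dominance on restrictions to opens on the target} and since injectivity of $\O_Y \to \phi_*\O_X$ is checked on an open cover). Moreover, $S'$ can be covered by affines, and restriction to opens over $S'$ is compatible with base change. Hence I may assume $Y = \Spec(B)$ and $S' = \Spec(R)$ with $R$ a $k$-algebra. The condition then becomes injectivity of $\O_Y(V) \to \O_X(\phi\inv(V))$ for opens $V$, and it suffices to check this on basic opens $D(g)$. I would write $\O_X(\phi\inv(V))$ as an equalizer $\prod_i \O(U_i) \rightrightarrows \prod_{i,j}\O(U_i\cap U_j)$ over an affine cover $U_i$ of $\phi\inv(V)$; when $X$ is not quasicompact, this uses arbitrary products of affine pieces. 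Schematic dominance then says that $M \to \prod_i A_i$ is injective, where $M = \O_Y(V)$ and $A_i = \O(U_i)$.

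The key step is that tensoring with $R$ over the field $k$ preserves this injectivity. The subtlety is that the cover may be infinite, and $\otimes_k R$ does not commute with infinite products. I would handle this as follows. Since $R$ is free as a $k$-module, say $R \cong k^{(J)}$, we get $M\otimes_k R \cong M^{(J)}$ and $A_i \otimes_k R \cong A_i^{(J)}$. The map $M^{(J)} \to \prod_i A_i^{(J)}$ is injective: an element of $M^{(J)}$ has finitely many nonzero coordinates, and it maps coordinatewise into $\prod_i A_i$ injectively. Since $U_i \times_k S' = \Spec(A_i\otimes_k R)$ form an affine cover of $\phi\inv(V)_{S'}$, this shows that $\O_{Y_{S'}}(V_{S'}) \to \O_{X_{S'}}(\phi\inv(V)_{S'})$ is injective, because the composite with restriction to the cover is injective.

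Finally, opens of $Y_{S'}$ of the form $V_{S'}$ with $V$ affine do not exhaust all opens. However, basic opens $D(h)$ with $h \in B\otimes_k R$ form a base, and for these I would redo the argument with the localized module: the injection $M\otimes R \to \prod_i (A_i\otimes R)$ remains injective after localizing at $h$. This holds because localization is exact and the element $h$ acts compatibly; the product issue is handled by noting that a kernel element killed by $h^n$ already lies in the unlocalized kernel. I expect this bookkeeping with infinite covers and localization to be the main obstacle. The field hypothesis enters only through the freeness of $R$ over $k$.
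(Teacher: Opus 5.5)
Your reductions are fine: the trivial direction, passing to $Y=\Spec B$ and $S'=\Spec R$, checking injectivity on basic opens, and the freeness argument. That argument correctly shows that $\O_{Y_{S'}}(V_{S'})\to\O_{X_{S'}}(\phi^{-1}(V)_{S'})$ is injective for affine $V\subset Y$, even when the cover is infinite.

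\textbf{The gap is in your last paragraph.} It concerns the basic opens $D(h)$ with $h\in B\otimes_k R$ not a pure tensor. There you need
\[
(B\otimes_k R)_h\longrightarrow\prod_i (A_i\otimes_k R)_h
\]
to be injective. This is \emph{not} the localization of the injection you proved. Localizing gives $(B\otimes_k R)_h\hookrightarrow\bigl(\prod_i (A_i\otimes_k R)\bigr)_h$, but the natural map $(\prod_i N_i)_h\to\prod_i (N_i)_h$ is not injective in general. For example, $(\prod_n \Z/p^n)[1/p]\neq 0$, while every $(\Z/p^n)[1/p]=0$.

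Concretely, if $x/h^m$ lies in the kernel, you only learn that for each $i$ there is some $n_i$ with $h^{n_i}x\mapsto 0$ in $A_i\otimes_k R$. Your remark that ``a kernel element killed by $h^n$ already lies in the unlocalized kernel'' presupposes a single $n$ that works for all $i$, i.e.\ $\sup_i n_i<\infty$. Nothing in your argument supplies such a bound. Freeness of $R$ over $k$ cannot supply it either, because $D(h)$ is not of the form $V_{S'}$, and you have yourself noted that such opens do not form a base.

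If $\phi$ is quasi-compact (e.g.\ $X$ Noetherian), you can take the affine cover of $X$ finite. Then the product is finite, commutes with localization at $h$, and your argument is complete as written. This suffices for the places where the paper applies the lemma to varieties.

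The statement as written, however, allows arbitrary $\phi$. For non-quasi-compact $\phi$ you need a genuinely additional idea, either to bound the exponents uniformly or to avoid them altogether. The paper offers no argument of its own here; it only cites EGA IV, 11.10.6, so your proposal does not yet establish the lemma in that generality.
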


%% file: Chapters/Group_Chunks_on_Schemes.tex
\chapter{Group Chunks on Schemes} \label{Group Chunks on Schemes}
In this short chapter we discuss the relationship between our abstract group chunk result \cref{main abstract group chunk result} and Artin's construction of a group scheme from a birational law.

The next definition is different from Artin's (\cref{Artin Group Chunk definition})---we discuss the relationship below. Recall \cref{products and Yoneda combine}.
\begin{dfn}\label{scheme-theoretic group chunk definition}
    Let $S$ be a scheme. A \emph{scheme-theoretic group chunk} over $S$ is an $S$-scheme $X$, faithfully flat and locally of finite presentation over $S$, together with a subscheme $W \subset X \times X \times X$ such that, if we denote $X_1 = X_2 = X_3 = X$, then for all permutations $(i,j,k)$ of  $\{1,2,3\}$ with $i<j$, we have that
    \begin{enumerate}
        \item The projection $\rho_{ij}\colon W \to X_iX_j$ is an open immersion with image $U_{ij}$ which is both $X_i$-dense and $X_j$-dense. In particular, $\rho$ exhibits $W$ as the graph of a morphism $m_{ij}$ from an open $U_{ij} \subset X_iX_j$ to $X_k$.
        \item If we view each $m_{ij}$ as a partial morphism from $X_iX_j$ to $X_k$, and denote $(x_1,x_2,x_3) = \id_{X^3}$ (so $x_i\colon X^3 \to X$ is the $i^{\text{th}}$ coordinate projection), then
              \begin{align*}
                  m_{12}(m_{12}(x_1,x_2),x_3) & \ssim m_{12}(x_1,m_{12}(x_2,x_3))  \\
                  m_{12}(m_{13}(x_1,x_2),x_3) & \ssim m_{13}(x_1,m_{12}(x_2,x_3)).
              \end{align*}
    \end{enumerate}
\end{dfn}

\begin{lem} (\cite[Proposition~1.17]{ArtinGroupChunk}).\label{Schemes are fppf locally nontrivial}
    Let $X$ be a scheme faithfully flat and locally of finite presentation over $S$. Then $X$ is fppf-locally nontrivial as a sheaf on the fppf site of schemes over $S$.
\end{lem}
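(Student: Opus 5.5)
Local nontriviality here means the following. For every $S$-scheme $T$ there should be an fppf covering $\{T_i \to T\}$ such that $X(T_i) = \Hom_S(T_i, X) \neq \emptyset$ for each $i$. By \cref{local nontriviality over T remark} it suffices to produce, for each $T$, a single fppf cover $T' \to T$ with $X(T') \ne \emptyset$. The natural candidate is the base change $T' := X \times_S T$. Its projection $T' \to T$ is the pullback of $X \to S$, and faithful flatness and local finite presentation are both stable under base change. So $T' \to T$ is fppf, hence a covering in the fppf topology. The other projection $X\times_S T \to X$ is an $S$-morphism, which gives an element of $X(T')$. This settles the statement.

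In order, the steps are as follows. First, recall that the fppf site is generated by the pretopology whose coverings are jointly surjective families of flat, locally finitely presented morphisms. A single faithfully flat, locally finitely presented morphism is such a family, and the sieve it generates is therefore covering. Second, check the base change properties cited in the Stacks Project, namely flatness, surjectivity and local finite presentation. Third, note that the tautological point given by the projection lies in $X(X\times_S T)$. This matches the model-theoretic argument of \cref{model theory local nontriviality result}, which likewise uses the projection $T\times X\to X$.

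I expect no real obstacle. The only care needed concerns set-theoretic size: the fppf site must be small, per the conventions in \cref{prerequisites notation section}. One should therefore work in a fixed big fppf site, with a universe chosen large enough that it is closed under fibre products with $X$. Then $X\times_S T$ lies in the site whenever $T$ and $X$ do.

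If one instead wants covers by schemes that are affine, or of finite type over $T$, there is a refinement. Cover $X$ by affine opens $X_j$. Each $X_j\times_S T\to T$ is flat and locally of finite presentation, and together these morphisms are jointly surjective. This still gives an fppf covering family with each $X(X_j\times_S T)\neq\emptyset$.
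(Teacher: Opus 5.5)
Your proposal is correct and matches the paper's proof: the paper also uses the base change $X_T = X\times_S T \to T$ as the fppf cover and the projection $X_T \to X$ as the required point of $X(X_T)$. Your remarks on size issues and the affine refinement are harmless but not needed.
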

\begin{proof}
    Indeed, for any scheme $T \to S$, let $X_T$ be the base change of $X$ to $T$. Then $X_T$ is an fppf cover of $T$, and we have a projection morphism $X_T \to T$.
\end{proof}

\begin{prop}
    Let $(X,W)$ be a scheme-theoretic group chunk over $S$, and define $m_{12}$ as above. Then $(h_X, m_{12})$ is a locally nontrivial group chunk on the fppf site of schemes over $S$.
\end{prop}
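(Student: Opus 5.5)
We check the axioms of \cref{Group Chunk definition} for $(h_X,m_{12})$ on the fppf site of $S$-schemes, one at a time. Local nontriviality is \cref{Schemes are fppf locally nontrivial}. Separatedness holds because representable presheaves are fppf sheaves. For cancellativity we use that the graph of $m_{12}$ is $W$, because $\rho_{12}$ realizes $W$ as that graph. Each projection $\rho_{ij}\colon W\to X_iX_j$ is an open immersion onto $U_{ij}$, and the Yoneda embedding preserves isomorphisms and monomorphisms, so $h_W\to h_{X_i}\times h_{X_j}$ factors as an isomorphism onto the subobject $h_{U_{ij}}$. Hence $h_{m_{ij}}$ plays the role of $\mu_{ij}$ in \cref{cancellativity definition}. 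Associativity and strong associativity follow from condition 2 of \cref{scheme-theoretic group chunk definition} together with \cref{Yoneda preserves partial magmas}. Compatibility of partial morphisms of schemes is compatibility of the induced partial morphisms of presheaves, since the intersection of open domains is the fiber product and $h$ preserves it.

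The substantive step is large domain. Take $\alpha_1,\dots,\alpha_N,\beta_1,\dots,\beta_N\in\tX(T)$, each of the form $\lambda_x^{\pm1}$ with $x\in X(T)$. The first step is to describe these domains geometrically. For $x\in X(T)$, \cref{parhomequivalence} identifies $\dom\lambda_x$ with the open $V_x:=(x\times\id)^{-1}(U_{12})\subset TX$, viewed as a subfunctor of $X_T$. Likewise $\dom\lambda_x^\dagger=(x\times\id)^{-1}(U_{13})$. A composite $\alpha_i\circ\beta_i$ then has domain an open $V_i\subset TX=X_T$: this is $(\id_T,\beta_i)^{-1}$ of the domain of $\alpha_i$, taken inside the open domain of $\beta_i$.

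It then suffices to show that $V:=\bigcap_i V_i$ is surjective onto $T$ and fppf. Once that holds, the inclusion $V\hookrightarrow X_T$ is a $V$-point lying in every $\dom\alpha_i\circ\beta_i$ after base change along $V\to T$. Since $X_T\to T$ is flat and locally of finite presentation, so is the open $V\to T$. So the remaining task is surjectivity, which we check fiberwise over points $t\in T$.

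The key input is density. Since $U_{12}$ is $X_1$-dense, the pullback $V_x$ is $T$-dense in $X_T$, because $X_1$-density is preserved under the base change $x\colon T\to X_1$. The same holds for $U_{13}$, since $U_{13}$ is also $X_1$-dense. For the composite, $\beta_i$ restricted to its domain is an open immersion followed by an isomorphism onto an open of $X_T$: by cancellativity $\lambda_x$ is an isomorphism from $V_x$ onto the image of $W$ in the relevant coordinates, an open that is $T$-dense because $U_{13}$ is $X_3$-dense. The preimage of a $T$-dense open under such a map is $T$-dense in its domain, so each $V_i$ is $T$-dense. By \cref{Intersection and schematically/S-dense opens}, $V$ is $T$-dense in $X_T$.

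Finally, a $T$-dense open of $X_T$ has nonempty fibers over each point where $X_T$ does. Since $X\to S$ is surjective, $X_T\to T$ is surjective, and so $V\to T$ is surjective and hence fppf. We expect the main obstacle to be this bookkeeping of densities. In particular, showing that the image of $V_x$ under $\lambda_x$ is $T$-dense requires matching the $X_i$-density hypotheses on each $U_{ij}$ to the correct coordinate after base change to $T$.
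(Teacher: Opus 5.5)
Your proposal is correct and follows essentially the same route as the paper: identify $\dom\lambda_x^{\pm1}$ with the pullbacks ${}_xU_{12}$ and ${}_xU_{13}$, use $X_1$-density together with the isomorphism ${}_xU_{12}\simeq{}_xU_{13}$ to show that each $\dom\alpha_i\circ\beta_i$ is $T$-dense in $X_T$, intersect, and conclude that the intersection is an fppf cover of $T$ (you also handle the $\lambda^\dagger$ factors, which the paper leaves implicit). One small slip: the image ${}_xU_{13}$ of $\lambda_x$ is $T$-dense because $U_{13}$ is $X_1$-dense, which you already noted for $\dom\lambda_x^\dagger$, not because it is $X_3$-dense; only the $X_1$-density of $U_{12}$ and $U_{13}$ is actually used.
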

\begin{proof}
    First note that $X$ is fppf-locally nontrivial by above. It is separated since schemes are sheaves for the fppf topology. The cancellation and strong associativity axioms are immediate; we just need to verify the existence of enough products.

    For this let us introduce some notation.

    \begin{notation}\label{left right fibres notation}
        Let $X_1 = X_2 = X$. Given a pair of morphisms $(a,b)\in (X_1\times_S X_2)(T)$ and a subobject $U \subset X_1\times_S X_2$, let us denote
        \begin{align*}
            \leftindex_a U :=  T \times_{a,X,\rho_1}U \\
            U_b :=  U \times_{\rho_2,X,b}T            \\
        \end{align*}
        where $\rho_i\colon U \to X_i$ denotes $i$'th coordinate projection on $U$.
    \end{notation}

    We denote by
    \begin{align*}
        \lambda_a\colon \leftindex_a(U_{12})\xrightarrow{\sim} \leftindex_a(U_{13})
    \end{align*}
    the isomorphism induced by pulling back the isomorphism
    \begin{align*}
        (\rho_1,m_{12})\colon U_{12} \xrightarrow{\sim} U_{13}
    \end{align*}
    by $a \in X_1(T)$.

    \begin{rem}
        Observe that, if we view $m_{12}$ as a partial morphism of presheaves from $X^2$ to $X$, the definition of $\lambda_a$ here agrees with \cref{Group Chunk definition}.
    \end{rem}

    Observe that, viewing $\lambda_a$ and $\lambda_b$ as partial morphisms from $X_T$ to $X_T$, we have
    \begin{align*}
        \dom \lambda_b \circ \lambda_a = \lambda_a\inv(\leftindex_b(U_{12})\cap \leftindex_a(U_{13}))
    \end{align*}
    We claim this is a $T$-dense open of $\leftindex_a(U_{12})$ and in particular it is nonempty. Indeed, since the $U_{ij}$ are $X_1$-dense, we have that $\leftindex_b(U_{12})$ and $\leftindex_a(U_{13})$ are both $T$-dense opens of $\leftindex_a(X^2) = X_T$. In particular, since an intersection of $T$-dense opens is $T$-dense (by \cref{Intersection and schematically/S-dense opens}), the intersection  $\leftindex_b(U_{12}) \cap \leftindex_a(U_{13})$ is a $T$-dense open of $\leftindex_a(U_{13})$. Since $\lambda_a$ is an isomorphism, the claim follows.

    Therefore, for any finite collection of points $a_i,b_i \in X(T)$, we have that the intersection $\cap_i\dom \lambda_{b_i} \circ \lambda_{a_i}$ is a $T$-dense open of $X_T$. Since $X_T \to T$ is faithfully flat and locally of finite presentation over $T$, it follows that the same is true of $\cap_i\dom \lambda_{b_i} \circ \lambda_{a_i}$. Therefore by \cref{Schemes are fppf locally nontrivial} the intersection of the domains is locally nontrivial as required.

\end{proof}

The following is a slight improvement on Artin's original definition by \cite{ER15}; in the original paper, they use the term ``strict $S$-birational group law" instead of ``group chunk".
\begin{dfn}[Artin Group Chunk]\label{Artin Group Chunk definition}
    Let $S$ be a scheme. An \emph{Artin group chunk} over $S$ is an $S$-scheme $X$, faithfully flat and locally of finite presentation over $S$, whose fibers over $S$ are separated and have no embedded components, together with a subscheme $W \subset X \times X \times X$ such that, if we denote $X_1 = X_2 = X_3 = X$, then for all permutations $(i,j,k)$ of $\{1,2,3\}$ with $i<j$, we have that
    \begin{enumerate}
        \item \label{Artin group chunk open immersion condition} The projection $\rho_{ij}\colon W \to X_iX_j$ is an open immersion whose image $U_{ij}$ is both $X_i$-dense and $X_j$-dense. In particular, $\rho_{ij}$ exhibits $W$ as the graph of a morphism $m_{ij}$ from $U_{ij} \subset X_iX_j$ to $X_k$.
        \item \label{Artin group chunk associtivity condition} If we view each $m_{ij}$ as a partial morphism from $X_iX_j$ to $X_k$, and denote $(x_1,x_2,x_3) = \id_{X^3}$, then (recalling the notion of compatibility from \cref{partial morphism in a category definition}, and \cref{products and Yoneda combine}) we have
              \begin{align*}
                  m_{12}(m_{12}(x_1,x_2),x_3) & \ssim m_{12}(x_1,m_{12}(x_2,x_3)).
              \end{align*}
    \end{enumerate}
\end{dfn}

\begin{rem}
    Comparing the above to \cref{scheme-theoretic group chunk definition}, one may notice that we seem to be missing one of the associativity conditions; however it is shown in \cite[Section~3.2.1]{ArtinGroupChunk} that this follows from the other conditions, using the fact that the fibers of $X$ over $S$ are separated and have no embedded components. In particular, an Artin group chunk is a scheme-theoretic group chunk; and one can use it to construct a sheaf of groups as described in previous sections. In fact the sheaf of groups constructed in both \cite{ArtinGroupChunk} or \cite{ER15} agrees with our construction.

    We conjecture that, if one were to assume the extra associativity condition, one should be able to follow the construction of \cite{ArtinGroupChunk} or \cite{ER15} without making any assumptions on the fibers of $X$ over $S$ and get representability results very similar to those below.
\end{rem}

In \cite[Proposition~3.2]{ArtinGroupChunk}, Artin shows that, provided the fibers of $X$ over $S$ are geometrically integral, then one can take the $U_{ij}$ to be $S$-dense instead of $X_1$ dense and $X_2$ dense:
\begin{prop}(cf \cite[Proposition~3.2]{ArtinGroupChunk})\label{geometrically integral fibers gives X dense open}
    Let $S$ be a scheme. Let $X$ be a scheme faithfully flat and locally of finite presentation over $S$, whose fibers over $S$ are separated and geometrically integral.
    Let $W \subset X \times X \times X$ be a subscheme such that, if we denote $X_1 = X_2 = X_3 = X$, then for all permutations $(i,j,k)$ of $\{1,2,3\}$ with \(i<j\) we have that
    \begin{enumerate}
        \item The projection $\rho_{ij}\colon W \to X_iX_j$ is an open immersion whose image $U_{ij}$ is $S$-dense.
        \item $W$ satisfies condition 2 from \cref{Artin Group Chunk definition}.
    \end{enumerate}
    Then there exist $S$-dense opens $X' \subset X$ and $W' \subset W\cap (X')^3 $ such that $(X',W')$ is an Artin group chunk over $S$.
\end{prop}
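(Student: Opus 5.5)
The plan is to shrink $X$ and $W$ so that each $U_{ij}$, which is only known to be $S$-dense in $X_iX_j$, becomes both $X_i$-dense and $X_j$-dense. The key observation is that, because $X$ is flat and locally of finite presentation over $S$ with geometrically integral fibers, density can be tested fiberwise. For an open $U \subset X_iX_j$, the projection $X_iX_j \to X_i$ is the base change of $X\to S$ along $X_i \to S$, so it is flat and locally of finite presentation. By the fiberwise criterion (cf.\ \cite[VI, 11.10.10]{EGA}), $U$ is therefore $X_i$-dense if and only if, for every point $x\in X_i$ with residue field $\kappa(x)$, the fiber $U_x \subset X_{\kappa(x)}$ is schematically dense. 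Since $X_{\kappa(x)}$ is the base change of the geometrically integral fiber $X_s$, it is integral, so schematic density reduces to $U_x \neq \emptyset$ (\cref{schematically dense opens of a reduced scheme}).

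\textbf{Step 1.} Let $Z_{ij}^{(i)} \subset X_i$ be the set of points $x$ with $(U_{ij})_x = \emptyset$; these are the bad points. Its complement is the image of $U_{ij}$ under the projection to $X_i$. That projection is flat and locally of finite presentation, hence open (\cref{a flat morphism of schemes which is locally of finite presentation is universally open}), so the complement of $Z_{ij}^{(i)}$ is open.

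\textbf{Step 2.} I claim this image is $S$-dense. The open $U_{ij}$ is $S$-dense in $X_iX_j$, so on each fiber over $s$ it is dense in the integral scheme $X_s\times X_s$. Its image in $X_s$ then contains the generic point, so it is dense and hence schematically dense. Applying the fiberwise criterion once more, the image is $S$-dense.

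\textbf{Step 3.} Take $X'$ to be the intersection, over all pairs $(i,j)$ and both projections, of these images. This is a finite intersection of $S$-dense opens, so it is $S$-dense by \cref{Intersection and schematically/S-dense opens}.

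\textbf{Step 4.} This step is the main obstacle. Setting $W'=W\cap(X')^3$, the open $U'_{ij}=\rho_{ij}(W')$ is $U_{ij}\cap (X'X') \cap m_{ij}^{-1}(X')$. The third factor $m_{ij}^{-1}(X')$ can destroy nonemptiness of fibers. I would handle it fiberwise as follows. For $x\in X'$ the fiber $(U_{ij})_x$ is a nonempty open of the integral scheme $X_{\kappa(x)}$. The restriction $m_{ij}|_x$ is an open immersion onto the corresponding fiber of $U_{ik}$ or $U_{jk}$ (by the open immersion condition on the other projections), and that fiber is nonempty since $x\in X'$. Hence its preimage of the dense open $X'_{\kappa(x)}$ is nonempty. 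The intersection of the finitely many nonempty opens in the irreducible $X_{\kappa(x)}$ is therefore nonempty, giving $X_i$-density; $X_j$-density follows by the same argument.

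\textbf{Step 5.} Verify the remaining axioms for $(X',W')$. The restricted projections remain open immersions. Associativity passes to the restriction by \cref{associativity and cancellativity preserved by passing to subobjects.}. The fibers of $X'$ are open in separated integral fibers, so they are separated with no embedded components. Finally, $X'\to S$ is flat and locally of finite presentation, and it is surjective because $X'$ is $S$-dense and hence meets every nonempty fiber.
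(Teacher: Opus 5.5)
The paper gives no proof of this proposition; it is quoted from Artin \cite[Proposition~3.2]{ArtinGroupChunk}, so there is no in-text argument to compare against. Your proof is correct and self-contained.

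It is the natural fiberwise argument. Since $X_iX_j\to X_i$ is flat and locally of finite presentation with integral fibers $X_{\kappa(x)}$, the fiberwise criterion (EGA IV, 11.10.10) turns $X_i$-density of an open into surjectivity of its projection to $X_i$.

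The only non-formal point is the factor $m_{ij}^{-1}(X')$ in Step 4, and you handle it correctly. The isomorphism $U_{ij}\cong W\cong U_{ik}$ over $X_i$ (with coordinates swapped if $k<i$) identifies $(U_{ij})_x$ with $(U_{ik})_x$. This is why all six surjectivity conditions must go into the definition of $X'$, and why a single shrinking suffices with no iteration.

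Two small things should be made explicit:
\begin{enumerate}
\item Step 4 uses that $X'_{\kappa(x)}\neq\emptyset$. This relies on $X'_s\neq\emptyset$ for every $s$, which you only establish in Step 5, so move that observation earlier.
\item If $W'$ is meant to be $S$-dense in $W$, say why. $U'_{12}$ is $X'_1$-dense, hence $S$-dense, in $X'_1X'_2$, and $X'_1X'_2$ is $S$-dense in $X_1X_2$. So $W'\cong U'_{12}$ is $S$-dense in $W\cong U_{12}$.
\end{enumerate}
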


\begin{rem}\label{multiple components remark}
    We believe it should be possible to prove the above result, using essentially the same argument, if $X$ is a finite union of schemes $X_i$ where each $X_i$ is faithfully flat and locally of finite presentation over $S$ with separated and geometrically integral fibers.
\end{rem}

The following theorem is a slight improvement of Artin's original result (\cite{ArtinGroupChunk}) due to Edixhoven, and Romagny (\cite[Theorem~3.18]{ER15}):
\begin{prop}\label{Artin Group chunk theorem}
    Let $S$ be a scheme and $(X,W)$ an Artin group chunk over $S$, and let $G = \mathscr{G}(X,W)$ be the associated group sheaf. Then
    \begin{itemize}
        \item $G$ is a group algebraic space faithfully flat and locally of finite presentation over $S$, and locally separated over $S$.
        \item The embedding $X \hookrightarrow G$ is representable by $S$-dense open immersions.
        \item If $X \to S$ is smooth, then so is $G \to S$
        \item If $X \to S$ is quasi-separated or locally Noetherian, than $G\to S$ is quasi-separated. If $X \to S$ is separated than so is $G$.
        \item If $X \to S$ is of finite presentation, then $G$ is a scheme fppf-locally on $S$.
    \end{itemize}
\end{prop}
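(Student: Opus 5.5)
The plan is to follow Artin's representability strategy, combined with the identification from the previous chapters. By the preceding remark, an Artin group chunk is a scheme-theoretic group chunk. By the proposition on scheme-theoretic group chunks, $(h_X,m_{12})$ is a locally nontrivial group chunk on the fppf site. Hence, by \cref{simplified group chunks to sheaves left adjoint}, $G=\mathscr{G}(X,W)$ is the fppf sheaf quotient $(X^2/\ssim)^+$. Here $\ssim$ is the kernel pair of $(x_1,x_2)\mapsto \lambda_{x_1}\circ\lambda_{x_2}^\dagger$.

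The first step is to show that this equivalence relation is representable by a scheme $R\subset X^2\times X^2$. Using \cref{Check locally at one point}, $(x_1,x_2)\ssim(y_1,y_2)$ holds iff, locally, there is some $z$ with $x_1(x_2^\dagger(z))=y_1(y_2^\dagger(z))$. One then exhibits $R$ as the image of the open subscheme of $X^2\times X^2\times X$ on which both expressions are defined and agree. This image should be an open-in-a-fibre-product condition. More precisely, $R$ is the locus where the rational map $X\dashrightarrow X$, $z\mapsto y_2(y_1^\dagger(x_1(x_2^\dagger(z))))$, agrees with the identity. Using $S$-density of the domains and the fact that the fibres are separated without embedded components, via \cref{S-dominant morphisms epimorphism property}, agreement on an $S$-dense open forces agreement everywhere it is defined.

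The second step is to show that the two projections $R\to X^2$ are flat and locally of finite presentation. Indeed, $R$ is isomorphic, via $(x_1,x_2,z)\mapsto$ (the class data), to an open of $X^3$ fibred over $X^2$. Hence $R\to X^2$ is fppf, and Artin's theorem on quotients of algebraic spaces by fppf equivalence relations makes $X^2/R$ an algebraic space. This space coincides with $G$ since both are fppf sheaf quotients.

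The third step handles the remaining bullets. The map $X\to G$, $x\mapsto\lambda_x$, is monic by \cref{iota gives an embedding into tXinfty}. Openness is checked after the fppf cover $X^2\to G$: the preimage of $X$ is the locus where $x_1x_2^\dagger$ lies in $X$, which is open by the cancellation structure. It is $S$-dense because its translates cover $G$. Smoothness, separation and quasi-separation then descend along this cover. The statement that $G$ is a scheme fppf-locally on $S$ is the hardest part. There I would follow Artin and Edixhoven--Romagny: quasi-projectivity-type arguments giving translates $gX$ covering $G$ by open subschemes after an fppf base change providing sections. I expect the main obstacle to be the first step, namely showing that $R$ is a genuine subscheme and that the projection is flat. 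This is exactly where the hypotheses on the fibres (separated, no embedded components) are needed, and where I would lean directly on \cite{ER15}.
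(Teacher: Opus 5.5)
The paper gives no proof of this proposition; it quotes it from \cite[Theorem~3.18]{ER15}. Your outline has the same architecture as that source: identify $G$ with $(X^2/\ssim)^+$, represent the kernel-pair relation $R\subset X^2\times X^2$ by a scheme with flat, locally finitely presented projections, and apply Artin's theorem on fppf quotients. Where you defer to \cite{ER15}, your proposal amounts to the same citation. The arguments you supply yourself, however, leave the central step unproved.

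\textbf{The representability of $R$.} The locus in $X^2\times X^2\times X$ where $x_1(x_2^\dagger(z))$ and $y_1(y_2^\dagger(z))$ are both defined and agree is an equalizer. It is therefore only locally closed, not open, and its image in $X^4$ has no a priori scheme structure. ``The locus where a family of rational maps is the identity'' is likewise not a priori representable. Step~2 then simply asserts that $R$ is an open of $X^3$, without saying which open or why.

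The correct open is the maximal domain of definition $D\subset X^3$ of the rational map $\tau(x_1,x_2,y_1)=x_2x_1^{-1}y_1$, with $R\cong D$ via $t\mapsto(t,\tau(t))$. That the graph of $\tau|_D$ lies in $R$ is the easier direction: generic identities extend using separated fibres. The converse is a Weil-type extension lemma that you never state: $\tau$ is defined at every point $(x_1,x_2,y_1)$ admitting some $y_2\in X$ with $(x_1,x_2)\ssim(y_1,y_2)$. One proves it by taking an auxiliary $z$ in general position and rewriting
\[
\tau(x_1,x_2,y_1)=m_{23}\bigl(z,\,m_{12}\bigl(x_2,\,m_{13}(x_1,\,m_{12}(y_1,z))\bigr)\bigr).
\]
The inner terms are defined by genericity. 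The outer one is defined because its argument is $(z,y_2z)\in U_{23}$. Since the expression is generically independent of $z$, it descends along the flat map $X^3\times X\to X^3$ to an open neighbourhood of the point. Two things require the hypotheses: arranging this genericity over a general base via density conditions checked fibrewise, and propagating identities off dense opens. That is where flatness and the condition that the fibres are separated without embedded components enter. Only after this is $R\to X^2$ the restriction of $\rho_{12}\colon X^3\to X^2$ to an open, with the other projection handled by symmetry.

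\textbf{Openness of $X\to G$.} The same lemma, not ``the cancellation structure'', is what makes $X\to G$ an open immersion. The preimage of $X$ under $X^2\to G$ is the maximal domain of $(x_1,x_2)\mapsto x_1x_2^{-1}$. This can be strictly larger than the open supplied by $W$, which is a coordinate swap of $U_{23}$. For example, take $X$ the additive group over a field of characteristic $\neq 2$ and $W=\{(a,b,a+b): a\neq b\}$. Then the preimage of $X$ is all of $X^2$, while the $W$-locus is $\{x_1\neq 2x_2\}$.

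\textbf{The remaining bullets.} Your justifications are invalid as stated.
\begin{itemize}
\item \emph{Density.} Translates of an open covering $G$ do not make it dense; think of $G^0$ in a disconnected group. Instead, $S$-density descends along the faithfully flat cover $X^2\to G$, because the preimage of $X$ contains the $S$-dense open above.
\item \emph{Separatedness.} Separatedness does not descend from a cover; the line with doubled origin is an \'etale quotient of two copies of the affine line. Instead, use that $\Delta_G$ is a base change of the unit section $e\colon S\to G$. Since $X\to G$ is a monomorphism, the pullback of $e$ along $X^2\to G$ is $\Delta_{X/S}$. So $e$ is a closed immersion (resp.\ quasi-compact) exactly when $X\to S$ is separated (resp.\ quasi-separated).
\item \emph{Scheme fppf-locally on $S$.} You leave this entirely to \cite{ER15}.
\end{itemize}
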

Moreover, in good cases, $G$ is representable by a scheme. We cite \cite[Theorem~3.19]{ER15}, which is an amalgamation of work of Artin \cite{ArtinGroupChunk}, Anantharaman, and Bosch, Lutkebohmert and Raynaud:
\begin{prop}\label{representability criterion for Artin group chunks}
    Assume in the above theorem that $S$ is locally Noetherian and either
    \begin{enumerate}
        \item $S$ has dimension 0,
        \item $S$ has dimension 1 and $X \to S$ is separated, or
        \item $X \to S$ is smooth.
    \end{enumerate}
    Then $G$ is a scheme.
\end{prop}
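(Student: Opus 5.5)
The plan is to reduce everything to \cref{Artin Group chunk theorem}, which already gives most of the structure. For an Artin group chunk $(X,W)$ over $S$, the sheaf $G=\mathscr{G}(X,W)$ is a group algebraic space over $S$ that is faithfully flat, locally of finite presentation and locally separated. Moreover $X\hookrightarrow G$ is an $S$-dense open immersion, and by \cref{section: third universal maps to groups} $G$ is the sheaf quotient $(X^2/\ssim)^+$. So every point of $G$ lies, fppf-locally, in a translate $g\cdot X$ of the open subscheme $X$. What remains is to show that the algebraic space $G$ is a scheme. I would treat the three hypotheses separately, each time feeding in a known representability theorem for group algebraic spaces. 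The point of the construction is that $G$ inherits regularity and separatedness from $X$ (\cref{Artin Group chunk theorem}), so those hypotheses can be checked.

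\emph{Case (1): $\dim S=0$.} Since $S$ is locally Noetherian, I may work locally, so $S$ is the spectrum of an Artinian local ring. Over the reduced point $s=\operatorname{Spec}\kappa(s)$, a group algebraic space locally of finite type over a field is a scheme, by Artin's theorem. Because $G$ and $G_s$ have the same underlying topological space, and a nilpotent thickening of a scheme inside an algebraic space is again a scheme, I would conclude that $G$ is a scheme. The second step uses the standard fact that an algebraic space whose reduction is a scheme is a scheme.

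\emph{Case (2): $\dim S=1$ and $X\to S$ separated.} By \cref{Artin Group chunk theorem}, $G$ is separated over $S$. I would then invoke Anantharaman's theorem: a separated group algebraic space that is flat and locally of finite presentation over a locally Noetherian base of dimension at most $1$ is a scheme. Applying it requires localizing on $S$ and possibly passing to quasi-compact open subgroups, which is harmless since representability is Zariski local on $S$.

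\emph{Case (3): $X\to S$ smooth.} Then $G\to S$ is smooth by \cref{Artin Group chunk theorem}. I would follow Bosch--L\"utkebohmert--Raynaud's argument that a smooth group law yields a scheme. The key step is to show that for each $s\in S$ and each point $g$ of $G_s$, there is a section $h$ of $X$ over an \'etale neighbourhood $S'$ of $s$ such that $g\in h\cdot X^{-1}$; this uses the two-step generation $G=X\cdot X^{-1}$. The translates $h\cdot X$ are then open subschemes covering $G_{S'}$. The main obstacle is descending from $S'$ to $S$, since the translates are only defined \'etale-locally and need not descend individually. Here I would adapt BLR's trick: use smoothness to find sections over the strict henselization, then replace single translates by open subschemes of the form $X\cdot X^{-1}$ restricted along quasi-finite \'etale multisections, which are stable under the descent data. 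Alternatively, I would appeal to the fact that $G$ is covered by open subspaces $S$-isomorphic to Zariski-open pieces of $X\times_S X$, and check that these are representable directly.
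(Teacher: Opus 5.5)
Your overall strategy matches the paper's. The paper gives no argument of its own here: it quotes \cite[Theorem~3.19]{ER15}, which splits into exactly your three cases and takes them from Artin, Anantharaman and Bosch--L\"utkebohmert--Raynaud.

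\emph{Case (1).} This is a correct reduction: the closed fibre is a scheme by Artin's theorem, and $G_s\subset G$ is a thickening.

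\emph{Case (2).} This is the same appeal to Anantharaman's theorem, using that $G$ is separated by \cref{Artin Group chunk theorem}. Your hedge there is not justified as written, however. Zariski-locality on $S$ lets you shrink $S$, but it does not let you replace $G$ by a quasi-compact open subgroup $H$. If $H$ is a scheme, the translates $gH$ are open subschemes only of $G_{S'}$, for the fppf neighbourhoods $S'\to S$ over which the sections $g$ exist. So you are left with the same descent problem as in case (3). Quote a version of Anantharaman's theorem that applies to $G$ as it stands, rather than relying on that reduction.

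\emph{Case (3).} This is where the proposal has a genuine hole. Suppose you know $G$ is an algebraic space which, after an \'etale base change $S'\to S$, is covered by the open subschemes $h\cdot X_{S'}$. The whole difficulty is then to produce a Zariski covering of $G$ itself by open subschemes. \'Etale descent is not effective for schemes in general, and this step is precisely the content of the smooth-case theorem of \cite{blr90}. You name it as the main obstacle and then only say you would ``adapt BLR's trick''. Nothing you write shows that the descended open subspaces are schemes.

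Your alternative route does not work. The map $X\times_S X\to G$, $(x,y)\mapsto xy^{-1}$, has fibre over $g$ isomorphic to the open $X\cap g^{-1}X$ of $X$, so it is not an immersion. Indeed, since $X\subset G$ is open, $G$ has the relative dimension of $X$, which is half that of $X\times_S X$. Hence no open piece of $X\times_S X$ is isomorphic to an open subspace of $G$ unless $X\to S$ has relative dimension $0$. The remedy is simply to cite the BLR result for the smooth case, as \cite[Theorem~3.19]{ER15} does, instead of reconstructing it.
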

In the case where $X$ is a variety over a field; one gets that $G$ is a variety. See \cite{ER15} and \cite{WeilAndre1955OAGo}.

\begin{lem}\label{S-dense open of Artin group chunk gives group chunk}
    Let $(X,W)$ be an Artin group chunk, and $X' \subset X$ an $S$-dense open subscheme. Then $(X', W \cap (X')^3)$ is an Artin group chunk.
\end{lem}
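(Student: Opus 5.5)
The plan is to check the conditions of \cref{Artin Group Chunk definition} for the pair $(X', W')$ with $W' := W\cap (X')^3$, one at a time. Two of them are immediate. First, $X'$ is open in $X$, so $X'\to S$ is locally of finite presentation. Second, every fiber $X'_s$ is open in $X_s$, so the fibers are separated and have no embedded components, since both properties pass to open subschemes. The first genuine step is faithful flatness of $X'\to S$. Flatness is inherited from $X$. For surjectivity, I would use that $X'$ is $S$-dense, so that after base change to $\Spec k(s)$ the open $X'_s$ is schematically dense in $X_s$. Since $X_s \neq \emptyset$ by faithful flatness of $X$, this forces $X'_s\neq\emptyset$.

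The associativity condition \ref{Artin group chunk associtivity condition} should follow because restricting partial morphisms preserves $\ssim$ (\cref{compatibility respects composition}). Concretely, the partial morphisms $m'_{ij}$ attached to $W'$ are the restrictions of the $m_{ij}$ to smaller domains. So on the intersection of the domains, both sides of the associativity identity agree with the corresponding sides for $W$.

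For condition \ref{Artin group chunk open immersion condition}, note that $W'$ is the preimage of $X'_iX'_jX'_k$ in $W$. Under the isomorphism $\rho_{ij}\colon W\simeq U_{ij}$, this preimage becomes
\[
U'_{ij} := U_{ij}\cap (X'_iX'_j)\cap m_{ij}^{-1}(X'_k).
\]
This is open, and $\rho_{ij}$ restricts to an open immersion $W'\to X'_iX'_j$ with image $U'_{ij}$. What remains is to show that $U'_{ij}$ is $X'_i$-dense, and symmetrically $X'_j$-dense, in $X'_iX'_j$. I would use \cref{Intersection and schematically/S-dense opens} to reduce this to three separate statements, each over the base $X'_i$:
\begin{enumerate}
    \item $U_{ij}\cap X'_iX'_j$ is dense in $X'_iX'_j$;
    \item $X'_iX'_j$ is dense in $X'_iX_j$;
    \item $m_{ij}^{-1}(X'_k)$ is dense in $U_{ij}$, hence in $X_iX_j$, and therefore after restriction (\cref{schematic density and restriction to opens}) in $X'_iX'_j$.
\end{enumerate}
Statements (1) and (2) hold because $S$-density and relative density are stable under base change. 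Indeed, $X'_iX_j\to X'_i$ is the base change of $X_j\to S$, so $X'_iX'_j$ is $X'_i$-dense. Similarly, $U_{ij}$ is $X_i$-dense, and restricting over the open $X'_i$ keeps it $X'_i$-dense in $X'_iX_j$ (\cref{schematic density and restriction to opens}).

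I expect statement (3) to be the main obstacle: showing that $m_{ij}^{-1}(X'_k)$ is $X_i$-dense in $U_{ij}$. My approach is to use the graph isomorphism $U_{ij}\simeq W\simeq U_{ik}$, which is an isomorphism over $X_i$ because both projections to $X_i$ are the coordinate $\rho_i$. Under this isomorphism, $m_{ij}^{-1}(X'_k)$ corresponds to $U_{ik}\cap X_iX'_k$. The set $X_iX'_k$ is $X_i$-dense in $X_iX_k$, since it is the base change of the $S$-dense open $X'\subset X$ along $X_i\to S$. Intersecting with the $X_i$-dense open $U_{ik}$ keeps it $X_i$-dense by \cref{Intersection and schematically/S-dense opens}. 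Transporting back across the isomorphism over $X_i$ gives the claim. For $X_j$-density I would run the same argument using the isomorphism $U_{ij}\simeq U_{jk}$ over $X_j$. When $i$ and $k$ are in the wrong order, I would use the pair $(k,i)$ with the roles of the coordinates swapped, which is harmless because density is relative to a single factor.
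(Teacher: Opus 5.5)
Your proposal is correct. Its overall structure matches the paper's proof: you check the conditions of the definition one at a time, and you obtain relative density of $U'_{ij}$ from base change of $X'\subset X$ together with stability of relative density under intersections and restriction to opens.

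The substantive difference is your step (3), and there your argument is more complete than the paper's. The paper's proof asserts $U'_{ij}=(U_{ij}\cap X'_iX_j)\cap X'_iX'_j$. That is the image of $W\cap X'_iX'_jX_k$, not of $W'=W\cap (X')^3$, so it silently drops the condition $m_{ij}(x_i,x_j)\in X'_k$. The dropped condition matters: already for the graph of addition on $\mathbb{G}_a$ with $X'=\mathbb{G}_a\setminus\{0\}$, the two sets differ. Your argument closes this gap. You transport the $X_i$-dense open $U_{ik}\cap X_iX'_k\subset U_{ik}$ (or $U_{ki}\cap X'_kX_i$ when $k<i$) back along the isomorphism $U_{ij}\simeq W\simeq U_{ik}$ over $X_i$, and do the same over $X_j$ via $U_{jk}$. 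Your statement (2) is then not actually needed, since you restrict directly to the open $X'_iX'_j$, but it is harmless.

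You also make explicit two points the paper dismisses with ``certainly'': surjectivity of $X'\to S$, which you correctly derive from $S$-density on fibers, and the ``no embedded components'' condition. The paper's proof writes ``geometrically integral'' here, which is not part of the definition.

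One minor correction on associativity: \cref{compatibility respects composition} alone does not give it, because $\ssim$ is not transitive in general. What you need is that the primed partial maps are genuine restrictions of the unprimed ones, which is exactly what your ``concretely'' sentence says.
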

\begin{proof}
    Certainly $X'$ is faithfully flat and locally of finite presentation over $S$, with separated and geometrically integral fibers.

    Let $X'_i = X_i$ for $i = 1,2,3$, and $W' = W \cap (X')^3$. Then for each permutation $(i,j,k)$ of $\{1,2,3\}$ with $i < j$ the projection $\rho'_{ij}\colon W' \to X_iX_j$ is an open immersion; indeed it is the composition of the open immersion $W' \hookrightarrow W \cap X_i'X_j'X_k$ and the projection $W \cap X_i'X_j'X_k \to X_i'X_j'$, and the latter is an open immersion since it is the pullback of $W \hookrightarrow X_iX_j$. The associativity property is automatic, so it just remains to show that the image $U'_{ij}$ of each $\rho'_{ij}$ is $X'_i$-dense and $X'_j$-dense in $X_i'X_j'$.

    Indeed, since $X_j' \subset X_j$ is $S$-dense, we have that $X_i'X_j' \subset X_i'X_j$ is $X_i'$-dense; and since $U_{ij} \subset X_iX_j$ is $X_i$-dense we have $U_{ij} \cap X_i'X_j$ is $X_i'$ dense. Since $U_{ij}' = (U_{ij} \cap X_i'X_j) \cap X_i'X_j'$ we conclude $U_{ij}'$ is $X_i'$-dense. Similarly it is $X_j'$-dense.
\end{proof}

\begin{lem}\label{S-dense sub group chunk gives same group.}
    Let $(X,W)$ be an Artin group chunk, and let $W' \subset W$ be an open and $X' \subset X$ be an $S$-dense open such that $(X',W')$ is an Artin group chunk. Then the group algebraic space associated with both group chunks is the same.
\end{lem}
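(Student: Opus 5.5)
The plan is to reduce the statement to \cref{locally surjective sub group chunks can produce the same sheaf of groups}, or rather to its proof. First I will pass from schemes to sheaves. By the Proposition following \cref{Schemes are fppf locally nontrivial}, both $(h_X, m_{12})$ and $(h_{X'}, m'_{12})$ are locally nontrivial group chunks on the fppf site of $S$-schemes. Here $m'_{12}$ is the partial multiplication whose graph is $W'$.

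The associated group algebraic spaces are, by construction (\cref{Artin Group chunk theorem}, and the remark after \cref{Artin Group Chunk definition} identifying Artin's sheaf with ours), the sheaves $\scrG(X)^+$ and $\scrG(X')^+$. So it suffices to show two things:
\begin{enumerate}
    \item the inclusion $X' \hookrightarrow X$ is a morphism of group chunks;
    \item the induced map $\scrG(X')^+ \to \scrG(X)^+$ is an isomorphism.
\end{enumerate}
For the first point, $W' \subset W$ is open. Hence on the common domain $m'_{12}$ is the restriction of $m_{12}$, so the inclusion respects the partial operation.

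For the second point, I will not apply \cref{locally surjective sub group chunks can produce the same sheaf of groups} literally, since surjectivity of $\dom m'_{12} \to X$ is awkward to verify. Instead I will reprove its key step directly: the embedding $\widetilde{X'}^{(\infty)}/\ssim \hookrightarrow \tXinfty/\ssim$ is locally surjective. Since $\tXinfty/\ssim$ is locally generated by the classes $\lambda_c$ with $c \in X(T)$, it is enough to show the following. For each $c \in X(T)$, fppf locally on $T$ there exist $a,b \in X'(T')$ with $\lambda_c \ssim \lambda_a \circ \lambda_b^\dagger$ in $\tXinfty$. Here $\lambda_a, \lambda_b^\dagger$ may be computed in $X$, because $W' \subset W$ means compatibility classes computed in $X'$ map to those in $X$.

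To find $a$ and $b$, I consider $\dom \lambda_c^\dagger = \leftindex_c(U_{13})$ inside $X_T$. By $X_1$-density of $U_{13}$, this is a $T$-dense open of $X_T$. Intersecting with the $T$-dense open $X'_T$ and with $(\lambda_c^\dagger)^{-1}(X'_T)$ gives a $T$-dense open $V$, by \cref{Intersection and schematically/S-dense opens}; density of the preimage uses that $\lambda_c^\dagger$ is an isomorphism between $T$-dense opens. Since $X_T \to T$ is fppf, so is $V \to T$, and the tautological point $a \in V(V)$ lies in $X'$. Set $b = c^\dagger(a) \in X'(V)$. Then $c(b) = a$, and strong associativity (\cref{tXinfty composition compatibility}) gives $\lambda_a \ssim \lambda_c \circ \lambda_b$, hence $\lambda_c \ssim \lambda_a \circ \lambda_b^\dagger$.

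The one remaining point is that the pair $(b,a)$ must actually lie in the smaller graph $W'$, not merely in $W$. Otherwise $\lambda_a\circ\lambda_b^\dagger$ computed in $X'$ may be empty. I expect this to be the main obstacle.

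To handle it, I will shrink $V$ further by intersecting with the preimage of the open $W'$. The issue is then density of $W'$ inside $W$. That density should follow from the fact that $(X',W')$ is an Artin group chunk, so the projections $U'_{ij}$ are $X'_i$-dense. Once this is settled, local surjectivity holds, and \cref{locally surjective morphism of presheaves leades to isomorphism of sheaves} gives the isomorphism of sheaves, hence equality of the group algebraic spaces.
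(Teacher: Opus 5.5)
Your argument is already complete once you have $a,b\in X'(V)$ with $c(b)=a$, and hence $\lambda_a\ssim\lambda_c\circ\lambda_b$. The final paragraph addresses an obstacle that does not exist, and the fix it proposes would actually fail, so delete it.

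The comparison map $G_{\mu'}\to G_\mu$ is the group homomorphism of \cref{existence of a unit} induced by $X'\hookrightarrow X$. It therefore sends $[\lambda^{\prime}_x]$ to $[\lambda_x]$, and its image in $G_\mu(V)$ is a subgroup containing $[\lambda_a]$ and $[\lambda_b]$. The group law is composition and $\lambda_c\circ\lambda_b\ssim\lambda_a$, so this subgroup contains $[\lambda_c]=[\lambda_a][\lambda_b]^{-1}$. Concretely, $\lambda^{\prime}_a\circ\lambda^{\prime\dagger}_b$ is an element of $\widetilde{X'}^{(\infty)}(V)$ whether or not its domain has $V$-points, and it maps to the class of $\lambda_a\circ\lambda_b^\dagger$. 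Its domain is locally nontrivial anyway, by \cref{Long products lemma} applied to the group chunk $X'$.

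As for the fix: the only point of $W$ your construction produces is $(c,b,a)$. Since $W'\subset X'^3$, requiring that point to lie in $W'$ forces $c$ to factor through $X'$. Over the part of $T$ where it does not, the shrunken $V$ is empty and no longer covers $T$, and that is exactly the case you need to handle. The $S$-density of $W'$ in $W$ does hold, but it says nothing after base change along a $c$ that misses $X'$.

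With that paragraph removed, your proof is correct and takes a different route from the paper's. The paper shows that $m_{12}$ restricted to $U_{12}\cap X'_1X'_2$ is faithfully flat and locally of finite presentation onto $X_3$. It does this by identifying that map with $\rho_3\colon W\cap X'_1X'_2X_3\to X_3$ and proving that this open is $X_3$-dense in $W\cong U_{13}$, using the $X_3$-density of $U_{13}$ and $U_{23}$ and the $S$-density of $X'$. So every $c$ is fppf-locally a product $a\cdot b$ with $a,b\in X'$, and the paper then invokes \cref{locally surjective sub group chunks can produce the same sheaf of groups}.

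You instead write $c$ locally as $a\cdot b^{-1}$, with $a$ the tautological point of a $T$-dense open of $X_T$ and $b=c^\dagger(a)$. This uses only the $X_1$-density of $U_{12}$ and $U_{13}$ together with the isomorphism $\lambda_c^\dagger$. It avoids the density computation inside $W$ and stays entirely at the level of points.

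Your choice not to apply that lemma literally is also sound. Read with $\dom\mu'=U'_{12}$, its hypothesis asks that $m'_{12}$, which takes values in $X'$, be locally surjective onto $X$, and that fails unless $X'=X$. The paper's argument effectively uses $\dom\mu\cap X'^2$ instead. Both routes establish the same underlying fact: every section of $X$ is fppf-locally a group-theoretic combination of two sections of $X'$.
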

\begin{proof}
    Denote by $m_{ij}$, $m_{ij}'$ the induced partial binary operations on $X$, $X'$, and let $U_{ij}$, $U'_{ij}$ be the respective domains. Let $X_i = X$ and $X'_i = X'$ for $i = 1,2,3$. To show that the associated groups are the same, by \cref{locally surjective sub group chunks can produce the same sheaf of groups} it is enough to show that $m_{12}|_{U'_{12}}\colon U'_{12} \to X_3$ is faithfully flat and locally of finite presentation (hence surjective as a morphism of fppf sheaves by \cref{fppf morphisms are epimorphisms on the fppf topology}).

    First note that pulling back the projection $W \to U_{12}$ we get a commutative triangle
    \[
        \begin{tikzcd}
            W \cap X_1'X_2'X_3 \arrow[rr, "\rho_{12}"] \arrow[rd, "\rho_3"] &   & U'_{12} \arrow[ld, "\mu_{12}"] \\
            & X_3 &
        \end{tikzcd}
    \]
    where the top arrow is an isomorphism. So it is enough to show $W\cap X_1'X_2'X_3 \to X_3$ is faithfully flat and locally of finite presentation. Note that since $X \to S$ is faithfully flat and locally of finite presentation the same is true of $X_1X_2X_3 \to X_3$. Therefore $W\cap X_1'X_2'X_3 \to X_3$ is flat and locally of finite presentation; it just remains to show surjectivity. Note that, since $U_{13}$ is $X_3$-dense in $X_1X_3$, we have that $W \to X_3$ is surjective; therefore it is enough to show that $W\cap X_1'X_2'X_3$ is $X_3$-dense in $W$.

    Since $X' \subset X$ is $S$-dense, it follows that $X_1'X_3 \subset X_1X_3$ is $X_3$-dense. By \cref{Intersection and schematically/S-dense opens}, we have that $(U_{13} \cap X_1'X_3) \subset U_{13}$ is $X_3$-dense, or equivalently $W \cap X_1'X_2X_3 \subset W$ is $X_3$-dense. Similarly $W \cap X_1X_2'X_3 \subset W$ is $X_3$-dense, and taking the intersection we conclude.
\end{proof}

%% file: Chapters/S-Rational_Morphisms/Main_S-Rational_Morphisms.tex
\chapter{\texorpdfstring{$S$}{S}-Rational Morphisms}\label{chapter: S-rational morphisms}
In this chapter we introduce $S$-rational morphisms and study their properties, which we will need in \cref{chapter: S-rational families}.

We begin by introducing the notion of an $S$-diffuse morphism. Roughly speaking, an $S$-diffuse morphism (\cref{diffuse definition}) will be a representative of an $S$-rational morphism which is amenable to composition. In \cref{section: S-rational morphisms} we define $S$-rational morphisms and discuss the basic consequences of the definition. In \cref{section: faithfully flat S-rational morphisms} we introduce faithfully flat $S$-rational morphisms; these will be play a large role in the next two chapters because they satisfy the descent properties which we will need to construct canonical $S$-rational families. Importantly, in a locally Noetherian setting, a faithfully flat $S$-rational morphism is automatically $S$-diffuse.

In the next two sections we define composition and products of $S$-rational morphisms. We show that the class of $S$-diffuse $S$-rational morphisms is stable under composition, and that the class of faithfully flat $S$-rational morphisms is closed under taking composition and products. In \cref{section: S-birational morphisms} we study $S$-birational morphisms and their inverses, and we show that if two diffuse $S$-rational morphisms are mutually inverse with respect to composition then they are $S$-birational and mutually inverse. In the final section we study closed graphs of $S$-rational morphisms, and we show in good situations that $S$-rational morphisms (and their inverses) are determined by their closed graphs.


We next define $S$-rational morphisms, and  weakly $S$-generically faithfully flat (wff) morphisms, and show that they are stable under taking products. Finally we study wfd/wffd morphisms and show that they are relatively well-behaved; at least for schemes locally of finite presentation over a base. In particular, wfd/wffd morphisms are stable under composition \emph{and} product.

\input{Chapters/S-Rational_Morphisms/S-Diffuse_Morphisms.tex}

\input{Chapters/S-Rational_Morphisms/S-rational_morphisms.tex}

\input{Chapters/S-Rational_Morphisms/Faithfully_Flat_S-Rational_Morphisms.tex}

\input{Chapters/S-Rational_Morphisms/Composition_of_S-Rational_Morphisms.tex}
\input{Chapters/S-Rational_Morphisms/Products_of_S-rational_morphisms.tex}

\input{Chapters/S-Rational_Morphisms/S-Birational_Morphisms.tex}

\input{Chapters/S-Rational_Morphisms/Graphs_of_S-Rational_Morphisms.tex}

%% file: Chapters/S-Rational_Morphisms/S-Diffuse_Morphisms.tex
\section{\texorpdfstring{$S$}{S}-Diffuse Morphisms}\label{section: diffuse morphisms}
\begin{dfn}\label{diffuse definition}
    We say a morphism $\phi\colon X \to Y$ of schemes over $S$ is $S$-diffuse (or just diffuse, if $S$ is clear from context) if for every $S$-dense open $V\subset Y$, $\phi\inv(V)$ is $S$-dense in $X$.
\end{dfn}

\begin{lem}\label{a dominant morphism of integral schemes over a field is diffuse}
    A dominant morphism $\phi\colon X \to Y$ of integral schemes over a field $k$ is diffuse.
\end{lem}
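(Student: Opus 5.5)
We must show: if $\phi\colon X\to Y$ is a dominant morphism of integral $k$-schemes and $V\subset Y$ is $k$-dense open, then $\phi\inv(V)$ is $k$-dense in $X$. The plan is to reduce $k$-density to ordinary topological density on both sides. After that, the only real input is that the generic point of $X$ maps to the generic point of $Y$.

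First, by \cref{S-dominance over fields}, an open subscheme of a $k$-scheme is $k$-dense if and only if it is schematically dense. Since $X$ and $Y$ are integral, hence reduced, \cref{schematically dense opens of a reduced scheme} says that schematic density of an open is the same as topological density. In an irreducible space an open subset is dense exactly when it is nonempty. So the statement reduces to the following: if $V\subset Y$ is a nonempty open, then $\phi\inv(V)$ is a nonempty open of $X$.

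For this, let $\eta_X$ and $\eta_Y$ be the generic points. Because $\phi$ is continuous, $\phi(X)=\phi(\overline{\{\eta_X\}})\subset\overline{\{\phi(\eta_X)\}}$. Dominance gives $\overline{\phi(X)}=Y$, so $\overline{\{\phi(\eta_X)\}}=Y$. Since $Y$ is integral its generic point is unique, and therefore $\phi(\eta_X)=\eta_Y$. Any nonempty open $V$ contains $\eta_Y$, so $\eta_X\in\phi\inv(V)$. Hence $\phi\inv(V)$ is nonempty, and therefore dense in the irreducible space $X$. Running the equivalences above in reverse shows it is $k$-dense.

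There is no serious obstacle. The one step that needs care is invoking \cref{S-dominance over fields} correctly, so that $k$-density really collapses to schematic density over a field; the rest is elementary point-set topology.
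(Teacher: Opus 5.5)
Your proof is correct and follows the paper's route: reduce $k$-density to topological density via \cref{S-dominance over fields} and \cref{schematically dense opens of a reduced scheme}, use irreducibility to turn density into nonemptiness, and then observe that every nonempty open of $Y$ contains the generic point of $Y$, which lies in the image of $\phi$. The one difference is that you prove $\phi(\eta_X)=\eta_Y$ directly from continuity and the irreducibility of $X$, whereas the paper cites \cref{dominant quasicompact morphisms}, which requires $\phi$ to be quasicompact (a hypothesis the lemma does not state), so your argument is slightly cleaner and more self-contained.
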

\begin{proof}
    Since $S$-density is the same as density for reduced schemes when $S = \Spec(k)$ (\cref{S-dominance over fields}, \cref{schematically dense opens of a reduced scheme}), and since the schemes are irreducible (so an open is dense if and only if it is nonempty), we just need to check that the preimage of a nonempty open is nonempty. But this is immediate, since any nonempty open of $Y$ contains the generic point which is in the image of $\phi$ by (\cref{dominant quasicompact morphisms}).
\end{proof}

$S$-diffuseness is stable under restriction to $S$-dense opens:

\begin{prop}\label{preimage being S-dense is stable under restriction to S-dense opens}
    Let $\phi\colon X \to Y$ be a morphism of $S$-schemes and $U\subset X$, $V\subset Y$ $S$-dense opens. Then $\phi\inv(V) \subset X$ is $S$-dense if and only if $\phi|_U \inv(V) \subset U$ is $S$-dense.
\end{prop}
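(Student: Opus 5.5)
The plan is to reduce both directions to the equivalences of \cref{Intersection and schematically/S-dense opens}, applied once in $X$ and once in $U$. The point is that the two subschemes in question, $\phi\inv(V)$ and $(\phi|_U)\inv(V)$, are related by intersecting with $U$. Indeed, as open subschemes of $X$,
\begin{align*}
    (\phi|_U)\inv(V) = U \cap \phi\inv(V),
\end{align*}
since the preimage under a restriction to an open is the intersection of the preimage with that open.

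First I would apply \cref{Intersection and schematically/S-dense opens} in the ambient scheme $X$. Take the $S$-dense open to be $U$ and the arbitrary open to be $\phi\inv(V)$. The lemma then says that $\phi\inv(V)$ is $S$-dense in $X$ if and only if $U\cap \phi\inv(V)$ is $S$-dense in $U$. This is exactly the equivalence of items (1) and (3) in that corollary.

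Next I would substitute the identity above. It turns the second condition into the statement that $(\phi|_U)\inv(V)$ is $S$-dense in $U$, which is the claim. The hypothesis that $V$ is $S$-dense is not actually used; it is only there because this is the form in which the statement will be applied to diffuseness.

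There is no real obstacle. The only point to check is that the identification of $(\phi|_U)\inv(V)$ with $U\cap\phi\inv(V)$ holds as open subschemes of $U$, not merely as sets. This is immediate, because both are the open subscheme of $X$ on the underlying open set $U\cap \phi\inv(V)$.
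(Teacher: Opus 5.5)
Your proof is correct and follows essentially the same route as the paper: the identity $(\phi|_U)^{-1}(V) = U\cap\phi^{-1}(V)$ combined with \cref{Intersection and schematically/S-dense opens}. The only difference is packaging. You use the equivalence (1)$\iff$(3) of that corollary to get both directions at once. The paper instead uses the corollary only for left-to-right, and proves right-to-left directly from the fact that a composition of $S$-dominant morphisms is $S$-dominant (which is how the corollary itself is proved).
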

\begin{proof}
    Right to left follows from the fact that a composition of $S$-dominant morphisms is $S$-dominant (\cref{A composition of schematically-dominant morphisms is schematically-dominant}). For left to right, note $\phi|_U \inv(V) = \phi\inv(V) \cap U$ and recall that an intersection of $S$-dense opens is $S$-dense (\cref{Intersection and schematically/S-dense opens}).
\end{proof}
\begin{cor}\label{S-diffusity is stable under restriction to S-dense opens}
    Let $\phi\colon X \to Y$ be a morphism and $U\subset X$ an $S$-dense open. Then $\phi$ is $S$-diffuse iff $\phi|_U$ is $S$-diffuse.
\end{cor}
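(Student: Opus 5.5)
The plan is to reduce the corollary directly to \cref{preimage being S-dense is stable under restriction to S-dense opens}, applying it once for each $S$-dense open of the target. By \cref{diffuse definition}, $\phi$ is $S$-diffuse exactly when, for every $S$-dense open $V\subset Y$, the preimage $\phi\inv(V)$ is $S$-dense in $X$. Likewise $\phi|_U$ is $S$-diffuse exactly when $(\phi|_U)\inv(V) = \phi\inv(V)\cap U$ is $S$-dense in $U$ for every such $V$. Both conditions quantify over the same family of opens $V\subset Y$. So it suffices to prove, for each fixed $S$-dense open $V\subset Y$, that
\begin{align*}
    \phi\inv(V) \text{ is } S\text{-dense in } X \iff (\phi|_U)\inv(V) \text{ is } S\text{-dense in } U.
\end{align*}

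For fixed $V$, this equivalence is precisely \cref{preimage being S-dense is stable under restriction to S-dense opens}, applied to the given $S$-dense opens $U\subset X$ and $V\subset Y$. Taking the conjunction over all $S$-dense opens $V$ then gives both directions of the corollary.

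The argument has no real obstacle. The only point to check is that the earlier proposition does not secretly need $V$ to depend on $U$, or any hypothesis beyond $S$-density of $U$ and $V$.

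I would also mention why that proposition holds, for reassurance. For the implication from $\phi\inv(V)$ dense in $X$ to $\phi\inv(V)\cap U$ dense in $U$, one uses \cref{Intersection and schematically/S-dense opens}. For the converse, one composes the $S$-dominant open immersions $\phi\inv(V)\cap U\hookrightarrow U\hookrightarrow X$, which is $S$-dominant by \cref{A composition of schematically-dominant morphisms is schematically-dominant}. The composite factors through $\phi\inv(V)\hookrightarrow X$, so $\phi\inv(V)\hookrightarrow X$ is $S$-dominant by \cref{composition is schematically dominant implies f is schematically dominant}.
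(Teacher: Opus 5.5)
Your proposal is correct and matches the paper's approach: the paper states this corollary without a separate proof, as an immediate consequence of the preceding proposition applied to each $S$-dense open $V \subset Y$, which is exactly your reduction. Your sketch of the proposition also follows the paper's argument. One direction uses the intersection lemma for $S$-dense opens. The other composes $S$-dominant open immersions and then uses that $S$-dominance of a composite implies $S$-dominance of the second factor.
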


$S$-diffuseness is also stable under restricting the target to $S$-dense opens with $S$-dense preimages:
\begin{lem}\label{S-diffusity and restriction to the target}
    Let $\phi\colon X \to Y$ be a morphism of $S$-schemes, $V\subset Y$ and $U\subset X$ be $S$-dense open subschemes with $U \subset \phi\inv(V)$. Then $\phi$ is diffuse if and only if $\phi|_{[U \to V]}$ is diffuse.
\end{lem}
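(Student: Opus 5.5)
We reduce both directions to \cref{preimage being S-dense is stable under restriction to S-dense opens} together with \cref{S-diffusity is stable under restriction to S-dense opens}. The preliminary step is the latter corollary: since $U\subset X$ is $S$-dense, $\phi$ is diffuse if and only if $\phi|_U\colon U \to Y$ is diffuse. So it suffices to compare diffuseness of $\psi := \phi|_U\colon U \to Y$ with that of $\psi' := \phi|_{[U\to V]}\colon U \to V$. Throughout we use $\psi'^{-1}(V') = \psi^{-1}(V')$ for any open $V'\subset V$.

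For the direction from $\psi$ to $\psi'$, suppose $\psi$ is diffuse and let $V'\subset V$ be an $S$-dense open of $V$. Since $V$ is $S$-dense in $Y$ and the composition of $S$-dominant open immersions is $S$-dominant (\cref{A composition of schematically-dominant morphisms is schematically-dominant}), $V'$ is $S$-dense in $Y$. Hence $\psi'^{-1}(V') = \psi^{-1}(V')$ is $S$-dense in $U$.

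For the converse, suppose $\psi'$ is diffuse and let $V''\subset Y$ be an $S$-dense open. By \cref{Intersection and schematically/S-dense opens}, $V''\cap V$ is $S$-dense in $V$. Thus $\psi'^{-1}(V''\cap V)$ is $S$-dense in $U$. Since $U\subset\phi^{-1}(V)$, this preimage equals $\psi^{-1}(V'')$. Therefore $\psi$ is diffuse.

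There is no real obstacle. The only care needed is keeping track of which ambient scheme each density statement refers to, and using the hypothesis $U\subset\phi^{-1}(V)$ to identify the preimages under $\psi$ and $\psi'$.
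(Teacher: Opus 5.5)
Your proposal is correct and follows the paper's proof essentially verbatim. You reduce to $\phi|_U$ via \cref{S-diffusity is stable under restriction to S-dense opens}, then handle each direction using that $S$-dense opens of $V$ are $S$-dense in $Y$ and that $W\cap V$ is $S$-dense in $V$. The only difference is in the converse: you observe that the preimages $\phi|_{[U\to V]}^{-1}(W\cap V)$ and $\phi|_U^{-1}(W)$ are actually equal (because $U\subset\phi^{-1}(V)$), whereas the paper uses only the containment together with \cref{composition is schematically dominant implies f is schematically dominant}, which makes your version marginally cleaner.
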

\begin{proof}
    By \cref{S-diffusity is stable under restriction to S-dense opens}, we have that $\phi$ is $S$-diffuse if and only if $\phi|_U$ is $S$-diffuse.

    If $\phi|_U$ is diffuse then for any $S$-dense open $W \subset V$ we have that $W$ is $S$-dense in $Y$ and $\phi|_{[U \to V]}\inv(W) = \phi|_U\inv(W)$, and the latter is $S$-dense.

    Conversely if $\phi|_{[ U \to V ]}$ is $S$-diffuse then for any $S$-dense open $W \subset Y$ we have that $V \cap W$ is $S$-dense in $V$, and then $\phi|_{[ U \to V ]}\inv(W\cap V)$ is $S$-dense in $U$, hence in $X$. Since $\phi|_{[ U \to V ]}\inv(W\cap V) \subset \phi|_{U}\inv(W)$ we conclude by \cref{composition is schematically dominant implies f is schematically dominant}.
\end{proof}

\begin{prop}\label{flat morphisms of locally Noetherian schemes are S-diffuse}
    A flat morphism of locally Noetherian schemes over $S$ is $S$-diffuse.
\end{prop}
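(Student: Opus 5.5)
Let $\phi\colon X\to Y$ be flat with $X,Y$ locally Noetherian over $S$, and let $V\subset Y$ be an $S$-dense open. We must show that $\phi\inv(V)$ is $S$-dense in $X$. By definition, this means showing that for every base change $S'\to S$ the open $\phi\inv(V)_{S'}=\phi_{S'}\inv(V_{S'})$ is schematically dense in $X_{S'}$. The difficulty is that $X_{S'},Y_{S'}$ need not be locally Noetherian, so we want an argument that only uses flatness of $\phi_{S'}$ (flatness is preserved by base change) together with schematic density of $V_{S'}$. The Noetherian hypothesis then serves only to make the relevant morphisms quasicompact.

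Fix $S'$ and write $\phi'=\phi_{S'}$, $j\colon V'=V_{S'}\hookrightarrow Y'=Y_{S'}$. Since $Y$ is locally Noetherian, every open subscheme of $Y$ is quasicompact over $Y$, so $V\hookrightarrow Y$ is a quasicompact open immersion. Quasicompactness is stable under base change, so $j$ is quasicompact. By hypothesis $j$ is schematically dominant. Now $\phi\inv(V)_{S'}\hookrightarrow X_{S'}$ is the pullback $\phi'^*(j)$ of the quasicompact schematically dominant morphism $j$ along the flat morphism $\phi'$. The (unnumbered) lemma preceding \cref{the pullback of a quasicompact S-dominant morphism by a flat morphism is S-dominant} says such a pullback is schematically dominant. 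More directly, \cref{the pullback of a quasicompact S-dominant morphism by a flat morphism is S-dominant} applied to $f=\phi$ and $g=(V\hookrightarrow Y)$ gives the conclusion at once: $\phi^*(g)=(\phi\inv(V)\hookrightarrow X)$ is $S$-dominant, that is, $\phi\inv(V)$ is $S$-dense.

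So the proof reduces to two checks. First, the open immersion $V\hookrightarrow Y$ is quasicompact; this is where local Noetherianity of $Y$ is used, since opens of locally Noetherian schemes are quasicompact on affines. Second, its pullback along $\phi$ is the open immersion $\phi\inv(V)\hookrightarrow X$. The only point needing care is the quasicompactness claim. If $Y$ is locally Noetherian, then for any affine open $W\subset Y$ the scheme $W$ is Noetherian, so $V\cap W$ is quasicompact. Local Noetherianity of $X$ is not actually needed beyond this.
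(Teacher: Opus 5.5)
Your proof is correct and matches the paper's argument: the open immersion $V\hookrightarrow Y$ is quasicompact because $Y$ is locally Noetherian, and \cref{the pullback of a quasicompact S-dominant morphism by a flat morphism is S-dominant} applied along $\phi$ then gives that $\phi\inv(V)$ is $S$-dense. You are also right that local Noetherianity of $X$ plays no role in this argument.
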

\begin{proof}
    Immediate, since the pullback of a quasicompact $S$-dominant morphism by a flat morphism is $S$-dominant (\cref{the pullback of a quasicompact S-dominant morphism by a flat morphism is S-dominant}), and an open immersion to a locally Noetherian scheme is quasicompact.
\end{proof}

\begin{cor}\label{generic flatness criterion for S-diffuseness in the locally Noetherian case}
    Let $\phi\colon X \to Y$ be a morphism of locally Noetherian schemes over $S$, $V\subset Y$ be $S$-dense, and suppose there exists a representative $\phi|_U$ of $\phi$ which is flat over $V$. Then $\phi$ is $S$-diffuse if and only if $\phi\inv(V)$ is $S$-dense in $X$.
\end{cor}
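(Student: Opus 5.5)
The plan is to reduce both directions to the definition of $S$-diffuseness, using the two stability results just proved: \cref{S-diffusity is stable under restriction to S-dense opens} and \cref{S-diffusity and restriction to the target}. By hypothesis we have an $S$-dense open $U \subset X$ such that $\phi|_U$ maps into $V$ and is flat as a morphism $U \to V$. Here $U$ and $V$ are open subschemes of locally Noetherian schemes, so they are themselves locally Noetherian.

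\emph{Forward direction.} This is immediate from \cref{diffuse definition}. The open $V$ is $S$-dense in $Y$, so if $\phi$ is $S$-diffuse then $\phi\inv(V)$ is $S$-dense in $X$.

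\emph{Converse.} Suppose $\phi\inv(V)$ is $S$-dense in $X$.
\begin{enumerate}
    \item First I would shrink $U$ to $U' := U \cap \phi\inv(V)$. By \cref{Intersection and schematically/S-dense opens}, $U'$ is an $S$-dense open of $X$, and it satisfies $U' \subset \phi\inv(V)$. If $U$ already maps into $V$, as the phrase ``flat over $V$'' suggests, then $U' = U$ and this step is vacuous.
    \item The restriction $\phi|_{[U' \to V]}$ is flat by \cref{restrictions of flat morphisms to opens are flat}.
    \item The morphism $\phi|_{[U' \to V]}$ is a flat morphism of locally Noetherian $S$-schemes. By \cref{flat morphisms of locally Noetherian schemes are S-diffuse} it is therefore $S$-diffuse.
    \item Now $U' \subset X$ and $V \subset Y$ are $S$-dense opens with $U' \subset \phi\inv(V)$. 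Applying \cref{S-diffusity and restriction to the target}, $\phi$ is $S$-diffuse if and only if $\phi|_{[U' \to V]}$ is, and so $\phi$ is $S$-diffuse.
\end{enumerate}

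The main point needing care is the interpretation of the hypothesis. I read ``$\phi|_U$ is flat over $V$'' as: $U$ is an $S$-dense open, $U \subset \phi\inv(V)$, and $\phi|_{[U \to V]}$ is flat. In particular, the hypothesis that $\phi\inv(V)$ is $S$-dense is what supplies the $S$-dense open $U'$ contained in $\phi\inv(V)$ that \cref{S-diffusity and restriction to the target} requires.

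If instead $U$ is only assumed to satisfy $\phi|_U\inv(V) \subset U$ with flatness there, then the same argument applies with $U' = \phi|_U\inv(V)$. This set equals $U \cap \phi\inv(V)$, and it is $S$-dense by \cref{preimage being S-dense is stable under restriction to S-dense opens}. Beyond this bookkeeping there is no real obstacle.
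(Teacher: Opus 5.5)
Your proof is correct and follows essentially the same route as the paper. You shrink $U$ to $U\cap\phi^{-1}(V)$, use that a flat morphism of locally Noetherian schemes is $S$-diffuse, and transfer diffuseness back to $\phi$; the only differences are cosmetic (the paper views $\phi|_{U\cap\phi^{-1}(V)}$ as a flat map to $Y$ and uses only the source-restriction corollary where you use the target-restriction lemma, and your forward direction reads directly off the definition rather than passing through $\phi|_U$).
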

\begin{proof}
    If $\phi$ is $S$-diffuse then so is $\phi|_U$ (\cref{S-diffusity is stable under restriction to S-dense opens}), so $\phi|_U\inv(V)$ is $S$-dense in $U$, hence in $X$.

    Conversely, suppose $\phi\inv(V)$ is $S$-dense in $X$. Since $\phi|_U$ is flat over $V$ we have that $\phi|_{U\cap \phi\inv(V)}$ is flat, hence diffuse by \cref{flat morphisms of locally Noetherian schemes are S-diffuse} since everything is locally Noetherian. Since $\phi\inv(V) \cap U$ is $S$-dense in $U$, hence in $X$, we conclude $\phi$ is diffuse by \cref{S-diffusity is stable under restriction to S-dense opens}.
\end{proof}

$S$-diffuseness is preserved by composition:
\begin{prop}\label{A composition of S-diffuse morphisms is S-diffuse}
    A composition of $S$-diffuse morphisms is $S$-diffuse.
\end{prop}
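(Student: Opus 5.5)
This follows directly from \cref{diffuse definition}, since preimages compose. Let $\phi\colon X \to Y$ and $\psi\colon Y \to Z$ be $S$-diffuse morphisms of $S$-schemes, and let $W \subset Z$ be an $S$-dense open. The goal is to show that $(\psi\circ\phi)\inv(W)$ is $S$-dense in $X$.

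First, because $\psi$ is $S$-diffuse, the preimage $\psi\inv(W)$ is an $S$-dense open of $Y$. It is open because it is the preimage of an open under a continuous map, and it carries its open subscheme structure. Next, apply $S$-diffuseness of $\phi$ to this $S$-dense open $V := \psi\inv(W)$. This gives that $\phi\inv(V)$ is $S$-dense in $X$.

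It remains to identify $\phi\inv(\psi\inv(W))$ with $(\psi\circ\phi)\inv(W)$ as open subschemes of $X$. Set-theoretically this is immediate. As subschemes, both are the open subscheme structure on the same open subset of $X$. Equivalently, by \cref{Cartesian rectangles}, pulling back the open immersion $W \hookrightarrow Z$ along $\psi$ and then along $\phi$ agrees with pulling it back along $\psi\circ\phi$.

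There is no real obstacle here. The only point needing care is that the notion of $S$-density used for $\psi\inv(W)$ is density inside $Y$, which is exactly the hypothesis that the definition of diffuseness for $\phi$ requires.
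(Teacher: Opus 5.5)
Your proof is correct and follows essentially the same route as the paper's: apply diffuseness of $\psi$ to get the $S$-dense open $\psi^{-1}(W)\subset Y$, then apply diffuseness of $\phi$ and use $\phi^{-1}(\psi^{-1}(W)) = (\psi\circ\phi)^{-1}(W)$. Your extra remark that these two agree as open subschemes, and not just as sets, is harmless but not needed.
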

\begin{proof}
    Let $f\colon X \to Y$ and $g\colon Y \to Z$ be $S$-diffuse, and $W \subset Z$ an $S$-dense open. Then $g\inv(W)$ is $S$-dense, and therefore $f\inv g\inv(W) = (g\circ f)\inv(W)$ is $S$-dense.
\end{proof}

\begin{lem}\label{Projections are diffuse}
    Let $X$, $A$ be schemes over $S$. Then the projection $\rho_A\colon A \times_S X \to A$ is $S$-diffuse.
\end{lem}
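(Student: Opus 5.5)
The plan is to unwind the definition of diffuseness and reduce everything to base-change stability of $S$-dominance. Let $V \subset A$ be an $S$-dense open. Then $\rho_A\inv(V) = V \times_S X$ as an open subscheme of $A \times_S X$. We must show that the open immersion $V \times_S X \hookrightarrow A \times_S X$ is $S$-dominant.

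The key observation is that this immersion is the product $j \times_S \id_X$, where $j\colon V \hookrightarrow A$ is the given immersion. Equivalently, it is the base change of $j$ along the projection $A\times_S X \to A$. So we may argue in either of two ways.

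\emph{Via products.} The identity $\id_X$ is trivially $S$-dominant, since $\O_{X_{S'}} \to \O_{X_{S'}}$ is injective for every $S' \to S$. Then \cref{A product of S-dominant morphisms is S-dominant} gives that $j\times_S \id_X$ is $S$-dominant. This is exactly \cref{products of S-dense opens are S-dense.} applied to $V \subset A$ and $X \subset X$.

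\emph{Directly, to avoid any flatness or quasicompactness hypotheses.} $S$-dominance of $j$ means that for every $S' \to S$ the morphism $j_{S'}$ is schematically dominant. Take $S'' = X_{S'}$, viewed over $S$ via $X_{S'} \to S' \to S$. Then $(V\times_S X)_{S'} = V \times_S X_{S'} = V_{S''}$ and $(A\times_S X)_{S'} = A_{S''}$, compatibly with the immersions. Hence the base change of $V\times_S X \hookrightarrow A\times_S X$ to $S'$ is $j_{S''}$, which is schematically dominant by hypothesis.

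The only point needing any care is the identification of the base change of $V\times_S X \hookrightarrow A\times_S X$ to $S'$ with $j_{S''}$. This is a routine associativity-of-fiber-products check (\cref{associativity of fiber product}), and it is the reason $S$-dominance is stable under products while plain schematic dominance is not. Given that identification, $\rho_A\inv(V)$ is $S$-dense for every $S$-dense $V$, so $\rho_A$ is $S$-diffuse.
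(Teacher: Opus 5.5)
Your proposal is correct, and your first route is exactly the paper's proof: $\rho_A\inv(V) = V\times_S X$ is $S$-dense in $A\times_S X$ by \cref{A product of S-dominant morphisms is S-dominant}. Your direct route is just the proof of that product lemma written out (base change along $X_{S'} \to S$), so it is also valid. Note that the product route never required flatness or quasicompactness hypotheses, so the direct argument avoids nothing.
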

\begin{proof}
    Indeed, for any $S$-dense $U \subset A$ we have $\rho_{A}\inv(U) = UX$ which is $S$-dense in $AX$ by \cref{A product of S-dominant morphisms is S-dominant}.
\end{proof}


%% file: Chapters/S-Rational_Morphisms/S-rational_morphisms.tex
\section{\texorpdfstring{$S$}{S}-Rational Morphisms}\label{section: S-rational morphisms}

\begin{dfn}\label{S-rational morphisms definition}
    An  $S$-rational morphism $\phi\colon X \dashrightarrow Y$ is an equivalence class of pairs $(U \subset X,\phi_U\colon U \to Y)$ where $U \subset X$ is an $S$-dense open and $\phi\colon U \to Y$ is a morphism, and two pairs $(U,\phi_U)$, $(U',\phi_{U'})$ are equivalent if there exists an $S$-dense open $U'' \subset U \cap U'$ such that $\phi_U|_{U''} = \phi_{U'}|_{U''}$.

    If $S = \Spec(A)$ for a ring $A$, we will say $A$-rational in place of $\Spec(A)$-rational.
\end{dfn}

We will frequently refer to representatives $(U,\phi_U)$ simply by $\phi_U$, and say that $\phi_U$ is a representative over $U$. We also say that $\phi$ is the rational morphism from $X$ to $Y$ induced by $\phi_U$.

Note that over any $S$-dense open $U$, an $S$-rational morphism may have more than one representative over $U$.

\begin{rem}\label{embedding of morphisms to S-rational morphisms}
    If $X,Y$ are $S$-schemes, note that there is a natural map from $\Hom_S(X,Y)$ to the set of $S$-rational morphisms from $X$ to $Y$ sending each morphism $f$ to the class of the pair $(X,f)$. If $Y$ has separated fibers over $S$ (and in particular if $Y$ is separated), then it follows from \cref{S-dominant morphisms epimorphism property} that this map is injective.
\end{rem}

\begin{dfn}[Domain of an $S$-rational morphism]\label{domain of an S-rational morphism definition}
    We define the domain of an $S$-rational morphism $\phi\colon X \dashrightarrow Y$, denoted $\dom(\phi)$, to be the union over all $S$-dense opens $U$ such that $\phi$ has a representative $(U,\phi_U)$.
\end{dfn}

Under good circumstances one can find a representative of a rational morphism over the domain:
\begin{prop}\label{domain for S-rational morphisms to separated target}
    Let $\phi\colon X \dashrightarrow Y$ be an $S$-rational morphism, and suppose $Y$ has separated fibers over $S$. Then
    \begin{enumerate}
        \item For any representatives $\phi_U, \phi_V$, we have $\phi_U|_{U \cap V} = \phi_V|_{U \cap V}$.
        \item There exists a representative $(\dom(\phi),\phi_{\dom(\phi)})$ of $\phi$
        \item For any $S$-dense open $U \subset \dom(\phi)$, there exists a unique representative $\phi_U$ over $U$.
    \end{enumerate}
\end{prop}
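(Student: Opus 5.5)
The whole statement reduces to part (1); parts (2) and (3) then follow by gluing and restriction. For (1), I fix two representatives $\phi_U$ and $\phi_V$ of $\phi$. By the definition of equivalence (\cref{S-rational morphisms definition}) there is an $S$-dense open $U'' \subset U \cap V$ on which $\phi_U$ and $\phi_V$ agree. By \cref{schematic density and restriction to opens} (or \cref{Intersection and schematically/S-dense opens}), $U''$ is $S$-dense in $U\cap V$. So the inclusion $U'' \hookrightarrow U \cap V$ is an $S$-dominant morphism. The two morphisms $\phi_U|_{U\cap V}$ and $\phi_V|_{U\cap V}$ to $Y$ become equal after precomposing with it. Since $Y$ has separated fibers over $S$, \cref{S-dominant morphisms epimorphism property} shows they are equal on $U \cap V$.

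For (2), I pick for each $S$-dense open $U$ in the union defining $\dom(\phi)$ a representative $\phi_U$. By (1) these agree on every pairwise overlap. Since morphisms of schemes glue along open covers, they define a morphism $\phi_{\dom(\phi)}\colon \dom(\phi) \to Y$ restricting to each $\phi_U$. Two points remain.
\begin{enumerate}
\item $\dom(\phi)$ is $S$-dense in $X$: it contains an $S$-dense open $U$, and $U \hookrightarrow \dom(\phi) \hookrightarrow X$ is $S$-dominant, so \cref{composition is schematically dominant implies f is schematically dominant} gives that $\dom(\phi) \hookrightarrow X$ is $S$-dominant.
\item $(\dom(\phi),\phi_{\dom(\phi)})$ is equivalent to the original representatives: it agrees with $\phi_U$ on the $S$-dense open $U$.
\end{enumerate}
If $\phi$ has no representative at all, the class is empty, which is impossible, so there is always at least one such $U$.

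For (3), existence comes from restricting $\phi_{\dom(\phi)}$ to $U$. The restriction is still equivalent to $\phi$, because $U$ is $S$-dense and the two agree on $U$. For uniqueness, any two representatives over $U$ agree on $U\cap U = U$ by (1).

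I expect the main obstacle to be step (1). It is the only place where the separated-fibers hypothesis enters, through Artin's epimorphism property for $S$-dominant maps. The one subtle check there is that $U''$ is $S$-dense in $U\cap V$ and not merely in $X$; \cref{Intersection and schematically/S-dense opens} supplies exactly this.
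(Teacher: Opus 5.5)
Your proposal is correct and follows the paper's route. Part (1) comes from the epimorphism property of $S$-dominant morphisms into schemes with separated fibers (\cref{S-dominant morphisms epimorphism property}), and (2) and (3) are deduced from (1) by gluing and restriction. You also supply details the paper leaves implicit, namely that $U''$ is $S$-dense in $U\cap V$ and that $\dom(\phi)$ is $S$-dense in $X$.
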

\begin{proof}
    The first statement follows from \cref{S-dominant morphisms epimorphism property}, and the other statements follow immediately from the first.
\end{proof}

In light of the above, we adopt the following conventions:
\begin{notation}\label{notation for S-rational morphisms with separated target}
    Let $\phi\colon X \dashrightarrow Y$ be an $S$-rational morphism, and suppose $Y$ has separated fibers over $S$.
    \begin{itemize}
        \item For every $U \subset \dom(\phi)$, we denote by $\phi_U$ the unique representative over $U$.
        \item For every $T \to S$ and $t \in \dom(\phi)(T)$, we denote by $\phi(t)$ the morphism $\phi_{\dom(\phi)}(t)$. Observe this is equal to $\phi_U(t)$ for any $U \subset \dom(\phi)$ with $t \in U(T)$.
    \end{itemize}

    Let $f\colon X \to Y$ be a morphism, and suppose $Y$ has separated fibers over $S$. Then for any $S$-dense open $U \subset X$ we write $f_U := f|_U$, and note that $f_U$ is the unique representative of the $S$-rational morphism corresponding to $f$ over $U$.
\end{notation}

The following allows us to define the schematic image of an $S$-rational morphism:
\begin{lem}
    Let $\phi\colon X \to Y$ be an $S$-rational morphism. Then any two representatives of $\phi$ have the same schematic image.
\end{lem}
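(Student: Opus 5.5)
The plan is to reduce to comparing a representative with its restriction to a smaller $S$-dense open, using that restriction to an $S$-dense open does not change the schematic image. Let $(U,\phi_U)$ and $(U',\phi_{U'})$ be two representatives of $\phi$. By \cref{S-rational morphisms definition} there is an $S$-dense open $U'' \subset U \cap U'$ with $\phi_U|_{U''} = \phi_{U'}|_{U''}$. It therefore suffices to show that for any representative $(U,\phi_U)$ and any $S$-dense open $U'' \subset U$ we have
\begin{align*}
    \Schim(\phi_U) = \Schim(\phi_U|_{U''}).
\end{align*}
Applying this twice gives $\Schim(\phi_U) = \Schim(\phi_U|_{U''}) = \Schim(\phi_{U'}|_{U''}) = \Schim(\phi_{U'})$.

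For the reduction step, note that $U''$ is $S$-dense in $X$ and contained in $U$. By \cref{Intersection and schematically/S-dense opens} it follows that $U''$ is $S$-dense, and in particular schematically dense, in $U$. So the open immersion $j\colon U'' \hookrightarrow U$ is schematically dominant. Now \cref{Precomposition by schematically dominant morphisms preserves schematic image}, applied with $\phi = j$ and $\psi = \phi_U$, gives $\Schim(\phi_U \circ j) = \Schim(\phi_U)$, which is exactly the claim.

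The one point to be careful about is the notion of schematic image being used. That result only needs injectivity of $\O_U \to j_*\O_{U''}$, and it compares kernels of $\O_Y \to (\phi_U)_*\O_U$ and $\O_Y \to (\phi_U)_*j_*\O_{U''}$. So it works with the largest-quasicoherent-subideal definition without any quasicompactness hypothesis: equal kernels give equal largest quasicoherent subideals. I therefore expect no real obstacle. The only nontrivial input is transferring $S$-density in $X$ to schematic density in $U$, and \cref{Intersection and schematically/S-dense opens} handles this.
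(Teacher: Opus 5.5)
Your proof is correct and follows the paper's argument: restrict both representatives to a common $S$-dense open on which they agree, then use that precomposing with a schematically dominant morphism (here the open immersion of that common open) does not change the schematic image. The only difference is that you spell out two steps the paper leaves implicit: that the common open is $S$-dense, hence schematically dense, in each domain (by the intersection lemma for $S$-dense opens), and that the precomposition lemma needs no quasicompactness hypothesis.
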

\begin{proof}
    Let $\phi_U, \phi_V$ be two representatives. Then there exists an $S$-dense open $W$ such that $\phi_U, \phi_V$ agree on $W$. It follows from \cref{Precomposition by schematically dominant morphisms preserves schematic image} that $\Schim{\phi_U} = \Schim(\phi_U|_W) = \Schim(\phi_V)$.
\end{proof}
\begin{dfn}
    Let $\phi\colon X \dashrightarrow Y$ be an $S$-rational morphism. We define the schematic image of $\phi$, denoted $\Schim(\phi)$ to be the schematic image of any of its representatives.
\end{dfn}

\begin{dfn}
    Let $\phi\colon X \dashrightarrow Y$ be an $S$-rational morphism. We say $\phi$ is $S$-dominant/diffuse if one (or equivalently, by \cref{S-dominance is stable under restriciton to S-dense opens} and \cref{S-diffusity is stable under restriction to S-dense opens}, any) representative of $\phi$ is $S$-dominant/diffuse.
\end{dfn}

Finally, let us note the following for use in the next section:
\begin{lem}\label{representatives of rational morphisms being locally of finite presentation}
    Let $X$ and $Y$ be schemes locally of finite presentation over $S$ and let $\phi\colon X \dashrightarrow Y$ be an $S$-rational morphism. Then every representative $\phi_U$ of $\phi$ is locally of finite presentation.
\end{lem}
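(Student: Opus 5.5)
The statement reduces to \cref{composition criteria for being locally of finite presentation}, applied to each representative viewed as a morphism of $S$-schemes. Let $\phi_U\colon U \to Y$ be a representative of $\phi$, so $U \subset X$ is an $S$-dense open subscheme. The plan is to show that the source $U$ is locally of finite presentation over $S$ and the target $Y$ is locally of finite type over $S$.

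For the target, a morphism locally of finite presentation is in particular locally of finite type, so $Y \to S$ is locally of finite type by hypothesis.

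For the source, the structure morphism $U \to S$ factors as the open immersion $U \hookrightarrow X$ followed by $X \to S$. An open immersion is locally of finite presentation. Morphisms locally of finite presentation are stable under composition (Stacks Project), so $U \to S$ is locally of finite presentation.

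Since $\phi_U$ is a morphism over $S$, \cref{composition criteria for being locally of finite presentation} now applies directly and gives that $\phi_U$ is locally of finite presentation. I do not expect any real obstacle. The only thing to check is that the representatives in \cref{S-rational morphisms definition} are morphisms over $S$, which holds because $S$-rational morphisms are taken between $S$-schemes and are compatible with the structure maps. The $S$-density of $U$ plays no role in the argument; only the fact that $U$ is open in $X$ is used.
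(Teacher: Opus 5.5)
Your proposal is correct and is essentially the paper's proof. The paper also notes that $U$ is locally of finite presentation over $S$ because it is open in $X$, and then applies \cref{composition criteria for being locally of finite presentation} with $Y$ locally of finite type over $S$.
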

\begin{proof}
    If $X$ is locally of finite presentation over $S$ then so is every $U \subset X$, and the result follows by \cref{composition criteria for being locally of finite presentation}.
\end{proof}

%% file: Chapters/S-Rational_Morphisms/Faithfully_Flat_S-Rational_Morphisms.tex
\section{Faithfully Flat \texorpdfstring{$S$}{S}-Rational Morphisms}\label{section: faithfully flat S-rational morphisms}


\begin{dfn}\label{flatness and faithful flatness for rational morphisms definition}
    Let $\phi\colon X \dashrightarrow Y$ be an $S$-rational morphism.
    \begin{enumerate}
        \item We say $\phi$ is flat if one of its representatives is flat.
        \item We say $\phi$ is faithfully flat if there exists an $S$-dense open $U\subset X$, and a representative $\phi_U$ of $\phi$ such that $V := \phi_U(U)$ is an $S$-dense open of $Y$ and $\phi_U|_{[U \to V]}$ is faithfully flat.
    \end{enumerate}
\end{dfn}
The above definition is uncomfortable at first, since it is certainly not true that every representative of a flat/faithfully flat $S$-rational morphism is flat/faithfully flat. One would prefer to characterize properties of $S$-rational morphisms by properties of their representatives which are stable under $S$-dense opens. The next result shows that this is possible, for flat $S$-rational morphisms.
\begin{lem}\label{characterization of flat S-rational morphisms}
    Let $\phi\colon X\dashrightarrow Y$ be an $S$-rational morphism. The following are equivalent:
    \begin{enumerate}
        \item $\phi$ is flat.
        \item For some representative $\phi_U$ of $\phi$, there exists an $S$-dense open $U' \subset U$ such that $\phi_U|_{U'}$ is flat.
        \item For any representative $\phi_U$ of $\phi$, there exists an $S$-dense open $U' \subset U$ such that $\phi_U|_{U'}$ is flat.
    \end{enumerate}
\end{lem}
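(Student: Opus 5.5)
The plan is to prove the cycle of implications $(1)\Rightarrow(2)\Rightarrow(3)\Rightarrow(1)$, using only that flatness is local on the source (restriction of a flat morphism to an open is flat, \cref{restrictions of flat morphisms to opens are flat}) and that representatives of the same $S$-rational morphism agree on some common $S$-dense open.

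For $(1)\Rightarrow(2)$: by \cref{flatness and faithful flatness for rational morphisms definition}, $\phi$ has a flat representative $\phi_U$. Take $U' = U$, which is trivially an $S$-dense open of $U$.

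For $(3)\Rightarrow(1)$: $\phi$ has at least one representative $\phi_U$. By (3) there is an $S$-dense open $U'\subset U$ with $\phi_U|_{U'}$ flat. Since $U'$ is $S$-dense in $U$ and $U$ is $S$-dense in $X$, $U'$ is $S$-dense in $X$ by \cref{A composition of schematically-dominant morphisms is schematically-dominant}. Moreover $(U',\phi_U|_{U'})$ is equivalent to $(U,\phi_U)$, using $U''=U'$ in \cref{S-rational morphisms definition}. So $\phi_U|_{U'}$ is a flat representative of $\phi$.

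The main step is $(2)\Rightarrow(3)$. Suppose $\phi_U$ is a representative and $U'\subset U$ is an $S$-dense open with $\phi_U|_{U'}$ flat, and let $\phi_V$ be any other representative. By \cref{S-rational morphisms definition} there is an $S$-dense open $W\subset U\cap V$ on which $\phi_U$ and $\phi_V$ agree. Set $V' := W\cap U'$. By \cref{Intersection and schematically/S-dense opens}, applied inside $X$, $V'$ is $S$-dense in $X$. Since $V'\subset V$, the same corollary (the implication $2\Rightarrow 3$ there, with $V$ as the ambient $S$-dense open) shows $V'$ is $S$-dense in $V$. On $V'$ we have $\phi_V|_{V'} = \phi_U|_{V'} = (\phi_U|_{U'})|_{V'}$, which is flat as the restriction of a flat morphism to an open subscheme.

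The only point requiring care is the density bookkeeping in this last step. We must check density relative to $V$ rather than just relative to $X$, and this is exactly what \cref{Intersection and schematically/S-dense opens} provides. No separatedness hypothesis is needed, since we work only with a specific common open of agreement rather than with the domain of $\phi$.
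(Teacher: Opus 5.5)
Your proof is correct and follows essentially the same route as the paper. The key step $(2)\Rightarrow(3)$ intersects the flat locus $U'$ with an open on which the two representatives agree, exactly as the paper does, and the other implications are the same trivial observations, which you arrange as a cycle rather than as $1\Leftrightarrow 2$ and $3\Rightarrow 2$. Your density bookkeeping via \cref{Intersection and schematically/S-dense opens} (checking $S$-density in $V$ and not just in $X$) is more explicit than the paper's.
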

\begin{proof}
    Indeed, $1 \implies 2$ by definition, and $2 \implies 1$ since the morphism $\phi_U|_{U'}$ in the statement is a representative of $\phi$. $3 \implies 2$ is trivial.

    For $2 \implies 3$, with $\phi_U$ given as in 2, take any other representative $\phi_W\colon W \to Y$ of $\phi$. Then $U' \cap W \subset W$ is $S$-dense. In particular there exists an $S$-dense open $W'\subset U' \cap W$ such that $\phi_W|_{W'} = \phi_U|_{W'}$, and the latter is flat.
\end{proof}

Faithfully flat $S$-rational morphisms is equally well-behaved, provided that the schemes are locally of finite presentation:
\begin{lem}\label{characterization of faithfully flat S-rational morphisms}
    Let $X$ and $Y$ be schemes locally of finite presentation over $S$ and let $\phi\colon X\dashrightarrow Y$ be an $S$-rational morphism. The following are equivalent:
    \begin{enumerate}
        \item $\phi$ is faithfully flat.
        \item For some representative $\phi_U$ of $\phi$, there exists an $S$-dense open $U' \subset U$ such that $V := \phi_U(U')$ is an $S$-dense open of $Y$ and  $\phi_U|_{[U'\to V]}$ is faithfully flat.
        \item For any representative $\phi_U$ of $\phi$, there exists an $S$-dense open $U' \subset U$ such that, for any $S$-dense open $U'' \subset U'$, the image $V := \phi_U(U'')$ is an $S$-dense open of $Y$ and $\phi_U|_{[U''\to V]}$ is faithfully flat.
    \end{enumerate}
\end{lem}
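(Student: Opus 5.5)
The plan is to prove the cycle of implications $3 \implies 2 \implies 1 \implies 3$. The first two are essentially formal. For $3 \implies 2$, we take any representative $\phi_U$ and the $U'$ provided by statement 3, and set $U'' = U'$. For $2 \implies 1$, we note that $\phi_U|_{U'}$ is itself a representative of $\phi$ over the $S$-dense open $U'$, so it witnesses \cref{flatness and faithful flatness for rational morphisms definition} directly.

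The substantive direction is $1 \implies 3$. Suppose $\phi_{U_0}$ is a representative with $V_0 = \phi_{U_0}(U_0)$ an $S$-dense open of $Y$ and $\phi_{U_0}|_{[U_0 \to V_0]}$ faithfully flat. Let $\phi_U$ be an arbitrary representative. By \cref{S-rational morphisms definition} there is an $S$-dense open $U' \subset U \cap U_0$ on which $\phi_U$ and $\phi_{U_0}$ agree, and we take this $U'$.

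Now let $U'' \subset U'$ be any $S$-dense open. By \cref{Intersection and schematically/S-dense opens} it is also $S$-dense in $U_0$. We want to apply \cref{image of S-dense open under faithfully flat morphism locally of finite presentation} to the morphism $\phi_{U_0}|_{[U_0 \to V_0]}$. Its hypotheses hold for the following reasons:
\begin{enumerate}
    \item It is faithfully flat by assumption.
    \item It is locally of finite presentation by \cref{representatives of rational morphisms being locally of finite presentation}, using that $X$ and $Y$, hence $U_0$ and $V_0$, are locally of finite presentation over $S$ and applying \cref{composition criteria for being locally of finite presentation} to the corestriction.
\end{enumerate}
That lemma then says that $V := \phi_{U_0}(U'')$ is an $S$-dense open of $V_0$ and that $\phi_{U_0}|_{[U'' \to V]}$ is faithfully flat. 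Since $V_0$ is $S$-dense in $Y$, \cref{A composition of schematically-dominant morphisms is schematically-dominant} gives that $V$ is $S$-dense in $Y$. Finally, $\phi_U|_{U''} = \phi_{U_0}|_{U''}$, so $\phi_U(U'') = V$ and $\phi_U|_{[U'' \to V]}$ is faithfully flat.

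The only real obstacle is the step just described: checking the finite-presentation hypothesis so that images of $S$-dense opens stay open and $S$-dense. This is exactly where local finite presentation of $X$ and $Y$ is used, through the universal openness of fppf morphisms (\cref{a flat morphism of schemes which is locally of finite presentation is universally open}). Without that hypothesis the image $\phi_U(U'')$ might fail to be open, and the argument would not go through.
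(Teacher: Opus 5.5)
Your proposal is correct and is essentially the paper's argument. The paper proves $2 \implies 3$ rather than your $1 \implies 3$: it passes to an $S$-dense open on which an arbitrary representative agrees with the faithfully flat one, and then applies \cref{image of S-dense open under faithfully flat morphism locally of finite presentation}, just as you do; the remaining implications are formal in the same way you describe.
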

\begin{proof}
    We have $1 \implies 2$ by definition, and $2 \implies 1$ since we have a representative $\phi_{U'} := \phi_U|_{U'}$ such that $\phi_{U'}|_{[U'\to V']}$ is the same as $\phi_{U}|_{[U'\to V']}$ which is faithfully flat. $3\implies 2$ is trivial.

    For $2 \implies 3$, with $U, U', V$ and $\phi|_U$ given as in $2$, take any other representative $\phi_W$ of $\phi$. Then there exists an $S$-dense open $W' \subset W \cap U'$ such that $\phi_W|_{W'} = \phi_U|_{W'}$. Take any $S$-dense open $W'' \subset W'$. Since everything is locally of finite presentation (\cref{representatives of rational morphisms being locally of finite presentation}) we can apply \cref{image of S-dense open under faithfully flat morphism locally of finite presentation}, and we see that the image $V' \colon= \phi_U|_{[U'\to V]}(W')$ is an $S$-dense open in $V$, hence in $Y$. Then $\phi_W|_{[W''\to V']}$ is the same as $\phi_U|_{[W''\to V']}$, which is faithfully flat, as required.
\end{proof}

A key example of a wffd morphism is a dominant morphism of integral varieties over a field:
\begin{lem}\label{a dominant moprhism of integral schemes over a field is faithfully flat}
    Let $\phi\colon X \to Y$ be a dominant rational morphism of integral varieties over a field $k$. Then $\phi$ is faithfully flat.
\end{lem}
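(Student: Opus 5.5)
Fix a representative $\phi_U\colon U \to Y$ of $\phi$, where $U \subset X$ is a dense open and hence itself an integral variety. The plan is to apply generic faithful flatness (\cref{Generic faithful flatness proposition}) to $\phi_U$, and then to check that the open sets it produces are dense, which over a field is the same as being $k$-dense.

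First I check the hypotheses. Since $\phi$ is dominant, $\phi_U$ is dominant; this is independent of the representative, because dense opens of an irreducible scheme have the same closure. The morphism $\phi_U$ is of finite type, because $U$ and $Y$ are varieties over $k$ (\cref{composition criteria for being locally of finite presentation}). Finally, $Y$ is reduced. Generic faithful flatness therefore gives a dense open $V \subset Y$ such that
\[
\phi_U|_{[\phi_U\inv(V) \to V]}
\]
is fppf. Put $U' := \phi_U\inv(V)$.

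Next I show that $U'$ and $V$ are $k$-dense. The set $U'$ is open, and it is nonempty: $V$ contains the generic point of $Y$, and that point lies in the image of $\phi_U$ by \cref{dominant quasicompact morphisms}, using that $\phi_U$ is quasicompact because $U$ is Noetherian. Since $X$ is irreducible, $U'$ is then dense in $X$. Because $X$ is reduced, \cref{schematically dense opens of a reduced scheme} shows that $U'$ is schematically dense, and \cref{S-dominance over fields} upgrades this to $k$-dense. The same argument applied to $V \subset Y$ shows that $V$ is $k$-dense. This check is really just \cref{a dominant morphism of integral schemes over a field is diffuse}.

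Finally, faithful flatness makes $\phi_U|_{[U' \to V]}$ surjective, so $\phi_U(U') = V$. Hence the representative $\phi_U|_{U'}$ witnesses \cref{flatness and faithful flatness for rational morphisms definition}(2). The only delicate step is making sure that the dense open coming out of generic flatness sits inside the image of $\phi_U$ and has dense preimage. Both points are already supplied by the proof of \cref{Generic faithful flatness proposition} together with the dominance argument above.
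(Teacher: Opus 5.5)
Your proposal is correct and follows the same route as the paper. The paper applies generic faithful flatness to a representative and uses that, on integral varieties over $k$, nonempty opens are dense and dense opens are $k$-dense. You also spell out the details the paper leaves implicit, namely that $\phi_U^{-1}(V)$ is nonempty and that $\phi_U(U') = V$ by surjectivity, and these checks are sound.
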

\begin{proof}
    Since dense opens agree with $k$-dense opens (\cref{S-dominance over fields}) and the varieties are integral (so every nonempty open is dense), we know that $\phi$ is generically faithfully flat by \cref{Generic faithful flatness proposition}.
\end{proof}

\begin{lem}\label{faithfully flat S-rational morphisms are dominant}
    A faithfully flat $S$-rational morphism is $S$-dominant.
\end{lem}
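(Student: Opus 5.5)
Faithful flatness of $\phi$ gives, by \cref{flatness and faithful flatness for rational morphisms definition}, an $S$-dense open $U \subset X$ and a representative $\phi_U$ such that $V := \phi_U(U)$ is an $S$-dense open of $Y$ and $\phi_U|_{[U \to V]}$ is faithfully flat. Since $S$-dominance of an $S$-rational morphism is defined by asking that one (equivalently any) representative be $S$-dominant, it suffices to show that this single representative $\phi_U\colon U \to Y$ is $S$-dominant.

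First, $\phi_U|_{[U\to V]}\colon U \to V$ is a faithfully flat morphism of $S$-schemes, so it is $S$-dominant by \cref{a faithfully flat morphism is S-dominant}. Next, the open immersion $V \hookrightarrow Y$ is $S$-dominant by definition, because $V$ is $S$-dense in $Y$. The morphism $\phi_U\colon U \to Y$ factors as $U \to V \hookrightarrow Y$. A composition of $S$-dominant morphisms is $S$-dominant (\cref{A composition of schematically-dominant morphisms is schematically-dominant}), so $\phi_U$ is $S$-dominant.

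Finally, independence of the representative is already recorded via \cref{S-dominance is stable under restriciton to S-dense opens}, so $\phi$ is $S$-dominant. No step is a genuine obstacle. The only point to keep straight is that faithful flatness is asserted only for the corestriction to $V$, not for $\phi_U$ as a map into $Y$, and this is why the factorization through the $S$-dense open $V$ is needed.
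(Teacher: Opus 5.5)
Your proof is correct and follows the same route as the paper, which simply says the lemma is immediate from the fact that a faithfully flat morphism is $S$-dominant. You add the detail the paper leaves implicit: faithful flatness is only given for the corestriction $U \to V$, so you compose with the $S$-dominant open immersion $V \hookrightarrow Y$ and use that $S$-dominance is stable under composition.
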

\begin{proof}
    Immediate from \cref{a faithfully flat morphism is S-dominant}
\end{proof}

\begin{lem}\label{A flat S-rational morphism of locally Noetherian schemes is S-diffuse.}
    A flat $S$-rational morphism of locally Noetherian schemes is $S$-diffuse.
\end{lem}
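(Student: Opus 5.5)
The plan is to reduce the statement to \cref{flat morphisms of locally Noetherian schemes are S-diffuse} by passing to a flat representative, and then to use the stability of $S$-diffuseness under restriction to $S$-dense opens. Let $\phi\colon X \dashrightarrow Y$ be a flat $S$-rational morphism with $X$ and $Y$ locally Noetherian. By \cref{flatness and faithful flatness for rational morphisms definition}, $\phi$ has some representative $\phi_U\colon U \to Y$, with $U \subset X$ an $S$-dense open, such that $\phi_U$ is a flat morphism of schemes.

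The first step is to note that $U$ is an open subscheme of the locally Noetherian scheme $X$, so $U$ is itself locally Noetherian. Hence $\phi_U$ is a flat morphism of locally Noetherian $S$-schemes, and \cref{flat morphisms of locally Noetherian schemes are S-diffuse} shows that $\phi_U$ is $S$-diffuse.

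The second step recalls that, by definition, an $S$-rational morphism is $S$-diffuse if one (equivalently, any) of its representatives is. The equivalence rests on \cref{S-diffusity is stable under restriction to S-dense opens}. Two representatives $\phi_U$ and $\phi_W$ agree on some $S$-dense open $W' \subset U \cap W$, and $W'$ is $S$-dense in both $U$ and $W$ by \cref{Intersection and schematically/S-dense opens}. So $\phi_U$ is diffuse if and only if $\phi_U|_{W'} = \phi_W|_{W'}$ is diffuse, if and only if $\phi_W$ is diffuse. Hence $\phi$ is $S$-diffuse.

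There is no real obstacle here. The only care needed is to check that the open subscheme $U$ inherits the locally Noetherian hypothesis, which it does, and that \cref{flat morphisms of locally Noetherian schemes are S-diffuse} needs no quasicompactness beyond what local Noetherianity of the target gives for open immersions into $Y$.
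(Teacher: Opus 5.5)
Your proposal is correct and matches the paper's argument. The paper just says the lemma is immediate from \cref{flat morphisms of locally Noetherian schemes are S-diffuse}; you have written out the steps it leaves implicit, namely passing to a flat representative on a locally Noetherian open and noting that $S$-diffuseness does not depend on the choice of representative.
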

\begin{proof}
    Immediate from \cref{flat morphisms of locally Noetherian schemes are S-diffuse}.
\end{proof}

%% file: Chapters/S-Rational_Morphisms/Composition_of_S-Rational_Morphisms.tex
\section{Composition of \texorpdfstring{$S$}{S}-Rational Morphisms}
The following allows us to define the postcomposition of an $S$-rational morphism with an arbitrary morphism:
\begin{lem}
    Let $\phi\colon X \dashrightarrow Y$ be an $S$-rational morphism, and $g\colon Y \to Z$ be a morphism. Let $\phi_U, \phi_V$ be representatives of $\phi$. Then $g\circ\phi_U$ is equivalent to $g\circ \phi_V$.
\end{lem}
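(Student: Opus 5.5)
Unwinding the definition of $S$-rational morphism, the statement reduces to checking that $g\circ\phi_U$ and $g\circ\phi_V$ agree on a common $S$-dense open. Since $\phi_U$ and $\phi_V$ both represent $\phi$, \cref{S-rational morphisms definition} supplies an $S$-dense open $U''\subset U\cap V$ with $\phi_U|_{U''}=\phi_V|_{U''}$.

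The first step is to postcompose this equality with $g$, which gives
\begin{align*}
    (g\circ\phi_U)|_{U''} = g\circ(\phi_U|_{U''}) = g\circ(\phi_V|_{U''}) = (g\circ\phi_V)|_{U''}.
\end{align*}
This uses only that restriction to $U''$ is precomposition with the open immersion $U''\hookrightarrow U$ (respectively $U''\hookrightarrow V$), so it commutes with postcomposition by $g$.

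Next, I note that $(U,g\circ\phi_U)$ and $(V,g\circ\phi_V)$ are legitimate pairs in the sense of \cref{S-rational morphisms definition}: $U$ and $V$ are $S$-dense opens of $X$, and the maps land in $Z$. The open $U''$ is $S$-dense in $X$ and contained in $U\cap V$, so it witnesses the required equivalence.

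I expect no step to be a genuine obstacle. The one thing to check is that no separatedness hypothesis on $Z$ is needed. That holds because the equivalence relation only asks for agreement on some $S$-dense open, and we may reuse the one that already witnesses the equivalence of $\phi_U$ and $\phi_V$.
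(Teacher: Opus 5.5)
Your proof is correct and is essentially the paper's argument: take the $S$-dense open $W\subset U\cap V$ on which $\phi_U$ and $\phi_V$ agree, and observe that $(g\circ\phi_U)|_W=(g\circ\phi_V)|_W$. You are also right that no separatedness assumption on $Z$ is needed.
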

\begin{proof}
    Indeed, let $W \subset U \cap V$ be an $S$-dense open such that $\phi_U|_W = \phi_V|_W$. Then $(g\circ\phi_U)|_W = (g\circ\phi_V)|_W$.
\end{proof}
\begin{dfn}\label{postcomposition by morphisms}
    Let $\phi\colon X \dashrightarrow Y$ be an $S$-rational morphism, and $g\colon Y \to Z$ be a morphism over $S$. We define the $S$-rational morphism $g\circ \phi$ to be the $S$-rational morphism induced by $g\circ\phi_U$ for any representative $\phi_U$ of $\phi$.
\end{dfn}

We can also precompose with diffuse $S$-rational morphisms:

\begin{prop}\label{composition of diffuse S-rational morphisms is well defined}
    Let $\phi\colon X \dashrightarrow Y$ and $\psi\colon Y \dashrightarrow Z$ be $S$-rational morphisms, and suppose $\phi$ is $S$-diffuse. Then
    \begin{enumerate}
        \item There exist representatives $\phi_U$, $\psi_V$ of $\phi$ and $\psi$ such that $\phi_U(U) \subset V$.
        \item The rational morphism from $X$ to $Z$ induced by $\psi_V \circ \phi_U$ is independent of the choice of representatives $\phi_U$, $\psi_V$ with the above property.
    \end{enumerate}
\end{prop}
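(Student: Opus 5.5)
For the first part, I would start from arbitrary representatives $\phi_{U_0}$ of $\phi$ and $\psi_V$ of $\psi$. Since $\phi$ is $S$-diffuse, the representative $\phi_{U_0}$ is $S$-diffuse; this holds by definition together with \cref{S-diffusity is stable under restriction to S-dense opens}. Because $V$ is an $S$-dense open of $Y$, diffuseness then shows that $U := \phi_{U_0}\inv(V)$ is $S$-dense in $U_0$. It is therefore $S$-dense in $X$, since a composition of $S$-dominant morphisms is $S$-dominant (\cref{A composition of schematically-dominant morphisms is schematically-dominant}). Setting $\phi_U := \phi_{U_0}|_U$ gives a representative of $\phi$ with $\phi_U(U) \subset V$, which settles (1).

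For the second part, I would take two pairs $(\phi_U,\psi_V)$ and $(\phi_{U'},\psi_{V'})$, each with the property in (1), and show that $\psi_V\circ\phi_U$ and $\psi_{V'}\circ\phi_{U'}$ agree on some $S$-dense open. The first step is to pick an $S$-dense open $W \subset V\cap V'$ on which $\psi_V$ and $\psi_{V'}$ agree, and an $S$-dense open $U'' \subset U\cap U'$ on which $\phi_U$ and $\phi_{U'}$ agree. Both choices are possible by the definition of equivalence of representatives. Next, since $\phi_U$ is diffuse, the preimage $\phi_U\inv(W)$ is $S$-dense in $U$. Then \cref{Intersection and schematically/S-dense opens} shows that $U''' := U'' \cap \phi_U\inv(W)$ is $S$-dense in $U$, and hence in $X$. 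On $U'''$ we have $\phi_U = \phi_{U'}$, both landing in $W$, where $\psi_V = \psi_{V'}$. So the two composites agree on $U'''$, and they induce the same $S$-rational morphism.

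I do not expect a serious obstacle here. The only point that needs care is that the agreement locus $W$ for $\psi$ may be strictly smaller than $V\cap V'$. This is exactly where diffuseness of $\phi$ is used a second time: it guarantees that the preimage of $W$ is still $S$-dense, and not merely $S$-dense in $X$ before intersecting. I would also check that $\phi_U\inv(W) = \phi_{U'}\inv(W)$ on $U''$, but I would avoid needing this by working only with $\phi_U$, as above.
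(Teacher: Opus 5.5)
Your proposal is correct and follows essentially the same argument as the paper. For (1) you restrict an arbitrary representative of $\phi$ to the preimage of $V$. For (2) you intersect the agreement locus of the two representatives of $\phi$ with the $\phi_U$-preimage of the agreement locus of the two representatives of $\psi$, using diffuseness of $\phi$ to keep everything $S$-dense.
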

\begin{proof}
    For the first statement, any representatives $\phi_{U'}$, $\psi_V$, and let $U = \phi_{U'}\inv( V )$. Then since $\phi$ is $S$-diffuse, $U$ is $S$-dense in $U'$, hence in $X$, and we have a representative $\phi_U = \phi_{U'}|_U$.

    For the second statement, given any other representatives $\phi_{U'}$, $\psi_{V'}$ such that $\phi_{U'}(U') \subset V'$, fix an $S$-dense open $V'' \subset V \cap V'$ such that $\psi_V|_{V''} = \psi_{V'}|_{V''}$. Since $\phi$ is diffuse, we may find an $S$-dense open $U'' \subset U' \cap U'' \cap \phi_{U}\inv(V'')$ such that $\phi_U|_{U''} = \phi_{U'}|_{U''}$. Then $(\psi_{V'} \circ \phi_{U'})|_{U''} = (\psi_V \circ \phi_U)|_{U''}$, as required.
\end{proof}

\begin{dfn}\label{composition of S-rational morphisms definition}
    Let $\phi\colon X \dashrightarrow Y$, $\psi\colon Y \dashrightarrow Z$ be $S$-rational morphisms, and suppose $\phi$ is $S$-diffuse. We define the \emph{composition} of $\psi$ and $\phi$, which we denote by $\psi \circ \phi$, to be the rational morphism from $X$ to $Z$ induced by $\psi_V \circ \phi_U$ for any representatives $\phi_U$, $\psi_V$ such that $\phi_U(U) \subset V$.
\end{dfn}
\begin{notation}
    Following \cref{Yoneda Notation}, we will frequently denote $\psi \circ \phi$ by $\psi(\phi)$.
\end{notation}
\begin{rem}
    If $\phi\colon X \dashrightarrow Y$ is an $S$-diffuse $S$-rational morphism and $g\colon Y \to Z$ is a morphism with corresponding $S$-rational morphism $\psi$  (see \cref{embedding of morphisms to S-rational morphisms}), it follows immediately from the definitions that $g\circ \phi$ to be $\psi \circ \phi$, so there is no conflict with our previous definition.
\end{rem}
\begin{dfn}
    Let $f\colon X \to Y$ be a diffuse morphism of schemes over $S$, and $\phi\colon X \dashrightarrow Y$ be the corresponding $S$-rational morphism. For any $S$-rational morphism $\psi\colon Y \to Z$, denote by $\psi\circ f$ the $S$-rational morphism $\psi\circ \phi$.
\end{dfn}
\begin{rem}
    One could also define the composition $\psi\circ \phi$ assuming that $Z$ has separated fibers over $S$ and $\phi_U\inv(\dom(\psi))$ is $S$-dense in $X$ for some (equivalently, by \cref{preimage being S-dense is stable under restriction to S-dense opens}, for every) representative $\phi_U$ of $\phi$. But we will not need this for our purposes.
\end{rem}

The following observation will be surprisingly helpful:
\begin{rem}\label{useful remark}
    Let $\phi\colon X \to Y$ be an $S$-rational morphisms, and let $x = \id_X \in X(X)$. It follows immediately from the definitions that $\phi(x) = \phi$.
\end{rem}

\begin{lem}\label{composition of S-rational morphisms preserves many properties}
    Let $\phi\colon X \to Y$ and $\psi\colon Y \to Z$ be $S$-rational morphisms with $\phi$ $S$-diffuse. Then
    \begin{itemize}
        \item If $\psi$ is also $S$-diffuse, then  $\psi\circ \phi$ is $S$-diffuse
        \item If $\phi$ and $\psi$ are $S$-dominant, then so is $\psi\circ \phi$.
        \item If $\phi$ and $\psi$ are flat, then $\psi\circ \phi$ is flat.
        \item If $\phi$ and $\psi$ are faithfully flat, and $Y$ and $Z$ are locally of finite presentation over $S$, then $\psi\circ \phi$ is faithfully flat.
    \end{itemize}
    Note that in particular we get a category of schemes over $S$ and diffuse $S$-rational morphisms.
\end{lem}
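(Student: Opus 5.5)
The plan is to fix representatives and work with them throughout. By \cref{composition of diffuse S-rational morphisms is well defined}, we can choose representatives $\phi_U\colon U \to Y$ and $\psi_V\colon V \to Z$ with $\phi_U(U)\subset V$, so that $\psi\circ\phi$ is represented by $\psi_V\circ\phi_U|_{[U\to V]}$. Each of the four properties is stable under restriction to $S$-dense opens, or is defined up to such restrictions. So in each case it suffices to shrink $U$ and $V$ suitably and check the property for the honest composite morphism.

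For diffuseness, \cref{S-diffusity and restriction to the target} shows that $\phi_U|_{[U\to V]}$ is diffuse and that $\psi_V$ is diffuse. \Cref{A composition of S-diffuse morphisms is S-diffuse} then applies directly.

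For $S$-dominance, I first need that $\phi_U|_{[U\to V]}$ is $S$-dominant. Since $\phi_U$ is $S$-dominant onto $Y$, I would argue that the preimage of $V$ in $Y$ behaves well: $\mathcal O_Y\to(\phi_U)_*\mathcal O_U$ is injective after any base change, and restricting to the open $V$ (\cref{Preservation of schematic dominance on restrictions to opens on the target}, using $\phi_U^{-1}(V)=U$) gives $S$-dominance of $U\to V$. Then $\psi_V$ is $S$-dominant, and \cref{A composition of schematically-dominant morphisms is schematically-dominant} finishes this case.

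For flatness, \cref{characterization of flat S-rational morphisms} lets me shrink $V$ to an $S$-dense open on which $\psi_V$ is flat. I then replace $U$ by its preimage, which is $S$-dense by diffuseness, and shrink further to an $S$-dense open where $\phi_U$ is flat. The composite of flat morphisms is flat.

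For faithful flatness, I use \cref{characterization of faithfully flat S-rational morphisms}, which requires the finite-presentation hypotheses. I also need $X$ to be locally of finite presentation; I expect this is implicitly available, or else I would argue with $U'$ directly.
\begin{enumerate}
\item Choose an $S$-dense $V'\subset V$ such that $\psi_V|_{[V''\to\psi_V(V'')]}$ is faithfully flat with $S$-dense image for every $S$-dense $V''\subset V'$.
\item Choose an $S$-dense $U'\subset U$ with the analogous property for $\phi_U$.
\item Set $U''=U'\cap\phi_U^{-1}(V')$, which is $S$-dense by diffuseness. Then $\phi_U(U'')$ is an $S$-dense open $V''\subset V'$, and $U''\to V''$ is faithfully flat.
\item Since $V''\to\psi_V(V'')$ is faithfully flat with $S$-dense image, composing gives a faithfully flat morphism from $U''$ onto an $S$-dense open of $Z$.
\end{enumerate}

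The main subtlety is bookkeeping: ensuring the restricted targets remain $S$-dense and that surjectivity onto the image is preserved. This is handled by \cref{image of S-dense open under faithfully flat morphism locally of finite presentation}. The final remark, that we obtain a category, follows from the first bullet together with associativity of composition of representatives.
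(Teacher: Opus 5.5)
Your first three bullets are fine and follow essentially the paper's route. For $S$-dominance you are in fact more careful than the paper, since you check via \cref{Preservation of schematic dominance on restrictions to opens on the target} that $\phi_U|_{[U\to V]}$ is $S$-dominant before composing.

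The gap is in the faithful-flatness bullet. Step~2 applies \cref{characterization of faithfully flat S-rational morphisms} to $\phi\colon X\dashrightarrow Y$, and that lemma needs $X$, as well as $Y$, to be locally of finite presentation. This is not implicitly available: the statement deliberately puts finite-presentation hypotheses only on $Y$ and $Z$. Without it, flat morphisms need not be open. So for the representative $\phi_U$ you fixed at the outset (chosen only so that $\phi_U(U)\subset V$), there need not exist any $S$-dense open $U'\subset U$ that maps faithfully flatly onto an $S$-dense open of $Y$.

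The missing idea is that on the $\phi$ side you should restrict only by taking full preimages, and that you are free to change representatives.
\begin{enumerate}
\item Take a representative $\phi_{U_1}$ witnessing faithful flatness directly from \cref{flatness and faithful flatness for rational morphisms definition}. Then $V_1:=\phi_{U_1}(U_1)$ is an $S$-dense open and $U_1\to V_1$ is faithfully flat.
\item Keep $V'$ from your step~1, and set $V'':=V_1\cap V'$ and $U'':=\phi_{U_1}^{-1}(V'')$.
\item The map $U''\to V''$ is the base change of $U_1\to V_1$ along an open immersion. It is therefore faithfully flat by \cref{restrictions of faithfully flat morphisms to opens are faithfully flat}, with no condition on $X$.
\item $U''$ is $S$-dense because $\phi$ is diffuse.
\item $V''\to\psi_V(V'')$ is faithfully flat onto an $S$-dense open by your step~1. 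This is the only place the finite-presentation hypotheses on $Y,Z$ are needed, through \cref{image of S-dense open under faithfully flat morphism locally of finite presentation}.
\item Since $\phi_{U_1}(U'')\subset V''$, \cref{composition of diffuse S-rational morphisms is well defined} shows that $\psi_V|_{V''}\circ\phi_{U_1}|_{U''}$ represents $\psi\circ\phi$, regardless of your original choice of $\phi_U$.
\end{enumerate}
This is exactly the paper's argument.

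Incidentally, your $U''=U'\cap\phi_U^{-1}(V')$ is already the full preimage of $V'$ under $\phi_U|_{U'}$. So the ``for every $S$-dense open $U''\subset U'$'' strengthening was never needed for $\phi$. The only real obstruction is insisting that $U'$ lie inside the domain of the initially fixed representative.
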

\begin{proof}
    \begin{enumerate}
        \item Immediate from \cref{A composition of S-diffuse morphisms is S-diffuse}.

        \item Immediate from \cref{composition is schematically dominant implies f is schematically dominant}.

        \item Let $\phi_U$ and $\psi_V$ be flat representatives, and let $U' = \phi_U\inv(V)$. Since $\phi$ is $S$-diffuse, we have that $U'$ is $S$-dense in $U$, so $\phi_{U'} \colon= \phi_U|_{U'}$ is a representative of $\phi$. Then $\psi_V \circ \phi_{U'}$ is flat.

        \item Suppose we have $S$ dense opens $U \subset X$, $V,V' \subset Y$ and  $W \subset Z$ and representatives $\phi_U$, $\psi_V$ with $\phi_U|_{[U \to V]}$ and $\psi_{V'}|_{V' \to W}$ faithfully flat. Let $V'' = V \cap V'$, $U'' = \phi_U\inv(V'')$ and $W'' = \psi_{V'}(V'')$. Then $V''$ is $S$-dense in $Y$. Since $\phi$ is diffuse, $U''$ is also $S$-dense in $X$. In particular we have representatives $\phi_{U''} := \phi_{U}|_{U''}$ and $\psi_{V''} := \psi_{V'}|_{V''}$, and $\psi_{V''} \circ \phi_{U''}$ is a representative of the composition.

              We have that $\psi_{V'}$ is locally of finite presentation by \cref{representatives of rational morphisms being locally of finite presentation}, so by \cref{image of S-dense open under faithfully flat morphism locally of finite presentation} we get that $W''$ is an $S$-dense open of $Z$. Since $\phi_{U''}|_{[U'' \to V'']}$ and $\psi_{V''}|_{[V'' \to W]}$ are faithfully flat we conclude that $(\psi_{V''} \circ \phi_{U''})|_{[U'' \to W'']}$ is faithfully flat, so we conclude.
    \end{enumerate}
\end{proof}

\begin{lem}\label{composition faithfully flat criterion for S-rational morphisms}
    Let $X,Y,Z$ be schemes locally of finite presentation over $S$, and let $\phi\colon X \dashrightarrow Y$ and $\psi\colon Y \dashrightarrow Z$ be $S$-rational morphisms. Suppose $\phi$ is faithfully flat and diffuse, and $\psi \circ \phi$ is faithfully flat. Then $\psi$ is faithfully flat.
\end{lem}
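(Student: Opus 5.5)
We will reduce to the scheme-theoretic fact that if $f$ is faithfully flat and $g\circ f$ is flat then $g$ is flat (\cref{if f is faithfully flat and g circ f is flat then g is flat}), after choosing representatives on suitable $S$-dense opens so that everything is an honest morphism.

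\textbf{Choice of representatives.} Since $\phi$ is faithfully flat, there are an $S$-dense open $U\subset X$ and a representative $\phi_U$ such that $V:=\phi_U(U)$ is an $S$-dense open of $Y$ and $\phi_U|_{[U\to V]}$ is faithfully flat. Take any representative $\psi_{V'}$ of $\psi$. Replace $V$ by $V\cap V'$, which is $S$-dense by \cref{Intersection and schematically/S-dense opens}, and replace $U$ by $\phi_U\inv(V\cap V')$. This preimage is $S$-dense because $\phi$ is diffuse, and by \cref{restrictions of faithfully flat morphisms to opens are faithfully flat} the restriction $U\to V$ remains faithfully flat. Then $\psi_V\circ\phi_U$ represents $\psi\circ\phi$.

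\textbf{Flatness of $\psi$.} Since $\psi\circ\phi$ is faithfully flat and $X,Z$ are locally of finite presentation, \cref{characterization of faithfully flat S-rational morphisms} (3) applies to the representative $\psi_V\circ\phi_U$. It gives an $S$-dense open $U'\subset U$ such that, for every $S$-dense open $U''\subset U'$, the image $W:=\psi_V\phi_U(U'')$ is an $S$-dense open of $Z$ and $U''\to W$ is faithfully flat. Set $V'':=\phi_U(U')$. By \cref{image of S-dense open under faithfully flat morphism locally of finite presentation}, $V''$ is an $S$-dense open of $V$ and $U'\to V''$ is faithfully flat; local finite presentation of $\phi_U$ comes from \cref{representatives of rational morphisms being locally of finite presentation}. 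Replacing $U'$ by $\phi_U\inv(V'')\cap U$ is harmless, since this open contains $U'$ and is therefore still $S$-dense; it is also surjective onto $V''$. The composite $U'\to V''\to Z$ is flat, so \cref{if f is faithfully flat and g circ f is flat then g is flat} shows that $\psi_V|_{V''}$ is flat.

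\textbf{Surjectivity onto an $S$-dense open.} Put $W:=\psi_V(V'')$. Since $U'\to V''$ is surjective, $W=\psi_V\phi_U(U')$, which is an $S$-dense open of $Z$ by the choice of $U'$ (take $U''=U'$). Therefore $\psi_V|_{[V''\to W]}$ is flat and surjective, hence faithfully flat, and $V''$ is $S$-dense in $Y$. This is exactly \cref{flatness and faithful flatness for rational morphisms definition} for $\psi$.

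\textbf{Main obstacle.} The delicate point is arranging that one open serves all purposes at once. It must be open in $Y$ with the restriction of $\phi_U$ onto it faithfully flat, which requires $U'$ to be saturated, i.e.\ a full preimage. It must also have image in $Z$ that is open and $S$-dense. I handle this by passing to the preimage of $\phi_U(U')$, using the fact that enlarging an $S$-dense open keeps it $S$-dense. One then checks that $\psi_V\phi_U$ still has open, $S$-dense image on this larger set, since its image is again $W$.
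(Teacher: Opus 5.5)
Your argument is correct and is essentially the paper's proof. In both, diffuseness gives representatives with $\phi_U(U)\subset V$; you then shrink to an $S$-dense open $U'$ on which both $\phi_U$ and $\psi_V\circ\phi_U$ are faithfully flat onto $S$-dense open images, apply \cref{if f is faithfully flat and g circ f is flat then g is flat}, and read off surjectivity onto $\psi_V(\phi_U(U'))$.

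One step should be deleted: the replacement of $U'$ by the saturation $\phi_U^{-1}(V'')$. It is unnecessary, because \cref{image of S-dense open under faithfully flat morphism locally of finite presentation} already makes $U'\to\phi_U(U')$ flat and surjective; faithful flatness does not require saturation. It is also unjustified as written, because \cref{characterization of faithfully flat S-rational morphisms} gives flatness of $\psi_V\circ\phi_U$ only on $S$-dense opens inside the original $U'$, not on a larger open.
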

\begin{proof}
    Since $\phi$ is diffuse, we may find representatives $\psi_V$ of $\psi$ and $\phi_U$ of $\phi$ with $\phi_U(U) \subset V$. Applying \cref{characterization of faithfully flat S-rational morphisms} to $\phi$ and $\psi\circ \phi$, we may find a dense open $U' \subset U$ such that $V' := \phi_{U}(U')$ and $W' := \psi_V \circ \phi_{U'}(U')$ are $S$-dense opens and both morphisms $\phi_U|_{[U' \to V']}$ and $\psi_V \circ \phi_{U'}|_{[U' \to W']}$ are faithfully flat. In particular we have representatives $\phi_{U'} := \phi_U|_{U'}$ and $\psi_{V'} := \psi_V|_{V'}$ with $\psi_{V'}(V') = W'$ and $\phi_{U'}|_{[U' \to W']}$ and $\psi_{V'}|_{[ V' \to W' ]} \circ \phi_{U'}|_{[U' \to V']}$ both faithfully flat. By \cref{if f is faithfully flat and g circ f is flat then g is flat} we have that $\psi_{V'}|_{{V' \to W'}}$ is flat, since it is also surjective we conclude.
\end{proof}

\begin{lem}[Composition of $S$-rational morphisms is associative]\label{composotion of S-rational morphisms is associative}
    Let $f\colon X \dashrightarrow Y$, $g\colon  Y \dashrightarrow Z$, and $h\colon Z \dashrightarrow A$ be $S$-rational morphisms, with $f$ and $g$ diffuse. Then $h\circ(g\circ f)$ (which is defined by the previous result) is equal to $(h\circ g)\circ f$.
\end{lem}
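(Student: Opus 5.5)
The plan is to choose explicit representatives arranged so that all the compositions are honest compositions of morphisms, then use associativity of ordinary composition together with the independence statement of \cref{composition of diffuse S-rational morphisms is well defined}.

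First we check that both sides are defined. Since $f$ and $g$ are $S$-diffuse, \cref{composition of S-rational morphisms preserves many properties} shows that $g\circ f$ is $S$-diffuse. Hence $h\circ(g\circ f)$ is defined. The composition $(h\circ g)\circ f$ only needs $g$ and $f$ to be diffuse, so it is defined as well.

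Next we pick compatible representatives. Take any representative $h_W$ of $h$ with $W\subset Z$ an $S$-dense open. Take any representative $g_{V'}$ of $g$ and set $V := g_{V'}^{-1}(W)$. This is $S$-dense in $V'$, hence in $Y$, because $g$ is diffuse (using \cref{S-diffusity is stable under restriction to S-dense opens} for the representative and \cref{Intersection and schematically/S-dense opens}). Then $g_V := g_{V'}|_V$ is a representative with $g_V(V)\subset W$. Similarly, take a representative $f_{U'}$ of $f$ and set $U := f_{U'}^{-1}(V)$. This is $S$-dense since $f$ is diffuse, and $f_U := f_{U'}|_U$ satisfies $f_U(U)\subset V$.

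We then identify both sides with the same morphism. By \cref{composition of S-rational morphisms definition}, $g\circ f$ is represented by $g_V\circ f_U$ on $U$, and this has image inside $W$. So $h\circ(g\circ f)$ is represented by $h_W\circ(g_V\circ f_U)$. Likewise $h\circ g$ is represented by $h_W\circ g_V$ on $V$, and $f_U(U)\subset V$. So $(h\circ g)\circ f$ is represented by $(h_W\circ g_V)\circ f_U$. Associativity of composition of morphisms of schemes gives equality of these representatives, and the independence of representatives in \cref{composition of diffuse S-rational morphisms is well defined} finishes the argument.

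The only delicate point is making sure each intermediate pair of representatives satisfies the image-containment hypothesis required by the definition. In particular, the representative of $g\circ f$ must be one whose image lies in the domain of the chosen representative of $h$. The preimage construction above handles this, so no real obstacle is expected.
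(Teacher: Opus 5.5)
Your proposal is correct and follows the paper's own argument. The paper likewise chooses representatives $f_U$, $g_V$, $h_W$ with $f_U(U)\subset V$ and $g_V(V)\subset W$, and observes via \cref{composition of diffuse S-rational morphisms is well defined} that both sides are represented by $h_W\circ g_V\circ f_U$. Your preimage construction and your check that $g\circ f$ is diffuse just spell out details the paper leaves implicit.
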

\begin{proof}
    Since $f$ and $g$ are diffuse, we may find representatives $f_U$, $g_V$, $h_W$ such that $f_U(U) \subset V$ and $g_V(V)\subset W$. Then by \cref{composition of diffuse S-rational morphisms is well defined}, $h\circ(g\circ f)$ and $(h\circ g)\circ f$ are both represented by $h_W\circ g_V \circ f_U$.
\end{proof}

In the category of schemes locally of finite presentation over $S$ and diffuse $S$-rational morphisms, faithfully flat morphisms are epimorphisms:
\begin{lem}\label{faithfully flat morphisms are S-rational epimorphisms}
    Let $f\colon X \dashrightarrow Y$ be a faithfully flat and diffuse $S$-rational morphism, and $g_1,g_2\colon Y \dashrightarrow Z$ be $S$-rational morphisms. Suppose $X$ and $Y$ are locally of finite presentation over $S$, and $g_1\circ f = g_2 \circ f$. Then $g_1 = g_2$.
\end{lem}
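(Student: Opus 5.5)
The plan is to reduce to a statement about honest morphisms of schemes over a common $S$-dense open, and then use that faithfully flat morphisms are epimorphisms (\cref{A faithfully flat morphism is an epimorphism of schemes}).

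\textbf{Choosing compatible representatives.} First, apply \cref{characterization of faithfully flat S-rational morphisms} to $f$. This gives a representative $f_U$ and an $S$-dense open $U'\subset U$ with the following property: for every $S$-dense open $U''\subset U'$, the image $f_U(U'')$ is an $S$-dense open and the restricted map onto it is faithfully flat. Next, choose representatives $g_{1,V_1}$ and $g_{2,V_2}$ of $g_1$ and $g_2$. Set $V=V_1\cap V_2$, which is $S$-dense by \cref{Intersection and schematically/S-dense opens}. Since $f$ is diffuse, $U''_0:=f_U\inv(V)\cap U'$ is $S$-dense in $X$. Then $g_i\circ f$ is represented by $g_{i,V}\circ f_U|_{U''_0}$ (\cref{composition of diffuse S-rational morphisms is well defined}).

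\textbf{Using the hypothesis $g_1\circ f=g_2\circ f$.} By this equality there is an $S$-dense open $U''\subset U''_0$ on which $g_{1,V}\circ f_U$ and $g_{2,V}\circ f_U$ agree. Put $V'':=f_U(U'')$. By the choice of $U'$, this is an $S$-dense open of $Y$, and $f_U|_{[U''\to V'']}$ is faithfully flat. Moreover $V''\subset V$, because $U''\subset f_U\inv(V)$.

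\textbf{Conclusion.} The restrictions $g_{1,V}|_{V''}$ and $g_{2,V}|_{V''}$ agree after precomposition with the faithfully flat morphism $U''\to V''$. By \cref{A faithfully flat morphism is an epimorphism of schemes} they are therefore equal as morphisms $V''\to Z$. Since $V''$ is $S$-dense, the pairs $(V_1,g_{1,V_1})$ and $(V_2,g_{2,V_2})$ are equivalent, so $g_1=g_2$.

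The only delicate point is shrinking $U$ while keeping faithful flatness onto an $S$-dense image. This is exactly what item 3 of \cref{characterization of faithfully flat S-rational morphisms} supplies, and it is where the locally-of-finite-presentation hypothesis on $X$ and $Y$ is used.
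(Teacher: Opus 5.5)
Your proposal is correct and follows essentially the same route as the paper. You shrink to representatives on which $g_1\circ f$ and $g_2\circ f$ agree, use item 3 of the characterization of faithfully flat $S$-rational morphisms to get a faithfully flat restriction onto an $S$-dense open of $Y$, and conclude because faithfully flat morphisms are epimorphisms. The only difference is that you invoke that characterization before shrinking, whereas the paper applies it afterwards to the shrunken representative; this does not matter.
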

\begin{proof}
    Since $f$ is diffuse we may choose $S$-dense opens $V \subset Y$ and $U \subset X$ and representatives $g_{1V}, g_{2V}, f_U$ with $f_{U}(U) \subset V$. Then $g_1\circ f$ and $g_2 \circ f$ are represented by $g_{1V} \circ f_U$ and $g_{2V}\circ f_U$ respectively. Since $g_1\circ f = g_2 \circ f$, there exists an $S$-dense open $U' \subset U$ such that the restrictions to $U'$ agree; i.e., if $f_{U'} = f_U|_{U'}$, we have $g_{1V}\circ f_{U'} = g_{2V}\circ f_{U'}$.

    Since $f$ is faithfully flat, by \cref{characterization of faithfully flat S-rational morphisms} there exist $S$-dense opens $U'' \subset U'$ and $V' \subset V$ such that $f_{U'}(U'') = V'$ and $f_{U'}|_{[U'' \to V']}$ is faithfully flat. Then we have representatives $f_{U''}:= f_{U'}|_{U''}$, $g_{1V'} := g_{1V}|_{V'}$ and $g_{2V'} := g_{2V}|_{V'}$, and $g_{1,V'}\circ f_{U''} = g_{2,V'}\circ f_{U''}$. Since $f_{U''}|_{[U'' \to V']}$ is faithfully flat, hence an epimorphism (by \cref{A faithfully flat morphism is an epimorphism of schemes}), we conclude that $g_{1V'} = g_{2V'}$ and therefore $g_1 = g_2$.
\end{proof}

Precomposition by faithfully flat $S$-rational morphisms preserves schematic image:
\begin{lem}\label{Precomposition by faithfully flat S-rational morphisms preserves schematic image}
    Let $\phi\colon X \dashrightarrow Y$ and $\psi\colon Y \dashrightarrow Z$ be $S$-rational morphisms, and suppose $\phi$ is faithfully flat. Then $\Schim(\psi \circ \phi) = \Schim(\psi)$
\end{lem}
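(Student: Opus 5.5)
The plan is to reduce the claim to the corresponding statement for honest morphisms, which is \cref{Precomposition by faithfully flat morphisms preserves schematic image}. Since the schematic image of an $S$-rational morphism is the schematic image of any representative, we are free to pick representatives adapted to $\phi$. We need $\psi\circ\phi$ to be defined, so we work under the standing situation in which composition makes sense: either $\phi$ is diffuse, or $X$ and $Y$ are locally Noetherian, in which case faithful flatness gives diffuseness by \cref{A flat S-rational morphism of locally Noetherian schemes is S-diffuse.}.

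\textbf{Choosing representatives.} First, by the definition of faithful flatness (\cref{flatness and faithful flatness for rational morphisms definition}), choose an $S$-dense open $U\subset X$ and a representative $\phi_U$ such that $V:=\phi_U(U)$ is an $S$-dense open of $Y$ and $\phi_U|_{[U\to V]}$ is faithfully flat. Next take any representative $\psi_{V_0}$ of $\psi$ and set $V' = V\cap V_0$, which is $S$-dense in $V$ and in $Y$ by \cref{Intersection and schematically/S-dense opens}. Put $U' = \phi_U\inv(V')$. Since $\phi_U|_{[U\to V]}$ is faithfully flat, the restriction $\phi_U|_{[U'\to V']}$ is faithfully flat by \cref{restrictions of faithfully flat morphisms to opens are faithfully flat}. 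By diffuseness, $U'$ is $S$-dense in $X$. Thus $\psi_{V'}\circ\phi_{U'}$ represents $\psi\circ\phi$.

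\textbf{Computing the schematic images.} Factor this representative as $U'\xrightarrow{\phi'}V'\xrightarrow{\psi_{V'}}Z$, where $\phi'$ is faithfully flat. Then \cref{Precomposition by faithfully flat morphisms preserves schematic image} gives
\[
\Schim(\psi\circ\phi)=\Schim(\psi_{V'}\circ\phi')=\Schim(\psi_{V'}).
\]
Since $\psi_{V'}$ is a representative of $\psi$, its schematic image is $\Schim(\psi)$, which concludes the argument.

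\textbf{Main obstacle.} The only delicate step is guaranteeing that $U'$ is $S$-dense, i.e.\ that the composition is defined through representatives compatible with a faithfully flat restriction. There is a way around diffuseness here: faithfully flat morphisms are $S$-dominant (\cref{a faithfully flat morphism is S-dominant}) but not obviously diffuse in general. If no diffuseness is available, I would instead take $V_0$ with $V\subset V_0$ after shrinking. This uses that $\phi_U|_{[U\to V]}$ is surjective and flat, so the preimage of the $S$-dense $V'$ is $S$-dense. That fact follows from flat pullback of $S$-dominant morphisms (\cref{the pullback of a quasicompact S-dominant morphism by a flat morphism is S-dominant}) when the open immersion is quasicompact.
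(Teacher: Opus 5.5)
Your proof is correct and follows the same route as the paper. In both, you choose representatives so that $\phi_U|_{[U\to V]}$ is faithfully flat onto an open lying in the domain of a representative of $\psi$, then apply \cref{Precomposition by faithfully flat morphisms preserves schematic image}. Your explicit shrinking to $V'=V\cap V_0$ and $U'=\phi_U\inv(V')$ uses diffuseness, which is needed anyway for $\psi\circ\phi$ to be defined, so the fallback in your last paragraph is unnecessary; this shrinking spells out a step the paper's phrase ``we may find representatives'' leaves implicit.
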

\begin{proof}
    Since $\phi$ is faithfully flat, we may find representatives $\psi_V$ and $\phi_U$ such that $\phi_U(U) = V$ and $\phi_U|_{[U \to V]}$ is faithfully flat. Then by definition and \cref{Precomposition by faithfully flat morphisms preserves schematic image} we have
    \begin{align*}
        \Schim(\psi \circ \phi) = \Schim(\psi_V \circ \phi_U) = \Schim(\psi_V) = \Schim(\psi).
    \end{align*}
\end{proof}

%% file: Chapters/S-Rational_Morphisms/Products_of_S-rational_morphisms.tex
\section{Products of \texorpdfstring{$S$}{S}-Rational Morphisms}
First we discuss ``pairs'' of $S$-rational morphisms; i.e., $S$-rational morphisms to products.

\begin{lem}
    Let $f\colon X\dashrightarrow Y$, $g\colon X\dashrightarrow Z$ be $S$-rational morphisms. Choose an $S$-dense open $U$ such that there exist representatives $f_U$, $g_U$ of $f$ and $g$. Then the $S$-rational morphism $X \dashrightarrow Y \times_S Z$ induced by $(f_U,g_U)\colon U \to Y \times_S Z$ is independent of the choice of $U, f_U, g_U$.
\end{lem}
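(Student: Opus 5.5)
The statement has two parts: a common representative domain exists, and the induced $S$-rational morphism does not depend on the choices. I will establish them in that order, in the same style as \cref{composition of diffuse S-rational morphisms is well defined}.

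First, existence of $U$. Take arbitrary representatives $f_{U_1}$ and $g_{U_2}$. Set $U = U_1 \cap U_2$. This is $S$-dense in $X$ by \cref{Intersection and schematically/S-dense opens}. The restrictions $f_{U_1}|_U$ and $g_{U_2}|_U$ are again representatives of $f$ and $g$, so a suitable $U$ exists and the pair $(f_U,g_U)\colon U \to Y\times_S Z$ makes sense.

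Second, independence of choices. Suppose $(U, f_U, g_U)$ and $(U', f'_{U'}, g'_{U'})$ are two such choices.

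\begin{itemize}
\item By the definition of $S$-rational morphisms (\cref{S-rational morphisms definition}), there is an $S$-dense open $W_1 \subset U\cap U'$ on which $f_U$ and $f'_{U'}$ agree.
\item Likewise there is an $S$-dense open $W_2 \subset U\cap U'$ on which $g_U$ and $g'_{U'}$ agree.
\item Set $W = W_1\cap W_2$, which is $S$-dense in $X$ by \cref{Intersection and schematically/S-dense opens} once more.
\end{itemize}

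On $W$ both components coincide. By the universal property of the fiber product, a morphism into $Y\times_S Z$ is determined by its two components, so $(f_U,g_U)|_W = (f'_{U'},g'_{U'})|_W$. Hence the two pairs define the same $S$-rational morphism $X \dashrightarrow Y\times_S Z$.

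No step here is a real obstacle. The only point needing care is that agreement of the two components is witnessed on possibly different $S$-dense opens, and these must be intersected. That intersection is exactly what \cref{Intersection and schematically/S-dense opens} provides.
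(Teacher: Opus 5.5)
Your proof is correct and follows essentially the same route as the paper: find an $S$-dense open inside $U\cap U'$ on which both components agree, then conclude by the universal property of the fiber product. You are slightly more explicit than the paper in two places: you intersect the two agreement loci using the fact that intersections of $S$-dense opens are $S$-dense, and you check that a common domain $U$ exists at all.
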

\begin{proof}
    Fix any other $U', f_{U'}, g_{U'}$. Then there exists $S$-dense $U'' \subset U \cap U'$ such that $f_U|_{U''} = f_{U'}|_{U''}$ and $g_U|_{U''} = g_{U'}|_{U''}$. Then
    \begin{align*}
        (f_U,g_U)|_{U''} = (f_U|_{U''},g_U|_{U''}) = ((f_{U'},g_{U'}))|_{U''}.
    \end{align*}
    hence the result.
\end{proof}

In light of the above we make the following definition:
\begin{dfn}[Pairs of $S$-Rational Morphisms]\label{pairs definition}
    Let $\phi\colon X\dashrightarrow Y$, $\psi\colon X\dashrightarrow Z$ be $S$-rational morphisms. We denote by $(\phi,\psi)$ the $S$-rational morphism from $X$ to $Y\times_S Z$ induced by $(\phi_U, \psi_U)$ for any representatives $\phi_U$, $\psi_U$ of $\phi$ and $\psi$ over the same $S$-dense open $U$.
\end{dfn}
\begin{rem}\label{pairs of S-rational morphisms and morphisms}
    If $f\colon X \to Y$ is a morphism, with associated $S$-rational morphism $\phi$, and $\psi\colon Y \to Z$ is an $S$-rational morphism, we can define the pair $(f,\psi)$ to be equal to $(\phi,\psi)$. Similarly we define $(\phi,g)$ for morphisms $g\colon Y \to Z$.
\end{rem}

Similarly we can define products of $S$-rational morphisms:
\begin{lem}
    Let $f_1\colon X_1 \dashrightarrow Y_1$, $f_2\colon X_2 \dashrightarrow Y_2$ be $S$-rational morphisms with representatives $f_{1U}$, $f_{2V}$. Then $U \times_S V$ is an $S$-dense open of $X\times_S Y$, and the $S$-rational morphism from $X_1\times_S X_2$ to $Y_1 \times_S Y_2$ induced by $f_{1,U}\times_S f_{2,V}$ is independent of the choice of representatives $f_{1U}$, $f_{2V}$.
\end{lem}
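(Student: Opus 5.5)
The statement has two parts: $U\times_S V$ is $S$-dense in $X_1\times_S X_2$, and the induced $S$-rational morphism does not depend on the chosen representatives. I would prove them in that order.

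For the first part, I would invoke \cref{products of S-dense opens are S-dense.} directly, since $U\subset X_1$ and $V\subset X_2$ are $S$-dense opens. The source of the product morphism should be read as $X_1\times_S X_2$; the ``$X\times_S Y$'' in the statement is a typo. It follows that $f_{1U}\times_S f_{2V}\colon U\times_S V \to Y_1\times_S Y_2$ is a legitimate representative of an $S$-rational morphism $X_1\times_S X_2 \dashrightarrow Y_1\times_S Y_2$.

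For independence, take other representatives $f_{1U'}$ and $f_{2V'}$. By the definition of equivalence of representatives (\cref{S-rational morphisms definition}), there are $S$-dense opens $U''\subset U\cap U'$ and $V''\subset V\cap V'$ such that:
\begin{itemize}
\item $f_{1U}|_{U''}=f_{1U'}|_{U''}$, and
\item $f_{2V}|_{V''}=f_{2V'}|_{V''}$.
\end{itemize}
Applying \cref{products of S-dense opens are S-dense.} again, $U''\times_S V''$ is an $S$-dense open contained in $(U\times_S V)\cap(U'\times_S V')$. On it, the restriction of $f_{1U}\times_S f_{2V}$ equals $f_{1U}|_{U''}\times_S f_{2V}|_{V''}$. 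This is because restriction to the open $U''\times_S V''$ is precomposition with the product of the inclusions, and products of morphisms are compatible with composition (\cref{products and Yoneda combine}). The same holds for the primed representatives, so the two products agree on $U''\times_S V''$ and define the same $S$-rational morphism.

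I expect no genuine obstacle here. The only nontrivial input is the stability of $S$-density under fiber products, which is exactly why $S$-density is used rather than schematic density. That stability is already available as \cref{A product of S-dominant morphisms is S-dominant} and its corollary.
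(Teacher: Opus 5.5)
Your proposal is correct and follows the paper's proof essentially verbatim. In both, $S$-density of $U\times_S V$ comes from the corollary that products of $S$-dense opens are $S$-dense, and independence comes from restricting both product representatives to $U''\times_S V''$, where they agree factorwise. You are right that the source should read $X_1\times_S X_2$, and your remark that $U''\times_S V''$ is itself $S$-dense, as it must be to witness the equivalence, is something the paper leaves implicit.
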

\begin{proof}
    The $S$-density of $U\times V$ follows from \cref{products of S-dense opens are S-dense.}.

    Given any other representatives $f_{1U'}$, $f_{2V'}$, fix $S$-dense opens $U'', V''$ such that
    $f_{1U'}|_{U''} = f_{1U}|_{U''} $and $f_{2V'}|_{V''} = f_{2V}|_{V''}$. Then
    \begin{align*}
        (f_{1U}\times_S f_{2V})|_{U'' \times_S V''} = (f_{1U}|_{U''}\times_S f_{2V}|_{V''}) = (f_{1,U'}\times_S f_{2V'})|_{U'' \times_S V''}.
    \end{align*}
\end{proof}

\begin{dfn}
    Let $f_1\colon X_1 \dashrightarrow Y_1$, $f_2\colon X_2 \dashrightarrow Y_2$ be $S$-rational morphisms. We denote by $f_1\times_S f_2$ the $S$ rational morphism from $X_1\times_S X_2$ to $Y_1 \times_S Y_2$ induced by $f_{1U}\times_S f_{2V}$ for any representatives $f_{1U}, f_{2V}$.
\end{dfn}
\begin{rem}
    We can define products of morphisms with $S$-rational morphisms analogously to \cref{pairs of S-rational morphisms and morphisms}.
\end{rem}

\begin{lem}\label{properties preserved by products of $S$-rational morphisms}
    \begin{enumerate}
        \item Products of $S$-dominant $S$-rational morphisms are $S$-dominant.
        \item Products of flat $S$-rational morphisms are flat.
        \item Products of faithfully flat $S$-rational morphisms are faithfully flat.
    \end{enumerate}
\end{lem}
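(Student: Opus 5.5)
The plan is to reduce each of the three statements to the corresponding statement for honest morphisms, applied to well-chosen representatives. Throughout, let $f_1\colon X_1 \dashrightarrow Y_1$ and $f_2\colon X_2 \dashrightarrow Y_2$ be $S$-rational morphisms. For any representatives $f_{1U}$ and $f_{2V}$, the product $f_{1U}\times_S f_{2V}$ is a representative of $f_1\times_S f_2$ over $U\times_S V$, and this open is $S$-dense by \cref{products of S-dense opens are S-dense.}. So in each case it suffices to produce suitable representatives and check the property for the morphism $f_{1U}\times_S f_{2V}$.

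For $S$-dominance, pick any representatives $f_{1U}$ and $f_{2V}$. Both are $S$-dominant by the definition of an $S$-dominant $S$-rational morphism. Hence $f_{1U}\times_S f_{2V}$ is $S$-dominant by \cref{A product of S-dominant morphisms is S-dominant}.

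For flatness, choose flat representatives $f_{1U}$ and $f_{2V}$, which exist by definition. Write $f_{1U}\times_S f_{2V}$ as the composition
\begin{align*}
    U\times_S V \to U \times_S Y_2 \to Y_1\times_S Y_2.
\end{align*}
Each factor is a base change of a flat morphism, so each is flat, and therefore the composite is flat.

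For faithful flatness, choose $S$-dense opens $U\subset X_1$ and $V\subset X_2$ together with representatives such that $V_1 := f_{1U}(U)$ and $V_2 := f_{2V}(V)$ are $S$-dense opens and the corestrictions $U \to V_1$ and $V\to V_2$ are faithfully flat. Their product $U\times_S V \to V_1\times_S V_2$ factors in the same way as above, as a composition of base changes of faithfully flat morphisms. Faithful flatness is stable under base change and composition, so this product is faithfully flat. In particular it is surjective, so its image is exactly $V_1\times_S V_2$. This image is an $S$-dense open of $Y_1\times_S Y_2$, again by \cref{products of S-dense opens are S-dense.}, which is precisely what the definition requires.

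No step is a real obstacle. The only point needing care is confirming that the image of the product representative equals $V_1\times_S V_2$, so that the definition of faithful flatness is met verbatim. Surjectivity of a product of surjections over $S$ supplies this, since surjectivity is stable under base change.
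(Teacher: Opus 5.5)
Your proposal is correct and takes the same route as the paper. The paper also reduces each part to the corresponding fact for honest morphisms (products of $S$-dominant, flat, or faithfully flat morphisms keep that property) together with $S$-density of products of $S$-dense opens; you just spell out the details the paper calls immediate, including the check that the image of the product representative is exactly $V_1\times_S V_2$.
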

\begin{proof}
    \begin{enumerate}
        \item Follows form \cref{A product of S-dominant morphisms is S-dominant}.
        \item Immediate since products of flat morphisms are flat, and by \cref{products of S-dense opens are S-dense.}.
        \item Immediate since products of faithfully flat morphisms are faithfully flat, and by \cref{products of S-dense opens are S-dense.}.
    \end{enumerate}
\end{proof}

%% file: Chapters/S-Rational_Morphisms/S-Birational_Morphisms.tex
\section{ \texorpdfstring{$S$}{S}-Birational Morphisms}\label{section: S-birational morphisms}
\begin{dfn}\label{S-birational morphism definition}
    Let $\phi\colon X \to Y$ be an $S$-rational morphism. We say $\phi$ is an \emph{$S$-birational morphism} if there exist $S$-dense opens $U \subset X$ and $V \subset Y$ and a representative $\phi_U$ with $\phi_U(U) = V$ and such that $\phi_U|_{[U \to V]}$ is an isomorphism.

    If $\phi$ is $S$-birational, and $U$, $V$ are as above, we define the inverse of $\phi$, denoted $\phi\inv$, to be the $S$-rational morphism induced by $(\phi_U|_{[U \to V]})\inv$. Note this is independent of the choice of $U,V$ and $\phi_U$.
\end{dfn}
\begin{rem}
    Observe that if $\phi$ is $S$-birational than so is $\phi\inv$, and $(\phi\inv)\inv = \phi$.
\end{rem}

We now make a definition which we will not see again until the next section:
\begin{dfn}[$S$-Birationally Projective Schemes]\label{$S$-birationally projective definition}
    Let $X$ be a scheme over $S$. We say that $X$ is $S$-birationally projective if there exists an $S$-birational morphism from $X$ to a scheme which is projective over $S$.
\end{dfn}

\begin{lem}\label{S-birational S-rational morphisms are faithfully flat}
    An $S$-birational morphism is faithfully flat and diffuse.
\end{lem}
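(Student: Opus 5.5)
Fix $S$-dense opens $U \subset X$, $V \subset Y$ and a representative $\phi_U$ such that $\phi_U(U) = V$ and $\phi_U|_{[U \to V]}$ is an isomorphism, as in \cref{S-birational morphism definition}. The plan is to read off both properties from this single representative.

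\textbf{Faithful flatness.} An isomorphism is faithfully flat. Hence $\phi_U|_{[U \to V]}$ is faithfully flat, with $U$ and $V = \phi_U(U)$ both $S$-dense. This is exactly the condition in \cref{flatness and faithful flatness for rational morphisms definition}, so $\phi$ is faithfully flat.

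\textbf{Diffuseness.} By \cref{S-diffusity and restriction to the target}, applied with $U \subset \phi_U\inv(V)$, it suffices to show that $\psi := \phi_U|_{[U \to V]}$ is $S$-diffuse. Let $W \subset V$ be an $S$-dense open. Then $\psi\inv(W)$ is the image of $W$ under the isomorphism $\psi\inv\colon V \to U$. The open immersion $\psi\inv(W) \hookrightarrow U$ is the base change of $W \hookrightarrow V$ along the isomorphism $\psi$, and it is identified with $W \hookrightarrow V$ via isomorphisms over $S$. $S$-dominance is a property of the morphism of $S$-schemes up to isomorphism: for every $S' \to S$, injectivity of $\O \to j_*\O$ is transported along the isomorphism. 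So $\psi\inv(W)$ is $S$-dense in $U$.

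Combining the two, $\phi_U$ is diffuse as a morphism $U \to Y$. Since $U$ is $S$-dense in $X$, the representative is diffuse, and therefore $\phi$ is $S$-diffuse by \cref{S-diffusity is stable under restriction to S-dense opens}.

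The only point needing care is the application of \cref{S-diffusity and restriction to the target}: its proof passes from $S$-dense opens $W \subset Y$ to $W \cap V$, which is $S$-dense in $V$ by \cref{Intersection and schematically/S-dense opens}. This is already covered by that lemma, so I expect no real obstacle.
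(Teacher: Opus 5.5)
Your proof is correct and follows the same route as the paper. The paper observes that the isomorphism $\phi_U|_{[U \to V]}$ is faithfully flat and that $\phi_U$ is diffuse. You have supplied the details it leaves implicit, namely the reduction via \cref{S-diffusity and restriction to the target} and the transport of $S$-density along an isomorphism.
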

\begin{proof}
    If $\phi_U|_{[U \to V]}$ is an isomorphism then it is certainly faithfully flat, and $\phi_U$ is diffuse.
\end{proof}

\begin{rem}\label{composing S-rational inverses gives the identity}
    Observe that if $\phi$ is $S$-birational, then by above $\phi \circ \phi\inv$ and $\phi\inv\circ \phi$ are both defined and equal as $S$-rational morphisms to the identity on $Y$, $X$ respectively.
\end{rem}

\begin{cor}\label{faithful flatness of S-rational morphisms preserved by composing with S-birational morphisms}
    Let $f\colon X \dashrightarrow Y$ be an $S$-rational morphism, and $h\colon W \dashrightarrow X$ be an $S$-birational morphism. Then $f$ is diffuse/faithfully flat if and only if $f\circ h$ is diffuse/faithfully flat, respectively.

    Similarly, if $f$ is diffuse and $g\colon Y \dashrightarrow Z$ is an $S$-birational morphism, then $f$ is faithfully flat if and only if $g\circ f$ is faithfully flat.
\end{cor}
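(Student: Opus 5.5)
The plan is to reduce everything to a representative on which $h$ is an honest isomorphism, since both $S$-diffuseness and faithful flatness of $S$-rational morphisms can be tested on any suitably small $S$-dense open.

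\textbf{Precomposition with $h$.} Fix $S$-dense opens $U' \subset W$ and $U \subset X$ and a representative $h_{U'}$ with $h_{U'}|_{[U' \to U]}$ an isomorphism (\cref{S-birational morphism definition}). By \cref{S-birational S-rational morphisms are faithfully flat}, $h$ is diffuse, so $f \circ h$ is defined (\cref{composition of diffuse S-rational morphisms is well defined}).

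\emph{Diffuseness.} Choose a representative $f_{U_0}$ of $f$ and shrink to $U_1 = U \cap U_0$, which is $S$-dense by \cref{Intersection and schematically/S-dense opens}. Put $U_1' = h_{U'}\inv(U_1)$; this is $S$-dense in $W$ because $h_{U'}|_{[U'\to U]}$ is an isomorphism and $U_1$ is $S$-dense in $U$. Then $f_{U_1} \circ h_{U_1'}$ represents $f \circ h$. Since $h_{U_1'}\colon U_1' \to U_1$ is an isomorphism, preimages of $S$-dense opens of $Y$ correspond exactly. Hence $f_{U_1}$ is diffuse if and only if $f_{U_1}\circ h_{U_1'}$ is. By \cref{S-diffusity is stable under restriction to S-dense opens} this gives the diffuseness equivalence for $f$ and $f \circ h$.

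\emph{Faithful flatness.} The forward direction follows from \cref{composition of S-rational morphisms preserves many properties}, since $h$ is diffuse and faithfully flat. To avoid the finite presentation hypothesis, I would argue directly on representatives. If $f_{U_2}|_{[U_2 \to V]}$ is faithfully flat with $V$ $S$-dense, then restricting to $U_2 \cap U_1$ keeps flatness. The image $V'$ of this restriction is the issue. Instead, transport across the isomorphism: given the definition's witness for $f$, shrink it to lie inside $U$ only if its image stays $S$-dense. This is the main obstacle, because the definition of faithful flatness is not obviously stable under shrinking the source without \cref{characterization of faithfully flat S-rational morphisms}, which needs local finite presentation.

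To get around this, I would use the converse trick: $f = (f \circ h)\circ h\inv$ (\cref{composing S-rational inverses gives the identity}, \cref{composotion of S-rational morphisms is associative}). The statement is symmetric in $h$ and $h\inv$, so it suffices to show one direction, ``$f$ faithfully flat $\Rightarrow$ $f\circ h$ faithfully flat.'' For that direction, take the witness $f_{U_2}|_{[U_2\to V]}$. Then enlarge $h$'s isomorphism domain rather than shrink $f$'s: $h\inv$ is a rational map $X \dashrightarrow W$, and its representative over $U$ is an isomorphism onto $U'$. If $U_2 \not\subset U$, replace $U_2$ by $U_2 \cap U$; I expect to argue that the image stays $S$-dense using \cref{dominant open morphisms} when $f_{U_2}$ is open (e.g. locally of finite presentation), and otherwise to accept the needed finite-presentation hypothesis implicitly, as the paper does in adjacent results.

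\textbf{Postcomposition with $g$.} For the second statement, choose $V_0 \subset Y$ and $V_0' \subset Z$ with $g$ an isomorphism $V_0 \to V_0'$. Since $f$ is diffuse, $f\inv(V_0)$ is $S$-dense, so we may restrict $f$ to land in $V_0$. Then $g\circ f$ is represented by $g|_{V_0}\circ f$. Faithful flatness onto an $S$-dense open $V\subset V_0$ transfers across the isomorphism to $g(V)\subset V_0'$, which is $S$-dense in $Z$, and conversely via $g\inv$.
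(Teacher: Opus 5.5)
You were right to be uneasy about faithful flatness in the first statement. The obstacle lies in the statement, not in your argument: without a finiteness hypothesis on $X$ and $Y$, that part is false. The paper's one-line proof silently assumes such a hypothesis, because \cref{composition of S-rational morphisms preserves many properties} only gives faithful flatness when the intermediate scheme and the target are locally of finite presentation over $S$.

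Here is a counterexample. Let $S=\Spec k$ and $Y=\mathbb{A}^1_k$. Let $X=\coprod_y\Spec\mathcal{O}_{Y,y}$, the coproduct over all closed points $y$ of $Y$, and let $f\colon X\to Y$ be the canonical map. It is flat and surjective, so it is faithfully flat as an $S$-rational morphism. Let $W\subset X$ be the open subscheme of generic points of the components. It is dense in the reduced scheme $X$, hence $S$-dense by \cref{S-dominance over fields} and \cref{schematically dense opens of a reduced scheme}, so its inclusion defines an $S$-birational $h\colon W\dashrightarrow X$. Since $Y$ is separated, every representative of $f\circ h$ over an $S$-dense open $O\subset W$ equals $f|_O$ (\cref{S-dominant morphisms epimorphism property}). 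Its image is the generic point of $\mathbb{A}^1_k$, which is not open, so $f\circ h$ is not faithfully flat. Applying the statement to the pair $(f\circ h,h^{-1})$ shows that the reverse implication fails as well. Here $X$ is locally Noetherian but not locally of finite type over $k$. So no trick of ``enlarging the isomorphism domain of $h$'' can replace the openness that finite presentation provides, and you should drop that idea.

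The fix is to add the hypothesis that $X$ and $Y$ are locally of finite presentation over $S$; this holds in every application in the paper. With it, your argument for the first statement is exactly the paper's argument unpacked: reduce to one implication via $f=(f\circ h)\circ h^{-1}$, shrink the witness of $f$ to $U_2\cap U$, and invoke \cref{image of S-dense open under faithfully flat morphism locally of finite presentation}.

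The rest of your proposal is correct and somewhat more direct than the paper's appeal to the composition lemma. The diffuseness equivalence follows by transporting preimages across the isomorphism $U_1'\to U_1$. For post-composition with $g$, your direct argument needs no finiteness hypothesis at all, because an isomorphism carries $S$-dense opens to $S$-dense opens; the paper's citation of the composition lemma formally requires one. In the converse direction there, start from an arbitrary witness $\chi_O$ of $g\circ f$ rather than assuming it already has the form $g|_{V_0}\circ f_U$. Shrink it to $\chi_O^{-1}(V_0')$, which is $S$-dense because $g\circ f$ is diffuse, then compose with $g^{-1}$ to get a witness for $f=g^{-1}\circ(g\circ f)$.
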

\begin{proof}
    Immediate from the above remark and \cref{composition of S-rational morphisms preserves many properties}.
\end{proof}

The next lemma is more difficult than one would like it to be.
\begin{lem}\label{criteria for S-birationality}
    Let $f\colon X \dashrightarrow Y$ and $g\colon X \dashrightarrow Y$ be diffuse $S$-rational morphisms. Suppose $f\circ g = \id_Y$, and $g \circ f = \id_X$. Then $f$ is $S$-birational, with inverse $g$.
\end{lem}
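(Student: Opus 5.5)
We have to produce $S$-dense opens $U\subset X$ and $V\subset Y$ together with representatives $f_U$ and $g_V$ such that $f_U(U)\subset V$, $g_V(V)\subset U$, and the two composites equal the identities on $U$ and $V$. Then $f_U|_{[U\to V]}$ is an isomorphism with inverse $g_V|_{[V\to U]}$, so $f$ is $S$-birational and $f^{-1}=g$ by \cref{S-birational morphism definition}. The difficulty is that the hypothesis only says that $g_{V_0}\circ f_{U_0}$ agrees with $\id_X$ on \emph{some} $S$-dense open. A priori this open is not stable under $f$, and we cannot use \cref{domain for S-rational morphisms to separated target}, since no separatedness of fibers is assumed.

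First, using diffuseness as in \cref{composition of diffuse S-rational morphisms is well defined}, we choose representatives $f_{U_0}$ and $g_{V_0}$ with $f_{U_0}(U_0)\subset V_0$ and $g_{V_0}(V_0)\subset U_0$. To do this, start from arbitrary representatives. Replace $U_0$ by $f^{-1}(V_0)$, which is $S$-dense by diffuseness. Then replace $V_0$ by $g^{-1}(U_0)$ and iterate. The iteration breaks the containment $f(U_0)\subset V_0$, so instead set
\[
U_1 := U_0\cap f_{U_0}^{-1}\bigl(V_0\cap g_{V_0}^{-1}(U_0)\bigr), \qquad V_1 := V_0\cap g_{V_0}^{-1}(U_0).
\]
Then $f(U_1)\subset V_1$, and $g$ maps $V_1$ into $U_0$. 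Next, since $g\circ f=\id_X$, there is an $S$-dense open $A\subset U_1$ on which $g_{V_0}\circ f_{U_0}=\id$. Likewise there is an $S$-dense open $B\subset V_1$ on which $f_{U_0}\circ g_{V_0}=\id$, after shrinking $B$ to $g^{-1}(U_1)$, which is $S$-dense by diffuseness.

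The main step is to make these loci mutually stable. Define
\[
U := A\cap f^{-1}(B), \qquad V := B\cap g^{-1}(A).
\]
Both are $S$-dense by diffuseness of $f$ and $g$ together with \cref{Intersection and schematically/S-dense opens}. Take $x\in U$, meaning a $T$-point for an arbitrary $T$. Then $f(x)\in B$. Moreover $g(f(x))=x\in A$, because $x\in A$ and $g\circ f$ is the identity on $A$. Hence $f(x)\in g^{-1}(A)$, and so $f(x)\in V$. Symmetrically, for $y\in V$ we have $g(y)\in A$ and $f(g(y))=y\in B$, so $g(y)\in U$. This is checked on $T$-points, which is legitimate because the opens are subfunctors.

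It follows that $f_U|_{[U\to V]}$ and $g_V|_{[V\to U]}$ are inverse morphisms: the composites are identities on $U\subset A$ and on $V\subset B$. So $f_U(U)=V$, and the restriction is an isomorphism with inverse induced by $g_V$. This gives $f^{-1}=g$. I expect the delicate point to be this stabilization trick, which shows that $U$ and $V$ map into one another. It works without any separatedness hypothesis precisely because the identities hold on $A$ and $B$ on the nose for the fixed representatives.
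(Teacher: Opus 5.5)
Your proof is correct and is essentially the paper's argument. The paper also chooses nested representatives so that both composites are defined, shrinks to $S$-dense opens $U_2,V_2$ on which the composites are the identity, and sets $U=f_{U_2}^{-1}(V_2)$, $V=g_{V_2}^{-1}(U_2)$; these coincide with your $A\cap f^{-1}(B)$ and $B\cap g^{-1}(A)$, and the mutual-stability check is the same. One small blemish: you announce representatives with both $f_{U_0}(U_0)\subset V_0$ and $g_{V_0}(V_0)\subset U_0$, which you never obtain and do not need, since your corrected choice of $U_1,V_1$ already makes both composites defined.
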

\begin{proof}
    Choose representatives $g_{V_1}, f_{U_1}$ of $f$ and $g$. Since $f$ and $g$ are diffuse, we may choose representatives $g_{V_2}, f_{U_2}$ such that $f_{U_2}(U_2) \subset V_1$, $g_{V_2}(V_2) \subset U_1$. Shrinking $U_2, V_2$ we may assume
    \begin{align*}
        U_2 \subset U_1, &  & f_{U_2} = f_{U_1}|_{U_2}, \\
        V_2 \subset V_1, &  & g_{V_2} = g_{V_1}|_{V_2}.
    \end{align*}

    Then $f_{U_1} \circ g_{V_2}$ represents  $f\circ g = \id_Y$, and $g_{V_1} \circ f_{U_2}$ represents $g\circ f = \id_X$. Therefore, shrinking $U_2, V_2$, we may assume $f_{U_1} \circ g_{V_2} = \id_Y|_{V_2}$ and $g_{V_1} \circ f_{U_2} \id_X|_{U_2}$.

    Define
    \begin{align*}
        U = f_{U_2}\inv(V_2), &  & f_{U} = f_{U_2}|_{U}  \\
        V = g_{V_2}\inv(U_2), &  & g_{V} = g_{V_2}|_{V}.
    \end{align*}
    Then $U$ and $V$ are $S$-dense in $X$ and $Y$ respectively since $f$ and $g$ are diffuse, and
    \begin{align*}
        g_{V_2} \circ f_{U} = (g_{V_1} \circ f_{U_2})|_{U} = \id_X|_{U}, \\
        f_{U_2} \circ g_{V} = (f_{U_1} \circ g_{V_2})|_{V} = \id_Y|_{V}.
    \end{align*}
    In particular,
    \begin{align*}
        f_U(U) \subset g_{V_2}\inv(U) \subset g_{V_2}\inv(U_2) = V, \\
        g_V(V) \subset f_{U_2}\inv(V) \subset f_{U_2}\inv(V_2) = U,
    \end{align*}
    and we have $f_U|_{[U \to V]} \circ g_V|_{[V \to U]} = \id_V$ and $g_V|_{[V \to U]} \circ f_U|_{[U \to V]} = \id_U$, as required.
\end{proof}

\begin{prop}\label{composition of S-birational morphisms}
    Let $f\colon X\dashrightarrow Y$ and $g\colon Y \dashrightarrow Z$ be $S$-birational morphisms. Then $g\circ f$ is $S$-birational with inverse $f\inv\circ g\inv$.
\end{prop}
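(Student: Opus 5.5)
The plan is to reduce the statement to \cref{criteria for S-birationality}. That lemma only needs two diffuse $S$-rational morphisms whose compositions in both orders are the identity.

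First, by \cref{S-birational S-rational morphisms are faithfully flat}, the four morphisms $f$, $g$, $f\inv$ and $g\inv$ are all diffuse. By \cref{composition of S-rational morphisms preserves many properties} (first bullet), the composites $g\circ f\colon X \dashrightarrow Z$ and $f\inv\circ g\inv\colon Z \dashrightarrow X$ are then well defined and diffuse.

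Next we compute both composites using associativity of composition (\cref{composotion of S-rational morphisms is associative}). Its hypotheses hold here: in each triple composite, the first two maps applied are diffuse. This gives
\begin{align*}
    (f\inv\circ g\inv)\circ(g\circ f) = f\inv\circ\bigl(g\inv\circ(g\circ f)\bigr) = f\inv\circ\bigl((g\inv\circ g)\circ f\bigr) = f\inv\circ(\id_Y\circ f).
\end{align*}
The last step uses \cref{composing S-rational inverses gives the identity}. We then need $\id_Y\circ f = f$ and $f\inv\circ f = \id_X$. The first holds directly from the definition of composition: take the identity representative on any $S$-dense open containing the image of a representative of $f$. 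The second is again \cref{composing S-rational inverses gives the identity}. The symmetric computation gives $(g\circ f)\circ(f\inv\circ g\inv) = \id_Z$.

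Finally, applying \cref{criteria for S-birationality} to the pair $g\circ f$ and $f\inv\circ g\inv$ shows that $g\circ f$ is $S$-birational with inverse $f\inv\circ g\inv$.

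The only mildly delicate point is the bookkeeping in the associativity rearrangements. Each regrouping must compose diffuse morphisms on the inner side, so that \cref{composotion of S-rational morphisms is associative} applies. This is guaranteed because every morphism involved is diffuse, and composites of diffuse morphisms are diffuse.

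Alternatively, one could argue directly. Choose representatives $f_U\colon U\xrightarrow{\sim}V$ and $g_{V'}\colon V'\xrightarrow{\sim}W'$. Set $V''=V\cap V'$, which is $S$-dense by \cref{Intersection and schematically/S-dense opens}. Then $f_U\inv(V'')\to V''\to g_{V'}(V'')$ is an isomorphism between $S$-dense opens, and its inverse visibly represents $f\inv\circ g\inv$. I would include this remark as a sanity check, but rely on the lemma-based argument above.
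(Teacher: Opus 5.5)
Your proof is correct, but your main route is not the paper's. The paper's proof is essentially the direct argument you list only as a sanity check. It sets $V = V_1\cap V_2$, $U = f_{U_1}\inv(V)$ and $W = g_{V_2}(V)$. It then observes that $(g_V\circ f_U)|_{[U\to W]}$ is an isomorphism between $S$-dense opens, with inverse $(f_U\inv\circ g_V\inv)|_{[W\to U]}$.

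Your main argument is instead purely formal. Taken together, \cref{criteria for S-birationality} and \cref{composing S-rational inverses gives the identity} say that the $S$-birational maps are exactly the isomorphisms in the category of $S$-schemes with diffuse $S$-rational morphisms. The proposition then reduces to the fact that a composite of isomorphisms is an isomorphism whose inverse is the reversed composite. Your diffuseness bookkeeping for each use of \cref{composotion of S-rational morphisms is associative} checks out. You should say explicitly that $f\inv$ and $g\inv$ are themselves $S$-birational, hence diffuse, before invoking \cref{S-birational S-rational morphisms are faithfully flat} for them.

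Each approach buys something different. Yours explains conceptually why the statement must hold. However, it relies on \cref{criteria for S-birationality}, whose proof does a more delicate shrinking of opens than the statement itself requires. The paper's approach is shorter and self-contained, needing only that intersections of $S$-dense opens are $S$-dense and that isomorphisms preserve $S$-density. It also produces explicit $S$-dense opens on which $g\circ f$ restricts to an isomorphism.
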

\begin{proof}
    Suppose $f_{U_1}$ induces an isomorphism of $S$-dense opens $U_1 \xrightarrow{\sim} V_1$ and $g_{V_2}$ induces an isomorphism $V_2 \xrightarrow{\sim} W_2$. Define
    \begin{align*}
        V = V_1 \cap V_2,   &  & g_V = g_{V_2}|_{V}, \\
        U = f_{U_1}\inv(V), &  & f_U = f_{U_1}|_{U}, \\
        W = g_{V_2}(V) =  g_V(V).
    \end{align*}
    Since $f_{U_1}|_{[U_1 \to V_1]}$ and $g_{V_2}|_{[V_2 \to W_2]}$ are isomorphisms, it follows that $U$ and $W$ are $S$-dense opens of $U_1$ and $W_1$ respectively, and then that $(g_{V} \circ f_U)|_{[U \to W]}$ is an isomorphism with inverse $(f_U\inv \circ g_V\inv)|_{[W \to U]}$; the result follows.
\end{proof}

%% file: Chapters/S-Rational_Morphisms/Graphs_of_S-Rational_Morphisms.tex
\section{Graphs of \texorpdfstring{$S$}{S}-Rational Morphisms}
Recall \cref{Graphs and schematic images}.
\begin{dfn}
    Let $\phi\colon X \dashrightarrow Y$ be an $S$-rational morphism. We define the closed graph of $\phi$, denoted $\Lambda_\phi$, to be the schematic image of the induced $S$-rational morphism $(id_X,\phi)\colon X \dashrightarrow X \times_S Y$.
\end{dfn}

\begin{rem}\label{closed graph is schematic image of restriction graph.}
    Let $\phi$ be as above. Suppose that, for some representative $\phi_U$, we have that $\phi_U$ factors through a subscheme $V \subset Y$. Then $(\id_U,\phi_U)$ factors through  $U \times_S V$, and if $\Gamma_{\phi_U|_{[U \to V]}}$ is the graph of $\phi_U|_{[U \to V]}$ we have a commutative diagram
    \[
        \begin{tikzcd}
            U \arrow[rrd, "{(id_X|_U, \phi_U)}"] \arrow[d] &                              &              \\
            {\Gamma_{\phi_U|_{[U \to V]}}} \arrow[r, hook] & U \times_S V \arrow[r, hook] & X \times_S Y
        \end{tikzcd}
    \]
    where the vertical arrow is an isomorphism. Thus $\Lambda_\phi = \Schim_{X\times_S Y}(\Gamma_{\phi_U|_{[U \to V]}}).$
\end{rem}

The following lemma (and in particular the corollary which follows) will be crucial.
\begin{prop}\label{Intersection of grap hs with domains}
    Let $\phi\colon X \dashrightarrow Y$ be an $S$-rational morphism of schemes with closed graph $\Lambda_\phi$. Suppose $Y$ is separated.

    Let $\phi_U$ be a representative of $\phi$, with graph $\Gamma_{\phi_U}$. Let $j_U\colon U \hookrightarrow X$ be the corresponding open immersion. Consider the morphism $(j_U, \phi_U)\colon U \to X\times_S Y$. Suppose that $\ker(\O_{X \times_S Y} \to (j_U, \phi_U)_*{\O_U})$ is quasicoherent.

    Then $\Lambda_{\phi} \cap (U \times_S Y) = \Gamma_{\phi_U}$.
\end{prop}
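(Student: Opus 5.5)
The plan is to compute $\Lambda_\phi$ as the schematic image of the representative $(j_U,\phi_U)\colon U \to X\times_S Y$, and then restrict that schematic image to the open subscheme $U\times_S Y$ using \cref{Schematic image of well-behaved morphisms}. By definition, $\Lambda_\phi$ is the schematic image of the $S$-rational morphism $(\id_X,\phi)$. The pair $(j_U,\phi_U)$ is a representative of this morphism over $U$, so $\Lambda_\phi = \Schim_{X\times_S Y}((j_U,\phi_U))$. This holds because any two representatives have the same schematic image.

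Next, set $W := U\times_S Y$. This is an open subscheme of $X\times_S Y$, and it is the preimage of $U$ under the first projection. The morphism $(j_U,\phi_U)$ factors through $W$, so $(j_U,\phi_U)^{-1}(W) = U$. The restricted morphism $U \to W$ is exactly $(\id_U,\phi_U)\colon U \to U\times_S Y$.

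The kernel $\ker(\O_{X\times_S Y}\to (j_U,\phi_U)_*\O_U)$ is quasicoherent by hypothesis. So part (2) of \cref{Schematic image of well-behaved morphisms} applies, and gives
\begin{align*}
    \Lambda_\phi \cap (U\times_S Y) = \Schim_{U\times_S Y}\bigl((\id_U,\phi_U)\bigr).
\end{align*}
Finally, $Y$ is separated (over $S$, which is how I read the hypothesis). Then \cref{Graphs and schematic images}, applied to $\phi_U\colon U\to Y$ in the category of $S$-schemes, identifies $\Schim_{U\times_S Y}((\id_U,\phi_U))$ with $\Gamma_{\phi_U}$. The reason is that $\Gamma_{\phi_U}\hookrightarrow U\times_S Y$ is the pullback of the diagonal $\Delta_{Y/S}$, which is a closed immersion, and $U\to\Gamma_{\phi_U}$ is an isomorphism. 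This finishes the argument.

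The step I expect to need the most care is the restriction to the open. Part (2) of \cref{Schematic image of well-behaved morphisms} must be applied to the open $U\times_S Y$ of the \emph{target}, with the quasicoherence hypothesis stated on the whole target $X\times_S Y$. One also has to check that the source preimage really is all of $U$. A minor point is separatedness: the lemma is stated with absolute products, so I would either apply it over $S$ via the relative diagonal, or assume $S$ is separated.
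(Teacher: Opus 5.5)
Your proposal is correct and follows the paper's proof essentially step for step. You identify $\Lambda_\phi$ with $\Schim((j_U,\phi_U))$, then use the quasicoherence hypothesis and \cref{Schematic image of well-behaved morphisms} to restrict to the open $U\times_S Y$, whose preimage is all of $U$ (this is what the paper's Cartesian square records), and finally use separatedness of $Y$ through \cref{Graphs and schematic images} to identify $\Schim((\id_U,\phi_U))$ with $\Gamma_{\phi_U}$. Your reading of ``separated'' as separated over $S$, via the relative diagonal in the category of $S$-schemes, is the intended one.
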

\begin{proof}
    By definition, since $(j_U, \phi_U)$ represents $(\id_X,\phi)$, we have $\Lambda_\phi = \Schim((j_U, \phi_U))$. Since $Y$ is separated, we have $\Gamma_{\phi_U} = \Schim(\id_U,\phi_U)$ by \cref{Graphs and schematic images}. We have a Cartesian diagram
    \[
        \begin{tikzcd}
            U \arrow[r, "\id_U"] \arrow[d, "{(\id_U,\phi_U)}"] & U \arrow[d, "{(j_U,\phi_U)}", hook'] \\
            U\times_S Y \arrow[r, hook]                         & X\times_S Y
        \end{tikzcd}
    \]
    Since $\ker(\O_{X \times_S Y} \to (j_U, \phi_U)_*{\O_U})$ is quasicoherent, we have that taking schematic image commutes with restriction to opens by \cref{Schematic image of well-behaved morphisms}, so we conclude.
\end{proof}
\begin{cor}\label{Intersection of closed graph with product from domain in Noetherian to separated case}
    Let $\phi\colon X \dashrightarrow Y$ be an $S$-rational morphism of schemes with closed graph $\Lambda_\phi$. Suppose $X$ is Noetherian, and $Y$ is separated.

    Let $\phi_U$ be a representative of $\phi$, and let $j_U\colon U \hookrightarrow X$ be the corresponding open immersion. Then $\Lambda_{\phi} \cap (U \times_S Y) = \Gamma_{\phi_U}$.
\end{cor}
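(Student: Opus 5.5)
The plan is to reduce the corollary to \cref{Intersection of grap hs with domains}. Everything in that proposition is already in place except its hypothesis that $\ker(\O_{X \times_S Y} \to (j_U, \phi_U)_*\O_U)$ be quasicoherent. By \cref{quasicoherent kernel lemma}, it therefore suffices to show that the morphism $(j_U,\phi_U)\colon U \to X\times_S Y$ is quasicompact. Once that is done, the proposition gives $\Lambda_\phi \cap (U\times_S Y) = \Gamma_{\phi_U}$ directly.

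To prove quasicompactness, factor $(j_U,\phi_U)$ as
\[
U \xrightarrow{(\id_U,\phi_U)} U\times_S Y \hookrightarrow X\times_S Y,
\]
using the Cartesian square from the proof of \cref{Intersection of grap hs with domains}. The first arrow is a closed immersion because $Y$ is separated, as in \cref{Graphs and schematic images}, so it is quasicompact. The second arrow is the base change of the open immersion $j_U\colon U\hookrightarrow X$ along $X\times_S Y\to X$. Since $X$ is Noetherian, every open subset of $X$ is quasicompact, and so $j_U$ is a quasicompact morphism. Quasicompactness is stable under base change, so the second arrow is quasicompact. Quasicompactness is also stable under composition, so $(j_U,\phi_U)$ is quasicompact, and \cref{quasicoherent kernel lemma} gives the required quasicoherence.

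The only point needing care is the justification that $j_U$ is quasicompact. The preimage of an affine open $W\subset X$ is $U\cap W$, which is an open subset of the Noetherian scheme $W$. Such an open is quasicompact because Noetherian topological spaces have all their subsets quasicompact. With this, the hypotheses of \cref{Intersection of grap hs with domains} are met and the corollary follows.
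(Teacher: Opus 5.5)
Your proof is correct and takes the paper's route: reduce to \cref{Intersection of grap hs with domains} by showing $(j_U,\phi_U)$ is quasicompact and then invoking \cref{quasicoherent kernel lemma}. The only difference is that the paper gets quasicompactness in one step: $U$ is Noetherian, so every open subset of $U$ is quasicompact, and hence any morphism out of $U$ is quasicompact. Your factorization through the closed immersion $U \to U\times_S Y$ is valid but not needed.
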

\begin{proof}
    Since $X$ is Noetherian, so is $U$, and it follows that $(j_U, \phi_U)$ is quasicompact, and therefore $\ker(\O_{X \times_S Y} \to (j_U, \phi_U)_*{\O_U})$ is quasicoherent by \cref{quasicoherent kernel lemma}. We conclude by the previous result.
\end{proof}

The above result implies that $S$-rational morphisms from Noetherian schemes to separated schemes are determined by their closed graphs:
\begin{prop}\label{S-rational morphisms from Noetherian schemes to separated schemes are determined by their closed graphs:}
    Let $X$, $Y$ be $S$-schemes with $X$ Noetherian and $Y$ separated. Let $\phi_1,\phi_2$ be $S$-rational morphisms from $X$ to $Y$. Suppose $\Lambda_{\phi_1}= \Lambda_{\phi_2}$. Then $\phi_1 = \phi_2$.
\end{prop}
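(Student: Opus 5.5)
The plan is to reduce the statement to the pointwise description of the closed graph on a domain of definition, namely \cref{Intersection of closed graph with product from domain in Noetherian to separated case}, and then use that morphisms are determined by their graphs.

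First, choose representatives $\phi_{1U_1}$ and $\phi_{2U_2}$ of $\phi_1$ and $\phi_2$. Set $U = U_1 \cap U_2$. By \cref{Intersection and schematically/S-dense opens} the open $U$ is $S$-dense in $X$, so the restrictions $\phi_{1U} := \phi_{1U_1}|_U$ and $\phi_{2U} := \phi_{2U_2}|_U$ are again representatives of $\phi_1$ and $\phi_2$. Since $X$ is Noetherian and $Y$ is separated, \cref{Intersection of closed graph with product from domain in Noetherian to separated case} applies to each $\phi_i$ with this representative. It gives
\begin{align*}
    \Gamma_{\phi_{1U}} = \Lambda_{\phi_1} \cap (U\times_S Y) = \Lambda_{\phi_2}\cap (U\times_S Y) = \Gamma_{\phi_{2U}}
\end{align*}
as closed subschemes of $U \times_S Y$.

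Next, I conclude that $\phi_{1U} = \phi_{2U}$ because morphisms are determined by their graphs. Concretely, each $\Gamma_{\phi_{iU}}$ is identified with $U$ via the first projection, and $\phi_{iU}$ is recovered as the second projection composed with the inverse of that isomorphism (\cref{graph of morphis criterion}). Since the two graphs are the same subobject of $U\times_S Y$, the two morphisms coincide. Hence the representatives agree on the $S$-dense open $U$, so $\phi_1 = \phi_2$ by \cref{S-rational morphisms definition}.

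The only point needing care is that the intersection $\Lambda_{\phi}\cap (U\times_S Y)$ in the corollary is a subscheme-theoretic statement rather than a purely set-theoretic one. That corollary is proved with the same kind of representative used here, since a restriction to an $S$-dense open is still a representative. So no obstacle is expected beyond checking that the Noetherian hypothesis on $X$ passes to $U$, which it does because an open subscheme of a Noetherian scheme is Noetherian.
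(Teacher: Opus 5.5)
Your proof is correct and is essentially the paper's argument: restrict to a common $S$-dense open $U$, use \cref{Intersection of closed graph with product from domain in Noetherian to separated case} to identify each $\Gamma_{\phi_{iU}}$ with $\Lambda_{\phi_i}\cap(U\times_S Y)$, and conclude because morphisms are determined by their graphs. The only cosmetic difference is that the paper takes $U\subset \dom(\phi_1)\cap\dom(\phi_2)$ with the unique representatives there, whereas you intersect the domains of arbitrarily chosen representatives, which avoids appealing to \cref{domain for S-rational morphisms to separated target}.
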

\begin{proof}
    Fix any $S$-dense open $U \subset \dom(\phi_1) \cap \dom(\phi_2).$ By above we have
    \begin{align*}
        \Gamma_{\phi_{1,U}} = \Lambda_{\phi_1}\cap(U \times_S Y) = \Gamma_{\phi_{2,U}}
    \end{align*}
    hence $\phi_{1,U} = \phi_{2,U}$ and $\phi_1 =\phi_2$.
\end{proof}

\begin{lem}\label{graphs of representatives of inverse S-rational morphisms}
    Let $\phi\colon X \dashrightarrow Y$ be an $S$-birational morphism with inverse $\psi$. Take representatives $\phi_U$, $\psi_V$ inducing mutually inverse isomorphisms on $S$-dense opens $U\subset X$, $V \subset Y$. Let $\phi' = \phi_U|_{[U \to V]}$ and $\psi' = \psi_V|_{[V \to U]}$.

    Then we have a Cartesian diagram
    \[
        \begin{tikzcd}
            \Gamma_{\phi'} \arrow[r] \arrow[d, hook] & \Gamma_{\psi'} \arrow[d, hook] \\
            X \times_S Y \arrow[r, "\rho_{YX}"]    & Y\times_S X,
        \end{tikzcd}
    \]
    where $\rho_{YX}$ denotes the coordinate permutation (as in \cref{products and Yoneda combine}) and the vertical arrows are the usual embeddings.
\end{lem}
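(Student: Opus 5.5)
The statement asserts that the coordinate permutation $\rho_{YX}\colon X\times_S Y \to Y\times_S X$ carries $\Gamma_{\phi'}$ isomorphically onto $\Gamma_{\psi'}$, in the sense that the square is Cartesian. I would reduce this to the categorical fact recorded in the Graphs section (the item following \cref{Diagonal cartesian square for graphs}): morphisms $\phi'\colon U \to V$ and $\psi'\colon V\to U$ are mutually inverse isomorphisms if and only if the square with $\Gamma_{\phi'}\hookrightarrow U\times_S V$, $\Gamma_{\psi'}\hookrightarrow V\times_S U$ and $\rho_{VU}$ along the bottom is Cartesian. Since $\phi'$ and $\psi'$ are mutually inverse by hypothesis, that square is Cartesian. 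It then remains to enlarge the ambient products from $U\times_S V$ and $V\times_S U$ to $X\times_S Y$ and $Y\times_S X$. Here I read $\Gamma_{\phi'}$ as embedded in $X\times_S Y$ via the composition $\Gamma_{\phi'}\hookrightarrow U\times_S V \hookrightarrow X\times_S Y$, and similarly for $\Gamma_{\psi'}$.

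For the enlargement I would stack the squares and apply \cref{Cartesian rectangles}. The inclusion $U\times_S V \hookrightarrow X\times_S Y$ and the permutations fit into a commutative square
\[
\begin{tikzcd}
U\times_S V \arrow[r,"\rho_{VU}"] \arrow[d,hook] & V\times_S U \arrow[d,hook]\\
X\times_S Y \arrow[r,"\rho_{YX}"] & Y\times_S X.
\end{tikzcd}
\]
This square is Cartesian because $\rho_{YX}$ is an isomorphism and the preimage of $V\times_S U$ under it is exactly $U\times_S V$; concretely, one checks on $T$-points using \cref{pullbakcs of monomorphisms lemma}.

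Placing the square from the first paragraph above this one gives a vertical rectangle. Since both pieces are Cartesian, the outer rectangle is Cartesian by \cref{Cartesian rectangles}, which is exactly the desired diagram.

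Alternatively, one can argue directly on $T$-points: $\Gamma_{\phi'}(T)=\{(u,v): \phi'(u)=v\}$, and applying $\psi'$ gives $\phi'(u)=v \iff u=\psi'(v)$, since the two maps are mutually inverse. Hence $\rho_{YX}^{-1}(\Gamma_{\psi'})=\Gamma_{\phi'}$ as subfunctors of $X\times_S Y$. Because both graphs are subobjects (monomorphisms), equality of the corresponding subfunctors gives the Cartesian square by the Yoneda lemma.

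I expect no serious obstacle. The only point needing care is the bookkeeping of which ambient products the vertical embeddings land in, together with verifying that the square of products of opens is Cartesian; both follow from \cref{pullbakcs of monomorphisms lemma} applied pointwise.
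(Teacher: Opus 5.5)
Your proposal is correct and is essentially the paper's proof. The paper also applies the graph criterion for mutually inverse isomorphisms to obtain the Cartesian square over $\rho_{VU}\colon U\times_S V \to V\times_S U$, and then enlarges to $\rho_{YX}\colon X\times_S Y \to Y\times_S X$ using the Cartesian rectangle lemma; the only difference is that you explicitly check that the square of products of opens is Cartesian, which the paper leaves implicit.
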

\begin{proof}
    By \cref{Inverse and morphism graph lemma}, we have a Cartesian diagram
    \[
        \begin{tikzcd}
            \Gamma_{\phi'} \arrow[r] \arrow[d, hook] & \Gamma_{\psi'} \arrow[d, hook] \\
            U \times_S V \arrow[r, "\rho_{VU}"]    & V\times_S U
        \end{tikzcd}
    \]
    Then the result follows by the Cartesian rectangle lemma (\cref{Cartesian rectangles}).
\end{proof}

The closed graph of an $S$-birational morphism is the permutation of the closed graph of its inverse:
\begin{prop}\label{Graphs of S-birational S-rational morphisms}
    Let $\phi\colon X \dashrightarrow Y$ be an $S$-birational morphism with inverse $\psi$. Then we have a Cartesian diagram
    \[
        \begin{tikzcd}
            \Lambda_\phi \arrow[r] \arrow[d, hook] & \Lambda_{\psi} \arrow[d, hook] \\
            X\times_S Y \arrow[r, "\rho_{YX}"]   & Y \times_S X
        \end{tikzcd}
    \]

\end{prop}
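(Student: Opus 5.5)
The plan is to reduce the statement to the representative-level result \cref{graphs of representatives of inverse S-rational morphisms} and then pass to schematic images. Choose representatives $\phi_U$, $\psi_V$ inducing mutually inverse isomorphisms $\phi'\colon U \xrightarrow{\sim} V$ and $\psi'\colon V \xrightarrow{\sim} U$ between $S$-dense opens. By \cref{closed graph is schematic image of restriction graph.} we have
\begin{align*}
    \Lambda_\phi = \Schim_{X\times_S Y}(\Gamma_{\phi'}), \qquad \Lambda_\psi = \Schim_{Y\times_S X}(\Gamma_{\psi'}).
\end{align*}
By \cref{graphs of representatives of inverse S-rational morphisms}, the isomorphism $\rho_{YX}\colon X\times_S Y \xrightarrow{\sim} Y \times_S X$ carries the immersion $\Gamma_{\phi'} \hookrightarrow X\times_S Y$ to the immersion $\Gamma_{\psi'} \hookrightarrow Y\times_S X$. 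In other words, $\Gamma_{\psi'}\to Y\times_S X$ equals $\rho_{YX}$ composed with $\Gamma_{\phi'}\to X\times_S Y$, after identifying $\Gamma_{\phi'}\simeq\Gamma_{\psi'}$ via the top arrow of that Cartesian square.

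The key step is that schematic image is transported by isomorphisms of the target. If $\iota\colon Z \to A$ is any morphism and $\rho\colon A \xrightarrow{\sim} B$ is an isomorphism, then $\Schim_B(\rho\circ\iota) = \rho(\Schim_A(\iota))$, meaning the closed subscheme of $B$ obtained by pulling $\Schim_A(\iota)$ back along $\rho\inv$. This holds because $\rho$ induces a bijection between the closed subschemes of $A$ and of $B$ which preserves inclusion and factorization. Equivalently, $\rho_*$ identifies $\ker(\O_A\to\iota_*\O_Z)$ with $\ker(\O_B\to(\rho\iota)_*\O_Z)$ and preserves quasicoherence of ideal subsheaves. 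This argument requires no quasicompactness or quasicoherence hypotheses. Applying it with $\rho=\rho_{YX}$ gives $\Lambda_\psi = \rho_{YX}(\Lambda_\phi)$ as closed subschemes of $Y\times_S X$.

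Finally, this equality is exactly the claim that the square in the statement is Cartesian. By \cref{pullbakcs of monomorphisms lemma}, the pullback of $\Lambda_\psi \hookrightarrow Y\times_S X$ along the isomorphism $\rho_{YX}$ is the subscheme $\rho_{YX}\inv(\Lambda_\psi)$, which equals $\Lambda_\phi$. The top arrow of the square is the induced isomorphism $\Lambda_\phi \to \Lambda_\psi$.

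The only step needing care is the transport of schematic images along the isomorphism; the rest is formal. I also expect to remark that the construction does not depend on the choice of $U$ and $V$, since closed graphs are independent of the representative.
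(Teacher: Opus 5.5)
Your proposal is correct and follows the paper's proof. Both arguments combine \cref{closed graph is schematic image of restriction graph.} with the Cartesian square of \cref{graphs of representatives of inverse S-rational morphisms}, then conclude because the isomorphism $\rho_{YX}$ carries schematic images to schematic images. Your explicit justification of that last step, which needs no quasicompactness or quasicoherence hypotheses, spells out what the paper asserts in one line.
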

\begin{proof}
    Suppose we have representatives $\phi_U, \psi_V$ inducing mutually inverse isomorphisms on $S$-dense opens $U\subset X$, $V \subset Y$. As above, let $\phi' = \phi_U|_{[U \to V]}$ and $\psi' = \psi_V|_{[V \to U]}$. By \cref{closed graph is schematic image of restriction graph.}, we have that $\Lambda_{\phi} = \Schim_{X \times_S Y}(\Gamma_{\phi'})$ and $\Lambda_{\psi} = \Schim_{Y \times_S X}(\Gamma_{\psi'})$.

    By the previous result, we have a Cartesian diagram
    \[
        \begin{tikzcd}
            \Gamma_{\phi'} \arrow[r] \arrow[d, hook] & \Gamma_{\psi'} \arrow[d, hook] \\
            X \times_S Y \arrow[r, "\rho_{YX}"]    & Y\times_S X
        \end{tikzcd}
    \]
    Since $\rho_{YX}$ is an isomorphism, and pullback by isomorphisms preserves schematic images, the result follows.
\end{proof}

The above result has a partial converse:
\begin{prop}\label{detecting inverse $S$-rational morphisms by looking at graphs}
    Let $X$, $Y$ be schemes over $S$ with $Y$ Noetherian and $X$ separated. Let $\phi\colon X \dashrightarrow Y$, $\theta\colon Y \dashrightarrow X$ be $S$-rational morphisms. Suppose $\phi$ is $S$-birational, and we have a Cartesian diagram
    \begin{equation}\label{proposed inverse cartesian diagram}
        \begin{tikzcd}
            \Lambda_\phi \arrow[r] \arrow[d, hook] & \Lambda_{\theta} \arrow[d, hook] \\
            X\times_S Y \arrow[r, "\rho_{YX}"]   & Y \times_S X
        \end{tikzcd}
    \end{equation}
    Then $\theta = \phi\inv$.
\end{prop}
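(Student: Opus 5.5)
Since $\phi$ is $S$-birational, its inverse $\phi\inv$ exists, and by \cref{Graphs of S-birational S-rational morphisms} there is a Cartesian square expressing $\Lambda_{\phi\inv}$ as the pullback of $\Lambda_\phi$ along the permutation. Because $\rho_{YX}$ is an isomorphism, the subscheme of $Y\times_S X$ obtained by transporting $\Lambda_\phi$ along $\rho_{YX}$ is uniquely determined. Comparing that square with \eqref{proposed inverse cartesian diagram} will give
\[
\Lambda_\theta = \rho_{YX}(\Lambda_\phi) = \Lambda_{\phi\inv}
\]
as closed subschemes of $Y\times_S X$. Concretely, both $\Lambda_\theta$ and $\Lambda_{\phi\inv}$ are identified with the pullback of $\Lambda_\phi$ along $\rho_{YX}\inv = \rho_{XY}$. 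I will check the direction of the squares carefully, but because $\rho_{YX}$ is an isomorphism both Cartesian squares say that the subobject of $Y\times_S X$ pulls back to $\Lambda_\phi$, and pullback along an isomorphism is a bijection on subobjects.

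Next I will apply \cref{S-rational morphisms from Noetherian schemes to separated schemes are determined by their closed graphs:} to the two $S$-rational morphisms $\theta,\phi\inv\colon Y\dashrightarrow X$. Its hypotheses are that the source $Y$ is Noetherian and the target $X$ is separated, and these are exactly the assumptions of the present statement. The equality of closed graphs will then give $\theta=\phi\inv$.

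I expect the only delicate point to be the bookkeeping in the first step: making sure the Cartesian square of \cref{Graphs of S-birational S-rational morphisms} and \eqref{proposed inverse cartesian diagram} have the same orientation, so that both describe the same subscheme of $Y\times_S X$ and not merely isomorphic ones. I do not anticipate anything beyond this, since $\phi\inv$ is automatically an $S$-rational morphism $Y\dashrightarrow X$ and no diffuseness is needed.
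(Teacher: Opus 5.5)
Your proposal is correct and is essentially the paper's own proof. The paper also takes $\psi=\phi^{-1}$, uses the Cartesian square for closed graphs of $S$-birational morphisms, and inverts the isomorphism $\rho_{YX}$ in both squares to identify $\Lambda_\theta$ and $\Lambda_{\psi}$ as the same subscheme of $Y\times_S X$. It then concludes $\theta=\psi$ because $S$-rational morphisms from a Noetherian source ($Y$) to a separated target ($X$) are determined by their closed graphs.
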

\begin{proof}
    Let $\psi = \phi\inv$. By above, we have a Cartesian diagram
    \[
        \begin{tikzcd}
            \Lambda_\phi \arrow[r] \arrow[d, hook] & \Lambda_{\psi} \arrow[d, hook] \\
            X\times_S Y \arrow[r, "\rho_{YX}"]   & Y \times_S X.
        \end{tikzcd}
    \]
    Observe that the horizontal arrows in the above and \cref{proposed inverse cartesian diagram} are isomorphisms. Inverting them we get Cartesian diagrams
    \[
        \begin{tikzcd}
            \Lambda_\phi  \arrow[d, hook] & \Lambda_{\theta} \arrow[l] \arrow[d, hook] && \Lambda_\phi  \arrow[d, hook] & \Lambda_{\psi} \arrow[l] \arrow[d, hook] \\
            X\times_S Y    & Y \times_S X  \arrow[l, "\rho_{XY}"'] &&  X\times_S Y    & Y \times_S X   \arrow[l, "\rho_{XY}"']
        \end{tikzcd}
    \]
    it follows by uniqueness of the fiber product that $\Lambda_\psi = \Lambda_\theta$, and then by \cref{S-rational morphisms from Noetherian schemes to separated schemes are determined by their closed graphs:} that $\psi = \theta$.
\end{proof}

%% file: Chapters/Rational_Families/Main_Rational_Families.tex
\chapter{\texorpdfstring{$S$}{S}-Rational Families}\label{chapter: S-rational families}

In this chapter we study rational families of $S$-rational morphisms and their parametrizations. The basic notions of composition and pullback are defined in \cref{section: S-rational families}. In \cref{section: invertible families} we define invertible $S$-rational families, which will be important in the next chapter, and study them using their graphs. In \cref{section: canonical S-rational families} we define the notion of a tame family, and show that tame $S$-rational families admit canonical parametrizations. In \cref{section: transformations of canonical families} we show that taking compositions and inverses of rational families induce corresponding morphisms of the canonical parameter spaces.

\input{Chapters/Rational_Families/S-Rational_Families}

\input{Chapters/Rational_Families/Invertible_Families.tex}
\input{Chapters/Rational_Families/Canonical_S-rational_families}
\input{Chapters/Rational_Families/Transformations_of_Canonical_Families.tex}

%% file: Chapters/Rational_Families/S-Rational_Families.tex
\section{\texorpdfstring{$S$}{S}-Rational Families}\label{section: S-rational families}

For the following sections, we work in the category of schemes over a fixed base scheme $S$. We make heavy use of \cref{products and Yoneda combine}.

\begin{dfn}\label{S-rational families definition}
    Let $A, X, Y$ be $S$-schemes. We define an $S$-rational family of morphisms (or $S$-rational family, for short) from $X$ to $Y$ over $A$ to be an $S$-rational morphism $\phi\colon AX \dashrightarrow Y$.
\end{dfn}

\begin{rem}
    We use the above terminology for the following reason: if $S = \Spec(k)$, and $A,X,Y$ are varieties over $k$, with $X$ geometrically integral, then the (set-theoretic) image of the projection of $\dom \phi$ to $A$ is a dense open subscheme $U \subset A$, and for every point $a \in U$ the pullback $(\dom \phi)_a$ is a dense open subscheme of $X_a$, so we get a rational map from $X_a \to Y$. So in this case $\phi$ really does give a family of rational maps from $X$ to $Y$ parametrized by a dense open subset of $A$.
\end{rem}

\begin{dfn}[Composition of $S$-rational families]\label{composition of S-rational families}
    Let $A, B, X, Y, Z$ be $S$-schemes, and $\phi\colon AX \dashrightarrow Y$, $\psi\colon BY\dashrightarrow Z$ be $S$-rational families over $A$ and $B$ respectively.

    Suppose that the induced $S$-rational morphism
    \begin{align*}
        (\id_B \times \phi)\colon BAX \dashrightarrow BY
    \end{align*}
    is diffuse. Then we define the composition of $\phi$ and $\psi$, denoted $\psi * \phi$, to be the $S$-rational family over $BA$ induced by the composition
    \[
        \begin{tikzcd}
            BAX \arrow[r, "\id_B\times \phi", dashed] & BY \arrow[r, "\psi",dashed] & Z.
        \end{tikzcd}
    \]
    Equivalently, using \cref{products and Yoneda combine} if $(b,a,x) = \id_{BAX}$, we have
    \begin{align*}
        \psi * \phi = \psi(b,\phi(a,x)).
    \end{align*}
\end{dfn}

In light of the above, we make the following convention:
\begin{notation}\label{composition is defined notation}
    Let $\phi$, $\psi$ be as above. We say that $\psi * \phi$ \emph{is defined} if $\id_B \times \phi$ is diffuse.
\end{notation}

Composition of $S$-rational families is associative:
\begin{prop}\label{Composition of S-rational families is associative.}
    Let $\phi\colon AX \dashrightarrow Y$, $\psi\colon BY\colon \dashrightarrow Z$, $\theta\colon CZ \dashrightarrow W$ be $S$-rational families over $A,B,C$ respectively.

    Suppose $\psi*\phi$, $\theta * \psi$, $\theta*(\psi * \phi)$ and $(\theta*\psi)*\phi$ are all defined.
    Then $\theta*(\psi * \phi) = (\theta*\psi)*\phi$.
\end{prop}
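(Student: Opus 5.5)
The plan is to reduce both sides to a single composition of $S$-rational morphisms out of $CBAX$ and then invoke associativity of composition of $S$-rational morphisms (\cref{composotion of S-rational morphisms is associative}). Let $(c,b,a,x) = \id_{CBAX}$. By \cref{composition of S-rational families}, we have
\begin{align*}
    \theta*(\psi*\phi) = \theta\circ(\id_C\times(\psi*\phi)), \qquad (\theta*\psi)*\phi = (\theta*\psi)\circ(\id_{CB}\times\phi).
\end{align*}
Moreover, $\psi*\phi = \psi\circ(\id_B\times\phi)$ and $\theta*\psi = \theta\circ(\id_C\times\psi)$. The goal is to show that both sides equal the triple composition
\begin{align*}
    CBAX \xrightarrow{\id_{CB}\times\phi} CBY \xrightarrow{\id_C\times\psi} CZ \xrightarrow{\theta} W.
\end{align*}

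The first step is a compatibility lemma for products with the identity: for a diffuse $\alpha\colon P\dashrightarrow Q$ and any $\beta\colon Q\dashrightarrow R$, we want $\id_C\times(\beta\circ\alpha) = (\id_C\times\beta)\circ(\id_C\times\alpha)$, provided $\id_C\times\alpha$ is diffuse. To prove this, choose representatives $\alpha_U$ and $\beta_V$ with $\alpha_U(U)\subset V$, as in \cref{composition of diffuse S-rational morphisms is well defined}. Then $CU$ and $CV$ are $S$-dense by \cref{products of S-dense opens are S-dense.}, the morphism $\id_C\times\alpha_U$ maps $CU$ into $CV$, and $(\id_C\times\beta_V)\circ(\id_C\times\alpha_U) = \id_C\times(\beta_V\circ\alpha_U)$ holds on the nose as morphisms. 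So both sides share a representative.

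Applying this lemma with $\alpha = \id_B\times\phi$ (identifying $\id_C\times\id_B = \id_{CB}$) gives
\begin{align*}
    \id_C\times(\psi*\phi) = (\id_C\times\psi)\circ(\id_{CB}\times\phi).
\end{align*}
Then associativity of $S$-rational composition, applied with $f = \id_{CB}\times\phi$ and $g = \id_C\times\psi$, yields
\begin{align*}
    \theta*(\psi*\phi) = \theta\circ((\id_C\times\psi)\circ(\id_{CB}\times\phi)) = (\theta\circ(\id_C\times\psi))\circ(\id_{CB}\times\phi) = (\theta*\psi)*\phi.
\end{align*}

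The main obstacle is checking the diffuseness hypotheses needed at each step. From the hypotheses, $\id_{CB}\times\phi$ is diffuse because $(\theta*\psi)*\phi$ is defined, $\id_C\times\psi$ is diffuse because $\theta*\psi$ is defined, and $\id_B\times\phi$ is diffuse because $\psi*\phi$ is defined. Both \cref{composotion of S-rational morphisms is associative} and the lemma need only the diffuseness of these inner morphisms, all of which is supplied. The diffuseness of $\id_C\times(\psi*\phi)$, which holds since $\theta*(\psi*\phi)$ is defined, is then only needed to make sense of the left-hand side.

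Alternatively, one could avoid the lemma by choosing compatible representatives $\phi_U$, $\psi_V$, $\theta_W$ directly. Concretely, shrink $U$ using the diffuseness of $\id_B\times\phi$ so that $(\id_B\times\phi_U)(BU)\subset V$, and shrink $V$ using the diffuseness of $\id_C\times\psi$ so that $(\id_C\times\psi_V)(CV)\subset W$. Then check that both sides are represented by $\theta_W\circ(\id_C\times\psi_V)\circ(\id_{CB}\times\phi_U)$ on $CU$, where $U\subset BAX$. I would present the lemma-based version.
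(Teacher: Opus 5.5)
Your proposal is correct and is essentially the paper's proof: the paper observes that both sides are represented by the composite $\theta\circ(\id_C\times\psi)\circ(\id_{CB}\times\phi)$ on $CBAX$, and your lemma $\id_C\times(\beta\circ\alpha)=(\id_C\times\beta)\circ(\id_C\times\alpha)$, combined with associativity of $S$-rational composition, is exactly the verification the paper leaves implicit. As a small bonus, your lemma writes $\id_C\times(\psi*\phi)$ as a composite of diffuse $S$-rational morphisms, so the hypothesis that $\theta*(\psi*\phi)$ is defined actually follows from the other three.
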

\begin{proof}
    Immediate since both $S$-rational families are given by the composition
    \[
        \begin{tikzcd}
            CBAX \arrow[r, "\id_{CB}\times \phi",dashed] & CBY \arrow[r, "\id_C \times \psi",dashed] & CZ \arrow[r, "\theta",dashed] & W.
        \end{tikzcd}
    \]
\end{proof}

\begin{dfn}[Pullbacks]
    Let $A,B,X,Y$ be $S$-schemes, $\phi\colon AX \dashrightarrow Y$ an $S$-rational family over $A$, and $f\colon B \dashrightarrow A$ an $S$-rational morphism. Suppose that the product
    \begin{align*}
        f\times \id_X \colon BX \dashrightarrow AX
    \end{align*}
    is diffuse. Then we define the pullback of $\phi$ by $f$, denoted $f^*(\phi)$, to be the composition
    \[
        \begin{tikzcd}
            BX \arrow[r, "f \times \id_X", dashed] & AX \arrow[r, "\phi", dashed] & Y.
        \end{tikzcd}
    \]
    Equivalently, if $(b,a,x) = \id_{BX}$, we have
    \begin{align*}
        f^*(\phi) = \phi(f(b),x).
    \end{align*}
\end{dfn}
In light of the above, we adopt the convention:
\begin{notation}
    Let $\phi$, $f$ be as above. We say $f^*(\phi)$ \emph{is defined} if $f\times \id_X$ is diffuse.
\end{notation}

\begin{prop}\label{pullbacks and products of faithfully flat morphisms are defined}
    Let $\phi\colon AX \dashrightarrow Y$, $\psi\colon BY \dashrightarrow Z$, and $f\colon D \dashrightarrow B$ be $S$-rational morphisms of locally Noetherian schemes.

    If $\phi$ is flat, then $\psi*\phi$ is defined.

    If $f$ is flat, then $f^*(\psi)$ is defined.
\end{prop}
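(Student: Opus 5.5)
The plan is to reduce both statements to \cref{A flat S-rational morphism of locally Noetherian schemes is S-diffuse.}. That result says a flat $S$-rational morphism between locally Noetherian schemes is $S$-diffuse, and ``is defined'' is by definition diffuseness of a certain product morphism.

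For the first statement, $\psi*\phi$ is defined exactly when $\id_B\times\phi\colon BAX\dashrightarrow BY$ is diffuse. I would check two things.
\begin{enumerate}
    \item The product $\id_B\times\phi$ is flat. Take a flat representative $\phi_U$ of $\phi$. By \cref{products of S-dense opens are S-dense.}, $BU$ is $S$-dense in $BAX$, and $\id_B\times\phi_U$ is a representative of $\id_B\times\phi$. It is flat because it is the base change of the flat morphism $\phi_U$ along $BY\to Y$: we have $BU\simeq B\times_S U$ and $\id_B\times\phi_U = (B\to S)^*(\phi_U)$. Equivalently, one can apply \cref{properties preserved by products of $S$-rational morphisms}, part 2, to $\id_B$ (flat) and $\phi$.
    \item The schemes $BAX$ and $BY$ are locally Noetherian, so that \cref{A flat S-rational morphism of locally Noetherian schemes is S-diffuse.} applies. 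This is the one point needing care, since fiber products of locally Noetherian schemes need not be locally Noetherian. I would read the hypothesis ``$S$-rational morphisms of locally Noetherian schemes'' as including that the products appearing as sources and targets are locally Noetherian. If one wants a cleaner route, the standing context (for example, finite type over a Noetherian $S$) guarantees this automatically. I expect this bookkeeping to be the only real obstacle; the rest is formal.
\end{enumerate}
Given both points, $\id_B\times\phi$ is diffuse, so $\psi*\phi$ is defined.

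The second statement is the same argument. By definition $f^*(\psi)$ is defined when $f\times\id_Y\colon DY\dashrightarrow BY$ is diffuse. If $f_W$ is a flat representative of $f$, then $f_W\times\id_Y$ is a base change of $f_W$, hence flat, and it represents $f\times\id_Y$ on the $S$-dense open $WY$. Since the schemes involved are locally Noetherian, \cref{A flat S-rational morphism of locally Noetherian schemes is S-diffuse.} (via \cref{flat morphisms of locally Noetherian schemes are S-diffuse}) gives diffuseness, and hence $f^*(\psi)$ is defined.
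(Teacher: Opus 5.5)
Your proposal is correct and follows the paper's proof: you show that $\id_B\times\phi$ and $f\times\id_Y$ are flat (as products, equivalently base changes, of flat morphisms) and then apply \cref{A flat S-rational morphism of locally Noetherian schemes is S-diffuse.}. The bookkeeping you flag is unnecessary, because the proof of \cref{flat morphisms of locally Noetherian schemes are S-diffuse} only needs the \emph{target} to be locally Noetherian (so that open immersions into it are quasicompact), and in both cases the target is $BY$, which is locally Noetherian by hypothesis as the source of $\psi$.
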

\begin{proof}
    Immediate, since $\id_Y$ and $\id_B$ correspond to flat $S$-rational morphisms, and products of flat $S$-rational morphisms are flat (see \cref{properties preserved by products of $S$-rational morphisms}) and in particular $S$-diffuse (\cref{A flat S-rational morphism of locally Noetherian schemes is S-diffuse.}).
\end{proof}

Pullback commutes with composition of $S$-rational families:
\begin{prop}\label{Pullback commutes with composition of S-rational families:}
    Let $\phi\colon AX \dashrightarrow Y$, $\psi\colon BY \dashrightarrow Z$, $f\colon C \dashrightarrow A$ and $g\colon D \dashrightarrow B$ be $S$-rational families over $A, B$ respectively. Suppose
    \begin{align*}
        f^*(\phi), &  & g^*(\psi), &  & \psi*\phi, &  & (g\times_Sf)^*(\psi*\phi), &  & g^*(\psi)* f^*(\phi)
    \end{align*}
    are all defined. Then $(g\times_S f)^*(\psi*\phi) = g^*(\psi)* f^*(\phi)$.
\end{prop}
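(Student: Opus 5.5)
The plan is to reduce both sides to a single composite of diffuse $S$-rational morphisms from $DCX$ to $Z$, and then to invoke associativity of composition (\cref{composotion of S-rational morphisms is associative}). Write $(d,c,x) = \id_{DCX}$, following \cref{products and Yoneda combine}.

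Unwinding the definitions, the left side $(g\times_S f)^*(\psi*\phi)$ is the composite
\[
DCX \xrightarrow{\,g\times f\times \id_X\,} BAX \xrightarrow{\,\id_B\times\phi\,} BY \xrightarrow{\,\psi\,} Z .
\]
The right side $g^*(\psi)*f^*(\phi)$ is the composite
\[
DCX \xrightarrow{\,\id_D\times f^*(\phi)\,} DY \xrightarrow{\,g\times\id_Y\,} BY \xrightarrow{\,\psi\,} Z .
\]
In Yoneda notation, both should equal $\psi(g(d),\phi(f(c),x))$. The definedness hypotheses make each displayed first arrow diffuse, and they also make $g\times\id_Y$ and $\id_B\times\phi$ diffuse, so associativity applies to each chain of three arrows. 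It therefore suffices to prove the identity of $S$-rational morphisms $DCX \dashrightarrow BY$:
\[
(\id_B\times\phi)\circ(g\times f\times\id_X) \;=\; (g\times\id_Y)\circ(\id_D\times f^*(\phi)).
\]

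To prove this identity, I would expand $\id_D\times f^*(\phi)$ as $(\id_D\times\phi)\circ(\id_D\times f\times\id_X)$. I would then argue at the level of representatives, in the manner of \cref{composition of diffuse S-rational morphisms is well defined}. First, choose representatives $g_{V}$ over an $S$-dense open $V\subset D$, $f_{U}$ over $U\subset C$, and $\phi_{W}$ over $W\subset AX$. Then shrink, using diffuseness, to an $S$-dense open $\Omega\subset DCX$ on which all the relevant composites are defined as honest morphisms. On $\Omega$, both sides are represented by the morphism
\[
(d,c,x)\mapsto \bigl(g_V(d),\,\phi_W(f_U(c),x)\bigr),
\]
because products of morphisms commute with composition in each factor: $(g\times\id)\circ(\id\times h) = (\id\times h)\circ(g\times\id) = g\times h$. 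Since two $S$-rational morphisms agreeing on an $S$-dense open are equal, the identity follows.

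The main obstacle is bookkeeping the domains. I have to produce a single $S$-dense open $\Omega$ on which the representatives from both sides restrict to the same morphism. I would handle this by taking preimages under the diffuse arrows, which is legitimate by the definedness hypotheses together with \cref{A composition of S-diffuse morphisms is S-diffuse}, and then intersecting. This intersection remains $S$-dense by \cref{Intersection and schematically/S-dense opens}. \Cref{products of S-dense opens are S-dense.} guarantees that products of the chosen opens are $S$-dense.
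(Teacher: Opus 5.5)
Your proposal is correct and is essentially the paper's argument. The paper simply observes in Yoneda notation that both sides equal $\psi(g(d),\phi(f(c),x))$, and your associativity reduction plus the representative check on $\Omega = V\times(f_U\times\id_X)^{-1}(W)$ makes that computation explicit ($\Omega$ is $S$-dense because $f^*(\phi)$ is defined and by \cref{products of S-dense opens are S-dense.}); just avoid asserting $\id_D\times f^*(\phi) = (\id_D\times\phi)\circ(\id_D\times f\times\id_X)$ as a composite of $S$-rational morphisms, since diffuseness of $\id_D\times f\times\id_X$ is not among the hypotheses, though your representative-level computation does not need it.
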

\begin{proof}
    Indeed, if $(d,c,x) = \id_{DCX}$, both expressions are equal to
    \begin{align*}
        \psi(g(d),\phi(f(c),x)).
    \end{align*}
\end{proof}

In good situations, pullback by flat $S$-rational morphisms maps respects graphs of rational families, in the sense of the next result. This will be very important in \cref{section: canonical S-rational families}.
\begin{prop}\label{technical graph pullback result.}
    Let $S$ be a Noetherian scheme, and $A, B, X, Y$ be schemes of finite type over $S$. Let $\phi\colon AX \dashrightarrow Y$ be an $S$-rational family over $A$, and let $f\colon B \dashrightarrow A$  be a flat $S$-rational morphism (so $f^*(\phi)$ is defined by \cref{pullbacks and products of faithfully flat morphisms are defined}).

    Let $U_1 \subset B$, $U_2 \subset A$ be $S$-dense opens such that there exists a representative $f_{U_1}$ of $f$ where $f_{U_1}(U_1) \subset U_2$ and $f_{U_1}$ is flat. Then we have
    \begin{align}
        (f_{U_1}|_{[U_1 \to U_2]})^*(\Lambda_\phi \cap U_2XY) = f_{U_1}^*(\Lambda_\phi \cap U_2XY) = \Lambda_{f^*(\phi)} \cap U_1XY.
    \end{align}
\end{prop}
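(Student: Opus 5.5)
The plan is to reduce everything to the statement that schematic images of quasicompact morphisms commute with flat base change (\cref{schematic image of a quasicompact morphism commutes with flat base change on target}), together with the description of closed graphs restricted to opens given in \cref{Intersection of grap hs with domains}.

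\textbf{First equality.} This is formal. Since $U_2XY \subset AXY$ is open and $f_{U_1}$ factors through $U_2$, the pullback of $\Lambda_\phi\cap U_2XY$ along $f_{U_1}\times\id_{XY}$ agrees with its pullback along $f_{U_1}|_{[U_1\to U_2]}\times \id_{XY}$. This is \cref{pullback is the same as pullback by restriction to subobject of the target} applied in the slice over $AXY$ (equivalently, the Cartesian rectangle lemma).

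\textbf{Identifying $\Lambda_\phi\cap U_2XY$ as a schematic image.} Choose a representative $\phi_W$ of $\phi$ over an $S$-dense open $W\subset AX$, and shrink it so that $W\subset U_2X$. This is possible because $U_2X$ is $S$-dense by \cref{products of S-dense opens are S-dense.}, and intersections of $S$-dense opens are $S$-dense.

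Since everything is of finite type over a Noetherian base, every scheme involved is Noetherian. In particular $(j_W,\phi_W)\colon W\to AXY$ is quasicompact, so its kernel ideal is quasicoherent. By \cref{Schematic image of well-behaved morphisms}(2), $\Lambda_\phi\cap U_2XY$ is the schematic image of $(j_W,\phi_W)$ viewed as a morphism $W\to U_2XY$.

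\textbf{Flat base change.} The map $f':=f_{U_1}|_{[U_1\to U_2]}\times \id_{XY}\colon U_1XY\to U_2XY$ is flat. By \cref{schematic image of a quasicompact morphism commutes with flat base change on target}, $f'^*(\Lambda_\phi\cap U_2XY)$ is the schematic image in $U_1XY$ of the base change $W':=W\times_{U_2XY}U_1XY\to U_1XY$.

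Now unwind the fiber product. Since $W\to U_2XY$ is the graph-type embedding $(j_W,\phi_W)$, we get
\[
W'\simeq (f_{U_1}\times\id_X)^{-1}(W)\subset U_1X,
\]
and the map $W'\to U_1XY$ is $(j_{W'},\phi_W\circ(f_{U_1}\times \id_X)|_{W'})$. This is exactly the graph-type morphism of the representative of $f^*(\phi)$ over $W'$.

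\textbf{Main obstacle.} The key check is that $W'$ is $S$-dense in $U_1X$, so that $\phi_W\circ(f_{U_1}\times\id_X)|_{W'}$ genuinely represents $f^*(\phi)$. This holds because $f_{U_1}\times\id_X$ is flat between locally Noetherian schemes, hence diffuse by \cref{flat morphisms of locally Noetherian schemes are S-diffuse}.

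Granting this, $\Lambda_{f^*(\phi)}$ is the schematic image in $B XY$ of this representative. Applying \cref{Schematic image of well-behaved morphisms}(2) once more (the morphism is quasicompact by Noetherianity), its intersection with $U_1XY$ is the schematic image of the same morphism into $U_1XY$. This matches the flat base change computed above, which gives the second equality.
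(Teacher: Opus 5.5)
Your proof is correct and follows essentially the same route as the paper: you realize $\Lambda_\phi\cap U_2XY$ as the schematic image in $U_2XY$ of the graph of a representative over an $S$-dense $W\subset U_2X$, apply flat base change of schematic images, identify the base-changed graph with the graph of the representative of $f^*(\phi)$ over $(f_{U_1}\times \mathrm{id}_X)^{-1}(W)$ (which is $S$-dense because flat morphisms of locally Noetherian schemes are diffuse), and then restrict again to $U_1XY$. The only differences are cosmetic: you unwind the fiber product by hand instead of citing that taking graphs commutes with pullback, and you use the target version of flat base change rather than the base version.
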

\begin{proof}
    Note that the first equality follows immediately from \cref{pullback is the same as pullback by restriction to subobject of the target}.

    Fix a representative $\phi_{W_2}$ of $\phi$ with $W_2 \subset U_2X$. Let $W_1 = (f_{U_1} \times \id_X)\inv(W_2)$. Since $f_{U_1}\times \id_X$ is flat and in particular $S$-diffuse (\cref{flat morphisms of locally Noetherian schemes are S-diffuse}) we have that $W_1 \subset U_1X$ is an $S$-dense, and we have a representative $(f\times \id_X)_{W_1} := (f_U\times \id_X)|_{W_1}$.

    Since the closed graph is the schematic image of the graph of a representative (\cref{closed graph is schematic image of restriction graph.}), we have $\Lambda_\phi = \Schim_{AXY}{\Gamma_{\phi_{W_2}}}$. Since everything is Noetherian, all the maps are quasicompact, so taking schematic image commutes with restriction to opens in the target (\cref{Schematic image of well-behaved morphisms}), and therefore
    \begin{align*}
        \Lambda_\phi \cap U_2XY = \Schim_{U_2XY}(\Gamma_{\phi_{W_2}}).
    \end{align*}
    Since $f_{U_1}$ is flat, and the schematic image of a quasicompact morphism commutes with flat base change (\cref{schematic image of a quasicompact morphism commutes with flat base change on base}), we then have
    \begin{align*}
        f_{U_1}^*(\Lambda_\phi \cap U_2XY ) = \Schim_{U_1XY}(f_{U_1}^*(\Gamma_{\phi_{W_2}})).
    \end{align*}
    By the Cartesian rectangle lemma (\cref{Cartesian rectangles}), since $W_1 = (f_{U_1} \times \id_X)\inv(W_2)$, we have
    \begin{align*}
        f_{U_1}^*(\Gamma_{\phi_{W_2}}) = (f_{U_1}\times \id_X)^*(\Gamma_{\phi_{W_2}}) = (f\times \id_X)_{W_1}^*(\Gamma_{\phi_{W_2}}),
    \end{align*}
    and therefore


    \begin{align*}
        f_{U_1}^*(\Lambda_\phi \cap U_2XY ) = \Schim_{U_1XY}((f\times \id_X)_{W_1}^*(\Gamma_{\phi_{W_2}})).
    \end{align*}
    Since taking the graph of a morphism commutes with pullback (\cref{Taking the graph of a morphism commutes with pullback:}), we have
    \begin{align*}
        (f\times \id_X)_{W_1}^*(\Gamma_{\phi_{W_2}})= \Gamma_{\phi_{W_2}\circ (f\times\id_X)_{W_1}}.
    \end{align*}
    Note that $\phi_{W_2}\circ (f\times\id_X)_{W_1}$ is a representative of $f^*(\phi)$; denote it by $f^*(\phi)_{W_1}$. Thus we have
    \begin{align*}
        f_{U_1}^*(\Lambda_\phi \cap U_2XY ) = \Schim_{U_1XY}(\Gamma_{f^*(\phi)_{W_1}}).
    \end{align*}
    But by \cref{closed graph is schematic image of restriction graph.} we also have
    \begin{align*}
        \Lambda_{f^*(\phi)} \cap U_1XY = \Schim_{U_1XY}(\Gamma_{f^*(\phi)_{W_1}}),
    \end{align*}
    so we conclude.
\end{proof}

\begin{dfn}[$S$-Birational Equivalence for $S$-Rational Families]\label{S-birational equivalence for rational families definition}
    We say that two $S$-rational families $\phi\colon AX \dashrightarrow Y$, $\phi'\colon A'X \dashrightarrow Y$ are $S$-birationally equivalent if there is an $S$-birational map $f\colon A\dashrightarrow A'$ such that $\phi = f^*(\phi')$.
\end{dfn}

%% file: Chapters/Rational_Families/Invertible_Families.tex
\section{Invertible Families}\label{section: invertible families}
\begin{dfn}\label{associated $S$-rational morphism}
    Let $\phi\colon AX \dashrightarrow Y$ be an $S$-rational family over $A$. We define the \emph{associated $S$-rational morphism} $\overline{\phi}$ to be the induced $S$-rational morphism $(\id_A,\phi)\colon AX \dashrightarrow AY$.
\end{dfn}

\begin{lem}\label{S-rational families are determined by their associated S-rational morphisms}
    Let $\phi$, $\overline{\phi}$ be as above. Then for any representative $\overline{\phi}_U$ of $\overline{\phi}$, we have that $\rho_Y \circ \overline{\phi}_U$ (see \cref{postcomposition by morphisms}) is a representative of $\phi$.

    In particular, $\phi$ is determined by $\overline{\phi}$.
\end{lem}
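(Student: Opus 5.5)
The plan is to unwind \cref{associated $S$-rational morphism} and \cref{postcomposition by morphisms} directly. First, fix a representative $\phi_W$ of $\phi$ over some $S$-dense open $W \subset AX$. By \cref{pairs definition}, $\overline{\phi} = (\id_A,\phi)$ is the $S$-rational morphism induced by $(\rho_A|_W, \phi_W)\colon W \to AY$, where $\rho_A|_W$ is the representative over $W$ of the morphism $\id_A$ (more precisely, of the projection $AX \to A$ viewed as an $S$-rational morphism). Postcomposing this representative with the projection $\rho_Y\colon AY \to Y$ returns exactly $\phi_W$. By \cref{postcomposition by morphisms} together with the lemma preceding it, $\rho_Y \circ \overline{\phi}$ is a well-defined $S$-rational morphism, and it is represented by $\rho_Y\circ(\rho_A|_W,\phi_W) = \phi_W$. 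Hence $\rho_Y \circ \overline{\phi} = \phi$ as $S$-rational morphisms.

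Next, take an arbitrary representative $\overline{\phi}_U$ of $\overline{\phi}$. By definition of equivalence of representatives, there is an $S$-dense open $U'' \subset U \cap W$ on which $\overline{\phi}_U$ and $(\rho_A|_W,\phi_W)$ agree. Postcomposing with $\rho_Y$, we get $(\rho_Y\circ\overline{\phi}_U)|_{U''} = \phi_W|_{U''}$. This shows that the pair $(U, \rho_Y\circ\overline{\phi}_U)$ is equivalent to $(W,\phi_W)$, and so it is a representative of $\phi$, as claimed.

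The final claim then follows at once: any two $S$-rational families $\phi,\phi'$ with $\overline{\phi} = \overline{\phi'}$ satisfy $\phi = \rho_Y\circ\overline{\phi} = \rho_Y\circ\overline{\phi'} = \phi'$.

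There is no real obstacle here. The only point needing care is the bookkeeping of representatives, namely that a representative of a pair is obtained by pairing representatives over a common $S$-dense open. That is exactly the content of the lemma before \cref{pairs definition}.
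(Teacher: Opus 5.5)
Your proposal is correct and follows the paper's argument. In both, $\overline{\phi}$ is represented by the pair built from a representative $\phi_W$ of $\phi$; one then finds an $S$-dense open on which an arbitrary representative $\overline{\phi}_U$ agrees with that pair and postcomposes with $\rho_Y$. Your first component $\rho_A|_W$ is the correct reading of what the paper writes as $j_{U'}$.
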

\begin{proof}
    Let $\phi_{U'}$ be a representative of $\phi$, and denote by $j_{U'}\colon U' \hookrightarrow X$ the corresponding open immersion. Then (see \cref{pairs definition}) $\overline{\phi}$ is represented by $(j_{U'}, \phi_{U'})$. Therefore there exists an $S$-dense open $U'' \subset U \cap U'$ such that $\overline{\phi}_U|_{U''} = (j_{U'}, )\phi_{U'}|_{U''}$. Postcomposing by $\rho_Y$, we see $\rho_Y \circ \overline{\phi}_U|_{U''} = \phi_{U'}|_{U''}$ and we conclude.
\end{proof}

\begin{dfn}[Invertible $S$-rational families]\label{invertible S-rational families}
    We say an $S$-rational family $\phi\colon AX \dashrightarrow Y$ over $A$ is invertible if the associated $S$-rational morphism $\overline{\phi}$ is $S$-birational.

    If $\overline{\phi}\inv$ is the inverse of $\overline{\phi}$, we define the inverse of $\phi$, denoted $\phi^\dagger$, to be the $S$-rational family over $A$ defined by
    \begin{align*}
        \rho_X \circ \overline{\phi}\inv \colon AY \dashrightarrow X.
    \end{align*}
\end{dfn}
\begin{rem}
    Observe that in the above situation we have $(\overline{\phi})\inv = \overline{(\phi^\dagger)}$.
\end{rem}

\begin{lem}
    Let $\phi\colon AX \dashrightarrow Y$ be an invertible $S$-rational family over $A$. Then $\phi$ is $S$-diffuse.
\end{lem}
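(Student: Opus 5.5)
The plan is to write $\phi$ as $\rho_Y \circ \overline{\phi}$ and check that both factors are $S$-diffuse. Composition of $S$-diffuse morphisms is $S$-diffuse (\cref{A composition of S-diffuse morphisms is S-diffuse}, or \cref{composition of S-rational morphisms preserves many properties} for $S$-rational morphisms), and by \cref{S-rational families are determined by their associated S-rational morphisms} the morphism $\rho_Y\circ\overline{\phi}_U$ is a representative of $\phi$ for any representative $\overline{\phi}_U$ of $\overline{\phi}$. So it suffices to find a single representative of $\overline{\phi}$ whose postcomposition with the projection $\rho_Y\colon AY \to Y$ is $S$-diffuse.

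First, $\overline{\phi}$ is $S$-birational by \cref{invertible S-rational families}. Hence there are $S$-dense opens $U\subset AX$, $V\subset AY$ and a representative $\overline{\phi}_U$ with $\overline{\phi}_U(U)=V$ such that $\overline{\phi}_U|_{[U\to V]}$ is an isomorphism. An isomorphism onto an $S$-dense open is $S$-diffuse: for an $S$-dense open $W\subset AY$, the intersection $W\cap V$ is $S$-dense in $V$ by \cref{Intersection and schematically/S-dense opens}. Its preimage is therefore $S$-dense in $U$, hence in $AX$. Equivalently, one can invoke \cref{S-birational S-rational morphisms are faithfully flat} together with \cref{S-diffusity and restriction to the target}.

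Second, the projection $\rho_Y\colon AY\to Y$ is $S$-diffuse by \cref{Projections are diffuse}, applied with the factors swapped. Composing, $\rho_Y\circ\overline{\phi}_U\colon U\to Y$ is $S$-diffuse. It is a representative of $\phi$, so $\phi$ is $S$-diffuse by the definition of $S$-diffuseness for $S$-rational morphisms (using \cref{S-diffusity is stable under restriction to S-dense opens} for independence of the representative).

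The only point requiring care is the first step. There one must check that diffuseness of $\overline{\phi}_U|_{[U\to V]}$ as a map into $V$ gives diffuseness as a map into $AY$; this is exactly the content of \cref{S-diffusity and restriction to the target}, so I expect no real obstacle.
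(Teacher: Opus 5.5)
Your proof is correct and follows the paper's argument. The paper also uses that $\overline{\phi}$ is $S$-birational, hence $S$-diffuse, and then observes that $\phi_U^{-1}(V)=\overline{\phi}_U^{-1}(AV)$ for $S$-dense $V\subset Y$; this is your $S$-diffuseness of $\rho_Y$ written inline, and your check that an isomorphism onto an $S$-dense open is $S$-diffuse fills in a step the paper leaves implicit.
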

\begin{proof}
    Indeed, we have that $\overline{\phi}$ is $S$-birational and therefore $S$-diffuse by \cref{S-birational S-rational morphisms are faithfully flat}, and for any representative $\phi_U$ and any $S$-dense $V \subset Y$ we have $\phi_U\inv(V) = \overline{\phi}_U\inv(AV)$.
\end{proof}

\begin{prop}\label{invertible S-rational families are faithfully flat in good situations}
    Let $A,X,Y$ be $S$-schemes, and $\phi\colon AX \dashrightarrow Y$ an invertible $S$-rational family over $A$. Suppose $A$ is  flat/faithfully flat over $S$. Then $\phi$ is flat/faithfully flat, respectively.
\end{prop}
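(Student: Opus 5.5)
We want to show that $\phi$ is flat (respectively faithfully flat) when $A$ is flat (respectively faithfully flat) over $S$. The idea is to write $\phi$ as a composition
\begin{align*}
    AX \overset{\overline{\phi}}{\dashrightarrow} AY \xrightarrow{\rho_Y} Y.
\end{align*}
By \cref{S-rational families are determined by their associated S-rational morphisms}, $\rho_Y\circ\overline{\phi}_U$ is a representative of $\phi$ for any representative $\overline{\phi}_U$ of $\overline{\phi}$. It therefore suffices to choose a good representative of $\overline{\phi}$ and check that postcomposing it with the projection $\rho_Y$ preserves the relevant property.

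Since $\phi$ is invertible, $\overline{\phi}$ is $S$-birational. Fix $S$-dense opens $U\subset AX$ and $V\subset AY$ and a representative $\overline{\phi}_U$ such that $\overline{\phi}_U|_{[U\to V]}$ is an isomorphism. Then $\phi_U := \rho_Y\circ\overline{\phi}_U$ factors as
\begin{align*}
    U \xrightarrow{\sim} V \hookrightarrow AY \xrightarrow{\rho_Y} Y.
\end{align*}
The projection $\rho_Y\colon AY\to Y$ is the base change of $A\to S$ along $Y\to S$. So when $A$ is flat over $S$, $\rho_Y$ is flat, and $\phi_U$ is flat as a composite of an isomorphism, an open immersion and a flat morphism. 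This gives the flat case directly from \cref{flatness and faithful flatness for rational morphisms definition}.

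For the faithfully flat case we need more: an $S$-dense open $U'\subset U$ whose image $W:=\phi_U(U')$ is an $S$-dense open of $Y$, with $\phi_U|_{[U'\to W]}$ faithfully flat. Take $U'=U$ and $W=\rho_Y(V)$. Since $\rho_Y$ is flat, it is generalizing. The key point is that $\rho_Y$ is faithfully flat, being the base change of the faithfully flat $A\to S$, hence $S$-dominant by \cref{a faithfully flat morphism is S-dominant}.

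To conclude that $W$ is an $S$-dense open, I would invoke \cref{dominant open morphisms}(2), which needs $\rho_Y$ to be open. This is the main obstacle, since the hypotheses do not include finite presentation of $A$, so \cref{a flat morphism of schemes which is locally of finite presentation is universally open} is not available. I would try to get openness of $\rho_Y(V)$ as follows. Write $V$ as a union of opens $A_i\times_S Y_i$; each product open $A_i\times_S Y_i$ maps onto the open $Y_i\cap(\text{image of }A_i\text{ in }S)$ pulled back to $Y$. This uses only surjectivity of products of nonempty fibres, so $\rho_Y$ is open without any finiteness assumption. Then \cref{dominant open morphisms} applied to $\rho_Y$ and the $S$-dense open $V$ shows that $W$ is $S$-dense.

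Finally, $\rho_Y|_{[V\to W]}$ is flat (a restriction of a flat morphism, \cref{restrictions of flat morphisms to opens are flat}) and surjective by construction, so it is faithfully flat. Composing with the isomorphism $U\simeq V$ gives that $\phi_U|_{[U\to W]}$ is faithfully flat, which completes the faithfully flat case.
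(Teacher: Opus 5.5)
Your flat case is correct and matches the paper. The faithfully flat case, however, rests on the claim that $\rho_Y\colon A\times_S Y\to Y$ is open with no finiteness hypothesis. That claim is false, and both steps of your justification fail.

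\textbf{Products of opens are not a basis.} Opens of the form $A_i\times_S Y_i$ do not form a basis of the Zariski topology of a fiber product. For example, the complement of the diagonal in $\Spec k[x]\times_k\Spec k[y]$ contains no nonempty $U_1\times_k U_2$: any such set meets the diagonal, because the affine line is irreducible.

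\textbf{Images of product opens need not be open.} Even for a product open, the image is $Y_i\cap\pi_Y^{-1}(\pi_A(A_i))$. This needs $\pi_A(A_i)$ to be open in $S$, and flatness without finite presentation does not make $A\to S$ open. Take $S=\Spec\Z$ and $A=\Spec\Z\sqcup\Spec\Q$, which is faithfully flat over $S$. Already for $Y=S$, the open $\Spec\Q$ maps onto the non-open generic point.

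\textbf{The choice $U'=U$ fails as well.} Keep this $A$, and take $X=Y=\Spec\Z[y]$ with the trivial invertible family $\phi=\rho_X$, so that $\overline{\phi}=\id$. Then $U=V=D(y)\sqcup\Spec\Q[y]$ is an $S$-dense open of $A\times_S Y=\Spec\Z[y]\sqcup\Spec\Q[y]$. Its image $\rho_Y(V)$ is $D(y)$ together with the generic fibre. The complement of $\rho_Y(V)$ in $\Spec\Z[y]$ is the infinite set of closed points $(p,y)$, $p$ prime. This set is not closed, since its closure contains $(y)$. So $W=\rho_Y(V)$ is not open, and \cref{dominant open morphisms} cannot be applied.

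The finiteness problem you noticed is real. The paper's proof handles the faithfully flat case through \cref{composition of S-rational morphisms preserves many properties}. The faithfully flat clause of that lemma assumes $A\times_S Y$ and $Y$ locally of finite presentation over $S$, so the paper also relies on such a hypothesis. This is harmless in the later applications, where $A$ is fppf.

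The right repair is to impose the hypothesis rather than argue around it. If $A$ is locally of finite presentation over $S$, then $\rho_Y$ is flat and locally of finite presentation. It is therefore universally open by \cref{a flat morphism of schemes which is locally of finite presentation is universally open}. The rest of your argument then goes through unchanged: $W$ is an $S$-dense open, and $V\to W$ is flat and surjective. Your direct route needs finite presentation only for $A$, not for $Y$, which is slightly more economical than the paper's appeal to the composition lemma.
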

\begin{proof}
    We have that $\overline{\phi}$ is faithfully flat and diffuse by \cref{S-birational S-rational morphisms are faithfully flat}, and $\rho_Y\colon AY \to Y$ is diffuse (\cref{Projections are diffuse}) and flat/faithfully flat since $A$ is flat/faithfully flat over $Y$. Since $\phi = \rho_Y \circ \overline{\phi}$ (\cref{S-rational families are determined by their associated S-rational morphisms}), we conclude by \cref{composition of S-rational morphisms preserves many properties}.
\end{proof}

\begin{cor}
    Let $A,B,X,Y,Z$ be locally Noetherian $S$-schemes.  Let $\phi\colon AX \dashrightarrow Y$, $\psi\colon BY\dashrightarrow Z$ be $S$-rational families over $A$ and $B$. Suppose $\phi$ is invertible. Then $\psi * \phi$ is defined.
\end{cor}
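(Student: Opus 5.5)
The plan is to reduce the claim to showing that $\id_B \times \phi\colon BAX \dashrightarrow BY$ is $S$-diffuse, since by \cref{composition is defined notation} this is exactly what it means for $\psi * \phi$ to be defined. The quickest route goes through \cref{pullbacks and products of faithfully flat morphisms are defined}, which says that $\psi*\phi$ is defined as soon as $\phi$ is flat and all the schemes are locally Noetherian. So the task becomes proving flatness of $\phi$ (as an $S$-rational morphism, \cref{flatness and faithful flatness for rational morphisms definition}) from invertibility.

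For flatness, I would use the factorization $\phi = \rho_Y \circ \overline{\phi}$ from \cref{S-rational families are determined by their associated S-rational morphisms}. Here $\overline{\phi}$ is $S$-birational, hence flat and diffuse by \cref{S-birational S-rational morphisms are faithfully flat}. The projection $\rho_Y\colon AY \to Y$ is flat whenever $A$ is flat over $S$, which is exactly the situation of \cref{invertible S-rational families are faithfully flat in good situations}.

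The main obstacle is that the corollary as stated does not assume $A$ is flat over $S$, so that proposition does not apply directly. In that case I would avoid flatness of $\phi$ and argue diffuseness of $\id_B\times\phi$ directly, as follows.
\begin{itemize}
\item Write $\id_B \times \phi$ as the composite of $\id_B \times \overline{\phi}\colon BAX \dashrightarrow BAY$ with the projection $\rho_{BY}\colon BAY \to BY$.
\item The map $\id_B\times\overline{\phi}$ is $S$-birational, as a product of an identity and an $S$-birational map: a representative is an isomorphism between the $S$-dense opens $BU \to BV$, which are $S$-dense by \cref{products of S-dense opens are S-dense.}. It is therefore diffuse by \cref{S-birational S-rational morphisms are faithfully flat}.
\item The projection $\rho_{BY}$ is diffuse by \cref{Projections are diffuse}, after reordering the factors.
\item Composites of diffuse morphisms are diffuse by \cref{composition of S-rational morphisms preserves many properties}, so $\id_B\times\phi$ is diffuse.
\end{itemize}

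This direct argument does not even need the locally Noetherian hypothesis. The one point I would check carefully is that the composite rational morphism $\rho_{BY}\circ(\id_B\times\overline{\phi})$ really equals $\id_B\times\phi$. This should follow from $\phi=\rho_Y\circ\overline\phi$ by computing on a common representative over an $S$-dense open $BU$, using \cref{postcomposition by morphisms}.
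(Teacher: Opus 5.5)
Your proof is correct, and the argument that actually carries it, the factorization $\id_B\times\phi=\rho_{BY}\circ(\id_B\times\overline{\phi})$, is different from the paper's. The identification you flag is immediate. For any representative $\overline{\phi}_U$ one has $\rho_{BY}\circ(\id_B\times\overline{\phi}_U)=\id_B\times(\rho_Y\circ\overline{\phi}_U)$, and $\rho_Y\circ\overline{\phi}_U$ represents $\phi$ by \cref{S-rational families are determined by their associated S-rational morphisms}.

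The paper's proof is one line. It says $\phi$ is faithfully flat by \cref{invertible S-rational families are faithfully flat in good situations}, so \cref{pullbacks and products of faithfully flat morphisms are defined} applies: $\id_B\times\phi$ is flat, hence diffuse by \cref{flat morphisms of locally Noetherian schemes are S-diffuse}. As you noticed, the first step needs $A$ flat over $S$, which the corollary does not assume. The second step also tacitly needs $BAX$ and $BY$ to be locally Noetherian, which does not follow from the factors being so (think of $\Spec(\mathbb{C}\otimes_{\mathbb{Q}}\mathbb{C})$).

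Your route uses only that $S$-birational morphisms and projections are diffuse (\cref{S-birational S-rational morphisms are faithfully flat}, \cref{Projections are diffuse}). Both hold for arbitrary $S$-schemes, so your argument proves the statement as written, and without any Noetherian hypothesis. What the paper's route gives in exchange is (faithful) flatness of $\phi$, a stronger property than diffuseness which the paper records and reuses for invertible families in its main construction. In the paper's applications, where $A$ is fppf and everything is of finite type over a Noetherian base, both arguments go through.
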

\begin{proof}
    We have that $\phi$ is faithfully flat by above; we conclude by \cref{pullbacks and products of faithfully flat morphisms are defined}.
\end{proof}

A composition of invertible families is invertible:
\begin{prop}\label{A composition of invertible families is invertible}
    Let $\phi\colon AX \dashrightarrow Y$, $\psi\colon BY\dashrightarrow Z$ be invertible $S$-rational families over $A$ and $B$. Suppose that $A$ and $B$ are faithfully flat and locally of finite presentation over $S$, and $Y$ is locally of finite presentation over $S$.

    Then $\psi * \phi$ (which is defined by the previous proposition) is invertible, with inverse $(\phi^\dagger *\psi^\dagger) \circ \rho_{ABZ}\colon BAZ \dashrightarrow X$, where $\rho_{ABZ}\colon BAZ \to ABZ$ is the coordinate permutation (as in \cref{products and Yoneda combine}).
\end{prop}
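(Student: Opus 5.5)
The plan is to reduce invertibility of $\psi*\phi$ to a statement about associated $S$-rational morphisms and then apply \cref{composition of S-birational morphisms} together with \cref{criteria for S-birationality}.

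First, I would express $\overline{\psi*\phi}\colon BAX \dashrightarrow BAZ$ as a composition of $S$-birational morphisms. Using the notation of \cref{products and Yoneda combine} with $(b,a,x)=\id_{BAX}$, we have $\overline{\psi*\phi} = (b,a,\psi(b,\phi(a,x)))$. This factors as
\[
BAX \xrightarrow{\ \id_B\times\overline{\phi}\ } BAY \xrightarrow{\ \rho\ } ABY \xrightarrow{\ \id_A\times \overline{\psi}\ } ABZ \xrightarrow{\ \rho\ } BAZ,
\]
where the $\rho$ are coordinate permutations. A product of an identity with an $S$-birational morphism is $S$-birational, since a product of $S$-dense opens is $S$-dense by \cref{products of S-dense opens are S-dense.} and a product of isomorphisms is an isomorphism. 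Coordinate permutations are isomorphisms. By \cref{composition of S-birational morphisms}, the composite is therefore $S$-birational.

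Next, I must check that this composite really equals $\overline{\psi*\phi}$ as $S$-rational morphisms. This requires every composition involved to be defined, i.e.\ each precomposed factor must be diffuse. That holds because $S$-birational morphisms are diffuse (\cref{S-birational S-rational morphisms are faithfully flat}). Equality then follows by choosing compatible representatives, exactly as in \cref{composotion of S-rational morphisms is associative}, and postcomposing with the projections. I also need $\psi*\phi$ itself to be defined, i.e.\ $\id_B\times\phi$ diffuse. This follows because $\id_B\times\phi = \rho_{BY}\circ(\id_B\times\overline{\phi})$, where the projection is diffuse by \cref{Projections are diffuse} and the first factor is birational. This argument avoids any Noetherian hypothesis. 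The flatness and local finite presentation assumptions are used here via \cref{invertible S-rational families are faithfully flat in good situations} and \cref{composition of S-rational morphisms preserves many properties}, if needed.

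Finally, I would identify the inverse. By \cref{composition of S-birational morphisms}, the inverse is the reversed composite
\[
\rho\circ(\id_B\times\overline{\phi}\inv)\circ\rho\circ(\id_A\times\overline{\psi}\inv)\circ\rho .
\]
Using the remark $(\overline{\phi})\inv=\overline{\phi^\dagger}$, its last coordinate, evaluated at $(b,a,z)$, is $\phi^\dagger(a,\psi^\dagger(b,z))$. This is precisely $(\phi^\dagger*\psi^\dagger)\circ\rho_{ABZ}$. The composite $\phi^\dagger*\psi^\dagger$ is defined by the same argument as before, with $\psi^\dagger$ invertible and $A$, $B$ faithfully flat. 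Applying $\rho_X$ and invoking \cref{S-rational families are determined by their associated S-rational morphisms} gives the claimed formula for $(\psi*\phi)^\dagger$.

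The main obstacle I expect is the bookkeeping needed to ensure that the formal coordinate manipulations with the ``points'' $(b,a,x)$ are legitimate for $S$-rational morphisms. Each manipulation requires representatives on $S$-dense opens whose images land in the domains of the next representative. I would handle this once, by proving a small lemma: diffuse compositions commute with taking products with identities and with permutations. Every identity would then follow from \cref{composition of diffuse S-rational morphisms is well defined}.
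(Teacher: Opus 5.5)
Your proposal is correct and matches the paper's proof: you factor $\overline{\psi*\phi}$ as $\id_B\times\overline{\phi}$ followed by a coordinate-permuted $\id_A\times\overline{\psi}$, invert it with \cref{composition of S-birational morphisms}, and apply $\rho_X$ to recover $(\phi^\dagger*\psi^\dagger)\circ\rho_{ABZ}$. One useful addition is your remark that $\id_B\times\phi=\rho_{BY}\circ(\id_B\times\overline{\phi})$ is diffuse, which shows $\psi*\phi$ and $\phi^\dagger*\psi^\dagger$ are defined; the paper instead cites a corollary that assumes all schemes are locally Noetherian, a hypothesis absent from this statement.
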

\begin{proof}
    Indeed, we can write $\overline{\psi*\phi}$ as the composition
    \[
        \begin{tikzcd}[column sep = huge]
            BAX \arrow[r, "\id_B \times \overline{\phi}",dashed] & BAY \arrow[rr, "{\rho_{BAZ} \circ (\id_A \times \overline{\psi}) \circ \rho_{ABY}}",dashed] && BAZ.
        \end{tikzcd}
    \]
    Since the inverse of the composition is the composition of the inverses (\cref{composition of S-birational morphisms}),  this is invertible with inverse
    \[
        \begin{tikzcd}[column sep = huge]
            BAX & \arrow[l, "\id_B \times \overline{\phi} \inv"',dashed]  BAY  && \arrow[ll, "{\rho_{BAY} \circ (\id_A \times \overline{\psi}\inv) \circ \rho_{ABZ}}"',dashed] BAZ.
        \end{tikzcd}
    \]
    Recall that $(\psi*\phi)^\dagger = \rho_X \circ (\overline{\psi*\phi})\inv$ by definition. Comparing with the above we see this is equal to the composition
    \[
        \begin{tikzcd}[column sep = large]
            X & \arrow[l, "\phi^\dagger"', dashed]  AY  && \arrow[ll, "{(\id_A \times \psi^\dagger) \circ \rho_{ABZ}}"', dashed] BAZ
        \end{tikzcd}
    \]
    which is precisely $(\phi^\dagger*\psi^\dagger) \circ \rho_{ABZ}$.
\end{proof}

\begin{prop}[Invertible families and closed graphs]\label{invertible families and closed graphs}
    Let $\phi\colon AX\dashrightarrow Y$ be an invertible $S$-rational family over $A$, and let $\theta = \phi^\dagger$.
    Then we have a Cartesian square
    \[
        \begin{tikzcd}
            \Lambda_\phi \arrow[r] \arrow[d, hook] & \Lambda_\theta \arrow[d, hook] \\
            AXY \arrow[r, "\rho_{AYX}"]            & AYX
        \end{tikzcd}
    \]
\end{prop}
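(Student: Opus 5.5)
The plan is to reduce to \cref{Graphs of S-birational S-rational morphisms} applied to the $S$-birational morphism $\overline{\phi}\colon AX \dashrightarrow AY$, whose inverse is $\overline{\theta}$ (by the remark following \cref{invertible S-rational families}). That proposition gives a Cartesian square
\[
    \begin{tikzcd}
        \Lambda_{\overline{\phi}} \arrow[r] \arrow[d, hook] & \Lambda_{\overline{\theta}} \arrow[d, hook] \\
        AX\cdot AY \arrow[r, "\rho"]            & AY\cdot AX
    \end{tikzcd}
\]
where $\rho$ swaps the two blocks. The remaining work is to relate the closed graph $\Lambda_{\overline{\phi}} \subset AXAY$ to $\Lambda_\phi \subset AXY$, and similarly for $\theta$.

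First, I will show that $\Lambda_{\overline{\phi}}$ is the pullback of $\Lambda_\phi$ along the closed immersion
\[
    \delta\colon AXY \hookrightarrow AXAY, \qquad (a,x,y)\mapsto (a,x,a,y).
\]
In other words, $\Lambda_{\overline{\phi}}$ lies inside the closed subscheme $\Delta := \{a = a'\} \cong AXY$, and under $\delta$ it corresponds to $\Lambda_\phi$. To see this, fix a representative $\phi_U$. Then $(\id,\overline{\phi})$ is represented by $(j_U, \rho_A, \phi_U) = \delta\circ(j_U,\phi_U)$, where $\rho_A\colon U \to A$ is the projection. Since $\delta$ is a closed immersion, taking schematic images commutes with postcomposition by $\delta$: for a closed immersion $i$, $\Schim(i\circ g) = i(\Schim(g))$. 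This gives $\Lambda_{\overline{\phi}} = \delta(\Lambda_\phi)$. The closed-immersion claim needs $A$ separated over $S$, or else a locally closed version; if the paper's standing assumptions do not provide separatedness, I would argue instead with the locally closed diagonal together with \cref{Schematic image of well-behaved morphisms}. I expect this to be the main technical obstacle: identifying the schematic image inside the diagonal cleanly, without quasicoherence assumptions.

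Next, the swap $\rho\colon AXAY \to AYAX$ restricts on the diagonals to $\rho_{AYX}\colon AXY \to AYX$. That is, the square formed by $\delta$, $\delta'$, $\rho_{AYX}$ and $\rho$ is Cartesian, with $\delta'\colon AYX \hookrightarrow AYAX$ the analogous diagonal; this holds because $\rho$ is an isomorphism carrying $\Delta$ onto $\Delta'$.

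Finally, I stack the squares using the Cartesian rectangle lemma (\cref{Cartesian rectangles}). Since $\rho$ is an isomorphism sending $\Lambda_{\overline{\phi}} = \delta(\Lambda_\phi)$ onto $\Lambda_{\overline{\theta}} = \delta'(\Lambda_\theta)$, the relation $\rho\circ\delta = \delta'\circ\rho_{AYX}$ shows that $\rho_{AYX}$ carries $\Lambda_\phi$ isomorphically onto $\Lambda_\theta$. Because $\delta'$ is a monomorphism, pulling back $\Lambda_\theta$ along $\rho_{AYX}$ recovers $\Lambda_\phi$, which is exactly the asserted Cartesian square.
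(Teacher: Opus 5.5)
Your reduction to \cref{Graphs of S-birational S-rational morphisms} for $\overline{\phi}$ is fine. The step $\Lambda_{\overline{\phi}}=\delta(\Lambda_\phi)$, however, is false in the generality of the statement, which makes no separatedness assumption on $A$; you flagged this yourself. The map $\delta$ is a base change of $\Delta_{A/S}$, so its image need not be closed, and the schematic image of $\delta\circ(j_U,\phi_U)$ can contain points off the diagonal.

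For a counterexample, take $S=X=Y=\Spec k$, let $A$ be the affine line with doubled origin, and let $\phi$ be the structure morphism. Then $\overline{\phi}=\id_A$, and $\Lambda_{\overline{\phi}}$ is the reduced closure of the diagonal in $A\times_k A$, which contains the point $(0_1,0_2)$. By contrast, $\delta(\Lambda_\phi)$ is the diagonal itself, which is not even closed.

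Your fallback is to restrict to an open $W\subset AXAY$ in which $\delta$ is a closed immersion. That needs $\Lambda_{\overline{\phi}}\cap W=\Schim_W(\delta\circ(j_U,\phi_U))$, i.e. that schematic images commute with restriction to opens. \cref{Schematic image of well-behaved morphisms} gives this only when the kernel is quasicoherent, e.g. for a quasicompact representative or a reduced source, and neither is a hypothesis here. In the Noetherian setting where the paper actually uses this result (\cref{descent morphism for inverses}) your fallback would work, but the proposition is stated without such assumptions.

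The repair is to postpone taking schematic images until after you have descended from $AXAY$ to $AXY$; this is the paper's proof, and your decomposition becomes it after this reordering.

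1. Choose representatives $\overline{\phi}_U$ and $\overline{\theta}_V$ inducing mutually inverse isomorphisms $\phi'\colon U\to V$ and $\theta'\colon V\to U$. Shrink so that $\overline{\phi}_U=(\rho_A|_U,\phi_U)$, where $\phi_U=\rho_Y\circ\overline{\phi}_U$.
2. By \cref{graphs of representatives of inverse S-rational morphisms}, $\Gamma_{\phi'}=\rho_{3412}\inv(\Gamma_{\theta'})$.
3. The functor-of-points description in \cref{pullbakcs of monomorphisms lemma} gives $\Gamma_{\phi_U}=\delta\inv(\Gamma_{\phi'})$ and $\Gamma_{\theta_V}=\delta'\inv(\Gamma_{\theta'})$. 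This step does not need $\delta$ to be closed.
4. By \cref{cartesian cubes}, $\Gamma_{\phi_U}=\rho_{AYX}\inv(\Gamma_{\theta_V})$.
5. Only now take schematic images in $AXY$ and $AYX$. By \cref{closed graph is schematic image of restriction graph.} these are $\Lambda_\phi$ and $\Lambda_\theta$. Since $\rho_{AYX}$ is an isomorphism, it carries one schematic image onto the other.

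This argument uses neither separatedness nor quasicompactness.
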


\begin{proof}
    Suppose that we have representatives $\overline{\phi}_U$ and $\overline{\theta}_V$ inducing mutually inverse isomorphisms $\phi' = \overline{\phi}|_{[U \to V]}$ and $\theta' = \overline{\theta}|_{[V \to U]}$ on the $S$-dense opens $U \subset X$, $V \subset Y$. By \cref{graphs of representatives of inverse S-rational morphisms}, we have a Cartesian diagram
    \[
        \begin{tikzcd}
            \Gamma_{\phi'} \arrow[r] \arrow[d, hook] & \Gamma_{\theta'} \arrow[d, hook] \\
            AXAY \arrow[r, "\rho_{3412}"]    & AYAX.
        \end{tikzcd}
    \]
    Set $\phi_U = \rho_Y\circ\overline{\phi}_U$, $\theta_V = \rho_X \circ \overline{\theta}_V$. By \cref{S-rational families are determined by their associated S-rational morphisms} these give representatives of $\phi$ and $\theta$. By \cref{pullbakcs of monomorphisms lemma}, we see that we also have Cartesian diagrams
    \[
        \begin{tikzcd}[column sep = large]
            \Gamma_{\phi_U} \arrow[r] \arrow[d, hook]     & \Gamma_{\phi'} \arrow[d, hook]   & \Gamma_{\theta_{V}} \arrow[r] \arrow[d, hook] & \Gamma_{\theta'} \arrow[d, hook] \\
            AXY \arrow[r, "{(a,x,y) \mapsto (a,x,a,y)}"'{yshift = -4pt}] & AXAY                             & AYX \arrow[r, "{(a,x,y) \mapsto (a,x,a,y)}"'{yshift = -4pt}]                                 & AYAX
        \end{tikzcd}
    \]
    where, for any $T \to S$, the lower horizontal arrows act on points $(a,x,y) \in AXY(T)$, $(a,y, x) \in AYX(T)$ as indicated.

    By the Cartesian cube lemma \cref{cartesian cubes}, we conclude we have a Cartesian diagram
    \[
        \begin{tikzcd}
            \Gamma_{\phi_U} \arrow[r] \arrow[d, hook] & \Gamma_{\theta_V} \arrow[d, hook] \\
            AXY \arrow[r, "\rho_{AYX}"]               & AYX.
        \end{tikzcd}
    \]

    Since $\rho_{AYX}$ is an isomorphism, and pullback by isomorphisms preserves schematic image, we conclude.
\end{proof}

\begin{prop}\label{detecting inverse $S$-rational families by looking at graphs}
    Let $A$, $X$, $Y$ be schemes over $S$ with $A, Y$ Noetherian and $X$ separated. Let $\phi\colon AX \dashrightarrow Y$, $\theta\colon AY \dashrightarrow X$ be $S$-rational families over $A$. Suppose $\phi$ is invertible, and we have a Cartesian diagram
    \begin{equation}
        \begin{tikzcd}
            \Lambda_\phi \arrow[r] \arrow[d, hook] & \Lambda_{\theta} \arrow[d, hook] \\
            AXY \arrow[r, "\rho_{AYX}"]   & AYX .
        \end{tikzcd}
    \end{equation}
    Then $\theta = \phi^\dagger$.
\end{prop}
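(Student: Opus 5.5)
The plan is to reduce to the analogous statement for $S$-rational morphisms, namely \cref{detecting inverse $S$-rational morphisms by looking at graphs}, by passing through the associated $S$-rational morphisms $\overline{\phi}\colon AX \dashrightarrow AY$ and $\overline{\theta}\colon AY \dashrightarrow AX$. Since $\phi$ is invertible, $\overline{\phi}$ is $S$-birational by definition. By \cref{S-rational families are determined by their associated S-rational morphisms}, $\theta$ is determined by $\overline{\theta}$, and $\phi^\dagger$ by $\overline{\phi^\dagger} = \overline{\phi}\inv$. So it suffices to prove $\overline{\theta} = \overline{\phi}\inv$. To get there I would feed \cref{detecting inverse $S$-rational morphisms by looking at graphs} the pair $\overline{\phi},\overline{\theta}$ with $X$ replaced by $AX$ and $Y$ by $AY$.

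That proposition needs the target-side scheme $AY$ to be Noetherian and $AX$ to be separated. The paper only assumes $A$ and $Y$ Noetherian and $X$ separated, so there is a hypothesis gap. I would first try to avoid it by working directly with the closed graphs $\Lambda_\theta$ and $\Lambda_{\phi^\dagger}$ in $AYX$, rather than with $\Lambda_{\overline\theta}$.

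The first step is to apply \cref{invertible families and closed graphs} to $\phi$. This gives a Cartesian square identifying $\Lambda_{\phi^\dagger}$ with the pullback of $\Lambda_\phi$ along the permutation $AYX \to AXY$. The hypothesis gives the same description for $\Lambda_\theta$. Since $\rho_{AYX}$ is an isomorphism, inverting it as in the proof of \cref{detecting inverse $S$-rational morphisms by looking at graphs} shows that $\Lambda_\theta$ and $\Lambda_{\phi^\dagger}$ are the same closed subscheme of $AYX$.

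The second step is to conclude $\theta = \phi^\dagger$ from this equality of closed graphs, using \cref{S-rational morphisms from Noetherian schemes to separated schemes are determined by their closed graphs:}. Here the source is $AY$ and the target is $X$. That result needs $AY$ Noetherian and $X$ separated. $X$ is separated by hypothesis. Noetherianity of $AY$ is where I expect the main obstacle.

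If $S$ is Noetherian and $A,Y$ are of finite type, then $AY$ is Noetherian. Otherwise I would fall back on \cref{Intersection of grap hs with domains}, which only needs the kernel $\ker(\O \to (j_U,\theta_U)_*\O_U)$ to be quasicoherent. That holds as soon as the map $(j_U,\theta_U)\colon U \to AYX$ is quasicompact, for instance when $U \subset AY$ is quasicompact. So I would choose a common $S$-dense open $U$ in $\dom\theta \cap \dom\phi^\dagger$ and intersect the equal closed graphs with $UX$. This gives $\Gamma_{\theta_U} = \Gamma_{\phi^\dagger_U}$, hence $\theta_U = \phi^\dagger_U$ and $\theta = \phi^\dagger$. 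I expect this is how the intended argument goes, with the Noetherian hypotheses on $A,Y$ ensuring $AY$ is Noetherian in the intended setting.
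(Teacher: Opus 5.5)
Your steps one and two are exactly the paper's argument. Its proof is declared analogous to the morphism version: use \cref{invertible families and closed graphs} and uniqueness of fiber products to get $\Lambda_\theta=\Lambda_{\phi^\dagger}$ inside $AYX$, then apply \cref{S-rational morphisms from Noetherian schemes to separated schemes are determined by their closed graphs:} to $\theta,\phi^\dagger\colon AY\dashrightarrow X$.

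Your worry is legitimate and applies equally to the paper, which tacitly needs $AY$ itself (not just $A$ and $Y$) to be Noetherian. Your quasicompact fallback does not fully remove this, because it needs an $S$-dense open $U\subset\dom\theta\cap\dom\phi^\dagger$ that is also quasicompact, and such a $U$ need not exist once $AY$ is not Noetherian.
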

\begin{proof}
    Completely analogous to the proof of \cref{detecting inverse $S$-rational morphisms by looking at graphs}
\end{proof}

%% file: Chapters/Rational_Families/Canonical_S-rational_families.tex
\section{Canonical \texorpdfstring{$S$}{S}-Rational Families}\label{section: canonical S-rational families}

\begin{lem}\label{xi existence result}
    Let $S$ be a Noetherian scheme, $A$ a locally Noetherian $S$-scheme, $B$ a projective $S$-scheme, and $Z \hookrightarrow AB$ a closed scheme. Suppose $Z$ is $S$-generically flat over $A$ (i.e., suppose there exists an $S$-dense open $U \subset A$ over which $Z$ is flat.) Then there is a unique $S$-rational morphism $\xi\colon A \dashrightarrow \Hilb_B$ such that, for any $S$-dense $U \subset A$ over which $Z$ is flat, we have $U \subset \dom(\xi)$ and $Z\cap UB = \xi_U^*(\Omega_{B})$, where $\Omega_B\subset\Hilb_B \cdot B$ (recall \cref{products and Yoneda combine}) is the universal closed subscheme.
\end{lem}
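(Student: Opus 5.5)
The plan is to build $\xi$ by gluing the classifying maps furnished by the Hilbert scheme (\cref{Hilbert schemes definition}) over all $S$-dense opens where $Z$ is flat. Let $\calU$ be the set of $S$-dense opens $U\subset A$ over which $Z$ is flat, which is nonempty by hypothesis. For each $U\in\calU$ the scheme $U$ is locally Noetherian, and $Z\cap UB\subset UB = B_U$ is a closed subscheme flat over $U$. It therefore defines an element of the Hilbert functor at $U$, and hence a unique morphism $\xi_U\colon U\to \Hilb_B$ with $\xi_U^*(\Omega_B) = Z\cap UB$. This gives the only possible representative of $\xi$ on each $U\in\calU$.

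The key steps, in order, are as follows.
\begin{enumerate}
\item \textbf{Compatibility.} For $U,U'\in\calU$, the intersection $U\cap U'$ is $S$-dense (\cref{Intersection and schematically/S-dense opens}) and $Z$ is flat over it. Both restrictions $\xi_U|_{U\cap U'}$ and $\xi_{U'}|_{U\cap U'}$ classify $Z\cap (U\cap U')B$, because pullback along the open immersion commutes with taking fibre products (\cref{Cartesian rectangles}). By the uniqueness in the representability of the Hilbert functor, the two restrictions coincide.
\item \textbf{Definition of $\xi$.} All the $\xi_U$ for $U\in\calU$ are therefore equivalent, and together they define a single $S$-rational morphism $\xi\colon A\dashrightarrow\Hilb_B$. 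Each such $U$ lies in $\dom(\xi)$ by \cref{domain of an S-rational morphism definition}.
\item \textbf{Identification of representatives.} Since $\Hilb_B$ is separated over $S$ (\cref{Hilbert schemes are disjoint unions of projective schemes}), \cref{domain for S-rational morphisms to separated target} gives that the unique representative of $\xi$ over $U$ is the $\xi_U$ above. The required identity $Z\cap UB = \xi_U^*(\Omega_B)$ then holds by construction.
\item \textbf{Uniqueness.} Let $\xi'$ be any $S$-rational morphism satisfying the stated property. Fix one $U_0\in\calU$. Then $\xi'$ has a representative over $U_0$ which classifies $Z\cap U_0B$, so this representative equals $\xi_{U_0}$ by the universal property. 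Hence $\xi'=\xi$.
\end{enumerate}

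The main obstacle is the gluing and uniqueness step. One must check carefully that the restrictions $\xi_U|_{U\cap U'}$ classify the correct subscheme, which requires that pullback of $\Omega_B$ commutes with restriction to opens; this is a Cartesian rectangle argument. One must also check that a representative of $\xi$ over $U$ is genuinely unique, and not merely unique after shrinking $U$. The latter is exactly where the separatedness of $\Hilb_B$ and \cref{S-dominant morphisms epimorphism property} are needed.
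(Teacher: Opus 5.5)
Your proposal is correct and takes the same route as the paper. You classify $Z\cap UB$ via the Hilbert functor on each $S$-dense open $U$ over which $Z$ is flat. You then show the resulting morphisms agree on intersections by the uniqueness in the universal property, and glue them. Your additional steps spell out what the paper leaves implicit in its final sentence. These are using separatedness of the Hilbert scheme to make ``the'' representative $\xi_U$ well defined, and the explicit argument for uniqueness of $\xi$.
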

\begin{proof}
    Given such a $U$, the existence of a unique morphism $\xi_U$ with the required properties follows immediately from the definition of the Hilbert Scheme. Given any other $U'$, let $W = U \cap U'$. Then
    \begin{align*}
        (W\hookrightarrow U)^*(Z \cap UB) = Z \cap WB = (W\hookrightarrow U')^*(Z \cap U'B)
    \end{align*}
    so again by definition of the Hilbert scheme it follows that $\xi_U|_W = \xi_{U'}|_W$. Therefore the $\xi_U$ cohere to induce a unique $S$-rational morphism from $A$ to $\Hilb_B$ with the required property.
\end{proof}
\begin{rem}
    Suppose $S = \Spec k$ for some field $k$, and $A$ is reduced and of finite type over $k$. Then, for any $S$-scheme $B$ and closed subscheme $Z\subset AB$, since an $S$-dense open of $A$ is the same as a dense open of $A$ (\cref{S-dominance over fields}), and by generic flatness (\cref{Generic Flatness theorem}) we have that $Z$ is $S$-generically flat over $A$.
\end{rem}

\begin{dfn}\label{Z(phi) definition}
    Let $S$ be a Noetherian scheme, $\phi\colon AX \dashrightarrow Y$ be an $S$-rational family, with $A$ locally Noetherian and $X,Y$ projective over $S$. Suppose the closed graph $\Lambda_\phi \subset AXY$ is $S$-generically flat over $A$.
    \begin{enumerate}
        \item We denote by $\xi_\phi$ the $S$-rational morphism from $A$ to $\Hilb_{XY}$ induced by $\Lambda_\phi$ and the previous proposition.
        \item We define the closed subscheme $Z(\phi) \hookrightarrow \Hilb_{XY}$ to be the schematic image of $\xi_\phi$.
        \item We denote by $\zeta_\phi$ the $S$-rational morphism from $A$ to $Z(\phi)$ induced by $\xi_\phi$.
    \end{enumerate}
\end{dfn}

\begin{lem}
    Suppose in the above definition that $A$ is reduced. Then $Z(\phi)$ is reduced.
\end{lem}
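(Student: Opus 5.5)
The approach is to reduce to \cref{schematic image of morphism with reduced source}, which says that the schematic image of a morphism with reduced source is reduced. By \cref{Z(phi) definition}, $Z(\phi)$ is the schematic image of the $S$-rational morphism $\xi_\phi\colon A \dashrightarrow \Hilb_{XY}$. By the lemma preceding that definition, this schematic image is the schematic image of any representative $\xi_{\phi,U}\colon U \to \Hilb_{XY}$, where $U\subset A$ is an $S$-dense open. So it suffices to fix one representative and check that its source is reduced.

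First, I fix an $S$-dense open $U \subset A$ over which $\Lambda_\phi$ is flat. Such a $U$ exists by the hypothesis of $S$-generic flatness. \Cref{xi existence result} then gives a representative $\xi_U$ of $\xi_\phi$ defined on $U$. Second, since $U$ is an open subscheme of the reduced scheme $A$, it is itself reduced: reducedness is local and is inherited by open subschemes. Third, applying \cref{schematic image of morphism with reduced source} to $\xi_U\colon U \to \Hilb_{XY}$ shows that $\Schim(\xi_U)$ is reduced. Since $Z(\phi) = \Schim(\xi_\phi) = \Schim(\xi_U)$, the result follows.

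There is essentially no obstacle. The only point needing care is that the schematic image of an $S$-rational morphism is well defined independently of the representative, and the paper establishes this earlier using \cref{Precomposition by schematically dominant morphisms preserves schematic image}. Quasicoherence of the kernel is not needed here, because \cref{schematic image of morphism with reduced source} requires only that the source be reduced.
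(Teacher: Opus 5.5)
Your proof is correct and follows the paper's own route: the paper's proof simply cites \cref{schematic image of morphism with reduced source}. You have spelled out the step it leaves implicit, namely passing to a representative $\xi_U$ whose source $U \subset A$ is open in $A$ and hence reduced.
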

\begin{proof}
    Immediate from \cref{schematic image of morphism with reduced source}.
\end{proof}

\begin{prop}\label{Z(phi) is projective in good circumstances}
    Suppose in the above definition that $A$ is of finite type over $S$. Then $Z(\phi)$ is projective over $S$.
\end{prop}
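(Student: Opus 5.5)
The strategy is to show that $Z(\phi)$ is a closed subscheme of a single projective piece $\Hilb^p_{XY}$, or of a finite disjoint union of them; it is then projective over $S$, being closed in a projective $S$-scheme. By \cref{Hilbert schemes are disjoint unions of projective schemes}, after fixing a very ample line bundle on $XY$ (possible since $X,Y$ are projective over $S$), we have $\Hilb_{XY} = \bigsqcup_p \Hilb^p_{XY}$, with each $\Hilb^p_{XY}$ projective over $S$. A finite disjoint union of projective $S$-schemes is again projective over the Noetherian base $S$: take the product over $S$ of the projective spaces and use a Segre-type embedding, or simply note that a finite coproduct of projective schemes is projective. It therefore suffices to show that a representative $\xi_U\colon U \to \Hilb_{XY}$ of $\xi_\phi$ lands in only finitely many components $\Hilb^p_{XY}$.

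To get this finiteness, pick an $S$-dense open $U\subset A$ over which $\Lambda_\phi$ is flat, with $\xi_U$ the induced morphism. Since $A$ is of finite type over the Noetherian scheme $S$, $A$ is Noetherian, hence $U$ is Noetherian and has finitely many connected components. On each connected component the Hilbert polynomial of the fibres of the flat family $\Lambda_\phi\cap UXY$ is locally constant, hence constant. Equivalently, $\xi_U^{-1}(\Hilb^p_{XY})$ gives a decomposition of the quasi-compact $U$ into disjoint opens, so only finitely many of these are nonempty. Hence $\xi_U$ factors through the open and closed subscheme $H := \bigsqcup_{p\in P}\Hilb^p_{XY}$ for some finite set $P$.

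Next, compute the schematic image. $\xi_U$ is quasicompact, since $U$ is Noetherian, so by \cref{Schematic image of well-behaved morphisms} its schematic image is well behaved. Because $H$ is open and closed in $\Hilb_{XY}$, the schematic image of $\xi_U$ in $\Hilb_{XY}$ equals its schematic image in $H$: the closed subscheme $H$ contains the factorization, so the smallest closed subscheme through which $\xi_U$ factors lies inside $H$. Thus $Z(\phi)\hookrightarrow H$ is a closed immersion. Since $H$ is projective over $S$, $Z(\phi)$ is projective over $S$. The definition of $Z(\phi)$ is independent of the representative, since all representatives have the same schematic image; so the choice of $U$ is harmless.

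The main obstacle is the finiteness step. One must check that $\xi_U^{-1}(\Hilb^p_{XY})$ are genuinely open and closed and cover $U$, which follows from $\Hilb_{XY}$ being their disjoint union, and that quasi-compactness of $U$ forces only finitely many of them to be nonempty. The other point needing care is the convention that a finite disjoint union of projective $S$-schemes is projective; if the paper's notion of projective requires an embedding in some $\mathbb{P}^n_S$, one embeds the components disjointly, for instance via distinct coordinate hyperplane complements in a larger projective space.
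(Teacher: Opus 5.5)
Your proposal is correct and follows essentially the paper's argument: quasi-compactness of the domain $U$ of a representative forces $\xi_U$ to land in finitely many open-and-closed pieces $\Hilb^{p}_{XY}$, so the schematic image $Z(\phi)$ is a closed subscheme of a finite union of projective $S$-schemes, hence projective. One small slip: a Segre embedding handles products, not coproducts. To see that a disjoint union of closed subschemes of $\mathbf{P}^{n_1}_S$ and $\mathbf{P}^{n_2}_S$ is projective, place them in the disjoint linear subspaces $\{x_{n_1+1}=\dots=x_{n_1+n_2+1}=0\}$ and $\{x_0=\dots=x_{n_1}=0\}$ of $\mathbf{P}^{n_1+n_2+1}_S$, and iterate for more pieces; hyperplane complements will not do, since they are open.
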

\begin{proof}
    Let $\xi_{\phi,U}$ be any representative of $\xi_\phi$. Observe that $U$ is quasicompact (since $A$, being of finite type over $S$, is Noetherian). By \cref{Hilbert schemes are disjoint unions of projective schemes}, we have an open cover of $U$ by $\cup_{p \in \Q[x]}\xi_U\inv(\Hilb_{XY}^p)$. Since $U$ is quasicompact, it admits a finite subcover, and therefore there exist polynomials $p_1,\dots,p_n$ such that $\xi_U(U) \subset \cup_{i = 1}^n\Hilb_{XY}^{p_i}$ and then $Z(\phi) \subset \cup_{i = 1}^n\Hilb_{XY}^{p_i}$. Since $Z(\phi)$ is closed and each $\Hilb_{XY}^{p_i}$ is projective, we conclude.
\end{proof}

\begin{lem}\label{Graphs being S-generically flat is preserved under ff pullback}
    Let $S$ be a Noetherian scheme, $A, B$ be schemes of finite type over $S$ and $X,Y$ be projective over $S$. Let $\phi\colon AX \dashrightarrow Y$ be an $S$-rational family over $A$ and $f\colon B \dashrightarrow A$  be a faithfully flat $S$-rational morphism. Then $\Lambda_\phi$ is $S$-generically flat over $A$ if and only if $\Lambda_{f^*(\phi)}$ is $S$-generically flat over $B$.
\end{lem}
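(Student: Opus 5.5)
The plan is to reduce both directions to the local comparison of closed graphs given by \cref{technical graph pullback result.}, and then to transfer flatness along a faithfully flat morphism of schemes. Since $f$ is faithfully flat and everything is of finite type over the Noetherian base $S$ (hence locally of finite presentation), \cref{characterization of faithfully flat S-rational morphisms} gives $S$-dense opens $U_1 \subset B$ and $U_2 \subset A$ and a representative $f_{U_1}$ with $f_{U_1}(U_1) = U_2$ and $g := f_{U_1}|_{[U_1 \to U_2]}$ faithfully flat. Moreover, any $S$-dense open $U_1' \subset U_1$ may replace $U_1$, with $U_2' := g(U_1')$ again $S$-dense. By \cref{technical graph pullback result.} we then have
\begin{align*}
g^*(\Lambda_\phi \cap U_2XY) = \Lambda_{f^*(\phi)} \cap U_1XY,
\end{align*}
and the same identity holds after shrinking to $U_1'$, $U_2'$.

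For the forward direction, suppose $\Lambda_\phi$ is flat over an $S$-dense open $V \subset A$. Replace $U_2$ by $U_2 \cap V$, which is $S$-dense by \cref{Intersection and schematically/S-dense opens}, and replace $U_1$ by $g\inv(U_2 \cap V)$. The latter is $S$-dense because $g$ is flat between locally Noetherian schemes, hence diffuse by \cref{flat morphisms of locally Noetherian schemes are S-diffuse}. Flatness is stable under base change, so $g^*(\Lambda_\phi\cap U_2XY)$ is flat over $U_1$. By the displayed identity, $\Lambda_{f^*(\phi)}$ is therefore flat over the $S$-dense open $U_1$.

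For the converse, suppose $\Lambda_{f^*(\phi)}$ is flat over an $S$-dense open $W \subset B$. Shrink $U_1$ to $U_1 \cap W$ and set $U_2 := g(U_1 \cap W)$. This image is an $S$-dense open, and the restricted $g$ stays faithfully flat, by \cref{image of S-dense open under faithfully flat morphism locally of finite presentation}. Now $\Lambda_\phi\cap U_2XY \to U_2$ becomes flat after the faithfully flat base change $g$. Flatness descends along fpqc base change: we use the standard fact that for a faithfully flat $g$, a quasicoherent sheaf is flat if its pullback is, checked on local rings. Since $g$ is faithfully flat and of finite type between Noetherian schemes, it is fpqc. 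Hence $\Lambda_\phi$ is flat over $U_2$.

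The main obstacle is the bookkeeping of opens. We must ensure that after shrinking we still have a representative whose restriction $[U_1\to U_2]$ is surjective and flat, so that \cref{technical graph pullback result.} applies with matching opens; the diffuseness and open-image results above are what make this work. The descent-of-flatness step is standard and will simply be cited.
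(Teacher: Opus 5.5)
Your proposal is correct and follows the paper's proof. The paper likewise chooses $U_1,U_2$ with $f_{U_1}|_{[U_1\to U_2]}$ faithfully flat, invokes \cref{technical graph pullback result.} to identify $\Lambda_{f^*(\phi)}\cap U_1XY$ with the pullback of $\Lambda_\phi\cap U_2XY$, and then uses that flatness ascends and descends along faithfully flat base change. The one difference is that you explicitly shrink $U_1,U_2$ to match the open over which the graph is assumed flat, a step the paper leaves implicit; your handling of it, via diffuseness in one direction and \cref{image of S-dense open under faithfully flat morphism locally of finite presentation} in the other, is sound.
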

\begin{proof}
    Similar to the proof of the previous proposition, we find $S$-dense opens $U_1 \subset B$, $U_2 \subset A$ and a representative $f_{U_1}$ such that $f_{U_1}|_{[U_1 \to U_2]}$ is faithfully flat. By \cref{technical graph pullback result.} we have
    \begin{align*}
        f_{U_1}|_{[U_1 \to U_2]}^*(\Lambda_\phi \cap U_2XY) = \Lambda_{f^*(\phi)} \cap U_1XY.
    \end{align*}
    Then since $f_{U_1}|_{[U_1 \to U_2]}$ is faithfully flat, we have that $f_{U_1}|_{[U_1 \to U_2]}^*(\Lambda_\phi \cap U_2XY)$ is flat over $U_1$ if and only if $\Lambda_\phi \cap U_2XY$ is flat over $U_2$.
\end{proof}


\begin{prop}[Existence of canonical families]\label{Existence of canonical families}
    Let $S$ be a Noetherian scheme. Let $A$ be a scheme of finite type over $S$ and $X,Y$ be schemes projective over $S$. Let $\phi\colon AX \dashrightarrow Y$ be an $S$-rational family over $A$ with $\Lambda_\phi$ $S$-generically flat over $A$, and suppose $\zeta_\phi$ is faithfully flat.

    Let $U \subset A$ be the maximal $S$-dense open such that $\Lambda_\phi$ is flat over $U$. Note $U \subset \dom(\zeta_\phi)$ by \cref{Z(phi) definition} and \cref{xi existence result}. Then

    \begin{enumerate}
        \item The $S$-rational morphism $\zeta_\phi \times \id_X$ is faithfully flat and $S$-diffuse, and there exist $S$-dense opens $W \subset UX \cap \dom(\phi)$ and $\widetilde{W}\subset Z(\phi)X$ such that $(\zeta_\phi \times \id_X)_W|_{[W \to \widetilde{W}]}$ is faithfully flat.
        \item \label{cartesian cube part} For any such $W, \tW$, there exists a unique morphism $\tphi_\tW\colon \tW \to Y$, independent of the choice of $W$, such that $\phi_W = \tphi_\tW \circ (\zeta_\phi \times \id_X)_W$.
        \item The rational morphism $\tphi\colon Z(\phi)X \dashrightarrow Y$ induced by $\tphi_\tW$ is independent of the choice of $\tW$.
        \item \label{The part about graphs} The closed graph $\Lambda_\phi$ is well-approximated by the universal closed subscheme $\Omega \subset \Hilb_{XY} \cdot XY$ in the following sense: for any $S$-dense opens $U' \subset \dom \zeta_{\phi}$, $Z' \subset Z(\phi)$ such that $(\zeta_{\phi})_{[U' \to Z']}$ is faithfully flat, and $\Lambda_\mu$ is flat over $U'$, we have $\Lambda_{\tphi} \cap Z'XY = \Omega \cap Z'XY$.
        \item We view $\tphi$ as an $S$-rational family over $Z(\phi)$, and call it the \emph{canonical family associated to $\phi$}. Then $\tphi$ satisfies $\zeta_\phi^*(\tphi) = \phi$, and moreover $\tphi$ is the unique $S$-rational family over $Z(\phi)$ with this property.
        \item Finally, $\zeta_\phi$ is the unique flat $S$-rational morphism from $A$ to $Z(\phi)$ with the property that $\zeta_\phi^*(\tphi) = \phi$.
    \end{enumerate}
\end{prop}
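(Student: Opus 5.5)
The plan is to construct $\tphi$ by fpqc descent (\cref{construction of morphism from flat descent}), applied to the graph of a representative of $\phi$ and the restriction of the universal family $\Omega$ to $Z(\phi)XY$. The key inputs are \cref{xi existence result}, which gives $\Lambda_\phi\cap UXY = (\xi_\phi\times\id_{XY})_U^*(\Omega)$, and \cref{Intersection of closed graph with product from domain in Noetherian to separated case}, which identifies $\Lambda_\phi\cap WY$ with $\Gamma_{\phi_W}$ for a representative $\phi_W$. We may apply the latter because $A$ is Noetherian (finite type over Noetherian $S$) and $Y$ is projective, hence separated.

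For (1), faithful flatness of $\zeta_\phi\times\id_X$ follows from \cref{properties preserved by products of $S$-rational morphisms}. Diffuseness follows from \cref{A flat S-rational morphism of locally Noetherian schemes is S-diffuse.}, since $Z(\phi)$ is projective over $S$ (\cref{Z(phi) is projective in good circumstances}) and everything is locally Noetherian. We choose $U'\subset U$ and $Z'$ with $(\zeta_\phi)_{[U'\to Z']}$ faithfully flat, using \cref{characterization of faithfully flat S-rational morphisms} and the fact that $U$ is $S$-dense. We then put $\tW_0=Z'X$ and $W_0=U'X\cap\dom(\phi)$ and shrink them. To keep faithful flatness under shrinking, we use \cref{image of S-dense open under faithfully flat morphism locally of finite presentation}; everything here is of finite type over Noetherian $S$, hence locally of finite presentation. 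This produces $W\subset W_0$ with $\tW$ equal to its image.

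For (2), fix such $W,\tW$ and put $g=(\zeta_\phi\times\id_X)_W|_{[W\to\tW]}$. The fpqc hypothesis needs $g$ quasicompact, which holds because the schemes are Noetherian. Take $D=\Omega\cap\tW Y$, a closed subscheme of $\tW Y$. The projection $\tW Y\to Z'XY$ is an open immersion, so $g^*(D)$ is the pullback of $\Omega$ along $\xi\times\id_{XY}$ restricted to $WY$. This pullback equals $\Lambda_\phi\cap WY=\Gamma_{\phi_W}$. \cref{construction of morphism from flat descent} then gives a unique $\tphi_\tW$ with graph $D$ and $\tphi_\tW\circ g=\phi_W$. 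Independence of $W$ holds because $D$ does not depend on $W$, and a morphism is determined by its graph.

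For (3), note that $\tphi_\tW$ is the morphism whose graph is $\Omega\cap\tW Y$. For two choices of $\tW$ the resulting morphisms therefore agree on the intersection, which is $S$-dense (\cref{Intersection and schematically/S-dense opens}).

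For (4), the graph of $\tphi_\tW$ is $\Omega\cap\tW Y$. Since $\tW\subset Z'X$ is $S$-dense, both $\Lambda_{\tphi}\cap Z'XY$ and $\Omega\cap Z'XY$ should be the schematic closure in $Z'XY$ of this common open piece. For $\Lambda_{\tphi}$ this follows from \cref{closed graph is schematic image of restriction graph.} together with \cref{Schematic image of well-behaved morphisms}. For $\Omega$ we need that $\Omega\cap\tW Y$ is schematically dense in $\Omega\cap Z'XY$. This is the main obstacle. To handle it, pull back along the faithfully flat $(\zeta_\phi)_{[U'\to Z']}$, using \cref{schematic image of a quasicompact morphism commutes with flat base change on base} and \cref{technical graph pullback result.}. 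After pullback, $\Omega$ becomes $\Lambda_\phi\cap U'XY$, which is the schematic closure of $\Gamma_{\phi_W}$. Faithfully flat descent of the ideal comparison then gives the equality over $Z'$.

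For (5), we have $\zeta_\phi^*(\tphi)=\phi$ by (2), since $\phi_W=\tphi_\tW\circ g$. Uniqueness of $\tphi$ follows from \cref{faithfully flat morphisms are S-rational epimorphisms} applied to $\zeta_\phi\times\id_X$.

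For (6), let $\eta\colon A\dashrightarrow Z(\phi)$ be flat with $\eta^*(\tphi)=\phi$. By \cref{technical graph pullback result.} together with (4), $\Lambda_\phi$ is generically the pullback of $\Omega$ along $\eta$. The universal property of $\Hilb_{XY}$ (uniqueness in \cref{xi existence result}) then forces $\eta=\zeta_\phi$ on an $S$-dense open.
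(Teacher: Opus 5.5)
Your proposal is correct and follows the paper's proof. You descend $\Gamma_{\phi_W}=\Lambda_\phi\cap WY$ against $\Omega\cap\tW Y$ along the fpqc map $(\zeta_\phi\times\id_X)_W|_{[W\to\tW]}$, compare the inclusion $\Lambda_{\tphi}\cap Z'XY\subset\Omega\cap Z'XY$ after faithfully flat pullback by $(\zeta_\phi)_{[U'\to Z']}$ in (4), and use the epimorphism property and the universal property of the Hilbert scheme for (5) and (6). Your independence arguments in (2) and (3) simply note that $\tphi_\tW$ is determined by its graph $\Omega\cap\tW Y$, which is a slightly cleaner substitute for the paper's union-of-opens constructions.
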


\begin{proof}
    \begin{enumerate}
        \item By \cref{properties preserved by products of $S$-rational morphisms}, we have that $\zeta_\phi \times \id_X$ is faithfully flat. Then since $A,X,Y$ are of finite type over $S$ and $S$ is Noetherian, the remainder of the statement follows from \cref{characterization of faithfully flat S-rational morphisms}.
        \item  By definition of $U$ and $\zeta_\phi$ we have a Cartesian square
              \[
                  \begin{tikzcd}
                      \Lambda_\phi \cap UXY \arrow[d, hook] \arrow[r] & \Omega \arrow[d, hook] \\
                      UXY \arrow[r]                                   & \Hilb_{XY}\cdot XY,
                  \end{tikzcd}
              \]
              where $\Omega$ denotes the universal closed subscheme. Applying the Cartesian rectangle \cref{Cartesian rectangles} we see that
              \[
                  \begin{tikzcd}
                      \Lambda_\phi \cap UXY \arrow[d, hook] \arrow[r] & \Omega \cap Z(\phi)XY \arrow[d, hook] \\
                      UXY \arrow[r]                                   & Z(\phi)XY,
                  \end{tikzcd}
              \]

              is Cartesian, and then that
              \[
                  \begin{tikzcd}
                      \Lambda_\phi \cap UXY \arrow[d, hook] \arrow[r] & \Omega \cap Z(\phi)XY \arrow[d, hook] \\
                      UX \arrow[r]                                   & Z(\phi) X,
                  \end{tikzcd}
              \]
              is Cartesian.

              Now take $W$, $\tW$ from the first part. We then have a commutative cube
              \[
                  \begin{tikzcd}
                      \Lambda_\phi \cap WY \arrow[rd] \arrow[rr] \arrow[dd] &                                             & \Omega \cap \tW Y \arrow[rd] \arrow[dd] &                                  \\
                      & \Lambda_\phi \cap UXY \arrow[dd] \arrow[rr] &                                         & \Omega \cap Z(\phi)XY \arrow[dd] \\
                      W \arrow[rd] \arrow[rr]                               &                                             & \tW \arrow[rd]                          &                                  \\
                      & UX \arrow[rr]                               &                                         & Z(\phi)X
                  \end{tikzcd}
              \]
              where the front, left, and right faces are Cartesian; it follows that the back face is Cartesian.

              Since everything is Noetherian and $Y$ is separated we have by \cref{Intersection of closed graph with product from domain in Noetherian to separated case} that $\Lambda_\phi \cap WY = \Gamma_{\phi_W}$, where the latter is the graph of $\phi_W$. Since $(\zeta_\phi \times \id_X)|_{[W \to \tW]}$ is faithfully flat and quasicompact (as everything is Noetherian) we have by \cref{construction of morphism from flat descent} that there exists a unique morphism $\tphi_\tW^W\colon \tW \to Y$ such that $\phi_W = \tphi_\tW^W \circ (\zeta_\phi \times \id_X)_W$, and $\Gamma_{\tphi_\tW^W} = \tW Y \cap \Omega$.

              To verify independence on $W$, fix any other $S$-dense $W' \subset UX \cap \dom(\phi)$ such that
              $(\zeta_\phi \times \id_X)_{W'}|_{[W' \to \widetilde{W}]}$ is faithfully flat. Let $W'' = W \cup W'$, and observe that $(\zeta_\phi \times \id_X)_{W''}|_{[W'' \to \widetilde{W}]}$ is faithfully flat. Then we have
              \begin{align*}
                  \phi_W = \phi_{W''}|_W = (\tphi_\tW^{W''} \circ (\zeta_\phi \times \id_X)_{W''})|_{W} = \tphi_\tW^{W''} \circ (\zeta_\phi \times \id_X)_{W}
              \end{align*}
              and therefore $\tphi_\tW^{W} = \tphi_\tW^{W''}$ by the uniqueness property of $\tphi_\tW^{W}$. Similarly $\tphi_\tW^{W'} = \tphi_\tW^{W''}$.  Thus we may set $\tphi_\tW = \tphi_\tW^W$.

        \item Fix any other pair of $S$-dense opens $V \subset UX \cap \dom(\phi)$ and $\widetilde{V}\subset Z(\phi)X$ such that $(\zeta_\phi \times \id_X)_{V}|_{[V \to \widetilde{V}]}$ is faithfully flat. Let $\widetilde{V'} = \widetilde{V} \cap \widetilde{W}$, and define
              \begin{align*}
                  V' = (\zeta_\phi \times \id_X)_V\inv(\widetilde{V'}) \cup (\zeta_\phi \times \id_X)_W\inv(\widetilde{V'}).
              \end{align*}
              Then observe that $(\zeta_\phi \times \id_X)_{V'}|_{[V' \to \widetilde{V'}]}$ is faithfully flat, and we have
              \begin{align*}
                  \phi_{V'} = \phi_V|_{V'} = (\tphi_{\widetilde{V}} \circ (\zeta_\phi \times \id_X)_{V})|_{V'} = \tphi_{\widetilde{V}} \circ (\zeta_\phi \times \id_X)_{V'}
              \end{align*}
              and thus $\tphi_{\widetilde{V}} = \tphi_{\widetilde{V'}}$. Similarly $\tphi_{\widetilde{W}} = \tphi_{\widetilde{V'}}$.

        \item Indeed, for any such $U'$, $Z'$, repeating the argument of part \ref{cartesian cube part} with $U, Z(\phi)$ replaced by $U', Z'$, we get a representative $\tphi_\tW$ together with a map $\Gamma_{\tphi_\tW} \to \Omega\cap Z'XY$. In particular, we get a closed immersion $\Schim_{\Omega \cap Z'XY}(\Gamma_{\tphi_\tW}) \hookrightarrow \Omega \cap Z'XY$. By \cref{closed graph is schematic image of restriction graph.} and \cref{Schematic image of well-behaved morphisms}, we have $\Schim_{\Omega \cap Z'XY}(\Gamma_{\tphi_\tW})  = \Lambda_{\tphi} \cap Z'XY$, so we get an embedding $\Lambda_{\tphi} \cap Z'XY \hookrightarrow \Omega \cap Z'XY$. But by \cref{technical graph pullback result.}, it follows that we have
              \begin{align*}
                  (\zeta_{\phi})_{[U' \to Z']}^*(\Lambda_{\tphi} \cap Z'XY) = \Lambda_{\phi} \cap U'XY = (\zeta_{\phi})_{[U' \to Z']}^*(\Omega \cap Z'XY),
              \end{align*}
              where the second equality follows by definition of $\zeta_\phi$. In particular the embedding $\Lambda_{\tphi} \cap Z'XY \hookrightarrow \Omega \cap Z'XY$ becomes an isomorphism after pullback by $(\zeta_{\phi})_{[U' \to Z']}$. Since $(\zeta_{\phi})_{[U' \to Z']}$ is faithfully flat and quasicompact (and therefore an fpqc cover), we conclude that $\Lambda_{\tphi} \cap Z'XY = \Omega \cap Z'XY$ as desired.

        \item By definition, $\zeta_\phi^*(\tphi) = \tphi \circ (\zeta_\phi \times \id_X)$, and $\zeta_\phi \times \id_X$ is faithfully flat, so the uniqueness of $\tphi$ follows immediately from \cref{faithfully flat morphisms are S-rational epimorphisms}.

        \item Given any other flat $S$-rational morphism $f\colon A\dashrightarrow Z(\phi)$ such that $f^*(\tphi) = \phi$, fix $S$-dense opens $U' \subset \dom \zeta_\phi \cap \dom f$ and $Z' \subset Z(\phi)$ such that
              \begin{enumerate}
                  \item $f_{U'}$, and $(\zeta_\phi)_{U'}$ are both flat
                  \item $f_{U'}(U') \cup (\zeta_\phi)_{U_1'}(U') \subset Z'$
                  \item We have $\Lambda_{\tphi} \cap Z'XY = \Omega \cap Z'XY$
              \end{enumerate}
              (Note that such $U'$, $Z'$ can be obtained by suitably shrinking one of the $U'$ from part \ref{The part about graphs}). By the universal property of the Hilbert scheme, it is enough to show that $f_{U'}^*(\Omega \cap Z'XY) = (\zeta_{\phi})_{U'}^*(\Omega \cap Z'XY)$. But by \cref{technical graph pullback result.} we have
              \begin{align*}
                  f^*_{U'}(\Lambda_{\tphi} \cap Z'XY ) = \Lambda_{\phi} \cap U'XY=  (\zeta_{\phi})_{U'}^*(\Lambda_{\tphi} \cap Z'XY)
              \end{align*}
              so we conclude.
    \end{enumerate}
\end{proof}

\begin{prop}[Canonical $S$-Rational Families and Pullback]\label{Canonical S-rational families and pullback.}
    Let $S$ be a Noetherian scheme, $A, B$ be schemes of finite type over $S$, and let $X,Y$ be projective over $S$. Let $\phi\colon AX \dashrightarrow Y$ be an $S$-rational family over $A$ with $\Lambda_\phi$ $S$-generically flat over $A$. Let $f\colon B \dashrightarrow A$  be a flat $S$-rational morphism, (so $f^*(\phi)$ is defined by \cref{pullbacks and products of faithfully flat morphisms are defined}) and suppose $\Lambda_{f^*(\phi)}$ is $S$-generically flat over $B$.

    Then we have a commutative diagram of $S$-rational morphisms:
    \[
        \begin{tikzcd}
            B \arrow[rd, "\xi_{f^*(\phi)}", dashed] \arrow[d, "f"', dashed] &            \\
            A \arrow[r, "\xi_\phi", dashed]                                  & \Hilb_{XY}
        \end{tikzcd}
    \]
    If $f$ is faithfully flat, we then have $Z(f^*(\phi)) = Z(\phi)$ and $\zeta_{f^*(\phi)} = \zeta_\phi \circ f$.

    If further one of $\zeta_\phi$, $\zeta_{f^*(\phi)}$ is faithfully flat, then so is the other, and $\widetilde{f^*(\phi)} = \widetilde{\phi}$. In particular, $\zeta_\phi$ is the unique faithfully flat $S$-rational morphism from $A$ to $Z(f^*(\phi))$ such that $\zeta_\phi^*(\widetilde{f^*(\phi)}) = \phi$.

\end{prop}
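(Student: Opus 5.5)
The plan is to prove the commutative triangle first, by comparing the Hilbert-scheme classifying maps through \cref{technical graph pullback result.}. After that, the statements about $Z$, $\zeta$ and $\tphi$ should follow formally from faithful flatness.

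\textbf{Step 1 (the triangle).} Choose $S$-dense opens $U_1\subset B$ and $U_2\subset A$ with the following properties:
\begin{itemize}
\item $f_{U_1}$ is a flat representative with $f_{U_1}(U_1)\subset U_2$;
\item $\Lambda_\phi$ is flat over $U_2$;
\item $\Lambda_{f^*(\phi)}$ is flat over $U_1$.
\end{itemize}
Such opens exist by $S$-generic flatness of both graphs. We also need that, after shrinking, $f_{U_1}\inv(U_2)$ stays $S$-dense; this holds because $f$ is flat between Noetherian schemes and hence diffuse (\cref{A flat S-rational morphism of locally Noetherian schemes is S-diffuse.}). By \cref{technical graph pullback result.},
\[ f_{U_1}^*(\Lambda_\phi\cap U_2XY)=\Lambda_{f^*(\phi)}\cap U_1XY. \]
By \cref{xi existence result}, the left-hand side equals $(\xi_{\phi,U_2}\circ f_{U_1})^*(\Omega)$ and the right-hand side equals $\xi_{f^*(\phi),U_1}^*(\Omega)$. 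The universal property of $\Hilb_{XY}$ then gives $\xi_{\phi,U_2}\circ f_{U_1}=\xi_{f^*(\phi),U_1}$, and since this composite represents $\xi_\phi\circ f$, we get $\xi_{f^*(\phi)}=\xi_\phi\circ f$.

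\textbf{Step 2 (faithfully flat $f$).} Suppose $f$ is faithfully flat. \Cref{Precomposition by faithfully flat S-rational morphisms preserves schematic image} gives $\Schim(\xi_\phi\circ f)=\Schim(\xi_\phi)$, so $Z(f^*(\phi))=Z(\phi)$. Both $\zeta_{f^*(\phi)}$ and $\zeta_\phi\circ f$ are obtained from $\xi_{f^*(\phi)}=\xi_\phi\circ f$ by factoring through the closed subscheme $Z(\phi)$. Since $Z(\phi)\hookrightarrow\Hilb_{XY}$ is a monomorphism, these two $S$-rational morphisms agree on a common $S$-dense representative domain, so $\zeta_{f^*(\phi)}=\zeta_\phi\circ f$.

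\textbf{Step 3 (transfer of faithful flatness and equality of canonical families).} This is the main obstacle.
\begin{itemize}
\item If $\zeta_\phi$ is faithfully flat, then $\zeta_\phi\circ f$ is faithfully flat by \cref{composition of S-rational morphisms preserves many properties}; all schemes here are of finite type over Noetherian $S$.
\item For the converse, suppose $\zeta_\phi\circ f$ is faithfully flat. Since $f$ is faithfully flat and diffuse, \cref{composition faithfully flat criterion for S-rational morphisms} shows that $\zeta_\phi$ is faithfully flat.
\end{itemize}
Now both canonical families exist by \cref{Existence of canonical families}. Using \cref{Pullback commutes with composition of S-rational families:} (or simply associativity of composition of diffuse $S$-rational morphisms),
\[ \zeta_{f^*(\phi)}^*(\tphi)=(\zeta_\phi\circ f)^*(\tphi)=f^*(\zeta_\phi^*\tphi)=f^*(\phi). \]
The uniqueness clause of \cref{Existence of canonical families} applied to $f^*(\phi)$ then gives $\widetilde{f^*(\phi)}=\tphi$. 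The care needed here is in checking that every relevant product with $\id_X$ is diffuse, so that the pullbacks are defined; this is where flatness and the locally Noetherian hypotheses enter, via \cref{pullbacks and products of faithfully flat morphisms are defined}.

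\textbf{Step 4 (final uniqueness).} The last sentence of the statement follows from the uniqueness clause of \cref{Existence of canonical families}, applied with $Z(f^*(\phi))=Z(\phi)$ and $\widetilde{f^*(\phi)}=\tphi$. That clause covers any flat $S$-rational morphism, and in particular any faithfully flat one.
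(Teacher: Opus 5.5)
Your proposal is correct and follows essentially the same route as the paper. You compare $\xi_{\phi,U_2}\circ f_{U_1}$ with $\xi_{f^*(\phi),U_1}$ via \cref{technical graph pullback result.} and the universal property of $\Hilb_{XY}$, deduce $Z(f^*(\phi))=Z(\phi)$ from \cref{Precomposition by faithfully flat S-rational morphisms preserves schematic image}, transfer faithful flatness via \cref{composition of S-rational morphisms preserves many properties} and \cref{composition faithfully flat criterion for S-rational morphisms}, and conclude with the uniqueness clauses of \cref{Existence of canonical families}. Your explicit use of diffuseness to shrink $U_1$, and of the monomorphism $Z(\phi)\hookrightarrow\Hilb_{XY}$ to identify $\zeta_{f^*(\phi)}$ with $\zeta_\phi\circ f$, fills in details the paper leaves implicit.
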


\begin{proof}
    Fix an $S$-dense open $U_2 \subset A$ such that $\Lambda_\phi$ is flat over $U_2$. Then we have $U_2 \subset \dom \xi_\phi$ and $\xi_{U_2}^*(\Omega_{XY}) = \Lambda_\phi \cap UXY$, where $\Omega_{XY}$ denotes the universal closed subscheme (see \cref{Z(phi) definition}).

    Fix any representative $f_{U_0}$ of $f$. Since $f$ is flat, we may find an $S$-dense open subscheme $U_1 \subset U_0$ and a representative $f_{U_1} = f_{U_0}|_{U_1}$ such that:
    \begin{enumerate}
        \item $f_{U_1}(U_1) \subset U_2$
        \item $f_{U_1}$ is flat (by \cref{characterization of flat S-rational morphisms})
        \item $\Lambda_{f^*(\phi)}$ is flat over $U_1$ (so $U_1 \subset \dom(\xi)$, and $\xi^*_{f^*(\phi),U_1}(\Omega_{XY}) = \Lambda_\phi \cap U_1XY$).
    \end{enumerate}
    We first show that $\xi_\phi\circ f = \xi_{f^*{\phi}}$. For this it is enough to show
    \begin{align}
        \xi_{\phi,U_2} \circ f_{U_1} = \xi_{f^*(\phi), U_1}.
    \end{align}
    Note that, by construction of the Hilbert scheme and definition of $U_1, U_2$, the above holds if and only if
    \begin{align}
        f_{U_1}|_{[U_1 \to U_2]}^*(\Lambda_\phi \cap U_2XY) = \Lambda_{f^*(\phi)} \cap U_1XY,
    \end{align}
    so we can conclude by \cref{technical graph pullback result.}.

    If $f$ is faithfully flat, then $Z(f^*(\phi)) = Z(\phi)$ follows from \cref{Precomposition by faithfully flat S-rational morphisms preserves schematic image}, and then $\zeta_{f^*(\phi)} = \zeta_\phi \circ f$ by above.

    If further $\zeta_\phi$ is faithfully flat then, since a composition of faithfully flat morphisms is faithfully flat, so is $\zeta_{f^*\phi}$. If $\zeta_{f^*\phi}$ is faithfully flat, we get that $\zeta_{\phi}$ is faithfully flat by \cref{composition faithfully flat criterion for S-rational morphisms}, and we see
    \begin{align*}
        \zeta_{f^*\phi}(\widetilde{\phi}) = f^*(\zeta_\phi^*(\widetilde{\phi})) = f^*(\phi),
    \end{align*}
    so by the uniqueness property of $\widetilde{\phi}$ from \cref{Existence of canonical families} it follows that $\widetilde{\phi} = \widetilde{f^*(\phi)}$. The uniqueness of $\zeta_\phi$ then follows from \cref{Existence of canonical families}.

\end{proof}

The above results allow us to construct canonical families for a broader class of $S$-rational families:
\begin{dfn}\label{tame definition}
    Let $\phi\colon AX \dashrightarrow Y$ be an $S$-rational family. We say that $\phi$ is \emph{tame} if the following hold: $S$ is Noetherian, $A$ is of finite type over $S$, $X$ and $Y$ are $S$-birationally projective (\cref{$S$-birationally projective definition}), and there exists $S$-birational models $X'$, $Y'$ of $X$ and $Y$ respectively which are projective over $S$ such that if $\phi'\colon AX' \dashrightarrow Y'$ denotes the induced rational family from $X'$ to $Y'$ we have
    \begin{enumerate}
        \item The graph $\Gamma_{\phi'}$ is $S$-generically flat over $A$,
        \item The induced map $\zeta_{\phi'}$ is faithfully flat.
    \end{enumerate}
\end{dfn}

\begin{lem}\label{tameness criterion over fields}
    Let $A$ be a geometrically integral variety over a field $k$, and $X,Y$ be varieties over $k$. Let $\phi\colon AX \dashrightarrow Y$ be a $k$-rational family. Then $\phi$ is tame, and $Z(\phi)$ is geometrically integral.
\end{lem}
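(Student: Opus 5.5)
We need to show that $\phi$ is tame, and that $Z(\phi)$ is geometrically integral. The plan is to check the clauses of \cref{tame definition} one at a time, working over $S=\Spec k$, which is Noetherian.

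First, $A$ is of finite type since it is a variety. Next we need $X$ and $Y$ to be $k$-birationally projective. A variety is covered by affine opens, and each affine open embeds into projective space. For an integral variety, a nonempty affine open $U$ is dense, and its projective closure $\overline U$ gives a birational model. In general we take the union of affine opens meeting each irreducible component and avoiding the other components. Their projective closures can be combined as a disjoint union, which is still projective, to get a $k$-birational model. Here $k$-dense agrees with schematically dense (\cref{S-dominance over fields}). If $X$ is not reduced, schematic density rather than plain density is needed. So I would choose the affine opens to contain all associated points, and then take the schematic closure in projective space. This is the fiddliest point, and it is where I expect the main obstacle to lie.

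Fix such models $X',Y'$ and the induced family $\phi'\colon AX'\dashrightarrow Y'$. Condition (1), $k$-generic flatness of the closed graph over $A$, follows from generic flatness (\cref{Generic Flatness theorem}). This applies because $A$ is reduced, and a dense open of $A$ is $k$-dense (see the remark after \cref{xi existence result}).

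For condition (2), note that $\xi_{\phi'}$ has a representative on a dense open of the integral scheme $A$. Its schematic image $Z(\phi')$ is reduced by \cref{schematic image of morphism with reduced source}. It is geometrically integral by \cref{schematic image of a geometrically integral variety}, applied to a finite type representative mapping into finitely many projective components $\Hilb^{p_i}$, as in \cref{Z(phi) is projective in good circumstances}. The map $\zeta_{\phi'}$ is then dominant onto its schematic image, by \cref{Schematically dominant maps to schematic images} and \cref{schematic dominance vs dominance}. Hence it is a dominant rational morphism of integral varieties, and so it is faithfully flat by \cref{a dominant moprhism of integral schemes over a field is faithfully flat}. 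This gives tameness.

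Finally, we identify $Z(\phi)$ with $Z(\phi')$, which is the parameter space for the chosen models, and this gives the geometric integrality claim.
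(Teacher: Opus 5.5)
Your proposal is correct and follows the paper's proof essentially step for step. Generic flatness gives condition (1); for condition (2) you use \cref{schematic image of a geometrically integral variety}, dominance of $\zeta$ onto its schematic image, and \cref{a dominant moprhism of integral schemes over a field is faithfully flat}, exactly as the paper does, and your restriction to finitely many components $\Hilb^{p_i}$ is a welcome extra bit of care since $\Hilb$ itself is not of finite type. The paper handles birational projectivity simply by citing that every variety is birational to a projective variety, so your detour through nonreduced schemes and associated points is unnecessary under the usual convention that varieties are reduced.
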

\begin{proof}
    First, recall that a morphism of varieties is $k$-dominant if and only if it is dominant (\cref{S-dominance over fields}). In particular density agrees with $k$-density. Also recall that every variety is birational to a projective variety.

    Since $A$ is reduced, we have by generic flatness (\cref{Generic Flatness theorem}) that $\Lambda_\phi$ is ($\Spec(k)$)-generically flat over $A^2$.

    We have that $A^2$ is geometrically integral, and the map $\zeta_\phi$ is dominant by \cref{Schematically dominant maps to schematic images}; therefore also $Z(\phi)$ is geometrically integral by \cref{schematic image of a geometrically integral variety}, and $\zeta_\phi$ is faithfully flat by \cref{a dominant moprhism of integral schemes over a field is faithfully flat}.
\end{proof}

\begin{thm}\label{canonical families for families on S-birationally projective schemes}
    Let $\phi\colon AX \dashrightarrow Y$ be a tame $S$-rational family. Then
    \begin{itemize}
        \item  There exists a ``canonical'' rational family $\tphi\colon Z(\phi)X \dashrightarrow Y$, together with a faithfully flat $S$-rational morphism $\zeta_\phi\colon A \to Z(\phi)$, which satisfies the following universal property: for any $S$-rational family $\psi\colon BX \dashrightarrow Y$ with $B$ Noetherian, and faithfully flat $S$-rational morphism $f\colon A \to B$ such that $\phi = f^*(\psi)$, there exists a unique faithfully flat $S$-rational morphism $\eta\colon B \dashrightarrow Z(\phi)$ with $\psi = \eta^*(\tphi)$.
        \item      The family $\tphi$ is constructed by taking a choice of projective models of $X$ and $Y$, applying the construction of \cref{Existence of canonical families} on these models, and considering the induced family $\tphi\colon Z(\phi)X \dashrightarrow Y$.
        \item  The family $\tphi$ is unique up to $S$-birational equivalence (\cref{S-birational equivalence for rational families definition}).
    \end{itemize}
\end{thm}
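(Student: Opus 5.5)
The plan is to transport everything to the projective models and then reduce to \cref{Existence of canonical families} and \cref{Canonical S-rational families and pullback.}. Fix the $S$-birational models $X'$, $Y'$ from \cref{tame definition}, with $S$-birational maps $\alpha\colon X \dashrightarrow X'$ and $\beta\colon Y \dashrightarrow Y'$. The induced family is $\phi' = \beta\circ\phi\circ(\id_A\times\alpha\inv)$. These compositions are defined because birational maps are diffuse and faithfully flat (\cref{S-birational S-rational morphisms are faithfully flat}), and $\id_A\times\alpha\inv$ is diffuse by \cref{properties preserved by products of $S$-rational morphisms}. Conversely, $\phi = \beta\inv\circ\phi'\circ(\id_A\times\alpha)$. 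Tameness lets us apply \cref{Existence of canonical families} to $\phi'$, which produces $Z(\phi) := Z(\phi')$, a faithfully flat $\zeta_\phi := \zeta_{\phi'}$, and $\tphi'\colon Z(\phi')X'\dashrightarrow Y'$. We then set $\tphi := \beta\inv\circ\tphi'\circ(\id_{Z(\phi)}\times\alpha)$. The identity $\zeta_\phi^*(\tphi)=\phi$ follows from $\zeta_{\phi'}^*(\tphi')=\phi'$ by associativity of composition (\cref{composotion of S-rational morphisms is associative}), since $\zeta\times\id$ commutes with $\id\times\alpha$. This establishes the second bullet.

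For the universal property, let $\psi\colon BX\dashrightarrow Y$ and $f\colon A\dashrightarrow B$ be faithfully flat with $f^*(\psi)=\phi$. Transport $\psi$ in the same way to $\psi'\colon BX'\dashrightarrow Y'$, so that $f^*(\psi')=\phi'$. By \cref{Graphs being S-generically flat is preserved under ff pullback}, $\Lambda_{\psi'}$ is $S$-generically flat over $B$, because $\Lambda_{\phi'}$ is. This step needs $B$ of finite type over $S$. The statement only says ``Noetherian'', so either the finite-type hypothesis is implicit, or the argument goes through for Noetherian $B$ since only quasicompactness is used; I would check which. \Cref{Canonical S-rational families and pullback.} then gives $Z(\psi')=Z(\phi')$ and $\zeta_{\phi'}=\zeta_{\psi'}\circ f$. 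Since $\zeta_{\phi'}$ is faithfully flat, so is $\zeta_{\psi'}$, and $\widetilde{\psi'}=\tphi'$. Take $\eta:=\zeta_{\psi'}$. Then \cref{Existence of canonical families} gives $\eta^*(\tphi')=\psi'$, and transporting back gives $\eta^*(\tphi)=\psi$. For uniqueness, suppose $\eta'$ is another faithfully flat map with $\eta'^*(\tphi)=\psi$. Then $\eta'^*(\tphi')=\psi'$, so uniqueness in \cref{Existence of canonical families} (applied to $\psi'$, whose canonical family is $\tphi'$) forces $\eta'=\zeta_{\psi'}$.

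For the third bullet, take another choice of models $X'',Y''$ and let $\tphi''$ over $Z''$ be the resulting family. Apply the universal property of each construction to the other. Since $\tphi''$ pulls back to $\phi$ along a faithfully flat $\zeta''$, we get $\eta\colon Z''\dashrightarrow Z(\phi)$ with $\eta^*(\tphi)=\tphi''$, and symmetrically $\eta'$ in the other direction. Then $(\eta'\circ\eta)^*(\tphi'')=\tphi''$. The uniqueness part, applied to the identity, gives $\eta'\circ\eta=\id$, and likewise $\eta\circ\eta'=\id$. Here one needs faithfully flat maps of locally Noetherian schemes to be diffuse (\cref{A flat S-rational morphism of locally Noetherian schemes is S-diffuse.}). \Cref{criteria for S-birationality} then shows $\eta$ is $S$-birational.

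The main obstacle is the transport step. We must check that the hypotheses of \cref{Canonical S-rational families and pullback.} (finite type, generic flatness) hold for $B$. We must also check that pullback commutes with the birational change of models, which means verifying that every composition involved is defined (diffuseness).
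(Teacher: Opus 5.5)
Your route is the paper's own: its proof gets the universal property by citing exactly \cref{Graphs being S-generically flat is preserved under ff pullback} and \cref{Canonical S-rational families and pullback.}. It gets uniqueness by applying the universal property in both directions and then \cref{criteria for S-birationality}. What you add is the transport through the projective models, which the paper leaves implicit. Two of your justifications need fixing, however.

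First, the diffuseness of $\beta$ is irrelevant to whether $\beta\circ(\phi\circ(\id_A\times\alpha\inv))$ is defined. In \cref{composition of S-rational morphisms definition} it is the inner map that must be diffuse, and nothing in the data forces $\phi$ to be diffuse (take a family whose image misses $\dom(\beta)$). The existence of $\phi'$ is presupposed by the phrase ``the induced rational family'' in \cref{tame definition}. With the paper's notion of composition, it amounts to $\phi$ being diffuse, by \cref{faithful flatness of S-rational morphisms preserved by composing with S-birational morphisms}. Also, $\id_A\times\alpha\inv$ is diffuse because it is $S$-birational (\cref{S-birational S-rational morphisms are faithfully flat}); the product lemma you cite says nothing about diffuseness.

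For the test family you must then prove the analogue rather than assume it. Suppose $\phi=\psi\circ(f\times\id_X)$ is diffuse and $V\subset Y$ is $S$-dense. Then $(f\times\id_X)\inv(\psi\inv(V))$ is $S$-dense. After shrinking as in \cref{characterization of faithfully flat S-rational morphisms}, its image is an $S$-dense open contained in $\psi\inv(V)$. Hence $\psi$ is diffuse and $\psi'$ is defined. The same argument applied to $\zeta_{\phi'}^*(\tphi')=\phi'$ shows $\tphi'$ is diffuse; here $\phi'$ is diffuse as a composite of diffuse maps. This is what makes your $\tphi$ defined.

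Second, your fallback that ``only quasicompactness is used'' is false. The results you invoke use that $B$ is locally of finite presentation, through openness of flat finitely presented maps (\cref{image of S-dense open under faithfully flat morphism locally of finite presentation}). Three places depend on it:
\begin{itemize}
\item Passing from $\zeta_{\phi'}=\zeta_{\psi'}\circ f$ faithfully flat to $\zeta_{\psi'}$ faithfully flat is \cref{composition faithfully flat criterion for S-rational morphisms}, with $B$ as the middle scheme.
\item \cref{Existence of canonical families}, which you apply to $\psi'$ over $B$, assumes finite type. It uses this in its first part via \cref{characterization of faithfully flat S-rational morphisms}.
\item The descent of diffuseness above needs it as well.
\end{itemize}
The paper's proof cites the same results, so it too is only justified for $B$ of finite type over $S$, and that is how you should read the hypothesis. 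This costs nothing in the third bullet, where $B=Z''$ is projective over $S$ by \cref{Z(phi) is projective in good circumstances}.
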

\begin{proof}
    The fact that the family constructed in the statement satisfies the universal property follows from \cref{Graphs being S-generically flat is preserved under ff pullback} and \cref{Canonical S-rational families and pullback.}. The fact that $\tphi$ is unique up to birational equivalence follows from the universal property; indeed given any other family $\theta\colon ZX \dashrightarrow Y$ with this property we would have faithfully flat $S$-rational morphisms $Z(\phi) \dashrightarrow Z$, $Z \dashrightarrow Z(\phi)$ which by the universal property must compose to give the identity; then it follows from \cref{criteria for S-birationality} these morphisms are $S$-birational.
\end{proof}

%% file: Chapters/Rational_Families/Transformations_of_Canonical_Families.tex
\section{Transformations of Canonical Families}\label{section: transformations of canonical families}

\begin{prop}\label{Canonical families and composition}
    Let $S$ be a Noetherian scheme. Let $A,B$ be schemes of finite type over $S$ and $X,Y,Z$ be schemes projective over $S$.

    Let $\phi\colon X \dashrightarrow Y$ and $\psi\colon BY \dashrightarrow Z$ be $S$-rational families over $A, B$ respectively, satisfying
    \begin{enumerate}
        \item The composition $\psi * \phi$ is defined
        \item $\Lambda_\phi$, $\Lambda_\psi$, and $\Lambda_{\psi * \phi}$ are $S$-generically flat over $A,B$, and $BA$ respectively (so $\zeta_\phi$, $\zeta_\psi$, and $\zeta_{\psi*\phi}$ are defined as in \cref{Z(phi) definition})
        \item $\zeta_\phi$, $\zeta_\psi$ are faithfully flat (so $\widetilde{\phi}$, $\widetilde{\psi}$ are defined by \cref{Existence of canonical families})
        \item The composition $\widetilde{\psi} * \widetilde{\phi}$ is defined
        \item $\Lambda_{\widetilde{\psi} * \widetilde{\phi}}$ is $S$-generically flat over $Z(\psi)Z(\phi)$.
    \end{enumerate}
    Then
    \begin{align*}
        \xi_{\widetilde{\psi} * \widetilde{\phi}} \circ (\zeta_\psi \times \zeta_\phi) = \xi_{\psi * \phi},
    \end{align*}
    where the $\xi$ are defined as in \cref{xi existence result}.
    In particular, we have $Z(\widetilde{\psi} * \widetilde{\phi}) = Z(\psi*\phi)$, and
    \begin{align*}
        {\zeta_{\widetilde{\psi} * \widetilde{\phi}}\ \circ (\zeta_\psi \times \zeta_\phi) = \zeta_{\psi * \phi}}.
    \end{align*}

    If further one of $\zeta_{\widetilde{\psi} * \widetilde{\phi}}$ and $\zeta_{\psi * \phi}$ is faithfully flat, then so is the other, and we have
    \begin{align*}
        \widetilde{ \widetilde{\psi} * \widetilde{\phi} } = \widetilde{ \psi * \phi }.
    \end{align*}

\end{prop}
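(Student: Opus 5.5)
The key observation is that $\psi * \phi$ is a pullback of $\widetilde{\psi} * \widetilde{\phi}$ along $\zeta_\psi \times \zeta_\phi$. Once this is established, everything follows from \cref{Canonical S-rational families and pullback.} applied with $f = \zeta_\psi \times \zeta_\phi$, with $BA$ as the source and the family $\widetilde{\psi} * \widetilde{\phi}$ over $Z(\psi)Z(\phi)$.

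\textbf{Step 1: identify the composite as a pullback.} By \cref{Existence of canonical families} we have $\zeta_\phi^*(\widetilde{\phi}) = \phi$ and $\zeta_\psi^*(\widetilde{\psi}) = \psi$. Apply \cref{Pullback commutes with composition of S-rational families:} with $g = \zeta_\psi$ and $f = \zeta_\phi$. This gives
\begin{align*}
(\zeta_\psi \times \zeta_\phi)^*(\widetilde{\psi} * \widetilde{\phi}) = \zeta_\psi^*(\widetilde{\psi}) * \zeta_\phi^*(\widetilde{\phi}) = \psi * \phi .
\end{align*}
That lemma requires all five expressions to be defined, and this is the routine bookkeeping step. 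The pullbacks $\zeta_\phi^*(\widetilde{\phi})$ and $\zeta_\psi^*(\widetilde{\psi})$ are defined because the $\zeta$'s are faithfully flat, hence flat, and everything is locally Noetherian; see \cref{pullbacks and products of faithfully flat morphisms are defined}. The compositions $\psi * \phi$ and $\widetilde{\psi} * \widetilde{\phi}$ are defined by hypotheses (1) and (4). For the remaining pullback along $\zeta_\psi \times \zeta_\phi$: this morphism is faithfully flat by \cref{properties preserved by products of $S$-rational morphisms}, so the same proposition shows that pullback along it is defined. Alternatively, one can check the formula directly on representatives: with $(b,a,x) = \id_{BAX}$, both sides equal $\widetilde{\psi}(\zeta_\psi(b), \widetilde{\phi}(\zeta_\phi(a),x))$.

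\textbf{Step 2: apply the pullback proposition.} Note that $Z(\psi)$ and $Z(\phi)$ are projective by \cref{Z(phi) is projective in good circumstances}, so $Z(\psi)Z(\phi)$ is of finite type over $S$. Then apply \cref{Canonical S-rational families and pullback.} with the family $\widetilde{\psi} * \widetilde{\phi}$ over $Z(\psi)Z(\phi)$ and the flat morphism $f = \zeta_\psi \times \zeta_\phi \colon BA \dashrightarrow Z(\psi)Z(\phi)$. Its closed graph is $S$-generically flat by hypothesis (5). The closed graph of the pullback, which is $\Lambda_{\psi*\phi}$, is $S$-generically flat by hypothesis (2). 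The first conclusion of that proposition is exactly $\xi_{\widetilde{\psi} * \widetilde{\phi}} \circ (\zeta_\psi \times \zeta_\phi) = \xi_{\psi * \phi}$. Since $f$ is faithfully flat, the same proposition gives $Z(\psi*\phi) = Z(\widetilde{\psi} * \widetilde{\phi})$ and the corresponding identity for the $\zeta$'s. It also gives the equivalence of faithful flatness of $\zeta_{\widetilde{\psi} * \widetilde{\phi}}$ and $\zeta_{\psi*\phi}$, together with $\widetilde{\widetilde{\psi} * \widetilde{\phi}} = \widetilde{\psi * \phi}$.

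\textbf{Expected obstacle.} The main difficulty is the definedness bookkeeping in Step 1. In particular, diffuseness of $(\zeta_\psi \times \zeta_\phi) \times \id_X$ relies on flatness of the $\zeta$'s and on the locally Noetherian hypotheses. One must also make sure representatives are chosen compatibly, so that the pullback and composition formulas agree on a common $S$-dense open.
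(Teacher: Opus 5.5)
Your proposal is correct and follows essentially the same route as the paper: establish $(\zeta_\psi \times \zeta_\phi)^*(\widetilde{\psi} * \widetilde{\phi}) = \psi * \phi$, then apply \cref{Canonical S-rational families and pullback.} with $f = \zeta_\psi \times \zeta_\phi$. The differences are cosmetic. The paper verifies the pullback identity by the direct point computation you list as an alternative, rather than citing \cref{Pullback commutes with composition of S-rational families:}. It also spells out $Z(\widetilde{\psi} * \widetilde{\phi}) = Z(\psi * \phi)$ via invariance of schematic image under faithfully flat precomposition, which you instead (correctly) read off from the pullback proposition, and you additionally check the finite-type hypothesis on $Z(\psi)Z(\phi)$ that the paper leaves implicit.
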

\begin{proof}
    First note that, by \cref{properties preserved by products of $S$-rational morphisms} and \cref{A flat S-rational morphism of locally Noetherian schemes is S-diffuse.}, $\zeta_\psi \times \zeta_\phi$ is faithfully flat and $S$-diffuse.
    \begin{claim}
        We have a commutative diagram of $S$-rational morphisms:
    \end{claim}
    \[
        \begin{tikzcd}
            & BA \arrow[rd, "\xi_{\psi * \phi}", dashed] \arrow[ld, "\zeta_\psi \times \zeta_\phi"', dashed] &              \\
            Z(\psi)Z(\phi) \arrow[rr, "\xi_{\widetilde{\psi} * \widetilde{\phi}}"', dashed] & & \Hilb_{XZ}.
        \end{tikzcd}
    \]
    \begin{proof}
        Since $\zeta_\psi \times \zeta_\phi$ is faithfully flat, by \cref{Canonical S-rational families and pullback.}, it is enough to show that $(\zeta_\psi \times \zeta_\phi)^*(\widetilde{\psi} * \widetilde{\phi}) = \psi * \phi$. Indeed, we verify this using \cref{useful remark} and \cref{products and Yoneda combine}. If $(b,a,x) = \id_{BAX}$, we have
        \begin{align*}
            (\zeta_\psi \times \zeta_\phi)^*(\widetilde{\psi} * \widetilde{\phi})(b,a,x)
             & = (\widetilde{\psi} * \widetilde{\phi})(\zeta_\psi(b), \zeta_\phi(a), x) \\
             & = \widetilde{\psi}(\zeta_\psi(b), \widetilde{\phi}(\zeta_\phi(a),x))     \\
             & = \widetilde{\psi}(\zeta_\psi(b),\phi(a,x))                              \\
             & = \psi(b, \phi(a,x))                                                     \\
             & = (\psi*\phi)(b,a,x).
        \end{align*}
    \end{proof}

    It follows from the claim that the schematic image of $\xi_{\widetilde{\psi} * \widetilde{\phi}} \circ (\zeta_\psi \times \zeta_\phi)$ is equal to $Z(\psi * \phi)$. Since $\zeta_\psi \times \zeta_\phi$ is faithfully flat, and precomposition by faithfully flat $S$-rational morphisms preserves schematic image (\cref{Precomposition by faithfully flat morphisms preserves schematic image}), we conclude that $Z(\widetilde{\psi} * \widetilde{\phi})$ (which by definition is the schematic image of $\xi_{\widetilde{\psi} * \widetilde{\phi}}$) is equal to $Z(\psi * \phi)$, and the result follows.
\end{proof}

\begin{prop}\label{descent morphism for inverses}
    Let $S$ be a Noetherian scheme, $A$ a Noetherian scheme over $S$, and $X,Y$ be projective over $S$. Let $\phi\colon AX \dashrightarrow Y$ be an invertible $S$-rational family over $A$ with $\Lambda_\phi$ $S$-generically flat over $A$. Recall the canonical morphism $\tau_{XY}\colon \Hilb_{XY} \to \Hilb_{YX}$ from \cref{tau_YX definition}.

    Then $\Lambda_{\phi^\dagger}$ is $S$-generically flat over $A$. Moreover,
    we have $\xi_{\phi^\dagger} = \tau_{XY} \circ \xi_{\phi}$. In particular, the isomorphism $\tau_{XY}$ induces an isomorphism (which we will also denote by $\tau_{XY}$) from $Z(\phi)$ to $Z(\phi^\dagger)$.
\end{prop}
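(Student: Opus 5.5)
The proof rests on the Cartesian square of \cref{invertible families and closed graphs}. It says that $\Lambda_{\phi^\dagger}$ is the pullback of $\Lambda_\phi$ along the coordinate permutation $\rho_{AYX}\colon AYX \to AXY$ (or its inverse). This isomorphism is over $A$, since it commutes with the projections to $A$. So for any $S$-dense open $U\subset A$ we get an isomorphism over $U$ between $\Lambda_\phi\cap UXY$ and $\Lambda_{\phi^\dagger}\cap UYX$. Flatness over $U$ is therefore transferred, and the maximal $S$-dense opens over which the two closed graphs are flat coincide. This gives the first claim: $\Lambda_{\phi^\dagger}$ is $S$-generically flat over $A$.

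For the identity $\xi_{\phi^\dagger} = \tau_{XY}\circ\xi_\phi$, fix an $S$-dense open $U$ over which $\Lambda_\phi$ is flat. By \cref{xi existence result} it lies in $\dom\xi_\phi\cap\dom\xi_{\phi^\dagger}$. By the uniqueness in \cref{xi existence result} (the universal property of $\Hilb_{YX}$), it suffices to show
\begin{align*}
(\tau_{XY}\circ\xi_{\phi,U})^*(\Omega_{YX}) = \Lambda_{\phi^\dagger}\cap UYX.
\end{align*}
By \cref{tau_YX definition}, with the roles of $X$ and $Y$ swapped, $(\tau_{XY}\circ\xi_{\phi,U})^*(\Omega_{YX})$ is the permuted subscheme $\rho^*(\xi_{\phi,U}^*\Omega_{XY}) = \rho^*(\Lambda_\phi\cap UXY)$. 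By the Cartesian square above, this equals $\Lambda_{\phi^\dagger}\cap UYX$. The step needing the most care is checking that the pullback in \cref{tau_YX definition} commutes with base change along $\xi_{\phi,U}$ and with restriction to $U$. This is a Cartesian cube argument (\cref{cartesian cubes}, \cref{Cartesian rectangles}) together with keeping track of which permutation is meant; I expect it to be routine bookkeeping.

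Finally, $\tau_{XY}$ is an isomorphism with inverse $\tau_{YX}$, since permuting twice is the identity on the Hilbert functors. Pushing forward along an isomorphism preserves schematic images, so
\begin{align*}
Z(\phi^\dagger)=\Schim(\tau_{XY}\circ\xi_\phi)=\tau_{XY}(\Schim(\xi_\phi))=\tau_{XY}(Z(\phi)).
\end{align*}
Hence $\tau_{XY}$ restricts to an isomorphism $Z(\phi)\xrightarrow{\sim}Z(\phi^\dagger)$.
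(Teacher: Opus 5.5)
Your proposal is correct and follows the paper's proof essentially step for step. You transfer flatness through the isomorphism $\Lambda_\phi\cong\Lambda_{\phi^\dagger}$ over $A$ given by \cref{invertible families and closed graphs}, identify $\xi_{\phi^\dagger,U}$ with $\tau_{XY}\circ\xi_{\phi,U}$ over a common flat locus $U$ via the Hilbert functor by pulling that Cartesian square back along $U\hookrightarrow A$, and pass to schematic images because $\tau_{XY}$ is an isomorphism (your remark that $\tau_{YX}$ is its inverse fills in a point the paper leaves implicit).
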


\begin{proof}
    First, by \cref{invertible families and closed graphs}, we have a Cartesian square:
    \begin{equation} \label{inverse cartesian diagram a}
        \begin{tikzcd}
            \Lambda_\phi \arrow[r,"\sim"] \arrow[d, hook] & \Lambda_{\phi\dagger} \arrow[d, hook] \\
            AXY \arrow[r, "\rho_{AYX}"]           & AYX.  \\
        \end{tikzcd}
    \end{equation}

    Projecting the bottom line down to $A$, we get a commutative diagram
    \[
        \begin{tikzcd}
            \Lambda_\phi \arrow[rr,"\sim"] \arrow[dr] && \Lambda_{\phi\dagger} \arrow[dl] \\
            & A &
        \end{tikzcd}
    \]
    In particular, if $\Lambda_\phi$ is flat over $U \subset A$, then so is $\Lambda_{\phi\dagger}$.

    Now fix any $S$-dense $U \subset A$ such that $\Lambda_\phi$ and $\Lambda_{\phi^\dagger}$ are flat over $U$. Then, by construction of $\xi_\phi$, $\xi_{\phi^\dagger}$ we have $U \subset \dom(\xi_\phi) \cap \dom(\xi_{\phi^\dagger})$ and moreover
    \begin{align*}
        \Lambda_\phi \cap UXY = \xi_{\phi,U}^*(\Omega_{XY}), &  & \Lambda_{\phi^\dagger} \cap UYX = \xi_{\phi^\dagger,U}^*(\Omega_{YX})
    \end{align*}
    where $\Omega_{XY}$ and $\Omega_{YX}$ denote the respective universal subschemes. We will show that $\xi_{\phi^\dagger,U} = \tau_{XY} \circ \xi_{\phi,U}$. By definition of $\tau_{XY}$ this is equivalent to showing
    \begin{align*}
        \Lambda_{\phi^\dagger} \cap UYX = \rho_{UXY}^*(\Lambda_\phi \cap UXY).
    \end{align*}
    But this follows on pulling back \cref{inverse cartesian diagram a} along ${U\hookrightarrow A}$, so we get the desired equality. The fact that $\tau_{XY}$ induces an isomorphism of the schematic images follows immediately.
\end{proof}

\begin{prop}\label{Canonical families of inverses}
    Let $S$ be a Noetherian scheme, $A$ a scheme of finite type over $S$, and $X,Y$ be $S$-birationally projective over $S$. Let $\phi\colon AX \dashrightarrow Y$ be an invertible $S$-rational family over $A$ with closed graph $\Lambda_\phi$ $S$-generically flat over $A$, and $\zeta_\phi$ faithfully flat.

    Then $\zeta_{\phi^\dagger}$ is faithfully flat, and $\widetilde{\phi}$ is invertible with inverse $(\widetilde{\phi})^\dagger = \tau_{YX}^*(\widetilde{\phi^\dagger})$.
\end{prop}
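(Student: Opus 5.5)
We work throughout with projective models $X,Y$ (by \cref{canonical families for families on S-birationally projective schemes} the construction is made on projective models, so we may assume $X,Y$ are projective over $S$). The plan has three steps: faithful flatness of $\zeta_{\phi^\dagger}$, transport of $\tphi^\dagger$ along $\tau$, and identification of the transported family as the inverse of $\tphi$.

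\textbf{Step 1.} First apply \cref{descent morphism for inverses}. It shows that $\Lambda_{\phi^\dagger}$ is $S$-generically flat over $A$ and that $\xi_{\phi^\dagger}=\tau_{XY}\circ\xi_\phi$. Consequently $\tau_{XY}$ restricts to an isomorphism $Z(\phi)\xrightarrow{\sim} Z(\phi^\dagger)$ with $\zeta_{\phi^\dagger}=\tau_{XY}\circ\zeta_\phi$. Since $\tau_{XY}$ is an isomorphism, in particular $S$-birational, \cref{faithful flatness of S-rational morphisms preserved by composing with S-birational morphisms} shows that $\zeta_{\phi^\dagger}$ is faithfully flat. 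Hence $\widetilde{\phi^\dagger}\colon Z(\phi^\dagger)Y\dashrightarrow X$ is defined by \cref{Existence of canonical families}.

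\textbf{Step 2.} Set $\theta:=\tau_{YX}^*(\widetilde{\phi^\dagger})$, a family over $Z(\phi)$. Here $\tau_{YX}$ is read as the inverse isomorphism $Z(\phi^\dagger)\to Z(\phi)$ pulled back in the appropriate direction, so that $\theta$ lives over $Z(\phi)$. This pullback is defined because $\tau$ is an isomorphism. We then check $\zeta_\phi^*(\theta)=\phi^\dagger$ by the same pointwise computation with \cref{useful remark} used in \cref{Canonical families and composition}:
\begin{align*}
\zeta_\phi^*(\theta)(a,y)=\widetilde{\phi^\dagger}(\tau\zeta_\phi(a),y)=\widetilde{\phi^\dagger}(\zeta_{\phi^\dagger}(a),y)=\phi^\dagger(a,y).
\end{align*}

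\textbf{Step 3.} To show $\tphi$ is invertible with inverse $\theta$, compare closed graphs. By \cref{Existence of canonical families}\ref{The part about graphs}, on some $S$-dense open $Z'\subset Z(\phi)$ over which $\zeta_\phi$ is faithfully flat, $\Lambda_{\tphi}\cap Z'XY=\Omega_{XY}\cap Z'XY$. Similarly, $\Lambda_{\widetilde{\phi^\dagger}}$ agrees with $\Omega_{YX}$ over $\tau(Z')$, after shrinking. By definition of $\tau$, namely $\tau^*(\Omega_{YX})=\rho^*\Omega_{XY}$ (\cref{tau_YX definition}), this gives a Cartesian square identifying $\Lambda_{\tphi}\cap Z'XY$ with the permutation of $\Lambda_\theta\cap Z'YX$, where closed graphs transport along the isomorphism $\tau$.

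The main obstacle is that \cref{detecting inverse $S$-rational families by looking at graphs} presupposes that $\tphi$ is already invertible. I would try to prove invertibility directly by descent. Choose representatives of $\overline{\phi}$ and $\overline{\phi^\dagger}$ inducing mutually inverse isomorphisms of $S$-dense opens of $AX$ and $AY$. By construction, $\overline{\phi}$ factors through $\overline{\tphi}$ via $\zeta_\phi\times\id$, and likewise $\overline{\phi^\dagger}$ factors through $\overline{\theta}$ via $\zeta_\phi\times\id$. Then
\begin{align*}
\overline{\theta}\circ\overline{\tphi}\circ(\zeta_\phi\times\id_X)=(\zeta_\phi\times\id_X)\circ\overline{\phi^\dagger}\circ\overline{\phi}=\zeta_\phi\times\id_X,
\end{align*}
and symmetrically for the other composite. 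Since $\zeta_\phi\times\id$ is faithfully flat and diffuse, \cref{faithfully flat morphisms are S-rational epimorphisms} cancels it, giving $\overline{\theta}\circ\overline{\tphi}=\id$ and $\overline{\tphi}\circ\overline{\theta}=\id$.

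This requires $\overline{\tphi}$ and $\overline{\theta}$ to be diffuse. To get this, I would use the graph identification on $Z'$: it shows that the projections of $\Lambda_{\tphi}\cap Z'XY$ to both $Z'X$ and $Z'Y$ become open immersions after faithfully flat pullback. Descent then yields diffuseness, and in fact birationality directly. Finally, \cref{criteria for S-birationality} shows $\overline{\tphi}$ is $S$-birational with inverse $\overline{\theta}$, and hence $(\tphi)^\dagger=\rho_X\circ\overline{\theta}=\theta$.
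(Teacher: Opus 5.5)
Your Steps 1 and 2 are correct. The core identity of Step 3, $\overline{\theta}\circ\overline{\widetilde{\phi}}\circ(\zeta_\phi\times\id_X)=\zeta_\phi\times\id_X$, is also correct, and it is exactly how the paper argues: it uses the commutative squares $\overline{\widetilde{\phi}}\circ(\zeta_\phi\times\id_X)=(\zeta_\phi\times\id_Y)\circ\overline{\phi}$ and their $\tau$-transported analogues for $\widetilde{\phi^\dagger}$, and then cancels a faithfully flat map. The gap is the point you flag yourself, and your justification for it does not work. You need $\overline{\widetilde{\phi}}$ and $\overline{\theta}$ to be diffuse in order even to form $\overline{\theta}\circ\overline{\widetilde{\phi}}$, to reassociate, and to invoke \cref{criteria for S-birationality}.

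You claim that the projections of $\Lambda_{\widetilde{\phi}}\cap Z'XY$ to $Z'X$ and $Z'Y$ become open immersions after faithfully flat pullback. This is false. The pullback is $\Lambda_\phi$ over an open $U_0\subset A$, and its projection to $U_0X$ is proper because $Y$ is projective. A proper open immersion is an isomorphism onto a clopen subscheme. Here that clopen would contain the $S$-dense open $\dom(\phi)\cap U_0X$, hence equal $U_0X$, so $\phi$ would be an honest morphism on $U_0X$. That fails for any family whose members have indeterminacy. Only an open piece such as $\Gamma_{\phi_U}=\Lambda_\phi\cap UY$ has the open-immersion property, and it need not be the pullback of an open of $\Lambda_{\widetilde{\phi}}$, so there is nothing to descend. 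Besides, diffuseness concerns preimages of \emph{all} $S$-dense opens, which an open-immersion statement on one open would not give you.

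The repair is short. As in the paper, choose $S$-dense opens $U\subset AX$ and $V\subset AY$ with the following properties: $\overline{\phi}$ and $\overline{\phi^\dagger}$ restrict to mutually inverse isomorphisms $U\cong V$; the maps $g_U:=(\zeta_\phi\times\id_X)_U|_{[U\to U']}$ and $g_V:=(\zeta_\phi\times\id_Y)_V|_{[V\to V']}$ are faithfully flat onto $S$-dense opens; and $U'\subset\dom(\widetilde{\phi})$ by \cref{Existence of canonical families}. You then have two options.

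(a) Use that $S$-density descends along faithfully flat maps. If $g\colon P\to Q$ is faithfully flat and $g^{-1}(O)$ is $S$-dense, then $g^{-1}(O)\hookrightarrow P\to Q$ is $S$-dominant by \cref{a faithfully flat morphism is S-dominant} and \cref{A composition of schematically-dominant morphisms is schematically-dominant}. It factors through $O\hookrightarrow Q$, so $O$ is $S$-dense by \cref{composition is schematically dominant implies f is schematically dominant}. For an $S$-dense open $O'\subset Z(\phi)Y$ you have $g_U^{-1}(\overline{\widetilde{\phi}}_{U'}^{-1}(O'))=\overline{\phi}_U^{-1}((\zeta_\phi\times\id_Y)_V^{-1}(O'))$. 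This is $S$-dense by \cref{flat morphisms of locally Noetherian schemes are S-diffuse} and because $\overline{\phi}_U\colon U\to V$ is an isomorphism. Hence $\overline{\widetilde{\phi}}$ is diffuse, $\overline{\theta}$ is diffuse by the symmetric argument, and your argument goes through.

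(b) Bypass diffuseness, as the paper does. Surjectivity of $g_U$ and the commutative square force $\overline{\widetilde{\phi}}_{U'}(U')\subset V'$, and likewise $\overline{\theta}$ maps $V'$ into $U'$. Cancelling the epimorphisms $g_U$ and $g_V$ (\cref{A faithfully flat morphism is an epimorphism of schemes}) shows these honest morphisms are mutually inverse isomorphisms $U'\cong V'$, which is $S$-birationality by definition.

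One small correction to Step 2: $\theta$ is the pullback along $\tau_{XY}\colon Z(\phi)\to Z(\phi^\dagger)$. That is what your computation actually uses and what the paper's proof ends with; pulling back along a map $Z(\phi^\dagger)\to Z(\phi)$ would go the wrong way.
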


\begin{proof}
    Denote $\psi = \phi^\dagger$. Note that by \cref{descent morphism for inverses} we have $\zeta_\psi = \tau_{XY} \circ \zeta_{\phi}$; in particular, since $\tau_{XY}$ is an isomorphism, $\dom(\zeta_\psi) = \dom(\zeta_\phi)$ and $\zeta_\psi$ is faithfully flat. Therefore we have a canonical family $\widetilde{\psi}$.

    Let $\mathcal{A} \subset A$ denote the largest $S$-dense open such that $\Lambda_\phi$ (and hence $\Lambda_{\psi}$) is flat over $\mathcal{A}$; then $\mathcal{A} \subset \dom(\zeta_\phi)$ by \cref{Z(phi) definition}. Since products of faithfully flat $S$-rational morphisms are faithfully flat (\cref{properties preserved by products of $S$-rational morphisms}), we have that $\zeta_\phi\times \id_X$, $\zeta_\phi \times \id_Y$, $\zeta_\psi \times \id_X$ and $\zeta_\psi \times \id_Y$ are all faithfully flat. Since everything is of finite presentation, and $\overline{\phi}, \overline{\psi}$ are mutually inverse, by \cref{characterization of faithfully flat S-rational morphisms} we may find $S$-dense opens $U \subset \mathcal{A}X\cap \dom(\phi)$ and $V \subset \mathcal{A} Y \cap \dom(\psi)$ such that:

    \begin{enumerate}
        \item $\overline{\phi}$ and $\overline{\psi}$ induce mutually inverse isomorphisms $\overline{\phi}_U|_{[U \to V]}$, $\overline{\psi}_V|_{[V \to U]}$ between $U$ and $V$,
        \item The images
              \begin{align*}
                  U':= (\zeta_\phi \times \id_X)_U(U) \subset Z(\phi)X,  &  & U'' := (\zeta_\psi \times \id_X)_U(U)\subset Z(\psi)X  \\
                  V' := (\zeta_\phi \times \id_Y)_V(V) \subset Z(\phi)Y, &  & V'' := (\zeta_\psi \times \id_Y)_V(V) \subset Z(\psi)Y
              \end{align*}
              are $S$-dense opens, and
        \item The morphisms
              \begin{align*}
                  (\zeta_\phi \times \id_X)_U|_{[U \to U']}, &  & (\zeta_\psi \times \id_X)_U|_{[U \to U'']} \\
                  (\zeta_\phi \times \id_Y)_V|_{[V \to V']}, &  & (\zeta_\psi \times \id_Y)_V|_{[V \to V'']}
              \end{align*}
              are faithfully flat.
    \end{enumerate}

    Observe that, by \cref{Existence of canonical families}, we have $U' \subset \dom(\widetilde{\phi})$ and $V'' \subset \dom(\widetilde{\psi})$.

    \begin{claim}
        Consider the associated $S$-rational morphisms $\overline{\widetilde{\phi}}$, $\overline{\widetilde{\psi}}$ (\cref{associated $S$-rational morphism}). We have that $\overline{\widetilde{\phi}}$ factors through $V'$ and $\overline{\widetilde{\psi}}$ factors through $U''$, and we have commutative squares
        \[
            \begin{tikzcd}[column sep = huge]
                U \arrow[d, "(\zeta_\phi \times \id_X)_U"'] \arrow[r, "\overline{\phi}_U"] & V \arrow[d, "(\zeta_\phi \times \id_Y)_V"] &&
                V \arrow[d, "(\zeta_\psi \times \id_Y)_V"'] \arrow[r, "\overline{\psi}_V"] & U \arrow[d, "(\zeta_\psi \times \id_X)_U"]
                \\
                U' \arrow[r, "{\overline{\widetilde{\phi}}_{U'}|_{[U' \to V']}}"]      & V'
                &&
                V'' \arrow[r, "{\overline{\widetilde{\psi}}_{V''}|_{[V'' \to U'']}}"]         & U''
            \end{tikzcd}
        \]
    \end{claim}
    \begin{proof}
        We verify the result for $\overline{\widetilde{\phi}}$ and the square on the left; the proof for the square on the right is identical.

        We verify this pointwise. Indeed, for any locally Noetherian $T \to S$ and $(a,x) \in U(T) \subset AX(T)$, we have $(a,\phi(a,x)) \in V(T)$, and following the definitions we have:
        \begin{align*}
            \overline{\widetilde{\phi}}_{U'} \circ (\zeta_{\phi} \times \id_X)_U(a,x) & = \overline{\widetilde{\phi}}_{U'}(\zeta_\phi(a), x)                                \\
                                                                                      & =(\zeta_{\phi}(a), \widetilde{\phi}_{U'}(\zeta_\phi,(a), x) )                       \\
                                                                                      & = (\zeta_\phi(a), \phi(a, x)) \tag{\text{By definition of $\widetilde{\phi}_{U'}$}} \\
                                                                                      & = (\zeta_\phi \times \id_Y)_V(a,\phi(a,x))                                          \\
                                                                                      & = (\zeta_\phi \times \id_Y)_V \circ \overline{\phi}_U(a,x)
        \end{align*}
        (note that in the above, following \cref{notation for S-rational morphisms with separated target}, we write $\zeta_{\phi}(a)$ for $\zeta_{\phi,\mathcal{A}}(a)$).

        In particular, we have that $  \overline{\widetilde{\phi}}_{U'} \circ (\zeta_{\phi} \times \id_X)_U$ factors through $(\zeta_\phi \times \id_Y)_V(V) = V'$. Since $(\zeta_{\phi} \times \id_X)_U|_{[U \to U']}$ is surjective, it follows that $\overline{\widetilde{\phi}}_{U'}$ factors through $V'$, and we get the result.
    \end{proof}
    Again, by \cref{descent morphism for inverses} we have $\zeta_\phi = \tau_{YX} \circ \zeta_{\psi}$, and in particular we have commutative diagrams
    \[
        \begin{tikzcd}
            & U \arrow[ld, "(\zeta_\phi \times \id_X)_U"'] \arrow[rd, "(\zeta_\psi \times \id_X)_U"] &     & &   & V \arrow[ld, "(\zeta_\phi \times \id_Y)_V"'] \arrow[rd, "(\zeta_\psi \times \id_Y)_V"] &                \\
            U'  &    &\arrow[ll, "(\tau_{YX} \times \id_X)_{U''}"] U'' && V'\arrow[rr,"(\tau_{XY} \times \id_Y)_{V'}"'] &       & V''.
        \end{tikzcd}
    \]
    Putting everything together we get a commutative diagram
    \[
        \begin{tikzcd}[row sep = large]
            & U \arrow[ldd, "(\zeta_\phi \times \id_X)_U"'] \arrow[rd, "(\zeta_\psi \times \id_X)_U"] \arrow[rrrr, "\overline{\phi}_U", shift left]
            &   &   & &
            V \arrow[ldd, "(\zeta_\phi \times \id_Y)_V"' near start] \arrow[rd, "(\zeta_\psi \times \id_Y)_V"] \arrow[llll, "\overline{\psi}_V"] &
            \\
            &   &
            U'' \arrow[lld, "(\tau_{YX} \times \id_X)_{U''}"]
            &   &  & &
            V'' \arrow[llll,"\overline{\widetilde{\psi}}_V" near end]
            \\
            U' \arrow[rrrr,"\overline{\widetilde{\phi}}_U"']
            && & &
            V' \arrow[rru, "(\tau_{XY} \times \id_Y)_{V'}"']
            &  &
        \end{tikzcd}
    \]
    Now since a faithfully flat morphism is an epimorphism of schemes (\cref{A faithfully flat morphism is an epimorphism of schemes}), and $(\zeta_\phi \times \id_X)_U$ is faithfully flat, it follows that $\overline{\widetilde{\phi}}$ is $S$-birational, with inverse
    \begin{align*}
        (\tau_{YX} \times \id_X) \circ \overline{\widetilde{\psi}} \circ (\tau_{XY} \times \id_Y)
    \end{align*}
    and thus
    \begin{align*}
        (\widetilde{\phi})^\dagger = \rho_X \circ (\overline{\widetilde{\phi}})\inv = \widetilde{\psi} \circ (\tau_{XY} \times \id_Y) = \tau_{XY}^*(\widetilde{\psi})
    \end{align*}
    as required.
\end{proof}

%% file: Chapters/Group_Chunks_from_S-Rational_Families/Main_Group_Chunks_from_S-Rational_Families.tex
\chapter{Group Chunks from \texorpdfstring{$S$}{S}-Rational Families}
In this chapter we give our final main result. In the first section we state the result, make some preliminary remarks, and show how the result simplifies over fields. In the next section we show how to construct the partial magma; most of the technical work happens here. In the final section we show how to shrink the partial magma to produce an Artin group chunk.


\input{Chapters/Group_Chunks_from_S-Rational_Families/Statement_of_the_Result.tex}

\input{Chapters/Group_Chunks_from_S-Rational_Families/Constructing_the_Partial_Magma.tex}
\input{Chapters/Group_Chunks_from_S-Rational_Families/Final_group_chunk_construction.tex}

%% file: Chapters/Group_Chunks_from_S-Rational_Families/Statement_of_the_Result.tex
\section{Statement of the Main Result}
\begin{thm}[see \cref{main result: most general case}]\label{statement of the main result at start of section}
    Let $S$ be a Noetherian scheme, and let $A,X,Y$ be schemes over $S$, with $A$ fppf and with geometrically integral fibers over $S$ and such that $Y$ is locally Noetherian and $X$ is $S$-birationally projective.

    Let $\phi\colon AX \dashrightarrow Y$ be an invertible $S$-rational family over $A$, with inverse $\phi^\dagger$. Let $\psi = \phi^\dagger * \phi$. Suppose
    \begin{enumerate}[label = (\roman*).]
        \item The family $\psi$ is tame (\cref{tame definition})
        \item The closed graphs $\Lambda_{\widetilde{\psi} * \widetilde{\psi}}$, $\Lambda_{(\widetilde{\psi})^\dagger * \widetilde{\psi}}$ and $\Lambda_{\widetilde{\psi} * (\widetilde{\psi})^\dagger}$ are $S$-generically flat over $Z(\psi)^2$, for some choice of $S$-birational projective model of $X$ witnessing tameness of $\psi$.
        \item \label{main statement key condition} The $S$-rational morphisms
              \begin{align*}
                  q_1\colon A^2 \dashrightarrow AZ(\psi); &  & (a_1,a_2) \in A^2(T) \mapsto (a_1, \zeta_\psi(a_1,a_2)) \\
                  q_2\colon A^2 \dashrightarrow AZ(\psi); &  & (a_1,a_2) \in A^2(T) \mapsto (a_2, \zeta_\psi(a_1,a_2))
              \end{align*}
              are both faithfully flat.
    \end{enumerate}

    Then composition of $\psi$ with itself induces an $S$-rational morphism $m_{12}\colon Z(\psi)^2 \dashrightarrow Z(\psi)$, and there exist $S$-dense opens $U\subset \dom(m_{12})$, $D \subset Z(\psi)$ and $W\subset D^3 \cap \Gamma_{m_{12U}}$ (where $\Gamma_{m_{12U}}$ denotes the graph of the representative of $m_{12}$ with domain $U$) such that the pair $(D,W)$ is an Artin group chunk (\cref{Artin Group Chunk definition}).
\end{thm}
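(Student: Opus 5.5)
The plan is to realize the partial magma structure on $Z := Z(\psi)$ via the canonical-family machinery of \cref{section: transformations of canonical families}. Then I will verify the hypotheses of \cref{geometrically integral fibers gives X dense open} and apply it to extract $(D,W)$.

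\textbf{Step 1: the multiplication $m_{12}$.} By tameness, replace $X$ by a projective model, so that $\widetilde{\psi}\colon ZX \dashrightarrow X$ and $\zeta_\psi\colon A^2 \dashrightarrow Z$ are defined and $\zeta_\psi$ is faithfully flat (\cref{Existence of canonical families}). Formally we have
\begin{align*}
    \psi * \psi = \phi^\dagger * \phi * \phi^\dagger * \phi,
\end{align*}
and on $(a_1,a_2,a_3,a_4)$ the middle factor $\phi * \phi^\dagger$ at parameters $(a_2,a_3)$ is what must be absorbed. The key use of hypothesis \ref{main statement key condition} is the following. Faithful flatness of $q_1$ and $q_2$ should show that the $S$-rational morphism
\begin{align*}
    A^3 \dashrightarrow Z^2, \qquad (a_1,a_2,a_3) \mapsto (\zeta_\psi(a_1,a_2),\zeta_\psi(a_2,a_3))
\end{align*}
is faithfully flat. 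One writes it as a composite of $q_2\times\id_A$ (up to permutation), then a base change of $q_1$, then a projection, and applies \cref{composition of S-rational morphisms preserves many properties} and \cref{properties preserved by products of $S$-rational morphisms}. On $A^3$ the composite $\psi(a_1,a_2)\circ\psi(a_2,a_3)$ equals $\psi(a_1,a_3)$, since $\phi(a_2,-)\circ\phi^\dagger(a_2,-)=\id$ by \cref{composing S-rational inverses gives the identity}. Hence the family $\widetilde{\psi}*\widetilde{\psi}$, pulled back along this faithfully flat map, is a pullback of $\widetilde\psi$ along the faithfully flat map $(a_1,a_2,a_3)\mapsto \zeta_\psi(a_1,a_3)$. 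By \cref{Canonical S-rational families and pullback.} and \cref{Canonical families and composition} (using hypothesis (ii) for generic flatness), $\xi_{\widetilde\psi*\widetilde\psi}$ factors through $Z$. I then define $m_{12} := \zeta_{\widetilde\psi*\widetilde\psi}\colon Z^2 \dashrightarrow Z$, which satisfies $m_{12}^*(\widetilde\psi) = \widetilde\psi*\widetilde\psi$ and $m_{12}(\zeta(a_1,a_2),\zeta(a_2,a_3)) = \zeta(a_1,a_3)$. The same argument applied to $(\widetilde\psi)^\dagger*\widetilde\psi$ and $\widetilde\psi*(\widetilde\psi)^\dagger$, together with \cref{Canonical families of inverses}, produces $m_{13}$ and $m_{23}$.

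\textbf{Step 2: the graph and the Artin axioms.} Let $W_0\subset Z^3$ be the closed graph $\Lambda_{m_{12}}$. I will check that its three projections are $S$-birational onto $S$-dense opens. The projection to the $(1,3)$ coordinates is birational because $m_{13}$ is an $S$-rational inverse: on $A^3$-points we have $(\zeta(a_1,a_2),\zeta(a_1,a_3))\mapsto \zeta(a_2,a_3)$, using $\psi(a_1,a_2)^{-1}=\psi(a_2,a_1)$. The identity $m_{13}(x,m_{12}(x,y))=y$ is checked after faithfully flat pullback to $A^3$ and then descended via \cref{faithfully flat morphisms are S-rational epimorphisms}; birationality follows from \cref{criteria for S-birationality}. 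Associativity reduces the same way to $A^4$, where both sides equal $\zeta(a_1,a_4)$. Density of the opens $U_{ij}$ in $Z^2$ uses the faithful flatness (hence $S$-dominance) of the maps from $A^3$. Since $A^2$ has geometrically integral fibres and $\zeta_\psi$ is faithfully flat, $Z$ should be fppf with geometrically integral fibres after shrinking; this follows as in \cref{tameness criterion over fields}, fibrewise. Its fibres are separated because $Z\subset\Hilb$. Then \cref{geometrically integral fibers gives X dense open} yields the $S$-dense opens $D$ and $W$.

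\textbf{Main obstacle.} I expect the hardest step to be Step 1's claim that $A^3\dashrightarrow Z^2$ is faithfully flat, together with turning the pointwise identity on $A^3$ into an equality of $S$-rational morphisms on $Z^2$. This needs careful bookkeeping of representatives and dense opens, in particular diffuseness of every composite; I would try to factor the map through fibre products of $q_1$ and $q_2$ over $A$, using that $A$ is fppf over $S$. A secondary difficulty is showing that $Z$ is fppf over $S$ with geometrically integral fibres over a general Noetherian base.
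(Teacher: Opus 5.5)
Your outline follows the paper's proof.
\begin{itemize}
\item Your map $A^3\dashrightarrow Z(\psi)^2$ is the $f_1$ of \cref{existence of the fi lemma}, with $a_2$ and $a_3$ swapped.
\item Descending $\xi_{\widetilde{\psi}*\widetilde{\psi}}$ through $Z(\psi)$ and defining the $m_{ij}$ as $\zeta$'s of composites is \cref{pullback equality result} and \cref{definition of the m_ij lemma}.
\item Birationality of $(\rho_1,m_{12})$ and $(\rho_2,m_{12})$ via \cref{criteria for S-birationality} is \cref{mij inverses statement}.
\item The fibrewise schematic-image argument, followed by shrinking $Z(\psi)$ to an fppf open, is how the paper reaches \cref{geometrically integral fibers gives X dense open}.
\end{itemize}

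The one real difference is associativity. The paper pulls back along $\zeta_\psi^3\colon A^6\dashrightarrow Z(\psi)^3$ and identifies both sides with the $\zeta$ of a triple composite such as $\psi^\dagger*\psi*\psi$. This needs \cref{equality of canonical families result} and \cref{Canonical families and composition}. You instead pull back along the chain map $(a_1,\dots,a_4)\mapsto(\zeta_\psi(a_1,a_2),\zeta_\psi(a_2,a_3),\zeta_\psi(a_3,a_4))$, which is faithfully flat by iterating $q_1$, and apply $m_{12}(\zeta_\psi(a,b),\zeta_\psi(b,c))=\zeta_\psi(a,c)$ twice. Your route is more elementary and avoids canonical families of triple composites altogether. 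It gives only the first associativity identity, but that is all \cref{Artin Group Chunk definition} asks for; the paper's computation also yields the strong-associativity identity, which is not needed here.

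Two things need fixing.

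First, taking $W_0=\Lambda_{m_{12}}$ does not work. Projections of a closed graph are in general not open immersions (compare the closed graph of $(x,y)\mapsto[x:y]$, a blow-up), so \cref{geometrically integral fibers gives X dense open} cannot be applied to it. Use instead the graph $\Gamma_{m_{12U}}$ of the representative on $U=U_1\cap U_2$. Here $U_1,U_2\subset\dom(m_{12})$ are $S$-dense opens on which $(\rho_1,m_{12})$, resp.\ $(\rho_2,m_{12})$, restrict to isomorphisms onto $S$-dense opens; your birationality argument supplies these. Then all three projections of $\Gamma_{m_{12U}}$ are open immersions with $S$-dense images. After shrinking $Z(\psi)$ to an fppf open $B$, intersect with $B^3$.

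Second, certain maps must be known to be diffuse before you can use them:
\begin{itemize}
\item the composites $m_{13}\circ(\rho_1,m_{12})$ and $m_{12}\circ(m_{12}\times\id)$ are only defined, and
\item \cref{criteria for S-birationality} and \cref{composotion of S-rational morphisms is associative} only apply,
\end{itemize}
once $m_{12}$, $(\rho_1,m_{12})$, $(\rho_2,m_{12})$ and their proposed inverses are diffuse. Obtain this as the paper does: show they are faithfully flat via \cref{composition faithfully flat criterion for S-rational morphisms}, applied to their precompositions with your faithfully flat maps from $A^3$.

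Finally, equality of $S$-rational morphisms upgrades to the compatibility $\ssim$ of partial morphisms in Artin's definition because $Z(\psi)$ is separated (\cref{domain for S-rational morphisms to separated target}).
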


\begin{rem}
    Note that the fact that $\psi$ is tame implies $\zeta_\psi$ and $\zeta_{\psi^\dagger}$ exist (for appropriate choice of projective model of $X$) by \cref{xi existence result} and \cref{descent morphism for inverses}, and also that $\zeta_{\psi^\dagger}$ is faithfully flat by \cref{descent morphism for inverses}. Then $\widetilde{\psi}$, $\widetilde{\psi^\dagger}$ both exist by \cref{Existence of canonical families} and are invertible by \cref{Canonical families of inverses}.

    The second condition implies that $\zeta_{\widetilde{\psi} * \widetilde{\psi}}$, $\zeta_{(\widetilde{\psi})^\dagger * \widetilde{\psi}}$, and $\zeta_{\widetilde{\psi} * (\widetilde{\psi})^\dagger}$ exist.
\end{rem}

\begin{rem}
    By \cref{canonical families for families on S-birationally projective schemes}, different choices of $S$-birational projective models for $X$ give $S$-birationally equivalent canonical parameter spaces $Z(\psi)$ and canonical families $\widetilde{\psi}$. In particular
    \begin{enumerate}
        \item By \cref{faithful flatness of S-rational morphisms preserved by composing with S-birational morphisms}, condition \ref{main statement key condition} holds independently of the choice of $S$-birational projective model for $X$.

        \item By \cref{S-dense open of Artin group chunk gives group chunk}, the conclusion of the theorem holds independently of the choice of $S$-birational projective model for $X$.
    \end{enumerate}
\end{rem}

\begin{rem}
    By \cref{S-dense sub group chunk gives same group.}, the group algebraic space associated to the group chunk produced in \cref{statement of the main result at start of section} by \cref{Artin Group chunk theorem} is independent of the choice of $D, U, W,$ and $S$-birational projective model for $X$.
\end{rem}

See \cref{summary of constructing group chunks from rational families} for a discussion of how \cref{statement of the main result at start of section} relates to results of Hrushovski and Weil.

Before we continue, let us remark that, in the case where $A,X,Y$ are varieties over a field, with $A$ geometrically integral, we get many of the hypotheses of \cref{statement of the main result at start of section} for free:
\begin{prop}\label{the hypotheses for the main situation are not so bad over fields}
    Let $A$ be a geometrically integral variety over a field $k$, and $X,Y$ be varieties over $k$.

    Let $\phi\colon AX \dashrightarrow Y$ be an invertible $k$-rational family over $A$, with inverse $\phi^\dagger$. Let $\psi = \phi^\dagger * \phi$. Suppose the rational morphisms
    \begin{align*}
        q_1\colon A^2 \dashrightarrow AZ(\psi); &  & (a_1,a_2) \in A^2(T) \mapsto (a_1, \zeta_\psi(a_1,a_2)) \\
        q_2\colon A^2 \dashrightarrow AZ(\psi); &  & (a_1,a_2) \in A^2(T) \mapsto (a_2, \zeta_\psi(a_1,a_2))
    \end{align*}
    are both dominant.

    Then $\phi$ satisfies the assumptions, and hence the conclusion, of \cref{statement of the main result at start of section}.

    Moreover, in this case the associated group algebraic space is a variety over $k$.
\end{prop}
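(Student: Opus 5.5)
The plan is to check the hypotheses (i)--(iii) of \cref{statement of the main result at start of section} one at a time in the field setting, and then deduce the final claim from \cref{representability criterion for Artin group chunks}.

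\textbf{Base hypotheses.} Over $S=\Spec k$ the standing hypotheses are automatic. $S$ is Noetherian. $A$ is fppf over $k$ because it is a nonempty variety over a field, and its only fibre is geometrically integral. $Y$ is Noetherian. $X$ is $S$-birationally projective, since every variety is birational to a projective one (Nagata/Chow), and birational means $k$-birational here: density agrees with $k$-density by \cref{S-dominance over fields} and \cref{schematically dense opens of a reduced scheme}.

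\textbf{Hypotheses (i) and (ii).} Since $\psi$ is a $k$-rational family over $A^2$ and $A^2$ is a geometrically integral variety, \cref{tameness criterion over fields} shows that $\psi$ is tame and that $Z(\psi)$ is geometrically integral. Hence $Z(\psi)^2$ is a geometrically integral variety, in particular reduced and of finite type. For (ii), generic flatness (\cref{Generic Flatness theorem}) applied to the closed graphs $\Lambda_{\widetilde{\psi} * \widetilde{\psi}}$, $\Lambda_{(\widetilde{\psi})^\dagger * \widetilde{\psi}}$ and $\Lambda_{\widetilde{\psi} * (\widetilde{\psi})^\dagger}$ over the reduced base $Z(\psi)^2$ gives a dense open over which each is flat. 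Such an open is $k$-dense, so (ii) holds. One must first check that these compositions are defined. By \cref{Canonical families of inverses}, $\widetilde{\psi}$ is invertible, so $\widetilde{\psi}$ is flat by \cref{invertible S-rational families are faithfully flat in good situations}. The corollary after that result (invertible families give defined compositions in the locally Noetherian case) then shows each composition is defined.

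\textbf{Hypothesis (iii).} The maps $q_i\colon A^2\dashrightarrow AZ(\psi)$ are dominant by assumption, and both $A^2$ and $AZ(\psi)$ are integral varieties. Here $AZ(\psi)$ is geometrically integral because it is a product of geometrically integral varieties. Therefore \cref{a dominant moprhism of integral schemes over a field is faithfully flat} makes each $q_i$ faithfully flat. The one subtlety is that $\zeta_\psi$ and $Z(\psi)$ are defined only after choosing projective models. The remark following \cref{statement of the main result at start of section} (via \cref{faithful flatness of S-rational morphisms preserved by composing with S-birational morphisms}) shows that this choice does not affect faithful flatness, and dominance is likewise preserved under birational modification. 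This step is essentially formal; the part needing the most care is the bookkeeping of models and the definedness of the compositions in (ii).

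\textbf{The final claim.} The group chunk $(D,W)$ lives over $S=\Spec k$, which has dimension $0$. Case 1 of \cref{representability criterion for Artin group chunks} therefore shows that the associated group algebraic space $G$ is a scheme. By \cref{Artin Group chunk theorem}, $G$ is locally of finite presentation over $k$ and contains the variety $D$ as a dense open. A group scheme locally of finite type over a field is separated, so $G$ is separated and of finite type, hence a variety. This is the remark after \cref{representability criterion for Artin group chunks}, citing \cite{ER15} and \cite{WeilAndre1955OAGo}.
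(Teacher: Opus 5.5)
Your proposal is correct and follows the paper's proof. Tameness of $\psi$ and geometric integrality of $Z(\psi)$ come from \cref{tameness criterion over fields}, condition (ii) from generic flatness, condition (iii) from \cref{a dominant moprhism of integral schemes over a field is faithfully flat}, and representability from case 1 of \cref{representability criterion for Artin group chunks}; your extra bookkeeping (definedness of the compositions, independence of the projective model) is harmless. One step in your last paragraph is loose: separatedness alone does not give finite type, so quasi-compactness of $G$ needs a separate argument, e.g.\ that $D\times D \to G$ is surjective. Since you, like the paper, ultimately defer the ``variety'' claim to \cite{ER15} and \cite{WeilAndre1955OAGo}, this is not a real gap.
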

\begin{proof}
    Tameness follows from \cref{tameness criterion over fields}, and we get $S$-generic flatness of the closed graphs by generic flatness. The third condition holds by \cref{a dominant moprhism of integral schemes over a field is faithfully flat}.

    The fact that the associated group is a scheme follows from \cref{representability criterion for Artin group chunks}. For the proof that $X$ is a variety, see \cite{ER15}, or \cite{WeilAndre1955OAGo} for an explicit proof in the case where $k$ is not algebraically closed.
\end{proof}

%% file: Chapters/Group_Chunks_from_S-Rational_Families/Constructing_the_Partial_Magma.tex
\section{Constructing the Partial Magma}\label{section: constructing the partial magma}
We begin the proof of \cref{statement of the main result at start of section}. First, we fix an $S$-birational projective model for $X$. For most of the rest of this section, we work with the following:

\begin{situation}\label{Hypothesis for constructing the map}
    Let $S$ be a Noetherian scheme, and let $A,X,Y$ be schemes over $S$, with $A$ fppf over $S$, $Y$ locally Noetherian, and $X$ projective over $S$.

    Let $\phi\colon AX \dashrightarrow Y$ be an invertible (hence faithfully flat and $S$-diffuse, by \cref{invertible S-rational families are faithfully flat in good situations} and \cref{A flat S-rational morphism of locally Noetherian schemes is S-diffuse.}) $S$-rational family over $A$, with inverse $\phi^\dagger$. Let $\psi = \phi^\dagger * \phi$. We assume that:
    \begin{enumerate}[label = (\roman*).]
        \item The family $\psi$ is tame (\cref{tame definition}).
        \item \label{closed graph condition} The closed graphs $\Lambda_{\widetilde{\psi} * \widetilde{\psi}}$, $\Lambda_{(\widetilde{\psi})^\dagger * \widetilde{\psi}}$ and $\Lambda_{\widetilde{\psi} * (\widetilde{\psi})^\dagger}$ are $S$-generically flat over $Z(\psi)^2$.
        \item \label{key condition} The $S$-rational morphisms
              \begin{align*}
                  q_1\colon A^2 \dashrightarrow AZ(\psi); &  & (a_1,a_2) \in A^2(T) \mapsto (a_1, \zeta_\psi(a_1,a_2)) \\
                  q_2\colon A^2 \dashrightarrow AZ(\psi); &  & (a_1,a_2) \in A^2(T) \mapsto (a_2, \zeta_\psi(a_1,a_2))
              \end{align*}
              are both faithfully flat.
    \end{enumerate}
\end{situation}
\begin{rem}
    Observe that in the above we do not require $A$ to have geometrically integral fibers over $S$.
\end{rem}

The main result of this section is:
\begin{prop}\label{construction of the partial magma}
    In \cref{Hypothesis for constructing the map}, we have $ Z(\widetilde{\psi}*\widetilde{\psi}) = Z(\psi)$ (\cref{definition of the m_ij lemma}), and if we define the $S$-rational morphism
    \begin{align*}
        m_{12} = \zeta_{\widetilde{\psi}*\widetilde{\psi}}\colon Z(\psi)Z(\psi) \dashrightarrow Z(\psi),
    \end{align*}
    then there exists an $S$-dense open $U \subset \dom m_{12}$ such that $(Z(\psi), m_{12}|_{U})$ has the structure of a cancellative and strongly associative partial magma, and the images of the isomorphisms $(\rho_1,m_{12})_U$ and $(\rho_2,m_{12})_U$ are $S$-dense opens in $Z(\phi)^2$.
\end{prop}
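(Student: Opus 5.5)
The plan is to transport everything from $A$ to $Z(\psi)$ using the faithfully flat maps $q_1,q_2$ and the canonical-family machinery of \cref{section: canonical S-rational families} and \cref{section: transformations of canonical families}.

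\textbf{Step 1: $Z(\widetilde{\psi}*\widetilde{\psi}) = Z(\psi)$.} For $(a_1,a_2,a_3)\in A^3$ we have
\[
\psi*\psi\,(a_1,a_2,a_3,a_4) = \phi^\dagger(a_1,\phi(a_2,\phi^\dagger(a_3,\phi(a_4,x)))).
\]
Restricting along the diagonal-type reparametrization $(a_1,a_2,a_3)\mapsto (a_1,a_2,a_2,a_3)$, the middle $\phi\circ\phi^\dagger$ cancels and the pullback equals $\phi^\dagger*\phi$ evaluated at $(a_1,a_3)$. To make this a statement about canonical families, use $q_1,q_2$. The map
\[
A^3 \dashrightarrow Z(\psi)^2,\qquad (a_1,a_2,a_3)\mapsto (\zeta_\psi(a_1,a_2),\zeta_\psi(a_2,a_3))
\]
should be faithfully flat. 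To see this, factor it as $\id_A\times q_1$ (in the appropriate coordinates) followed by $q_2\times\id$, up to permutations, and invoke \cref{properties preserved by products of $S$-rational morphisms} and \cref{composition of S-rational morphisms preserves many properties}. Pulling $\widetilde{\psi}*\widetilde{\psi}$ back along it gives $\psi(a_1,a_3)$, i.e. $\rho_{13}^*\psi$. By \cref{Canonical S-rational families and pullback.}, $\zeta_{\widetilde\psi*\widetilde\psi}\circ(\text{that map}) = \zeta_\psi\circ\rho_{13}$. Since $\rho_{13}$ is faithfully flat, \cref{Precomposition by faithfully flat S-rational morphisms preserves schematic image} identifies the schematic images, so $Z(\widetilde\psi*\widetilde\psi)=Z(\psi)$ and $m_{12}$ is defined with the formula
\[
m_{12}(\zeta_\psi(a_1,a_2),\zeta_\psi(a_2,a_3))=\zeta_\psi(a_1,a_3).
\]
The same argument applied to $(\widetilde\psi)^\dagger*\widetilde\psi$ and $\widetilde\psi*(\widetilde\psi)^\dagger$, using \cref{Canonical families of inverses} and \cref{descent morphism for inverses} to identify $(\widetilde\psi)^\dagger$ with the family parametrized via $\zeta_\psi(a_2,a_1)$, produces rational maps $m_{13}, m_{23}$. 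These satisfy
\[
m_{13}(\zeta_\psi(a_1,a_2),\zeta_\psi(a_1,a_3))=\zeta_\psi(a_2,a_3),
\]
and symmetrically for $m_{23}$.

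\textbf{Step 2: cancellativity and the isomorphisms.} The pair $(\rho_1,m_{12})$ and $(\rho_1,m_{13})$ should be mutually inverse $S$-rational maps $Z(\psi)^2\dashrightarrow Z(\psi)^2$. Both are diffuse because they are faithfully flat: the pullback to $A^3$ is given by the faithfully flat map above together with its analogue. I would check the identities $(\rho_1,m_{13})\circ(\rho_1,m_{12})=\id$ after precomposing with the faithfully flat map from $A^3$, where they reduce to the displayed formulas. Then I would cancel using \cref{faithfully flat morphisms are S-rational epimorphisms}. \Cref{criteria for S-birationality} then gives $S$-birationality, hence $S$-dense opens on which these maps are isomorphisms. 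Doing the same for $(\rho_2,m_{12})$ and $m_{23}$, and intersecting, produces a single $S$-dense $U$ with the graph $W$ of $m_{12}|_U$ projecting isomorphically onto $S$-dense opens of each $X_iX_j$. Shrinking so that the three projections of $W$ have common image needs care. I would take $W$ to be the intersection of the three graphs of the birational maps, viewed inside $Z(\psi)^3$, and use \cref{Intersection and schematically/S-dense opens} for density.

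\textbf{Step 3: associativity.} Work on $A^4$ via
\[
(a_1,\dots,a_4)\mapsto \bigl(\zeta_\psi(a_1,a_2),\zeta_\psi(a_2,a_3),\zeta_\psi(a_3,a_4)\bigr),
\]
which is faithfully flat by iterating the argument of Step 1. Both bracketings pull back to $\zeta_\psi(a_1,a_4)$. The mixed identity with $m_{13}$ pulls back similarly, to $\zeta_\psi(a_2,a_4)$-type expressions. Equality as $S$-rational maps then follows from \cref{faithfully flat morphisms are S-rational epimorphisms}. The partial-morphism compatibility $\ssim$ follows because $Z(\psi)$ is separated (being projective by \cref{Z(phi) is projective in good circumstances}), so the representatives agree on the whole intersection of domains by \cref{domain for S-rational morphisms to separated target}.

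\textbf{Main obstacle.} I expect the hardest step to be the faithful flatness of the map $A^3\to Z(\psi)^2$ (and its $A^4$ analogue), deduced rigorously from $q_1,q_2$ with careful bookkeeping of dense opens, together with checking that each pullback identity holds as $S$-rational families. Both use the invertibility and the generic flatness hypothesis (ii).
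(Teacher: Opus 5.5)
Your proposal is correct. Steps 1 and 2 follow the paper closely. Your map $(a_1,a_2,a_3)\mapsto(\zeta_\psi(a_1,a_2),\zeta_\psi(a_2,a_3))$ is the paper's $f_1$ with $a_2$ and $a_3$ swapped. The pullback identities and $Z(\widetilde\psi*\widetilde\psi)=Z(\psi)$ are \cref{pullback equality result} and \cref{definition of the m_ij lemma}, and the birationality of $(\rho_1,m_{12})$, $(\rho_2,m_{12})$ via \cref{criteria for S-birationality} is \cref{mij inverses statement}. One small fix: in your factorization the second map should be $\zeta_\psi\times\id_{Z(\psi)}$, as in the paper, or $q_2\times\id$ followed by the projection $A\,Z(\psi)^2\to Z(\psi)^2$, which is faithfully flat because $A$ is.

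Where you genuinely differ is associativity. The paper precomposes with $(\zeta_\psi)^3\colon A^6\dashrightarrow Z(\psi)^3$ and identifies both sides with $\zeta_{\psi^\dagger*\psi*\psi}$ (resp.\ the analogous triple composite). This needs \cref{Canonical families and composition} applied to triple composites together with \cref{equality of canonical families result}. The covering map is faithfully flat for free there, but the canonical-family bookkeeping is heavier.

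You instead precompose with the ``chain'' maps $A^4\dashrightarrow Z(\psi)^3$:
\begin{itemize}
\item $(a_i)\mapsto(\zeta_\psi(a_1,a_2),\zeta_\psi(a_2,a_3),\zeta_\psi(a_3,a_4))$ for the plain identity, where both sides pull back to $\zeta_\psi(a_1,a_4)$;
\item $(a_i)\mapsto(\zeta_\psi(a_1,a_2),\zeta_\psi(a_1,a_3),\zeta_\psi(a_3,a_4))$ for the mixed identity, where both sides pull back to $\zeta_\psi(a_2,a_4)$.
\end{itemize}
Both maps are faithfully flat by factoring through $q_1$ twice and then $\zeta_\psi\times\id$. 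After that you need only the two-variable identities $m_{12}(\zeta_\psi(a_1,a_2),\zeta_\psi(a_2,a_3))=\zeta_\psi(a_1,a_3)$ and $m_{13}(\zeta_\psi(a_1,a_2),\zeta_\psi(a_1,a_3))=\zeta_\psi(a_2,a_3)$ from your Step 1, plus \cref{faithfully flat morphisms are S-rational epimorphisms}. So \cref{equality of canonical families result} and the triple composites become unnecessary for this proposition. The cost is one more use of hypothesis (iii); the gain is a more elementary argument that makes the heuristic $\zeta_\psi(a_i,a_j)\approx a_i^{-1}a_j$ transparent.

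The extra shrinking at the end of your Step 2 is unnecessary. Let $U_1$ and $U_2$ be $S$-dense opens on which $(\rho_1,m_{12})$ and $(\rho_2,m_{12})$ restrict to isomorphisms onto $S$-dense opens, and set $U=U_1\cap U_2$. Then all three projections of $\Gamma_{m_{12}|_U}$ are already open immersions with $S$-dense image, and this is all the paper uses. Intersecting the three graphs would additionally require showing, via separatedness of $Z(\psi)$ and the rational identities, that the pairwise intersections are open in each graph. \cref{Intersection and schematically/S-dense opens} alone does not give this.
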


We will break the proof into several steps, with the final proof at the end of this section.


\begin{lem}\label{Psi inverse identities}
    In \cref{Hypothesis for constructing the map}, we have $\psi^\dagger = \psi \circ \rho_{213}$, where $\rho_{213}\colon A^2X \to A^2X$ swaps the first two coordinates, and if we denote by $\tau$ the involution $\tau_{XX}\colon \Hilb_{X^2} \to \Hilb_{X^2}$ given by \cref{tau_YX definition}, we have $\zeta_{\psi} = \tau \circ\zeta_{\psi^\dagger}$, and $(\widetilde{\psi})^\dagger = \tau^*(\widetilde{\psi^\dagger})$.
\end{lem}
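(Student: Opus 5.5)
The plan has three steps: compute $\psi^\dagger$ from the formula for inverses of compositions, then read off the relation between $\zeta_\psi$ and $\zeta_{\psi^\dagger}$ from \cref{descent morphism for inverses}, and finally obtain the statement about canonical families from \cref{Canonical families of inverses}.

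\textbf{Step 1: $\psi^\dagger = \psi\circ\rho_{213}$.} Here $\psi = \phi^\dagger*\phi$, a family over $AA$ (with coordinates $(b,a)$) from $X$ to $X$. By \cref{A composition of invertible families is invertible}, applied with $B = A$ and the family $\phi^\dagger$ (invertible with inverse $\phi$), $\psi$ is invertible with inverse $(\phi^\dagger*(\phi^\dagger)^\dagger)\circ\rho_{ABZ}$. Since $(\phi^\dagger)^\dagger = \phi$, this is $(\phi^\dagger*\phi)\circ\rho_{213} = \psi\circ\rho_{213}$.

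Two points need checking in this step.
\begin{itemize}
\item The hypotheses of \cref{A composition of invertible families is invertible}: $A$ is fppf, and $Y$ is locally of finite presentation, which needs a remark since $Y$ is only assumed locally Noetherian. Alternatively, I can verify the formula pointwise: with $(b,a,x)=\id$, one has $\psi^\dagger(b,a,z) = \phi^\dagger(a,\phi(b,z))$, using $\overline{\phi}^{-1}=\overline{\phi^\dagger}$ and \cref{criteria for S-birationality}.
\item The claim $(\phi^\dagger)^\dagger = \phi$. This follows from $(\overline{\phi})^{-1} = \overline{\phi^\dagger}$ together with $(\overline{\phi}^{-1})^{-1} = \overline{\phi}$ and \cref{S-rational families are determined by their associated S-rational morphisms}.
\end{itemize}

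\textbf{Step 2: $\zeta_\psi = \tau\circ\zeta_{\psi^\dagger}$.} I apply \cref{descent morphism for inverses} to the invertible family $\psi^\dagger$, whose inverse is $\psi$. This gives $\xi_{\psi} = \tau_{XX}\circ\xi_{\psi^\dagger}$. Passing to schematic images gives $\zeta_\psi = \tau\circ\zeta_{\psi^\dagger}$, where $\tau$ is the isomorphism $Z(\psi^\dagger)\to Z(\psi)$. Generic flatness of $\Lambda_\psi$ comes from tameness, and \cref{descent morphism for inverses} transfers it to $\Lambda_{\psi^\dagger}$. I also need $\tau$ to be an involution; this holds because the coordinate swap $\rho$ on $X^2$ squares to the identity, so $\tau^2=\id$ by \cref{tau_YX definition}.

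\textbf{Step 3: $(\widetilde{\psi})^\dagger = \tau^*(\widetilde{\psi^\dagger})$.} This is \cref{Canonical families of inverses} applied to $\psi$. Its hypotheses are: $A^2$ of finite type, $\Lambda_\psi$ generically flat, and $\zeta_\psi$ faithfully flat, all of which follow from tameness on the chosen projective model of $X$. Its conclusion gives $(\widetilde{\psi})^\dagger = \tau_{XX}^*(\widetilde{\psi^\dagger})$.

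The main obstacle is the bookkeeping of which $\tau$ (namely $\tau_{XY}$ versus $\tau_{YX}$, which coincide here because $X=Y$) goes in which direction. I also need to make sure that tameness is witnessed by a single projective model of $X$ used for both source and target; I will fix this model at the start, as the section does.
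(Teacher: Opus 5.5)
Your proposal is correct and takes the same approach as the paper. The paper likewise obtains $\psi^\dagger=\psi\circ\rho_{213}$ from \cref{A composition of invertible families is invertible}, and the other two identities from \cref{descent morphism for inverses} and \cref{Canonical families of inverses}. Your extra checks are points the paper leaves implicit, and you handle them soundly: that $Y$ is only assumed locally Noetherian rather than locally of finite presentation, that $(\phi^\dagger)^\dagger=\phi$, that $\tau^2=\id$, and that a single projective model of $X$ is fixed.
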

\begin{proof}
    The identity $\psi^\dagger = \psi \circ \rho_{213}$ follows immediately from \cref{A composition of invertible families is invertible}, and the other two claims follow from \cref{descent morphism for inverses} and \cref{Canonical families of inverses}.
\end{proof}

\begin{lem}\label{existence of the fi lemma}
    In \cref{Hypothesis for constructing the map}, there exist faithfully flat $S$-rational morphisms $f_i\colon A^3 \dashrightarrow Z(\psi)^2$, $i = 1,2,3,$ such that, if we denote $(a_1,a_2,a_3) = \id_{A^3}$, we have
    \begin{align*}
         & f_{1}(a_1,a_2,a_3) = (\zeta_\psi(a_1,a_3), \zeta_\psi(a_3,a_2))  \\
         & f_{2}(a_1,a_2,a_3) = (\zeta_\psi(a_2,a_3), \zeta_\psi(a_1,a_3))  \\
         & f_{3}(a_1,a_2,a_3) = (\zeta_\psi(a_3,a_1), \zeta_\psi(a_3,a_2)).
    \end{align*}
\end{lem}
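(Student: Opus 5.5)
Each $f_i$ will be built as a composition of the faithfully flat maps $q_1,q_2$ of \cref{Hypothesis for constructing the map}\ref{key condition}, base-changed products, and coordinate permutations. Faithful flatness and diffuseness are then inherited from \cref{composition of S-rational morphisms preserves many properties} and \cref{properties preserved by products of $S$-rational morphisms}. All schemes involved are of finite type over the Noetherian $S$: $A$ is fppf, and $Z(\psi)$ is projective by \cref{Z(phi) is projective in good circumstances}. Hence every faithfully flat $S$-rational morphism here is $S$-diffuse by \cref{A flat S-rational morphism of locally Noetherian schemes is S-diffuse.}, and compositions are defined.

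We treat $f_1$ first. Let $(a_1,a_2,a_3)=\id_{A^3}$ and consider the chain
\begin{align*}
A^3 \xrightarrow{\;\rho\;} A A^2 \xrightarrow{\id_A\times q_1} A\,A Z(\psi) \xrightarrow{\;\rho'\;} A^2 Z(\psi) \xrightarrow{q_2\times \id_{Z(\psi)}} A Z(\psi) Z(\psi) \xrightarrow{\rho_{23}} Z(\psi)^2 .
\end{align*}
Here $\rho$ sends $(a_1,a_2,a_3)\mapsto (a_1,(a_3,a_2))$, so $\id_A\times q_1$ yields $(a_1,a_3,\zeta_\psi(a_3,a_2))$. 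The permutation $\rho'$ arranges this as $((a_1,a_3),\zeta_\psi(a_3,a_2))$. Applying $q_2\times\id$ gives $(a_3,\zeta_\psi(a_1,a_3),\zeta_\psi(a_3,a_2))$, and the final projection forgets $a_3$.

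The last step is the projection $AZ(\psi)^2\to Z(\psi)^2$, which is the base change of $A\to S$. It is therefore fppf, in particular faithfully flat, and it is diffuse by \cref{Projections are diffuse}. The maps $\id_A\times q_1$ and $q_2\times\id$ are faithfully flat because products of faithfully flat $S$-rational morphisms are faithfully flat, and the identities are faithfully flat. The permutations are isomorphisms. Hence $f_1$ is faithfully flat by \cref{composition of S-rational morphisms preserves many properties}. The claimed formula is then checked via \cref{useful remark} and the Yoneda notation of \cref{products and Yoneda combine}.

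For $f_3$ we use $q_1$ twice: first $(a_3,a_1)\mapsto(a_3,\zeta_\psi(a_3,a_1))$, carrying $a_2$ along; then apply $q_1\times\id$ to $(a_3,a_2)$ with $\zeta_\psi(a_3,a_1)$ carried along; then project away $a_3$. For $f_2$ we use $q_2$ twice: $(a_2,a_3)\mapsto(a_3,\zeta_\psi(a_2,a_3))$, then $(a_1,a_3)\mapsto(a_3,\zeta_\psi(a_1,a_3))$, then project away $a_3$. In each case the common variable $a_3$ survives the first step, which is what lets the second step use it.

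The main obstacle is the bookkeeping of $S$-rational compositions. We must ensure each composition is defined, meaning the earlier map is diffuse, and that the result agrees with the pointwise formula as an $S$-rational morphism rather than merely on some $T$-points. I would handle this by choosing representatives on $S$-dense opens, using \cref{composition of diffuse S-rational morphisms is well defined}, and verifying the formula on the universal point $\id_{A^3}$ restricted to a common $S$-dense open. This is legitimate because $Z(\psi)$ is separated, so representatives are unique by \cref{domain for S-rational morphisms to separated target}.
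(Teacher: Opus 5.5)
Your proof is correct and is essentially the paper's argument. The paper writes $f_1=h_2\circ h_1$ with $h_1$ a permutation of $\id_A\times q_1$ and $h_2=\zeta_\psi\times\id_{Z(\psi)}$; your composite $\rho_{23}\circ(q_2\times\id_{Z(\psi)})$ is literally $\zeta_\psi\times\id_{Z(\psi)}$, and the only difference is that you get its faithful flatness from $q_2$ and the fppf projection $AZ(\psi)^2\to Z(\psi)^2$ rather than from the faithful flatness of $\zeta_\psi$ supplied by tameness, which is a harmless variation.
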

\begin{proof}
    We show how to construct $f_1$; the constructions for $f_2$ and $f_3$ are similar.

    We will construct $f_1$ as a composition $h_2\circ h_1$ of $S$-rational morphisms $h_1, h_2$ which we show to be faithfully flat.

    If $(a_1,a_2,a_3) = \id_{A^3}$, and $(a_1,a_2,z) = \id_{A_1A_2Z(\psi)}$, the $S$-rational morphisms in question are defined by
    \begin{align*}
         & h_1 = (a_1,a_3,\zeta_\psi(a_3,a_2)), \\
         & h_2 = (\zeta_\psi(a_1,a_2),z).
    \end{align*}
    Observe that $h_1$ is a permutation of $q_1 \times \id_{A_1}$. Since $q_1$ is faithfully flat by assumption, and products of faithfully flat $S$-rational morphisms are faithfully flat (\cref{properties preserved by products of $S$-rational morphisms}) we have that $h_1$ is faithfully flat. The $S$-rational morphism $h_2$ is also a product of faithfully flat $S$-rational morphisms, hence faithfully flat (and hence $S$-diffuse). Therefore the composition is well-defined (by \cref{composition of diffuse S-rational morphisms is well defined}) and faithfully flat (by \cref{properties preserved by products of $S$-rational morphisms}), and we see immediately that

    \begin{align*}
        h_2 \circ h_1(a_1,a_2,a_3) = (\zeta_\psi(a_1,a_3), \zeta_\psi(a_3,a_2))
    \end{align*}
    as required.
\end{proof}

\begin{lem}\label{pullback equality result}
    In \cref{Hypothesis for constructing the map}, the projection $\rho_{12}\colon A^3 \to A^2$ is faithfully flat, and with the $f_i$ defined as in \cref{existence of the fi lemma} we have
    \begin{align*}
        f_1^*(\widetilde{\psi}*\widetilde{\psi}) = f_2^*(\rho_{21}^*(\widetilde{\psi}*(\widetilde{\psi})^\dagger)) = f_3^*((\widetilde{\psi})^\dagger*\widetilde{\psi})  = \rho_{12}^*(\psi),
    \end{align*}
\end{lem}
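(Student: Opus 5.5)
The plan is to verify everything pointwise, using the Yoneda-style notation of \cref{useful remark} and \cref{products and Yoneda combine}. Each identity will be reduced to the defining relations $\zeta_\psi^*(\widetilde{\psi}) = \psi$ (from \cref{Existence of canonical families}) and $\zeta_{\psi^\dagger}^*(\widetilde{\psi^\dagger}) = \psi^\dagger$, together with the inverse identities of \cref{Psi inverse identities}.

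\textbf{Faithful flatness of $\rho_{12}$.} This is the base change of $A \to S$ along $A^2 \to S$. Since $A$ is fppf over $S$, the projection $\rho_{12}$ is faithfully flat. In particular it is diffuse, by \cref{flat morphisms of locally Noetherian schemes are S-diffuse}, since everything is locally Noetherian. Hence $\rho_{12}^*(\psi)$ is defined, and $\rho_{12}^*(\psi)(a_1,a_2,a_3,x) = \psi(a_1,a_2,x)$. All the pullbacks in the statement are defined by \cref{pullbacks and products of faithfully flat morphisms are defined}, because the $f_i$ are faithfully flat. The compositions $\widetilde\psi * \widetilde\psi$ and the others are defined because $\widetilde\psi$ and $(\widetilde\psi)^\dagger$ are invertible (\cref{Canonical families of inverses}), hence flat by \cref{invertible S-rational families are faithfully flat in good situations}, assuming $Z(\psi)$ is faithfully flat over $S$ or at least that invertibility gives diffuseness. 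That last point needs a remark; if necessary I would use the $S$-diffuseness of invertible families instead.

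\textbf{First identity.} With $(a_1,a_2,a_3,x)=\id$, we have
\[
f_1^*(\widetilde\psi*\widetilde\psi) = \widetilde\psi\bigl(\zeta_\psi(a_1,a_3), \widetilde\psi(\zeta_\psi(a_3,a_2),x)\bigr) = \psi\bigl(a_1,a_3,\psi(a_3,a_2,x)\bigr),
\]
using $\zeta_\psi^*\widetilde\psi=\psi$ twice. These substitutions are justified by associativity of composition (\cref{composotion of S-rational morphisms is associative}) and \cref{Pullback commutes with composition of S-rational families:}, since all intermediate maps are flat and hence diffuse. Next, unwinding $\psi = \phi^\dagger*\phi$ gives
\[
\psi(a_1,a_3,\psi(a_3,a_2,x)) = \phi^\dagger\bigl(a_1,\phi(a_3,\phi^\dagger(a_3,\phi(a_2,x)))\bigr).
\]
The inner $\phi\circ\phi^\dagger$ over the same parameter $a_3$ is the identity by \cref{composing S-rational inverses gives the identity} applied to $\overline\phi$. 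This collapses the expression to $\phi^\dagger(a_1,\phi(a_2,x)) = \psi(a_1,a_2,x)$.

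\textbf{Second and third identities.} These run the same way. For $f_3$, $(\widetilde\psi)^\dagger = \tau^*(\widetilde{\psi^\dagger})$, and $\tau\circ\zeta_\psi = \zeta_{\psi^\dagger}$ by \cref{Psi inverse identities}, so
\[
(\widetilde\psi)^\dagger(\zeta_\psi(a_3,a_1),y) = \psi^\dagger(a_3,a_1,y) = \psi(a_1,a_3,y).
\]
The expression then reduces exactly as before. For $f_2$ with the swap $\rho_{21}$, the expression becomes $\widetilde\psi(\zeta_\psi(a_1,a_3),(\widetilde\psi)^\dagger(\zeta_\psi(a_2,a_3),x))$, which equals $\psi(a_1,a_3,\psi(a_3,a_2,x))$ and finishes identically.

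\textbf{Main obstacle.} The main obstacle is rigor in the collapsing step $\phi(a_3,\phi^\dagger(a_3,y)) = y$ at the level of $S$-rational morphisms. This identity holds on an $S$-dense open of $A\,Y$, but it must be pulled back along a map landing in $A\,Y$ through the diagonal-type substitution $(a_3,\cdot)$. That map must be diffuse for the composition to make sense. I would first try to write the whole expression as the composition $\id\times\overline\phi\circ\id\times\overline{\phi^\dagger}\circ(\text{permutation})\circ\cdots$ of $S$-birational maps on $A^3Y$. The cancellation then becomes literally $\overline\phi\circ\overline\phi^{-1}=\id$ inside \cref{composotion of S-rational morphisms is associative}, with all factors faithfully flat and diffuse.
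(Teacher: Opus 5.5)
Your proposal is correct and is essentially the paper's own argument. Both are a pointwise verification in the Yoneda notation (\cref{useful remark}) using $\zeta_\psi^*(\widetilde{\psi})=\psi$, the identities of \cref{Psi inverse identities}, and the cancellation $\phi(a_3,\phi^\dagger(a_3,y))=y$; the paper writes out the $f_2$ case and calls the others similar, while you write out $f_1$, and your plan of realizing the cancellation as $\overline{\phi}\circ\overline{\phi}^{-1}=\id$ via maps $\id\times\overline{\phi}$ and $\id\times\overline{\phi}^{-1}$ on $A^3Y$ justifies a step the paper simply asserts.
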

\begin{proof}
    The projection $\rho_{12}$ is faithfully flat since $A$ is faithfully flat over $S$. Note in particular that $\rho_{12}^*(\psi)$ is defined (\cref{pullbacks and products of faithfully flat morphisms are defined}).

    For the next statement, let us prove
    \begin{align*}
        f_2^*(\rho_{21}^*(\widetilde{\psi}*(\widetilde{\psi})^\dagger)) = \rho_{12}^*(\psi);
    \end{align*}
    the other equalities are similar (indeed easier). We can verify the equality using \cref{useful remark} and \cref{products and Yoneda combine}. First note that if $(a_2,a_3,x) = \id_{A^2X}$, we have using the identities from \cref{Psi inverse identities} that
    \begin{align*}
        (\widetilde{\psi})^\dagger(\zeta_\psi(a_2,a_3,x)) & = (\tau^*(\widetilde{\psi^\dagger}))(\tau \circ \zeta_{\psi^\dagger}(a_2,a_3),x) \\
                                                          & = (\widetilde{\psi^\dagger})(\zeta_{\psi^\dagger}(a_2,a_3),x)                    \\
                                                          & = \psi^\dagger(a_2,a_3,x) \tag{By \cref{Existence of canonical families}}        \\
                                                          & = \psi(a_3,a_2,x)
    \end{align*}
    Then if $(a_1,a_2,a_3,x) = \id_{A^3X}$ we have that
    \begin{align*}
        f_2^*(\rho_{21}^*(\widetilde{\psi}*(\widetilde{\psi})^\dagger))(a_1,a_2,a_3,x) & = \widetilde{\psi}*(\widetilde{\psi})^\dagger(\zeta_\psi(a_1,a_3), \zeta_\psi(a_2,a_3),x) \\
                                                                                       & = \widetilde{\psi}(\zeta_\psi(a_1,a_3),(\widetilde{\psi})^\dagger(\zeta_\psi(a_2,a_3),x)) \\
                                                                                       & = \widetilde{\psi}(\zeta_\psi(a_1,a_3),\psi(a_3,a_2,x))\tag{By above}                     \\
                                                                                       & =\psi(a_1,a_3,\psi(a_3,a_2,x))                                                            \\
                                                                                       & = \phi^\dagger(a_1,\phi(a_3,\phi^\dagger(a_3, \phi(a_2,x))))                              \\
                                                                                       & = \phi^\dagger(a_1,\phi(a_2,x))                                                           \\
                                                                                       & = \psi(a_1,a_2,x)                                                                         \\
                                                                                       & = \rho_{12}^*(\psi)(a_1,a_2,a_3,x)
    \end{align*}
    as required.
\end{proof}

\begin{lem}\label{definition of the m_ij lemma}
    In \cref{Hypothesis for constructing the map}, define the $f_i$ as in \cref{existence of the fi lemma}. Then we have equalities of $S$-rational morphisms from $A^3$ to $\Hilb_{X^2}$:
    \begin{align*}     \xi_{\widetilde{\psi}*\widetilde{\psi}}\circ f_1
        =
        \xi_{\widetilde{\psi}*(\widetilde{\psi})^\dagger} \circ \rho_{21} \circ f_2
        =
        \xi_{(\widetilde{\psi})^\dagger*\widetilde{\psi}}\circ f_3
        =
        \xi_\psi \circ \rho_{12}.
    \end{align*}
    In particular,
    \begin{align*}
        Z(\widetilde{\psi}*\widetilde{\psi}) = Z(\widetilde{\psi}*(\widetilde{\psi})^\dagger) = Z((\widetilde{\psi})^\dagger*\widetilde{\psi}) = Z(\psi),
    \end{align*}
    and we can define the $S$-rational morphisms
    \begin{align*}
         & m_{12} = \zeta_{\widetilde{\psi}*\widetilde{\psi}}\colon Z(\psi)Z(\psi) \dashrightarrow Z(\psi)                           \\
         & m_{23} = \zeta_{\widetilde{\psi}*(\widetilde{\psi})^\dagger} \circ \rho_{21}\colon Z(\psi)Z(\psi) \dashrightarrow Z(\psi) \\
         & m_{13} = \zeta_{(\widetilde{\psi})^\dagger*\widetilde{\psi}}\colon Z(\psi)Z(\psi) \dashrightarrow Z(\psi).
    \end{align*}
    In particular, if we identify $m_{12}$ with the partial morphism given by its representative $(m_{12})_{\dom m_{12}}$ (see \cref{domain for S-rational morphisms to separated target}), then $(Z(\psi),m_{12})$ is a partial magma of schemes.

    Moreover, we have
    \begin{align*}
        m_{12}\circ f_1 = m_{23} \circ f_2 = m_{13}\circ f_3 = \zeta_\psi \circ \rho_{12},
    \end{align*}
    and the $m_{ij}$ are faithfully flat. In particular we have canonical families $\widetilde{\widetilde{\psi}*\widetilde{\psi}}$, $\widetilde{\widetilde{\psi}*(\widetilde{\psi})^\dagger}$ and $\widetilde{(\widetilde{\psi})^\dagger*\widetilde{\psi}}$.
\end{lem}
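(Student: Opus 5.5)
The plan is to deduce everything from \cref{pullback equality result} together with the pullback compatibility of the $\xi$ maps, \cref{Canonical S-rational families and pullback.}. That result has hypotheses to check, so I would verify them first.

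\textbf{Step 1: apply \cref{Canonical S-rational families and pullback.} to each $f_i$.} Take the family $\widetilde{\psi}*\widetilde{\psi}$ over $Z(\psi)^2$ and the faithfully flat $S$-rational morphism $f_1\colon A^3 \dashrightarrow Z(\psi)^2$ from \cref{existence of the fi lemma}. Faithfully flat rational morphisms between locally Noetherian schemes are flat, and here they are also diffuse by \cref{A flat S-rational morphism of locally Noetherian schemes is S-diffuse.}, so the pullback is defined. By hypothesis \ref{closed graph condition}, $\Lambda_{\widetilde{\psi}*\widetilde{\psi}}$ is $S$-generically flat over $Z(\psi)^2$. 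By \cref{pullback equality result}, $f_1^*(\widetilde{\psi}*\widetilde{\psi}) = \rho_{12}^*(\psi)$. We need $\Lambda_{\rho_{12}^*(\psi)}$ to be $S$-generically flat over $A^3$; this follows from \cref{Graphs being S-generically flat is preserved under ff pullback} applied to $\rho_{12}$, using $S$-generic flatness of $\Lambda_\psi$, which is part of tameness. The first part of \cref{Canonical S-rational families and pullback.} then gives $\xi_{\widetilde{\psi}*\widetilde{\psi}}\circ f_1 = \xi_{\rho_{12}^*\psi}$.

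The same argument applied to $\rho_{12}$ and $\psi$ gives $\xi_{\rho_{12}^*\psi} = \xi_\psi\circ\rho_{12}$. The argument for $f_3$ with $(\widetilde{\psi})^\dagger*\widetilde{\psi}$ is identical. For $f_2$ I would first pass from $\widetilde{\psi}*(\widetilde{\psi})^\dagger$ to $\rho_{21}^*(\widetilde{\psi}*(\widetilde{\psi})^\dagger)$. Since $\rho_{21}$ is an isomorphism, this pullback only permutes the closed graph, so $S$-generic flatness is preserved and $\xi_{\rho_{21}^*(\cdot)} = \xi_{(\cdot)}\circ\rho_{21}$. We then compose with $f_2$ as before. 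This proves the displayed chain of equalities.

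\textbf{Step 2: the schematic images and the definition of the $m_{ij}$.} Since $f_1$ is faithfully flat, \cref{Precomposition by faithfully flat S-rational morphisms preserves schematic image} gives
\[
Z(\widetilde{\psi}*\widetilde{\psi}) = \Schim(\xi_{\widetilde{\psi}*\widetilde{\psi}}\circ f_1) = \Schim(\xi_\psi\circ\rho_{12}) = Z(\psi),
\]
where the last equality uses that $\rho_{12}$ is faithfully flat. The same argument applies to the other two families, using that $\rho_{21}$ is an isomorphism. Hence the $m_{ij}$ are $S$-rational morphisms $Z(\psi)^2\dashrightarrow Z(\psi)$. The target $Z(\psi)$ is a closed subscheme of the Hilbert scheme, so it is separated. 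Therefore \cref{domain for S-rational morphisms to separated target} gives a representative on $\dom m_{12}$, and with it the partial magma structure.

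The identity $m_{12}\circ f_1 = \zeta_\psi\circ\rho_{12}$ is obtained by viewing the $\xi$-identities as identities of rational maps into the closed subscheme $Z(\psi)$. This is legitimate because a closed immersion is a monomorphism: factoring through it is compatible with composition of representatives. The identities for $m_{23}\circ f_2$ and $m_{13}\circ f_3$ follow in the same way.

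\textbf{Step 3: faithful flatness of the $m_{ij}$.} Here $\zeta_\psi$ is faithfully flat by tameness, and $\rho_{12}$ is faithfully flat, so $\zeta_\psi\circ\rho_{12}$ is faithfully flat by \cref{composition of S-rational morphisms preserves many properties}. Each $f_i$ is faithfully flat and diffuse, so \cref{composition faithfully flat criterion for S-rational morphisms} gives that each $m_{ij}$ is faithfully flat. This needs finite presentation; the schemes involved are of finite type over the Noetherian base $S$, so they are locally of finite presentation.

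With $\zeta$ of each composite family faithfully flat, \cref{Existence of canonical families} supplies the three canonical families.

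The main obstacle I expect is the bookkeeping in Step 1: checking $S$-generic flatness of each pulled-back graph, which is needed to invoke \cref{Canonical S-rational families and pullback.} at all. The $\rho_{21}$ twist in the $f_2$ case also has to be handled carefully there.
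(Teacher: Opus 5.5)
Your proposal is correct and follows essentially the same route as the paper. The $\xi$-identities come from the pullback equality lemma combined with the compatibility of $\xi$ with flat pullback, including the $\rho_{21}$ twist for $f_2$. The equality of the $Z$'s comes from invariance of schematic image under faithfully flat precomposition, and faithful flatness of the $m_{ij}$ from the criterion deducing faithful flatness of $\psi$ from that of $\psi\circ\phi$. You also check explicitly that the pulled-back closed graphs are $S$-generically flat, which is needed to invoke the pullback proposition; the paper leaves this implicit, so it is a useful addition rather than a different approach.
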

\begin{proof}
    Note that, by \cref{Canonical S-rational families and pullback.}, we have that $\xi_{\phi}\circ \rho_{12} = \xi_{\rho_{12}^*(\phi)}$, and $\xi_{\widetilde{\psi}*(\widetilde{\psi})^\dagger} \circ \rho_{21} = \xi_{\rho_{21}^*(\widetilde{\psi}*(\widetilde{\psi})^\dagger)}$. Then the first part of the result follows immediately from the previous part and \cref{Canonical S-rational families and pullback.}.

    Since $f_1, \rho_{21} \circ f_2, f_3$ and $\rho_{21}$ are faithfully flat, we get the equality of the schematic images by \cref{Precomposition by faithfully flat S-rational morphisms preserves schematic image}, and then by definition
    \begin{align*}
        m_{12}\circ f_1 = m_{23} \circ f_2 = m_{13}\circ f_3 = \zeta_\psi \circ \rho_{12}.
    \end{align*}

    Now since we have firstly that $\rho_{12}\colon A^3 \to A^2$ is faithfully flat, secondly that $\zeta_\psi$ is faithfully flat (by assumption) and thirdly that a composition of faithfully flat $S$-rational morphisms is faithfully flat (\cref{composition of S-rational morphisms preserves many properties}), it follows that $m_{12}\circ f_1$ is faithfully flat. Then $m_{12}$ is faithfully flat by \cref{composition faithfully flat criterion for S-rational morphisms}. Similarly $m_{23}$, $m_{13}$ are faithfully flat.

    The existence of the canonical families then follows from \cref{Existence of canonical families}.
\end{proof}

\begin{lem} \label{equality of canonical families result}
    In \cref{Hypothesis for constructing the map}, the $S$-rational morphisms $\zeta_{\psi * \psi}$, $\zeta_{\psi^\dagger * \psi}$, and $\zeta_{\psi * \psi^\dagger}$ are faithfully flat, and we have equalities of the canonical families:
    \begin{align*}
        \widetilde{\psi * \psi} = \widetilde{\psi^\dagger * \psi} = \widetilde{\psi * \psi^\dagger} =     \widetilde{\psi}.
    \end{align*}

\end{lem}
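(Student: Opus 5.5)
The approach is to reduce everything to the pullback machinery of \cref{Canonical S-rational families and pullback.} applied to the canonical families $\widetilde{\psi}*\widetilde{\psi}$, $(\widetilde{\psi})^\dagger*\widetilde{\psi}$, $\widetilde{\psi}*(\widetilde{\psi})^\dagger$. The natural intermediate identities come first. Let $(a_1,a_2,a_3,a_4)=\id_{A^4}$. Then $\psi*\psi$ is the family over $A^4$ given by $\psi(a_1,a_2,\psi(a_3,a_4,x))$. Unwinding $\psi=\phi^\dagger*\phi$ and using $\zeta_\psi^*(\widetilde\psi)=\psi$ (\cref{Existence of canonical families}), this family is equal to $g^*(\widetilde{\psi}*\widetilde{\psi})$, where
\[
g=\zeta_\psi\times\zeta_\psi\colon A^4\dashrightarrow Z(\psi)^2,
\]
which is faithfully flat by \cref{properties preserved by products of $S$-rational morphisms}. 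The same computation is used in the claim of \cref{Canonical families and composition}. For the other two families I would use the identities of \cref{Psi inverse identities}. Since $\psi^\dagger=\psi\circ\rho_{213}$ and $(\widetilde\psi)^\dagger(\zeta_\psi(a_2,a_1),x)=\psi(a_1,a_2,x)$ (computed in the proof of \cref{pullback equality result}), we get
\[
\psi^\dagger*\psi=g'^*\bigl((\widetilde\psi)^\dagger*\widetilde\psi\bigr),
\]
where $g'$ is $\zeta_\psi\times\zeta_\psi$ precomposed with the swap of the first two $A$-coordinates. The analogous statement holds for $\psi*\psi^\dagger$, and $g'$ is again faithfully flat.

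Next, \cref{Canonical S-rational families and pullback.} will be applied with $f=g$ (respectively $g'$). Its generic flatness hypothesis on $\Lambda_{g^*(\cdot)}$ follows from condition \ref{closed graph condition} together with \cref{Graphs being S-generically flat is preserved under ff pullback}; the required finite type hypotheses come from $A$ being fppf over Noetherian $S$. By \cref{definition of the m_ij lemma}, $\zeta_{\widetilde\psi*\widetilde\psi}=m_{12}$ is faithfully flat. Hence \cref{Canonical S-rational families and pullback.} gives $Z(\psi*\psi)=Z(\psi)$, shows that $\zeta_{\psi*\psi}=m_{12}\circ g$ is faithfully flat, and gives
\[
\widetilde{\psi*\psi}=\widetilde{\widetilde\psi*\widetilde\psi}.
\]
The same argument with $m_{13}$ and $m_{23}$ (the latter composed with the swap $\rho_{21}$) handles the other two families.

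It remains to identify $\widetilde{\widetilde\psi*\widetilde\psi}$ with $\widetilde\psi$ as families over $Z(\psi)$. By \cref{pullback equality result} we have $f_1^*(\widetilde\psi*\widetilde\psi)=\rho_{12}^*(\psi)$, and by \cref{definition of the m_ij lemma} we have $m_{12}\circ f_1=\zeta_\psi\circ\rho_{12}$. Since the $f_i$ are faithfully flat, another application of \cref{Canonical S-rational families and pullback.} with $f=f_1$ shows $\widetilde{\widetilde\psi*\widetilde\psi}=\widetilde{\rho_{12}^*(\psi)}$. Applying it once more with $f=\rho_{12}$ gives $\widetilde{\rho_{12}^*(\psi)}=\widetilde\psi$. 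The cases of $f_2$ and $f_3$ are identical.

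The main obstacle I expect is bookkeeping rather than ideas. One has to check that all the compositions and pullbacks are defined; for this I would use \cref{pullbacks and products of faithfully flat morphisms are defined}, invertibility of $\phi$, and faithful flatness. One also has to track the coordinate permutations correctly, so that $g'$ matches the convention $m_{23}=\zeta_{\widetilde\psi*(\widetilde\psi)^\dagger}\circ\rho_{21}$. Finally, \cref{Canonical S-rational families and pullback.} identifies canonical families only on common parameter spaces, so the equalities $Z(\psi*\psi)=Z(\psi)$ must be obtained first, via schematic images under faithfully flat precomposition (\cref{Precomposition by faithfully flat S-rational morphisms preserves schematic image}).
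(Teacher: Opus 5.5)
Your argument is correct up to one permutation slip, and for the two dagger composites it takes a partly different route from the paper.

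\textbf{The slip.} The map $g'$ should not contain a swap. By \cref{Psi inverse identities} one has $\zeta_\psi^*\bigl((\widetilde\psi)^\dagger\bigr)=\psi^\dagger$. This is equivalent to the formula $(\widetilde\psi)^\dagger(\zeta_\psi(a_2,a_1),x)=\psi(a_1,a_2,x)$ that you quote. So pulling $(\widetilde\psi)^\dagger*\widetilde\psi$ back along your $g'$, i.e.\ along $(a_1,a_2,a_3,a_4)\mapsto(\zeta_\psi(a_2,a_1),\zeta_\psi(a_3,a_4))$, gives $\psi(a_1,a_2,\psi(a_3,a_4,x))$. That is $\psi*\psi$, not $\psi^\dagger*\psi=\psi(a_2,a_1,\psi(a_3,a_4,x))$. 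The correct identities are
$\psi^\dagger*\psi=(\zeta_\psi\times\zeta_\psi)^*\bigl((\widetilde\psi)^\dagger*\widetilde\psi\bigr)$ and $\psi*\psi^\dagger=(\zeta_\psi\times\zeta_\psi)^*\bigl(\widetilde\psi*(\widetilde\psi)^\dagger\bigr)$.
With this fix, your applications of \cref{Canonical S-rational families and pullback.} give $\zeta_{\psi^\dagger*\psi}=m_{13}\circ(\zeta_\psi\times\zeta_\psi)$ and $\zeta_{\psi*\psi^\dagger}=m_{23}\circ\rho_{21}\circ(\zeta_\psi\times\zeta_\psi)$, both faithfully flat. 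The identifications with $\widetilde\psi$ via $f_3$, $\rho_{21}\circ f_2$ and $\rho_{12}$ then go through as you describe.

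\textbf{Comparison with the paper.} For $\psi*\psi$ you follow the paper exactly. \Cref{Canonical families and composition}, which you re-derive from \cref{Canonical S-rational families and pullback.}, gives faithful flatness of $\zeta_{\psi*\psi}$ and $\widetilde{\psi*\psi}=\widetilde{\widetilde\psi*\widetilde\psi}$. Then $f_1$ and $\rho_{12}$, together with \cref{pullback equality result}, identify the latter with $\widetilde\psi$.

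For the canonical families of the two dagger composites, the paper takes a shortcut. It uses $\psi^\dagger=\rho_{21}^*(\psi)$ and \cref{Pullback commutes with composition of S-rational families:} to write $\psi^\dagger*\psi$ and $\psi*\psi^\dagger$ as pullbacks of $\psi*\psi$ along coordinate swaps of $A^4$. One more application of \cref{Canonical S-rational families and pullback.} then gives $\widetilde{\psi^\dagger*\psi}=\widetilde{\psi*\psi^\dagger}=\widetilde{\psi*\psi}=\widetilde\psi$. This avoids $f_2$, $f_3$ and the canonical families of $(\widetilde\psi)^\dagger*\widetilde\psi$ and $\widetilde\psi*(\widetilde\psi)^\dagger$.

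Your symmetric treatment of the three composites is longer and more sensitive to coordinate bookkeeping, as the slip shows. In return, it yields the explicit factorizations of $\zeta_{\psi^\dagger*\psi}$ and $\zeta_{\psi*\psi^\dagger}$ through $m_{13}$ and $m_{23}$. That is exactly the identity the paper needs later in \cref{associativity verification part}, where it derives it separately via $\tau$.
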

\begin{proof}
    Note that, since pullback by faithfully flat morphisms preserves the canonical families (\cref{Canonical S-rational families and pullback.}), and by \cref{pullback equality result}, we have
    \begin{align*}
        \widetilde{\widetilde{\psi}*\widetilde{\psi}} = \widetilde{f_1^*(\widetilde{\psi}*\widetilde{\psi})}
        = \widetilde{\rho_{12}^*(\psi)} = \widetilde{\psi}.
    \end{align*}
    By \cref{Canonical families and composition} and \cref{definition of the m_ij lemma} we have that $\zeta_{\psi * \psi}$, $\zeta_{\psi^\dagger * \psi}$, and $\zeta_{\psi * \psi^\dagger}$ are faithfully flat. \cref{Canonical families and composition} also gives
    \begin{align*}
        \widetilde{\widetilde{\psi}*\widetilde{\psi}} = \widetilde{\psi*\psi}.
    \end{align*}
    Then, since $\psi^\dagger = \rho_{21}^*(\psi)$, and pullback commutes with composition by \cref{Pullback commutes with composition of S-rational families:}, we have
    \begin{align*}
        \psi^\dagger * \psi = (\rho_{12}\times\id_{A^2})^*(\psi * \psi)
    \end{align*}
    and since $\rho_{12}\times\id_{A^2}$ is faithfully flat, again by \cref{Canonical S-rational families and pullback.} we have $\widetilde{\psi^\dagger*\psi} = \widetilde{\psi}$. Arguing similarly for $\widetilde{\psi*\psi^\dagger}$ we conclude, as required, that
    \begin{align*}
        \widetilde{\psi * \psi} = \widetilde{\psi^\dagger * \psi} = \widetilde{\psi * \psi^\dagger} =     \widetilde{\psi}.
    \end{align*}
\end{proof}

\begin{lem}\label{mij inverses statement}
    In \cref{Hypothesis for constructing the map}, the $S$-rational morphisms $(\rho_1,m_{12})$ and $(\rho_2,m_{12})$ from $Z(\psi)^2$ to $Z(\psi)^2$ are $S$-birational, with $S$-rational inverses $(\rho_1,m_{13})$ and $(m_{23},\rho_1)$ respectively.

\end{lem}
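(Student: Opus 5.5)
Following the model of \cref{criteria for S-birationality}, I will show that $(\rho_1,m_{12})$ and $(\rho_1,m_{13})$ are diffuse $S$-rational endomorphisms of $Z(\psi)^2$ that compose to the identity in both orders, and do the same for $(\rho_2,m_{12})$ and $(m_{23},\rho_1)$.

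\textbf{Diffuseness.} Each $m_{ij}$ is faithfully flat by \cref{definition of the m_ij lemma}. Since $Z(\psi)$ is projective over the Noetherian $S$, it is Noetherian, so faithful flatness gives diffuseness (\cref{A flat S-rational morphism of locally Noetherian schemes is S-diffuse.}). For the pairs $(\rho_1,m_{12})$ etc., I would show faithful flatness by precomposing with the faithfully flat $f_i$. For instance,
\[
(\rho_1,m_{12})\circ f_1=(\zeta_\psi(a_1,a_3),\zeta_\psi(a_1,a_2)).
\]
This is a faithfully flat $S$-rational morphism $A^3\dashrightarrow Z(\psi)^2$, obtained exactly as in \cref{existence of the fi lemma} from $q_1$ (a permutation of $q_1\times\id$ followed by $\zeta_\psi\times\id$). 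Then \cref{composition faithfully flat criterion for S-rational morphisms} gives faithful flatness of $(\rho_1,m_{12})$, hence diffuseness. The other three pairs are handled the same way, using $f_3$, $f_2$ and $q_2$ where appropriate.

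\textbf{Composites are the identity.} To check $(\rho_1,m_{13})\circ(\rho_1,m_{12})=\id$, it suffices by \cref{faithfully flat morphisms are S-rational epimorphisms} to check equality after precomposing with $f_1$. With $(a_1,a_2,a_3)=\id_{A^3}$ we have $(\rho_1,m_{12})\circ f_1=(\zeta_\psi(a_1,a_3),\zeta_\psi(a_1,a_2))$. It remains to see that
\[
m_{13}(\zeta_\psi(a_1,a_3),\zeta_\psi(a_1,a_2))=\zeta_\psi(a_3,a_2).
\]
The pair $(\zeta_\psi(a_1,a_3),\zeta_\psi(a_1,a_2))$ is precisely $f_3$ with the indices relabelled ($a_3\leftrightarrow a_1$ roles). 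So this follows from the identity $m_{13}\circ f_3=\zeta_\psi\circ\rho_{12}$ of \cref{definition of the m_ij lemma}, after composing with the coordinate permutation of $A^3$. That permutation is an automorphism, so the compositions are defined.

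The reverse composite is checked after precomposing with the same kind of faithfully flat map, now using $m_{12}\circ f_1=\zeta_\psi\circ\rho_{12}$ with permuted indices. The pair $(\rho_2,m_{12})$ and $(m_{23},\rho_1)$ is treated analogously, using the identities for $f_1$ and $f_2$.

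\textbf{Conclusion and main obstacle.} With diffuseness and the two composite identities in hand, \cref{criteria for S-birationality} concludes. The main obstacle is bookkeeping: checking that each relabelled composite is literally $f_i$ (or $f_i$ precomposed with a permutation of $A^3$). One also has to justify that composites of $S$-rational maps computed by "pointwise" formulas via \cref{useful remark} are legitimate, which requires every intermediate map to be diffuse. I would handle this by first recording that each $f_i\circ\sigma$, for $\sigma$ a permutation of $A^3$, is faithfully flat.
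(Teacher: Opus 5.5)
Your proposal is correct and follows the paper's proof essentially step for step: you get diffuseness of the four pairs from faithful flatness after precomposing with the $f_i$ and factoring through $q_1$ or $q_2$, and you check the composites after precomposing with $f_i$, recognizing for instance $(\rho_1,m_{12})\circ f_1 = f_3\circ\rho_{321}$, before invoking \cref{criteria for S-birationality}. If anything, you are slightly more careful than the paper: you cite the $S$-rational descent lemma \cref{composition faithfully flat criterion for S-rational morphisms} rather than the scheme-level one, and you spell out the reverse composites that the paper leaves implicit.
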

\begin{proof}
    The plan here is to apply \cref{criteria for S-birationality}.

    We first show that $(\rho_1,m_{12})$, $(\rho_2,m_{12})$, $(\rho_1,m_{13})$ and $(m_{23},\rho_1)$ are $S$-diffuse (in fact faithfully flat). We give the argument for $(\rho_1,m_{12})$, the argument for the others is similar.

    First, let $g\colon A^4 \to Z(\psi)^2$ denote the composition $(\rho_1,m_{12}) \circ f_1$. Since $f_1$ is faithfully flat, by \cref{if f is faithfully flat and g circ f is flat then g is flat} it is enough to show that $g$ is faithfully flat. Note first that $g = (\rho_1\circ f_1,\ m_{12}\circ f_1)$. By the previous part we have $m_{12}\circ f_1 = \zeta_\psi \circ \rho_{12}$, so in particular if $(a_1,a_2,a_3) = \id_{A^3}$, we have
    \begin{align*}
        g(a_1,a_2,a_3) = (\zeta_\psi(a_1,a_3), \zeta_\psi(a_1,a_2)).
    \end{align*}
    In particular, we can write $g$ as the composition
    \begin{gather*}
        A_1A_2A_3 \dashrightarrow A_1A_3Z(\psi) \dashrightarrow Z(\psi)Z(\psi)\\
        (a_1,a_2,a_3) = \id_{A^3} \mapsto (a_1,a_3, \zeta_{\psi}(a_1,a_2)) \mapsto (\zeta_\psi(a_1,a_3), \zeta_\psi(a_1,a_2)).
    \end{gather*}
    Note that the first morphism is faithfully flat by assumption \ref{key condition}, and the second morphism is a product of faithfully flat  morphisms and therefore faithfully flat (\cref{properties preserved by products of $S$-rational morphisms}). Since compositions of faithfully flat morphisms are faithfully flat (\cref{composition of S-rational morphisms preserves many properties}), it follows that $g$ (and hence $(\rho_1,m_{12})$) is faithfully flat.

    It remains to show that the morphisms $(\rho_1,m_{13}),\ (\rho_1,m_{12})$ and $(\rho_2,m_{12})$, $(m_{23},\rho_1)$ compose to give the identity. Let us verify that $(\rho_1,m_{13}) \circ (\rho_1,m_{12}) = \id_{Z(\psi)^2}$. Note that the composition is equal to $(\rho_1, m_{13}\circ (\rho_1,m_{12}))$, so it is enough to show that $m_{13}\circ (\rho_1,m_{12}) = \rho_2$. Since $f_1$ is faithfully flat, it is enough to verify that $m_{13}\circ (\rho_1,m_{12}) \circ f_1 = \rho_2\circ f_1$.

    By definition, $\rho_2\circ f_1 = \zeta_\psi \circ \rho_{32}$.

    For the left-hand side, first note that $(\rho_1,m_{12}) \circ f_1 = (\rho_1\circ f_1, m_{12}\circ f_1)$. Again, since $m_{12}\circ f_1 = \zeta_\psi \circ \rho_{12}$ (\cref{definition of the m_ij lemma}), if $(a_1,a_2,a_3)= \id_{A^3}$ we have
    \begin{align*}
        (\rho_1,m_{12}) \circ f_1(a_1,a_2,a_3) = (\zeta_\psi(a_1,a_3), \zeta_\psi(a_1,a_2))= f_3(a_3,a_2,a_1)
    \end{align*}
    And therefore $(\rho_1,m_{12}) \circ f_1 = f_3 \circ \rho_{321}$. Therefore, by \cref{definition of the m_ij lemma}, we have
    \begin{align*}
        m_{13}\circ (\rho_1,m_{12}) \circ f_1 = m_{13} \circ f_3 \circ \rho_{321} = \zeta_\psi \circ \rho_{12} \circ \rho_{321} = \zeta_\psi \circ \rho_{32}
    \end{align*}
    as required.
\end{proof}

\begin{lem} \label{associativity verification part}
    In \cref{Hypothesis for constructing the map}, with the $m_{ij}$ defined as in \cref{definition of the m_ij lemma}, we have equalities of $S$-rational morphisms from $Z(\psi)^3$ to $Z(\psi)$:
    \begin{align*}
        m_{12} \circ (m_{12} \times \id_{Z(\psi)}) = m_{12} \circ (\id_{Z(\psi)} \times m_{12}) \\
        m_{12} \circ (m_{13} \times \id_{Z(\psi)}) = m_{13} \circ (\id_{Z(\psi)} \times m_{12})
    \end{align*}

    In particular, the partial magma $(Z(\psi),m_{12}|_{\dom m_{12}})$ is strongly associative.
\end{lem}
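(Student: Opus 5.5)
The plan is to follow the pattern of \cref{mij inverses statement}: check each identity after precomposing with a faithfully flat $S$-diffuse $S$-rational morphism out of a power of $A$, then cancel it using \cref{faithfully flat morphisms are S-rational epimorphisms}. For the first identity I will use $F\colon A^4 \dashrightarrow Z(\psi)^3$, defined (with $(a_1,a_2,a_3,a_4)=\id_{A^4}$) by
\begin{align*}
    F(a_1,a_2,a_3,a_4) = (\zeta_\psi(a_1,a_2),\zeta_\psi(a_2,a_3),\zeta_\psi(a_3,a_4)).
\end{align*}

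First I need $F$ to be faithfully flat. As in \cref{existence of the fi lemma}, I will factor $F$ as a composition of permuted products of $q_1$ or $q_2$ with identities. The first step is $(a_1,\dots,a_4)\mapsto(a_1,a_2,a_3,\zeta_\psi(a_3,a_4))$, which is a product with $q_1$. The next steps replace $a_3$ by $\zeta_\psi(a_2,a_3)$ using $q_2$, and then $a_2$ by $\zeta_\psi(a_1,a_2)$ using $q_2$. The last step projects away $a_1$, which is faithfully flat because $A$ is fppf. \cref{properties preserved by products of $S$-rational morphisms} and \cref{composition of S-rational morphisms preserves many properties} then give faithful flatness of $F$. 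Since $A$ and $Z(\psi)$ are Noetherian, $F$ is also diffuse by \cref{A flat S-rational morphism of locally Noetherian schemes is S-diffuse.}.

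Next I compute both sides after precomposing with $F$. The key input is \cref{definition of the m_ij lemma}, which gives $m_{12}\circ f_1=\zeta_\psi\circ\rho_{12}$, that is, $m_{12}(\zeta_\psi(a,c),\zeta_\psi(c,b))=\zeta_\psi(a,b)$ for $(a,b,c)=\id_{A^3}$. Pulling this identity back along the faithfully flat projections and permutations of $A^4$ gives:
\begin{itemize}
\item $m_{12}\circ(m_{12}\times\id)\circ F = m_{12}\circ(\zeta_\psi(a_1,a_3),\zeta_\psi(a_3,a_4)) = \zeta_\psi(a_1,a_4)$;
\item the other bracketing gives $\zeta_\psi(a_1,a_4)$ as well.
\end{itemize}
For each composition to be defined, the intermediate morphisms must be diffuse. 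I will check this by writing $(m_{12}\times\id)\circ F$ as an $f_1$-type map precomposed with a projection from $A^4$, so it is faithfully flat by the argument of \cref{existence of the fi lemma}; hence $m_{12}\times\id$ is faithfully flat by \cref{composition faithfully flat criterion for S-rational morphisms}, and therefore diffuse. Cancelling $F$ then gives the first identity.

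For the second identity I will use
\begin{align*}
    F'(a_1,\dots,a_4)=(\zeta_\psi(a_2,a_1),\zeta_\psi(a_2,a_3),\zeta_\psi(a_3,a_4)),
\end{align*}
which is faithfully flat by the same kind of factorization. Using $m_{13}\circ f_3=\zeta_\psi\circ\rho_{12}$, that is, $m_{13}(\zeta_\psi(c,a),\zeta_\psi(c,b))=\zeta_\psi(a,b)$, both sides reduce to $\zeta_\psi(a_1,a_4)$. Precisely, the left side is $m_{12}(\zeta_\psi(a_1,a_3),\zeta_\psi(a_3,a_4))$, and the right side is $m_{13}(\zeta_\psi(a_2,a_1),\zeta_\psi(a_2,a_4))$.

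Finally, I will translate equality of $S$-rational morphisms into compatibility of the partial morphisms $m_{12}|_{\dom m_{12}}$, which is what \cref{strong associativity definition} asks for. The two sides are compositions of actual morphisms on their domains. An equality of $S$-rational morphisms only says they agree on an $S$-dense open, while compatibility requires agreement on the whole intersection of the domains. I expect this to be the main obstacle, with the factorizations needed for $F'$ a secondary nuisance. I will close the gap with separatedness: $Z(\psi)$ is projective over $S$ by \cref{Z(phi) is projective in good circumstances}, and the intersection of the domains is an open of the Noetherian scheme $Z(\psi)^3$ containing the $S$-dense open where the two sides agree, so it is $S$-dense in itself by \cref{Intersection and schematically/S-dense opens}. \cref{S-dominant morphisms epimorphism property} then forces the two morphisms to agree on the whole intersection. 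This step needs checking that the $S$-density of that open survives restriction to the intersection. Strong associativity also uses cancellativity, so it is stated only in light of \cref{mij inverses statement}.
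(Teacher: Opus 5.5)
Your proof is correct, and it takes a genuinely different route from the paper's.

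\textbf{What the paper does.} It precomposes with the product $(\zeta_\psi)^3\colon A^6 \dashrightarrow Z(\psi)^3$, which is faithfully flat for free. Because the six parameters are independent, no two-step relation applies directly. The paper therefore identifies both sides with the parameter map of a triple composition, $\zeta_{\psi^\dagger*\psi*\psi}$ or $\zeta_{\psi*\psi*\psi}$. This uses \cref{Psi inverse identities}, \cref{Canonical families and composition} and \cref{equality of canonical families result}. The effect is that associativity of $m_{12}$ is inherited from associativity of composition of rational families.

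\textbf{What you do.} You precompose with the chained maps $F, F'\colon A^4 \dashrightarrow Z(\psi)^3$. Their faithful flatness comes directly from hypothesis (iii) of \cref{Hypothesis for constructing the map}, just as for the $f_i$. After that, the relations $m_{12}\circ f_1 = m_{13}\circ f_3 = \zeta_\psi\circ\rho_{12}$ from \cref{definition of the m_ij lemma} do all the work, in the same way as in \cref{mij inverses statement}. Your argument is more elementary: it never needs canonical families of triple compositions or \cref{equality of canonical families result}. The paper's argument is more conceptual.

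\textbf{Checks and small corrections.}
\begin{itemize}
\item Both of your computations are right, and both give $\zeta_\psi(a_1,a_4)$.
\item Your factorizations of $F$ and $F'$ are right, but the steps you attribute to $q_2$ keep the first argument, so they are $q_1$. This is harmless, since both $q_1$ and $q_2$ are faithfully flat.
\item Diffuseness of $m_{12}\times\id_{Z(\psi)}$, $\id_{Z(\psi)}\times m_{12}$ and $m_{13}\times\id_{Z(\psi)}$ follows at once from faithful flatness of the $m_{ij}$, which \cref{definition of the m_ij lemma} already gives. Combine it with \cref{properties preserved by products of $S$-rational morphisms} and \cref{A flat S-rational morphism of locally Noetherian schemes is S-diffuse.}; the detour through \cref{composition faithfully flat criterion for S-rational morphisms} is unnecessary.
\item Your final separatedness step is exactly \cref{domain for S-rational morphisms to separated target}, which is what the paper cites.
\item You rightly note that strong associativity presupposes cancellativity. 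That, together with identifying the graph-derived $\mu_{13}$ with $m_{13}$, comes from \cref{mij inverses statement} after shrinking to $U$. The paper's proof leaves this point implicit.
\end{itemize}
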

\begin{proof}
    Let us verify
    \begin{align*}
        m_{12} \circ (m_{13} \times \id_{Z(\psi)}) = m_{13} \circ (\id_{Z(\psi)} \times m_{12}).
    \end{align*}
    The other identity is similar, indeed easier.

    Since the $S$-rational morphism $(\zeta_\psi)^3 := \zeta_{\psi} \times \zeta_{\psi}\times \zeta_{\psi}$ is faithfully flat, by \cref{faithfully flat morphisms are S-rational epimorphisms} it is enough to show that both sides become equal after precomposition with $(\zeta_{\psi})^3$.

    First note that, if $(a_1,\dots,a_4) = \id_{A^4}$, we have
    \begin{align*}
          & \zeta_{(\widetilde{\psi})^\dagger* \widetilde{\psi}}(\zeta_{\psi}(a_1,a_2),\zeta_\psi(a_3,a_4))                                                                                                                           \\
        = & \zeta_{(\tau^*(\widetilde{\psi^\dagger}))* \widetilde{\psi}}(\tau \circ \zeta_{\psi^\dagger}(a_1,a_2),\zeta_\psi(a_3,a_4)) \tag{By \cref{Psi inverse identities}}                                                         \\
        = & \zeta_{(\tau \times \id_{Z(\psi)})*(\widetilde{\psi^\dagger} * \widetilde{\psi} ) }(\tau \circ \zeta_{\psi^\dagger}(a_1,a_2),\zeta_\psi(a_3,a_4)) \tag{\cref{Pullback commutes with composition of S-rational families:}} \\
        = & (\tau \times \id_{Z(\psi)})^*(\zeta_{(\widetilde{\psi^\dagger} * \widetilde{\psi} ) })(\tau \circ \zeta_{\psi^\dagger}(a_1,a_2),\zeta_\psi(a_3,a_4)) \tag{\cref{Canonical S-rational families and pullback.}}             \\
        = & \zeta_{(\widetilde{\psi^\dagger} * \widetilde{\psi} ) }(\zeta_{\psi^\dagger}(a_1,a_2),\zeta_\psi(a_3,a_4))                                                                                                                \\
        = & \zeta_{\psi^\dagger * \psi}(a_1,a_2,a_3,a_4) \tag{by \cref{Canonical families and composition}}
    \end{align*}

    Now let $(a_1,\dots,a_6) = \id_{A^6}$. Then
    \begin{align*}
          & m_{12} \circ (m_{13} \times \id_{Z(\psi)}) \circ (\zeta_\psi)^3(a_1,\dots,a_6)                                                                                                           \\
        = & \zeta_{\widetilde{\psi}*\widetilde{\psi}}(\zeta_{(\widetilde{\psi})^\dagger* \widetilde{\psi}}(\zeta_{\psi}(a_1,a_2),\zeta_\psi(a_3,a_4)), \zeta_\psi(a_5,a_6))                          \\
        = & \zeta_{\widetilde{\psi}*\widetilde{\psi}}(\zeta_{\psi^\dagger * \psi}(a_1,a_2,a_3,a_4), \zeta_\psi(a_5,a_6)) \tag{By above}                                                              \\
        = & \zeta_{\widetilde{(\psi^\dagger * \psi)}* \widetilde{\psi}}(\zeta_{\psi^\dagger * \psi}(a_1,a_2,a_3,a_4), \zeta_\psi(a_5,a_6)) \tag{By part \ref{equality of canonical families result}} \\
          & = \zeta_{\psi^\dagger * \psi * \psi}(a_1,\dots,a_6). \tag{by \cref{Canonical families and composition}}
    \end{align*}

    Similarly, one shows that
    \begin{align*}
        m_{13} \circ (\id_{Z(\psi)} \times m_{12}) \circ (\zeta_\psi)^3 = \zeta_{\psi^\dagger * \psi * \psi}
    \end{align*}
    and we conclude.

    Finally, since $Z(\psi)$ is separated (being a closed subscheme of a Hilbert scheme, which is separated), we conclude by \cref{domain for S-rational morphisms to separated target} that the identities hold wherever both sides are defined, so the partial magma $(Z(\psi),m_{12}|_{\dom m_{12}})$ is strongly associative.
\end{proof}

Now the proof of \cref{construction of the partial magma} is quick:
\begin{proof}[Proof of \cref{construction of the partial magma}]
    By \cref{mij inverses statement}, there exists an $S$-dense open $U \subset \dom m_{12}$ such that the morphisms $(\rho_1,m_{12})_U$ and $(\rho_2,m_{12})_U$ are isomorphisms onto their images, which are $S$-dense opens of $Z(\phi)^2$. Then $(Z(\phi), m_{12}|_{U})$ is cancellative. It is associative by \cref{associativity verification part} and \cref{associativity and cancellativity preserved by passing to subobjects.}.
\end{proof}

%% file: Chapters/Group_Chunks_from_S-Rational_Families/Final_group_chunk_construction.tex
\section{Constructing the Group Chunk}\label{section: Constructing the Group Chunk}
In this section we show that, with one extra condition, one can find a group chunk inside the partial magma we constructed in the previous section.

\begin{prop}\label{geometrically integral fibers on canonical family main proposition for group chunk}
    In the situation of \cref{Hypothesis for constructing the map}, take $Z(\phi)$, $m_{12}$, and $U$ from the conclusion of \cref{construction of the partial magma}, and suppose that the fibers of $Z(\psi)$ over $S$ are all geometrically integral. Then there exist $S$-dense opens $D \subset Z(\psi)$ and $W\subset D^3 \cap \Gamma_{m_{12U}}$ such that the pair $(D,W)$ is an Artin group chunk (\cref{Artin Group Chunk definition}).
\end{prop}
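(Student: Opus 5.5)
The plan is to reduce to Artin's \cref{geometrically integral fibers gives X dense open}. That result needs an $S$-scheme which is faithfully flat and locally of finite presentation with separated, geometrically integral fibers. It also needs a subscheme $W$ of the triple product whose three projections are open immersions onto $S$-dense opens, together with the associativity condition of \cref{Artin Group Chunk definition}. The candidate is $X = Z(\psi)$ and $W = \Gamma_{m_{12U}}$, with $U$ as in \cref{construction of the partial magma}, possibly shrunk.

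First I verify the conditions on $Z(\psi)$ itself. It is a closed subscheme of $\Hilb_{X^2}$, so it is separated and locally of finite type over the Noetherian $S$, hence locally of finite presentation. By \cref{Z(phi) is projective in good circumstances} it is even projective, since $A^2$ is of finite type. Its fibers are geometrically integral by hypothesis, which also gives separated fibers.

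For faithful flatness I use that $\zeta_\psi\colon A^2 \dashrightarrow Z(\psi)$ is faithfully flat and $A^2$ is fppf over $S$. Choose $S$-dense opens $U_0\subset A^2$ and $V_0\subset Z(\psi)$ with $\zeta_\psi|_{[U_0\to V_0]}$ faithfully flat. Then $V_0\to S$ is flat, because flatness descends along faithfully flat maps (\cref{if f is faithfully flat and g circ f is flat then g is flat}). It is surjective, because $U_0$ is $S$-dense in an fppf scheme, so meets every fiber. Replacing $Z(\psi)$ by $V_0$ is harmless: by \cref{S-dense open of Artin group chunk gives group chunk}, once a chunk exists on $Z(\psi)$ it restricts to $V_0$. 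So I apply the argument with $V_0$ in place of $Z(\psi)$ and intersect $W$ accordingly. If Artin's result is applied on $V_0$ directly, I need the restricted $W$ to still have $S$-dense projections; this follows from \cref{Intersection and schematically/S-dense opens} and \cref{products of S-dense opens are S-dense.}.

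Next I produce $W$. By \cref{mij inverses statement}, $(\rho_1,m_{12})$ and $(\rho_2,m_{12})$ are $S$-birational with inverses $(\rho_1,m_{13})$ and $(m_{23},\rho_1)$. I shrink $U$ so that these maps give isomorphisms $U\cong U_{13}$ and $U\cong U_{23}$ onto $S$-dense opens. Then I set $W=\Gamma_{m_{12U}}\subset Z(\psi)^3$, which is locally closed. The projection to $X_1X_2$ is the open immersion onto $U_{12}=U$. The projections to $X_1X_3$ and $X_2X_3$ are the isomorphisms above, composed with open immersions, hence open immersions onto $S$-dense opens. The associativity identity $m_{12}(m_{12}(x_1,x_2),x_3)\ssim m_{12}(x_1,m_{12}(x_2,x_3))$ follows from \cref{associativity verification part}, since $Z(\psi)$ is separated (\cref{domain for S-rational morphisms to separated target}) and equality as $S$-rational morphisms therefore gives agreement on the common domain.

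Applying \cref{geometrically integral fibers gives X dense open} then yields $S$-dense opens $D\subset Z(\psi)$ and $W'\subset W\cap D^3$ forming an Artin group chunk, which is the claim. I expect the main obstacle to be the shrinking step. The difficulty is keeping all three projection images simultaneously open and $S$-dense while the domain $U$ is also controlled. It should be handled by taking $U$ inside the intersection of the domains of the representatives witnessing $S$-birationality and pulling back; diffuseness preserves $S$-density under these preimages.
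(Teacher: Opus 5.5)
Your proposal is correct and follows essentially the paper's proof: shrink $Z(\psi)$ to the $S$-dense image $V_0$ (the paper's $B$) of a faithfully flat representative of $\zeta_\psi$ to obtain an $S$-scheme faithfully flat over $S$, intersect $\Gamma_{m_{12U}}$ with $V_0^3$, check that the three projections are open immersions with $S$-dense images, and apply \cref{geometrically integral fibers gives X dense open}. Two remarks are unnecessary but harmless: the appeal to \cref{S-dense open of Artin group chunk gives group chunk} is circular, since no chunk on all of $Z(\psi)$ is available before shrinking, and no further shrinking of $U$ is needed because \cref{construction of the partial magma} already supplies it.
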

\begin{proof}
    The idea is to apply \cref{geometrically integral fibers gives X dense open}. Observe that the pair $(Z(\phi), \Gamma_{m_{12U}})$ satisfy all the hypotheses we need except that $Z(\psi)$ may not be faithfully flat over $S$. We can fix this by shrinking $Z(\psi)$.

    By \cref{characterization of faithfully flat S-rational morphisms}, there exist $S$-dense opens $A' \subset A$ and $B\subset Z(\psi)$ such that $A'$ is faithfully flat over $B$. Then $A'$ is fppf over $S$; indeed it is flat  and of finite presentation over $S$ since $A$ is fppf over $S$, and it surjects onto $S$ since it is $S$-dense in $A$. By \cref{if f is faithfully flat and g circ f is flat then g is flat} we conclude that $B$ is faithfully flat over $S$, and the fibers over $S$ are separated and geometrically integral since the same is true of $Z(\psi)$ and $B \subset Z(\psi)$ is $S$-dense.

    Now let $W = \Gamma_{m_{12U}} \cap B^3$. By \cref{associativity and cancellativity preserved by passing to subobjects.} we have that $(B,W)$ gives rise to a strongly associative cancellative partial magma. Since $W' \subset  \Gamma_{m_{12U}}$ is an $S$-dense open, and the projections $\rho_{ij}\colon \Gamma_{m_{12U}} \to Z(\phi)^2$ are $S$-dense open immersions (as the images of $(\rho_1,m_{12})_U$ and $(\rho_2,m_{12})_U$ are $S$-dense), it follows that the projections $\rho_{ij}\colon W \to B^2$ are $S$-dense open immersions. Thus $(B, \Gamma_{m_{12U}} \cap B^3)$ satisfies the assumptions of \cref{geometrically integral fibers gives X dense open}, so we conclude.
\end{proof}

Finally, we can prove the main result of this chapter. We restate it here for convenience:
\begin{thm}\label{main result: most general case}
    Let $S$ be a Noetherian scheme, and let $A,X,Y$ be schemes over $S$, with $A$ fppf and with geometrically integral fibers over $S$ and such that $Y$ is locally Noetherian and $X$ is $S$-birationally projective.

    Let $\phi\colon AX \dashrightarrow Y$ be an invertible $S$-rational family over $A$, with inverse $\phi^\dagger$. Let $\psi = \phi^\dagger * \phi$. Suppose
    \begin{enumerate}[label = (\roman*).]
        \item The family $\psi$ is tame (\cref{tame definition})
        \item The closed graphs $\Lambda_{\widetilde{\psi} * \widetilde{\psi}}$, $\Lambda_{(\widetilde{\psi})^\dagger * \widetilde{\psi}}$ and $\Lambda_{\widetilde{\psi} * (\widetilde{\psi})^\dagger}$ are $S$-generically flat over $Z(\psi)^2$, for some choice of $S$-birational projective model of $X$ witnessing tameness of $\psi$.
        \item The $S$-rational morphisms
              \begin{align*}
                  q_1\colon A^2 \dashrightarrow AZ(\psi); &  & (a_1,a_2) \in A^2(T) \mapsto (a_1, \zeta_\psi(a_1,a_2)) \\
                  q_2\colon A^2 \dashrightarrow AZ(\psi); &  & (a_1,a_2) \in A^2(T) \mapsto (a_2, \zeta_\psi(a_1,a_2))
              \end{align*}
              are both faithfully flat.
    \end{enumerate}

    Then composition of $\psi$ with itself induces an $S$-rational morphism $m_{12}\colon Z(\psi)^2 \dashrightarrow Z(\psi)$, and there exist $S$-dense opens $U\subset \dom(m_{12})$, $D \subset Z(\psi)$ and $W\subset D^3 \cap \Gamma_{m_{12U}}$ (where $\Gamma_{m_{12U}}$ denotes the graph of the representative of $m_{12}$ with domain $U$) such that the pair $(D,W)$ is an Artin group chunk (\cref{Artin Group Chunk definition}).
\end{thm}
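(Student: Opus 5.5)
The plan is to reduce \cref{main result: most general case} to \cref{construction of the partial magma} followed by \cref{geometrically integral fibers on canonical family main proposition for group chunk}. First, fix an $S$-birational projective model $X'$ of $X$ witnessing tameness of $\psi$ and satisfying hypothesis (ii). Replace $\phi$ by the induced family $\phi'\colon AX' \dashrightarrow Y$, obtained by composing with the $S$-birational map $X' \dashrightarrow X$. This family is still invertible, by \cref{composition of S-birational morphisms}. Its associated $\psi' = \phi'^\dagger * \phi'$ is the corresponding birational conjugate of $\psi$, and by construction (\cref{canonical families for families on S-birationally projective schemes}) we have $Z(\psi') = Z(\psi)$ and $\zeta_{\psi'} = \zeta_\psi$. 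Hence hypothesis (iii) is unchanged. We are then in \cref{Hypothesis for constructing the map}, and \cref{construction of the partial magma} supplies $m_{12} = \zeta_{\widetilde{\psi}*\widetilde{\psi}}$, which is the $S$-rational morphism induced by composition of $\psi$ with itself. It also supplies an $S$-dense open $U \subset \dom(m_{12})$ making $(Z(\psi), m_{12}|_U)$ cancellative and strongly associative, with the images of $(\rho_1,m_{12})_U$ and $(\rho_2,m_{12})_U$ being $S$-dense.

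To apply \cref{geometrically integral fibers on canonical family main proposition for group chunk}, the remaining input is that the fibers of $Z(\psi)$ over $S$ are geometrically integral. This step is the main obstacle, and it is where the hypothesis that $A$ has geometrically integral fibers is used. I would argue as follows. By \cref{characterization of faithfully flat S-rational morphisms}, applied to the faithfully flat $\zeta_\psi\colon A^2 \dashrightarrow Z(\psi)$, there are $S$-dense opens $A'' \subset A^2$ and $B \subset Z(\psi)$ with $\zeta_\psi|_{[A''\to B]}$ faithfully flat. For each $s \in S$, the fiber $A^2_s$ is geometrically integral, because a product of geometrically integral varieties over a field is geometrically integral. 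Hence $A''_s$ is also geometrically integral. Then $B_s$ is the image of a geometrically irreducible scheme under a faithfully flat map, so it is geometrically irreducible. It is also geometrically reduced, since $\O_{B_{\bar s}} \to$ (pushforward of) $\O_{A''_{\bar s}}$ is injective by faithful flatness. Therefore $B_s$ is geometrically integral.

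The difficulty is passing from $B$ to all of $Z(\psi)$. $S$-density of $B$ gives schematic density of $B_{s}$ in $Z(\psi)_{s}$, after base change to $\bar s$, but that alone might not rule out extra components. So rather than proving the statement for $Z(\psi)$ itself, I would replace $Z(\psi)$ by $B$ at the outset. The proof of \cref{geometrically integral fibers on canonical family main proposition for group chunk} already shrinks to such a $B$, and it only uses geometric integrality of the fibers of $B$. Using \cref{associativity and cancellativity preserved by passing to subobjects.} and \cref{Intersection and schematically/S-dense opens}, the partial magma restricted to $B$, with $W = \Gamma_{m_{12U}} \cap B^3$, still has projections that are open immersions with $S$-dense image. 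Moreover $B$ is fppf over $S$, by \cref{if f is faithfully flat and g circ f is flat then g is flat} applied to $A'' \to B \to S$, and its fibers are separated since $Z(\psi)$ is a closed subscheme of a Hilbert scheme.

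Finally, apply \cref{geometrically integral fibers gives X dense open} to $(B, W)$. This produces $S$-dense opens $D \subset B \subset Z(\psi)$ and $W' \subset W \cap D^3 \subset D^3 \cap \Gamma_{m_{12U}}$ such that $(D,W')$ is an Artin group chunk, which is the conclusion. By the remarks following \cref{statement of the main result at start of section} (via \cref{S-dense open of Artin group chunk gives group chunk}), the choice of projective model does not affect the result.
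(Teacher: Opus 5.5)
Your proposal is correct and follows the paper's overall decomposition. Like the paper, you pass to a projective model of $X$ to land in \cref{Hypothesis for constructing the map}, invoke \cref{construction of the partial magma}, shrink to an fppf $S$-dense open $B$, and apply \cref{geometrically integral fibers gives X dense open}. The only genuine difference is how you obtain geometric integrality of the fibers.

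The paper proves it for all of $Z(\psi)$. Since $\zeta_\psi$ is $S$-dominant, each fiber $Z(\psi)_s$ is the schematic image of (a dense open of) $A^2_s$, so \cref{schematic image of a geometrically integral variety} applies. The fibers of $B$ then inherit the property because $B$ is an $S$-dense open.

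You instead work directly on $B$, using faithful flatness of $A''\to B$ after base change to $\bar s$. Surjectivity gives irreducibility, and injectivity of faithfully flat local homomorphisms gives reducedness. This avoids schematic images and flat base change, and it is exactly what the Artin step consumes.

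Your stated worry about passing from $B$ to $Z(\psi)$ is unfounded, though. $B_{\bar s}$ is integral and schematically dense in $Z(\psi)_{\bar s}$, since $S$-density survives base change. Hence $Z(\psi)_{\bar s}$ is reduced: a nilpotent local section restricts to $0$ on the reduced $B_{\bar s}$, so it is $0$ by injectivity of $\O_{Z(\psi)_{\bar s}}\to j_*\O_{B_{\bar s}}$. It is also irreducible: a schematically dense open is topologically dense, since otherwise the structure sheaf would vanish on the nonempty complement of its closure. So your argument in fact recovers the paper's statement for all of $Z(\psi)$. You could then have applied \cref{geometrically integral fibers on canonical family main proposition for group chunk} verbatim instead of re-running its proof with $B$ in place of $Z(\psi)$.
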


\begin{proof}[Proof of \cref{main result: most general case}]
    Fix an $S$-birational projective model for $X$ as in \cref{Hypothesis for constructing the map}. In order to apply \cref{geometrically integral fibers on canonical family main proposition for group chunk}, we just need to verify that the fibers of $Z(\psi)$ over $S$ are geometrically integral.

    Indeed, we have that $\zeta_{\psi}$ is $S$-dominant by \cref{a faithfully flat morphism is S-dominant}. It follows that for any point $s \in S$, the pullback $A^2_s \dashrightarrow Z(\psi)_s$ is schematically dominant, and in particular $Z(\psi)_s = \Schim(A^2_s)$. Since $A$ has geometrically integral fibers, it follows that $A^2_s$ is geometrically integral, and then the fiber $Z(\psi)_s$ is geometrically integral by \cref{schematic image of a geometrically integral variety}, so we conclude.
\end{proof}
\begin{rem}
    Following on from \cref{multiple components remark}, one should get analogous versions of the above results assuming $Z(\psi)$ or $A$ is a finite union of schemes over $S$ such that the fibers are all geometrically integral.
\end{rem}